%% file: main.tex
\documentclass[reqno, 12pt]{amsart}
\usepackage[letterpaper, margin=1in]{geometry}

\usepackage[utf8]{inputenc} 
\usepackage{amsmath}
\usepackage{amsthm} 
\usepackage{amssymb}
\usepackage{enumitem} 
\usepackage[dvipsnames]{xcolor}
\usepackage{graphicx}
\usepackage{tikz}
\usepackage[colorlinks=true]{hyperref}

\makeatletter
\renewcommand{\l@section}{\@tocline{1}{0pt}{1em}{}{}} % table of contents indentation
\renewcommand{\l@subsection}{\@tocline{2}{0pt}{2em}{}{}}
\makeatother 

\hypersetup{bookmarksdepth=3} % PDF bookmarks

\usepackage{biblatex}
\hypersetup{
	urlcolor = NavyBlue, % Colour for external hyperlinks
	linkcolor = NavyBlue, % Colour of internal links
	citecolor = Red, % Colour of citations
    linktocpage = true   % makes page numbers the clickable links in TOC
}

\newtheorem{theorem}{Theorem}[section]

\theoremstyle{definition}
\newtheorem{lemma}[theorem]{Lemma}

\newtheorem{definition}[theorem]{Definition}
\newtheorem{prop}[theorem]{Proposition}
\newtheorem{cor}[theorem]{Corollary}
\newtheorem{rmk}[theorem]{Remark}

\def\CC{\mathbb{C}}
\def\DD{\mathbb{D}}
\def\EE{\mathbb{E}}

\def\II{\mathbb{I}}

\def\PP{\mathbb{P}}
\def\QQ{\mathbb{Q}}
\def\RR{\mathbb{R}}
\def\SS{\mathbb{S}}

\def\ZZ{\mathbb{Z}}

\def\calE{\mathcal{E}}

\def\calG{\mathcal{G}}

\def\calJ{\mathcal{J}}

\def\calL{\mathcal{L}}
\def\calM{\mathcal{M}}
\def\calN{\mathcal{N}}
\def\calO{\mathcal{O}}

\def\calS{\mathcal{S}}

\def\calU{\mathcal{U}}

\newcommand\cA{\mathcal{A}}

\newcommand\cE{\mathcal{E}}

\newcommand\cG{\mathcal{G}}

\newcommand\cI{\mathcal{I}}
\newcommand\cJ{\mathcal{J}}

\newcommand\cL{\mathcal{L}}
\newcommand\cM{\mathcal{M}}
\newcommand\cN{\mathcal{N}}

\newcommand\cS{\mathcal{S}}

\newcommand\cZ{\mathcal{Z}}

\newcommand{\abs}[1]{\left\lvert#1\right\rvert}

\newcommand{\norm}[1]{\left\lVert#1\right\rVert}

\newcommand{\ang}[1]{\left\langle #1 \right\rangle}

\newcommand{\paren}[1]{\left( #1 \right)}
\newcommand{\sqb}[1]{\left[ #1 \right]}
\newcommand{\set}[1]{\left\{ #1 \right\}}
\newcommand{\setcond}[2]{\left\{ #1 \;\middle\vert\; #2 \right\}}

\newcommand{\wt}{\widetilde}

\newcommand{\pp}[2]{\frac{\partial #1}{\partial #2}}
\newcommand{\op}{\mathrm{op}}
\newcommand{\out}{\mathrm{out}}
\newcommand{\grad}{\nabla}
\newcommand{\diag}{\mathrm{diag}}
\newcommand{\Tr}{\mathrm{Tr}}
\newcommand{\pl}{\mathrm{pl}}
\newcommand{\nul}{\mathrm{null}}
\newcommand{\TV}{\mathrm{TV}}

\newcommand{\appropto}{\mathrel{\vcenter{
  \offinterlineskip\halign{\hfil$##$\cr
    \propto\cr\noalign{\kern2pt}\sim\cr\noalign{\kern-2pt}}}}}

\begin{document}

\title[Correlated Two-view Models in High Dimensions]{Optimal Spectral Algorithms for Correlated Two-view Models in High Dimensions}

\author{Hang Du}
\address{Department of Mathematics, Massachusetts Institute of Technology}
\email{hangdu@mit.edu}

\author{Henry Hu}
\address{Department of Mathematics, Massachusetts Institute of Technology}
\email{huhenry@mit.edu}

\author{Saba Lepsveridze}
\address{Department of Mathematics, Massachusetts Institute of Technology}
\email{sabal@mit.edu}

\date{}

\begin{abstract} 
We study high-dimensional inference in correlated two-view models, focusing on spectral methods for strong detection and weak recovery. We introduce a general framework, motivated by a TAP type heuristic from statistical physics, that provides a unified treatment of three canonical models: high-dimensional canonical correlation analysis, and the correlated spiked Wigner and Wishart models. Our main contribution is to construct explicit spectral algorithms in all three settings, that achieve strong detection and weak recovery down to the corresponding thresholds, where we prove matching information-theoretic lower bounds. Furthermore, our spectral procedures operate without knowledge of the model parameters, relying solely on the observed data. This demonstrates the optimality of spectral methods in these models and the broad statistical applicability of the framework.
\end{abstract}

\maketitle

\tableofcontents 

\hypersetup{linkcolor = NavyBlue} 

\input{introduction}

\input{TAP-heuristicMain}

\input{gordonMain}

\input{convergenceMain}

\input{outlierMain}

\input{lower-boundsMain}

\appendix

\input{TAP-heuristicsAppendix}

\input{gordonAppendix}

\input{largest-eigenvalue-bounds}

\input{convergence} 

\input{outlier} 

\input{lower-bounds}

\small
\printbibliography %%%%

\end{document}

%% file: introduction.tex
\section{Introduction}

A central theme in high-dimensional inference is to extract meaningful structure from high-dimensional (random) data. Prominent examples include principal component analysis for identifying low-rank signals \cite{Baik2005, Johnstone2001}, community detection in random graphs \cite{Mossel2015}, and sparse regression for variable selection \cite{Wainwright2009}. While classical formulations of these problems typically involve a single data source, increasing attention has been devoted to settings with multiple correlated views, where the goal is to detect and exploit the shared structure. Such settings arise naturally when multiple data sources are jointly observed and are governed by some common latent representation. For instance, this arises when two sets of measurements are collected on the same underlying objects, as in multi-omics studies \cite{Lock2013}. From a theoretical perspective, canonical correlation analysis (CCA) is a fundamental instance of this problem \cite{BykhovskayaGorin2023}, with closely related formulations arising in correlated spiked random matrix models \cite{Zhangsong25}.

The term high-dimensional refers to settings in which the dimension of the data is comparable to the sample size. In this context, statistical problems exhibit markedly different behavior from their classical low-dimensional counterparts. Standard estimators may become inconsistent, and sharp phase transitions emerge that separate regimes where inference is possible from those where it is impossible. Such phenomena have been extensively studied in a variety of high-dimensional single-view models, including spiked random matrix models, where precise information-theoretic thresholds for detection and recovery are well understood, and simple spectral methods are known to achieve these limits \cite{Baik2005, BenaychGeorgesNadakuditi2011, FeralPeche2007}. In contrast, for correlated two-view models such as high-dimensional canonical correlation analysis, a systematic understanding of the information-theoretic limits and the performance of simple and computationally efficient algorithms remains incomplete. 

More specifically, a gap remains between statistical optimality and algorithmic simplicity in correlated two-view models. Existing approaches that achieve information-theoretic limits often rely on detailed knowledge of model parameters or involve computationally intricate procedures. On the other hand, spectral methods are among the simplest and most widely used techniques in high-dimensional statistics, and are known to achieve optimal thresholds in a variety of classical single-view models. It is therefore natural to ask whether similarly simple spectral methods can attain information-theoretic limits in correlated two-view settings. While spectral approaches have been studied in such models \cite{BykhovskayaGorin2023, KeupZdeborova2025, mergny2025spectral}, existing results either do not achieve the information-theoretic thresholds, or rely on heuristic insights from statistical physics without corresponding rigorous guarantees. As a result, the optimality of spectral methods in these settings remains unclear in terms of rigorous guarantees.

In this work, we show that this gap can be resolved for a broad class of correlated two-view models. We introduce a general framework, motivated by a TAP-type heuristic from statistical physics, which leads to explicit spectral algorithms that do not require prior knowledge of model parameters. Applying this framework to high-dimensional canonical correlation analysis and to correlated spiked Wigner and Wishart models, we construct spectral methods that achieve strong detection and weak recovery down to the corresponding information-theoretic limits. While our approach is inspired by ideas from statistical physics, we establish these results through a fully rigorous analysis. These results demonstrate that, in these models, simple spectral methods suffice to attain optimal performance.

\subsection{Model and problem formulation} \label{sec:Model_Def}
We now introduce the class of two-view models studied in this paper, namely, high-dimensional canonical correlation analysis (CCA), the correlated spiked Wigner model (CSWig), and the correlated spiked Wishart model (CSWish). In all three cases, we observe a pair of high-dimensional random objects $(U,V)$ that exhibit correlations along low-dimensional structures, and the inference task is to recover this structure by leveraging the dependence between the two views.

To simplify notation, we adopt a unified set of conventions across these models. In each case, the correlation between the two views is governed by a parameter $\rho \in [0,1]$, and the model is specified by a probability distribution $\mathcal P_\rho$ over the tuple $(a,b,U,V)$ (or $(a,b,u,v,U,V)$), where $a,b$ (or $a,b,u,v$) are planted high-dimensional vectors, and $U,V$ are the two correlated views. We further denote by $\mathbb{P}_\rho$ for the marginal of $\mathcal P_\rho$ on the observations $(U,V)$. In addition to inference, we also consider the associated correlation detection problem, for which we introduce a null distribution $\QQ$ corresponding to the absence of correlation and low-dimensional structure. Throughout the paper, the notation $\mathcal P_\rho, \PP_\rho$ and $\QQ$ will be used generically, with the underlying model clear from context.

\subsubsection{Canonical correlation model} \label{subsec:CCA}
We fix two parameters $\tau_m,\tau_k>0$ and let $k,m,n$ be integers that go to infinity proportionally and satisfy
\[
\frac{n}{m}\to\tau_m\,,\qquad\frac{n}{k}\to\tau_k\,.
\]
Consider two hidden directions $a \in \SS^{m-1}$ and $b \in \SS^{k-1}$ that are drawn uniformly at random. Let $U \in \RR^{n \times m}$ and $V \in \RR^{n \times k}$, where the $n$ rows $(U_\mu, V_\mu),1\le \mu \le n$ are independent Gaussian pairs with marginal distributions
\[
\sqrt{m}\, U_\mu^\top \sim \calN(0, I_m), 
\qquad
\sqrt{k}\, V_\mu^\top \sim \calN(0, I_k),
\]
and are correlated only along the directions $a$ and $b$, in the sense that for any $u\in \mathbb{R}^m, v\in \mathbb{R}^k$, 
\[
\sqrt{mk}\, \EE[\langle u,U_\mu \rangle\langle v,V_\mu \rangle] = \rho\langle u,a\rangle\langle v,b\rangle .
\]
We denote by $\mathcal P_\rho$ the joint law of $(a,b,U,V)$ and by $\mathbb{P}_\rho$ its marginal on $(U,V)$ (both $\mathcal P_\rho,\mathbb{P}_\rho$ implicitly depend on $n,m,k$). The null model is defined as the law $\QQ=\mathbb{P}_0$ on $(U,V)$. 

\subsubsection{Correlated spiked Wigner model} \label{subsec:wigner}
Fix two constants $\alpha,\beta>0$. For each large integer $n$, consider symmetric matrices $U, V \in \RR^{n \times n}$ of the form
\[
U = \alpha\, aa^\top + Z_a, 
\qquad
V = \beta\, bb^\top + Z_b,
\]
where $\alpha, \beta > 0$ are signal strengths and $Z_a, Z_b \sim \mathrm{GOE}(1/n)$ are independent noise matrices. The vectors $a, b \in \RR^n$ are Gaussian with
\[
\sqrt{n}\, a \sim \calN(0, I_n), 
\qquad 
\sqrt{n}\, b \sim \calN(0, I_n), 
\qquad \quad n\, \EE[a b^\top] = \rho I_n.
\]
We denote by $\mathcal{P}_\rho$ the joint law of $(a,b,U,V)$ and let $\PP_\rho$ be its marginal on $(U,V)$ (both of them implicitly depend on $n,\alpha,\beta$). The null model $\QQ$ is the law of a pair of independent GOE matrices $(U,V)$ (corresponding to the case $a=b=0$). 

It is well known that when $\max\{\alpha,\beta\}>1$, one can achieve strong detection and weak recovery from a single view, by for instance looking at the top eigenvalue and top eigenvector of $U$ or $V$ \cite{BenaychGeorgesNadakuditi2011, FeralPeche2007}. Therefore, we focus on the case where $\alpha,\beta\le 1$ so that a single view is uninformative and the correlation between the two views becomes essential.

\subsubsection{Correlated spiked Wishart model} \label{subsec:wishart}
Fix $\alpha,\beta,\tau>0$ and let $n,m$ be integers that go to infinity proportionally such that $n/m\to\tau$. Consider matrices $U, V \in \RR^{n \times m}$ given by
\[
U = \sqrt{\frac{\alpha}{m}}\, u a^\top + Z_a, 
\qquad
V = \sqrt{\frac{\beta}{m}}\, v b^\top + Z_b,
\]
where $Z_a, Z_b$ have independent $\calN(0, m^{-1})$ entries, and $u, v \sim \calN(0, I_n)$ are independent latent factors. The vectors $a, b \in \RR^m$ satisfy
\[
\sqrt{m}\, a \sim \calN(0, I_m), 
\qquad 
\sqrt{m}\, b \sim \calN(0, I_m),
\qquad
m\, \EE[a b^\top] = \rho I_m.
\]
We denote by $\mathcal{P}_\rho$ the joint law of $(a,b,u,v,U,V)$ and by $\mathbb{P}_\rho$ its marginal on $(U,V)$ (both of them implicitly depend on $\alpha,\beta$ and $m,n$). The null model $\mathbb{Q}$ is a pair of independent $n \times m$ matrices $(U,V)$ with i.i.d. $\mathcal{N}(0,m^{-1})$ entries (corresponding to $a = b = 0$). 

Similarly as above, when $\max\{\tau\alpha^2,\tau\beta^2\}\ge 1$, it is well known that a single view suffices for strong detection and weak recovery, through the top eigenvalue and top eigenvector of $U^TU$ or $V^TV$ \cite{Baik2005}. Thus, we focus on the case where $\tau\alpha^2,\tau\beta^2\le 1$.

\subsubsection{Statistical questions for correlated two-view models}
For all three models, we study the problem of recovering nontrivial information about the hidden directions $a$ and $b$ from the observations $(U,V)$, as well as the closely related correlation detection problem: testing $\PP_\rho$ against $\QQ$. We formalize these tasks as follows.

\begin{itemize}
    \item A test with a statistic $T = T(U,V)$ and a threshold $t$ is said to achieve strong detection if as $n \to \infty$,
    \[
    \PP_\rho[ T < t] + \QQ[ T \ge t] = o(1).
    \]
    
    \item An estimator $(\hat{a},\hat{b})$, depending only on $(U,V)$ and satisfying $\|\hat{a}\|,\|\hat{b}\|\le 1$, is said to achieve high-probability weak recovery (respectively, positive-probability weak recovery), if there exists a constant $\delta > 0$ independent of $n$, such that as $n\to\infty$, 
\[
\mathcal P_\rho\big[ \langle \hat{a}, a \rangle^2 + \langle \hat{b}, b \rangle^2 \ge \delta \big] =1-o(1)\text{ (respectively, $\Omega(1)$)}\,.
\]
\end{itemize}

Our goal is to characterize the phase transitions for strong detection and weak recovery in terms of the correlation parameter $\rho$, and in particular to understand whether spectral algorithms can achieve these tasks down to the information-theoretic thresholds.

\subsection{Main results}
We now present the main results of this paper. In short, for each of the three models, we derive explicit spectral algorithms from the TAP heuristic (which we discuss in more detail in Section~\ref{sec:TAP_HeuristicsMain}), and prove that they achieve strong detection as well as weak recovery down to the corresponding information-theoretic threshold. We first present an informal statement that summarizes our results. For the precise theorem statement for each model, see Theorem~\ref{thm:CCA-outlierMainBody} (CCA), Theorem~\ref{thm:Spiked-outlierMainBody} (CSWig and CSWish).
\begin{theorem}[Informal]
 Let $(U, V)$ be sampled from one of the three above models (see Section~\ref{sec:Model_Def}) with correlation $\rho$. Then there exists a threshold $\kappa$, which may depend on the model parameters (the signal-to-noise ratios $\alpha, \beta$), such that:
 \begin{itemize}
\item If $\rho > \kappa$, there exists a spectral algorithm that achieves strong detection and high-probability weak recovery, without requiring prior knowledge of model parameters.
\item If $\rho < \kappa$, no algorithm achieves strong detection or positive-probability weak recovery, even with full knowledge of the model parameters. 
 \end{itemize}
\end{theorem}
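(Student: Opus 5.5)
The plan has two halves, matching the two bullets, and is carried out separately for each of the three models.

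For the upper bound, the spectral matrix comes from linearizing the TAP (approximate message passing) fixed-point equations around the trivial fixed point. The two-view TAP iteration couples estimates $(\hat a,\hat b)$ through the cross-view statistics: $U^\top V$ for CCA, and $U$ and $V$ for the spiked models. Linearizing gives a block matrix $M$ acting on $(x,y)\in\RR^{m}\times\RR^{k}$ (or $\RR^n\times\RR^n$). In CSWig it has the schematic form
\[
M=\begin{pmatrix} c_1 U - c_2 I & c_3 I\\ c_3 I & c_1 V - c_2 I\end{pmatrix}.
\]
The Onsager terms are diagonal shifts, and their coefficients are determined by empirical spectral quantities, such as traces of resolvents of $U$ and $V$. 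This is why no knowledge of $\alpha,\beta,\rho$ is needed. The test statistic is the top eigenvalue $\lambda_1(M)$, and the estimator is the normalized top eigenvector, split into its two blocks.

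The upper-bound analysis has three steps.
\begin{enumerate}
\item Under $\QQ$ and under $\PP_\rho$, show that the bulk spectrum of $M$ converges to a deterministic limit with edge $\lambda_+$, and that there are no outliers under $\QQ$. This is done via Gordon's comparison inequality and a free-probability computation of the limiting spectral measure. It yields $\lambda_1(M)\le\lambda_++o(1)$ under $\QQ$.
\item Under $\PP_\rho$, view $M$ as a low-rank perturbation of the null matrix and use the resolvent (Benaych-Georges--Nadakuditi) approach. An outlier exists at $z>\lambda_+$ exactly when $\det\bigl(I-\Theta\, G(z)\bigr)=0$, where $G(z)$ is a small matrix of limiting quadratic forms $\langle a,(z-M_0)^{-1}a\rangle$, $\langle a,(z-M_0)^{-1}b\rangle$ and so on.
\item Compute these limits explicitly and check that the outlier equation has a root strictly above $\lambda_+$ precisely when $\rho>\kappa$. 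The standard eigenvector perturbation formula then gives a limiting overlap $\langle\hat a,a\rangle^2+\langle\hat b,b\rangle^2\to\delta(\rho)>0$.
\end{enumerate}
Strong detection follows by thresholding $\lambda_1(M)$ at $\lambda_++\epsilon$.

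For the lower bound, when $\rho<\kappa$ I would show contiguity of $\PP_\rho$ with respect to $\QQ$ via a truncated second moment of the likelihood ratio $L=d\PP_\rho/d\QQ$. Contiguity rules out strong detection. Conditioning on the spikes, $\EE_\QQ[L^2]$ reduces to an expectation over two independent replicas $(a,b),(a',b')$ of $\exp\bigl(n\,F(\langle a,a'\rangle,\langle b,b'\rangle)\bigr)$.
\begin{enumerate}
\item Expand $F$ to second order around the origin. This gives a quadratic form in the rescaled overlaps $\sqrt n\langle a,a'\rangle$ and $\sqrt n\langle b,b'\rangle$.
\item The second moment stays bounded iff this $2\times2$ Gaussian integral converges. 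That condition should be exactly $\rho<\kappa$, which must agree with the outlier condition from the upper bound; this coincidence is the consistency check.
\item Large overlaps, where the quadratic approximation fails, are removed by conditioning on a high-probability event, for example norm bounds and the absence of single-view outliers, using $\alpha,\beta\le1$ or $\tau\alpha^2,\tau\beta^2\le1$.
\end{enumerate}
For weak recovery, I would argue that bounded $\EE_\QQ[L^2]$, together with the same computation with the spike overlap tilted, gives $\EE_{\mathcal P_\rho}\bigl[\langle a,a'\rangle^2\bigr]=O(1/n)$ for two posterior samples. This forces the Bayes-optimal overlap to be $o(1)$ and rules out positive-probability weak recovery.

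The main obstacle is the outlier analysis in step (3) of the upper bound: computing the limiting resolvent quadratic forms for the non-Hermitian-looking, data-dependent TAP matrix and proving that the threshold equals the second-moment threshold $\kappa$. This is hardest in CCA, where the null spectrum is the Wachter/Jacobi-type law and the Onsager corrections are nontrivial. My first attempt would be to choose the Onsager coefficients so that the block resolvent becomes diagonal in the limit. The outlier equation then factorizes into a scalar equation in the single-view Stieltjes transforms, which can be matched directly with the quadratic form found in the lower bound.
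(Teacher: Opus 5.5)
Your overall architecture matches the paper's. On the upper-bound side: a Gordon bound for the null edge, a resolvent/Schur-complement reduction to a $2\times2$ outlier equation, and a sign change of its determinant at the edge combined with monotonicity in $z$. On the lower-bound side: a (truncated) second moment, contiguity, and a posterior-overlap argument.

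The genuine gap is the \emph{parameter-free} claim in the first bullet. You assert that the coefficients of the linearized TAP matrix are ``determined by empirical spectral quantities, such as traces of resolvents of $U$ and $V$.'' This cannot work, because the coefficients that matter are not visible in single-view spectra.
\begin{itemize}
\item In CCA the linearized TAP matrix is $W^\rho$, with diagonal blocks $-\rho U^\intercal U$ and $-\rho V^\intercal V$ and target eigenvalue $(1-\rho^2)/\rho$. Each view alone has exactly the null law, so any function of $U$ alone or of $V$ alone has the same distribution for every $\rho$.
\item In CSWig/CSWish the matrix involves $\alpha U-\alpha^2 I$, $\beta V-\beta^2 I$ and a cross weight $\rho$. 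In the regime $\alpha,\beta<1$ (resp.\ $\tau\alpha^2,\tau\beta^2<1$), each view is contiguous to pure noise. Hence the limiting resolvent traces of $U$ and $V$ are the same for all parameter values, and no consistent estimate of $\alpha$ or $\beta$ can come from them.
\item If you build the matrix with the true $\rho$, as the TAP linearization suggests, its null edge $\lambda_+$ also depends on the parameters. So neither your statistic nor your threshold $\lambda_++\epsilon$ can be computed from the data.
\end{itemize}

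The missing idea is to replace $\rho$ by its critical value $\kappa$ in the linearized TAP matrix. This removes $\rho$ from the statistic. It pins the null top eigenvalue below $\lambda_*+o(1)$, with $\lambda_*=(1-\kappa^2)/\kappa$ for CCA and $\lambda_*=1$ for the spiked models. It also makes $\det\cS(\lambda_*^+)$ have the sign of $\kappa-\rho$, which is what makes the outlier equation clean. For CCA, $\kappa=(\tau_m\tau_k)^{-1/4}$ is read off from the dimensions, so this substitution alone already gives a parameter-free method.

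For CSWig/CSWish, $\kappa=\kappa(\alpha,\beta)$ still depends on unknowns. The paper therefore searches over a grid $\cZ_\varepsilon$ of candidate pairs $(\tilde\alpha,\tilde\beta)$ and uses three ingredients.
\begin{itemize}
\item Every admissible $W(\tilde\alpha,\tilde\beta)$ has null top eigenvalue at most $1+o(1)$. The Gordon tail bound plus a union bound over the finite grid then controls $\max_{\cZ_\varepsilon}\lambda_1$ under $\QQ$.
\item A mismatched-outlier theorem shows that, when $\rho>\kappa+\varepsilon$, some grid point near $(\alpha,\beta)$ satisfies $\rho>\rho_{\out}(\tilde\alpha,\tilde\beta;\alpha,\beta)$ and produces an outlier.
\item A separate lemma shows that \emph{any} grid point whose top eigenvalue exceeds $1+\varepsilon$ has a top eigenvector with $\Omega(1)$ overlap. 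This is needed because the maximizing grid point need not be the well-matched one.
\end{itemize}
As a result, the parameter-free guarantee is proved only with a margin: $\rho>\kappa+\varepsilon$ and $\alpha,\beta\in\tau^{-1/2}(2\varepsilon,1]$.

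A smaller point on your weak-recovery lower bound. The quantity $\EE_{\PP_\rho}\mu_{U,V}^{\otimes2}[\,\cdot\,]$ equals $\EE_\QQ[N/L_\rho]$, which has $L_\rho$ in the denominator, so a bounded $\EE_\QQ[L_\rho^2]$ does not by itself control it. You need a small numerator together with a split on $\{L_\rho\ge\epsilon_n\}$. The paper shows $\EE_\QQ I_\delta\le e^{-\Omega(n)}$ and splits at $\epsilon_n=e^{-\gamma n}$. This gives $o(1)$, not the $O(1/n)$ you state.
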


Before stating our formal results, we introduce some conventions. Throughout the paper, we use the standard Landau notation $o(\cdot), O(\cdot), \Omega(\cdot), \Theta(\cdot)$. Precisely, for functions $f$ and $g$ of $n$, we write $f=o(g)$, $f=O(g)$, $f=\Omega(g)$, and $f=\Theta(g)$ to mean that $f/g\to 0$, $f \le Cg$, $f \ge c g$, and $c g \le f \le C g$, respectively, for some constants $c,C>0$ that may depend on some fixed parameters but not on $n$. Moreover, we say that a sequence of events $\{\mathcal E_n\}$ holds with high probability (w.h.p.) if $\PP(\mathcal E_n) = 1 - o(1)$ as $n$ goes to infinity. 

We now present our result for CCA.

\begin{theorem}[CCA Model]\label{thm:CCA-outlierMainBody}
 Let $(U, V)$ be sampled from the CCA model with correlation $\rho$ and planted directions $a, b$. Then there exists a threshold $\kappa = (\tau_m \tau_k)^{-1/4}$ such that:
\begin{itemize}
 \item If $\rho > \kappa$, we can achieve strong detection and weak recovery. Specifically, define
         \[W = \begin{pmatrix}-\kappa U^\intercal U & U^\intercal V \\ V^\intercal U & -\kappa V^\intercal V \end{pmatrix}.\]
        The top eigenvalue and eigenvectors of $W$ achieve the following guarantees:\begin{enumerate}[label=(\roman*)]
           \item The top eigenvalue of $W$ converges w.h.p. to an outlier eigenvalue $\lambda_\out$,
              \begin{equation*}
                \lambda_1(W) = \lambda_\out + o(1) \quad \text{ and } \quad    \lambda_2(W) \leq \lambda_* + o(1).
              \end{equation*}
            where $\lambda_* = (1-\kappa^2)/\kappa$, and $\lambda_\out > \lambda_*$ is the unique solution of $\det \calS(z) = 0$, for some explicit deterministic  $\cS(z)$, defined in \eqref{eq:SDefMain}. On the other hand, for $(U,V)$ sampled from the null model, w.h.p. $\lambda_1(W)\le \lambda_*+o(1)$, and thus $\lambda_1(W)$ achieves strong detection. 
           \item Furthermore, if $\hat w = (\hat a, \hat b)$ is a unit eigenvector corresponding to the largest eigenvalue of $W$, then w.h.p. the overlaps are bounded away from zero and satisfy,
              \begin{equation*}
                |{\langle \hat a, a \rangle}|^2 =
                \frac{x_{*,1}^2}{-x_*^\intercal  \, \partial_z \cS(\lambda_{\mathrm{out}}) \,  x_*} +o(1), 
                \quad 
                |{\langle \hat b, b\rangle }|^2 =
                \frac{x_{*,2}^2}{-x_*^\intercal  \, \partial_z \cS(\lambda_{\mathrm{out}}) \,  x_*} +o(1).
              \end{equation*}
        where $x_* = (x_{*, 1}, x_{*, 2})$ is any vector in $\ker \cS(\lambda_{\mathrm{out}})$. Thus, the top eigenvector of $W$ achieves high-probability weak recovery. 
        \end{enumerate} 
  \item If $\rho < \kappa$, there is no test that achieves strong detection, and there is no estimator that achieves positive-probability weak recovery.
  \end{itemize}
\end{theorem}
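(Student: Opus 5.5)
The argument splits into three parts: the bulk edge bound, the outlier location and overlaps, and the lower bound.

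\emph{Bulk edge.} Under $\QQ$ the blocks $U^\intercal U$, $V^\intercal V$, $U^\intercal V$ come from independent Gaussian matrices. I would write $W = X^\intercal J X$ with $X=[U\;V]\in\RR^{n\times(m+k)}$ and $J=\begin{pmatrix}-\kappa I_m & \ast\\ \ast & -\kappa I_k\end{pmatrix}$ in block form. More usefully, for a unit vector $w=(x,y)$ one has
\[
w^\intercal W w = -\kappa\|Ux\|^2-\kappa\|Vy\|^2+2\langle Ux,Vy\rangle .
\]
So $\lambda_1(W)=\max_{\|x\|^2+\|y\|^2=1}\big(-\kappa\|Ux\|^2-\kappa\|Vy\|^2+2\langle Ux,Vy\rangle\big)$, which is a Gaussian min-max problem. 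I would bound it by Gordon's comparison inequality, as developed in the section following the TAP heuristic. The comparison reduces the problem to a scalar optimization over $\|x\|=s$, $\|y\|=t$, and evaluating it should give $\lambda_*=(1-\kappa^2)/\kappa$ exactly when $\kappa=(\tau_m\tau_k)^{-1/4}$. This choice of $\kappa$ makes the spectrum of $W$ under $\QQ$ have its edge exactly at $\lambda_*$. Under $\PP_\rho$, the rows factor as a null matrix plus a rank-two perturbation along $a,b$: write $U=\tilde U + g a^\intercal$ and $V=\tilde V+h b^\intercal$, with $(g,h)$ correlated Gaussian vectors of correlation $\rho$ and $\tilde U,\tilde V$ independent and null-like on the orthogonal complements. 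Hence $W$ is a finite-rank perturbation of a null-type matrix. Interlacing with the rank of the perturbation then gives $\lambda_2(W)\le\lambda_*+o(1)$ once we know only one eigenvalue escapes.

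\emph{Outlier and overlaps.} I would use the standard master-equation approach. Isolate the low-rank part, apply a Schur complement to reduce the eigenvalue equation $\det(z-W)=0$ for $z>\lambda_*+\epsilon$ to a $2\times 2$ determinant. This determinant is built from quadratic forms of resolvents of the null part against $a,b,g,h$. These quadratic forms concentrate around deterministic limits computed from the limiting Stieltjes-type transforms (again via convergence results, presumably in the convergence section), yielding the deterministic matrix $\cS(z)$. Uniform convergence on compact subsets of $(\lambda_*,\infty)$, together with monotonicity, gives $\lambda_1(W)\to\lambda_\out$, the unique zero of $\det\cS$, whenever that zero exists. Existence holds precisely when $\rho>\kappa$: check that $\det\cS(\lambda_*^+)$ has the opposite sign to its limit at $\infty$ iff $\rho>\kappa$. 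The eigenvector overlap formula follows from the usual residue computation. The projection of the spectral projector onto $\mathrm{span}(a,b)$ is $-\big(\partial_z\cS(\lambda_\out)\big)^{-1}$ restricted to $\ker\cS$, giving $x_{*,i}^2/(-x_*^\intercal\partial_z\cS\,x_*)$. These quantities are positive because $\cS$ is decreasing in $z$.

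\emph{Lower bound.} For $\rho<\kappa$ I would use a second-moment computation. Condition on $(a,b)$ and compute $\EE_\QQ[(d\PP_\rho/d\QQ)^2]$. In the CCA model this reduces to an expectation over $\langle a,a'\rangle,\langle b,b'\rangle$ of $\det(\cdot)^{-n/2}$. A Laplace-type analysis shows it stays bounded iff $\rho^2 < (\tau_m\tau_k)^{-1/2}$, i.e. $\rho<\kappa$. I would then refine this to a truncated or conditional second moment, or a small-subgraph-conditioning-type argument, to get contiguity. Contiguity rules out strong detection. Weak recovery is ruled out via the standard reduction that uses the boundedness of the second moment with planted overlap tilts, or via an $I$-MMSE/free energy argument.

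The main obstacle I expect is the sharp Gordon computation for the null edge. A min-max bound giving exactly $(1-\kappa^2)/\kappa$ with matching $\kappa$ is delicate; the indefinite sign structure means a plain Slepian bound is not tight. The next hardest step is verifying that $\det\cS$ vanishes above $\lambda_*$ exactly when $\rho>\kappa$.
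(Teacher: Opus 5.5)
Your plan is essentially the paper's proof. It uses Gordon's comparison for the null edge, made applicable by dualizing the quadratic terms and merging the dual variables via $y_i=(1+\kappa)y_i'+z$ into a bilinear min--max. Then a Schur complement onto the two planted coordinates gives a $2\times 2$ matrix $\cS_n(z)$ with $\partial_z\cS_n\preceq -I_2$, converging to $\cS(z)$ on compacts of $(\lambda_*,\infty)$, with edge value $\det\cS(\lambda_*^+)=\lambda_*^2(1-\rho^2/\kappa^2)$. The eigenvector/residue identity gives the overlaps, and the lower bounds come from the saddle-point bound on $\EE[(1-\rho^2\theta\phi)^{-n}]$ over the replica overlaps $\theta=\langle a^1,a^2\rangle$, $\phi=\langle b^1,b^2\rangle$.

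A few small corrections:
\begin{itemize}
\item The Gordon bound $(1-\kappa^2)/\kappa$ holds for every $\kappa\in(0,1)$; the threshold value of $\kappa$ enters instead through the edge limits $r(\lambda_*^+)=s(\lambda_*^+)=-1/\lambda_*$.
\item Uniqueness of the outlier comes from the monotonicity of $\cS_n$ together with $\det\cS_n(\lambda_*+\delta)<0$, not from interlacing: your perturbation of $W$ has rank up to four and is indefinite.
\item The untruncated $\chi^2$ bound already gives $\PP_\rho\triangleleft\QQ$, so no truncation or conditioning is needed for detection.
\item Weak recovery is excluded by showing that the second moment restricted to replica overlap at least $\delta$ is $e^{-\Omega(n)}$, then splitting on $\{L_\rho\ge e^{-\gamma n}\}$.
\end{itemize}
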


From Theorem~\ref{thm:CCA-outlierMainBody}, we identify $\kappa = (\tau_m \tau_k)^{-1/4}$ as the information-theoretic threshold for strong detection and weak recovery in the CCA model. To the best of our knowledge, this threshold has not been established previously. Moreover, we show that a simple and explicit spectral algorithm achieves both tasks down to this threshold. Notably, the associated spectral statistic $W$ does not require prior knowledge of model parameters, as $\tau_m$ and $\tau_k$ can be directly inferred from the data.

\begin{figure}[ht]
    \centering
    \includegraphics[width=0.48\linewidth]{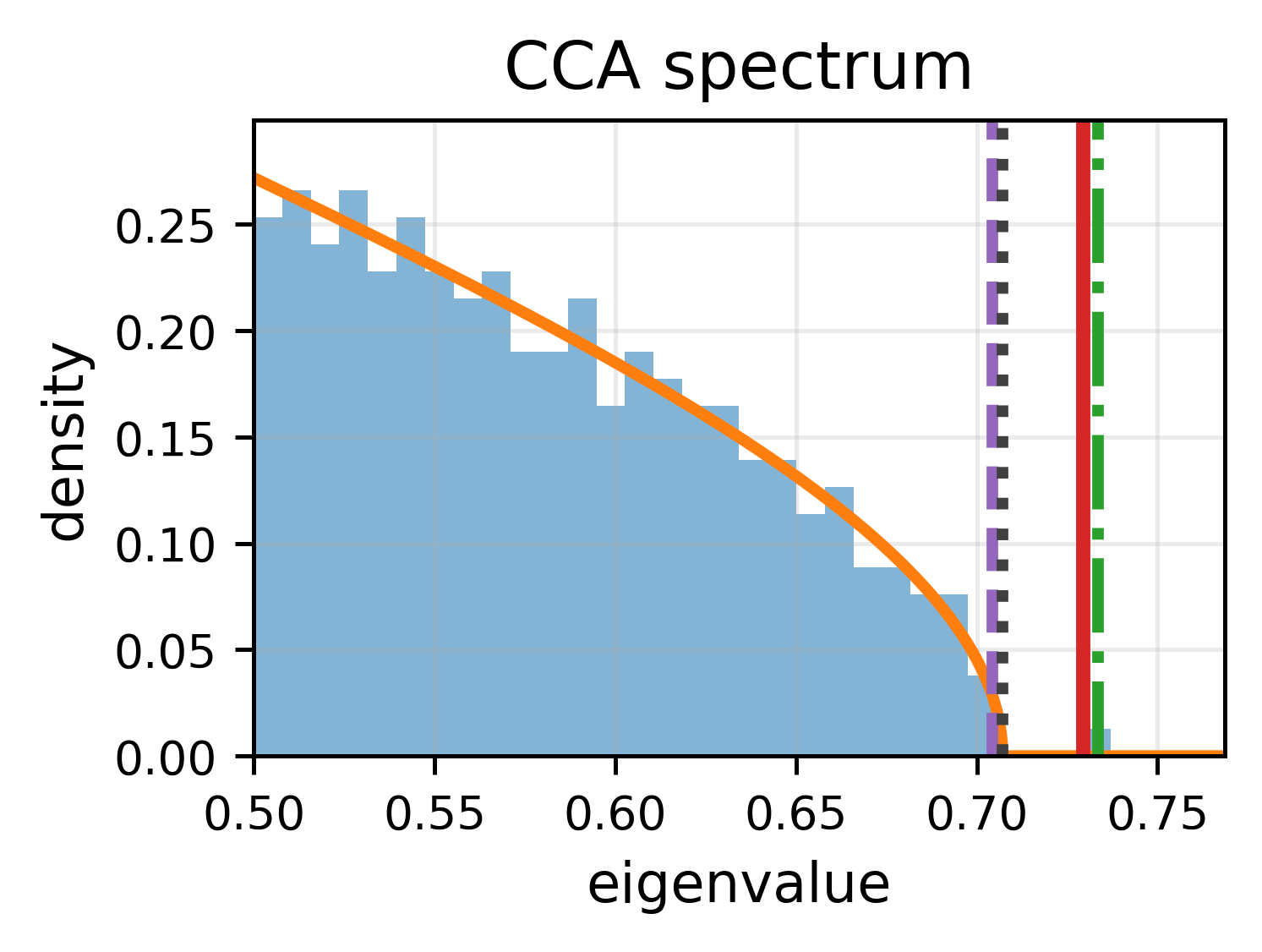}
    \hfill
    \includegraphics[width=0.48\linewidth]{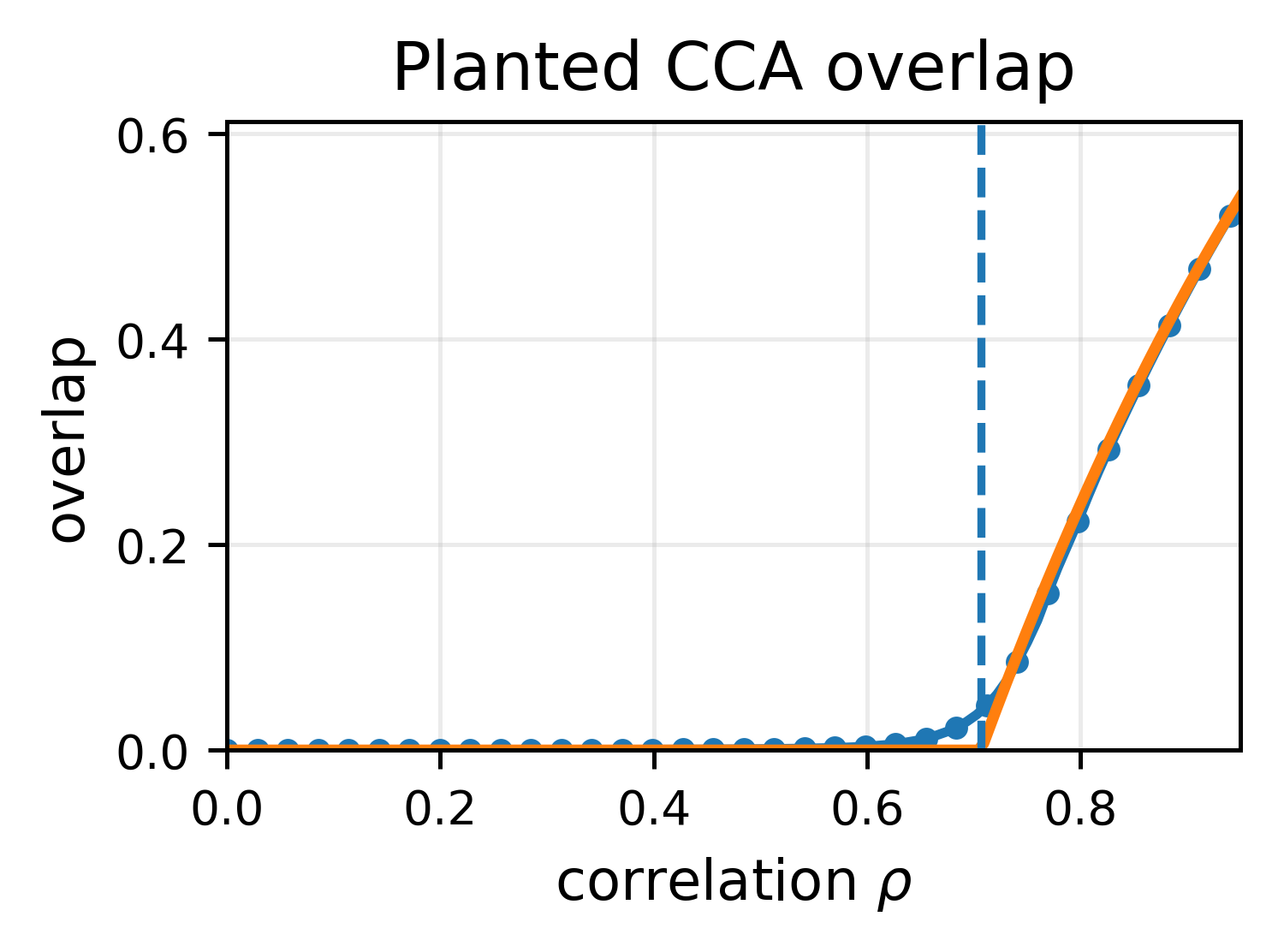}
    \caption{
    The panels show numerical simulations for the CCA model with parameters $n=10000$, $m=k=5000$, and threshold $\kappa = 0.707$. The left panel shows the spectrum of the CCA estimator $W$ with planted correlation $\rho = 0.85$. The right panel shows the overlap, $|\langle \hat w, (a,b)\rangle|^2$, averaged over 50 trials, as a function of $\rho$.} 

    \label{fig:CCA}
\end{figure}
In Figure~\ref{fig:CCA}, we provide a numerical simulation for the CCA model. The left panel shows the spectrum when the correlation ($\rho = 0.85$) is above the threshold ($\kappa = 0.707$). We see that the top eigenvalue ($\lambda_1 = 0.729$, solid red) matches well with the predicted location of the outlier ($\lambda_\out = 0.734$, dashed green) while the second eigenvalue ($\lambda_2 = 0.704$, dashed purple) matches closely with the predicted bulk edge ($\lambda_* = 0.707$, dotted black). The remaining spectrum matches the theoretical limiting spectrum of the null model (solid orange), which we establish in Section~\ref{sec:convergenceMain}. The right panel shows a numerical simulation of the overlap between the top eigenvector and the hidden directions at various correlation strengths. Each data point (blue dots) represents the average overlap, as a function of the correlation $\rho$, over $50$ trials. The predicted deterministic overlap is given as the solid orange line. Notice at the threshold $\kappa$ (dashed blue), the overlap between the top eigenvector with the hidden directions increases sharply with $\rho$.

We now turn to the correlated spiked models, where the model parameters $\alpha$ and $\beta$ are not directly observable from the data. We first present random matrix results that assume knowledge of $\alpha$ and $\beta$, and then show how to obtain algorithms that do not require this assumption.  In the context of the following theorems, $\tau = 1$ for the CSWig model.

\begin{theorem}\label{thm:Spiked-outlierMainBody} 
  Let $(U, V)$ be sampled from the CSWig model or CSWish model with correlation $\rho$, model parameters $0 < \alpha, \beta < \tau^{-1/2}$ and planted spikes $a, b$. Then there exists a threshold,
    \begin{equation}\label{eq:CSSpike-threshold}
    \kappa(\alpha,\beta,\tau) := \left(\frac{(1-\tau\alpha^2)(1-\tau\beta^2)}{\tau^2\alpha^2\beta^2}\right)^{1/4},
    \end{equation}
such that:
\begin{itemize}
 \item If $\rho > \kappa$, we can achieve strong detection and weak recovery. Specifically, define
       \begin{equation}\label{eq:W-DefWigWish}
       W(\alpha, \beta) := \begin{pmatrix}
            \bar U & \kappa\bar V\\
            \kappa \bar U & \bar V
            \end{pmatrix}
            \end{equation}
 where depending on the model,
   \[\begin{cases}
                \bar U_{\text{wig}} &:= \alpha U - \alpha^2 I_n, \\
                \bar V_{\text{wig}} & := \beta V - \beta^2 I_n,
            \end{cases} \qquad \begin{cases}
                \bar U_{\text{wish}} &:= \frac{\alpha}{1+\alpha} U^\intercal U - \alpha\tau I_m, \\
                \bar V_{\text{wish}} & := \frac{\beta}{1+\beta} V^\intercal V - \beta\tau I_m.
            \end{cases}\]
   The top eigenvalue and eigenvectors of $W$ achieve the following guarantees:
      \begin{enumerate}[label=(\roman*)]
          \item The top eigenvalue of $W$ converges w.h.p. to an outlier eigenvalue $\lambda_\out$,
            \[\lambda_1(W)=\lambda_{\out}+o(1) \qquad\text{and}\qquad \lambda_2(W)\le 1+o(1),\]
          where $\lambda_{\out}>1$ is the unique solution of $\det \cS(\lambda_{\out})=0$ for some explicit deterministic  $\cS(z)$, defined in \eqref{eq:wigner-Sigma-rho} (respectively  \eqref{eq:wishart-Sigma-rho}). On the other hand, for $(U, V)$ sampled from the null model, w.h.p. $\lambda_1(W) \le 1+o(1)$, and thus $\lambda_1(W)$ achieves strong detection.
          \item Furthermore if $\hat{w}=(\hat a,\hat b)$ is a unit eigenvector corresponding to the largest eigenvalue of $W$, then w.h.p. the overlaps are bounded away from zero and satisfy,
          \begin{equation*}
              |\langle \hat a,a\rangle|^2 \geq \Omega(1) \quad \text{ and } \quad  |\langle \hat b,b\rangle|^2 \geq \Omega(1).
          \end{equation*}
           
         Thus, the top eigenvector of $W$ achieves high-probability weak recovery.
      \end{enumerate}
  \item If $\rho < \kappa$, there is no test that achieves strong detection, and there is no estimator that achieves positive probability weak recovery. 
\end{itemize}
\end{theorem}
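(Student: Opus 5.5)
The plan has three parts: the null spectrum (which also gives the bulk edge), the outlier analysis under the planted model, and the information-theoretic lower bound. The matrix $W(\alpha,\beta)$ is not symmetric, but it is similar to a symmetric one. Write
\[
W=\begin{pmatrix} I & 0\\ 0 & \kappa^{-1} I\end{pmatrix}^{-1}\!\begin{pmatrix}\bar U & \kappa^{1/2}\cdot\kappa^{1/2}\bar V\\ \kappa\bar U & \bar V\end{pmatrix}.
\]
Equivalently, $W=\begin{pmatrix} I & \kappa \\ \kappa & I\end{pmatrix}\begin{pmatrix}\bar U & 0\\ 0&\bar V\end{pmatrix}$ with the $2\times2$ blocks understood as tensored with $I_n$ (resp.\ $I_m$). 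Call the first factor $K$. For $\kappa<1$ it is positive definite, so $W$ is similar to the symmetric matrix $K^{1/2}\diag(\bar U,\bar V)K^{1/2}$. For $\kappa\ge 1$ one reduces to the case $\kappa<1$, or uses the indefinite-inner-product version of the same argument. The eigenvalues of $W$ are therefore real, and the planted model is a finite-rank perturbation of this structure.

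\textbf{Null spectrum.} Under $\QQ$ the blocks $\bar U,\bar V$ are independent, shifted and scaled GOE (resp.\ Wishart) matrices. Their free multiplicative/additive combination through $K$ has a deterministic limiting spectrum, which I would compute by a subordination/Stieltjes-transform argument or by the Gordon-type comparison developed in Section~\ref{sec:convergenceMain}. I would check that, for the specific value $\kappa(\alpha,\beta,\tau)$, the right edge of this support equals exactly $1$; the normalizations $\alpha U-\alpha^2 I$ and $\frac{\alpha}{1+\alpha}U^\top U-\alpha\tau I$ are presumably chosen so that this holds. Convergence of the extreme eigenvalue (no outliers under $\QQ$) then follows from standard norm/edge bounds for GOE and Wishart matrices, for instance via the largest-eigenvalue bounds appendix, applied to the resolvent of the symmetrized matrix. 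This gives $\lambda_1(W)\le 1+o(1)$ under $\QQ$.

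\textbf{Outlier and overlaps.} Under $\mathcal P_\rho$ we have $\diag(\bar U,\bar V)=\diag(\bar Z_a,\bar Z_b)+$ a perturbation of rank at most $2$ (rank $4$ in the Wishart case after expanding $U^\top U$), supported on $\mathrm{span}\{a\}\oplus\mathrm{span}\{b\}$ together with cross terms. The standard master-equation argument applies: $z>1$ is an eigenvalue iff $\det(I_2+\Theta\, P^\top (z-W_0)^{-1}K P)=0$, where $W_0$ is the noise part and $P$ collects $a,b$. Isotropic local laws for the resolvent of $W_0$ at distance $\Omega(1)$ from the spectrum, together with concentration of the quadratic forms $\langle a,(z-W_0)^{-1}a\rangle$ and the cross term $\langle a,\cdot\, b\rangle\approx\rho\times(\ldots)$, reduce this to $\det\cS(z)=0$ with the deterministic $2\times 2$ matrix $\cS$ of \eqref{eq:wigner-Sigma-rho}/\eqref{eq:wishart-Sigma-rho}. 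The key computation is to show that $\det\cS$ has a root $\lambda_\out>1$ iff $\rho>\kappa$. Evaluating at the edge $z\downarrow1$, the diagonal entries behave like $1-\tau\alpha^2$ and $1-\tau\beta^2$ (up to normalization), while the off-diagonal entries are proportional to $\rho^2\tau\alpha\beta$. The sign change of $\det\cS(1^+)$ then occurs at $\rho^4=(1-\tau\alpha^2)(1-\tau\beta^2)/(\tau^2\alpha^2\beta^2)$, and monotonicity in $z$ gives uniqueness. The eigenvector overlaps follow from the residue formula: the projection onto the outlier eigenvector equals $-x_*x_*^\top/(x_*^\top\partial_z\cS\,x_*)$ applied to $P$, whose both coordinates are nonzero when $\rho>0$. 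This yields the $\Omega(1)$ lower bounds, and $\lambda_2\le1+o(1)$ follows by eigenvalue interlacing for finite-rank perturbations.

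\textbf{Lower bound.} For $\rho<\kappa$ I would use the second-moment method on the likelihood ratio $d\PP_\rho/d\QQ$, possibly truncated. With Gaussian spikes, $\EE_\QQ[(d\PP_\rho/d\QQ)^2]$ becomes an expectation over two independent replicas $(a,b),(a',b')$ of $\exp\big(\tfrac n2(\alpha^2\langle a,a'\rangle^2+\beta^2\langle b,b'\rangle^2)\big)$-type terms, with an extra Wishart factor. This reduces to a Gaussian integral in the overlaps $(\sqrt n\langle a,a'\rangle,\sqrt n\langle b,b'\rangle)$, which stays bounded iff a $2\times2$ covariance matrix is positive definite; its determinant condition is exactly $\rho^4<\kappa^4$. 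Bounded second moment rules out strong detection, and contiguity plus a standard argument rules out positive-probability weak recovery.

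I expect the main obstacle to be the first step: identifying the null edge as exactly $1$ and establishing no-outlier bounds for the non-Hermitian block structure, together with the Wishart cross terms in the perturbation.
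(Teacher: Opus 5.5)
The step you flag as the main obstacle is where the proposal has a genuine gap. You invoke a Gordon-type comparison only to compute the limiting spectrum. For the absence of null outliers you rely on standard norm/edge bounds for the GOE/Wishart blocks, and that cannot work.

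Write $W=K\diag(\bar U,\bar V)$ and take CSWig with $\alpha=\beta$. Any bound that uses the spectra of $\bar U$ and $\bar V$ only separately cannot go below $(1+\kappa)(2\alpha-\alpha^2)$, because this value is attained when the two top eigenvectors are aligned. For $\alpha=\beta=0.8$ (so $\kappa=0.75$) this equals $1.68$. Knowing that the limiting spectral measure has right edge $1$ also excludes nothing about outliers.

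What is needed is a bound on $\lambda_{\max}$ that uses the independence of the two views. The paper obtains it from Gordon's inequality (Theorems~\ref{thm:WignerGordonEdge} and \ref{thm:WishartGordonEdge}):
\begin{itemize}
\item After symmetrizing, $\lambda_{\max}(W)$ is the maximum of $-\alpha^2\norm{x_1}^2-\beta^2\norm{x_2}^2+\alpha x_1^\intercal Ux_1+\beta x_2^\intercal Vx_2$ over the ellipsoid $\norm{x_1}^2+\norm{x_2}^2-2\kappa\ang{x_1,x_2}=1-\kappa^2$.
\item Linearizing the Gaussian quadratic forms and solving the auxiliary problem reduces everything to the deterministic inequality $\ang{f_1,x_1}^2+\ang{f_2,x_2}^2\le 1$ on that ellipsoid, for orthonormal $f_1,f_2$ (Lemma~\ref{lem:wigner-null-geometry}; for Wishart, Lemmas~\ref{lem:wishart-basic-scalar} and \ref{lem:wishart-Ainv-geometry}).
\item The resulting tail bound $e^{-\Omega(n\varepsilon^2)}$ is also what lets the paper continue $T(z)$ to $(1,\infty)$ and extend the resolvent convergence to real $z>1$. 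Your outlier step presupposes both.
\end{itemize}
Strong asymptotic freeness for polynomials in independent Gaussian matrices would be another way to exclude null outliers, but you would still have to locate the edge of the operator-valued free limit.

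The outlier step has three further problems.

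First, the sign change at $\rho=\kappa$ is asserted with the wrong structure. The off-diagonal entry of $\cS(1^+)$ is linear in $\rho$: it equals $\rho\alpha\beta\,p^\intercal T(1^+)q=-\rho\alpha\beta\kappa$ for CSWig, and $-\rho\alpha\beta\kappa/\sqrt{(1+\alpha)(1+\beta)}$ for CSWish. Hence $\det\cS(1^+)$ is a positive multiple of $\kappa^2-\rho^2$. The missing input is the edge identity $T(1^+)=-I_2$ (Corollaries~\ref{cor:wigner-critical-identities-outlier} and \ref{cor:wishart-critical-identities-outlier}), derived from the fixed-point system, $T(x)\prec 0$ for $x>1$, and the value of $\kappa$. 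Separately, your master equation should contain $(W_0-z)^{-1}$; with $(z-W_0)^{-1}$ the determinant stays positive above the bulk.

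Second, $\lambda_2(W)\le 1+o(1)$ does not follow from interlacing. A rank-two positive semidefinite perturbation only gives $\lambda_3(H)\le\lambda_1(H_0)$, and the Wishart perturbation has rank up to four and is indefinite. The paper counts roots instead. In the symmetric form $\cS_n(z)=I_2+\Theta^{1/2}B_n^\intercal(H_0-zI)^{-1}B_n\Theta^{1/2}$ the derivative in $z$ is positive semidefinite. Moreover $\cS(1+\delta)$ has positive diagonal and negative determinant, hence a single negative eigenvalue. So exactly one eigenvalue of $\cS_n$ crosses zero.

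Third, for CSWish the perturbation of $U^\intercal U$ contains $\sqrt{\alpha/m}\,Z_a^\intercal u\,a^\intercal$, which is correlated with the noise. Quadratic forms of the null resolvent in $a,b$ alone therefore do not give the limit. The paper puts $u,v$ in the lower block of the linearization $\calL(z)$, where they are independent of the noise, and takes a Schur complement of the resulting $4\times 4$ determinant. This is where the terms $-1/(\tau r(z))$ and $-1/(\tau s(z))$ in \eqref{eq:wishart-Sigma-rho} come from.

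Your lower-bound sketch matches the paper, including the determinant $(1-\tau\alpha^2)(1-\tau\beta^2)-\tau^2\alpha^2\beta^2\rho^4$ (with $\tau=1$ for CSWig), with two caveats. The truncation is necessary: the untruncated second moment is infinite, since e.g.\ $\norm{a}^2>\alpha^{-2}$ with positive probability. And contiguity alone does not exclude weak recovery. You need that the second moment restricted to replica pairs with overlap at least $\delta$ is $e^{-\Omega(n)}$, fed into the posterior-overlap argument of Propositions~\ref{prop:wigner-posterior-overlap} and \ref{prop:wishart-posterior-overlap}.
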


The thresholds \eqref{eq:CSSpike-threshold} for the two models
appearing in Theorem~\ref{thm:Spiked-outlierMainBody}
match the known results in \cite{Zhangsong25,YLS25}. Items (i) and (ii) in the above theorem show that in the regime where single-view is uninformative, there exists an explicit spectral algorithm, that achieves strong detection and weak recovery down to the information-theoretic threshold.

The following theorem shows there is a spectral algorithm which achieves both strong detection and weak recovery without prior knowledge of $\alpha$, $\beta$, or $\rho$, by searching over an appropriate set of candidate values $(\tilde{\alpha}, \tilde{\beta})$ and applying the estimator defined above. 

\begin{theorem}[Parameter-free detection and recovery]\label{thm:Spiked-parameter-free}
Let $(U,V)$ be sampled from the CSWig model or CSWish model with correlation $\rho$, model parameters $0<\alpha,\beta\le \tau^{-1/2}$ and planted spikes $a,b$. 
Fix $\varepsilon\in(0,1/4)$, let us define the admissible grid
\[
    \cZ_{\varepsilon}
    := \Bigl\{ (\tilde\alpha,\tilde\beta)\ :\ \tilde{\alpha}, \tilde{\beta}\in \varepsilon^9\mathbb Z \cap (0,\tau^{-1/2}),\ \kappa(\tilde\alpha,\tilde\beta, \tau)<1
    \Bigr\}.
\]
We define the test statistic, $ \Lambda_\varepsilon = \max_{\cZ_\varepsilon} \lambda_1(W(\tilde \alpha, \tilde \beta))$.
\begin{enumerate}[label=(\roman*)]
   \item Under the null model, $ \Lambda_\varepsilon\le 1+o(1)$ with high probability.
    \item Under the planted model, if $\alpha,\beta \in \tau^{-1/2}(2\varepsilon, 1]$ and $\rho > \kappa + \varepsilon$, then with high probability $\Lambda_\varepsilon > 1+\Omega(1)$ and the corresponding eigenvector achieves weak recovery:
    \[|\langle \hat a,a\rangle|^2\ge \Omega(1),
        \qquad
        |\langle \hat b,b\rangle|^2\ge \Omega(1).\]
\end{enumerate}
\end{theorem}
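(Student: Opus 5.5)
We reduce Theorem~\ref{thm:Spiked-parameter-free} to Theorem~\ref{thm:Spiked-outlierMainBody} applied at finitely many candidate parameters, plus a robustness statement for the top eigenvalue under a parameter mismatch. The key point is that $\cZ_\varepsilon$ is a finite set whose cardinality depends only on $\varepsilon$ and $\tau$, not on $n$. Hence any statement that holds w.h.p. for each fixed $(\tilde\alpha,\tilde\beta)\in\cZ_\varepsilon$ holds simultaneously for all of them, by a union bound over $O_\varepsilon(1)$ events.

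\textbf{Part (i).} Under $\QQ$ the law of $(U,V)$ does not depend on $(\alpha,\beta)$. For each fixed $(\tilde\alpha,\tilde\beta)\in\cZ_\varepsilon$, the null part of Theorem~\ref{thm:Spiked-outlierMainBody}(i), applied with model parameters $(\tilde\alpha,\tilde\beta)$, gives $\lambda_1(W(\tilde\alpha,\tilde\beta))\le 1+o(1)$ w.h.p. This is valid because $\tilde\alpha,\tilde\beta<\tau^{-1/2}$ and $\kappa<1$, so the threshold statement is nonvacuous. Taking the maximum over the finite grid gives $\Lambda_\varepsilon\le 1+o(1)$.

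\textbf{Part (ii).} We need one grid point $(\tilde\alpha,\tilde\beta)$ such that $\lambda_1(W(\tilde\alpha,\tilde\beta))\ge 1+c(\varepsilon)$ w.h.p. under $\PP_\rho$, with top eigenvector correlated with $(a,b)$. The natural choice is the grid point nearest to $(\alpha,\beta)$, rounded to stay inside $(0,\tau^{-1/2})$. Since $\alpha,\beta>2\varepsilon\tau^{-1/2}$ and the mesh is $\varepsilon^9$, we get $|\tilde\alpha-\alpha|,|\tilde\beta-\beta|\le \varepsilon^9$.

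\begin{itemize}
\item \textbf{Threshold control.} Check $\kappa(\tilde\alpha,\tilde\beta,\tau)\le\kappa(\alpha,\beta,\tau)+\varepsilon/2$, so that $\rho>\kappa(\tilde\alpha,\tilde\beta,\tau)+\varepsilon/2$ and also $\kappa(\tilde\alpha,\tilde\beta,\tau)<1$ (using $\rho\le1$). Since $\kappa$ is decreasing in each argument, it suffices to bound $\partial\kappa$ on the region where $\alpha,\beta\ge 2\varepsilon\tau^{-1/2}$. There the derivative is at most polynomial in $1/\varepsilon$. The only delicate factor is $(1-\tau\alpha^2)^{1/4}$ near $\alpha=\tau^{-1/2}$, which is Hölder-$1/4$ there. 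We handle it with $|x^{1/4}-y^{1/4}|\le|x-y|^{1/4}$, giving a bound of order $\varepsilon^{9/4}\cdot\mathrm{poly}(1/\varepsilon)$. The exponent $9$ is presumably chosen to beat these polynomial losses.
\item \textbf{Mismatched outlier analysis.} This is the main obstacle. The matrix $W(\tilde\alpha,\tilde\beta)$ is built with the wrong parameters relative to the data-generating $(\alpha,\beta)$, so Theorem~\ref{thm:Spiked-outlierMainBody} does not apply verbatim.
  \begin{itemize}
  \item First attempt: a perturbation argument. For CSWig, $W(\tilde\alpha,\tilde\beta)-W(\alpha,\beta)$ is a block combination of $(\tilde\alpha-\alpha)U-(\tilde\alpha^2-\alpha^2)I$ and the analogous term in $V$. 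Its operator norm is $O(\varepsilon^9)$ w.h.p. since $\|U\|,\|V\|=O(1)$; the same holds for CSWish via $\|U^\intercal U\|=O(1)$. The matrix $W$ also carries $\kappa$, which shifts by at most $\varepsilon/2$ times a bounded norm.
  \item The perturbation is therefore only $O(\varepsilon)$-small. By Weyl's inequality, $\lambda_1$ moves by $O(\varepsilon)$, so we need $\lambda_\out(\alpha,\beta,\rho)-1\gg\varepsilon$.
  \item Quantitatively, this requires a lower bound $\lambda_\out-1\ge c(\rho-\kappa)^{p}$ that is uniform over compact parameter sets, obtained from the explicit $\cS(z)$. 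It is not clear this beats $\varepsilon$ when $\rho-\kappa\approx\varepsilon$.
  \item Cleaner route: rerun the outlier analysis (the Schur complement / $\det\cS(z)=0$ characterization behind Theorem~\ref{thm:Spiked-outlierMainBody}) for the matrix $W(\tilde\alpha,\tilde\beta)$ with spikes of strength $(\alpha,\beta)$. The bulk of $W(\tilde\alpha,\tilde\beta)$ has edge $\le1+o(1)$ by part (i) and a rank-$O(1)$ perturbation argument. The outlier equation becomes $\det\cS_{\tilde\alpha,\tilde\beta;\alpha,\beta,\rho}(z)=0$, and we show it has a root $z\ge1+c(\varepsilon)$ whenever $\rho>\kappa(\tilde\alpha,\tilde\beta)+\varepsilon/2$ and the mismatch is $\le\varepsilon^9$. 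This follows by continuity of $\cS$ in the parameters, uniform on compacts away from the bulk edge, which yields $\Lambda_\varepsilon\ge1+\Omega(1)$.
  \end{itemize}
\item \textbf{Eigenvector and weak recovery.} Let $(\tilde\alpha',\tilde\beta')$ be the maximizing grid point, which may differ from the nearest one. Its eigenvalue exceeds $1+\Omega(1)$, while for every grid point the non-outlier spectrum sits below $1+o(1)$. So its top eigenvalue is an outlier, and the outlier eigenvector formula (residue of the resolvent, as in the overlap expression of Theorem~\ref{thm:CCA-outlierMainBody}) gives overlaps bounded below by a constant depending on the gap. If needed, we argue that an outlier exists only when the signal contributes, i.e. the null part of $\cS$ alone has no root above $1$, so the eigenvector must correlate with both $a$ and $b$.
\end{itemize}
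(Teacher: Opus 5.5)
Part (i) is fine. Your ``cleaner route'' for (ii), redoing the rank-two Schur-complement analysis for $W(\tilde\alpha,\tilde\beta)$ instead of a Weyl perturbation of $W(\alpha,\beta)$, is also the right architecture. The gap is the step meant to produce a good grid point.

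In the Wigner normalization, the symmetrized $W(\tilde\alpha,\tilde\beta)$ is the null matrix built from $(\tilde\alpha,\tilde\beta)$ plus a rank-two term with strengths $\diag(\alpha\tilde\alpha,\beta\tilde\beta)$. Its limiting Schur complement $\tilde\cS(z)$ is increasing in $z$, positive definite for large $z$, and has positive diagonal entries $1-\alpha\tilde\alpha$, $1-\beta\tilde\beta$ at $z=1$. Hence an outlier above $1$ exists \emph{if and only if} $\det\tilde\cS(1)<0$, and whether a grid point works is decided by this explicit edge quantity, which you never compute. Using $\tilde T(1)=-I_2$ one gets $\det\tilde\cS(1)=(1-\alpha\tilde\alpha)(1-\beta\tilde\beta)-\rho^2\tilde\kappa^2\alpha\beta\tilde\alpha\tilde\beta$. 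So the condition is $\rho>\rho_{\out}(\tilde\alpha,\tilde\beta;\alpha,\beta)$, where
\[
\rho_{\out}^4=\Bigl(\frac{1-\alpha^2}{\alpha^2}+\frac{(\alpha-\tilde\alpha)^2}{\alpha^2(1-\tilde\alpha^2)}\Bigr)\Bigl(\frac{1-\beta^2}{\beta^2}+\frac{(\beta-\tilde\beta)^2}{\beta^2(1-\tilde\beta^2)}\Bigr)\ \ge\ \kappa(\alpha,\beta)^4 .
\]
For CSWish, replace $\alpha,\beta,\tilde\alpha,\tilde\beta$ by their multiples by $\tau^{1/2}$. This threshold is not $\kappa(\tilde\alpha,\tilde\beta)$, so your threshold-control bullet bounds the wrong quantity.

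Your claim that a root exists whenever $\rho>\kappa(\tilde\alpha,\tilde\beta)+\varepsilon/2$ and the mismatch is at most $\varepsilon^9$ is false. Take $\tilde\beta=\beta=1/2$, $\tilde\alpha=1-\delta$, $\alpha=\tilde\alpha-\varepsilon^9/2$ with $0<\delta\ll\varepsilon^{18}$, and $\rho=1/2$. Then $\kappa(\alpha,\beta)=O(\varepsilon^{9/4})$ and $\kappa(\tilde\alpha,\tilde\beta)=O(\delta^{1/4})$, so all your hypotheses hold. Yet the correction term is about $\varepsilon^{18}/(8\delta)$, which gives $\rho_{\out}>1$.

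Continuity in the parameters cannot rescue this. The mismatch $\varepsilon^9$ is fixed, while the matched margin $\det\cS(1)=\alpha^2\beta^2\kappa^2(\kappa^2-\rho^2)$ degenerates as $\alpha\to\tau^{-1/2}$. At $\alpha=\tau^{-1/2}$, which the hypothesis allows, the matched $W$ is block diagonal and there is no outlier to perturb. For the same reason your nearest-grid-point rule fails when the top grid point lies much closer than $\varepsilon^{18}$ to $\tau^{-1/2}$ and $\alpha$ sits just below it.

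What works is the explicit bound with a chosen rounding direction. Take $\tilde\alpha\in[\alpha-2\varepsilon^9,\alpha-\varepsilon^9]$, and likewise for $\tilde\beta$. This forces $1-\tilde\alpha^2\ge\varepsilon^9$, so each correction is at most $\varepsilon^7$. Using $\alpha,\beta>2\varepsilon$ this gives $\rho_{\out}^2\le\kappa^2+\varepsilon^{5/2}+\varepsilon^7<(\kappa+\varepsilon)^2$, plus a routine check that $\tilde\kappa<1$. This downward rounding near the boundary, not a H\"older bound on $\kappa$, is what the mesh $\varepsilon^9$ is for.

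Your eigenvector step also claims more than it shows. ``An outlier exists only when the signal contributes'' gives a nontrivial projection of the top eigenvector onto the span of the two planted directions. It does not give lower bounds on $|\langle\hat a,a\rangle|$ and $|\langle\hat b,b\rangle|$ separately.

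The statement you need holds at every admissible grid point, and a finite union bound then covers the random maximizer. Write $\tilde B_n=[\tilde p\otimes a\ \ \tilde q\otimes b]$ and $\tilde\Theta=\diag(\alpha\tilde\alpha,\beta\tilde\beta)$. Suppose $\lambda=\lambda_1(W(\tilde\alpha,\tilde\beta))\ge 1+\varepsilon'$ and $h$ is the corresponding unit eigenvector of the symmetrized matrix. Then $\tilde x_n:=\tilde\Theta^{1/2}\tilde B_n^\intercal h$ satisfies $\tilde\cS_n(\lambda)\tilde x_n=0$ and $\tilde x_n^\intercal\partial_z\tilde\cS_n(\lambda)\tilde x_n=\|h\|^2=1$.

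A compactness argument over $z\in[1+\varepsilon',R]$ then keeps both coordinates of $\tilde x_n$ bounded away from zero. The reason is that a kernel vector of $\tilde\cS(z)$ with $z>1$ cannot have a vanishing coordinate: its diagonal entries are positive at $z=1$ and increase in $z$. Finally, $(\langle\hat a,a\rangle,\langle\hat b,b\rangle)$ equals $\tilde\Theta^{-1/2}\tilde x_n$ divided by a factor at most $(1+\tilde\kappa)^{1/2}$. This yields both individual overlaps.
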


Theorem~\ref{thm:Spiked-parameter-free} suggests a practical algorithm for detection and recovery in the CSWig and CSWish models that does not require knowledge of $\alpha$ and $\beta$. Focusing on the CSWig model, for example, given matrices $U$ and $V$, one may first test whether $\max\{\alpha,\beta\}>1$ by examining the top eigenvalues of $U$ and $V$. In this regime, a single view suffices and classical algorithms apply. Otherwise, one can apply Theorem~\ref{thm:Spiked-parameter-free}: for a suitable $\varepsilon>0$, perform a search over $(\tilde{\alpha},\tilde{\beta})\in \mathcal Z_{\varepsilon}$\footnote{Indeed, if one further assumes that $\alpha,\beta\in (2\varepsilon,1-2\varepsilon)$, then it suffices to search over an $\epsilon^2$-grid, and similarly for the CSWish model. See Appendix~\ref{subsec:parameterFreeDetection} for more discussions on the choice of grid mesh size.} and compute $\Lambda_\varepsilon=\max_{\mathcal Z_{\varepsilon}}\lambda_1(\tilde{W}(\tilde{\alpha},\tilde{\beta}))$. If the correlation intensity is $\varepsilon$-above the threshold, $\Lambda_\varepsilon$ achieves strong detection\footnote{From a practical perspective, one may reject the null hypothesis if $\Lambda_\varepsilon\ge 1+\omega(n^{-2/3})$ from the Tracy-Widom heuristic, or more safely, reject if $\Lambda_\varepsilon\ge 1+\omega(n^{-1/2})$ from the bound implied by Gordon's inequality.}, while the associated eigenvector yields weak recovery. A similar procedure applies to the CSWish model. 

We make yet another interesting observation: for the edge case that $\max\{\alpha,\beta\}$ equals to $1$ (CSWig) or $\tau^{-1/2}$ (CSWish) and $\kappa=0$, while the matrix $W(\alpha,\beta)$ from Theorem~\ref{thm:Spiked-outlierMainBody} fails to achieve strong detection or weak recovery as $W(\alpha,\beta)$ becomes diagonal and does not capture correlations between the two views, the above algorithm still applies.

\begin{figure}[ht]
    \centering
    \includegraphics[width=0.48\linewidth]{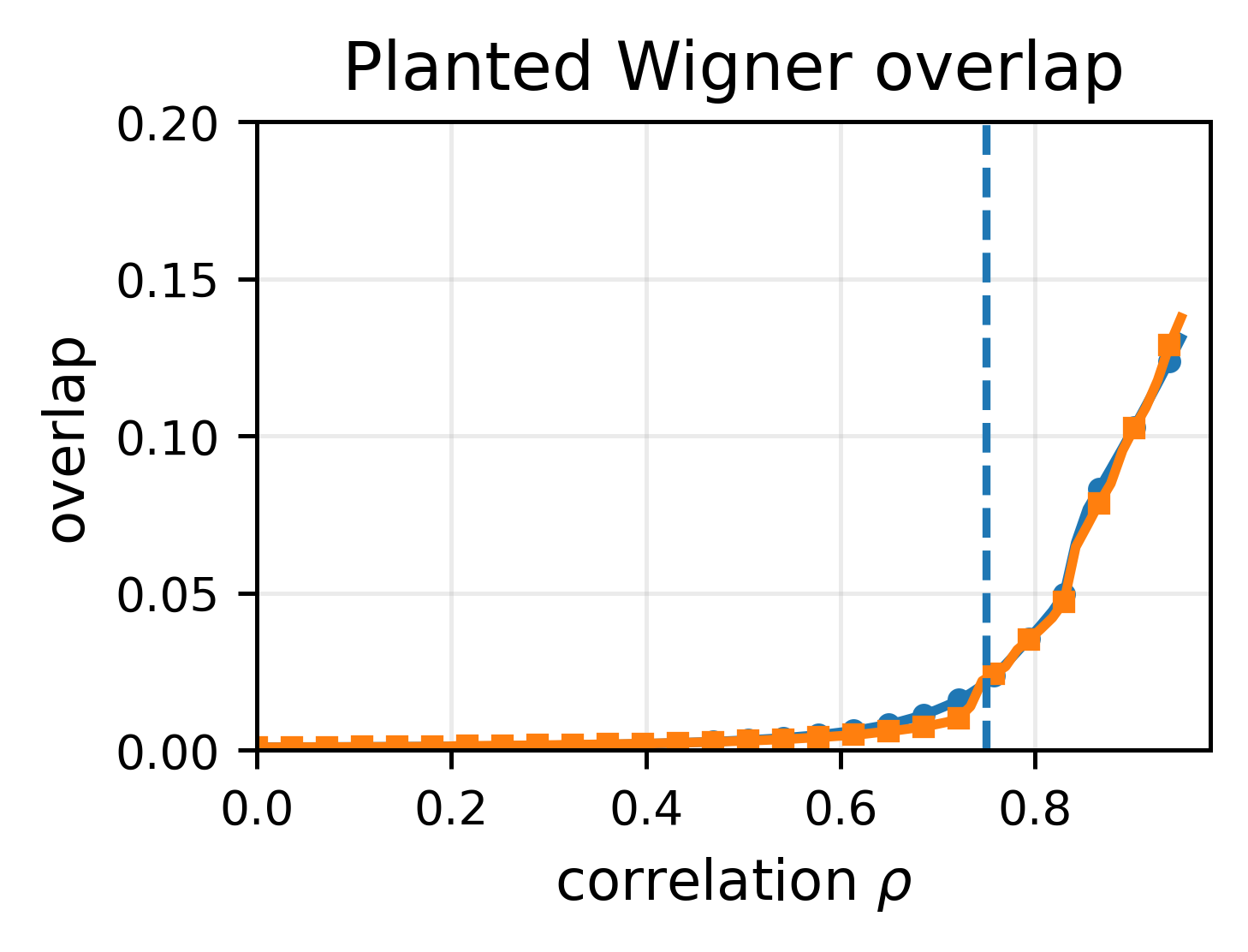}
    \hfill
    \includegraphics[width=0.48\linewidth]{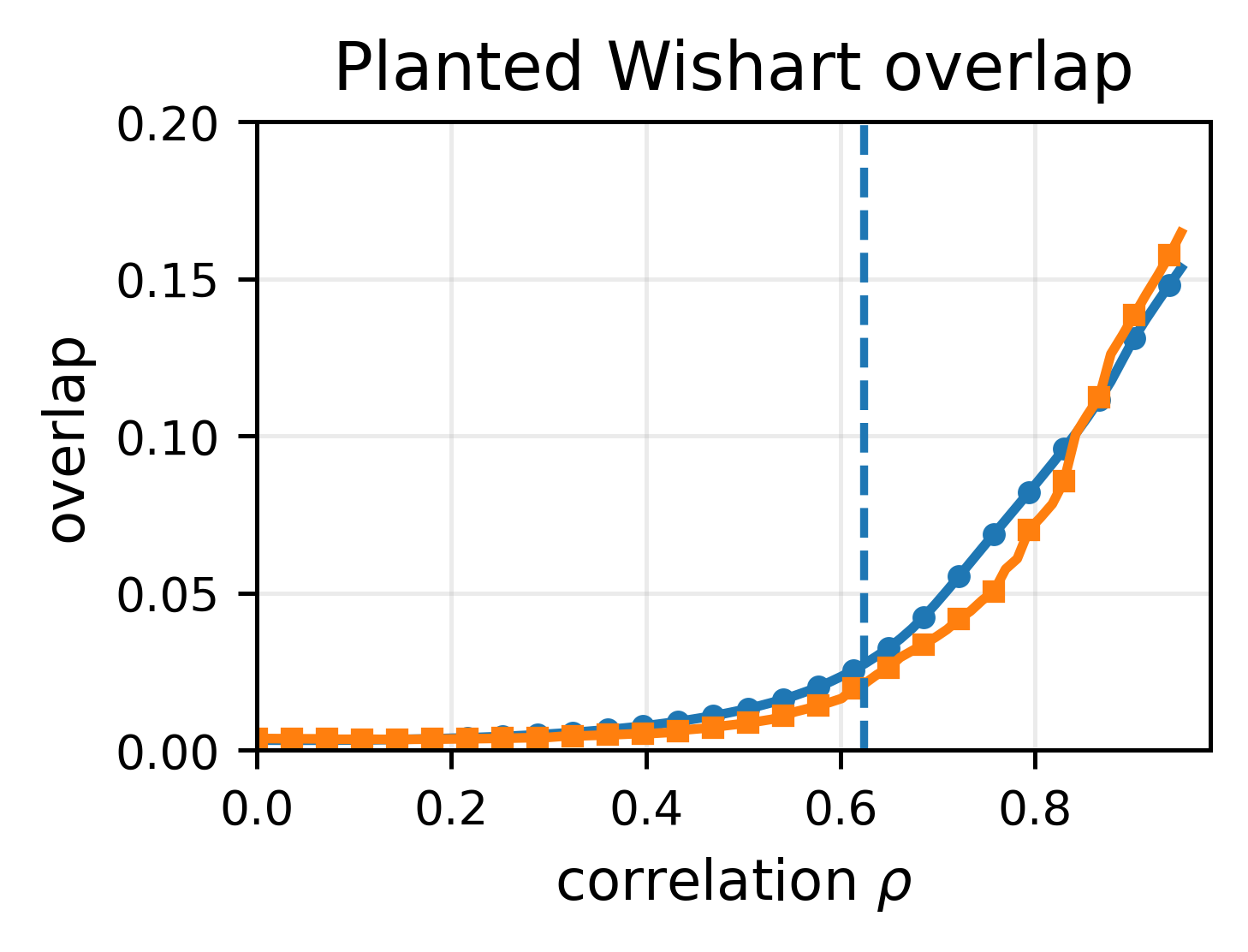}
    \caption{The two panels plot numerical simulations of the overlap, $|\langle \hat w, (a,b)\rangle|^2$, as a function of $\rho$. The left panel shows simulations for CSWig with parameters $n=5000$, signal strengths $\alpha=\beta=0.8$, and threshold $\kappa = 0.75$, averaged over 12 trials.  The right panel shows simulations for CSWish with parameters $n=10000$, $m=5000$, signal strengths $\alpha=\beta=0.6$, and threshold $\kappa = 0.624$, averaged over 24 trials.}
    \label{fig:wigner-wishart}
\end{figure}

In Figure~\ref{fig:wigner-wishart}, we present numerical simulations of our spectral algorithms. The blue dots show the overlap between the top eigenvector of $W=W(\alpha,\beta)$ and the hidden signals at various correlation levels $\rho$ for the CSWig and CSWish models. While the threshold effect is less pronounced than in the CCA model, we observe the same qualitative behavior: at the threshold $\kappa$ (dashed blue line), the overlap increases sharply as $\rho$ grows. We also implement the data-driven algorithm described above, performing a search over a $10\times 10$ grid of candidate values $(\tilde{\alpha}, \tilde{\beta})$. The resulting overlaps (orange squares) closely match those obtained from the top eigenvector of $W$.

Finally, taking advantage of Theorem~\ref{thm:Spiked-parameter-free}, we also prove that one can estimate the signal strengths from the data. 

\begin{theorem}\label{thm:recoverySignalStrengths-MainBody}
 For the CSWig and CSWish models, let $(U,V)$ be sampled from $\PP_\rho$ and $\kappa$ be the corresponding threshold. Assume for some $\varepsilon > 0$ it holds that $\rho > \kappa + \varepsilon$ and $\alpha,\beta \in \tau^{-1/2}(\varepsilon, 1]$. Then there exist estimators $\hat{\alpha},\hat{\beta}$ depending only on $U,V$ and $\varepsilon$ \footnote{The exact construction can be found in Theorem~\ref{thm:wigner-strength-estimation} (CSWig) and Theorem~\ref{thm:wishart-strength-estimation} (CSWish).} such that with high probability
   \[\hat{\alpha} = \alpha + o(1), \quad \hat{\beta} = \beta + o(1).\]
\end{theorem}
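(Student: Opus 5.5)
The plan is to read off $(\alpha,\beta)$ from the behaviour of $\tilde W := W(\tilde\alpha,\tilde\beta)$ as $(\tilde\alpha,\tilde\beta)$ ranges over the grid $\cZ_\varepsilon$ of Theorem~\ref{thm:Spiked-parameter-free}. For each fixed candidate, the outlier analysis behind Theorem~\ref{thm:Spiked-outlierMainBody} gives a deterministic limit. This analysis is the master-equation / determinant computation: $\tilde W$ is a finite-rank perturbation of a null matrix whose bulk edge we control through Gordon's inequality and the convergence results. So w.h.p. $\lambda_1(\tilde W)\to \Lambda(\tilde\alpha,\tilde\beta;\alpha,\beta,\rho,\tau)$, where $\Lambda$ is either the largest root above $1$ of $\det\cS_{\tilde\alpha,\tilde\beta}(z)=0$ or, if no such root exists, the value $1$. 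Here $\cS_{\tilde\alpha,\tilde\beta}$ is the analogue of \eqref{eq:wigner-Sigma-rho} (respectively \eqref{eq:wishart-Sigma-rho}) built with mismatched weights. The grid is finite of size $O(\varepsilon^{-18})$, and the convergence holds at each point. A union bound therefore gives that w.h.p., simultaneously for all grid points,
\[
\lambda_1(W(\tilde\alpha,\tilde\beta)) = \Lambda(\tilde\alpha,\tilde\beta)+o(1).
\]
The same statement holds for the top eigenvector overlaps and for auxiliary quadratic forms such as $\hat a^\top U \hat a$.

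The estimator itself is defined by maximizing the first eigenvalue, with the overlaps as a backup. One natural choice is $(\hat\alpha,\hat\beta)\in\arg\max_{\cZ_\varepsilon}\lambda_1(W(\tilde\alpha,\tilde\beta))$. A more robust choice uses the recovered eigenvector $\hat w=(\hat a,\hat b)$ from Theorem~\ref{thm:Spiked-parameter-free}. In the Wigner case one has $\hat a^\top U\hat a \approx \alpha\langle \hat a,a\rangle^2+\hat a^\top Z_a\hat a$. The noise term and the overlap are both deterministic functionals obtained from the resolvent computation. Inverting the resulting explicit relations, with $\tau$ known from the data, expresses $\alpha$ as a continuous function of observable limits. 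The Wishart case is analogous, using $\hat a^\top U^\top U\hat a$.

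For the argmax route, the key deterministic claim is identifiability: $\Lambda(\cdot,\cdot;\alpha,\beta,\rho)$ is continuous on the admissible region, and any point within $\Omega(1)$ of the maximum value is within $o_\varepsilon$-small distance of $(\alpha,\beta)$. Given this, the grid mesh $\varepsilon^9$ together with the $o(1)$ uniform approximation yields $|\hat\alpha-\alpha|\le \eta(\varepsilon)+o(1)$. To get a genuine $o(1)$ error rather than one depending on $\varepsilon$, I would refine the grid: take a mesh $\delta_n\to0$ slowly, and use a diagonal argument, since for each fixed $\delta$ the union bound holds. Estimators built this way still depend only on $U,V,\varepsilon$.

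The main obstacle is the identifiability claim, namely showing that $(\alpha,\beta)$ is the unique maximizer of $\Lambda$ (or that the inversion map in the quadratic-form approach is well defined and injective). This requires an explicit understanding of $\det\cS_{\tilde\alpha,\tilde\beta}(z)$ under mismatch. I would first try to compute $\Lambda$ in closed form from the $2\times2$ master equation, then verify strict monotonicity in the mismatch direction. If that is too messy, I would fall back on the quadratic-form inversion, which only needs the single matched point and monotonicity of a scalar map in $\alpha$.
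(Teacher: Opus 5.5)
There is a genuine gap. Both of your routes rest on an identifiability statement you do not prove, and in the argmax form it is false under the theorem's own hypotheses.

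\textbf{The argmax route.} Nothing in the paper says that the limit $\Lambda(\tilde\alpha,\tilde\beta)$ of $\lambda_1(W(\tilde\alpha,\tilde\beta))$ is maximized at $(\alpha,\beta)$. The paper only controls the mismatched \emph{threshold} $\rho_{\out}(\tilde\alpha,\tilde\beta;\alpha,\beta)$, which is a different functional from the outlier location. Concretely, the hypothesis $\beta\in\tau^{-1/2}(\varepsilon,1]$ allows $\beta=1$ in CSWig. Then $\kappa(\alpha,1)=0$, and $W(\alpha,1)$ is block diagonal with no outlier, as the paper itself remarks.

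Near this point the maximizer is pushed away from the truth. As $\tilde\beta\uparrow1$ one has $\tilde\kappa\to0$, so the symmetrized matrix $\tilde H$ lies within $O(\tilde\kappa)$ in operator norm of $\mathrm{diag}(\tilde\alpha U-\tilde\alpha^2 I_n,\tilde\beta V-\tilde\beta^2 I_n)$. That matrix has top eigenvalue tending to $\max\{2\tilde\alpha-\tilde\alpha^2,2\tilde\beta-\tilde\beta^2\}\le 1$. Hence $\Lambda\le 1+O(\tilde\kappa)\to1$ as $\tilde\beta\uparrow1$. Meanwhile, Theorem~\ref{thm:Spiked-parameter-free} guarantees a fixed grid point with $\tilde\beta\le 1-\varepsilon^9$ and $\Lambda>1$. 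So the maximizer sits at some $\tilde\beta^*$ bounded away from $1$, and your $\hat\beta$ is inconsistent.

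\textbf{The fallback route.} This does not close the gap either. Since $\hat a$ is built from $Z_a$, the term $\hat a^\top Z_a\hat a$ is an order-one random-matrix quantity. Its limit depends on the unknown $(\alpha,\beta,\rho)$ and on the data-selected mismatched grid point. At such points the paper proves only lower bounds on overlaps (Theorem~\ref{thm:mismatched-Wigner-outlier}). Inverting these limits would again need injectivity of a map you have not analyzed. Appealing to ``the single matched point'' is circular, because forming $W(\alpha,\beta)$ requires the parameters you are estimating.

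\textbf{The missing idea.} The paper uses a noise-splitting step that avoids identifiability altogether. Take $\eta=n^{-1/5}$ and an independent $G_U\sim\mathrm{GOE}(1/n)$, and set $U^{(1)}=(U+\sqrt\eta\,G_U)/\sqrt{1+\eta}$ and $U^{(2)}=(U-\eta^{-1/2}G_U)/\sqrt{1+\eta^{-1}}$; do the same for $V$. The two noise parts are independent GOEs. Thus $U^{(1)}$ is a spiked instance of strength $\alpha/\sqrt{1+\eta}\to\alpha$, and $U^{(2)}=\gamma_n aa^\top+Z^{(2)}$ with $\gamma_n=\alpha/\sqrt{1+\eta^{-1}}\asymp n^{-1/10}$.

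Weak recovery on $(U^{(1)},V^{(1)})$ gives a unit vector $x$ with $\langle x,a\rangle^2\ge c$, and $x$ is independent of $Z^{(2)}$. Consequently $x^\top Z^{(2)}x$, $a^\top Z^{(2)}x$ and $\|Z^{(2)}x\|^2-1$ are all $O(n^{-1/2})=o(\gamma_n^2)$. The unknown overlap then cancels in the ratio $(\|U^{(2)}x\|^2-1)/(x^\top U^{(2)}x)=\gamma_n(1+o(1))$, and $\hat\alpha=\sqrt{1+\eta^{-1}}$ times this ratio is consistent. CSWish is analogous, with $(U^{(2)})^\top U^{(2)}-\tau_n I_m$ playing the role of $U^{(2)}$ in a corresponding ratio.

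Without this decoupling you would need two things your proposal does not provide: a full deterministic-equivalent analysis of quadratic forms in the outlier eigenvector at mismatched parameters, and a proof that the resulting map is injective.
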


\subsection{Related work and connections to our approach}\label{subsec:prior_work}

High-dimensional statistical inference problems in single view settings have been extensively studied over the past two decades. A central class of examples is given by spiked random matrix models (a.k.a. low-rank estimation problems), including the spiked covariance and spiked Wigner models, in which a low-rank signal is planted in high-dimensional noise. These models are by now well understood: for a broad class of priors, such as Gaussian priors, simple and natural spectral methods are known to achieve the information-theoretic thresholds for detection and recovery \cite{Baik2005,BenaychGeorgesNadakuditi2011,FeralPeche2007,Johnstone2001} (these results are commonly known as BBP-type phase transitions). For more structured priors, such as sparse signals, statistical–computational gaps may arise, and this phenomenon has been precisely characterized in a variety of settings \cite{SchrammWein2022,SohnWein2025}. Generalizations of these models to higher-order tensors have also been extensively studied; see, e.g., \cite{LesieurEtAl2017,Li2025TensorPCA,RichardMontanari2014,WeinElAlaouiMoore2019} and the references therein.

In contrast, correlated two-view models remain far less understood, even for simple priors such as Gaussian or uniform distributions. A key difficulty in moving from single view to multi-view settings is the lack of an obvious spectral method: unlike in classical spiked models, there is a priori no canonical spectral statistic that directly captures the underlying signal. As a result, both the information-theoretic limits and the presence of statistical–computational gaps remain challenging to characterize in these models. While recent works have advanced our understanding \cite{BykhovskayaGorin2023, KeupZdeborova2025, Zhangsong25,  mergny2025spectral, SwainRidoutNemenman2025, TabanelliMergnyZdeborovaKrzakala2025, YLS25}, a systematic picture of their information-theoretic limits and optimal algorithms remains incomplete.

Specifically, for correlated spiked Wigner model, \cite{YLS25} establishes information-theoretic thresholds for both strong detection and weak recovery under a class of priors including Gaussian. On the algorithmic side, \cite{Zhangsong25} proposes a sophisticated cycle-counting algorithm, based on low-degree polynomial techniques, that achieves the limit (similar results are obtained for the correlated spiked Wishart model). These results indicate that no statistical–computational gap arises for detection and recovery in these models with Gaussian priors. However, the algorithms in \cite{Zhangsong25}, while taking polynomial-time, involve intricate polynomial computations and require precise knowledge of the model parameters $\alpha$ and $\beta$. \cite{mergny2025spectral} analyzes a simple parameter-free spectral algorithm arising from the partial least square (PLS) method. Despite its simplicity and broad applicability, the PLS method performs sub-optimally. Therefore, it remains unclear that whether spectral methods can achieve these tasks down to the threshold without requiring parameter knowledge.

For high-dimensional canonical correlation analysis, \cite{BykhovskayaGorin2023} analyzes a specific spectral algorithm, which can be interpreted as finding the most correlated directions in the column spaces of the two datasets. While this is arguably the most natural spectral approach for this model, it only succeeds when the correlation intensity is well above the threshold\footnote{Following notations in Section~\ref{subsec:CCA}, the algorithm in \cite{BykhovskayaGorin2023} achieves strong detection or weak recovery only when $\rho > ((\tau_m - 1)(\tau_k - 1))^{-1/4}$, whereas we show that the optimal threshold is $(\tau_m \tau_k)^{-1/4}$. Nevertheless, their method applies to a more general setting where the Gaussian covariance structures are arbitrary and unknown.}. Moreover, the information-theoretic limits for strong detection and weak recovery in this model are not yet fully characterized, and it remains open whether efficient algorithms can achieve these limits.

While natural spectral methods fail to achieve the desired limits, more sophisticated approaches based on insights from statistical physics suggest candidate statistics that may be optimal \cite{LesieurKrzakalaZdeborova2017, ZdeborovaKrzakala2016}. We now briefly review this perspective.

The starting point of the TAP approach is a heuristic connection between the feasibility of detection or recovery and the structure of the posterior distribution. In many high-dimensional inference problems, the thresholds for strong detection and weak recovery are conjectured to coincide with a structural phase transition of the posterior measure. In particular, under the so-called replica symmetric regime, the posterior mass is expected to concentrate around its barycenter, namely the posterior mean. Whether this barycenter significantly deviates from the origin then reflects the possibility or impossibility of weak recovery.

Motivated by this connection, one is led to study the posterior mean. In the models considered here, the posterior distribution can be expressed as a mean-field spin glass model. Insights from spin glass theory then suggest that the posterior mean approximately satisfies a system of TAP equations \cite{ThoulessAndersonPalmer1977}. These equations give rise to a scalar consistency equation for the relevant order parameters, whose solutions characterize the qualitative behavior of the model. In particular, the emergence of a nontrivial solution marks the onset of the phase transition and determines the information-theoretic threshold. 

While the TAP equations are typically nonlinear and difficult to analyze directly, they can be suitably linearized. This linearization yields a natural matrix, whose leading eigenvalue is expected to detect the onset of the phase transition, while the associated top eigenvector is expected to achieve nontrivial recovery of the underlying signal down to the threshold. We refer to \cite{MontanariSen2024} for a detailed discussion of this approach.

The TAP-based perspective has been successfully applied to a range of high-dimensional inference problems. In particular, a line of work \cite{LesieurKrzakalaZdeborova2015MMSE,LesieurKrzakalaZdeborova2015SparsePCA,LesieurKrzakalaZdeborova2017} combines TAP heuristics with approximate message passing algorithms to characterize phase transitions and optimal performance in low-rank estimation problems. More recently, related ideas have been extended to multi-view models, including a heuristic study of the correlated spiked Wishart model \cite{KeupZdeborova2025}. 
Additionally, near the completion of this manuscript, we became aware of concurrent work \cite{LuSenYangInPrep} that applies related techniques to correlated spiked Wigner models with multiple (general $L\ge 2$) views. Compared to these works, our approach introduces a conceptually simple but crucial modification: while TAP-based constructions typically depend on the correlation parameter $\rho$ (see \cite{KeupZdeborova2025,LuSenYangInPrep}), we replace $\rho$ by its critical value $\kappa$ in the spectral procedure. This removes the need for prior knowledge of $\rho$ and leads to significant technical simplifications.

More concretely, a key step in establishing the BBP-type phase transitions in Theorems~\ref{thm:CCA-outlierMainBody}, \ref{thm:Spiked-outlierMainBody} is to control the largest eigenvalue of the associated spectral matrix under the null model. A notable feature of our construction is that, after replacing $\rho$ by its critical value $\kappa$, the spectral edge of the matrix remains invariant as $\rho$ varies, while the leading eigenvalue undergoes a transition. This  enables a relatively simple analysis of the top eigenvalue in the null model via comparison techniques such as Gordon's inequality. In contrast, parameter-dependent constructions typically lead to spectral statistics whose edge varies with $\rho$, requiring more delicate random matrix analysis to establish the separation between the bulk and the outlier. 

Beyond technical simplification, the invariance of the spectral edge also plays a crucial role in enabling parameter-agnostic algorithms for the correlated spiked models. In particular, as discussed around Theorem~\ref{thm:Spiked-parameter-free}, one can perform a search over candidate values and identify informative points by detecting eigenvalues that significantly separate from the common spectral bulk edge. We believe that this technique may be of independent interest and applicable to a broader class of problems.

\vspace{0.5\baselineskip}
\noindent
\textbf{Concurrent work.} As mentioned above, during the completion of this manuscript, we became aware of concurrent work by Yang, Sen and Lu \cite{LuSenYangInPrep}. Their work studies the correlated Wigner model in the general $L$-view setting for fixed $L\ge 2$, and identifies the corresponding spectral threshold. At a high level, both works are inspired by the linearized TAP approach, though the resulting spectral statistics and analyses differ substantially.

In the two-view setting, the correlation structure is governed by a single scalar parameter, which allows us to replace the unknown correlation by a fixed threshold value and obtain parameter-free procedures. Such a reduction no longer appears possible in the general $L$-view setting, where the correlation structure is encoded by an $L\times L$ Gram matrix and characterizing the threshold intrinsically becomes a multi-dimensional problem. Accordingly, \cite{LuSenYangInPrep} studies a symmetrized matrix, arising directly from the linearized TAP equations, which depends explicitly on the underlying Gram matrix. This leads to rather different spectral behavior. In their setting, (approximately) the top eigenvalue remains fixed at $1$ and the phase transition occurs when the bulk edge moves below it. In contrast, our bulk edge is bounded by $1$ and the phase transition occurs when the top eigenvalue moves above it.

While our parameter-free approach in the two-view setting is more statistically flavored, their work provides an important step toward understanding the multi-view setting.

\subsection{Proof outline}

In the remainder of this paper, we focus on the main ideas underlying Theorem~\ref{thm:CCA-outlierMainBody} (CCA), and defer technical details as well as the proofs of Theorem~\ref{thm:Spiked-outlierMainBody} (CSWig and CSWish) to the appendix.

As discussed earlier, the spectral statistic $W$ is motivated by the TAP approach from spin glass theory. While this heuristic is not required for the proofs, it provides useful guidance for constructing the optimal spectral procedure. For completeness, we present in Section~\ref{sec:TAP_HeuristicsMain} a derivation of the TAP equations for CCA and explain how they lead to the matrix $W$.

The analysis of $W$ proceeds in three main steps. First, in Section~\ref{sec:gordonMain} we study its behavior under the null model $\QQ$, where no correlation is present, and establish tail bounds for the largest eigenvalue using Gordon's inequality. Then, in Section~\ref{sec:convergenceMain} we further analyze the spectral properties of $W$ under the null model by proving convergence of its Stieltjes transform, using tools from high-dimensional probability and random matrix theory. Finally, in Section~\ref{sec:outlierMain} we study the effect of the planted correlation structure by treating it as a perturbation of the null model, and establish the BBP-type phase transition for $W$ as stated in Theorem~\ref{thm:CCA-outlierMainBody}.

The matching lower bounds follow from standard techniques, including $\chi^2$-divergence methods (and their truncated variants) together with a structural analysis of the posterior distribution; see Section~\ref{sec:lower-boundsMain}.

\subsection{Extensions and open questions}

We briefly discuss potential extensions of our results and some future directions. We expect that many of our results, including the BBP-type phase transition and the impossibility results, extend to a broader class of priors and beyond Gaussian noise. Our analysis relies on Gaussianity primarily through controlling the top eigenvalue under the null model via Gordon's inequality. While this leads to technical simplifications, we do not believe it is essential, and expect that more general settings can be handled using sharper tools from random matrix theory.

It is also natural to extend our study from two-view to multi-view models (as considered in \cite{LuSenYangInPrep}). While much of our analysis should extend to this setting (at least conceptually), a key challenge is to obtain parameter-agnostic spectral algorithms. In the two-view case, this is achieved by replacing $\rho$ with its critical value $\kappa$, whereas in the multi-view setting the correlation strength is described by a Gram matrix, and there is no direct analogue of a single critical value $\kappa$. Designing data-driven algorithms that do not require prior knowledge of the model parameters in multi-view models remains an interesting open problem.

More broadly, while existing and concurrent works together with our results suggest that the TAP approach leads to optimal spectral algorithms across a range of high-dimensional statistical models, a unified theory is still lacking. It would be valuable to identify general conditions on the model and the priors under which linearizing the TAP equations yields optimal spectral statistics. We leave this to future work.

\vspace{0.5\baselineskip}
\noindent
\textbf{Acknowledgment}. We would like to warmly thank Nike Sun for many insightful comments and helpful suggestions on this manuscript. We are also grateful to Brice Huang for suggesting the Gordon approach to bound the largest eigenvalue in the null model. Finally, we thank Zhangsong Li, Yue Lu, Subhabrata Sen, and Xiaodong Yang for stimulating discussions.
H.~Du, H.~Hu, and S.~Lepsveridze are partially supported by the NSF–Simons Research Collaboration Grant (Award No.~2031883).

%% file: TAP-heuristicMain.tex
\section{TAP motivation for the CCA statistic}\label{sec:TAP_HeuristicsMain}
As discussed in Section~\ref{subsec:prior_work}, a key step in constructing the candidate spectral statistic is to derive the TAP equations and appropriately linearize them. The TAP equations can be viewed as a system approximately satisfied by the posterior mean. In this section, we provide a heuristic derivation of these equations in the CCA model following the classic physics literature \cite{Mezard2017, ThoulessAndersonPalmer1977}. The Wigner and Wishart cases can be treated similarly, and we defer the details to the appendix.

We now focus on the CCA model, following the notations in Section~\ref{subsec:CCA}. While the main results assume a prior $\operatorname{Unif}(\mathbb{S}^{m-1}\times \mathbb{S}^{k-1})$, for technical and presentational convenience we instead consider the prior $\operatorname{Unif}(\{\pm 1/\sqrt m\}^m\times \{\pm 1/\sqrt k\}^k)$. Compared to the spherical prior, although the resulting TAP equations (see \eqref{eq:TAP-equation-CCA} below) differ slightly, their linearization leads to the same matrix $W$ as in Theorem~\ref{thm:CCA-outlierMainBody}.

We begin with the derivation of the TAP equations. Let $\mathbb{P}(\cdot\mid U,V)$ denote the posterior distribution of $(\sqrt m a,\sqrt k b)\in \{\pm 1\}^{m+k}$ (under the cube prior) given the observations $(U,V)$; see \eqref{eq:CCA-posterior-TAP} below for its precise form. Fixing a realization of $(U,V)$, our goal is to understand the posterior mean, which amounts to analyzing the marginal means of $\mathbb{P}(\cdot\mid U,V)$ coordinate-wise. As we essentially only care the results near the threshold, we may assume $(U,V)$ is sampled from the null model (i.e., no correlation between $U,V$ is present). 

At a high level, the derivation proceeds as follows. We first view $\PP(\cdot\mid U,V)$ as a factor model and write down the corresponding belief propagation (BP) fixed-point equations. These equations, involving $\Theta((m+k)\times n)$ probability measures, can be interpreted as approximate consistency conditions for the marginals of $(\sqrt m\, a,\sqrt k\, b)$ under cavity versions of $\PP(\cdot \mid U,V)$ (where “cavity” refers to leaving one node out) and related auxiliary measures. Since the marginals of the cavity measures are expected to be close to those of the original posterior, analyzing the BP fixed-point equations provides insight into the posterior mean. For further details on the heuristics and derivation of BP equations, we refer to \cite[Chapter~9]{MezardMontanari2009}.

Next, although the BP equations are complicated and involve probability measures, they can be simplified in the large $n$ limit via Gaussian approximations, leading to a system involving $\Theta((m+k)\times n)$ scalar quantities. Finally, by applying suitable Taylor expansions and law-of-large-numbers approximations, this system reduces to equations for $\Theta(m+k+n)$ scalar quantities, with $m+k$ of them encoding the posterior means under $\PP(\cdot\mid U,V)$. This yields the final TAP equations. We refer to \cite{Mezard2017} for a more detailed derivation for a closely related model.

Precisely for the CCA model, a straightforward computation yields that $\PP(\cdot \mid U,V)$ is defined such that for any $a\in \{\pm1\}^m,b\in \{\pm 1\}^k$, 
\begin{equation}\label{eq:CCA-posterior-TAP}
  \PP(a,b\mid U,V)
  \propto
  \prod_{\mu\in [n]}
  \exp\set{-\frac{\rho^2 x_\mu(a)^2+\rho^2 y_\mu(b)^2-2\rho x_\mu(a)y_\mu(b)}{2(1-\rho^2)}}\,,
\end{equation}
where we define
\[
  x_\mu(a):=\sum_{i\in [m]} U_{\mu i}a_i,
  \qquad
  y_\mu(b):=\sum_{j\in [k]} V_{\mu j}b_j.
\]
Consider the following marginals for the entries of $a$ and $b$: 
\begin{itemize}
    \setlength\itemsep{3pt}
    \item For $i\in [m]$, $m^a_{i\to \mu}(\cdot)$ is the marginal of $a_i$ given all interactions with the $U^\mu$ term removed. 
    \item For $i\in [m]$, $\hat{m}^a_{\mu\to i}(\cdot)$ is the marginal of $a_i$ when only the interactions involving $U^\mu$ is kept. 
    \item We define $m^b_{i\to \mu}(\cdot)$ and $\hat{m}^b_{\mu\to i}(\cdot)$ for $i\in [k]$ similarly.
\end{itemize}
The BP fixed-point equations yield that approximately, for all $i\in [m],\mu\in [n]$ and $a_i\in \{\pm 1\}$
\begin{align*}
    m^a_{i\to \mu}(a_i) & \propto \prod_{\nu\ne \mu} \hat{m}_{\nu\to i}^a(a_i)\,, \\
    \hat{m}^a_{\mu\to i}(a_i) & \propto \sum_{\substack{a_j, b_j\\ \text{not $a_i$}}}\exp\set{ -\tfrac{\rho^2 x_\mu(a)^2 + \rho^2y_\mu(b)^2 - 2\rho x_\mu(a)y_\mu(b)}{2(1-\rho^2)} }
    \prod_{j\ne i} m^a_{j\to \mu}(a_j) \prod_{j} m^b_{j\to \mu}(b_j)\,,
\end{align*} 
and similarly expressions holds if we switch $a$ with $b$. 

 Because all the measures support on $\{\pm 1\}$, we may express 
 \begin{align*}
 &m_{i\to \mu}^a(a_i) \propto \exp\paren{h_{i\to \mu}^a a_i}, \qquad \hat{m}^a_{\mu\to i}(a_i) \propto \exp(\hat{h}_{\mu\to i}^a a_i)\,,
 \end{align*}
and likewise for $b$. So $\tanh(h^a_{i\to \mu})$ is the mean of $m^a_{i\to \mu}(a_i)$ and similarly $\tanh(h^b_{i\to \mu})$ is the mean of $m^b_{i\to \mu}(b_i)$. 

By the central limit theorem, under the product measure of $\prod m^a_{j\to \mu}\prod m^b_{j\to \mu}$, we may treat the terms inside the sums as independent Gaussians
  \[x_\mu(a) \sim\cN\paren{U_{\mu i} a_i + \sum_{j\in [m]\setminus\{i\}}U_{\mu j} \tanh(h^a_{j\to \mu}), \sum_{j\in [m]\setminus \{i\}}U_{\mu j}^2(1-\tanh^2(h^a_{j\to \mu}))},\]
and similarly for $y_\mu(b)$, where we used the fact that $a_j^2 = b_j^2 = 1$. The replica-symmetric heuristic allows us to substitute the variances by deterministic scaler quantities
\[
\sum_{j\in [m]\setminus\{i\}}U_{\mu j}^2(1-\tanh^2(h_{j\to \mu}^a))\approx \frac{1}{m}\sum_{j\in [m]}(1-\tanh^2(h_{i\to \mu}^a))\approx 1-q_m\,,
\]

and similarly the variance of $y_\mu(b)$ is approximately $1-q_k$, where
  \begin{equation}\label{eq:replica_symmetric}
  q_m \approx \frac{1}{m} \sum_{j\in [m]}\tanh^2(h^a_{j\to\mu})\,,\qquad 
  q_k\approx \frac{1}{k} \sum_{j\in [k]}\tanh^2(h^b_{j\to\mu})\,.
  \end{equation}
Under the Gaussian approximation, $\hat m^a_{\mu\to i}(a_i)$ can be written as the Gaussian expectation
\begin{align*}
  \hat m^a_{\mu\to i}(a_i)
  &\propto
  \EE_{\substack{x\sim \cN(X_{\mu\to i},\,1-q_m)\\ y\sim \cN(Y_\mu,\,1-q_k)}}
  \exp\set{-\frac{\rho^2(x+U_{\mu i}a_i)^2+\rho^2y^2-2\rho(x+U_{\mu i}a_i)y}{2(1-\rho^2)}}.
  \label{eq:CCA-factor-integral-a}
\end{align*}
where we define the quantities
  \[X_{\mu\to i} = \sum_{j \ne i} U_{\mu j} \tanh(h^a_{j\to\mu}), \qquad Y_\mu = \sum_{j \in [k]} V_{\mu j} \tanh(h^b_{j\to\mu}).\]
We may explicitly compute this to obtain that
\[h^a_{i\to\mu} = \frac{\rho}{\rho^2 q_mq_k - 1}\sum_{\nu\ne \mu} \Big(\rho q_k\sum_{j\ne i}U_{\nu j} \tanh(h^a_{j\to\nu}) - \sum_{j \in [k]} V_{\nu j} \tanh(h^b_{j\to\nu})\Big)U_{\nu i},\]
and we have similar expressions for $h_{i\to\mu}^b$. 

At this point, we may derive the order-parameter equations, i.e., a self-consistent equation system for the scale parameters $q_m,q_k$. Denote for simplicity
$
\gamma={\rho}/({\rho^2q_mq_k-1}).
$
From above we expect that for each $\mu\in [n]$, the empirical distribution of $\{h_{i\to \mu}^a\}_{i\in [m]}$ is given by a Gaussian with mean $0$ and variance
\[
\sigma_m^2=\gamma^2\sum_{\nu\neq \mu}\frac{1}{m}\Big(\rho^2q_k^2\cdot \frac{1}{m}\sum_{j\neq i}\tanh^2(h_{j\to \nu}^a)+\frac{1}{k}\sum_{j\in [k]}\tanh^2(h_{j\to\nu}^b)\Big)\,.
\]
Using the replica symmetric approximations, we see that
\[
\sigma_m^2\approx \bar{\sigma}_m^2:=\gamma^2\cdot \tau_m\cdot (\rho^2q_k^2q_m+q_k)\,.
\]
Similarly, the empirical distribution of $\{h_{i\to \mu}^b\}_{i\in [k]}$ is given by a Gaussian with mean $0$ and variance
\[
\sigma_k^2\approx \bar{\sigma}_k^2:=\gamma^2\cdot \tau_k\cdot (\rho^2q_m^2q_k+q_m)\,.
\]
Recall the definition of $q_m,q_k$ in \eqref{eq:replica_symmetric}, we conclude that
\[
q_m\approx \mathbb{E}_{h^a\sim \mathcal N(0,\bar{\sigma}_m^2)}\tanh^2(h^a)\,,\qquad q_k\approx \mathbb{E}_{h^b\sim \mathcal N(0,\bar{\sigma}_k^2)}\tanh^2(h^b)\,.
\]

These equations provide self-consistent conditions for parameters $q_m$ and $q_k$. While $q_m =q_k =0$ is always a solution to the above system (referred as the uninformative point), we now check that a non-trivial solution of $(q_m,q_k)\in (0,1)^2$ exists if and only if $\rho>(\tau_m\tau_k)^{-1/4} =: \kappa$. To see this, note that near the threshold where non-trivial solutions emerge, $q_m,q_k\approx 0$, and thus $\mathbb{E}\tanh^2(h^a)\sim \bar{\sigma}_m^2$ and $\mathbb{E}\tanh^2(h^b)\sim \bar{\sigma}_k^2$. Using this approximation, the equations become
\[q_m\sim \gamma^2\tau_mq_k(1+\rho^2q_mq_k) \quad \text{ and } \quad q_k\sim \gamma^2\tau_kq_m(1+\rho^2q_mq_k),\]
Multiplying them together and cancel out $q_mq_k$, we obtain $\gamma^4\tau_m\tau_k(1+\rho^2q_mq_k)^2\approx 1$. When $q_m,q_k\approx 0$, $\gamma^2\approx \rho^2$, and this yields the threshold $\rho^4\tau_m\tau_k=1$. This suggests that $\kappa$ should be the phase transition threshold. 

To further simplify the equations regarding $h_{i\to \mu}^a,h_{i\to\mu}^b$, we define for $i\in [m]$ and $j\in [k]$,
\[
h_i^a=\gamma\sum_{\nu}\Big(\rho q_k\sum_{j\ne i}U_{\nu j} \tanh(h^a_{j\to\nu}) - \sum_{j \in [k]} V_{\nu j} \tanh(h^b_{j\to\nu})\Big)U_{\nu i}\,, 
\]

and likewise for $h_j^b$, with $M_i^a=\tanh(h_i^a),M_{j}^b=\tanh(h_j^b)$. Moreover, we define for $\nu\in [n]$, 
\[
N_{\nu}^a=\rho q_k\sum_{i\in [m]}U_{\nu i}\tanh (h_{i\to\nu}^a)-\sum_{j\in [k]}V_{\nu j}\tanh (h_{j\to \nu}^b)\,,
\]

and likewise for $N^b_\nu$. We now proceed to derive approximate consistent conditions for the vectors $M^a,M^b,N^a,N^b$.  On the one hand, by definition, 
\[
h_i^a=\gamma \sum_{\nu}(N_\nu^a-\rho q_k U_{\nu i}\tanh(h_{i\to \nu}^a))U_{\nu i}\approx \gamma\sum_\nu N_\nu^a U_{\nu i}-\gamma\rho q_k\tau_m M_i^a\,,
\]
where we used the fact that 
\[
\sum_\nu \rho q_k U_{\nu i}^2\tanh(h_{i\to \nu}^a)\approx \frac{\rho q_k}{m}\sum_{\nu} \tanh(h_{i\to \nu}^a)\approx \rho q_k\tau_m M_i^a.
\]
Hence, 
\[
M_i^a\approx \tanh\Big(\gamma \sum_\nu N_\nu^a U_{\nu i}-\gamma\rho q_k\tau_m M_i^a\Big)\,,
\]
and similar expressions for $M_j^b$ hold. On the other hand, approximately we have $h_{i\to\nu}^a\approx h_i^a-\gamma N_\nu^aU_{\nu i}$ and $h_{i\to \nu}^b\approx h_i^b-\gamma N_\nu^bV_{\nu i}$. Hence,
\begin{align*}
    N_\nu^a&\approx \rho q_k\sum_{i\in [m]}U_{\nu i}\tanh(h_i^a-\gamma N_\nu^aU_{\nu i})-\sum_{i\in [k]}V_{\nu i}\tanh(h_i^b-\gamma N_\nu^b V_{\nu i})\\
    &\approx \rho q_k\sum_{i\in [m]}(U_{\nu i}M_i^a-(1-(M_i^a)^2)\gamma U_{\nu i}^2 N_\nu^a)-\sum_{i\in [k]}(V_{\nu i}M_i^b-(1-(M_i^b)^2)\gamma V_{\nu i}^2 N_\nu^b)\\
    &\approx \rho q_k\sum_{i\in [m]}U_{\nu i}M_i^a-\gamma \rho q_k(1-q_m)N_\nu^a-\sum_{i\in [k]}V_{\nu i}M_i^b+\gamma(1-q_k)N_\nu^b\,.
\end{align*}
Here in the second line, we perform Taylor expansion of $\tanh(\cdot)$ to the second order, and in the last line we use once again the replica symmetric approximations \eqref{eq:replica_symmetric}. Similarly, we get approximate equations for $N_\nu^b$. 

In conclusion, we get the TAP equations for the CCA model with respect to four vectors $M^a\in \mathbb{R}^m,M^b\in \mathbb{R}^k$ and $N^a,N^b\in \mathbb{R}^n$ as follows:
\begin{equation}\label{eq:TAP-equation-CCA}
\begin{cases}
    M^a=\tanh\left(\gamma U^\top N^a-\gamma \rho q_k\tau_m M^a\right)\,,\\
    M^b=\tanh\left(\gamma V^\top N^b-\gamma \rho q_m\tau_k M^b\right)\,,\\
    N^a=\rho q_kUM^a-VM^b-\gamma \rho q_k(1-q_m)N^a+\gamma (1-q_k)N^b\,,\\
    N^b=\rho q_m VM^b-UM^a-\gamma \rho q_m(1-q_k)N^b+\gamma (1-q_m)N^a\,.
\end{cases}
\end{equation}
 
We now linearize the TAP equation around the uninformative fixed point $q_m=q_k=0$. Since the prior is symmetric, this corresponds to $M^a,M^b,N^a,N^b\approx 0$. This suggests we may use $\tanh(x)\sim x$ for $x=o(1)$ to rewrite the first two equations in \eqref{eq:TAP-equation-CCA} as 
\[
\begin{cases}
    M^a=\gamma U^\top N^a-\gamma\rho q_k\tau_m M^a\,,\\
    M^b=\gamma V^\top N^b-\gamma\rho q_m\tau_kM^b\,.
\end{cases}
\]
Combining with the last two equations in \eqref{eq:TAP-equation-CCA}, we get self-consistent linear equations of $M^a,M^b,N^a,N^b$. We further plug in $q_m=q_k=0,\gamma=-\rho$, which yields
\[
\begin{cases}
    M^a=-\rho U^\top N^a\,,\\
    M^b=-\rho V^\top N^b\, \\
\end{cases} \quad
\begin{cases}
    N^a=-VM^b-\rho N^b\,,\\
    N^b=-UM^a-\rho N^a\,.
\end{cases}
\]
Solving the last two equations gives
\[
N^a=\frac{\rho UM^a-VM^b}{1-\rho^2}\,,\qquad N^b=\frac{-UM^a+\rho VM^b}{1-\rho^2}\,.
\]
Plugging into the first two equations, we obtain the equation 
\begin{equation*}
  W^\rho
  \begin{pmatrix}M^a\\ M^b\end{pmatrix}
  =
  \frac{1-\rho^2}{\rho}
  \begin{pmatrix}M^a\\ M^b\end{pmatrix},
  \qquad
  W^\rho:=
  \begin{pmatrix}
    -\rho U^\intercal U & U^\intercal V \\
    V^\intercal U & -\rho V^\intercal V
  \end{pmatrix}.
\end{equation*}
Thus the TAP heuristics predicts that the relevant estimator is the top eigenvector of $W^\rho$. At the threshold $\rho=\kappa=(\tau_m\tau_k)^{-1/4}$, the eigenvalue on the right-hand side becomes $\lambda_*=({1-\kappa^2})/{\kappa}$. Hence the threshold linearization is precisely
\begin{equation}\label{eq:CCA-threshold-W-eigenvalue}
  W
  \begin{pmatrix}H^a\\ H^b\end{pmatrix}
  =
  \lambda_*
  \begin{pmatrix}H^a\\ H^b\end{pmatrix},
  \qquad
  W=
  \begin{pmatrix}
    -\kappa U^\intercal U & U^\intercal V \\
    V^\intercal U & -\kappa V^\intercal V
  \end{pmatrix}.
\end{equation}
This is exactly the matrix estimator that appears throughout the rest of the paper.

%% file: gordonMain.tex
\section{Largest eigenvalue tail bounds}\label{sec:gordonMain}
In this section, we show that for $(U,V)$ sampled from the null CCA model (see Section~\ref{subsec:CCA}), the largest eigenvalue of $W$ is w.h.p. bounded by $\lambda_*+o(1)$. We will prove this by Gordon's inequality, which we briefly review below.

The classic Gordon's inequality \cite{Gordon85, TOH15, VRbook} allows one to compare an optimization problem over Gaussian processes with another (simpler) optimization problem that has similar covariance structure. In particular, because the top eigenvalue of a matrix with Gaussian entries can be written as an optimization problem over some Gaussian process, we can obtain good tail bounds for the largest eigenvalue by comparing to a suitable optimization problem. 

For our purpose, we will need a theorem similar to the one given in \cite{TOH15}, which we state below. The proof follows from Gordon's inequality and can be found in Appendix~\ref{sec:gordonAppendix}. Define
\begin{equation*}
    {G} = (G_1, \ldots, G_k), \quad {g}=(g_1,\ldots,g_k), \quad  {h}=(h_1,\ldots,h_k), 
\end{equation*}
where $G_j \in \RR^{n_j \times m_j}$, $g_j \in \RR^{n_j}$, and $h_j \in \RR^{m_j}$ for all $j \in [k]$. Let ${w}\in\RR^k$ and let
\[
\calS_{{x}} \subseteq \RR^{n_1} \times \cdots \times \RR^{n_k}, \qquad
\calS_{{y}} \subseteq \RR^{m_1} \times \cdots \times \RR^{m_k}, \qquad
\calS_{{z}} \subseteq \RR^d.
\]
be compact sets. Suppose $\psi : \calS_{{x}} \times \calS_{{y}} \times \calS_{{z}} \to \RR$ is a continuous function. We define two objects of optimization 
\begin{align*}
    \Phi({G}) &=
\min_{{z}\in \calS_{{z}}}
\min_{{x}\in \calS_{{x}}}
\max_{{y}\in \calS_{{y}}}
\Big\{
\sum_{j=1}^k x_j^\top G_j y_j + \psi({x},{y},{z})
\Big\}. \\
\phi({g},{h})
&=
\min_{{z}\in \calS_{{z}}}
\min_{{x}\in \calS_{{x}}}
\max_{{y}\in \calS_{{y}}}
\Big\{
\sum_{j=1}^k \|y_j\|_2\, g_j^\top x_j
+\sum_{j=1}^k \|x_j\|_2\, h_j^\top y_j
+\psi({x},{y},{z})
\Big\}.
\end{align*}
We treat $\Phi$ as the primary optimization objective and $\phi$ as an auxiliary optimization objective. The following theorem relates $\Phi$ and $\phi$.
\begin{theorem}\label{thm:gordon-usefulMain}
Suppose the entries of ${G}, {g}, {h}$ are i.i.d.\ $\calN(0,1)$. For all $t\in\RR$,
\[
\PP(\Phi({G})<t)
\le 2^k\,\PP(\phi({g},{h})\le t).
\]
\end{theorem}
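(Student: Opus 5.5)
We reduce the statement to the classical Gordon min-max comparison for a single Gaussian process indexed by a product set, and handle the $k$ blocks by one common augmentation trick. The $2^k$ factor comes from $k$ independent auxiliary signs.

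First, for each block introduce independent scalars $\gamma_j\sim\calN(0,1)$. Define the two processes indexed by $(z,x,y)$:
\[
X_{z,x,y}=\sum_{j=1}^k\big(x_j^\top G_j y_j+\gamma_j\|x_j\|_2\|y_j\|_2\big)+\psi(x,y,z),
\]
\[
Y_{z,x,y}=\sum_{j=1}^k\big(\|y_j\|_2 g_j^\top x_j+\|x_j\|_2 h_j^\top y_j\big)+\psi(x,y,z).
\]
We view $(z,x)$ jointly as the outer minimization index and $y$ as the inner maximization index.

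Second, compare covariances. For indices $(x,y)$ and $(x',y')$, block $j$ contributes to $\EE X X'$ the amount $\langle x_j,x_j'\rangle\langle y_j,y_j'\rangle+\|x_j\|\|x_j'\|\|y_j\|\|y_j'\|$. It contributes to $\EE YY'$ the amount $\|y_j\|\|y_j'\|\langle x_j,x_j'\rangle+\|x_j\|\|x_j'\|\langle y_j,y_j'\rangle$. The difference per block is
\[
\big(\|x_j\|\|x_j'\|-\langle x_j,x_j'\rangle\big)\big(\|y_j\|\|y_j'\|-\langle y_j,y_j'\rangle\big)\ge 0,
\]
and it vanishes when $x=x'$. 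Summing over $j$ gives exactly the hypotheses of Gordon's theorem in its min-max form \cite{Gordon85}: variances agree, the inequality goes one way for the same outer index, and the other way for different outer indices. The deterministic shift $\psi$ does not affect these conditions.

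Gordon's theorem is stated for finite index sets. We apply it on finite nets of the compact sets and pass to the limit using continuity of $\psi$ and of the Gaussian terms, which are a.s. continuous on compacts. Taking the nets increasing makes the min-max values converge a.s., so probabilities of strict and non-strict events pass to the limit appropriately. This yields
\[
\PP\Big(\min_{z,x}\max_y X<t\Big)\le \PP\Big(\min_{z,x}\max_y Y<t\Big)\le\PP(\phi(g,h)\le t).
\]

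Third, remove the $\gamma_j$. On the event $E=\{\gamma_j\le 0 \ \forall j\}$, which has probability $2^{-k}$ and is independent of $G$, we have $X\le \sum_j x_j^\top G_jy_j+\psi$ pointwise. Hence $\min\max X\le\Phi(G)$ on $E$. Therefore
\[
\PP(\Phi(G)<t)\,2^{-k}=\PP(\{\Phi<t\}\cap E)\le\PP\Big(\min\max X<t\Big)\le \PP(\phi\le t),
\]
which gives the claim.

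The main technical obstacle is the discretization and limiting step: getting the strict versus non-strict inequalities right, and checking that min-max over nets converges. For this we use uniform continuity of the processes on the compact sets, which holds since $\|G_j\|$ is a.s. finite.
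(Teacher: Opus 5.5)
Your proposal is correct and follows essentially the same route as the paper. You augment the primary process with independent terms $\gamma_j\|x_j\|_2\|y_j\|_2$ (the paper's $w_j$), check Gordon's hypotheses via the product covariance identity, and remove the augmentation on the event that all $\gamma_j\le 0$, which costs the factor $2^k$. Your account of the finite-net limiting step, including the strict versus non-strict inequalities, is more careful than the paper's one-line appeal to a standard discretization argument.
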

We now apply this theorem to obtain tail estimates for the largest eigenvalue of $W$ under the CCA null model (see Section~\ref{subsec:CCA}). We will assume the threshold $\kappa = (\tau_m\tau_k)^{-1/4} < 1$ where $\tau_m = n/m+o(1)$ and $\tau_k = n/k+o(1)$. Recall $W$ is the block matrix of form
\begin{equation*}
	W = \begin{pmatrix}
		-\kappa U^\intercal U & U^\intercal V \\
		V^\intercal U & -\kappa V^\intercal V
	\end{pmatrix}.
\end{equation*}

\begin{theorem}\label{thm:CCAGordonEdge}
Let $W$ be a matrix as above. Then for any $\varepsilon \in (0,1)$
\begin{equation*}
    \PP\paren{\lambda_{\max}(W) \geq \tfrac{1-\kappa^2}{\kappa}  + \varepsilon} \leq e^{-\Omega(n \varepsilon^2)}.
\end{equation*}
\end{theorem}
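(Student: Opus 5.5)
The plan is to write $-\lambda_{\max}(W)$ as a min--max of a Gaussian process, bilinear in $G_1:=\sqrt m\,U$ and $G_2:=\sqrt k\,V$. These have i.i.d.\ $\calN(0,1)$ entries and are independent under $\QQ$. We then apply Theorem~\ref{thm:gordon-usefulMain} with $k=2$ and evaluate the auxiliary problem to leading order. Fix a unit vector $w=(x,y)\in\RR^{m}\times\RR^k$ and set $p=Ux$, $q=Vy$. Then
\[
-w^\top W w=\kappa\|p\|^2+\kappa\|q\|^2-2\langle p,q\rangle ,
\]
and $-\lambda_{\max}(W)=\min_{\|x\|^2+\|y\|^2=1}(\cdot)$. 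The event $\lambda_{\max}\ge \lambda_*+\varepsilon$ is the event that this minimum is $\le -\lambda_*-\varepsilon$, which is the lower-tail form controlled by Theorem~\ref{thm:gordon-usefulMain}.

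The first step is to linearize. The identity
\[
\kappa\|p\|^2-2\langle p,q\rangle=\max_{s}\Big\{2\langle s,p\rangle-\tfrac1\kappa\|s+q\|^2\Big\}
\]
(maximizer $s=\kappa p-q$) removes the quadratic dependence on $G_1$. What remains in $q$ is $-(\tfrac1\kappa-\kappa)\|q\|^2-\tfrac2\kappa\langle s,q\rangle$, which is concave because $\kappa<1$. We linearize it as
\[
\min_r\Big\{-2\langle r,q\rangle+\tfrac{1}{c}\|r-s/\kappa\|^2\Big\},\qquad c=\tfrac1\kappa-\kappa .
\]
This yields an objective of the form $\tfrac{2}{\sqrt m}s^\top G_1x-\tfrac2{\sqrt k}r^\top G_2y+\psi(x,y,s,r)$ with $\psi$ deterministic and continuous. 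The inner $\min_r$ sits inside $\max_s$. Since the objective is convex in $r$ and concave in $s$ for fixed $(x,y)$, a Sion-type exchange lets us move $\min_r$ outside. We restrict $s,r$ to balls of radius $C$. This is harmless on the event $\|U\|_{\op},\|V\|_{\op}\le C'$, which fails with probability $e^{-\Omega(n)}$, because the optimizers $s=\kappa p-q$ and $r=s/\kappa+cq$ are then bounded. The problem then has exactly the shape of $\Phi(G)$, with $z=(x,y)$, the block $j=1$ pairing $(s,x)$, and the block $j=2$ pairing $(r,y)$.

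The second step is to evaluate the auxiliary problem $\phi(g,h)$, where $g_1,g_2\in\RR^n$, $h_1\in\RR^m$, $h_2\in\RR^k$. In it, $s^\top G_1x$ is replaced by $\|x\|g_1^\top s+\|s\|h_1^\top x$, and similarly for $G_2$. We optimize directions first. The maximizing $s$ aligns its direction with a combination of $g_1$ and the fixed vectors in $\psi$. The terms $h_1^\top x$ and $h_2^\top y$ are minimized by anti-aligning, giving $-\|h_1\|\|x\|\approx-\sqrt m\|x\|$ and $-\|h_2\|\|y\|\approx -\sqrt k\|y\|$. Since $\|g_j\|^2/n,\ \|h_1\|^2/m,\ \|h_2\|^2/k\to1$, with Gaussian concentration at rate $e^{-\Omega(n\varepsilon^2)}$, the problem reduces, up to $O(\varepsilon)$ errors, to a deterministic optimization over a few scalars: $\|x\|^2=\theta$, $\|y\|^2=1-\theta$, the norms of $s$ and $r$, and one angle. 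The target claim is that this scalar problem has value $-\lambda_*=-(1-\kappa^2)/\kappa$. Heuristically, the minimizing $\theta$ balances the $\tau_m$ and $\tau_k$ contributions, and the value is pinned to $-\lambda_*$ exactly when $\kappa^4\tau_m\tau_k=1$, matching the threshold found in Section~\ref{sec:TAP_HeuristicsMain}. Combining Theorem~\ref{thm:gordon-usefulMain} with this computation gives $\PP(\lambda_{\max}\ge\lambda_*+\varepsilon)\le 4\,e^{-\Omega(n\varepsilon^2)}+e^{-\Omega(n)}$.

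I expect the main obstacle to be twofold. The first issue is arranging the variational representation so that the Gaussian terms appear in a genuine $\min_z\min_x\max_y$ form to which Theorem~\ref{thm:gordon-usefulMain} applies; this depends on the minimax exchange of $r$ and $s$ and on the compact truncation. The second issue is solving the resulting scalar problem in closed form to see that it equals exactly $-\lambda_*$, not something weaker. If the two-layer linearization proves awkward, I would first try a direct alternative: take $\psi$ to encode the constraint $p=Ux$ through a single Lagrange multiplier $s$, treat $V$ as fixed, apply Gordon only in $G_1$, and then control the remaining $V$-dependent quadratic using the Marchenko--Pastur edge bounds for $V^\top V$.
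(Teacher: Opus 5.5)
There is a genuine gap in your first step. At fixed $w=(x,y)$ your objective is $F(s,r)=2\langle s,p\rangle-\tfrac1\kappa\|s\|^2-2\langle r,q\rangle+\tfrac1c\|r-s/\kappa\|^2$. Expanding the last term, the coefficient of $\|s\|^2$ is $-\tfrac1\kappa+\tfrac{1}{c\kappa^2}=\tfrac{\kappa}{1-\kappa^2}>0$. So $F$ is \emph{convex} in $s$ for fixed $r$, not concave. The concavity you had after the first linearization is destroyed by the very shift $r-s/\kappa$ you used to attach $V$ to $r$ alone.

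Consequently Sion's theorem does not apply, and the exchange is false. One has $\max_s F(s,r)=+\infty$ for every $r$. Even with the truncation $\|s\|,\|r\|\le C$, one still has $\min_r\max_s F\ge\tfrac{\kappa}{1-\kappa^2}C^2-c\|q\|^2$, so the truncation is not harmless for the swapped problem.

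Moreover, the error points the wrong way. Weak duality gives $\min_w\min_r\max_s F\ge\min_w\max_s\min_r F=-\lambda_{\max}(W)$. Theorem~\ref{thm:gordon-usefulMain} controls $\PP(\Phi<t)$, so to transfer the event $\{-\lambda_{\max}(W)\le-\lambda_*-\varepsilon\}$ you need a primary objective that is $\le-\lambda_{\max}(W)$.

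The assignment also does not fit the theorem's template. There the outer variable $z$ enters only through $\psi$, and each $G_j$ pairs a minimized $x_j$ with a maximized $y_j$. You put $(x,y)$ in the $z$ slot while they multiply $G_1,G_2$, and after the swap the block $(r,y)$ pairs two minimized variables. The root cause is that your linearization places $G_1$ at the middle (max) level and $G_2$ at the inner (min) level of a min--max--min problem. No reordering repairs that.

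The missing idea is to make both Gaussian matrices pair only with the innermost variables. The paper works with $\lambda_{\max}(W)$ itself. Writing $w=(x_1,x_2)$, it uses $w^\top Ww=-(1+\kappa)\|Ux_1\|^2-(1+\kappa)\|Vx_2\|^2+\|Ux_1+Vx_2\|^2$. The two concave pieces become minima over $y_1',y_2'\in\RR^n$, and the single convex piece becomes a maximum over one $z\in\RR^n$. The substitution $y_i=(1+\kappa)y_i'+z$ then leaves $2\langle y_1,Ux_1\rangle+2\langle y_2,Vx_2\rangle$ plus a deterministic function of $(y,z)$. No minimax exchange is ever needed: before the substitution the $z$- and $y'$-parts are separable, and the substitution is a bijection for fixed $z$. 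Hence $\lambda_{\max}(W)=\max_x\max_z\min_y\Phi$, and its negation has exactly the form $\min_z\min_x\max_y$ required by Theorem~\ref{thm:gordon-usefulMain}, with $z$ appearing only in $\psi$.

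Your second step is also only heuristic, whereas in the paper it is explicit. Minimizing the auxiliary objective in $y$ and rescaling $z$ gives $(1+\kappa)\bigl[(1-\kappa)\|z\|^2-\sum_i\bigl[\,\bigl\|\|x_i\|h_i-z\bigr\|-\|g_i\|\|x_i\|\,\bigr]_+^2\bigr]$. After concentration and a few monotone relaxations, $\tau_m$ and $\tau_k$ drop out altogether. So no balancing of $\|x\|^2=\theta$ against $\kappa^4\tau_m\tau_k=1$ is needed for the upper bound. One is left with $-\kappa(\varrho_1^2+\varrho_2^2)+2\varrho_1t_1+2\varrho_2t_2-1\le\tfrac{1-\kappa}{\kappa}$, where $t_i=\|x_i\|$ with $t_1^2+t_2^2=1$ and $\varrho_i\ge t_i$ are radii built from $z$. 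This follows from $2\varrho_it_i\le\kappa\varrho_i^2+t_i^2/\kappa$.

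Your fallback of applying Gordon only in $G_1$, conditionally on $V$, does yield a valid template. However, its auxiliary problem depends on $V$ through $\langle V^\top h,x_2\rangle$ jointly with $\|Vx_2\|^2$, and the Marchenko--Pastur edge alone does not control that sharply.
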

\begin{proof}
    Since matrix $W$ is symmetric, the largest eigenvalue is obtained by the best unit test vector ${x} = (x_1,x_2)^\intercal \in \RR^{m} \times \RR^{k}$,
    \begin{align*}
	\lambda_{\max} &=\max_{\|x\|=1}x^TWx= \max_{\norm{x_1}^2 + \norm{x_2}^2 = 1} \set{-\kappa \norm{Ux_1}^2 -\kappa\norm{Vx_2}^2+2\ang{Ux_1, Vx_2}} \\
	&= \max_{\norm{x_1}^2 + \norm{x_2}^2 = 1} \set{-(1+\kappa) \norm{Ux_1}^2 -(1+\kappa)\norm{Vx_2}^2+\norm{Ux_1 +Vx_2}^2}.
\end{align*}
We linearize this optimization via 
\begin{gather*}
	-\norm{Ux_1}^2 = \min_{y_1'} \set{2\ang{y_1', Ux_1} + \norm{y_1'}^2}, \quad
	-\norm{Vx_2}^2 = \min_{y_2'} \set{2\ang{y_2', Vx_2} + \norm{y_2'}^2},  \\
	\norm{Ux_1+Vx_2}^2= \max_{z} \set{2\ang{z,Ux_1+Vx_2} -\norm{z}^2}.
\end{gather*}
Hence, we get that 
\begin{equation*}
	\lambda_{\max} = \max_{\substack{\norm{{x}} = 1}} \max_{z}  \,\,\,\min_{{y'}} \,\,  \Phi'( U, V ; \,{x}, {y'}, z),  
\end{equation*}
where the function  $\Phi'( U, V ; \,{x}, {y'}, z)$ is defined by
\begin{equation*}
   2\ang{(1+\kappa)y_1' + z, Ux_1} + 2\ang{(1+\kappa)y_2' + z, Vx_2}+(1+\kappa)\norm{y_1'}^2+(1+\kappa)\norm{y_2'}^2-\norm{z}^2.
\end{equation*}
Now observe that since $1+\kappa > 0$, for fixed $z$ it is equivalent to minimize with respect to $y_i = (1+\kappa)y_i' + z$ for $i \in \set{1,2}$. In other words,
\begin{equation}\label{eq:canonical-PO}
    \lambda_{\max} = \max_{\substack{\norm{{x}} = 1}} \, \max_{z}  \,\,\,\min_{{y}} \,\,  \Phi( U, V ; \,{x}, {y}, z), 
\end{equation}
where now $\Phi( U, V ; \,{x}, {y}, z)$ is defined by 
\begin{equation*}
   2\ang{y_1, Ux_1} + 2\ang{y_2, Vx_2}+ \tfrac{1}{1+\kappa}\paren{\norm{y_1-z}^2+\norm{y_2-z}^2-(1+\kappa)\norm{z}^2}.
\end{equation*}
To parallel this primary optimization problem, we define the auxiliary optimization objective by taking 
\begin{align}
\begin{split}\label{eq:ccaAuxProblem}
	 \phi({g}, {h} ; {x}, {y}, z) = 2\norm{y_1}g_1^\intercal x_1
 &+2 \norm{x_1}h_1^\intercal y_1 + 2\norm{y_2}g_2^\intercal x_2 + 2\norm{x_2}h_2^\intercal y_2 +  \\
 &+\tfrac{1}{1+\kappa}\paren{\norm{y_1-z}^2 + \norm{y_2-z}^2 - (1+\kappa)\norm{z}^2},
 \end{split}
 \end{align}
where ${g} = (g_1, g_2)^\intercal$ and ${h} = (h_1, h_2)^\intercal$ are Gaussian vectors 
\begin{equation*}
    g_1 \sim \calN(0, m^{-1}\II_m) \;g_2 \sim \calN(0, k^{-1}\II_k) \text{ and }h_1 \sim \calN(0, m^{-1}\II_n) \; h_2 \sim \calN(0, k^{-1}\II_n).
\end{equation*}
Then for any real number $\lambda \in \RR$, we have 
    \begin{equation*}
        \PP\paren{ \lambda_{\max}(W) > \lambda} \leq 4 \, \PP\paren{ \max_{\substack{\norm{{x}} = 1}} \, \max_{z}  \,\,\,\min_{{y}} \,\, \phi({g}, {h} ; {x}, {y}, z) \geq \lambda}
    \end{equation*}
This follows from applying Theorem~\ref{thm:gordon-usefulMain} to deduce that 
\begin{equation*}
\PP\Bigg(\max_{\substack{\norm{{x}} = 1 \\ \, \norm{z} \leq R_1 }} \min_{ \norm{y}  \leq R_2}  \Phi( U, V ; {x}, {y}, z) > \lambda\Bigg) \leq 4 \PP\Bigg(\max_{\substack{\norm{{x}} = 1 \\ \, \norm{z} \leq R_1}}   \min_{ \norm{y} \leq R_2}  \phi( {g}, {h} ; {x}, {y}, z) \geq  \lambda\Bigg).
\end{equation*}
Then take $R_2 \uparrow \infty$ and then $R_1 \uparrow \infty$ and apply continuity of probability measures. It suffices to solve the auxiliary optimization problem. By Lemma~\ref{lem:cca-AuxiliaryOptMain} below, we have 
\begin{equation*}
     \PP\paren{ \max_{\substack{\norm{{x}} = 1}} \, \max_{z}  \,\,\,\min_{{y}} \phi \leq \tfrac{1-\kappa^2}{\kappa} + O(\varepsilon)} = 1- \exp\set{-\Omega(n \varepsilon^2)},
\end{equation*}
and the desired result follows.
\end{proof}

\begin{lemma}\label{lem:cca-AuxiliaryOptMain}
 The auxiliary optimization problem over $\phi$, defined in \eqref{eq:ccaAuxProblem}, is bounded with probability $(1- \exp\set{-\Omega(n \varepsilon^2)})$ over the Gaussian vectors $g, h$,
   \[\max_{\substack{\norm{{x}} = 1}} \, \max_{z}  \,\,\,\min_{{y}} \phi({g}, {h} ; {x}, {y}, z) \leq \tfrac{1-\kappa^2}{\kappa} + O(\varepsilon),\]
 where $x = (x_1, x_2)^\intercal \in \RR^m\times \RR^k$, $y = (y_1, y_2)^\intercal \in \RR^n\times \RR^n$ and $z \in \RR^n$.
\end{lemma}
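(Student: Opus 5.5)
The plan is to reduce the auxiliary problem to a low-dimensional deterministic min–max. The inputs are a handful of scalar statistics of $g,h$ that concentrate at rate $e^{-\Omega(n\varepsilon^2)}$; we then evaluate that deterministic problem and check that its value is $(1-\kappa^2)/\kappa$. Since we only need an \emph{upper} bound on $\max_x\max_z\min_y\phi$, for each fixed $(x,z)$ we are free to plug in any convenient $y$, and to bound the $g$-terms crudely by Cauchy--Schwarz.

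\textbf{Step 1 (concentration).} Define the good event $\mathcal E$ on which
\[
\|g_1\|,\|g_2\|\le 1+\varepsilon,\qquad \bigl|\|h_1\|^2-\tau_m\bigr|,\ \bigl|\|h_2\|^2-\tau_k\bigr|\le \varepsilon,\qquad |\langle h_1,h_2\rangle|\le\varepsilon .
\]
By Gaussian concentration of norms (for example Lipschitz concentration, or $\chi^2$ tail bounds) and Bernstein for the inner product, $\mathcal E$ has probability $1-e^{-\Omega(n\varepsilon^2)}$. Here we use that $m,k,n$ are proportional.

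\textbf{Step 2 (scalar upper bound on $\mathcal E$).} Write $r_i=\|x_i\|$, so $r_1^2+r_2^2=1$. On $\mathcal E$ we have $g_i^\top x_i\le (1+\varepsilon)r_i$. Hence
\[
\phi\le \sum_{i=1,2}\Bigl[2(1+\varepsilon)r_i\|y_i\|+2r_i\langle h_i,y_i\rangle\Bigr]+\tfrac1{1+\kappa}\bigl(\|y_1-z\|^2+\|y_2-z\|^2\bigr)-\|z\|^2 .
\]
Decompose $z=z_\parallel+z_\perp$ with $z_\parallel\in\mathrm{span}(h_1,h_2)$. Restrict to test points $y_i=z_\perp+c_{i1}\hat h_1+c_{i2}\hat h_2$, where $\hat h_i=h_i/\|h_i\|$.

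The $\|z_\perp\|^2$ contributions then appear as $-\|z\|^2$ together with the $\|y_i\|$ terms. I would bound $\|y_i\|\le\|z_\perp\|+|c_i|$ and use the resulting coercive $-\|z_\perp\|^2$ term to show that the $z_\perp$-dependence is maximized at a bounded value. Doing this explicitly may require a slightly smarter choice of $y$ (scaled toward $z$). In any case the coercivity, seen already by taking $y_i=z$, shows that the maximizing $z$ and $y$ stay in a ball of radius $O(1)$. This makes the Lipschitz error from the $\varepsilon$-perturbations $O(\varepsilon)$.

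After this reduction the problem becomes a finite-dimensional deterministic problem $F(r_1,r_2,\|z_\parallel\|,\|z_\perp\|;\tau_m,\tau_k)$, up to an $O(\varepsilon)$ error. This step uses the near-orthogonality of $h_1,h_2$ and their norms $\sqrt{\tau_m},\sqrt{\tau_k}$.

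\textbf{Step 3 (solving the deterministic problem).} This is the main obstacle. The inner minimum over the scalars $c_{ij}$ (and $\|y_i\|$) is a convex quadratic-plus-norm problem, which can be solved in closed form by first minimizing over directions and then over lengths. The outer maximum over $z_\parallel,z_\perp$ is a concave quadratic. The final maximum is over $r_1^2+r_2^2=1$.

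I would first try guessing the optimizer from the TAP linearization in Section~\ref{sec:TAP_HeuristicsMain}, namely balanced $r_1^2/r_2^2$ dictated by $\tau_m,\tau_k$, and then verify optimality via first-order conditions. One should find the value $\frac{1-\kappa^2}{\kappa}$ precisely when $\kappa^4\tau_m\tau_k=1$.

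As a sanity check, Proposition-level convergence of the null spectrum (Section~\ref{sec:convergenceMain}) predicts the bulk edge $\lambda_*$. Gordon's bound is expected to be tight for such min–max problems, so the value must coincide with $\lambda_*$. If the Cauchy--Schwarz relaxation of the $g$-terms turns out to be lossy, I would instead keep $\|g_i\|$ and optimize the alignment of $x_i$ with $g_i$ jointly.
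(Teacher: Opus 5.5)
\textbf{There is a genuine gap.} Your concentration step is fine. So is the bound $g_i^\intercal x_i\le(1+\varepsilon)\|x_i\|$; it is not even lossy, because after minimizing over $y$ the objective is nondecreasing in $g_i^\intercal x_i$, so aligning $x_i$ with $g_i$ is optimal. The problems are in Steps 2 and 3.

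\textbf{Step 2: the explicit relaxation is not sharp, and sharpness is the whole point of the lemma.} Take $y_i=z_\perp+c_i$ with $c_i\in\mathrm{span}(h_1,h_2)$ and use $\|y_i\|\le\|z_\perp\|+\|c_i\|$. Then the only dependence on $z_\perp$ is
\[
2(1+\varepsilon)(r_1+r_2)\|z_\perp\|-\|z_\perp\|^2 ,
\]
whose maximum $(1+\varepsilon)^2(r_1+r_2)^2\ge 1$ is simply added to the value at $z_\perp=0$.
\begin{itemize}
\item At the extremal $x$, the value at $z_\perp=0$ is already $\lambda_*+O(\varepsilon)$. So your bound is at least $\lambda_*+1$.
\item Coercivity only tells you the maximizing $z_\perp$ is bounded; it does not make its contribution vanish.
\item Dropping the triangle inequality does not rescue this. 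Pinning the $z_\perp$-component of $y_i$ to exactly $z_\perp$ is suboptimal, since the true minimizer shrinks it. A second-order expansion in $\|z_\perp\|$ at the extremal configuration shows the restricted bound still exceeds $\lambda_*$ once $\kappa>\sqrt2-1$.
\end{itemize}
No clever test point is needed. Minimize over $y$ exactly, first the direction and then the norm, to get
\[
\min_y\phi=\tfrac{1-\kappa}{1+\kappa}\|z\|^2-(1+\kappa)\sum_{i=1,2}\Bigl[\bigl\|\,\|x_i\|h_i-\tfrac{z}{1+\kappa}\bigr\|-g_i^\intercal x_i\Bigr]_+^2 .
\]

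\textbf{Step 3: the actual content of the lemma is left undone.} Guessing an optimizer from the TAP heuristic and checking first-order conditions would not certify a global upper bound, since the problem is not concave in $x$. Your sanity check cannot replace it either. Gordon's comparison is one-sided, so the true bulk edge says nothing about the value of the auxiliary problem. Moreover, in the paper the support of the limiting measure is itself located using this very bound, so the argument would be circular.

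\textbf{The missing idea is a decoupling relaxation that makes the optimizer irrelevant.}
\begin{itemize}
\item On the good event, replace $h_1,h_2$ by $\sqrt{\tau_m}\,u$ and $\sqrt{\tau_k}\,v$ with $u\perp v$ unit vectors. This costs $O(\varepsilon)$ on a bounded $z$-ball.
\item Rescale $z=(1+\kappa)\zeta$ and use
\[
\bigl\|\,\|x_1\|\sqrt{\tau_m}\,u-\zeta\bigr\|\ge\|P_{u^\perp}\zeta\|,\qquad\bigl\|\,\|x_2\|\sqrt{\tau_k}\,v-\zeta\bigr\|\ge\|P_{v^\perp}\zeta\|,\qquad\|\zeta\|^2\le\|P_{u^\perp}\zeta\|^2+\|P_{v^\perp}\zeta\|^2 .
\]
Each of these only increases the objective.
\item The problem then splits into the scalar bounds $\sup_{\varrho\ge0}\{(1-\kappa)\varrho^2-[\varrho-s_i]_+^2\}=\tfrac{1-\kappa}{\kappa}s_i^2$, with $s_i=\|x_i\|$ and $s_1^2+s_2^2=1$.
\item Summing and multiplying by $1+\kappa$ gives $\lambda_*=\tfrac{1-\kappa^2}{\kappa}$.
\end{itemize}
Note that $\tau_m,\tau_k$ drop out entirely. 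The upper bound holds for every $\kappa\in(0,1)$, and the critical value $\kappa=(\tau_m\tau_k)^{-1/4}$ matters only for tightness. A computation organized around the condition $\kappa^4\tau_m\tau_k=1$ is therefore looking in the wrong place.
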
 
 The proof of Lemma~\ref{lem:cca-AuxiliaryOptMain} is elementary,
 and it can be found in Appendix~\ref{subsec:CCAGordon-Appendix}.

%% file: convergenceMain.tex
\section{Analysis of the Stieltjes transform}\label{sec:convergenceMain}

In this section, we study in greater detail the spectral properties of the random matrix $W$ under the CCA null model (see Section~\ref{subsec:CCA}), focusing on its Stieltjes transform. Our analysis draws on a substantial body of work \cite{AjankiErdosKrugerQVE,AjankiErdosKrugerMDE,SilversteinChoi1995,BaiSilverstein1998,HachemLoubatonNajimVallet2013} that develops powerful tools for the study of random matrices. Let us recall a few important properties of the Stieltjes transform, which takes as input some probability measure and outputs a complex function. The Stieltjes transform, evaluated at $z\in \CC$, of the empirical measure on the eigenvalues of a matrix $A\in \RR^{n\times n}$ is given by
  \[\text{Stieltjes}_{A}(z) = \frac{1}{n}\Tr\paren{(A - zI_{n})^{-1}}.\]
The term $(A-zI_n)^{-1}$ is the resolvent of $A$ evaluated at $z$. We will study the resolvent of our spectral statistic $W$ and show that for any $z$ in the upper-half plane, $\text{Stieltjes}_W(z)$ converges to some explicit limit as $n\to\infty$. We then extend the convergence result to a region on the real line using estimates from Section~\ref{sec:gordonMain}. Importantly, the pointwise convergence of Stieltjes transforms implies weak convergence of the corresponding spectral measures \cite{BS10}. Lastly, we characterize the limiting transform, which will be crucial in the outlier analysis in Section~\ref{sec:outlierMain}.  

Throughout this section, we will assume the threshold $\kappa = (\tau_m\tau_k)^{-1/4} < 1$ where $\tau_m = n/m+o(1)$ and $\tau_k = n/k+o(1)$. Recall $W$ is the block matrix of form
\begin{equation*}
	W = \begin{pmatrix}
		-\kappa U^\intercal U & U^\intercal V \\
		V^\intercal U & -\kappa V^\intercal V
	\end{pmatrix}.
\end{equation*}
We define by $\lambda_* = (1-\kappa^2)/\kappa$ the bound on the top eigenvalue of $W$ obtained via Gordon's inequality in Section~\ref{sec:gordonMain}. We study the resolvent $(W- zI_{m+k})^{-1}$ through the linearization of $(W-zI_{m+k})$, given by
  \begin{equation}\label{eq:linMatrixCCAMain}
    \calL(z) := \begin{pmatrix}
            -z I_m & 0 & \sqrt{\kappa} U^\intercal  & 0 \\
            0 & -z I_k & 0 & \sqrt{\kappa} V^\intercal  \\
            \sqrt{\kappa} U & 0 & \alpha \kappa  I_n & \alpha I_n  \\ 
            0 & \sqrt{\kappa} V & \alpha  I_n & \alpha \kappa  I_n
              \end{pmatrix} \quad \text{ and } \quad \calG (z) = \calL(z)^{-1},
  \end{equation}
where $\alpha = \kappa/(\kappa^2 - 1)$. We observe that by Schur's complement, the top left $(m+k) \times (m+k)$ block $\calG (z)$ is precisely $(W - zI_{m+k})^{-1}$. Using tools from random matrix theory, we can deduce that $\cG(z)$ is close to the deterministic matrix $\calM (z)$ defined below. 

\begin{definition}\label{def:det_systemMain}
For each $z\in \CC_+$, let $r(z),s(z)\in\CC_+$ and a $2\times2$ complex matrix
$T(z)=(t_{ij}(z))_{i,j\in\{1,2\}}$ be the unique solution to the coupled system,
\begin{align}\label{eq:finite_systemMain}
\begin{split}
r(z) = \frac{1}{-z - \kappa \tau_m \,t_{11}(z)}, \quad
s(z) = \frac{1}{-z - \kappa \tau_k\,t_{22}(z)}, \\
\begin{pmatrix}
    t_{11}(z) & t_{12}(z) \\
    t_{21}(z) & t_{22}(z)
\end{pmatrix}\begin{pmatrix}
    \alpha \kappa - \kappa  r(z) & \alpha \\
    \alpha & \alpha \kappa  - \kappa s(z)
\end{pmatrix} = I_{2}
\end{split}
\end{align} 
We may extend $(r, s, T)$ analytically to $\DD := \CC \setminus (-\infty, \lambda_*]$ and define for all $z \in \DD$,
\begin{equation}\label{eq:MNdefMain}
\calM (z):=
\begin{pmatrix}
r(z) I_m & 0 & 0 & 0\\
0 & s(z) I_k & 0 & 0\\
0 & 0 & t_{11}(z) I_n & t_{12}(z) I_n\\
0 & 0 & t_{21}(z) I_n & t_{22}(z) I_n
\end{pmatrix}.
\end{equation}
\end{definition}

We remark that the existence and uniqueness of a solution to \eqref{eq:finite_systemMain} is non-trivial and is the content of Lemma~\ref{lem:convergence-to-finite-system}. The analytic continuation result is also nontrivial and is the content of Lemma~\ref{lem:extended-solution-of-finite-system}. We will need the following stronger version of `with high probability'.

\begin{definition}[Overwhelming probability]
    We say that a sequence of events $\{\mathcal E_n\}$ holds with overwhelming probability (w.o.p.) if for any $C>0$, $\PP(\mathcal E_n) \ge 1 - n^{-C}$ for all sufficiently large $n$. By a union bound, the union of polynomially many events that hold with overwhelming probability also holds with overwhelming probability.
\end{definition}
We are now ready to give the formal statement for the convergence of $\cG(z)$ to $\cM(z)$, given in terms of quadratic forms. Let $\CC_+=\{z\in \CC:\Im z>0\}$, and for $\eta>0$, 
\begin{equation}\label{eq:OmegaDefMain}
    \Omega_{\eta} = \{z \in \CC_+ : \Im z > \eta, \abs{z} < \eta^{-1}\}.
\end{equation}
Since the spectrum of $W$ is real, the resolvent $(W - zI_{m+k})^{-1}$ has bounded operator norm on $\Omega_\eta$, which allows us to control the error tightly. 

\begin{theorem}\label{thm:resolvConvCCAMain}
 Let $\calG$ be as in \eqref{eq:linMatrixCCAMain} and $\calM$ be as in Definition~\ref{def:det_systemMain}. Fix $\eta, \varepsilon > 0$. Then, for any unit vectors $x, y\in \RR^{m+k+2n}$, 
  \begin{equation*}
  \sup_{z\in \Omega_\eta } \abs{x^\intercal  \calG (z) y - x^\intercal  \calM (z) y} \le O(n^{-1/2 + \varepsilon})
  \end{equation*}
 with overwhelming probability, where the hidden constant depends only on $\tau_m, \tau_k, \eta, $ and $\varepsilon$.
\end{theorem}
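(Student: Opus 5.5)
The plan is the standard route to anisotropic laws for linearized Gaussian matrix models: a priori norm bounds, Gaussian concentration of the quadratic forms, an approximate self-consistent (Dyson) equation for $\EE\,\calG$ obtained by Gaussian integration by parts, stability of the system \eqref{eq:finite_systemMain}, and finally a net argument for uniformity in $z$.

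\textbf{A priori bounds.} The lower-right $2n\times 2n$ block of $\calL(z)$ is $\begin{pmatrix}\alpha\kappa & \alpha\\ \alpha & \alpha\kappa\end{pmatrix}\otimes I_n$. It is deterministic and invertible, since its determinant is $\alpha^2(\kappa^2-1)\neq 0$. Taking Schur complements with respect to this block expresses every block of $\calG(z)$ through three ingredients:
\begin{itemize}
\item the resolvent $(W-zI)^{-1}$, which has norm at most $\eta^{-1}$ on $\Omega_\eta$;
\item the matrices $U$ and $V$;
\item constant matrices.
\end{itemize}
Since $\|U\|,\|V\|\le C$ with overwhelming probability (Gaussian concentration, or Theorem~\ref{thm:CCAGordonEdge}-type bounds), we get $\|\calG(z)\|\le C\eta^{-1}$ on an event $\calE$ of overwhelming probability.

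\textbf{Concentration.} For fixed unit vectors $x,y$ and fixed $z$, the derivative of $x^\intercal\calG y$ in an entry of $U$ is $-x^\intercal \calG(\partial\calL)\calG y$. Its Euclidean gradient norm is therefore $O(n^{-1/2}\eta^{-2})$ on $\calE$, after accounting for the normalization $\sqrt m U$ having standard entries. The map is only Lipschitz on $\calE$, so I would compose it with a smooth cutoff in $\|U\|,\|V\|$, or use a Lipschitz extension. Gaussian concentration then gives $|x^\intercal\calG y-\EE x^\intercal \calG y|\le n^{-1/2+\varepsilon}$ with overwhelming probability. The same bound holds for the normalized block traces $\frac1n\Tr \calG_{33}$, $\frac1n\Tr\calG_{34}$, and so on, with the improved rate $n^{-1+\varepsilon}$.

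\textbf{Approximate Dyson equation.} Write $\calL=\calL_0+\calX$, where $\calX$ collects the $U,V$ blocks. Start from the identity $\EE[\calG\calL]=I$ and apply Stein's lemma $\EE[U_{\mu i}f]=m^{-1}\EE[\partial_{U_{\mu i}}f]$ to $\EE[\calG\calX]$. This produces
\[
\EE[\calG]\bigl(\calL_0-\Sigma(\EE\calG)\bigr)=I+\calE_n .
\]
Here $\Sigma$ is the block-diagonal self-energy. Its $m$-block is $\kappa\tau_m\cdot\frac1n\Tr\calG_{33}$ and its $k$-block is $\kappa\tau_k\cdot\frac1n\Tr\calG_{44}$. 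Its $n$-blocks are $\kappa\cdot\frac1m\Tr\calG_{11}$ and $\kappa\cdot\frac1k\Tr\calG_{22}$. The error $\calE_n$ comes from covariances of traces with entries of $\calG$ and, by the concentration step, is $O(n^{-1/2+\varepsilon})$ in quadratic forms. Because $\Sigma$ depends on $\calG$ only through four normalized traces, $\EE\calG$ is, up to this error, of the block-constant form \eqref{eq:MNdefMain}. Its parameters $(\tilde r,\tilde s,\tilde T)$ solve \eqref{eq:finite_systemMain} up to $O(n^{-1/2+\varepsilon})$. Indeed $r=1/(-z-\kappa\tau_m t_{11})$ is exactly the inverse of the $m$-block, and $T$ is the inverse of the $2\times2$ matrix $\begin{pmatrix}\alpha\kappa-\kappa r & \alpha\\ \alpha & \alpha\kappa-\kappa s\end{pmatrix}$.

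\textbf{Stability (main obstacle).} It remains to show that on $\Omega_\eta$ an approximate solution of \eqref{eq:finite_systemMain} is $O(\delta)$-close to the exact one. This is the step I expect to be hardest. The linearization is not of the standard positive type, since $\Im\calL=\diag(\Im z\, I,0)$ is degenerate, so the usual Helton--Far--Speicher contraction does not apply directly. I would first eliminate $T$, which is an explicit rational function of $(r,s)$. This reduces the system to two scalar equations for $(r,s)$. Both $r$ and $s$ are Stieltjes transforms of probability measures (namely the limits of $\frac1m\Tr$ and $\frac1k\Tr$ of the corresponding blocks of $(W-z)^{-1}$), so they satisfy $\Im r,\Im s\ge c(\eta)>0$ and $|r|,|s|\le\eta^{-1}$. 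On this compact set I would show that the Jacobian of the reduced map is invertible with a bound uniform in $z\in\Omega_\eta$. My first attempt would be the imaginary-part identity $\Im r=|r|^2(\Im z+\kappa\tau_m\Im t_{11})$, which shows the linearized operator has spectral radius strictly below $1$. The uniqueness statement of Lemma~\ref{lem:convergence-to-finite-system} supplies the exact solution to compare against. An implicit-function or continuity argument then gives $\|\EE\calG-\calM\|\le O(n^{-1/2+\varepsilon})$ in quadratic forms.

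\textbf{Uniformity in $z$.} Both $\calG$ and $\calM$ are $O(\eta^{-2})$-Lipschitz in $z$ on $\Omega_\eta$; for $\calG$ this holds on $\calE$. I would therefore take an $n^{-1}$-net of $\Omega_\eta$, which has polynomially many points, apply the pointwise bound at each net point, and take a union bound. This preserves overwhelming probability and yields the supremum bound.
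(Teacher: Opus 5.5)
Your proposal follows essentially the same route as the paper. The steps match one for one: Schur-complement a priori bounds on $\calG$, Lipschitz concentration with an extension off the good event, and Gaussian integration by parts giving an approximate version of \eqref{eq:finite_systemMain}. Stability likewise comes from the imaginary-part identities showing that the Jacobian of the reduced fixed-point map in $(r,s)$ has spectral radius below $1$, followed by a $1/n$-net for uniformity in $z$.

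The differences are minor. The paper gets the exact block-constant form of $\EE\calG$ from block-orthogonal invariance, which also yields an $O(n^{-1+\varepsilon})$ bound on $\EE\calG-\calM$. It ensures $(r_n,s_n)$ lies where the local inverse bound applies by first proving $r_n\to r$ and $s_n\to s$ locally uniformly, using Montel's theorem together with uniqueness of the limiting system. Your ``continuity argument'' would do the same job, but it needs a starting point, for instance large $|z|$ where both solutions are close to $-1/z$.
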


We show Theorem~\ref{thm:resolvConvCCAMain} in two main steps. We first show a concentration result, $\calG (z) \approx \EE \calG (z)$. Then we show a deterministic stability $\EE\calG (z) \approx \calM(z)$ through Gaussian integration by parts. These steps are rather technical and details can be found in Appendix~\ref{subsec:CCA_ConvProof}. 

As a consequence of Theorem~\ref{thm:resolvConvCCAMain}, one can easily deduce that for any $z\in \CC_+$, the traces of the blocks of $\cG(z)$ also converge to those of $\cM(z)$. This implies that $\text{Stieltjes}_{W}(z)$ converges to $(\tau_k r(z) + \tau_m s(z))/(\tau_k + \tau_m)$ as $n\to\infty$, on $\CC_+.$ Moreover, combining Theorem~\ref{thm:resolvConvCCAMain} with the bound on $\lambda_{\max}(W)$ obtained in Theorem~\ref{thm:CCAGordonEdge}, we can control the resolvent on the real line and extend the Stieltjes transform convergence result to $z\in (\lambda_*,\infty)$. This is the context of the next theorem. 

\begin{theorem}\label{thm:real-stieltjes-transform-CCAMain}
Let $\cG(z) = (\cG^{(ij)}(z))_{i,j \in [4]}$ be written as a block matrix with the same shape as $\cL(z)$ as given in \eqref{eq:linMatrixCCAMain}. Let $\calM$ be as in Definition~\ref{def:det_systemMain}. Then, for any compact set $K \subset (\lambda_*, \infty)$ and $i,j\in [4]$, we have that 
\begin{equation*}
   \sup_{z \in K}  \frac{1}{n}\abs{\Tr \, \calG^{(ij)}(z)  - \Tr \, \calM^{(ij)}(z)} = o(1),
\end{equation*}
with overwhelming probability.
\end{theorem}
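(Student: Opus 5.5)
We extend the convergence of Theorem~\ref{thm:resolvConvCCAMain} from $\Omega_\eta$ to a compact $K\subset(\lambda_*,\infty)$ by combining the edge bound of Theorem~\ref{thm:CCAGordonEdge} with analyticity and a Vitali/Montel-type argument.

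\textbf{Step 1: a uniform bound on a complex neighbourhood of $K$.} Fix $K$ and let $\delta>0$ with $K\subset(\lambda_*+3\delta,\delta^{-1})$. By Theorem~\ref{thm:CCAGordonEdge} with $\varepsilon=\delta$, with probability $1-e^{-\Omega(n)}$ (hence w.o.p.) we have $\lambda_{\max}(W)\le\lambda_*+\delta$. On this event, for all $z$ in the rectangle
\[
D=\set{z:\Re z\in(\lambda_*+2\delta,2\delta^{-1}),\ |\Im z|<1},
\]
we have $\norm{(W-z)^{-1}}\le\delta^{-1}$.

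To bound the remaining blocks of $\cG(z)$, we write them via Schur complements. The lower-right $2n\times 2n$ block $B=\alpha\begin{pmatrix}\kappa I&I\\ I&\kappa I\end{pmatrix}$ is deterministic and invertible, since $\kappa<1$. With $X=\sqrt\kappa\,\diag(U^\intercal,V^\intercal)$, the blocks of $\cG$ are $(W-z)^{-1}$, $-(W-z)^{-1}XB^{-1}$, its transpose, and $B^{-1}+B^{-1}X^\intercal (W-z)^{-1}XB^{-1}$. Since $\norm{U},\norm{V}=O(1)$ w.o.p., every block has operator norm $O(1)$ uniformly on $D$. Hence the normalized traces $f_n^{ij}(z)=\frac1n\Tr\cG^{(ij)}(z)$ are analytic on $D$ and uniformly bounded there w.o.p.

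\textbf{Step 2: identifying the limit.} By Theorem~\ref{thm:resolvConvCCAMain}, applied with coordinate unit vectors and a union bound over the $O(n)$ diagonal entries, we get $|f_n^{ij}(z)-m^{ij}(z)|\le O(n^{-1/2+\varepsilon})$ uniformly on $\Omega_\eta\cap D$ for fixed $\eta$, for instance $\eta=1/2$. Here $m^{ij}(z)=\frac1n\Tr\cM^{(ij)}(z)$ is a fixed scalar multiple of $r,s$ or $t_{ij}$, and by Lemma~\ref{lem:extended-solution-of-finite-system} it is analytic on $\DD\supset D$.

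We argue by contradiction. Suppose that with probability bounded below along some subsequence, $\sup_K|f_n-m|>c$. On the good events, the uniformly bounded analytic family $\{f_n\}$ is normal on $D$. Any subsequential local-uniform limit is analytic and agrees with $m$ on the open set $D\cap\{\Im z>1/2\}$, so by the identity theorem it equals $m$ on all of $D$. This yields a contradiction.

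This argument is stochastic, so we make it quantitative and deterministic in two ways. First, we use a deterministic two-constants (Hadamard three-lines/harmonic measure) estimate: if $g=f_n-m$ is analytic and bounded by $C$ on $D$ and bounded by $n^{-1/2+\varepsilon}$ on the subregion $\{\Im z>1/2\}$, then $\sup_K|g|\le C^{1-\theta}(n^{-1/2+\varepsilon})^\theta=o(1)$ for some $\theta=\theta(K,\delta)>0$. Second, we use conjugate symmetry $f_n(\bar z)=\overline{f_n(z)}$ to cover $\Im z<0$. Together these give the claim w.o.p. directly.

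\textbf{Main obstacle.} Two points need care. The first is making sure the bound in Theorem~\ref{thm:resolvConvCCAMain} is stated uniformly on $\Omega_\eta$, which it is. The second, which is the main one, is checking that $\cM$ is analytic and bounded on $D$. This relies on the analytic continuation from Lemma~\ref{lem:extended-solution-of-finite-system} to all of $\DD=\CC\setminus(-\infty,\lambda_*]$, including real $z>\lambda_*$, so that the $m^{ij}$ have no singularities on $K$. The two-constants estimate is then routine.
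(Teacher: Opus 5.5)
Your proposal is correct, but it moves the estimate onto the real axis by a different route than the paper.

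\textbf{The paper's route.} It fixes a small $\eta$ and applies Theorem~\ref{thm:resolvConvCCA} at the shifted points $z+i\eta$, $z\in K$. It then moves straight down using the resolvent identity $\cG(z+i\eta)-\cG(z)=i\eta\,\cG(z+i\eta)(I_{m+k}\oplus 0)\cG(z)$. Together with the Gordon edge bound, this gives an $O(\eta)$ error on the random side. Analyticity of $\cM$ near $K$ gives $O(\eta)$ on the deterministic side. The result is a bound $O(\eta)+C_\eta n^{-1/4}$, and one lets $n\to\infty$ and then $\eta\downarrow 0$.

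\textbf{Your route and what it buys.} You use the same three inputs: the edge bound (giving uniform boundedness on a complex neighbourhood), the analytic continuation of $\cM$ from Lemma~\ref{lem:extended-solution-of-finite-system}, and convergence above the axis. You then interpolate with a two-constants (harmonic measure) estimate on a rectangle around $K$. This gives an explicit polynomial rate $O(n^{-\theta(1/2-\varepsilon)})$ uniformly on $K$. It also uses Theorem~\ref{thm:resolvConvCCA} at a single height only, so you never need to know how its constant depends on $\eta$. The paper does not track $C_\eta$, which is why its argument yields only $o(1)$ through the double limit. The paper's route, in turn, is more elementary: it needs only the resolvent identity and a triangle inequality.

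\textbf{One fix needed.} The choice $\eta=1/2$ does not work in general. $\Omega_{1/2}$ requires $|z|<2$, while $\lambda_*=(1-\kappa^2)/\kappa$, and hence $K$ and your rectangle $D$, may lie far to the right, so $\Omega_{1/2}\cap D$ can even be empty. Instead, fix a height $h\in(0,1)$ for the top side of the rectangle, and choose $\eta<h$ small enough (depending on $\delta$) that this whole side lies in $\Omega_\eta$.

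\textbf{Simplifications.} The conjugate-symmetry step is not needed. Use the rectangle with $\Im z\in[-1,h]$, with the small bound only on its top side; the harmonic measure of that side is still bounded below on $K$. The normal-families paragraph can also be dropped, since the deterministic two-constants estimate supersedes it.
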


The conclusion of Theorem~\ref{thm:real-stieltjes-transform-CCAMain} is a direct consequence of Theorem~\ref{thm:resolvConvCCAMain} for $K \subset \CC_+$. Theorem~\ref{thm:real-stieltjes-transform-CCAMain} is extending this conclusion to the real line. Since $K \subset (\lambda_*, +\infty)$ contains no eigenvalues of $W$ with overwhelming probability, $\operatorname{Tr}\mathcal G^{(ij)}(\cdot)/n$ is smooth in the complex neighborhood of $K$, so we can extend transfer the result from $z + i\eta \to z$ by smoothness. The detailed proof of Theorem~\ref{thm:real-stieltjes-transform-CCAMain} can be found in the appendix. 

We end this section by a short lemma which tells us that indeed $r(z)$ and $s(z)$ are each the Stieltjes transform of probability measures supported on $(-\infty, \lambda_*]$. This implies their weighted average is again the Stieltjes transform of some probability measure. 

\begin{lemma}\label{lem:extended-solution-of-finite-systemMain}
Let $(r,s,T)$ be the unique solution of \eqref{eq:finite_systemMain} on $\CC_+$. Define $\DD$ as in Definition~\ref{def:det_systemMain}. Then there exists probability measures $\mu$ and $\nu$ supported on $(-\infty, \lambda_*]$ such that
 \[r(z) = \text{Stieltjes}_\mu(z), \quad s(z) = \text{Stieltjes}_\nu(z)\]
\end{lemma}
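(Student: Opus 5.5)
The plan is to identify $r$ and $s$ as limits of genuine Stieltjes transforms and then use the Gordon edge bound to pin down the support. For $z\in\CC_+$, consider the normalized partial traces
\[
r_n(z) := \tfrac1m \Tr\,(W-zI_{m+k})^{-1}_{[1:m,1:m]},\qquad s_n(z):=\tfrac1k\Tr\,(W-zI_{m+k})^{-1}_{[m+1:m+k,\,m+1:m+k]}.
\]
By spectral decomposition, $r_n(z)=\int (\lambda-z)^{-1}\,d\mu_n(\lambda)$, where $\mu_n=\frac1m\sum_i \|P_1 w_i\|^2\delta_{\lambda_i}$. Here $(\lambda_i,w_i)$ are the eigenpairs of $W$ and $P_1$ is the projection onto the first $m$ coordinates. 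Since $\sum_i\|P_1w_i\|^2=\Tr P_1=m$, each $\mu_n$ is a probability measure, and similarly we obtain $\nu_n$ for $s_n$. Recall that the top-left $(m+k)\times(m+k)$ block of $\cG(z)$ is $(W-zI)^{-1}$. Averaging the diagonal entries in Theorem~\ref{thm:resolvConvCCAMain}, or directly using the block-trace convergence, gives $r_n(z)\to r(z)$ and $s_n(z)\to s(z)$ pointwise on $\CC_+$, with overwhelming probability.

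The first step is to extract a limiting measure. The sequence $\{\mu_n\}$ is tight with high probability. The upper tail is controlled by Theorem~\ref{thm:CCAGordonEdge}. The lower tail is controlled by $\lambda_{\min}(W)\ge -\kappa\|U\|^2-\kappa\|V\|^2-2\|U\|\|V\|$, and $\|U\|,\|V\|=O(1)$ w.h.p. by standard Gaussian norm bounds. Passing to a subsequence, $\mu_n\to\mu$ weakly for some probability measure $\mu$, with no mass lost by tightness. Then $\mathrm{Stieltjes}_{\mu_n}(z)\to\mathrm{Stieltjes}_\mu(z)$ for each $z\in\CC_+$, so $r=\mathrm{Stieltjes}_\mu$ on $\CC_+$. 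Because a Stieltjes transform determines its measure, $\mu$ is independent of the subsequence. (Alternatively, I can bypass subsequences by checking directly that $r$ is analytic on $\CC_+$, maps $\CC_+$ to $\CC_+$, and satisfies $iy\,r(iy)\to-1$ as $y\to\infty$. The last property can be read off the system \eqref{eq:finite_systemMain}, since $T(z)$ stays bounded as $|z|\to\infty$ and hence $r(z)\sim -1/z$. The Nevanlinna representation then produces a probability measure. I prefer the random-matrix route because it gives the support information for free.)

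The second step, and the main point, is the support. For any $\varepsilon>0$, Theorem~\ref{thm:CCAGordonEdge} shows that all eigenvalues of $W$ lie in $(-\infty,\lambda_*+\varepsilon)$ with probability $1-e^{-\Omega(n\varepsilon^2)}$. Hence $\mu_n\big((\lambda_*+\varepsilon,\infty)\big)=0$ on that event. By the portmanteau theorem applied to this open set, $\mu\big((\lambda_*+\varepsilon,\infty)\big)\le\liminf\mu_n\big((\lambda_*+\varepsilon,\infty)\big)=0$. Letting $\varepsilon\downarrow0$ gives $\operatorname{supp}\mu\subseteq(-\infty,\lambda_*]$, and the same argument applies to $\nu$.

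A subtle point is that the analytic extension of $(r,s,T)$ to $\DD$ from Definition~\ref{def:det_systemMain} must agree with $\mathrm{Stieltjes}_\mu$ off $\CC_+$. This follows from the identity theorem: $\mathrm{Stieltjes}_\mu$ is analytic on $\CC\setminus(-\infty,\lambda_*]$, and $\DD$ is connected. The main obstacle I anticipate is the bookkeeping needed to turn the quadratic-form bound of Theorem~\ref{thm:resolvConvCCAMain} into a partial-trace statement and to handle the randomness. Each realization yields random measures $\mu_n$, so I would work on the intersection of the high-probability events — operator norm bounds, the Gordon edge bound, and resolvent convergence on a countable dense subset of $\CC_+$. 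Because the limit $\mu$ is deterministic and pinned down by $r$, this is enough.
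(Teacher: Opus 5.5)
Your proposal is correct and follows essentially the same route as the paper: you realize $r,s$ as limits of Stieltjes transforms of the partial spectral measures of $W$, and you use the Gordon edge bound (Theorem~\ref{thm:CCAGordonEdge}) with weak convergence to confine the limiting supports to $(-\infty,\lambda_*]$. The differences are technical: the paper works with the deterministic averaged measures $\EE\mu_n$, so it needs only the convergence of expectations in Lemma~\ref{lem:convergence-to-finite-system}, avoids almost-sure bookkeeping, and bounds $\EE\mu_n([\lambda_*+\varepsilon,\infty))\le\PP(\lambda_{\max}(W)\ge\lambda_*+\varepsilon)$; it also gets total mass one by reading $i\eta\,r(i\eta)\to-1$ off the polynomial form of the system rather than from tightness. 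Your random-measure version instead relies on Theorem~\ref{thm:resolvConvCCAMain} and operator-norm bounds, and its countable intersection of good events is best handled by Borel--Cantelli, since the failure probabilities are summable.
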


The proof of Lemma~\ref{lem:extended-solution-of-finite-systemMain} can be found in Appendix~\ref{subsec:CCA_ConvProof}. 
Lemma~\ref{lem:extended-solution-of-finite-systemMain} together with Theorem~\ref{thm:real-stieltjes-transform-CCAMain} imply that the empirical distribution of the eigenvalues of $W$ converges weakly to an explicit probability measure supported on $(-\infty, \lambda_*]$. The limiting measure can be recovered by taking the inverse Stieltjes transforms of $r(z)$ and $s(z)$ from Definition~\ref{def:det_systemMain}. For example, one checks that the right edge of the limiting measure indeed equals $\lambda_*$. For our purpose, however, we only need information about the Stieltjes transform (see Section~\ref{sec:outlierMain} below). 

%% file: outlierMain.tex
\section{Outlier analysis} \label{sec:outlierMain}
We can analyze the planted CCA model (see Section~\ref{subsec:CCA}) as a perturbation of the null model analyzed in Section~\ref{sec:convergenceMain}. Throughout this section, we will assume $\rho > \kappa$. Recall that $U\in \RR^{n\times m}$ and $V\in \RR^{n\times k}$ where $n/m \to \tau_m$ and $n/k\to \tau_k$. 

By the orthogonal invariance of the null model, we may assume without loss of generality that the planted directions $a\in \RR^m, b\in \RR^k$ are the standard basis vectors $e_1 \in \RR^m, e_1 \in \RR^k$. In this way, the planted directions show up as the first column of $U$ and $V$, which we may isolate and write,
 \[U = [u\ U_0], \qquad V=[v\ V_0],\]
where $u,v\in\RR^n$ are the correlated first columns and $U_0\in\RR^{n\times(m-1)}$, $V_0\in\RR^{n\times(k-1)}$ are independent. We may permute the rows and columns of $W$, without changing its spectrum, so that the blocks without $u, v$ are collected in the bottom right. More precisely, there is a permutation matrix $P$ so that
   \begin{equation}\label{eq:cca-permutedWMain}
       \bar{W} := P W P^\intercal =\begin{pmatrix}
                                C & D^\intercal\\
                                D & W_{0}
                                \end{pmatrix},
   \end{equation}
where $W_0\in \RR^{(m+k-2)\times (m+k-2)}$, $C \in \RR^{2 \times 2}$ and $D \in \RR^{(m+k-2) \times 2}$ are defined by
  \begin{equation*}
    W_{0}:=
        \begin{pmatrix}
        -\kappa U_0^\intercal U_0 & U_0^\intercal V_0\\
        V_0^\intercal U_0 & -\kappa V_0^\intercal V_0
        \end{pmatrix}, 
        \quad
    C= \begin{pmatrix}
        -\kappa\,u^\intercal u & u^\intercal v\\
        v^\intercal u & -\kappa\,v^\intercal v
        \end{pmatrix},
    \quad
    D= \begin{pmatrix}
        -\kappa U_0^\intercal u & U_0^\intercal v\\
        V_0^\intercal u & -\kappa V_0^\intercal v
        \end{pmatrix}.
  \end{equation*}
Note that limiting law and edge behavior of $W_{0}$ are the same as in the null model with the same ratios $\tau_m$ and $\tau_k$ as $W$. If $W$ has an outlier eigenvalue, it should appear to the right of the top eigenvalue of $W_0$. We can express this by relating the eigenvalues of $W$ with those of $W_0$ via the Schur complement identity,
 \begin{equation}\label{eq:cca-det-factorizationMain}
    \det({W}-zI_{m+k})= \det(\bar{W}-zI_{m+k}) = \det(W_{0}-zI_{m+k-2})\det \cS_n(z),
  \end{equation}
where for $z\notin \text{spec}(W_{0})$, we define,
  \begin{equation}\label{eq:cca-Schur-compMain}
    G_0(z):=(W_{0}-zI_{m+k-2})^{-1}, \qquad \cS_n(z):=C-zI_2-D^\intercal G_0(z)D.
  \end{equation}
In particular, every eigenvalue $\lambda$ of $W$ outside the spectrum of the null model $W_0$ must solve $\det \cS_n(\lambda) = 0$. Let us first identify the limiting object for $\cS_n$.
\begin{lemma}\label{lem:LimitingS_CCAMain}
 Fix a compact set $K\subset (\lambda_*, \infty)$. The matrix $\cS_n(z)$ defined in \eqref{eq:cca-Schur-compMain} converges to $\cS(z)$ w.o.p. in operator norm uniformly over $z\in K$, where
    \begin{equation}\label{eq:SDefMain}
     \cS (z):= \begin{pmatrix}
         r(z)^{-1} & -\rho\kappa^{-1}t_{12}(z)\\
         -\rho\kappa^{-1}t_{21}(z) & s(z)^{-1}
        \end{pmatrix}.
    \end{equation}
 The coefficients $r(z), s(z), t_{ij}(z)$ are the unique solution to \eqref{eq:finite_systemMain} from Definition~\ref{def:det_systemMain}.
\end{lemma}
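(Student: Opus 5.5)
The plan is to compute each entry of $\cS_n(z)=C-zI_2-D^\intercal G_0(z)D$ directly. I would exploit three facts: $(u,v)$ is independent of $(U_0,V_0)$; quadratic forms in Gaussian vectors concentrate around normalized traces; and the normalized traces of the relevant resolvent blocks are controlled by Theorem~\ref{thm:real-stieltjes-transform-CCAMain}, applied to $W_0$, which is a null-model matrix with the same limiting ratios.

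\emph{Step 1 (the matrix $C$).} The vectors $u,v\in\RR^n$ have i.i.d.\ coordinate pairs with $\EE u_\mu^2=1/m$, $\EE v_\mu^2=1/k$ and $\EE u_\mu v_\mu=\rho/\sqrt{mk}$. Standard $\chi^2$-type concentration then gives, w.o.p.,
\[
u^\intercal u=\tau_m+o(1),\qquad v^\intercal v=\tau_k+o(1),\qquad u^\intercal v=\rho\sqrt{\tau_m\tau_k}+o(1)=\rho\kappa^{-2}+o(1).
\]

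\emph{Step 2 (factoring $D$ and conditioning).} Set $Y=\diag(U_0,V_0)\in\RR^{2n\times(m+k-2)}$ and $X=\begin{pmatrix}-\kappa u & v\\ u & -\kappa v\end{pmatrix}\in\RR^{2n\times 2}$. Then $D=Y^\intercal X$, so
\[
D^\intercal G_0 D=X^\intercal\big(YG_0Y^\intercal\big)X .
\]
I condition on $(U_0,V_0)$ and work on the event, of probability $1-e^{-\Omega(n)}$ by Theorem~\ref{thm:CCAGordonEdge} applied to $W_0$, that $\lambda_{\max}(W_0)\le\lambda_*+\delta$ with $\delta<\operatorname{dist}(K,\lambda_*)/2$. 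On this event $\|YG_0(z)Y^\intercal\|_{\op}=O(1)$ uniformly over $z\in K$. The Hanson--Wright inequality then shows that each entry of $X^\intercal(YG_0Y^\intercal)X$ is, up to $O(n^{-1/2+\varepsilon})$ w.o.p., an explicit linear combination of the normalized block traces $\tfrac1n\Tr(YG_0Y^\intercal)^{(ab)}$, $a,b\in\{1,2\}$. The coefficients come from the covariance of $(u,v)$, namely $\tau_m,\tau_k,\rho\kappa^{-2}$, together with the entries $\pm\kappa,1$ of $X$.

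\emph{Step 3 (linking to the linearization).} Write $\cL(z)$ for $W_0$ with blocks $A=-zI$, $B=\sqrt\kappa\,Y^\intercal$ and $D_\alpha=\alpha\begin{pmatrix}\kappa&1\\1&\kappa\end{pmatrix}\otimes I_n$. The Schur complement formula gives $G_0=(A-BD_\alpha^{-1}B^\intercal)^{-1}$, and the bottom-right $2n\times2n$ block of $\cG$ equals $D_\alpha^{-1}+D_\alpha^{-1}B^\intercal G_0BD_\alpha^{-1}$. Hence
\[
\kappa\, YG_0Y^\intercal=D_\alpha\big(\cG^{\mathrm{br}}-D_\alpha^{-1}\big)D_\alpha .
\]
By Theorem~\ref{thm:real-stieltjes-transform-CCAMain}, uniformly on $K$ and w.o.p., $\tfrac1n\Tr\cG^{(ij)}\to t_{i-2,j-2}(z)$ for $i,j\in\{3,4\}$. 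Substituting these limits turns $\cS_n(z)$ into an explicit $2\times2$ matrix built from $z,\kappa,\alpha,\tau_m,\tau_k,\rho$ and $T(z)$.

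\emph{Step 4 (algebraic simplification).} I would then use the fixed-point system \eqref{eq:finite_systemMain}, namely $T\,\big(\alpha\kappa I-\kappa\diag(r,s)+\alpha J\big)=I$ with $J$ the off-diagonal swap, together with $r^{-1}=-z-\kappa\tau_m t_{11}$ and $s^{-1}=-z-\kappa\tau_k t_{22}$, to reduce the limit to $\cS(z)$. For instance, the $(1,1)$ entry should collapse to $-\kappa\tau_m-z-(\cdots)=-z-\kappa\tau_m t_{11}=r^{-1}$, and the $\rho$-dependent off-diagonal terms should collapse to $-\rho\kappa^{-1}t_{12}$.

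\emph{Step 5 (uniformity in $z$).} I obtain uniformity over $z\in K$ from a polynomial-size net combined with Lipschitz bounds. On the Gordon event, $z\mapsto\cS_n(z)$ has derivative $O(1)$ on $K$, and the limit $\cS$ is analytic on $\DD$.

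I expect Step 4 to be the main obstacle: the bookkeeping of the $D_\alpha$ conjugation and the covariance weights must be arranged so that the fixed-point relations cancel everything except the clean form \eqref{eq:SDefMain}. I would first verify the $\rho=0$ diagonal entries to calibrate the signs, and only then treat the cross terms.
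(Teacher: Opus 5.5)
Your proposal is correct and follows essentially the paper's route: you factor $D$ through the null data, rewrite $D^\intercal G_0 D$ via the bottom-right block of the null linearized resolvent, apply Hanson--Wright conditionally on $(U_0,V_0)$ together with Theorem~\ref{thm:real-stieltjes-transform-CCAMain}, and pass to uniformity on $K$ by a net. The Step~4 bookkeeping you flag as the main obstacle is immediate once you note $D_\alpha X=-\kappa\,\diag(u,v)$ with $\diag(u,v)\in\RR^{2n\times 2}$: then $C$ cancels the $D_\alpha^{-1}$ term exactly at finite $n$, leaving $\cS_n(z)=-zI_2-\kappa\,\diag(u,v)^\intercal\cG^{\mathrm{br}}(z)\,\diag(u,v)$, whose limit is $\cS(z)$ using only $r^{-1}=-z-\kappa\tau_m t_{11}$, $s^{-1}=-z-\kappa\tau_k t_{22}$ and $\sqrt{\tau_m\tau_k}=\kappa^{-2}$, with no further fixed-point relations needed, and this is how the paper proceeds.
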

The proof of Lemma~\ref{lem:LimitingS_CCAMain} essentially consists of using Schur's complement to express the resolvent $G_0(z)$ in terms of the linearization \eqref{eq:linMatrixCCAMain} and applying Theorem~\ref{thm:real-stieltjes-transform-CCAMain} to the resulting object to obtain limiting matrix $\cS(z)$ (details in Appendix~\ref{sec:outlierCCA-Appendix}). 
We are ready to prove the first part of Theorem~\ref{thm:CCA-outlierMainBody}.

\begin{theorem}[Theorem~\ref{thm:CCA-outlierMainBody}, (i)]\label{thm:CCA-outlier1iMain}
  Suppose $\rho > \kappa$. Then the top eigenvalue of $W$ converges w.h.p. to an outlier eigenvalue $\lambda_\out$,
      \begin{equation*}
        \lambda_1(W) = \lambda_\out + o(1) \quad \text{ and } \quad    \lambda_2(W) \leq \lambda_* + o(1).
      \end{equation*}
    where $\lambda_* = (1-\kappa^2)/\kappa$, and $\lambda_\out > \lambda_*$ is the unique solution of $\det \calS(z) = 0$. 
\end{theorem}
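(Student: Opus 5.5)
We will use the Schur complement factorization \eqref{eq:cca-det-factorizationMain} together with the uniform convergence $\cS_n\to\cS$ on compacts of $(\lambda_*,\infty)$ from Lemma~\ref{lem:LimitingS_CCAMain}. The proof has three steps.

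\textbf{Step 1: reduction to the Schur complement.} $W_0$ is a null-model matrix with the same aspect ratios, so Theorem~\ref{thm:CCAGordonEdge} gives $\lambda_1(W_0)\le\lambda_*+o(1)$ w.h.p. Cauchy interlacing applied to $\bar W$, which has $W_0$ as a principal submatrix of codimension $2$, then yields $\lambda_3(W)\le\lambda_1(W_0)\le \lambda_*+o(1)$. This only controls $\lambda_3$, so we must still show that exactly one eigenvalue lies above $\lambda_*+\delta$.

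We also bound $\lambda_1(W)$ a priori. The norms $\|U\|_{\op}$ and $\|V\|_{\op}$ are $O(1)$ w.h.p., so $\lambda_1(W)\le L$ for a constant $L$, and we may restrict attention to the compact set $K=[\lambda_*+\delta,L]$. On $K$ the matrix $W_0-z$ is invertible w.h.p. Hence the eigenvalues of $W$ in $K$ are exactly the zeros of $\det\cS_n(z)$ there, counted with multiplicity: the factorization identifies multiplicities because $\det(W_0-z)$ does not vanish on $K$.

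\textbf{Step 2: analysis of the deterministic limit $\cS(z)$.} This is the main obstacle. We need three facts:
\begin{enumerate}[label=(\alph*)]
\item $\det\cS(z)=r(z)^{-1}s(z)^{-1}-\rho^2\kappa^{-2}t_{12}(z)t_{21}(z)$ has exactly one zero $\lambda_\out$ in $(\lambda_*,\infty)$ when $\rho>\kappa$.
\item This zero is simple.
\item $\det\cS$ has no zeros when $\rho\le\kappa$.
\end{enumerate}

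The plan is to study the function $f(z)=\kappa^2 r(z)^{-1}s(z)^{-1}/(t_{12}(z)t_{21}(z))$ on $(\lambda_*,\infty)$, so that $\det\cS(z)=0$ is equivalent to $f(z)=\rho^2$. By Lemma~\ref{lem:extended-solution-of-finite-systemMain}, $r$ and $s$ are Stieltjes transforms of measures supported in $(-\infty,\lambda_*]$. Hence $r,s<0$ and both are increasing to $0$ on $(\lambda_*,\infty)$, behaving like $-1/z$ at infinity. From \eqref{eq:finite_systemMain} we can write $t_{12}=t_{21}$ explicitly: it equals $-\alpha$ divided by $\det$ of the $2\times2$ matrix there.

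The claim to establish is that $f$ is strictly increasing on $(\lambda_*,\infty)$, with $f(\lambda_*^+)=\kappa^2$ and $f(z)\to\infty$ as $z\to\infty$. The limit at infinity is clear, since $r^{-1}s^{-1}\sim z^2$ while $t_{12}\to$ a constant. For the value at the edge, we would solve the system explicitly at $z=\lambda_*$. The Gordon edge computation suggests that $\det\cS$ vanishes there exactly when $\rho=\kappa$, which is what $f(\lambda_*^+)=\kappa^2$ expresses. For monotonicity, we would first try to reduce the system to a polynomial equation in $r$ (eliminating $s$ and $T$) and verify the derivative sign directly. The fallback is a Stieltjes-representation argument: write $-\cS(z)$ as a positive-definite-derivative matrix function, using $\partial_z\cS\prec0$, which also gives simplicity of the root.

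\textbf{Step 3: transfer to the random matrix.} Fix $\delta>0$ small so that $\lambda_\out>\lambda_*+2\delta$. Lemma~\ref{lem:LimitingS_CCAMain} gives $\det\cS_n\to\det\cS$ uniformly on $K$, and the convergence extends to complex neighborhoods by analyticity and Theorem~\ref{thm:resolvConvCCAMain}. Since $\lambda_\out$ is a simple zero and $\det\cS$ is bounded away from $0$ on $K$ outside small intervals around it, Hurwitz's/Rouché's theorem gives the following: w.h.p., $\det\cS_n$ has exactly one zero in $K$, and that zero lies within $o(1)$ of $\lambda_\out$.

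Combining this with Step 1, $W$ has exactly one eigenvalue in $[\lambda_*+\delta,\infty)$, located at $\lambda_\out+o(1)$, while $\lambda_2(W)\le\lambda_*+\delta$. Letting $\delta\to0$ slowly gives the statement.
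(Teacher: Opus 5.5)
Steps 1 and 3 are sound in outline. The multiplicity bookkeeping through $\det(W_0-z)\neq 0$ on $K$ is correct. For Hurwitz you would still have to prove convergence of $\cS_n$ on a complex strip around $K$: Theorem~\ref{thm:resolvConvCCAMain} only covers $\Im z>\eta$, so you would need to rerun the $z+i\eta\to z$ argument of Theorem~\ref{thm:real-stieltjes-transform-CCAMain} for small $|\Im z|$, or use a Vitali/Borel--Cantelli argument.

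The genuine gap is Step 2. This is where the content of the theorem lies, and you leave it as a claim ``to establish.'' Two things are missing.

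\emph{The edge value.} The value $f(\lambda_*^+)=\kappa^2$, i.e.\ $\lim_{z\downarrow\lambda_*}\det\cS(z)=\lambda_*^2(1-\rho^2/\kappa^2)$, is supported only by a Gordon heuristic. Solving the system at $z=\lambda_*$ is not enough, because the limiting algebraic system has more than one solution. You must show that the Stieltjes branch converges to $r_*=s_*=-1/\lambda_*$ and $T_*=\begin{pmatrix}0&-\lambda_*\\-\lambda_*&0\end{pmatrix}$. The paper does this in two moves:
\begin{enumerate}[label=(\roman*)]
\item It proves $-1/\lambda_*<r,s<0$ on $(\lambda_*,\infty)$ by contradiction through the fixed-point equations, so the monotone limits exist.
\item It excludes $1+\lambda_*r_*\neq 0$ using $\kappa^{-1}\sqrt{r_*s_*}\ge 1+2\kappa\sqrt{r_*s_*}-(1-\kappa^2)r_*s_*$.
\end{enumerate}

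\emph{Uniqueness.} Monotonicity of $f$ is never verified, since the elimination route is not carried out. Your fallback invokes $\partial_z\cS\prec 0$ without saying where it comes from.

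The missing idea is that this monotonicity comes for free at finite $n$. Differentiating $\cS_n(z)=C-zI_2-D^\intercal G_0(z)D$ gives $\partial_z\cS_n(z)=-I_2-D^\intercal G_0(z)^2D\preceq -I_2$ for $z>\lambda_1(W_0)$. Hence $\cS_n(z)-\cS_n(z')\preceq -(z-z')I_2$ for $z>z'$ in $K$, and this Loewner inequality passes to the limit by Lemma~\ref{lem:LimitingS_CCAMain}. So both eigenvalues of $\cS$ are strictly decreasing on $(\lambda_*,\infty)$.

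Now combine this with the signs at the two ends:
\begin{enumerate}[label=(\roman*)]
\item $\cS(\lambda_*^+)$ has diagonal entries $-\lambda_*<0$ and negative determinant, so it has one positive and one negative eigenvalue.
\item $\cS(z)=-zI_2+O(1)$ at infinity, so both eigenvalues are eventually negative.
\end{enumerate}
Therefore only the larger eigenvalue crosses zero, and it does so exactly once. The crossing is simple because the smaller eigenvalue stays negative. You need neither the monotonicity of $f$ nor item (c).

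The same identity also replaces Hurwitz in Step 3. Each eigenvalue of $\cS_n$ crosses zero at most once on $(\lambda_1(W_0),\infty)$, and w.h.p.\ the smaller one is already negative at $\lambda_*+\delta$. So $\det\cS_n$ has at most one zero above $\lambda_*+\delta$. Its existence and its location near $\lambda_\out$ then follow from sign changes and uniform convergence on $K$; this is how the paper concludes.
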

\begin{proof}
In light of Lemma~\ref{lem:LimitingS_CCAMain} and the previous discussion, it suffices to determine when does the matrix $\cS(z)$ has an eigenvalue of $0$. Taking a limit $z\downarrow \lambda_*$, one can first find that
    \begin{equation}\label{eq:detS_LimitMain}
        \lim_{z \to\lambda_*^+} \det \calS(z) = \lambda_*^2\Bigl(1-\frac{\rho^2}{\kappa^2}\Bigr).
    \end{equation}
This limit is proven in Lemma~\ref{lem:S-has-negative-determinant-at-edge}. Notably, when $\rho > \kappa$, we have $\det\cS(\lambda_*^+) < 0$. As $\cS(\lambda_*^+)$ is a symmetric $2\times 2$ matrix, this implies exactly one of its eigenvalues must be positive. On the other hand, Lemma~\ref{lem:extended-solution-of-finite-systemMain} tells us that $r(z)$ and $s(z)$ are Stieltjes transforms of probability measures, which implies that in a large $z$ limit
  \[r(z) = -\frac{1}{z} + O(z^{-2}), \quad s(z) = -\frac{1}{z} + O(z^{-2}), \quad t_{21}(z) = t_{12}(z)=-\frac1\kappa+O(z^{-1}),\]
where the last equality comes from substituting the asymptotic behavior of $r(z)$ and $s(z)$ into the identities in \eqref{eq:finite_systemMain}. Hence for sufficiently large $z$, 
  \[\cS(z) = -z I_2 + O(1)\]
has strictly negative eigenvalues. Furthermore, for each fixed $n$ and every $z>\lambda_{\max}(W_{0})$, using \eqref{eq:cca-Schur-compMain} and the formula for the derivative of an inverse of a matrix \eqref{eq:derivativeInverse}, we see that
\begin{equation}\label{eq:cca-SN-derivativeMain}
\pp{\cS_n(z)}{z}=-I_2-D^\intercal (W_{0}-zI_{m+k-2})^{-2}D\prec -I_2 \prec 0.
\end{equation}
Passing through the limit from Lemma~\ref{lem:LimitingS_CCAMain}, we conclude that $\partial_z\cS(z) \preceq -I_2$. By the analyticity of $r(z)$, $s(z)$, and $t_{ij}(z)$, the entries of $\cS(z)$ are continuous for $z\in(\lambda_*, \infty)$. So the eigenvalues of $\cS(z)$ decrease continuously with $z$ and exactly one must be equal to $0$ at some unique $\lambda_\out\in (\lambda_*, \infty)$. Hence, $\det\cS(\lambda_\out) = 0$ for some unique $\lambda_\out \in (\lambda_*,\infty)$. 

The uniqueness and existence of the outlier of $W$ follows by the noticing the top eigenvalue of $W$ converges to $\lambda_{\out}$, which follows from the convergence of $\cS_n\to \cS$ in operator norm as $n\to\infty$ (details in Appendix~\ref{sec:outlierCCA-Appendix}).  
\end{proof}
 
\begin{theorem}[Theorem~\ref{thm:CCA-outlierMainBody}, (ii)]\label{thm:CCA_eigVecOverlapMain}
Suppose $\rho > \kappa$.  If $\hat w = (\hat a, \hat b)$ is a unit eigenvector corresponding to the largest eigenvalue of $W$, then w.h.p. the overlaps are bounded away from zero and satisfy
              \begin{equation*}
                |{\langle \hat a, a \rangle}|^2 =
                \frac{x_{*,1}^2}{-x_*^\intercal  \, \partial_z \cS(\lambda_{\mathrm{out}}) \,  x_*} +o(1), 
                \quad 
                |{\langle \hat b, b\rangle }|^2 =
                \frac{x_{*,2}^2}{-x_*^\intercal  \, \partial_z \cS(\lambda_{\mathrm{out}}) \,  x_*} +o(1).
              \end{equation*}
        where $x_* = (x_{*, 1}, x_{*, 2})$ is any unit vector in $\ker \cS(\lambda_{\mathrm{out}})$.
\end{theorem}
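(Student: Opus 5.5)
We work in the permuted coordinates of \eqref{eq:cca-permutedWMain}. There the planted directions $a,b$ are the first two coordinates, so $\langle \hat a,a\rangle$ and $\langle \hat b,b\rangle$ are exactly the two entries of the top block of the eigenvector. Write the unit eigenvector of $\bar W$ for $\lambda_1=\lambda_1(W)$ as $(\xi,\zeta)$, with $\xi\in\RR^2$ and $\zeta\in\RR^{m+k-2}$. The eigenvalue equation $\bar W(\xi,\zeta)=\lambda_1(\xi,\zeta)$ gives two relations. The bottom block yields $\zeta = -G_0(\lambda_1)D\xi$, which is valid because w.h.p. $\lambda_1>\lambda_*+c\ge\lambda_{\max}(W_0)$ by Theorem~\ref{thm:CCA-outlier1iMain} and Theorem~\ref{thm:CCAGordonEdge} applied to $W_0$. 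The top block then yields $\cS_n(\lambda_1)\xi=0$.

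The normalization $\|\xi\|^2+\|\zeta\|^2=1$ becomes
\[
1=\xi^\intercal\bigl(I_2+D^\intercal G_0(\lambda_1)^2D\bigr)\xi=-\xi^\intercal\,\partial_z\cS_n(\lambda_1)\,\xi,
\]
using \eqref{eq:cca-SN-derivativeMain}. It follows that $\xi=x_n/\sqrt{-x_n^\intercal\partial_z\cS_n(\lambda_1)x_n}$ for any nonzero $x_n\in\ker\cS_n(\lambda_1)$, and hence
\[
\langle\hat a,a\rangle^2=\frac{x_{n,1}^2}{-x_n^\intercal\partial_z\cS_n(\lambda_1)x_n},
\]
with the analogous formula for $b$. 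This identity is exact for each $n$. What remains is to pass to the limit in three objects: $\lambda_1$, $\cS_n$ with its kernel, and $\partial_z\cS_n$.

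The convergence argument runs as follows.
\begin{itemize}
\item By Theorem~\ref{thm:CCA-outlier1iMain}, $\lambda_1\to\lambda_\out$, and $\lambda_\out$ lies in a compact set $K\subset(\lambda_*,\infty)$.
\item By Lemma~\ref{lem:LimitingS_CCAMain}, $\cS_n\to\cS$ uniformly on $K$ in operator norm w.o.p.
\item Since $\cS$ is continuous, $\cS_n(\lambda_1)\to\cS(\lambda_\out)$.
\item In the proof of Theorem~\ref{thm:CCA-outlier1iMain}, exactly one eigenvalue of $\cS(z)$ crosses zero, and it does so strictly (its derivative is $\le -1$). The other eigenvalue is strictly negative at $\lambda_\out$, since it is already negative at $\lambda_*^+$ and is decreasing. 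Hence $\ker\cS(\lambda_\out)$ is one-dimensional.
\item By eigenvector perturbation (Davis--Kahan on $2\times2$ symmetric matrices with a spectral gap), the unit kernel vector $x_n$ of $\cS_n(\lambda_1)$ converges to $\pm x_*$.
\end{itemize}

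For the derivative, I would use the fact that the $\cS_n$ are analytic, uniformly bounded functions on a complex neighborhood of $K$. This holds w.h.p., since the spectrum of $W_0$ stays at distance $c$ from $K$ and the norms of $D$ and $C$ are bounded. Cauchy's integral formula then upgrades uniform convergence to uniform convergence of derivatives, giving $\partial_z\cS_n(\lambda_1)\to\partial_z\cS(\lambda_\out)$. One subtlety is that Lemma~\ref{lem:LimitingS_CCAMain} is stated on a real compact set. I expect this to be the main obstacle. The workaround is to note that the proof of Theorem~\ref{thm:real-stieltjes-transform-CCAMain} extends verbatim to a thin complex neighborhood of $K$, by smoothness away from the spectrum. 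Alternatively, one can avoid complex analysis: the $\cS_n$ are concave-monotone matrix functions with uniformly bounded second derivative on $K$, namely $-2D^\intercal G_0^3D$, so derivatives converge by a standard difference-quotient argument.

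Finally, the denominator satisfies $-x_*^\intercal\partial_z\cS(\lambda_\out)x_*\ge1$, so the ratio is continuous and the limiting formulas follow. For the overlaps to be bounded away from zero, it suffices that both coordinates of $x_*$ are nonzero. If $x_{*,2}=0$, then $\cS(\lambda_\out)x_*=0$ forces $-\rho\kappa^{-1}t_{21}(\lambda_\out)x_{*,1}=0$, and also $r(\lambda_\out)^{-1}x_{*,1}=0$. Since $r^{-1}$ is finite, we get $x_{*,1}\neq0$, which requires $t_{21}(\lambda_\out)=0$. I would rule this out by showing $t_{12}\ne0$ on $(\lambda_*,\infty)$. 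This follows from $\det\cS<0$ near $\lambda_*$ together with the explicit relation $t_{12}=-\alpha/\det(\cdot)$ obtained from inverting the $2\times2$ system in \eqref{eq:finite_systemMain}, which is nonzero wherever it is defined.
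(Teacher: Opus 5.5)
Your proposal is correct and follows essentially the same route as the paper. Both use the Schur-complement eigenvector identity with normalization $-x_n^\intercal\partial_z\cS_n(\lambda_{\out,n})x_n$, convergence of the kernel vector to $\pm x_*$ from the one-dimensionality of $\ker\cS(\lambda_\out)$, derivative convergence via Cauchy's integral formula, and non-vanishing of the coordinates of $x_*$.

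Two small remarks. First, the paper dissolves the obstacle you flagged with Vitali's theorem: convergence on the real segment, together with local boundedness of the holomorphic $\cS_n$ on a complex ball, already gives locally uniform convergence on that ball, so no complex version of Lemma~\ref{lem:LimitingS_CCAMain} is needed. Second, your coordinate argument can stop at the first row. Since $r(\lambda_\out)$ is a finite negative number, $r(\lambda_\out)^{-1}x_{*,1}=0$ already forces $x_{*,1}=0$, which contradicts $\|x_*\|=1$. The detour through $t_{12}\neq0$ is therefore unnecessary, and so is the garbled step claiming that finiteness of $r^{-1}$ yields $x_{*,1}\neq0$.
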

\begin{proof}
From Theorem~\ref{thm:CCA-outlier1iMain}, we know w.h.p., the matrix $W$ has exactly one outlier eigenvalue as its top eigenvalue $\lambda_{\text{out}, n}$ which satisfies $\det\cS_n(\lambda_{\out,n}) = 0$. Let $x_n\in \ker \cS_n(\lambda_{\out,n})$ be a unit vector. Let us define
  \[\hat{w}' = P^\intercal \binom{x_n}{-G_{0}(\lambda_{\out,n})D x_n} \quad \text{ and } \quad \hat w =  \frac{\hat w'}{\norm{\hat w'}}.\]
where $P$ is the permutation matrix in \eqref{eq:cca-permutedWMain}. If $x\in \ker \cS_n(\lambda)\setminus\{0\}$, then using \eqref{eq:cca-Schur-compMain}, an eigenvector of  $\bar{W}$ with eigenvalue $\lambda$ is
\begin{equation}\label{eq:cca-eigvec-schurMain}
\bar{W}\begin{pmatrix}
    x \\
    -G_{0}(\lambda)D x
\end{pmatrix} = \lambda\begin{pmatrix}
    x \\
    -G_{0}(\lambda)D x
\end{pmatrix}.
\end{equation}
Then \eqref{eq:cca-eigvec-schurMain} tells us,
  \[W \hat{w}' = P^\intercal\bar{W}\binom{x_n}{-G_{0}(\lambda_{\out,n})D x_n} = \lambda_{\out,n}P^\intercal \binom{x_n}{-G_{0}(\lambda_{\out,n})D x_n} = \lambda_{\out,n} \hat{w}'.\]
Recalling \eqref{eq:cca-SN-derivativeMain} and that $G_0(z) = (W_0 - zI_{m+k-2})^{-1}$ is symmetric, we see that
  \begin{equation*}
    \norm{\hat{w}'}^2 = \norm{x_n}^2+x_n^\intercal D^\intercal G_{0}(\lambda_{\out,n})^2 Dx_n = -x_n^\intercal  \, \partial_z\cS_n(\lambda_{\out,n}) \, x_n.
  \end{equation*}
Hence the individual overlap of $\hat{w} = (\hat a, \hat b)$ with the planted directions $(a, b)\in \RR^m \times \RR^k$ are given by the first coordinate of $\hat{a}$ and $\hat{b}$,
\begin{align}
 \begin{split}\label{eq:cca-individual-overlap-finiteMain}
      |{\langle \hat a, a \rangle}|^2
      &= |\hat a_1|^2
      = \frac{x_{n,1}^2}{\norm{\hat{w}'}^2}
      = \frac{x_{n,1}^2}{-x_n^\intercal \partial_z \, \cS_n(\lambda_{\out,n}) \, x_n},
      \\
      |{\langle \hat b, b\rangle }|^2
      &=|\hat b_1|^2
      = \frac{x_{n,2}^2}{\norm{\hat{w}'}^2}=  \frac{x_{n,2}^2}{-x_n^\intercal \partial_z \, \cS_n(\lambda_{\out,n}) \, x_n}.
  \end{split}
\end{align}
Then we take the limit of the right hand side to conclude, carried out in Appendix~\ref{sec:outlierCCA-Appendix}. 
\end{proof}

%% file: lower-boundsMain.tex
\section{Lower bounds} \label{sec:lower-boundsMain}

In this section, we prove impossibility of strong detection and weak recovery  for the CCA model when the correlation intensity is below the threshold. 

Recall the probability distributions $\mathcal P_\rho,\PP_\rho,\QQ$ defined as in Section~\ref{subsec:CCA}. Theorems~\ref{thm:CCA-TV-boundMain} and \ref{thm:cca-no-weak-recoveryMain} below together show the second bullet point of Theorem~\ref{thm:CCA-outlierMainBody} and conclude the proof. For fixed $(a,b)$, it is convenient to define the conditional
likelihood ratio $\Lambda(a,b;U,V)$ by
\begin{equation*}
\prod_{i=1}^n
\frac{1}{\sqrt{1-\rho^2}}
\exp\set{
-\frac{
\rho^2 m\langle U_i,a\rangle^2
+
\rho^2 k\langle V_i,b\rangle^2
-
2\rho\sqrt{mk}\langle U_i,a\rangle\langle V_i,b\rangle
}{
2(1-\rho^2)}
},
\end{equation*}
where $U_i^\intercal $ and $V_i^\intercal$ are the rows of $U$ and $V$, respectively. Then, we can write
\[
L_\rho(U,V)
:=
\frac{\mathrm d\PP_\rho}{\mathrm d\QQ}(U,V)
=
\EE_{\pi}\bigl[\Lambda(a,b;U,V)\bigr],
\]
where $\pi$ is the product of the uniform laws on
$\SS^{m-1}$ and $\SS^{k-1}$.

\begin{theorem}[Impossibility of Strong Detection]\label{thm:CCA-TV-boundMain}
    Suppose $\rho < \kappa$, then we have that 
    \begin{equation*}
        \TV(\PP_\rho, \QQ) = 1 - \Omega(1).
    \end{equation*}
    where TV denotes the total variation distance.  
    Equivalently, below the threshold $\rho<\kappa$, strong detection is impossible. 
\end{theorem}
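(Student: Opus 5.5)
The plan is a second-moment argument on the likelihood ratio $L_\rho$. Possibly it will need to be truncated: if $\EE_\QQ[L_\rho^2]$ stays bounded as $n\to\infty$, then Cauchy--Schwarz gives $\TV(\PP_\rho,\QQ)\le 1-\Omega(1)$. More precisely, if $\EE_\QQ L^2\le C$, then for any event $A$ we have $\PP_\rho(A)\le \sqrt{C\,\QQ(A)}$, so no test can drive both errors to zero.

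\textbf{Step 1: compute the second moment.} By Fubini,
\[\EE_\QQ L_\rho^2=\EE_{(a,b),(a',b')\sim\pi^{\otimes 2}}\,\EE_\QQ[\Lambda(a,b)\Lambda(a',b')].\]
For fixed planted pairs, the inner expectation factorizes over the $n$ rows. Each row contributes a Gaussian integral, namely the ratio of a product of two rank-two-perturbed Gaussian densities to the null density. It depends only on the overlaps $p=\langle a,a'\rangle$ and $q=\langle b,b'\rangle$. The result should be $\det(I-\rho^2 M)^{-n/2}$, where $M$ encodes the cross-covariance structure. One expects the per-row factor to be
\[(1-\rho^2 pq)^{-2}\times(\text{corrections}),\]
and more cleanly $\bigl[(1-\rho^2pq)^2\bigr]^{-1/2}$ per row, i.e.\ overall $(1-\rho^2pq)^{-n}$. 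This mirrors the standard computation for correlated Gaussian vectors: $\EE_\QQ[\Lambda\Lambda']$ is $\EE$ over $(x,y)$ of the product of two bivariate densities, and with overlap $pq$ this reduces to $(1-\rho^2pq)^{-1}$ per row. I would verify this by block-diagonalizing in the span of $a,a',b,b'$.

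\textbf{Step 2: the overlap integral.} Under $\pi^{\otimes 2}$, the overlaps $p,q$ are independent with densities proportional to $(1-p^2)^{(m-3)/2}$ and $(1-q^2)^{(k-3)/2}$. This gives a Laplace-type integral
\[\int \exp\Bigl\{n\bigl[-\log(1-\rho^2pq)+\tfrac{1}{2\tau_m}\log(1-p^2)+\tfrac{1}{2\tau_k}\log(1-q^2)\bigr]\Bigr\}\,dp\,dq .\]
Near the origin the exponent is
\[n\bigl[\rho^2pq-\tfrac{p^2}{2\tau_m}-\tfrac{q^2}{2\tau_k}\bigr],\]
which is negative definite exactly when $\rho^4<1/(\tau_m\tau_k)=\kappa^4$. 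In that case the local Gaussian integral over $\sqrt n$-scale fluctuations gives a bounded constant,
\[\bigl(1-\rho^4\tau_m\tau_k\bigr)^{-1/2}.\]

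\textbf{Main obstacle.} The hard part is the global maximization: showing the exponent is strictly negative away from $(0,0)$ on $[-1,1]^2$. Near $|pq|\to 1/\rho^2$ (impossible, since $\rho<1$) or at large overlaps, the term $-\log(1-\rho^2pq)$ may dominate $\tfrac{1}{2\tau}\log(1-p^2)$ when $\tau_m,\tau_k$ are large. If the function is not globally maximized at $0$, I would truncate: restrict $L_\rho$ to the high-probability event under $\PP_\rho$ that, say, $\|Ua\|$, $\|Vb\|$ and related statistics are typical, or equivalently condition the prior so that the contributing overlaps stay small. Then I would use the truncated second moment $\EE_\QQ[\tilde L^2]=O(1)$ together with $\TV(\PP_\rho,\tilde\PP_\rho)=o(1)$. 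For the global check without truncation, I would try the bound $-\log(1-\rho^2pq)\le -\tfrac12\log(1-\rho^2p^2)-\tfrac12\log(1-\rho^2q^2)$, which separates the variables. Each one-dimensional function $-\tfrac12\log(1-\rho^2p^2)+\tfrac{1}{2\tau_m}\log(1-p^2)$ must then be handled, with the AM-GM weighting tuned ($p\to\lambda p$, $q\to q/\lambda$) so that the quadratic parts match $\kappa$. Where concavity fails, I fall back on truncation.
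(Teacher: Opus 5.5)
Your Steps 1 and 2 follow the paper's route exactly. That includes the $\chi^2$ bound and the exact identity $\EE_\QQ L_\rho^2=\EE[(1-\rho^2\theta\phi)^{-n}]$; per row, $\frac{1}{1-\rho^2}\det(I+\Sigma R)^{-1/2}=(1-\rho^2pq)^{-1}$. It also includes the overlap densities with $c_mc_k=O(n)$ and the local Hessian condition $\rho<\kappa$.

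\textbf{The gap.} What is missing is the step you flag as the main obstacle. You need to show that $\Psi(s,t)=-\log(1-\rho^2st)+\frac{1}{2\tau_m}\log(1-s^2)+\frac{1}{2\tau_k}\log(1-t^2)\le -c(s^2+t^2)$ on all of $(-1,1)^2$. The tool you propose does not give this for arbitrary $\tau_m,\tau_k>0$, which is what the statement allows.

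Expanded in power series, your separation $-\log(1-\rho^2pq)\le-\frac12\log(1-\rho^2\lambda^2p^2)-\frac12\log(1-\rho^2\lambda^{-2}q^2)$ is termwise AM--GM, $|pq|^\ell\le\frac12(\lambda^{2\ell}p^{2\ell}+\lambda^{-2\ell}q^{2\ell})$. The weights $\lambda^{\pm 2\ell}$ vary geometrically in $\ell$.

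When $\tau_m,\tau_k\ge1$ it does close the argument, and no concavity is needed. Take $\lambda^2=\sqrt{\tau_k/\tau_m}$, so that $x:=\rho^2\lambda^2<\tau_m^{-1}\le1$. Then $-\log(1-xu)\le -x\log(1-u)$ for $x\in[0,1]$ bounds the $p$-part by $\frac12(\tau_m^{-1}-x)\log(1-p^2)$, and the $q$-part is symmetric.

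In general it fails. Take $\tau_m=1/2$ and $\tau_k=4$, so $\kappa^4=1/2$. Keeping $-\frac12\log(1-\rho^2\lambda^2p^2)$ finite as $|p|\to1$ forces $\rho^2\lambda^2\le1$. The quadratic term in $q$ forces $\rho^2\lambda^{-2}\le\tau_k^{-1}=1/4$. So no $\lambda$ works once $\rho^4>1/4$, even though $\rho<\kappa$.

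\textbf{The truncation fallback.} This does not repair the gap. The prior is uniform on spheres, and the second moment depends only on the overlaps between two \emph{independent} draws. For each fixed $(a,b)$, the set of $(a',b')$ with prescribed overlaps has the same measure, by rotation invariance. Hence restricting a single draw to an event of probability $1-\varepsilon$ lowers $\EE[(1-\rho^2\theta\phi)^{-n}]$ by at most a factor $1-2\varepsilon$. Truncating instead on data statistics such as $\|Ua\|,\|Vb\|$ would be a genuinely different conditional second-moment argument, which you have not set up.

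\textbf{The missing idea.} None of this is needed: $\Psi$ is globally maximized at the origin whenever $\rho<\kappa$. The fix is to apply AM--GM termwise with $\ell$-\emph{independent} weights, which is what the paper does.

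Since $\rho<1$ and $\rho^2\sqrt{\tau_m\tau_k}<1$, pick $q$ with $\rho^2\sqrt{\tau_m\tau_k}<q<1$. Write $-\log(1-\rho^2st)\le-\log(1-\rho^2|st|)=\sum_{\ell\ge1}\rho^{2\ell}|st|^\ell/\ell$. For every $\ell\ge1$,
$\rho^{2\ell}|s|^\ell|t|^\ell\le\rho^{2\ell}\frac{\sqrt{\tau_m\tau_k}}{2}\bigl(s^{2\ell}/\tau_m+t^{2\ell}/\tau_k\bigr)\le\frac q2\bigl(s^{2\ell}/\tau_m+t^{2\ell}/\tau_k\bigr)$, using $\rho^{2\ell}\le\rho^2$.

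Summing against $1/\ell$ gives $-\log(1-\rho^2st)\le\frac q2\bigl(A(s)/\tau_m+A(t)/\tau_k\bigr)$, where $A(x)=-\log(1-x^2)$. Since $(m-3)/n\to\tau_m^{-1}$ and $(k-3)/n\to\tau_k^{-1}$, for large $n$ and some $q_1\in(q,1)$ this yields $\Psi_n\le-\frac{1-q_1}{2}\bigl(A(s)/\tau_m+A(t)/\tau_k\bigr)\le-c(s^2+t^2)$. The overlap integral is then $O(1)$ directly.
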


\begin{proof}
    Recall the standard TV to $\chi^2$-inequality (see \cite{ITBook}) given by 
    \begin{equation*}
        \TV(\PP_\rho, \QQ) \leq \max\set{\frac{1}{2}, \frac{\chi^2(\PP_\rho \| \QQ)}{1+ \chi^2(\PP_\rho \| \QQ)}}.
    \end{equation*}
    Therefore, it suffices to show $\chi^2(\PP_\rho \| \QQ) = O(1)$. In fact, it suffices to show $\EE_{\QQ} L_\rho^2 = O(1)$. Suppose $(a_j, b_j)$ for $j \in \set{1,2}$ are two independent copies of $(a,b)$. Then,
    \begin{align*}
        \EE_{\QQ} [L_\rho^2]=\frac{1}{(1-\rho^2)^n}\EE \exp \Big(-\sum_{i = 1}^n \sum_{j = 1}^2 \tfrac{\rho^2{m\langle U_i, a_j\rangle^2 + \rho^2k\langle{ V_i, b_j\rangle}^2} - 2\rho\sqrt{mk} \langle{ U_i, a_j}\rangle\langle{  V_i, b_j}\rangle}{2(1-\rho^2)}\Big).
    \end{align*}
    Let $\theta =\langle a_1, a_2\rangle$ and $\phi = \langle b_1, b_2\rangle$ be the random overlaps. Let us collate the independent vectors and take $g = (g_i)_{i\in [n]}$ where,
    \begin{equation*}
        g_i^\intercal = (
            \sqrt{m}\langle U_i, a_1 \rangle,  \sqrt{m}\langle U_i, a_2 \rangle,
            \sqrt{k}\langle V_i, b_1 \rangle,
            \sqrt{k}\langle V_i, b_2 \rangle ),
    \end{equation*}
    Then we can compactly rewrite the the objective as follows:
    \begin{equation*}
        \EE_{\QQ} L_\rho^2 =  \EE \set{\EE \sqb{ \frac{1}{1-\rho^2}\exp\set{-\frac{1}{2} \, g^\intercal R g } \Big\vert  \, \,\theta, \phi \, }^n }.
    \end{equation*}
    where $g \sim \calN(0 ,\Sigma)$ and $\Sigma = \Sigma(\theta,\phi)$ and $R$ are $4 \times 4$ matrices defined by
    \begin{equation*}
        \Sigma(\theta, \phi) = \begin{pmatrix}
            1\ & \theta\ & 0\ & 0 \\
             \theta\ & 1\ & 0\ & 0\\
             0\ & 0\ & 1\ & \phi \\
             0\ & 0\ & \phi\ & 1 \\
         \end{pmatrix} \quad \text{ and } \quad  R = \frac{1}{1-\rho^2}\begin{pmatrix}
            \ \rho^2 & \ 0 & -\rho &\ 0 \\
            \ 0 & \ \rho^2 & \ 0 & -\rho \\
            -\rho & \ 0 & \ \rho^2 & \ 0 \\
            \ 0 & -\rho & \ 0 & \ \rho^2 
            \end{pmatrix}.
    \end{equation*}
    Using Gaussian integration and the fact that $\det(I + \Sigma R) = (1-\theta \phi \rho^2)^2/(1-\rho^2)^2$, we can rewrite our objective as
    \begin{align*}
         \EE_{\QQ} L_\rho^2 &= (1-\rho^2)^{-n}\EE \set{ \det (I + \Sigma({\theta, \phi}) R)^{-n/2}} = \EE \sqb{(1-  \theta \phi \rho^2 )^{-n}}.
    \end{align*}
 It remains to estimate this last expectation. The overlaps have densities
    \begin{equation*}
        f_m(s)=c_m(1-s^2)^{(m-3)/2}, \qquad
        f_k(t)=c_k(1-t^2)^{(k-3)/2}.
    \end{equation*}
    where $c_m=O(\sqrt m)$ and $c_k=O(\sqrt k)$, so $c_m c_k=O(n)$. Define the saddle exponent
    \begin{equation}
        \Psi_n(s,t):=-\log(1-\rho^2st)+\frac{m-3}{2n}\log(1-s^2)+\frac{k-3}{2n}\log(1-t^2). \label{eq:cca-saddle-exponentMain}
    \end{equation}
    Then, we can rewrite the expectation as integral
    \begin{equation}
        \EE \sqb{(1-\rho^2\theta\phi)^{-n}}
        =c_m c_k\int_{-1}^{1}\int_{-1}^{1}\exp\{n\Psi_n(s,t)\}\,\mathrm ds\,\mathrm dt . \label{eq:cca-double-integralMain}
    \end{equation}
    We now locate the saddle. Let $A(x):=-\log({1-x^2})$. Since $\rho<\kappa=(\tau_m\tau_k)^{-1/4}$, we may choose $q<q_1<1$ such that $\rho^2 \max\{1, \sqrt{\tau_m \tau_k}\}<q$. For all $s,t\in(-1,1)$,
    \begin{align*}
        -\log(1-\rho^2st)
        &\le -\log(1-\rho^2|s||t|) \\
        &=\sum_{\ell\ge1}\frac{\rho^{2\ell}|s|^\ell |t|^\ell}{\ell}
        \le \sum_{\ell\ge1}\frac{q^\ell}{2\ell}\paren{\frac{s^{2\ell}}{\tau_m}+\frac{t^{2\ell}}{\tau_k}}
        \le \frac q2\paren{\frac{A(s)}{\tau_m}+\frac{A(t)}{\tau_k}}.
    \end{align*}
    Since $(m-3)/n\to1/\tau_m$ and $(k-3)/n\to1/\tau_k$, for all sufficiently large $n$ the previous display implies the global  bound
    \begin{equation}
        \Psi_n(s,t)
        \le
        -\frac{1-q_1}{2}\paren{\frac{A(s)}{\tau_m}+\frac{A(t)}{\tau_k}}
        \le -c(s^2+t^2), \label{eq:cca-saddle-boundMain}
    \end{equation}
    for some constant $c>0$ depending only on $\rho,\tau_m,\tau_k$. In particular, $\Psi_n$ has its unique maximizer at $(0,0)$. Combining \eqref{eq:cca-double-integralMain} and \eqref{eq:cca-saddle-boundMain},
    \begin{equation*}
        \EE \sqb{(1-\rho^2\theta\phi)^{-n}}
        \le Cn\int_{\RR^2}\exp\{-cn(s^2+t^2)\}\,\mathrm ds\,\mathrm dt
        =O(1).
    \end{equation*}
    This proves $\chi^2(\PP_\rho\|\QQ)=O(1)$ and concludes the proof.
\end{proof}

We now prove that weak recovery is impossible when $\rho <\kappa$. To this end, we will need the following estimate. 

\begin{lemma}\label{lem:cca-overlap-numeratorMain}
Suppose $\rho<\kappa$. For every fixed $\delta>0$, define
\[
I_\delta(U,V)
:=
\EE_{\pi^{\otimes 2}}
\Bigl[
\Lambda(a^1,b^1;U,V)
\Lambda(a^2,b^2;U,V)
\mathbf 1
\bigl\{
\langle a^1,a^2\rangle ^2+\langle b^1,b^2\rangle^2\ge\delta
\bigr\}
\Bigr].
\]
Then, we have that $\EE_{\QQ} I_\delta(U,V)\le \exp\set{-\Omega_\delta(n)}.$
\end{lemma}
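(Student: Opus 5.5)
The plan is to reuse the second-moment computation from the proof of Theorem~\ref{thm:CCA-TV-boundMain}, now keeping the indicator. Apply Fubini and take the $\QQ$-expectation inside for fixed $(a^1,b^1,a^2,b^2)$. The Gaussian integration is exactly the same as before, since the indicator depends only on the overlaps $\theta=\langle a^1,a^2\rangle$ and $\phi=\langle b^1,b^2\rangle$. This gives
\[
\EE_\QQ I_\delta(U,V)=\EE\Bigl[(1-\rho^2\theta\phi)^{-n}\,\mathbf 1\{\theta^2+\phi^2\ge\delta\}\Bigr].
\]
Writing this with the overlap densities $f_m,f_k$ as in \eqref{eq:cca-double-integralMain}, we obtain
\[
\EE_\QQ I_\delta = c_mc_k\int\!\!\int_{\{s^2+t^2\ge\delta\}} \exp\{n\Psi_n(s,t)\}\,\mathrm ds\,\mathrm dt,
\]
with $\Psi_n$ as in \eqref{eq:cca-saddle-exponentMain}.

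Next, invoke the global bound \eqref{eq:cca-saddle-boundMain}. It holds for all sufficiently large $n$ and gives $\Psi_n(s,t)\le -c(s^2+t^2)$ on $(-1,1)^2$, with $c>0$ depending only on $\rho,\tau_m,\tau_k$. This is where $\rho<\kappa$ enters. On the region $\{s^2+t^2\ge\delta\}$ we therefore have $n\Psi_n\le -cn\delta$.

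To keep a clean exponential, split the exponent as $-cn(s^2+t^2)\le -\tfrac{c}{2}n\delta-\tfrac{c}{2}n(s^2+t^2)$ on that region. This yields
\[
\EE_\QQ I_\delta\le Cn\,e^{-cn\delta/2}\int_{\RR^2}e^{-\frac c2 n(s^2+t^2)}\,\mathrm ds\,\mathrm dt = O(1)\cdot e^{-cn\delta/2}.
\]
Alternatively, one can simply bound the area of the domain by $4$ and get $O(n)e^{-cn\delta}$. Either way the result is $\exp\{-\Omega_\delta(n)\}$, as claimed.

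The only point needing care is that the bound \eqref{eq:cca-saddle-boundMain} is uniform up to the boundary $|s|,|t|\to1$. There $A(s)\to\infty$, so the bound only becomes stronger, and no separate treatment of the boundary is needed. The polynomial prefactor $c_mc_k=O(n)$ is absorbed into the exponential. So there is no real obstacle beyond re-verifying that the Gaussian computation of $\det(I+\Sigma R)$ applies pointwise in $(\theta,\phi)$, which it does.
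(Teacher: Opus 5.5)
Your proposal is correct and follows the paper's proof essentially verbatim. You condition on the replicas to get $\EE_\QQ I_\delta=\EE\bigl[(1-\rho^2\theta\phi)^{-n}\mathbf 1\{\theta^2+\phi^2\ge\delta\}\bigr]$, rewrite it as the restricted double integral of $e^{n\Psi_n}$, and apply the global saddle bound $\Psi_n\le -c(s^2+t^2)\le -c\delta$, absorbing $c_mc_k=O(n)$ into the exponential; splitting the exponent to remove the $O(n)$ prefactor is a harmless refinement of the same argument.
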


\begin{proof}
Let $\theta:=\langle a^1,a^2\rangle$ and $\phi:=\langle b^1,b^2\rangle $. By the same Gaussian integration and determinant computation as in the proof of Theorem~\ref{thm:CCA-TV-boundMain}, conditionally on $(a^1,b^1,a^2,b^2)$,
\[
\EE_{\QQ}
\Bigl[
\Lambda(a^1,b^1;U,V)
\Lambda(a^2,b^2;U,V)
\Bigr]
=
(1-\rho^2\theta\phi)^{-n}.
\]
Using the overlap densities and the saddle exponent $\Psi_n$ from \eqref{eq:cca-saddle-exponentMain}, we therefore have
\[
\EE_{\QQ} I_\delta
=
 c_m c_k\int_{-1}^{1}\int_{-1}^{1}
\exp\{n\Psi_n(s,t)\}\mathbf 1\{s^2+t^2\ge\delta\}\,\mathrm ds\,\mathrm dt .
\]
The global saddle bound \eqref{eq:cca-saddle-boundMain} gives the following estimate whenever $s^2 + t^2 \geq \delta$
\[
\Psi_n(s,t)\le -c(s^2+t^2)\le -c\delta.
\]
where $c>0$ is the constant from \eqref{eq:cca-saddle-boundMain}. Since $c_m c_k=O(n)$, the conclusion follows.
\end{proof}

\begin{prop}\label{prop:cca-posterior-overlapMain}
Let $\rho<\kappa$ and $\mu_{U,V}$ denote the posterior law of
$(a,b)$ given $(U,V)$. Then, as $n\to\infty$, 
\begin{gather*}
\EE_{\PP_\rho}\Big[\mathbb{E}
_{\mu_{U,V}^{\otimes 2}}\big[
\langle a^1,a^2\rangle^2+\langle b^1,b^2\rangle^2
\big]\Big]=o(1)\,.
\end{gather*}
\end{prop}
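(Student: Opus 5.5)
The plan is to rewrite the posterior overlap as a null-model expectation and then control it with Lemma~\ref{lem:cca-overlap-numeratorMain} together with the contiguity estimate behind Theorem~\ref{thm:CCA-TV-boundMain}.

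\emph{Rewriting the overlap.} By Bayes' rule, the posterior is $\mu_{U,V}(\mathrm d a,\mathrm d b)=\Lambda(a,b;U,V)\,\pi(\mathrm da,\mathrm db)/L_\rho(U,V)$, so for any event $E$ on replica pairs,
\[
\mu_{U,V}^{\otimes 2}(E)=\frac{\EE_{\pi^{\otimes2}}[\Lambda(a^1,b^1)\Lambda(a^2,b^2)\mathbf 1_E]}{L_\rho(U,V)^2}.
\]
The overlap $\langle a^1,a^2\rangle^2+\langle b^1,b^2\rangle^2$ is bounded by $2$. Hence, for any fixed $\delta>0$,
\[
\EE_{\mu_{U,V}^{\otimes2}}[\langle a^1,a^2\rangle^2+\langle b^1,b^2\rangle^2]\le \delta+2\,\frac{I_\delta(U,V)}{L_\rho(U,V)^2}.
\]
It therefore suffices to show $\EE_{\PP_\rho}\big[\min\{1, I_\delta/L_\rho^2\}\big]\to 0$ for each fixed $\delta$, and then let $\delta\downarrow 0$.

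\emph{Passing to the null model.} Changing measure gives
\[
\EE_{\PP_\rho}\Big[\min\{1,I_\delta/L_\rho^2\}\Big]=\EE_{\QQ}\Big[\min\{L_\rho,I_\delta/L_\rho\}\Big].
\]
Fix $\eta>0$ and split according to whether $L_\rho\ge\eta$ or $L_\rho<\eta$.
\begin{itemize}
\item On $\{L_\rho\ge \eta\}$, the integrand is at most $I_\delta/\eta$. By Lemma~\ref{lem:cca-overlap-numeratorMain}, its $\QQ$-expectation is at most $\eta^{-1}e^{-\Omega_\delta(n)}$.
\item On $\{L_\rho<\eta\}$, the integrand is at most $L_\rho<\eta$. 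So this part contributes at most $\eta$.
\end{itemize}
Together these give
\[
\EE_{\PP_\rho}\Big[\min\{1,I_\delta/L_\rho^2\}\Big]\le \eta+\eta^{-1}e^{-\Omega_\delta(n)},
\]
and sending $n\to\infty$ and then $\eta\to0$ yields $o(1)$.

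This argument only needs Lemma~\ref{lem:cca-overlap-numeratorMain} and the identity $\EE_\QQ L_\rho=1$. It does not need the second-moment bound $\EE_\QQ L_\rho^2=O(1)$, which could also be used via Cauchy--Schwarz as an alternative. Taking $\delta\downarrow 0$ then gives the proposition.

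The main obstacle I expect is bookkeeping rather than substance. The key point is that the normalization $L_\rho^2$ sits in the denominator, so Lemma~\ref{lem:cca-overlap-numeratorMain} cannot be applied to the ratio directly. The truncation at level $\eta$ together with the change of measure resolves this. I would also check that $I_\delta\le L_\rho^2$ pointwise, which holds because $I_\delta$ is the restriction of $L_\rho^2=\EE_{\pi^{\otimes2}}[\Lambda\Lambda]$ to the event $\{\langle a^1,a^2\rangle^2+\langle b^1,b^2\rangle^2\ge\delta\}$. This makes the minimum with $1$ harmless.
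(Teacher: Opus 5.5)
Your proposal is correct and follows essentially the paper's argument: Bayes' formula, change of measure to $\QQ$, a split according to the size of $L_\rho$ using $I_\delta\le L_\rho^2$ on the small part and Lemma~\ref{lem:cca-overlap-numeratorMain} on the large part, then $\delta\downarrow 0$. The only difference is cosmetic. The paper splits at the exponentially small level $e^{-\gamma n}$, with $\gamma$ below the lemma's exponent, which yields an exponential rate; your fixed level $\eta$, followed by $\eta\to 0$, gives just $o(1)$, which is all the statement needs.
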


\begin{proof}
Let $R_{12}:=\langle a^1,a^2\rangle^2+\langle b^1,b^2\rangle^2.$ By Bayes' formula, we have for any $\delta>0$, 
\[
\mu_{U,V}^{\otimes 2}(R_{12}\ge\delta)
=
\frac{I_\delta(U,V)}{L_\rho(U,V)^2}.
\]
Since $\PP_\rho$ has density $L_\rho$ with respect to $\QQ$, we have
\[
\EE_{\PP_\rho}
\big[\mu_{U,V}^{\otimes 2}(R_{12}\ge\delta)\big]
=
\EE_{\QQ}\Big[
\frac{I_\delta(U,V)}{L_\rho(U,V)}\Big].
\]
Let $c=c(\delta)>0$ be the exponent from Lemma~\ref{lem:cca-overlap-numeratorMain}, and we choose
$\gamma\in(0,c)$. Splitting according to the event
$\{L_\rho(U,V)\ge e^{-\gamma n}\}$ gives
\begin{align*}
\EE_{\QQ}
\Big[\frac{I_\delta(U,V)}{L_\rho(U,V)}\Big]
&\le
e^{\gamma n}\EE_{\QQ}[I_\delta(U,V)]
+
\EE_{\QQ}
\left[
\frac{I_\delta(U,V)}{L_\rho(U,V)}
\mathbf 1\{L_\rho(U,V)<e^{-\gamma n}\}
\right].
\end{align*}
Since $I_\delta\le L_\rho^2$, the second term is bounded by $e^{-\gamma n}$. Using Lemma~\ref{lem:cca-overlap-numeratorMain},
\[
\EE_{\PP_\rho}\big[
\mu_{U,V}^{\otimes 2}(R_{12}\ge\delta)\big]
\le
C e^{-(c-\gamma)n}+e^{-\gamma n}=o(1).
\]
Since $0\le R_{12}\le2$, we have for every $\delta>0$,
\[
\EE_{\PP_\rho}
\mu_{U,V}^{\otimes 2}[R_{12}]
\le
\delta
+
2\EE_{\PP_\rho}\big[
\mu_{U,V}^{\otimes 2}(R_{12}\ge\delta)\big].
\]
Taking $n\to\infty$ and then $\delta \downarrow0$ yields the desired result. 
\end{proof}

\begin{theorem}[Impossibility of Weak Recovery]\label{thm:cca-no-weak-recoveryMain}
Suppose $\rho<\kappa$. Let
$\hat a=\hat a(U,V)\in\RR^m$ and
$\hat b=\hat b(U,V)\in\RR^k$ be arbitrary estimators satisfying $\norm{\hat{a}} \leq 1$ and $\| \hat b\| \leq 1$  almost surely. Then, for every fixed $\delta>0$, we have
\[
\mathcal P_\rho\big(
\langle \hat a,a\rangle^2+\langle\hat b,b\rangle^2\ge\delta
\big)=o(1)\,.
\]
\end{theorem}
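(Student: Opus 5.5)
The plan is to reduce weak recovery to the vanishing posterior overlap of Proposition~\ref{prop:cca-posterior-overlapMain}, using the standard Nishimori / second-moment argument. Since $(a,b)$ given $(U,V)$ is distributed as $\mu_{U,V}$, the identity
\[
\EE_{\mathcal P_\rho}\big[\langle \hat a,a\rangle^2\big]
=\EE_{\PP_\rho}\Big[\EE_{\mu_{U,V}}\langle \hat a,a\rangle^2\Big]
\]
holds for any estimator $\hat a=\hat a(U,V)$. The same identity holds for $b$.

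The key step is to bound the inner posterior expectation. Write $\hat a^\intercal \EE_{\mu_{U,V}}[aa^\intercal]\hat a \le \norm{\EE_{\mu_{U,V}}[aa^\intercal]}_{\op}$, using $\norm{\hat a}\le 1$. Then bound the operator norm by the Frobenius norm:
\[
\norm{\EE_{\mu}[aa^\intercal]}_{F}^2=\EE_{\mu^{\otimes 2}}\big[\langle a^1,a^2\rangle^2\big].
\]
This follows from expanding $\Tr(\EE[a^1a^{1\intercal}]\EE[a^2a^{2\intercal}])$ with independent replicas. Hence
\[
\EE_{\mu_{U,V}}\langle \hat a,a\rangle^2\le \big(\EE_{\mu_{U,V}^{\otimes2}}\langle a^1,a^2\rangle^2\big)^{1/2},
\]
and similarly for $b$.

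Taking $\EE_{\PP_\rho}$ and applying Jensen (concavity of the square root), we get
\[
\EE_{\mathcal P_\rho}\big[\langle \hat a,a\rangle^2+\langle \hat b,b\rangle^2\big]\le 2\Big(\EE_{\PP_\rho}\EE_{\mu_{U,V}^{\otimes 2}}\big[\langle a^1,a^2\rangle^2+\langle b^1,b^2\rangle^2\big]\Big)^{1/2}=o(1)
\]
by Proposition~\ref{prop:cca-posterior-overlapMain}. Markov's inequality then gives $\mathcal P_\rho(\langle \hat a,a\rangle^2+\langle\hat b,b\rangle^2\ge\delta)\le o(1)/\delta=o(1)$ for each fixed $\delta$.

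There is no real obstacle here, since the heavy lifting (the truncated second moment of Lemma~\ref{lem:cca-overlap-numeratorMain}) is already done in Proposition~\ref{prop:cca-posterior-overlapMain}. The only points needing care are measurability and the fact that $\hat a$ depends only on $(U,V)$, so it is constant under the posterior. Randomized estimators can be handled by conditioning on the independent auxiliary randomness.
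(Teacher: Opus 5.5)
Your proposal is correct and is essentially the paper's argument: the posterior identity, the bound $\hat a^\intercal M_a \hat a\le \|M_a\|_{\mathrm F}$ with $\|M_a\|_{\mathrm F}^2=\EE_{\mu_{U,V}^{\otimes 2}}\langle a^1,a^2\rangle^2$, Cauchy--Schwarz (equivalently Jensen), Proposition~\ref{prop:cca-posterior-overlapMain}, and Markov's inequality. The only differences are cosmetic: you pass explicitly through the operator norm, and you combine the $a$ and $b$ terms into a single bound, whereas the paper treats them separately.
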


\begin{proof}
By Markov inequality, suffices to show that $\EE_{\mathcal P_\rho}
\big[
\langle \hat a,a\rangle^2+\langle\hat b,b\rangle^2
\big]=o(1)$. By the posterior identity, we have that 
\[
\EE_{\mathcal P_\rho}\big[\langle \hat a,a\rangle^2\big]
=
\EE_{\PP_\rho}\Big[
\EE_{\mu_{U,V}}
\bigl[
\langle \hat a,a\rangle^2
\bigr]\Big].
\]
For fixed $(U,V)$, define $M_a(U,V):=
\EE_{\mu_{U,V}}[aa^\intercal].$ Since $\norm{\hat a}\le1$, we have
\[
\EE_{\mu_{U,V}}
\bigl[
\langle \hat a,a\rangle^2
\bigr]
=
\hat a^\intercal M_a(U,V)\hat a
\le
\norm{M_a(U,V)}_{\mathrm F}.
\]
This together with Cauchy-Schwarz inequality implies that
\begin{align*}
    \EE_{\PP_\rho}\big[\langle \hat a,a\rangle^2\big]\le \EE\big[\|M_a(U,V)\|_{\operatorname{F}}\big]
&\le
\left(
\EE_{\PP_\rho}
\norm{M_a(U,V)}_{\mathrm F}^2
\right)^{1/2} = \left(\EE_{\PP_\rho} \EE_{\mu_{U,V}^{\otimes 2}}
\bigl[
\langle a^1,a^2\rangle^2
\bigr]\right)^{1/2}\,,
\end{align*}
which is $o(1)$ by Proposition~\ref{prop:cca-posterior-overlapMain}. Applying the same argument to $\hat b$ concludes the proof. 
\end{proof}

%% file: TAP-heuristicsAppendix.tex
\section{TAP heuristics for CSWig and CSWish}\label{sec:TAP_Heuristics}
In this section we provide TAP heuristics for the correlated spiked models CSWig and CSwish. As in Section~\ref{sec:TAP_HeuristicsMain}, we replace the
Gaussian spike priors used in the main theorems by Rademacher proxy priors.
The scaling is chosen so that the planted rank-one terms have the same
order as in the Gaussian model. The resulting linearized TAP equations are
the same as those used in the rigorous analysis.

\subsection{Correlated spiked Wigner model}\label{subsec:corSpikedWigner}
Let us first compute TAP equations for the single-view spiked Wigner model:
  \[U = \frac{\alpha}{n} aa^\intercal + Z,\]
where $a \in\{\pm 1\}^n$ and $Z\sim \text{GOE}(n)$ is sampled from the Gaussian Orthogonal Ensemble. That is, $Z_{ij}\sim \cN(0, 1/n)$ for $i\ne j$ and $Z_{ii}\sim \cN(0, 2/n)$.  We will let the prior on $a$ be the uniform distribution over $\{\pm 1\}^n$. Then the posterior distribution of the signal $a$ given $U$.
  \begin{equation}\label{eq:post-classicSpikedWigner}
    \PP(a  \mid U) \propto \prod_{i < j}\exp\paren{-\frac{1}{2}\paren{\sqrt{n}U_{ij} - \frac{\alpha}{\sqrt{n}} a_i a_j}^2}\propto \exp\paren{\alpha \sum_{i < j} U_{ij}a_ia_j}.
  \end{equation}
where we used the fact that $a_i^2 = 1$. The second step is to apply the BP heuristics. This amounts to writing down a system of marginals of the entries $a_i$, in the following sense. 
  \begin{itemize}
      \item $m_{i\to j}(a_i)$ is the marginal of $a_i$ when the interaction between $a_i$ and $a_j$ is removed. 
      \item $\hat{m}_{j\to i}(a_i)$ is the marginal of $a_i$ when only the interaction between $a_i$ and $a_j$ is kept. 
  \end{itemize}
Written in equations, we have for all $i\ne j \in [n]$,
  \begin{align}
    m_{i\to j}(a_i) & \propto \prod_{k \ne j} \hat{m}_{k\to i}(a_i),\label{eq:spikedWigner_mEquation} \\
    \hat{m}_{j\to i}(a_i) & \propto \sum_{a_j\in \{\pm 1\}}\exp\paren{\alpha U_{ij} a_i a_j} m_{j\to i}(a_j).
  \end{align}
Because $a_i\in \{\pm 1\}$, we may express 
 \[m_{i\to j}(a_i) \propto \exp\paren{h_{i\to j} a_i}, \qquad \hat{m}_{j\to i}(a_i) \propto \exp(\hat{h}_{j\to i} a_i),\]
and study how $h$ and $\hat{h}$ behave. As we expect $h_{i\to j}$ to be of constant order, we may approximate $\hat{m}$ to the first exponential order in $n$,
 \[\hat{m}_{j\to i}(a_i) \propto \cosh\paren{\alpha U_{ij} a_i + h_{j\to i}} \appropto \exp\paren{\alpha U_{ij} a_i \tanh(h_{j\to i})}.\]
Substituting this back, we obtain the BP equation for $h_{i\to j}$,
 \begin{equation}\label{eq:BP-classicSpikerWigner}
   h_{i\to j} = \sum_{k\ne j} \alpha U_{ik} \tanh(h_{k\to i}).
 \end{equation}
The third step is to add back in the cavity (the missing $j$-term) in \eqref{eq:BP-classicSpikerWigner} to obtain the TAP equation. We introduce the cavity-less $H_i$,
  \begin{equation}\label{eq:cavitylessHClassicWigner}
    H_i = \sum_{j\in [n]}\alpha U_{ij}\tanh(h_{j\to i}) =  \sum_{j \in [n]} \alpha X_{ij} \tanh\paren{H_j - \alpha U_{ij} \tanh(h_{i\to j})}.
  \end{equation}
Taylor expanding around each $H_j$ and keeping up to the $n^{-1}$ terms, and applying the law of large numbers we obtain the TAP equation.
  \begin{align*}
    H_i & = \sum_{j\in [n]} \alpha U_{ij} \tanh(H_j) - \alpha^2\sum_{j\in [n]} U_{ij}^2 (1-\tanh(H_j)^2)\tanh(H_i) \\
        & \approx \sum_{j\in [n]} \alpha U_{ij} \tanh(H_j) - \frac{\alpha^2}{n}\sum_{j\in [n]}(1-\tanh(H_j)^2)\tanh(H_i)
  \end{align*}

Finally we linearize these equations. As $\tanh(H_i)$ is describing the mean of $a_i$, because the prior for $a_i$ is symmetric, we expect at the uninformative point (i.e. as we approach the threshold for $\alpha$) the mean of $a_i$ should go to $0$ (i.e. $\tanh(H_i)$ vanishes). In particular, this means $\tanh(H_i)\sim H_i$ and $\tanh(H_i)^2 \approx 0$. Hence by approximation, we obtain a linear equation for the vector $H = (H_1,\ldots, H_n)$,
  \begin{equation}\label{eq:meanEq-classicSpikedWigner}
    (\alpha U - \alpha^2 I_n) H = H \quad \Leftrightarrow \quad U H = \paren{\alpha + \frac{1}{\alpha}} H\,.
  \end{equation}

The TAP heuristics for the two-view correlated spiked
Wigner model defined in Section~\ref{subsec:wigner} follows almost immediately from the above computation. The observed matrices can be written as
\begin{equation}\label{eq:corWigner-model}
  U = \frac{\alpha}{n} aa^\intercal + Z_a,
  \qquad
  V = \frac{\beta}{n} bb^\intercal + Z_b,
\end{equation}
where $Z_a,Z_b\sim \mathrm{GOE}(n)$ are independent and $(a,b)$ are
$\rho$-correlated Rademacher vectors. The posterior can be written as
\begin{equation}\label{eq:corWigner-posterior}
  \PP(a,b\mid U,V)
  \propto
  \exp\Biggl\{
    \alpha\sum_{i<j} U_{ij}a_i a_j
    +
    \beta\sum_{i<j} V_{ij}b_i b_j
    +
    \eta\sum_{i\in [n]} a_i b_i
  \Biggr\},
\end{equation}
where $\eta$ is defined such that $\tanh(\eta)=\rho$. 
We define the marginals of interest in the same way as before, with superscripts to differentiate between the marginals of $a$ and $b$. The main difference lies in the equation \eqref{eq:spikedWigner_mEquation} for $m^a_{i\to j}$ and $m^b_{i\to j}$ where there is an extra contribution from the correlation between $a$ and $b$. This gives rise to a simple modification of \eqref{eq:meanEq-classicSpikedWigner} for $a$. Due to the correlation term, the marginal of $a_i$ without the interaction with $a_j$ becomes,
  \[m^a_{i\to j}(a_i) \propto \hat{m}^a_{b\to i}(a_i)\prod_{k \ne j} \hat{m}^a_{k\to i}(a_i)\]
where $\hat{m}_{b\to i}(a_i)$ denotes the marginal of $a_i$ when only the interaction with $b_i$ is kept. In particular,
  \begin{equation}\label{eq:hatmSpikedWigner}
    \hat{m}^a_{b\to i}(a_i)\propto \sum_{b_i\in \{\pm 1\}} \exp\paren{\eta a_i b_i} m^b_{i\to b}(b_i), \qquad m^b_{i\to b}(b_i) = \prod_{j\in[n]} \hat{m}_{j\to i}^b(b_i).
  \end{equation}
As $a_i\in \{\pm 1\}$, we may parametrize 
  \[\hat{m}^a_{b\to i}(a_i)\propto \exp(\hat{h}^a_{b\to i}a_i), \qquad m^b_{i\to b}(b_i) \propto \exp(h^b_{i\to b} b_i).\]
Notice that $h^b_{i\to b}$ is precisely the cavity-less quantity for $b_i$ we introduced in \eqref{eq:cavitylessHClassicWigner} in the single-matrix case. We define $\hat{h}^b_{a\to i}$ and $h^a_{i\to a}$ in similar fashion. The cavity-less quantity in the correlated case includes the extra correlation term, 
  \[H^a_i = h^a_{i\to a} + \hat{h}^a_{b\to i}.\]
Taking the expectation of both sides of the relation \eqref{eq:hatmSpikedWigner} for $\hat{m}^a_{b\to i}$, we see that
  \[\tanh(\hat{h}^a_{b\to i}) = \frac{\cosh(\eta + h^b_{i\to b}) - \cosh(-\eta + h^b_{i\to b}))}{\cosh(\eta + h^b_{i\to b}) + \cosh(-\eta + h^b_{i\to b}))} = \rho \tanh(h^b_{i\to b}).\]
Or after linearization, $\hat{h}^a_{b\to i} \approx \rho h^b_{i\to b}$. The computation in for the single-view case tells us that after linearization,
  \[h^a_{i\to a} \approx [(\alpha U - \alpha^2 I_n) H^a]_i, \qquad h^b_{i\to b} \approx [(\beta V - \beta ^2 I_n) H^b]_i.\]
Hence, the corresponding linearized TAP-equation for $H^a$ would become:
  \[(\alpha U - \alpha^2 I_n) H^a + \rho(\beta V - \beta^2 I_n) H^b = H^a.\]
  A similar equation for $H^b$ also holds. 
Written in matrix form,
\begin{equation*}
  W^\rho
  \begin{pmatrix}H^a\\ H^b\end{pmatrix}
  =
  \begin{pmatrix}H^a\\ H^b\end{pmatrix},
  \qquad
  W^\rho :=
  \begin{pmatrix}
    \widetilde U & \rho\widetilde V \\
    \rho\widetilde U & \widetilde V
  \end{pmatrix}.
\end{equation*}
where $\tilde{U}$ and $\tilde{V}$ are
  \[\tilde{U} = \alpha U - \alpha^2 I_n, \qquad \tilde{V} = \beta V - \beta^2 I_n\]
Thus the TAP heuristics predicts that the relevant estimator is the top eigenvector of $W_\rho$. On the other hand, using order parameter equations one predicts for $\alpha,\beta\le 1$ the threshold is at
\begin{equation*}
  \kappa
  :=
  \paren{\frac{(1-\alpha^2)(1-\beta^2)}{\alpha^2\beta^2}}^{1/4}.
\end{equation*}
Replacing $\rho$ with the threshold $\kappa$, we obtain the threshold matrix that appears in Theorem~\ref{thm:Spiked-outlierMainBody},
\[
  W = W^\kappa =
  \begin{pmatrix}
    \widetilde U & \kappa\widetilde V \\
    \kappa\widetilde U & \widetilde V
  \end{pmatrix}.
\]

\subsection{Correlated spiked Wishart model}
We now repeat the same cavity calculation for spiked Wishart
model, defined in Section~\ref{subsec:wishart}. We once again work with the signals $a,b\in\{\pm1\}^m$ that are $\rho$-correlated Rademacher vectors. We observe matrices
\begin{equation*}
  U = \sqrt{\frac{\alpha}{m}}\,ua^\intercal + Z_a,
  \qquad
  V = \sqrt{\frac{\beta}{m}}\,vb^\intercal + Z_b,
\end{equation*}
where $\sqrt m\,u,\sqrt m\,v\sim\cN(0,I_n)$ are independent and
$Z_a,Z_b$ have i.i.d. $\cN(0,m^{-1})$ entries. We write
$U_\mu,V_\mu\in\RR^m$ for the $\mu$-th rows of $U,V$, and we set
\[
  \tau := \frac{n}{m}.
\]

If $\eta$ is defined by $\tanh(\eta)=\rho$, then after integrating out the Gaussian
latent vectors $u$ and $v$ row-by-row we obtain
\begin{equation}\label{eq:Wishart-posterior}
  \PP(a,b\mid U,V)
  \propto
  \exp\Biggl\{
    \frac{\alpha}{2(1+\alpha)}\sum_{\mu\in[n]}
      \langle U_\mu,a\rangle^2
    +
    \frac{\beta}{2(1+\beta)}\sum_{\mu\in[n]}
      \langle V_\mu,b\rangle^2
    + \eta\sum_{i\in[m]} a_i b_i
  \Biggr\}.
\end{equation}
Notice the correlation structure is identical to that of the correlated spiked Wigner in \eqref{eq:corWigner-posterior}. By repeating the same argument in Section~\ref{subsec:corSpikedWigner}, it suffices to understand the TAP-heuristics for the posterior Gibbs measure of a single spiked Wishart matrix
 \[\PP(a \mid U)\propto \exp\paren{ \frac{\alpha}{2(1+\alpha)}\sum_{\mu=1}^n
      \langle U_\mu,a\rangle^2}.\]
We note that this is the Gibbs measure of the Hopfield model with inverse temperature $\alpha/(1+\alpha)$ and TAP heuristics about its general behavior can be found in \cite{Mezard2017}. As in the Canonical Correlation model, let us define
 \begin{equation}\label{eq:WishartXDefinition}
  x_\mu(a):=\sum_{i\in [m]} U_{\mu i}a_i,
  \qquad
  X_{\mu\to i} = \sum_{j \ne i} U_{\mu j} \tanh(h_{j\to\mu})
\end{equation}
Consider the following marginals for the entries of $a$.
\begin{itemize}
    \item $m_{i\to \mu}(a_i)$ is the marginal of $a_i$ given all interactions with the $U^\mu$ term has been removed. 
    \item $\hat{m}_{\mu\to i}(a_i)$ is the marginal of $a_i$ when only the interactions involving $U^\mu$ is kept. 
\end{itemize}
Written in equations, we have for all $i\in [m]$ and $\mu\in [n]$,
\begin{align*}
    m_{i\to \mu}(a_i) & \propto \prod_{\nu\ne \mu} \hat{m}_{\nu\to i}(a_i) \\
    \hat{m}_{\mu\to i}(a_i) & \propto \sum_{a_j, j\ne i}\exp\paren{\frac{\alpha}{2(1+\alpha)} x_\mu(a)^2} \prod_{j\ne i} m_{j\to \mu}(a_j) 
\end{align*}
Because $a_i\in \{\pm 1\}$, we may express 
 \[m_{i\to \mu}(a_i) \propto \exp\paren{h_{i\to \mu} a_i}, \qquad \hat{m}_{\mu\to i}(a_i) \propto \exp(\hat{h}_{\mu\to i} a_i).\]
So that $\tanh(h_{i\to \mu})$ is the mean of $m_{i\to \mu}(a_i)$. By the central limit theorem, under the product measure of $\prod m_{j\to \mu}$, we may treat the terms inside the sum as independent Gaussians
  \[x_\mu(a) \sim\cN\paren{U_{\mu i}a_i + \sum_{j\ne i}U_{\mu j} \tanh(h_{j\to \mu}), \sum_{j\in [m]}U_{\mu j}^2(1-\tanh^2(h_{j\to \mu}))}\]
where we used the fact that $a_j^2 = 1$ and $U^\mu_j$ has variance $1/m$. The `replica-symmetric' heuristic allows us to substitute the variance by a deterministic quantity $(1-q)$ where
  \[q \approx \frac{1}{m} \sum_{j\in [m]}\tanh^2(h_{j\to\mu}).\]
Hence $\hat m_{\mu\to i}(a_i)$ can be written as the Gaussian expectation
\begin{align*}
  \hat m_{\mu\to i}(a_i)
  &\propto
  \EE_{x\sim \cN(X_{\mu\to i},\,1-q)}
  \exp\paren{\frac{\alpha}{2(1+\alpha)} (x + U_{\mu i} a_i)^2} \propto \exp\paren{\frac{\alpha X_{\mu\to i} U_{\mu i} a_i}{1+\alpha - \alpha(1-q)}},
\end{align*}
where $X_{\mu\to i}$ was defined in \eqref{eq:WishartXDefinition}. Hence we see that
  \[h_{i\to \mu} = \sum_{\nu \ne \mu} \frac{\alpha X_{\nu\to i} U_{\nu i}}{1+\alpha - \alpha(1-q)}.\]
We introduce the cavity-less quantities,
  \[H_i = \sum_{\mu\in [n]} \frac{\alpha X_{\mu\to i} U_{\mu i}}{1+\alpha - \alpha(1-q)}, \qquad X_\mu  = X_{\mu\to i} + U_{\mu i}\tanh(h_{i\to \mu}) =\sum_{j \in [m]} U_{\mu j} \tanh(h_{j\to\mu}).\]
Taylor expanding around $H_j$ up to terms of order $n^{-1}$, and applying the law of large numbers to the $U^2_{\mu j}$ term, we see that
  \[X_\mu \approx \sum_{j\in[m]} U_{\mu j} \tanh(H_j) - \frac{\alpha X_\mu}{1+\alpha - \alpha(1-q)}\frac{1}{m}\sum_{j\in[m]} (1-\tanh^2(H_j))\]
Since the prior is symmetric, the uninformative fixed point is at $h_{i\to\mu}=0$. This suggests that $\tanh(h_{i\to\mu})\approx h_{i\to\mu}$, $\tanh^2(h_{i\to \mu})\approx 0$, and $q \approx 0$. So linearizing (we do not care about terms of order lower than $n^{-1}$), we see that
  \[X_\mu \approx \sum_{j\in[m]} U_{\mu j} H_j - \alpha X_\mu.\]
Substituting this back into $H_i$,
  \[H_i\approx \sum_{\mu\in [n]} \frac{\alpha}{1+\alpha}\sum_{j\in[m]}U_{\mu i}U_{\mu j} H_j - \sum_{\mu\in[n]}\alpha U_{\mu i}^2 H_i.\]
Applying the law of large numbers to the $U_{\mu i}^2$ term and recalling $\tau = n/m$, we get the matrix equation for $H$,
  \[H = \paren{\frac{\alpha}{1+\alpha} U^\intercal U - \alpha \tau I_m}H.\]
Going back to our model, we find
\begin{equation*}
  \widetilde U := \frac{\alpha}{1+\alpha}U^\intercal U - \alpha\tau I_m,
  \qquad
  \widetilde V := \frac{\beta}{1+\beta}V^\intercal V - \beta\tau I_m.
\end{equation*}
Applying the same argument for the correlation term as in Section~\ref{subsec:corSpikedWigner}, the matrix equation becomes,
\begin{equation*}
  W^\rho
  \begin{pmatrix}H^a\\ H^b\end{pmatrix}
  =
  \begin{pmatrix}H^a\\ H^b\end{pmatrix},
  \qquad
W^\rho :=
  \begin{pmatrix}
    \widetilde U & \rho\widetilde V \\
    \rho\widetilde U & \widetilde V
  \end{pmatrix}.
\end{equation*}
Similarly as before, one predicts using the order parameter equations that for $\tau\alpha^2,\tau\beta^2\le 1$, the threshold is at 
\begin{equation*}
  \kappa
  :=
  \paren{\frac{(1-\tau\alpha^2)(1-\tau\beta^2)}{\tau^2\alpha^2\beta^2}}^{1/4},
\end{equation*}
Replacing $\rho$ by the threshold $\kappa$, 
we obtain the matrix $W$ that appears in Theorem~\ref{thm:Spiked-outlierMainBody}:
\[
  W = W^\kappa =
  \begin{pmatrix}
    \widetilde U & \kappa\widetilde V \\
    \kappa\widetilde U & \widetilde V
  \end{pmatrix}.
\]

%% file: gordonAppendix.tex
\section{Gordon's inequality and proof of Theorem~\ref{thm:gordon-usefulMain}}\label{sec:gordonAppendix}

We recall Gordon's inequality \cite{Gordon85, TOH15, VRbook} for centered Gaussian vectors.
\begin{theorem}[Gordon's inequality]\label{thm:gordon-og}
Suppose $\{X_{ij}\}$ and $\{Y_{ij}\}$ are two centered Gaussian processes indexed by finite sets $i \in \cI$ and $j \in \cJ$ such that for all $i,j,k$ and $l \neq i$,
\[
\EE X_{ij}^2 = \EE Y_{ij}^2, \qquad
\EE X_{ij}X_{ik} \ge \EE Y_{ij}Y_{ik}, \qquad
\EE X_{ij}X_{lk} \le \EE Y_{ij}Y_{lk}.
\]
Then for any choice of parameters $\lambda_{ij}\in \RR$,
\[
\PP\paren{\bigcap_{i\in \cI} \bigcup_{j\in \cJ} \{X_{ij} \ge \lambda_{ij}\}}
\le
\PP\paren{\bigcap_{i\in \cI} \bigcup_{j\in \cJ} \{Y_{ij} \ge \lambda_{ij}\}}.
\]
In particular, for any $\psi:\cI \times \cJ\to\RR$ and $t \in \RR$,
\[
\PP\paren{\min_{i\in \cI} \max_{j\in \cJ} \{X_{ij} + \psi(i,j)\} \ge t}
\le
\PP\paren{\min_{i\in \cI} \max_{j\in \cJ} \{Y_{ij} + \psi(i,j)\} \ge t}.
\]
\end{theorem}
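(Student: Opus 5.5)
The plan is the classical Gaussian interpolation argument of Slepian–Kahane–Gordon, applied to a smoothed version of the indicator of the event. Write $N=\abs{\cI}\abs{\cJ}$ and view $X=(X_{ij})$ and $Y=(Y_{ij})$ as centered Gaussian vectors in $\RR^N$. Without loss of generality, take $X$ and $Y$ independent.

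\textbf{Step 1: rewrite the event.} The indicator of $\bigcap_i\bigcup_j\{x_{ij}\ge\lambda_{ij}\}$ equals
\[
\prod_{i}\Bigl(1-\prod_{j}\mathbf 1\{x_{ij}<\lambda_{ij}\}\Bigr).
\]

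\textbf{Step 2: smooth it.} Fix $\varepsilon>0$. Choose smooth nonincreasing functions $\varphi_{ij}:\RR\to[0,1]$ with $\varphi_{ij}(x)=1$ for $x\le\lambda_{ij}-\varepsilon$ and $\varphi_{ij}(x)=0$ for $x\ge\lambda_{ij}$. Set
\[
f(x)=\prod_i\Bigl(1-\prod_j\varphi_{ij}(x_{ij})\Bigr).
\]
Since $\varphi_{ij}\le \mathbf 1\{x<\lambda_{ij}\}$, we have $f\ge$ the indicator pointwise. As $\varepsilon\downarrow0$, $f$ decreases pointwise to the indicator of $\bigcap_i\bigcup_j\{x_{ij}>\lambda_{ij}-0\}$, i.e. of the event with $\ge$. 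The indicator $\mathbf 1\{x<\lambda\}$ is left-continuous in the right sense, so this limit is exact. Dominated convergence then lets us pass to the limit on both sides. If the boundary needs care, one can alternatively perturb $\lambda_{ij}$ slightly and use continuity from above of the probabilities.

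\textbf{Step 3: interpolate.} Set $Z(t)=\sqrt t\,X+\sqrt{1-t}\,Y$ for $t\in[0,1]$. By Gaussian integration by parts,
\[
\frac{d}{dt}\EE f(Z(t))=\frac12\sum_{(i,j),(l,k)}\bigl(\EE X_{ij}X_{lk}-\EE Y_{ij}Y_{lk}\bigr)\,\EE\,\partial_{ij}\partial_{lk}f(Z(t)).
\]
We check the sign of each type of term.
\begin{itemize}
\item Diagonal terms, $(l,k)=(i,j)$, vanish because the variances are equal.
\item Same row, $l=i$ and $k\ne j$: only the $i$-th factor of $f$ depends on both coordinates, so
\[
\partial_{ij}\partial_{ik}f=-\varphi_{ij}'\varphi_{ik}'\prod_{j'\ne j,k}\varphi_{ij'}\prod_{i'\ne i}\Bigl(1-\prod_{j'}\varphi_{i'j'}\Bigr)\le0,
\]
since $\varphi'\varphi'\ge0$ and every other factor lies in $[0,1]$. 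The covariance coefficient is $\ge0$, so these terms contribute $\le0$.
\item Different rows, $l\ne i$: the derivative is a product of $-\varphi_{ij}'\prod_{j'\ne j}\varphi_{ij'}\ge0$, the analogous nonnegative factor for row $l$, and nonnegative factors for the remaining rows. Hence $\partial_{ij}\partial_{lk}f\ge0$. The coefficient is $\le0$, so these terms also contribute $\le0$.
\end{itemize}
Therefore $t\mapsto\EE f(Z(t))$ is nonincreasing, which gives $\EE f(X)=\EE f(Z(1))\le\EE f(Z(0))=\EE f(Y)$. Letting $\varepsilon\downarrow0$ as in Step 2 yields the first inequality.

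\textbf{Step 4: the "in particular" statement.} Put $\lambda_{ij}=t-\psi(i,j)$. Then
\[
\Bigl\{\min_i\max_j\{X_{ij}+\psi(i,j)\}\ge t\Bigr\}=\bigcap_i\bigcup_j\{X_{ij}\ge\lambda_{ij}\},
\]
and the same holds for $Y$, so the second inequality follows from the first.

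The only delicate point is the limit $\varepsilon\to0$ at boundary points where $X_{ij}=\lambda_{ij}$. These have probability zero unless some coordinate is degenerate, and in that case the monotone choice of $\varphi_{ij}$ in Step 2 still handles it. The sign bookkeeping in Step 3 is the real heart of the proof but is mechanical.
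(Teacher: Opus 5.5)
The paper does not prove this statement. It is quoted from \cite{Gordon85, TOH15, VRbook} and used as a black box in the proof of Theorem~\ref{thm:gordon-useful}, so there is no in-paper argument to compare against. Your proposal is a correct, self-contained proof by the standard Gaussian interpolation (Kahane--Gordon) route, and each ingredient checks out:
\begin{itemize}
\item the product form $f=\prod_i\bigl(1-\prod_j\varphi_{ij}\bigr)$;
\item the identity $\frac{d}{dt}\EE f(Z(t))=\frac12\sum\bigl(\EE X_{ij}X_{lk}-\EE Y_{ij}Y_{lk}\bigr)\EE\,\partial_{ij}\partial_{lk}f(Z(t))$;
\item the sign bookkeeping: same-row mixed partials are $\le 0$ against coefficients $\ge 0$, cross-row mixed partials are $\ge 0$ against coefficients $\le 0$, and diagonal terms vanish by equal variances;
\item the substitution $\lambda_{ij}=t-\psi(i,j)$, which gives exactly the min--max form the paper needs.
\end{itemize}
Compared with the citation, your route gives the slightly stronger functional inequality $\EE f(X)\le \EE f(Y)$ for every such product-form $f$. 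It also needs no nondegeneracy of the covariances, since Gaussian integration by parts holds for singular Gaussian vectors. This is worth having here: the processes in Theorem~\ref{thm:gordon-useful} are linear images of a fixed number of Gaussians indexed by large finite nets, so their covariance matrices are singular.

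One simplification: the boundary issue in your last paragraph does not arise. Each $\varphi^{\varepsilon}_{ij}$ vanishes on $[\lambda_{ij},\infty)$, and at any fixed $x<\lambda_{ij}$ it equals $1$ once $\varepsilon<\lambda_{ij}-x$. Hence $\varphi^{\varepsilon}_{ij}(x)\to\mathbf 1\{x<\lambda_{ij}\}$ at every $x$, including $x=\lambda_{ij}$. So $f^{\varepsilon}$ converges everywhere to the indicator of the event with $\ge$. Dominated convergence then applies with no exceptional set, no assumption on degenerate coordinates, and no need to choose the family monotone in $\varepsilon$.
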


Recall the definitions from Section~\ref{sec:gordonMain}:
\begin{equation*}
    {G} = (G_1, \ldots, G_k), \quad {g}=(g_1,\ldots,g_k), \quad  {h}=(h_1,\ldots,h_k), 
\end{equation*}
where $G_j \in \RR^{n_j \times m_j}$, $g_j \in \RR^{n_j}$, and $h_j \in \RR^{m_j}$ for all $j \in [k]$. Let ${w}\in\RR^k$ and let
\[
\calS_{{x}} \subseteq \RR^{n_1} \times \cdots \times \RR^{n_k}, \qquad
\calS_{{y}} \subseteq \RR^{m_1} \times \cdots \times \RR^{m_k}, \qquad
\calS_{{z}} \subseteq \RR^d.
\]
be compact sets. Suppose $\psi : \calS_{{x}} \times \calS_{{y}} \times \calS_{{z}} \to \RR$ is a continuous function. We define three objects of optimization 
\begin{align*}
    \Phi({G}) &=
\min_{{z}\in \calS_{{z}}}
\min_{{x}\in \calS_{{x}}}
\max_{{y}\in \calS_{{y}}}
\left\{
\sum_{j=1}^k x_j^\top G_j y_j + \psi({x},{y},{z})
\right\}. \\
\Psi({G}, {w})
&=
\min_{{z}\in \calS_{{z}}}
\min_{{x}\in \calS_{{x}}}
\max_{{y}\in \calS_{{y}}}
\left\{
\sum_{j=1}^k x_j^\top G_j y_j
+\sum_{j=1}^k w_j \|x_j\|_2 \|y_j\|_2
+\psi({x},{y},{z})
\right\}. \\
\phi({g},{h})
&=
\min_{{z}\in \calS_{{z}}}
\min_{{x}\in \calS_{{x}}}
\max_{{y}\in \calS_{{y}}}
\left\{
\sum_{j=1}^k \|y_j\|_2\, g_j^\top x_j
+\sum_{j=1}^k \|x_j\|_2\, h_j^\top y_j
+\psi({x},{y},{z})
\right\}.
\end{align*}

We recall the statement for Theorem~\ref{thm:gordon-usefulMain}. 
\begin{theorem}[Theorem~\ref{thm:gordon-usefulMain}]\label{thm:gordon-useful}
Suppose the entries of ${G}, {w}, {g}, {h}$ are i.i.d.\ $\calN(0,1)$. For all $t\in\RR$,
\[
\PP(\Phi({G})<t)\le 2^k\,\PP(\Psi({G},{w})<t)
\le 2^k\,\PP(\phi({g},{h})\le t).
\]
\end{theorem}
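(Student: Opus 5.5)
The plan is to prove the two inequalities separately. The first one, $\PP(\Phi(G)<t)\le 2^k\PP(\Psi(G,w)<t)$, will come from a symmetrization argument. The second one, $\PP(\Psi(G,w)<t)\le\PP(\phi(g,h)\le t)$, is a direct application of Gordon's inequality (Theorem~\ref{thm:gordon-og}). By compactness and continuity of $\psi$, we may first replace $\calS_x,\calS_y,\calS_z$ with finite $\delta$-nets, prove everything on the nets, and pass $\delta\to0$ at the end using continuity of the objectives and monotone limits of events. This limiting step is where the strict-versus-non-strict inequality ($<t$ versus $\le t$) is absorbed. Throughout, the pair $(z,x)$ plays the role of the index $i\in\cI$ (the outer minimum) and $y$ plays the role of $j\in\cJ$.

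\emph{Step 1: Gordon comparison.} Put $X_{(z,x),y}=\sum_j x_j^\top G_j y_j+\sum_j w_j\|x_j\|\|y_j\|$ and $Y_{(z,x),y}=\sum_j\|y_j\|g_j^\top x_j+\sum_j\|x_j\|h_j^\top y_j$. A direct computation gives
\[\EE X X' - \EE Y Y'=\sum_j\bigl(\|x_j\|\|x_j'\|-\langle x_j,x_j'\rangle\bigr)\bigl(\|y_j\|\|y_j'\|-\langle y_j,y_j'\rangle\bigr)\ge 0,\]
and this difference vanishes when $x=x'$ or $y=y'$. Hence the variances match, the covariances agree for a common outer index (so the condition $\EE X_{ij}X_{ik}\ge\EE Y_{ij}Y_{ik}$ holds with equality), and $\EE X_{ij}X_{lk}\ge\EE Y_{ij}Y_{lk}$ for $l\ne i$. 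This is the reverse of the hypothesis needed in Theorem~\ref{thm:gordon-og}. So we apply the theorem with the roles of $X$ and $Y$ swapped, i.e.\ to the complementary events, in the form $\PP(\min_i\max_j\{X+\psi\}\ge t)\ge\PP(\min_i\max_j\{Y+\psi\}\ge t)$. Taking complements yields $\PP(\Psi<t)\le\PP(\phi<t)\le\PP(\phi\le t)$. I expect this to be the main obstacle: one has to set up the index sets carefully, with the outer $\min$ over the joint index $(z,x)$, so that the sign conditions come out in the correct direction. It is also why the auxiliary term $\sum_j w_j\|x_j\|\|y_j\|$ is inserted, since it makes the variances match exactly.

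\emph{Step 2: Removing $w$.} Let $E_+=\{w_j\ge0\ \forall j\}$, which has probability $2^{-k}$ and is independent of $G$. On $E_+$ we have $\sum_j w_j\|x_j\|\|y_j\|\ge0$ pointwise, so $\Psi(G,w)\ge\Phi(G)$. Therefore
\[\PP(\Psi<t)\ge\PP(\{\Psi<t\}\cap E_+)\ge\PP(\{\Phi<t\}\cap E_+)=2^{-k}\PP(\Phi<t).\]
The inclusion $\{\Psi<t\}\cap E_+\supseteq\{\Phi<t\}\cap E_+$ holds because on $E_+$ the inequality $\Phi\le\Psi$ means $\Psi<t$ implies $\Phi<t$, and we need the reverse implication. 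To fix this, take instead $E_-=\{w_j\le0\ \forall j\}$. On $E_-$ we get $\Psi\le\Phi$, so $\{\Phi<t\}\cap E_-\subseteq\{\Psi<t\}$. Using independence of $w$ and $G$ gives $2^{-k}\PP(\Phi<t)\le\PP(\Psi<t)$. Combining this with Step 1 proves the theorem.
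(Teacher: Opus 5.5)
Your proposal is correct and follows the paper's proof essentially step for step. It uses the same covariance identity $\sum_j(\|x_j\|\|x_j'\|-\langle x_j,x_j'\rangle)(\|y_j\|\|y_j'\|-\langle y_j,y_j'\rangle)\ge 0$ to feed Gordon's inequality with the auxiliary process in the role of $X$ and the $w$-augmented primary process in the role of $Y$ (your labels start out swapped, which is why you had to reverse them), and the same sign event $\{w_j\le 0\ \forall j\}$, of probability $2^{-k}$ and independent of $G$, to remove $w$.

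The only blemishes are expository. You should drop the abandoned $E_+$ attempt in Step 2. You should also note that swapping the roles of the two processes is not the same as passing to complementary events: you complement only afterwards, to turn $\PP(\Psi\ge t)\ge\PP(\phi\ge t)$ into the stated bound.
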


\begin{proof}[Proof of Theorem~\ref{thm:gordon-useful}, Theorem~\ref{thm:gordon-usefulMain}]
By a standard $\varepsilon$-net discretization argument \cite{TOH15}, it suffices to prove the claim when
$\calS_{{x}},\calS_{{y}},\calS_{{z}}$ are finite; the compact case follows by continuity of $\psi$.

For $i=({x},{z})\in \calS_{{x}}\times\calS_{{z}}$ and $j={y}\in\calS_{{y}}$, define centered Gaussian processes
\[
X_{ij}:=\sum_{\ell=1}^k \|y_\ell\|_2\, g_\ell^\top x_\ell+\sum_{\ell=1}^k \|x_\ell\|_2\, h_\ell^\top y_\ell,
\qquad
Y_{ij}:=\sum_{\ell=1}^k x_\ell^\top G_\ell y_\ell+\sum_{\ell=1}^k w_\ell\|x_\ell\|_2\|y_\ell\|_2,
\]
so that we precisely have
\[
\phi({g},{h})=\min_i\max_j\{X_{ij}+\psi({x},{y},{z})\},\qquad
\Psi({G},{w})=\min_i\max_j\{Y_{ij}+\psi({x},{y},{z})\}.
\]
A direct covariance calculation using independence across $\ell$ gives that for any choice of indices $i = ({x},{z}), j = {y}, i' = ({x'},{z'}), j' = {y'}$, we have 
\[
\EE Y_{ij}Y_{i'j'}-\EE X_{ij}X_{i'j'}
=\sum_{\ell=1}^k\big(\|x_\ell\|_2\|x'_\ell\|_2-x_\ell^\top x'_\ell\big)\big(\|y_\ell\|_2\|y'_\ell\|_2-y_\ell^\top y'_\ell\big)\ge 0,
\]
since each factor is nonnegative by Cauchy-Schwarz. Furthermore, this implies that for fixed $i$, we have $\EE X_{ij}X_{ij'}=\EE Y_{ij}Y_{ij'}$. Thus the hypotheses of Theorem~\ref{thm:gordon-og} apply with $(X_{ij})$ and $(Y_{ij})$. Setting $\lambda_{ij} = t - \psi(i, j)$ yields
\[
\PP\!\left(\phi({g},{h})\ge t\right)\le \PP\!(\Psi({G},{w})\ge t).
\]
Taking complements gives $\PP(\Psi({G},{w})<t)\le \PP(\phi({g},{h})\le t)$. For the second claim, let $\cE:=\{w_1<0,\dots,w_k<0\}$ so $\PP(\cE)=2^{-k}$. On $\cE$ we have pointwise
\[
\sum_{\ell=1}^k x_\ell^\top G_\ell y_\ell+\psi({x},{y},{z})
\;\ge\;
\sum_{\ell=1}^k x_\ell^\top G_\ell y_\ell+\sum_{\ell=1}^k w_\ell\|x_\ell\|_2\|y_\ell\|_2+\psi({x},{y},{z}),
\]
hence $\Phi({G})\ge \Psi({G},{w})$ on $\cE$ and therefore
$\PP(\Psi({G},{w})<t\mid\cE)\ge \PP(\Phi({G})<t)$ (since $\Phi({G})$ is independent of ${w}$).
Multiplying by $\PP(\cE)$ gives
\[
\PP(\Phi({G})<t)\le 2^k\,\PP(\Psi({G},{w})<t)
\le 2^k\,\PP(\phi({g},{h})\le t),
\]
as desired.
\end{proof}

Similarly, we can prove the operator norm version of this theorem.

\begin{theorem}\label{thm:gordon-useful-opnorm}
    Suppose entries of $\calG, g, h$ are i.i.d. $\calN(0,1)$. For all $t \in \RR$
    {\small
    \begin{equation*}
        \PP\paren{\max_{x,y} \set{\sum_{j = 1}^kx_j^\intercal G_j y_j} \geq t } \leq 2^k \PP\paren{\max_{x,y} \set{\sum_{j=1}^k \|y_j\|_2\, g_j^\top x_j + \|x_j\|_2\, h_j^\top y_j} \geq t },  
    \end{equation*}
    }
    where the maximum is taken over $(x,y) \in \calS_x \times \calS_y$.
\end{theorem}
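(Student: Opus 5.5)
The plan is to run the same Gordon comparison as in the proof of Theorem~\ref{thm:gordon-useful}, now for a pure max–max problem with no min layer and no $\psi$. As there, we first reduce to finite sets $\calS_x,\calS_y$ by an $\varepsilon$-net argument. For a compact set the objective is continuous in $(x,y)$ and the maximum is attained, so the finite case passes to the limit via continuity of probability along an increasing sequence of nets.

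For finite sets, treat the maximum over $(x,y)$ as a ``min over a singleton index set, max over $j=(x,y)$''. Equivalently, take $\cI$ to be a single point and $\cJ=\calS_x\times\calS_y$, and define
\[
X_{j}:=\sum_{\ell}\|y_\ell\|_2 g_\ell^\top x_\ell+\|x_\ell\|_2 h_\ell^\top y_\ell,\qquad Y_{j}:=\sum_\ell x_\ell^\top G_\ell y_\ell+w_\ell\|x_\ell\|_2\|y_\ell\|_2 .
\]
The covariance calculation already carried out gives
\[
\EE Y_jY_{j'}-\EE X_jX_{j'}=\sum_\ell(\|x_\ell\|\|x'_\ell\|-x_\ell^\top x'_\ell)(\|y_\ell\|\|y'_\ell\|-y_\ell^\top y'_\ell)\ge0 ,
\]
with equality on the diagonal, so the variances match. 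Since there is only one $i$, the condition for $l\neq i$ is vacuous, and the condition within a fixed $i$ reads $\EE X_jX_{j'}\ge \EE Y_jY_{j'}$. This has the wrong direction for the Gordon inequality as stated, so we apply it with the roles of $X$ and $Y$ swapped: set $X':=Y$ and $Y':=X$. Then $\EE X'_jX'_{j'}\ge\EE Y'_jY'_{j'}$, and Theorem~\ref{thm:gordon-og} (equivalently Slepian's lemma) gives
\[
\PP(\max_j Y_j\ge t)\le \PP(\max_j X_j\ge t).
\]

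Next we remove the $w$-term. On the event $\cE=\{w_\ell>0\ \forall\ell\}$, which has probability $2^{-k}$ and is independent of $G$, we have pointwise $\sum_\ell x_\ell^\top G_\ell y_\ell\le Y_j$. Hence
\[
2^{-k}\,\PP\Bigl(\max\sum_\ell x_\ell^\top G_\ell y_\ell\ge t\Bigr)\le \PP\Bigl(\{\max_j Y_j\ge t\}\cap\cE\Bigr)\le\PP(\max_j Y_j\ge t),
\]
and combining this with the previous display gives the claim.

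The only delicate point is the direction of the inequalities. Unlike in Theorem~\ref{thm:gordon-useful}, here we need an upper-tail bound for a maximum, so the $w$-event must be $\{w>0\}$ rather than $\{w<0\}$, and the Slepian comparison must be applied with $X$ and $Y$ swapped. I would double-check both of these carefully. The rest is routine.
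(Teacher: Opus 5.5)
Your proposal is correct and is essentially the argument the paper intends when it says the operator-norm version follows ``similarly'': the same covariance identity is fed into Gordon's inequality with a singleton outer index (i.e.\ Slepian's lemma), followed by the same auxiliary-$w$ conditioning trick. You have also correctly identified the only two changes relative to Theorem~\ref{thm:gordon-useful}. First, the roles of the primary and auxiliary processes swap, because only the within-$i$ covariance condition is active. Second, the conditioning event becomes $\{w_\ell>0\ \forall \ell\}$, since you now need an upper-tail bound on a maximum rather than a lower-tail bound on a min--max.
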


%% file: largest-eigenvalue-bounds.tex
\section{Bounds on largest eigenvalues}

In this section, we will apply Theorem~\ref{thm:gordon-useful} to prove tight bounds on the largest eigenvalues of the null models (see Section~\ref{sec:Model_Def}) of the test matrices $W = W_\kappa$ for each model. 

\subsection{Canonical correlation model and proof of Lemma~\ref{lem:cca-AuxiliaryOptMain}}\label{subsec:CCAGordon-Appendix}
Recall in the CCA null model, $U \in \RR^{n \times m}$ and $V \in \RR^{n \times k}$ are two independent Gaussian matrices with i.i.d. entries  $U_{ij} \sim \calN(0, m^{-1})$ and $V_{ij} \sim \calN(0, k^{-1})$. Suppose
\begin{equation*}
    \frac{n}{m} \mapsto \tau_m  \text{ and } \frac{n}{ k} \mapsto \tau_k \text{ and } \kappa := (\tau_m\tau_k)^{-1/4}  < 1.
\end{equation*}
Furthermore, we denote by $W$ the block matrix of form
\begin{equation*}
	W = \begin{pmatrix}
		-\kappa U^\intercal U & U^\intercal V \\
		V^\intercal U & -\kappa V^\intercal V
	\end{pmatrix}.
\end{equation*}
We recall the main Theorem~\ref{thm:CCAGordonEdge} from Section~\ref{sec:gordonMain}.
\begin{theorem}[Theorem~\ref{thm:CCAGordonEdge}]\label{thm:CCAGordonEdgeAppendix}
Let $W$ be the null matrix for the CCA model from Theorem~\ref{thm:CCA-outlierMainBody}. Then for any $\varepsilon \in (0,1)$
\begin{equation*}
    \PP\paren{\lambda_{\max}(W) \geq \frac{1-\kappa^2}{\kappa}  + \varepsilon} \leq e^{-\Omega(n \varepsilon^2)}.
\end{equation*}
\end{theorem}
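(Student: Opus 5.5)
The statement reduces, via the display in the proof of Theorem~\ref{thm:CCAGordonEdge} (Gordon's comparison, Theorem~\ref{thm:gordon-usefulMain}, applied to the primary problem \eqref{eq:canonical-PO}), to Lemma~\ref{lem:cca-AuxiliaryOptMain}. So the plan is to prove that lemma: with probability $1-e^{-\Omega(n\varepsilon^2)}$ the auxiliary value $\max_{\|x\|=1}\max_z\min_y\phi$ is at most $\lambda_*+O(\varepsilon)$. Combined with the factor $4$ from Gordon and the truncation limits $R_2\uparrow\infty$, $R_1\uparrow\infty$ already described in the main text, this gives the theorem.

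\textbf{Step 1 (concentration).} First I place myself on a good event $\mathcal E$ of probability $1-e^{-\Omega(n\varepsilon^2)}$ on which all the Gaussian inputs are close to their typical values. By standard Gaussian norm concentration (Lipschitz concentration of $\|\cdot\|$) I expect
\[
\|g_1\|,\|g_2\|=1+O(\varepsilon),\qquad \|h_1\|^2=\tau_m+O(\varepsilon),\qquad \|h_2\|^2=\tau_k+O(\varepsilon),
\]
and also $|\langle h_1,h_2\rangle|=O(\varepsilon)$. Everything afterwards is deterministic on $\mathcal E$.

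\textbf{Step 2 (reduction to few parameters).} Write $p_i=\|x_i\|$ with $p_1^2+p_2^2=1$. For fixed $y$, the maximum over the direction of $x_i$ gives $\max g_i^\intercal x_i=p_i\|g_i\|$. So the objective becomes
\[
\sum_{i=1,2}\Big(2p_i\|g_i\|\,\|y_i\|+2p_i\,h_i^\intercal y_i\Big)+\tfrac{1}{1+\kappa}\Big(\|y_1-z\|^2+\|y_2-z\|^2-(1+\kappa)\|z\|^2\Big).
\]
Strictly the order is $\max_x\max_z\min_y$, but swapping the direction-maximization inside the minimization over $y$ only increases the value, so this is a valid upper bound. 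Since we need an upper bound, for each $(p,z)$ I am free to plug in a good candidate $y$ rather than the true minimizer. I will take $y_i=c_i z-d_i h_i$, supported on $\mathrm{span}(z,h_i)$, with scalars $c_i,d_i$ chosen as functions of $(p,z)$.

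Decompose $z=\zeta_1\hat h_1+\zeta_2\hat h_2+z_\perp$. Using the near-orthogonality of $h_1,h_2$, the objective becomes, up to $O(\varepsilon)$ multiplicative errors, an explicit function of finitely many scalars: $p_1,p_2,\zeta_1,\zeta_2,\|z_\perp\|$, and $c_i,d_i$. The component $z_\perp$ enters with a negative net coefficient once $c_i$ is chosen well. This is where $-(1+\kappa)\|z\|^2$ must beat the $\|y_i-z\|^2$ gain, and I expect it lets me set $z_\perp=0$.

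\textbf{Step 3 (scalar min--max).} It then remains to solve a deterministic min--max problem in these scalars and check that its value is at most $(1-\kappa^2)/\kappa$, using $\kappa^4\tau_m\tau_k=1$. A consistency check is the TAP derivation in Section~\ref{sec:TAP_HeuristicsMain}, which identifies $\lambda_*$ as exactly the critical eigenvalue. I anticipate that the optimum has $p_1^2:p_2^2$ balanced through $\tau_m,\tau_k$, so that the bound closes precisely at $\rho=\kappa$.

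\textbf{Main obstacle.} The hard part is Step 3, specifically the non-smooth term $2p_i\|g_i\|\,\|y_i\|$. Bounding it crudely via the triangle inequality will lose constants and miss the sharp value $\lambda_*$. What I would try first is to handle it exactly: fix the norms $r_i=\|y_i\|$ and the angle between $y_i$ and $h_i$, minimize over these exactly, and only then optimize over $z$ and $p$. If this becomes messy, a fallback is to verify the inequality $\le\lambda_*$ at the candidate stationary point and use concavity in $z$ and convexity in $y$ of the quadratic parts to argue that it is the global saddle.
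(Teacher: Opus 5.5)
Your Steps 1--2 are sound and are essentially the paper's opening moves. These are the concentration event, taking $x_i\parallel g_i$, and minimizing over $y_i\in\mathrm{span}(z,h_i)$. The exact minimizer does lie in that span; the paper computes it in closed form, obtaining $\min_y\phi=\frac{1-\kappa}{1+\kappa}\|z\|^2-(1+\kappa)\sum_i\bigl[\bigl\|\,\|x_i\|h_i-\frac{z}{1+\kappa}\bigr\|-g_i^\intercal x_i\bigr]_+^2$.

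The gap is that Step 3, which is the whole content of the lemma, is only announced, and the fallback you describe cannot work. The auxiliary objective is \emph{convex} in $z$: its $z$-Hessian is $\frac{2(1-\kappa)}{1+\kappa}I\succ0$, so $\sup_z\phi=+\infty$ for every fixed $y$. Since $z$ is maximized, the order $\max_z\min_y$ cannot be exchanged, and $x$ ranges over a sphere. There is therefore no convex--concave saddle structure, and checking a stationary point gives no global upper bound. Your heuristic for discarding $z_\perp$ also fails in general. Wherever both $[\cdot]_+$ terms vanish, the $y$-minimized objective equals $\frac{1-\kappa}{1+\kappa}\|z\|^2$ and strictly increases with $\|z_\perp\|$, so $z_\perp=0$ cannot simply be imposed.

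The missing idea is that the scalar problem need not be solved: the $h_i$ can be thrown away by two crude relaxations. First rescale $z\mapsto(1+\kappa)z$ and use $\|g_i\|=1+O(\varepsilon)$. Then bound $\bigl\|\,\|x_i\|h_i-z\bigr\|\ge\|P_{h_i}^{\perp}z\|=:r_i$. Next use near-orthogonality of $h_1,h_2$ to get $\|z\|^2\le(1+O(\varepsilon))(r_1^2+r_2^2)$. The part of $z$ orthogonal to both $h_i$ is counted twice on the right, which is how $z_\perp$ is absorbed. With $p_i=\|x_i\|$ and $p_1^2+p_2^2=1$, it remains to show
\[(1-\kappa)(r_1^2+r_2^2)-[r_1-p_1]_+^2-[r_2-p_2]_+^2\le\frac{1-\kappa}{\kappa}.\]
One may assume $r_i\ge p_i$, since raising $r_i$ to $p_i$ only increases the left side. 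Then the left side equals $-\kappa(r_1^2+r_2^2)+2(r_1p_1+r_2p_2)-1$, which is at most $\frac1\kappa-1$ by $2r_ip_i\le\kappa r_i^2+p_i^2/\kappa$. Multiplying by $1+\kappa$ gives $\lambda_*$.

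Note that $\kappa^4\tau_m\tau_k=1$ is never used, contrary to your plan, and $\rho$ does not enter the null problem at all. That relation is only what makes the relaxations tight. Equality forces $z=\|x_1\|h_1+\|x_2\|h_2$ and $r_i=p_i/\kappa$, which is consistent exactly when $\tau_m\tau_k=\kappa^{-4}$.
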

Recall the auxiliary optimization objective defined in \eqref{eq:ccaAuxProblem}:
\begin{align*}
\begin{split}
	 \phi({g}, {h} ; {x}, {y}, z) = 2\norm{y_1}g_1^\intercal x_1
 &+2 \norm{x_1}h_1^\intercal y_1 + 2\norm{y_2}g_2^\intercal x_2 + 2\norm{x_2}h_2^\intercal y_2 +  \\
 &+\frac{1}{1+\kappa}\paren{\norm{y_1-z}^2 + \norm{y_2-z}^2 - (1+\kappa)\norm{z}^2},
 \end{split}
 \end{align*}
where ${g} = (g_1, g_2)^\intercal$ and ${h} = (h_1, h_2)^\intercal$ are Gaussian vectors 
\begin{equation*}
    g_1 \sim \calN(0, m^{-1}\II_m) \;g_2 \sim \calN(0, k^{-1}\II_k) \text{ and }h_1 \sim \calN(0, m^{-1}\II_n) \; h_2 \sim \calN(0, k^{-1}\II_n).
\end{equation*}
Let us recall the lemma needed to complete the proof in Section~\ref{sec:gordonMain} of Theorem~\ref{thm:CCAGordonEdgeAppendix}.

\begin{lemma}[Lemma~\ref{lem:cca-AuxiliaryOptMain}]\label{lem:cca-AuxiliaryOpt}
 The auxiliary optimization problem over $\phi$ is bounded with probability $(1- \exp\set{-\Omega(n \varepsilon^2)})$ over the Gaussian vectors $g, h$,
   \[\max_{\substack{\norm{{x}} = 1}} \, \max_{z}  \,\,\,\min_{{y}} \phi({g}, {h} ; {x}, {y}, z) \leq \frac{1-\kappa^2}{\kappa} + O(\varepsilon),\]
 where $x = (x_1, x_2)^\intercal \in \RR^m\times \RR^k$, $y = (y_1, y_2)^\intercal \in \RR^n\times \RR^n$ and $z \in \RR^n$.
\end{lemma}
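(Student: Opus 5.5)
The plan is to upper bound the inner minimum over $y$ by plugging in a well-chosen $y=y(x,z)$. This collapses the max--min to a maximization of an explicit low-dimensional deterministic function, whose maximum we then compute to be $\lambda_*=(1-\kappa^2)/\kappa$.

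\textbf{Step 1: concentration.} First fix a good event $\cE_\varepsilon$ on which
\[
\abs{\norm{g_1}-1},\ \abs{\norm{g_2}-1}\le\varepsilon,\qquad
\abs{\norm{h_1}^2-\tau_m},\ \abs{\norm{h_2}^2-\tau_k}\le\varepsilon,\qquad
\abs{\ang{h_1,h_2}}\le\varepsilon.
\]
Each of these quantities is a Lipschitz function of a standard Gaussian vector after rescaling, and $n/m\to\tau_m$, $n/k\to\tau_k$. So $\PP(\cE_\varepsilon^c)\le e^{-\Omega(n\varepsilon^2)}$ by Gaussian concentration, and we work deterministically on $\cE_\varepsilon$.

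On $\cE_\varepsilon$, write $s_j=\norm{x_j}$, so that $s_1^2+s_2^2=1$. By Cauchy--Schwarz, $g_j^\intercal x_j\le(1+\varepsilon)s_j$. Since the coefficient $2\norm{y_j}$ is nonnegative, this gives
\[
\phi\le \sum_{j=1}^2\Big(2(1+\varepsilon)s_j\norm{y_j}+2s_j\,h_j^\intercal y_j\Big)+\tfrac{1}{1+\kappa}\big(\norm{y_1-z}^2+\norm{y_2-z}^2\big)-\norm{z}^2 .
\]
The right-hand side no longer depends on $x$ beyond $(s_1,s_2)$, and its dependence on $g$ is gone.

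\textbf{Step 2: reduction to a planar problem.} For fixed $z$, restrict $y_1,y_2$ to $\mathrm{span}\{z,h_1,h_2\}$. Taking the minimum over this restricted class only increases the bound, so this is legitimate for an upper bound. On $\cE_\varepsilon$, $h_1,h_2$ are almost orthogonal with norms $\approx\sqrt{\tau_m},\sqrt{\tau_k}$. Hence the objective becomes, up to $O(\varepsilon)$ times a quadratic in the norms, a function of
\[
s_1,\quad \norm{z},\quad \ang{z,\hat h_1},\quad \ang{z,\hat h_2},\quad \text{and the coordinates of } y_1,y_2 \text{ in this span},
\]
where $\hat h_j=h_j/\norm{h_j}$.

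A natural choice is $y_j=z-c_j\hat h_j$ with $c_j$ a scalar optimized explicitly. Expanding the square $\norm{c_j\hat h_j}^2/(1+\kappa)$ against the linear term $-2s_jc_j\norm{h_j}$ produces the quadratic gain $-(1+\kappa)s_j^2\norm{h_j}^2$. The terms $2s_j\norm{h_j}\ang{z,\hat h_j}$ and $2(1+\varepsilon)s_j\norm{y_j}$ are then to be balanced against $-\norm{z}^2$. I would, however, keep the fully optimized two-dimensional choice, i.e.\ minimize exactly over the component of $y_j$ along $z$ and along $\hat h_j$. The reason is that the crude choice might lose the sharp constant.

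\textbf{Step 3: explicit scalar optimization (main obstacle).} What remains is a deterministic finite-dimensional max--min in the scalars above. We maximize over $\norm{z}$ and the angles of $z$ to $\hat h_1,\hat h_2$ (a concave quadratic maximization once $y$ is fixed), and then over $s_1\in[0,1]$. The goal is to show this value is $\lambda_*$ at $\varepsilon=0$, using the identity $\kappa^4\tau_m\tau_k=1$, and that it depends continuously on $\varepsilon$, giving $\lambda_*+O(\varepsilon)$ uniformly. We must also check that the maximizing $z$ has bounded norm, so that the $O(\varepsilon)$ perturbation terms remain $O(\varepsilon)$.

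This explicit computation is where I expect the difficulty: choosing $y$ precisely enough to obtain the exact constant $(1-\kappa^2)/\kappa$ rather than a weaker one. If the ansatz in Step 2 falls short, I would instead compute the exact minimizer over $y_j$ of
\[
2s_j\norm{y_j}+2s_j\,h_j^\intercal y_j+\tfrac{1}{1+\kappa}\norm{y_j-z}^2 ,
\]
which is a convex problem whose minimizer is parallel to $z-(1+\kappa)s_jh_j$ shrunk by a soft threshold. Substituting this minimizer, the maximization over $z$ becomes an explicit one-dimensional problem. As a consistency check, its maximum should be attained at the balanced ratio $s_1^2/s_2^2$ dictated by $\tau_m,\tau_k$.
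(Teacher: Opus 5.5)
Your Steps 1--2 are correct and match the paper's reduction: concentration, bounding $g_j^\intercal x_j\le(1+\varepsilon)\|x_j\|$, and (in your fallback) minimizing exactly over $y_j$ via the soft-thresholded multiple of $z-(1+\kappa)s_jh_j$. For fixed $s_j=\|x_j\|$, after substituting $z=(1+\kappa)\zeta$, this gives up to $O(\varepsilon)$ errors
\[
\max_z\min_y\phi\;\le\;(1+\kappa)\max_{\zeta}\Big\{(1-\kappa)\|\zeta\|^2-\sum_{j=1}^2\big[\|s_jh_j-\zeta\|-s_j\big]_+^2\Big\}.
\]
The gap is that you never bound this maximum. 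Step 3 only says its value ``should'' be $\lambda_*$ and flags this as the obstacle. It is also not one-dimensional: $\zeta$ has three relevant coordinates (along $h_1$, along $h_2$, and orthogonal to both), plus $s_1$.

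The missing idea is that no exact solution is needed; a relaxation suffices. Set $r_j:=\|P_{h_j^\perp}\zeta\|$. Then $\|s_jh_j-\zeta\|\ge r_j$. After replacing $h_1,h_2$ by exactly orthogonal vectors at $O(\varepsilon)$ cost, $\|\zeta\|^2\le r_1^2+r_2^2$. Since $(1-\kappa)r^2$ is increasing, you may assume $r_j\ge s_j$. Then $2r_js_j\le\kappa r_j^2+s_j^2/\kappa$ yields
\[
(1-\kappa)(r_1^2+r_2^2)-(r_1-s_1)^2-(r_2-s_2)^2\le \tfrac1\kappa-1,
\]
hence the bound $(1+\kappa)(1-\kappa)/\kappa=\lambda_*$. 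This uses neither $\tau_m,\tau_k$ nor $\kappa^4\tau_m\tau_k=1$. That identity is exactly the condition under which all the relaxations are tight at once (with $s_2/s_1=(\tau_m/\tau_k)^{1/4}$). This is why trying to compute the maximizer exactly is the wrong target.

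Two smaller errors. First, your parenthetical ``a concave quadratic maximization once $y$ is fixed'' is false. For fixed $y$, the $z$-dependence of $\phi$ is $\frac{1-\kappa}{1+\kappa}\|z\|^2$ plus linear terms, which is convex and unbounded above. So $y$ must depend on $z$ before you maximize. The boundedness of the maximizing $z$, which you need to control the $O(\varepsilon)$ errors, comes only from the $y$-minimized objective, which behaves like $-\|z\|^2$.

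Second, your first ansatz $y_j=z-c_j\hat h_j$ with $c_j=(1+\kappa)s_j\|h_j\|$ omits the shrinkage. It loses exactly $(1+\kappa)(g_j^\intercal x_j)^2$ in block $j$, about $1+\kappa$ in total, so it cannot reach the sharp constant; only the soft-threshold minimizer does.
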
 
\begin{proof}[Proof of Lemma~\ref{lem:cca-AuxiliaryOpt}, Lemma~\ref{lem:cca-AuxiliaryOptMain}]
First fix $x$ and $z$, and minimize over $y_1$ and $y_2$. This can be done explicitly by first fixing their norms, minimizing over their directions, and then minimizing over the norms. This yields 
\begin{align*}
    \min_{{y}}\phi({g}, {h} ; {x}, {y}, z) = \frac{1-\kappa}{1+\kappa}\norm{z}^2 &- \sum_ {i=1,2}(1+\kappa)\sqb{\norm{\norm{x_i}h_i - \frac{z}{1+\kappa}} - g_i^\intercal x_i}_+^2.
\end{align*}
Here $[x]_+$ denotes $\max\set{x,0}$. Observe that since $x \mapsto [x]_+$ is non-decreasing, maximizing over the direction of $x_1$ and $x_2$ results in vectors parallel to $g_1$ and $g_2$, respectively. moreover, we may reparametrize $z \mapsto z(1+\kappa) $  and see that 
\begin{equation*}
    \max_{\substack{\norm{{x}} = 1}} \, \max_{z}  \,\,\,\min_{{y}} \,\, \phi({g}, {h} ; {x}, {y}, z) = (1+\kappa) \max_{\norm{{x}} = 1} \max_{{z}} \phi'({g}, {h} ; {x}, z),
\end{equation*}
where  $\phi'({g}, {h} ; {x}, z) $ is defined by 
\begin{align*}
     \phi'({g}, {h} ; {x}, z) := (1-\kappa)\norm{z}^2 &- \sum_{i=1,2} \sqb{\norm{\norm{x_i} h_i - {z}} -  \norm{g_i}\norm{x_i}}_+^2.
\end{align*}
By Gaussian concentration for Lipschitz functions (Lemma \ref{lem:gaussianConLip}), 
    \[\PP\paren{\abs{\norm{h_1} - \sqrt{\tau_m}} \ge \varepsilon} \le 2 \exp\paren{- n \varepsilon^2/2},\]
and similar for $h_2, g_1, g_2$. By Berstein's inequality for subexponentials \cite{VRbook},
    \[\PP\paren{\abs{\ang{h_1,h_2}} \ge \varepsilon} \le 2\exp\paren{- \Omega( n \varepsilon^2)},\]
so with probability $1-\exp\set{-\Omega(n \varepsilon^2)}$, we have $\ang{h_1,h_2} = O(\varepsilon)$ and 
\begin{equation*}
    \norm{h_1} = \sqrt{\tau_m} + O(\varepsilon), \quad \norm{h_2} = \sqrt{\tau_k} + O(\varepsilon), \quad \norm{g_1} = 1 + O(\varepsilon), \quad  \norm{g_2} = 1 + O(\varepsilon). 
\end{equation*}
We may take, for instance, $\varepsilon = n^{-1/4}$ so that the error vanishes with $n$. In particular, under this event $\norm{z} = O(1)$ or else the objective value would shoot down to negative infinity as $\kappa > 0$. Hence, we can use the Lipschitzness of the expression to rewrite 
\begin{equation*}
    \phi' = (1-\kappa)\norm{z}^2 - \sqb{\norm{\norm{x_1}u\sqrt{\tau_m} - {z}} - \norm{x_1}}_+^2 - \sqb{\norm{\norm{x_2} v \sqrt{\tau_k} - {z}} - \norm{x_2}}_+^2 + f(\varepsilon),
\end{equation*}
where $u \perp v$ are orthogonal unit vectors and $f(\varepsilon)$ is the error
\begin{equation} \label{eq:gordonErrorTermCCA} 
 f(\varepsilon) = \paren{\norm{\norm{x_1}u\sqrt{\tau_m} - {z}} + \norm{\norm{x_2} v \sqrt{\tau_k} - {z}}}O(\varepsilon)+O(\varepsilon^2).
 \end{equation}
Note that this equality holds uniformly over $z$ in a ball of radius $O(1)$, in which the maximizer lies.  We may do an orthonormal transformation over to $z$ and $x$ separately, so that the optimization on the right hand side  becomes 
\begin{align*}
    \max_{z_1, z_2, z_3} \max_{\substack{x_1, x_2 \\ x_1^2 + x_2^2 = 1} } (1-\kappa)(z_1^2 + z_2^2 + z_3^2) &-  \sum_{i=1,2}\sqb{\sqrt{(x_i \sqrt{\tau_m} - z_2)^2 + z_1^2 + z_3^2} - x_i}^2_+.
\end{align*}
To simplify even further, we note the following inequalities:
\begin{gather*}
    \sqrt{(x_1 \sqrt{\tau_m} - z_2)^2 + z_1^2 + z_3^2} \geq  \sqrt{z_1^2 + z_3^2}, \quad \sqrt{(x_2 \sqrt{\tau_k} - z_1)^2 + z_2^2 + z_3^2} \geq \sqrt{z_2^2 + z_3^2},  \\
    z_1^2 + z_2^2 + z_3^2 \leq  z_1^2 + z_2^2 + 2z_3^2.
\end{gather*}
Plugging these inequalities in, we only increase the objective. Taking $r_1^2 = z_1^2 + z_3^2$
 and $r_2^2 = z_2^2 + z_3^2$, we conclude by the following claim: for any $r_1, r_2 \geq 0$ and $x_1, x_2 \geq 0$  with $x_1^2 + x_2^2 = 1$, we have
\begin{equation*}
    (1-\kappa)(r_1^2 + r_2^2) - \sqb{r_1 - x_1}_+^2 - \sqb{r_2 - x_2}_+^2 \leq \frac{1-\kappa}{\kappa}
\end{equation*}
Clearly, we can assume without loss of generality that $r_1 \geq x_1$ and $r_2 \geq x_2$. Hence, we can expand the left hand side and use $x_1^2 + x_2^2 = 1$ and Cauchy-Schwarz
\begin{equation*}
    -\kappa(r_1^2 + r_2^2)+2r_1x_1 + 2r_2x_2 - 1 \leq -\kappa(r_1^2 + r_2^2) + \kappa(r_1^2 + r_2^2) + \frac{x_1^2 + x_2^2}{\kappa} - 1  = \frac{1-\kappa}{\kappa}.
\end{equation*}
Together, we have just proved that with probability $1- \exp\set{-\Omega(n \varepsilon^2)}$, we have 
\begin{equation*}
     \max_{\substack{\norm{{x}} = 1}} \, \max_{z}  \,\,\,\min_{{y}} \phi \leq \frac{(1-\kappa)(1+\kappa)}{\kappa} +O(\varepsilon) = \frac{1-\kappa^2}{\kappa} + O(\varepsilon),
\end{equation*}
which is precisely what we wanted to prove.
\end{proof}

\subsection{Correlated spiked Wigner model}\label{subsec:WignerGordonEdge}

Throughout this subsection we work in the CSWig null model from Section~\ref{subsec:wigner}. More precisely, we assume that $U, V\sim \mathrm{GOE}(1/n)$ are independent and define 
\[
  W =
  \begin{pmatrix}
    \widetilde U & \kappa \widetilde V \\
    \kappa \widetilde U & \widetilde V
  \end{pmatrix},
  \qquad
  \widetilde U = \alpha U - \alpha^2 I_n,
  \qquad
  \widetilde V = \beta V - \beta^2 I_n,
\]
where
$\alpha,\beta\in(0,1)$ and $\kappa$ is defined by the following equation
\[
  \kappa
  :=
  \paren{\frac{(1-\alpha^2)(1-\beta^2)}{\alpha^2\beta^2}}^{1/4}
  \in (0,1).
\]

\begin{theorem}\label{thm:WignerGordonEdge}
For every $\varepsilon\in(0,1)$,
\[
  \PP\paren{\lambda_{\max}(W) \ge 1 + \varepsilon}
  \le e^{-\Omega(n\varepsilon^2)}.
\]
\end{theorem}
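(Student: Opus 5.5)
The plan is to symmetrize $W$, compare the resulting Gaussian process with a simpler one via Sudakov--Fernique (rather than the min--max Theorem~\ref{thm:gordon-usefulMain}), and then solve a deterministic optimization whose value should be exactly $1$. The first step is the factorization
\[
W=(K\otimes I_n)\,\diag(\widetilde U,\widetilde V),\qquad K=\begin{pmatrix}1&\kappa\\ \kappa&1\end{pmatrix}.
\]
Since $0<\kappa<1$, $K$ is positive definite. Hence $W$ is similar to the symmetric matrix $(K^{1/2}\otimes I)\diag(\widetilde U,\widetilde V)(K^{1/2}\otimes I)$, and its spectrum is real. Substituting $y=(K^{1/2}\otimes I)x$ gives the variational formula
\[
\lambda_{\max}(W)=\max_{y\in\mathcal C}\Big\{\alpha\, y_1^\intercal U y_1+\beta\, y_2^\intercal V y_2-\alpha^2\norm{y_1}^2-\beta^2\norm{y_2}^2\Big\},
\]
\[
\mathcal C=\Big\{(y_1,y_2):\ \norm{y_1}^2+\norm{y_2}^2-2\kappa\ang{y_1,y_2}=1-\kappa^2\Big\}.
\]
The set $\mathcal C$ is a compact ellipsoid; on it $\norm{y}^2\le (1-\kappa^2)/(1-\kappa)=1+\kappa$. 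The random part is a centered Gaussian process $X_y=\alpha y_1^\intercal Uy_1+\beta y_2^\intercal Vy_2$ indexed by $\mathcal C$. Note that the cruder bound $y_1^\intercal Uy_1\le (2+o(1))\norm{y_1}^2$ cannot suffice. It ignores that the top eigenvectors of $U$ and $V$ are nearly orthogonal, and the coupling term $\ang{y_1,y_2}$ in $\mathcal C$ then allows too much mass. So a genuine comparison argument is needed.

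For the comparison I would use the auxiliary process
\[
Y_y=\tfrac{2\alpha}{\sqrt n}\norm{y_1}\,g_1^\intercal y_1+\tfrac{2\beta}{\sqrt n}\norm{y_2}\,g_2^\intercal y_2,
\]
with $g_1,g_2\sim\calN(0,I_n)$ independent. This is the analogue of the auxiliary objective $\phi$ in Theorem~\ref{thm:gordon-usefulMain}. For the GOE$(1/n)$ normalization, a direct computation with $c=\ang{y_1,y_1'}$, $a=\norm{y_1}$, $b=\norm{y_1'}$ gives for the first block
\[
\EE(Y_y-Y_{y'})^2-\EE(X_y-X_{y'})^2=\tfrac{2\alpha^2}{n}\Big((a^2-b^2)^2+2(ab-c)^2\Big)\ge0,
\]
and similarly for the second block, using independence across the two blocks. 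By Sudakov--Fernique, applied after an $\varepsilon$-net reduction as in Appendix~\ref{sec:gordonAppendix}, this yields $\EE\max_{\mathcal C}(X+\psi)\le \EE\max_{\mathcal C}(Y+\psi)$, where $\psi=-\alpha^2\norm{y_1}^2-\beta^2\norm{y_2}^2$. Both maxima are Lipschitz functions of the underlying Gaussian entries with constant $O(n^{-1/2})$, because $\mathcal C$ is bounded. Gaussian concentration (Lemma~\ref{lem:gaussianConLip}) therefore converts the expectation bound into the tail bound $e^{-\Omega(n\varepsilon^2)}$. It then suffices to show $\EE\max_{\mathcal C}(Y+\psi)\le 1+O(n^{-1/2})$.

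For the auxiliary problem, with probability $1-e^{-\Omega(n\varepsilon^2)}$ we have $\norm{g_i}/\sqrt n=1+O(\varepsilon)$ and $\ang{g_1,g_2}/n=O(\varepsilon)$. Up to an $O(\varepsilon)$ error, which is Lipschitz on the bounded set $\mathcal C$, the problem becomes
\[
\max_{(y_1,y_2)\in\mathcal C}\ \sum_{i=1,2}\Big(2\alpha_i\norm{y_i}\ang{e_i,y_i}-\alpha_i^2\norm{y_i}^2\Big),\qquad \alpha_1=\alpha,\ \alpha_2=\beta,
\]
with $e_1\perp e_2$ unit vectors. Write $y_i=s_ie_i+w_i$ with $w_i\perp e_i$. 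Then $\ang{y_1,y_2}$ can only involve $s_1\ang{e_1,w_2}$, $s_2\ang{e_2,w_1}$ and $\ang{w_1,w_2}$ components. Reducing to norms $r_i=\norm{y_i}$ and overlaps $s_i\le r_i$ leads to a finite-dimensional problem: maximize $\sum_i(2\alpha_i r_is_i-\alpha_i^2r_i^2)$ subject to $r_1^2+r_2^2-2\kappa\ang{y_1,y_2}=1-\kappa^2$, with $\ang{y_1,y_2}$ bounded by a Cauchy--Schwarz expression in $r_i$ and $\sqrt{r_i^2-s_i^2}$. The key identity to verify, by Lagrange multipliers or a Cauchy--Schwarz/AM--GM trick as in the proof of Lemma~\ref{lem:cca-AuxiliaryOpt}, is that the maximum equals exactly $1$ precisely when $\kappa^4\alpha^2\beta^2=(1-\alpha^2)(1-\beta^2)$.

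This last deterministic optimization is the step I expect to be the main obstacle. The difficulty is choosing the correct relaxation of the coupling term so that the bound is tight at $1$ rather than something larger. I would first guess the optimizer, presumably with $\ang{y_1,y_2}$ realized through the orthogonal components $w_i$ and $s_i/r_i$ tuned to $\alpha_i$. I would then certify the bound by exhibiting a multiplier $\mu$ such that the objective minus $\mu$ times the constraint is a negative semidefinite quadratic form in $(r_1,s_1,r_2,s_2,\ldots)$.
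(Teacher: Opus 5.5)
Your comparison step is correct and is a cleaner route than the paper's. The paper writes $x^\intercal Ux=\sqrt{2/n}\,x^\intercal G_1x$ with $G_1$ i.i.d. It then linearizes the quadratic forms through auxiliary variables $y,z$ in order to apply the min--max comparison of Theorem~\ref{thm:gordon-useful}, paying a factor $4$. After optimizing out $y$ and $z$, it ends up bounding its auxiliary problem by exactly your $\max_{\mathcal C}(Y+\psi)$. Indeed, $\sqrt2\,\alpha\norm{x_1}(g_1+h_1)^\intercal x_1/\sqrt n$ with $g_1+h_1\sim\calN(0,2I_n)$ has the law of $2\alpha\norm{y_1}g_1^\intercal y_1/\sqrt n$. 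Your increment identity $(a^2-b^2)^2+2(ab-c)^2$ is right. Sudakov--Fernique with a common deterministic drift $\psi$ is also legitimate, since the interpolation proof only uses that the two processes have equal means. So you reach the same auxiliary problem with no linearization, at the cost of passing through expectations plus Gaussian concentration.

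The gap is the deterministic bound
\[
\max_{y\in\mathcal C}\ \sum_{i=1,2}\Big(2\alpha_i\norm{y_i}\ang{e_i,y_i}-\alpha_i^2\norm{y_i}^2\Big)\le 1 .
\]
This carries the whole content of the theorem, and you leave it open; moreover, your plan for it points the wrong way.

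First, the relation $\kappa^4\alpha^2\beta^2=(1-\alpha^2)(1-\beta^2)$ plays no role in the upper bound. It only decides whether the value $1$ is attained.

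Second, collapsing to $(r_i,s_i)$ with a Cauchy--Schwarz bound on $\ang{y_1,y_2}$ discards exactly the needed information. Take $\alpha=\beta$ and $\kappa$ as in the theorem. The value $1$ is attained at $y_1=(e_1+\kappa e_2)/\sqrt2$, $y_2=(\kappa e_1+e_2)/\sqrt2$. There the natural bound $s_1\omega_2+s_2\omega_1+\omega_1\omega_2$, with $\omega_i=\sqrt{r_i^2-s_i^2}$, inflates $\ang{y_1,y_2}=\kappa$ to $\kappa+\kappa^2/2$. Rescaling then pushes the relaxed value strictly above $1$.

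The missing idea is to keep the constraint as a quadratic form. By AM--GM,
\[
2\alpha_ir_is_i-\alpha_i^2r_i^2=s_i^2-(s_i-\alpha_ir_i)^2\le s_i^2,\qquad s_i=\ang{e_i,y_i},
\]
which removes $\alpha_i$ and makes the objective quadratic. Next write $y_1=ae_1+be_2+u$ and $y_2=ce_1+de_2+v$ with $u,v\perp\mathrm{span}\{e_1,e_2\}$. Then
\[
\norm{y_1}^2+\norm{y_2}^2-2\kappa\ang{y_1,y_2}=(1-\kappa^2)(a^2+d^2)+(c-\kappa a)^2+(b-\kappa d)^2+\norm{u-\kappa v}^2+(1-\kappa^2)\norm{v}^2,
\]
so $s_1^2+s_2^2=a^2+d^2\le 1$ on $\mathcal C$. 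This is exactly the certificate you were looking for, with multiplier $1$ against the normalized constraint: $e_1e_1^\intercal\oplus e_2e_2^\intercal\preceq K^{-1}\otimes I_n$. It is Lemma~\ref{lem:wigner-null-geometry} in the paper, and it gives the edge bound $1$ whatever the relation between $\kappa\in[0,1)$ and $\alpha,\beta$.
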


\begin{proof}
We break the proof into a sequence of reductions. We first symmetrize $W$. To this end define matrix $A \in \RR^{2 \times 2}$ and numbers $u, v \in \RR$ by
\[
  A :=
  \begin{pmatrix}
    1 & \kappa \\
    \kappa & 1
  \end{pmatrix},
  \qquad
  A^{1/2}
  =
  \begin{pmatrix}
    u & v \\
    v & u
  \end{pmatrix},
\]
so that $2u v = \kappa$ and $u^2 + v^2 = 1$. In this notation, we have that
\[
  W = (A \otimes I_n)
  \begin{pmatrix}
    \widetilde U & 0 \\
    0 & \widetilde V
  \end{pmatrix}.
\]
Now observe that $W$ is similar to the symmetric matrix $H$ defined by
\[
  H := (A^{1/2}\otimes I_n)
  \begin{pmatrix}
    \widetilde U & 0 \\
    0 & \widetilde V
  \end{pmatrix}
  (A^{1/2}\otimes I_n)
  = (A^{-1/2}\otimes I_n)
  W
  (A^{1/2}\otimes I_n).
\]
This means that $W$ and $H$ have the same real spectrum, and in particular the largest eigenvalues are equal $\lambda_{\max}(W) = \lambda_{\max}(H).$ Note that if  $r\in\SS^{2n-1}$ is any vector and $(x_1, x_2)^\intercal :=  (A^{1/2}\otimes I_n)r$ the image of $r$ under $A^{1/2} \otimes \II_n$ then  
\[
  1 = \norm{r}^2
  = \ang{(x_1,x_2), (A^{-1}\otimes I_n)(x_1,x_2)}
  = \frac{\norm{x_1}^2 + \norm{x_2}^2 - 2\kappa \ang{x_1,x_2}}{1-\kappa^2}.
\]
Thus the image of $\SS^{2n-1}$ under $A^{1/2}\otimes I_n$ is precisely the ellipsoid
\begin{equation}\label{eq:wigner-null-ellipsoid}
  \calE
  :=
  \setcond{(x_1,x_2)\in \RR^n\times\RR^n}{
    \norm{x_1}^2 + \norm{x_2}^2 - 2\kappa\ang{x_1,x_2} = 1-\kappa^2
  }.
\end{equation}
Consequently, we can rewrite
\begin{equation}\label{eq:wigner-null-rayleigh}
  \lambda_{\max}(W)
  =
  \max_{(x_1,x_2)\in\calE}
  \set{
    -\alpha^2\norm{x_1}^2 - \beta^2\norm{x_2}^2
    + \alpha x_1^\intercal U x_1
    + \beta x_2^\intercal V x_2
  }.
\end{equation}

Now without loss of generality, let $G_1,G_2\in\RR^{n\times n}$ be independent gaussian matrices with independent 
$\calN(0,1)$ entries. such that
\[
  U = \frac{G_1+G_1^\intercal}{\sqrt{2n}}, \quad \text{ and } \quad  V= \frac{G_2+G_2^\intercal}{\sqrt{2n}}.
\]
then, we can further rewrite the largest eigenvalue of $W$ as
\begin{equation}\label{eq:wigner-null-rayleigh-iid}
  \lambda_{\max}(W)
  \overset{d}{=}
  \max_{(x_1,x_2)\in\calE}
  \set{
    -\alpha^2\norm{x_1}^2 - \beta^2\norm{x_2}^2
    + \alpha\sqrt{\frac{2}{n}}\,x_1^\intercal G_1x_1
    + \beta\sqrt{\frac{2}{n}}\,x_2^\intercal G_2x_2
  }.
\end{equation}

\medskip

We now linearize the expression above. For any matrix $M$ and vector $u$,
\[
  2u^\intercal Mu = \norm{Mu+u}^2 - \norm{Mu}^2 - \norm{u}^2.
\]
We also use the identities
\[
  \frac{\gamma}{2}\norm{v}^2
  = \max_{z\in\RR^n}\set{\sqrt{2\gamma}\,z^\intercal v - \norm{z}^2},
  \qquad
  -\frac{\gamma}{2}\norm{v}^2
  = \min_{y'\in\RR^n}\set{\sqrt{2\gamma}\,{y'}^\intercal v + \norm{y'}^2}.
\]
Applying these identities separately to the two quadratic Gaussian terms in
\eqref{eq:wigner-null-rayleigh-iid}, and then changing variables from $y_i'$ and
$z_i$ to $y_i := y_i' + z_i$, yields the exact representation
\begin{equation}\label{eq:wigner-null-PO}
  \lambda_{\max}(W)
  \overset{d}{=}
  \max_{x\in\calE}
  \max_{z\in (\RR^n)^2}
  \min_{y\in (\RR^n)^2}
  \Phi(G_1,G_2;x,y,z),
\end{equation}
where $x=(x_1,x_2)$, $y=(y_1,y_2)$, $z=(z_1,z_2)$, and
\[
  \Phi(G_1,G_2;x,y,z)
  :=
  2\sqrt{\frac{\alpha}{n}}\,y_1^\intercal G_1x_1
  +
  2\sqrt{\frac{\beta}{n}}\,y_2^\intercal G_2x_2
  + \psi(x,y,z),
\]
with $\psi$ defined as follows
\begin{align*}
  \psi(x,y,z)
  := {}&
  \sqrt{2\alpha}\,z_1^\intercal x_1
  + \sqrt{2\beta}\,z_2^\intercal x_2
  + \norm{y_1-z_1}^2 + \norm{y_2-z_2}^2
  - \norm{z_1}^2 - \norm{z_2}^2 \\
  &
  - \paren{\alpha^2 + \frac{\alpha}{2}}\norm{x_1}^2
  - \paren{\beta^2 + \frac{\beta}{2}}\norm{x_2}^2.
\end{align*}

We now introduce the associated auxiliary optimization. Let
$g_1,g_2,h_1,h_2\in\RR^n$ be independent standard Gaussian vectors and define $ \phi(g,h;x,y,z)$ by
\[
  2\sqrt{\frac{\alpha}{n}}
  \paren{\norm{y_1}g_1^\intercal x_1 + \norm{x_1}h_1^\intercal y_1}
  +
  2\sqrt{\frac{\beta}{n}}
  \paren{\norm{y_2}g_2^\intercal x_2 + \norm{x_2}h_2^\intercal y_2}
  + \psi(x,y,z),
\]
and we will be referring this as 
\begin{equation}\label{eq:WignerAODefinition}
  \mathrm{AO}
  :=
  \max_{x\in\calE}
  \max_{z\in (\RR^n)^2}
  \min_{y\in (\RR^n)^2}
  \phi(g,h;x,y,z).
\end{equation}
We claim for every $t\in\RR$,
\begin{equation}\label{eq:wigner-null-cgmt}
  \PP\paren{\lambda_{\max}(W) \geq t}
  \le 4\PP\paren{\mathrm{AO} \ge t}.
\end{equation}

For $R_1,R_2>0$, let $\mathrm{PO}_{R_1,R_2}$ and $\mathrm{AO}_{R_1,R_2}$ denote the same
optimizations as in \eqref{eq:wigner-null-PO} and in the definition of $\mathrm{AO}$, but
with the additional constraints $\norm{z_i}\le R_1$ and $\norm{y_i}\le R_2$ for
$i=1,2$. Apply Theorem~\ref{thm:gordon-useful} to the negated objective, with the
Gaussian matrices rescaled by the deterministic factors
$2\sqrt{\alpha/n}$ and $2\sqrt{\beta/n}$. Since the proof of
Theorem~\ref{thm:gordon-useful} is a covariance comparison, these deterministic
prefactors are harmless. We obtain
\[
  \PP\paren{\mathrm{PO}_{R_1,R_2} > t}
  \le 4\PP\paren{\mathrm{AO}_{R_1,R_2} \ge t}.
\]
Now sending $R_2 \to \infty$ and then $R_1 \to \infty$ gives \eqref{eq:wigner-null-cgmt}. Together with Lemma~\ref{lem:Wigner-AuxiliaryOpt} below, we obtain
\[
  \PP\paren{\lambda_{\max}(W) > 1 + C\eta}
  \le 4\PP\paren{\mathrm{AO} \ge 1 + C\eta}
  \le e^{-\Omega(n\eta^2)}.
\]
Renaming $C\eta$ as $\varepsilon$ completes the proof.
\end{proof}

\begin{lemma}\label{lem:Wigner-AuxiliaryOpt}
   The auxiliary optimization defined in \eqref{eq:WignerAODefinition} is bounded with probability at least $1-e^{-\Omega(n \eta^2)}$,
\[
  \mathrm{AO} \le 1 + C\eta.
\]
  for some fixed constant $C > 0$.
\end{lemma}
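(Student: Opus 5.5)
The plan is to mirror the proof of Lemma~\ref{lem:cca-AuxiliaryOpt}. First solve the inner minimization over $y$ in closed form. Then use Gaussian concentration to replace the random vectors $g_i,h_i$ by deterministic surrogates. This leaves a low-dimensional deterministic maximization, which should be bounded by $1$ using the defining relation of $\kappa$.

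\textbf{Step 1 (minimizing over $y$).} Write $c_1=\sqrt{\alpha/n}$ and $c_2=\sqrt{\beta/n}$. The objective separates in $y_1,y_2$. For fixed $x,z$ we have
\[
\norm{y_i-z_i}^2+2c_i\norm{x_i}h_i^\intercal y_i+2c_i\norm{y_i}g_i^\intercal x_i
=\norm{y_i}^2-2\ang{y_i,z_i-c_i\norm{x_i}h_i}+2c_i\norm{y_i}g_i^\intercal x_i+\norm{z_i}^2 .
\]
Minimizing first over the direction of $y_i$ and then over its norm gives
\[
\norm{z_i}^2-\sqb{\norm{z_i-c_i\norm{x_i}h_i}-c_ig_i^\intercal x_i}_+^2 .
\]
The $\norm{z_i}^2$ terms cancel against $-\norm{z_i}^2$ in $\psi$. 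Hence
\[
\mathrm{AO}=\max_{x\in\calE}\max_{z}\sum_{i=1,2}\Big(\sqrt{2\gamma_i}\,z_i^\intercal x_i-\sqb{\norm{z_i-c_i\norm{x_i}h_i}-c_ig_i^\intercal x_i}_+^2-(\gamma_i^2+\tfrac{\gamma_i}{2})\norm{x_i}^2\Big),
\]
with $(\gamma_1,\gamma_2)=(\alpha,\beta)$.

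\textbf{Step 2 (concentration).} With probability $1-e^{-\Omega(n\eta^2)}$ we have $c_1\norm{h_1}=\sqrt\alpha+O(\eta)$ and $c_1\norm{g_1}=\sqrt\alpha+O(\eta)$, the analogous bounds for index $2$ with $\sqrt\beta$, and pairwise normalized inner products among $g_1,g_2,h_1,h_2$ of size $O(\eta)$. As in the CCA proof, coercivity forces the maximizing $z$ into a ball of radius $O(1)$. On that ball the objective is Lipschitz in these quantities, so the error is $O(\eta)$.

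\textbf{Step 3 (reduction to a low-dimensional problem).} This is the key structural point. The gain from $c_ig_i^\intercal x_i$ requires $x_i$ to have a component along $\hat g_i$. The ellipsoid constraint couples $x_1,x_2$ only through $\ang{x_1,x_2}$, and since $g_1\perp g_2$ approximately, this coupling can only use components orthogonal to both $g$'s. Decompose $x_i=s_i\hat g_i+w_i$ with $w_i\perp \hat g_1,\hat g_2$, so that $\ang{x_1,x_2}\approx\ang{w_1,w_2}$. Decompose $z_i$ similarly along $\hat h_i$, along $x_i$, and an orthogonal remainder. As in the CCA lemma, the triangle inequality lets us drop cross terms and only increase the objective. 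The result is an explicit maximization over finitely many scalars: $s_i$, $\norm{w_i}$, $\ang{w_1,w_2}$, and the $z$-coordinates. For a single block, maximizing over $z_i$ should reproduce the scalar identity behind $\lambda_{\max}(\alpha U-\alpha^2I)=2\alpha-\alpha^2$. This yields a bound of the form $F_\alpha(s_1,\norm{w_1})+F_\beta(s_2,\norm{w_2})$, where only the $s_i$-part enjoys the full edge value $2\gamma_i-\gamma_i^2$ and the $w_i$-part is penalized.

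\textbf{Step 4 (main obstacle).} The expected difficulty is the final deterministic inequality: the reduced function must be bounded by $1$ on the ellipsoid $\norm{x_1}^2+\norm{x_2}^2-2\kappa\ang{w_1,w_2}=1-\kappa^2$, with equality exactly when $\kappa^4=(1-\alpha^2)(1-\beta^2)/(\alpha^2\beta^2)$. The naive bound $(2\alpha-\alpha^2)\norm{x_1}^2+(2\beta-\beta^2)\norm{x_2}^2$ ignores the penalty on the correlated part $w$ and is too weak. My first attempt is to maximize out $s_i$ explicitly, which is a one-dimensional quadratic. This should leave a quadratic form in $(\norm{w_1},\norm{w_2})$ with coefficients involving $1-\alpha^2$ and $1-\beta^2$. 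I would then bound the cross term $2\kappa\ang{w_1,w_2}\le 2\kappa\norm{w_1}\norm{w_2}$ and apply a weighted AM--GM/Cauchy--Schwarz step, as in the CCA claim. The weights should be chosen so that the product of the two weights equals $\kappa^4\alpha^2\beta^2/((1-\alpha^2)(1-\beta^2))=1$, making the resulting $2\times2$ quadratic form negative semidefinite below the level $1$. Collecting the $O(\eta)$ errors then gives $\mathrm{AO}\le 1+C\eta$.
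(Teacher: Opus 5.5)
Your Step 1 is correct and matches the paper's first reduction. The gap is in Steps 3--4, which rest on a wrong picture of where the value comes from.

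First, $x_i=s_i\hat g_i+w_i$ with $w_i\perp\hat g_1,\hat g_2$ is not a decomposition of a general $x_i$. It forbids $x_1$ from having a component along $\hat g_2$ (and $x_2$ along $\hat g_1$), so anything you derive from it bounds a \emph{restricted} maximum, not $\mathrm{AO}$. This restriction loses the optimum, and your claim that the ellipsoid coupling ``can only use components orthogonal to both $g$'s'' is false. Let $e_i:=(g_i+h_i)/\norm{g_i+h_i}$, and let $f_1,f_2$ be their orthonormalization. The points $x_1=p_1f_1+\kappa p_2f_2$, $x_2=\kappa p_1f_1+p_2f_2$ with $p_1^2+p_2^2=1$ lie on $\calE$. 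At the threshold you can choose $p_1,p_2$ with $\alpha\norm{x_1}=p_1$ and $\beta\norm{x_2}=p_2$, and then the objective after the $y$- and $z$-optimizations equals $1+O(\eta)$. So near-maximizers load each $x_i$ on the \emph{other} block's gain direction, precisely to exploit the $-2\kappa\ang{x_1,x_2}$ term.

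Second, the gain direction itself is misidentified. Write $z_i=b_i+\tilde z_i$ with $b_i=\sqrt{\gamma_i/n}\,\norm{x_i}h_i$, and use $\sup_{r\ge0}\{\ell r-\sqb{r-d}_+^2\}\le d\ell+\ell^2/4$ (Lemma~\ref{lem:wigner-null-onedim}). Then block $i$ contributes at most $-\gamma_i^2\norm{x_i}^2+\sqrt2\,\gamma_i\norm{x_i}\ang{g_i+h_i,x_i}/\sqrt n$. Hence an $x_i$ along $\hat g_i$ alone reaches only $\sqrt2\gamma_i-\gamma_i^2$, not the edge value $2\gamma_i-\gamma_i^2$. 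Moreover, the $\hat h_i$-part of $x_i$, which sits inside your $w_i$, is rewarded rather than penalized. So the function $F_\alpha+F_\beta$ you anticipate does not have the structure Step 4 needs, and Step 4 (which you leave open) has no correct input.

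The missing idea is that, after this exact $z$-step, no tuned weighting is needed. Apply $-\gamma^2r^2+2\gamma rp\le p^2$ with $r=\norm{x_i}$, $p=|\ang{f_i,x_i}|$, and $\norm{g_i+h_i}/\sqrt n=\sqrt2+O(\eta)$. This eliminates $\alpha$ and $\beta$ entirely and gives $\mathrm{AO}\le\max_{x\in\calE}\{\ang{f_1,x_1}^2+\ang{f_2,x_2}^2\}+O(\eta)$ on the concentration event. Now write $x_1=af_1+bf_2+u$ and $x_2=cf_1+df_2+v$, and use
$a^2+c^2-2\kappa ac=(1-\kappa^2)a^2+(c-\kappa a)^2$,
$b^2+d^2-2\kappa bd=(1-\kappa^2)d^2+(b-\kappa d)^2$, and
$\norm{u}^2+\norm{v}^2-2\kappa\ang{u,v}\ge0$.
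The constraint then forces $a^2+d^2\le1$ (Lemma~\ref{lem:wigner-null-geometry}). This uses only $0\le\kappa<1$. The relation $\kappa^4\alpha^2\beta^2=(1-\alpha^2)(1-\beta^2)$ decides when the bound $1$ is attained, not whether it holds, so building your Cauchy--Schwarz weights around it is unnecessary.
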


Before proving Lemma~\ref{lem:Wigner-AuxiliaryOpt}, we prove the following two elementary inequalities: 

\begin{lemma}\label{lem:wigner-null-onedim}
For every $\ell\ge 0$ and every $d\in\RR$,
\[
  \sup_{r\ge 0}\set{\ell r - \sqb{r-d}_+^2}
  \le d\ell + \frac{\ell^2}{4}.
\]
\end{lemma}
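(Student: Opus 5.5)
The plan is a one-variable case split on the sign of $d$, followed by completing the square in $r$.

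\textbf{Case $d\ge 0$.} On $[0,d]$ the penalty $[r-d]_+^2$ vanishes, so the objective equals $\ell r$. It is nondecreasing in $r$ (as $\ell\ge 0$), so its supremum there is $\ell d$. On $[d,\infty)$ substitute $s=r-d\ge 0$; the objective becomes
\[
\ell d+\ell s-s^2 .
\]
Maximizing the concave quadratic $\ell s-s^2$ over $s\in\RR$ gives the value $\ell^2/4$ at $s=\ell/2\ge 0$. Hence the supremum over $[d,\infty)$ is exactly $d\ell+\ell^2/4$, which also dominates the value $\ell d$ from the first range.

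\textbf{Case $d<0$.} For every $r\ge 0$ we have $r-d>0$, so the objective is $\ell r-(r-d)^2$. Again set $s=r-d$ and bound by the unconstrained maximum over $s\in\RR$:
\[
\ell r-(r-d)^2=\ell d+\ell s-s^2\le \ell d+\frac{\ell^2}{4}.
\]

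Restricting the domain of $r$ only lowers the supremum, so in both cases the claimed bound follows. There is no real obstacle here. The only point needing care is the region where the positive part is inactive, and it is handled by the monotonicity of $\ell r$ when $d\ge 0$.
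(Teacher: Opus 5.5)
Your proof is correct and is essentially the paper's argument. The paper splits directly on $r\le d$ versus $r\ge d$: it bounds the objective by $d\ell$ in the first range and completes the square in $r-d$ in the second. This makes your preliminary split on the sign of $d$ unnecessary (the range $0\le r\le d$ is simply empty when $d<0$), but it is harmless.
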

\begin{proof}
If $r\le d$, then the left hand side is at most $d\ell$. If $r\ge d$, then
\[
  \ell r - \sqb{r-d}_+^2
  = d\ell + (r-d)\ell - (r-d)^2
  \le d\ell + \frac{\ell^2}{4}.
\]
Taking the supremum over $r\ge 0$ proves the claim.
\end{proof}

\begin{lemma}\label{lem:wigner-null-geometry}
Let $f_1,f_2\in\RR^n$ be orthonormal. Let $\cE$ be the ellipsoid defined in \eqref{eq:wigner-null-ellipsoid}. Then every
$(x_1,x_2)\in\calE$ satisfies
\begin{equation}\label{eq:wigner-null-geometry-main}
  \ang{f_1,x_1}^2 + \ang{f_2,x_2}^2 \le 1.
\end{equation}
\end{lemma}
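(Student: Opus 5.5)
Set $p := \ang{f_1,x_1}$ and $q := \ang{f_2,x_2}$. The plan is to show that the quadratic form defining $\calE$ dominates $p^2+q^2$ with respect to the ellipsoid constraint. Since the constraint is a positive definite quadratic form, we will minimize $\norm{x_1}^2+\norm{x_2}^2-2\kappa\ang{x_1,x_2}$ subject to prescribed values of $p$ and $q$. The goal is to show this minimum is at least $(1-\kappa^2)(p^2+q^2)$. Then any point of $\calE$ satisfies $1-\kappa^2 \ge (1-\kappa^2)(p^2+q^2)$, which gives \eqref{eq:wigner-null-geometry-main} because $\kappa<1$.

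To carry out the minimization, decompose $x_1 = p f_1 + x_1^\perp$ and $x_2 = q f_2 + x_2^\perp$. The constraint form is $Q(x_1,x_2) = \norm{x_1}^2+\norm{x_2}^2-2\kappa\ang{x_1,x_2}$. A cleaner route is to write it as
\[
Q(x_1,x_2) = (1-\kappa)\norm{x_1}^2 + (1-\kappa)\norm{x_2}^2 + \kappa\norm{x_1-x_2}^2 ,
\]
or alternatively as
\[
Q(x_1,x_2) = (1-\kappa^2)\norm{x_1}^2 + \norm{x_2-\kappa x_1}^2 .
\]
Neither decomposition alone directly yields $p^2+q^2$, so I would instead bound the minimum directly. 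The form $Q$ is a function of the Gram data, and we must minimize over $x_1, x_2$ with $\ang{f_1,x_1}=p$ and $\ang{f_2,x_2}=q$. By Lagrange multipliers, the minimizer satisfies $x_1-\kappa x_2 = \lambda_1 f_1$ and $x_2-\kappa x_1=\lambda_2 f_2$. Solving gives
\[
x_1 = \frac{\lambda_1 f_1+\kappa\lambda_2 f_2}{1-\kappa^2}, \qquad x_2 = \frac{\lambda_2 f_2 + \kappa\lambda_1 f_1}{1-\kappa^2}.
\]
Orthonormality then gives $p = \lambda_1/(1-\kappa^2)$ and $q=\lambda_2/(1-\kappa^2)$. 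At the minimizer, $Q = \lambda_1 p + \lambda_2 q = (1-\kappa^2)(p^2+q^2)$. Positive definiteness of $Q$ (since $\kappa<1$) guarantees that this stationary point is the global minimum on the affine constraint set.

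The only point needing care is justifying that the critical point is the minimum, and the orthogonality $\ang{f_1,f_2}=0$ is exactly what makes the cross terms vanish. This is the step I expect to be the crux, though it is routine. As a sanity check, it is consistent with the alternate description of $\calE$ as the image of the unit sphere under $A^{1/2}\otimes I_n$. Explicitly, $(p,q)$ equals the pair $(\ang{f_1\otimes e_1, \cdot}, \ang{f_2\otimes e_2,\cdot})$ applied to $(A^{1/2}\otimes I_n) r$. The bound $p^2+q^2\le 1$ is therefore equivalent to the operator norm of a $2\times 2n$ matrix being at most $1$. The rows of that matrix are $(A^{1/2}\otimes I_n)(e_i\otimes f_i)$. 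Their Gram matrix is $\diag(1,1)$, because the cross term is $2uv\ang{f_1,f_2}=0$ and the diagonal entries are $u^2+v^2=1$. This gives a second one-line proof, which I would present as the main argument.
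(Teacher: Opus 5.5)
Your proof is correct, and both of your arguments differ from the paper's. The paper expands $x_1=af_1+bf_2+u$, $x_2=cf_1+df_2+v$ with $u,v\perp\mathrm{span}\{f_1,f_2\}$. It then completes the square \emph{differently in the two coordinates}: $a^2+c^2-2\kappa ac=(1-\kappa^2)a^2+(c-\kappa a)^2$ along $f_1$, but $b^2+d^2-2\kappa bd=(1-\kappa^2)d^2+(b-\kappa d)^2$ along $f_2$. The orthogonal part is $\norm{u-\kappa v}^2+(1-\kappa^2)\norm{v}^2\ge 0$.

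This resolves your remark that neither global decomposition of $Q$ produces $p^2+q^2$. One applies $(1-\kappa^2)\norm{x_1}^2+\norm{x_2-\kappa x_1}^2$ on the $f_1$-coordinate and its mirror image on the $f_2$-coordinate. The result is an explicit sum-of-squares identity for $Q-(1-\kappa^2)(p^2+q^2)$. It needs no convexity or Lagrange justification, and it exhibits the equality case $c=\kappa a$, $b=\kappa d$, $u=v=0$. That equality case is exactly your Lagrange minimizer $x_1=pf_1+\kappa qf_2$, $x_2=\kappa pf_1+qf_2$. So your first route computes the sharp constrained minimum, and the paper certifies it by hand.

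Your second route is the most conceptual. It uses the parametrization $\calE=(A^{1/2}\otimes I_n)\SS^{2n-1}$, which the paper derives in the proof of Theorem~\ref{thm:WignerGordonEdge}. Writing $x=(A^{1/2}\otimes I_n)r$ with $\norm{r}=1$, you get $(p,q)=Mr$, where the rows $(A^{1/2}e_i)\otimes f_i$ of $M$ have Gram entries $A_{ij}\ang{f_i,f_j}$. Hence $MM^\intercal=I_2$, and the bound is just Bessel's inequality. This explains why the constant is exactly $1$.

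The argument is also robust. For unit but non-orthogonal $f_1,f_2$, the same computation gives $p^2+q^2\le 1+\kappa\abs{\ang{f_1,f_2}}$. That would let one work directly with the nearly orthogonal directions in the proof of Lemma~\ref{lem:Wigner-AuxiliaryOpt} and skip the Gram--Schmidt step. By homogeneity, the argument also proves Lemma~\ref{lem:wishart-Ainv-geometry} verbatim.

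A small notational slip: with the paper's block convention the coordinate functionals are $\ang{e_i\otimes f_i,\cdot}$, not $\ang{f_i\otimes e_i,\cdot}$. Your Gram computation already uses the correct order.
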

\begin{proof}
Let us write 
\[
  x_1 = af_1 + bf_2 + u,
  \qquad
  x_2 = cf_1 + df_2 + v,
\]
with $u,v\perp \mathrm{span}\{f_1,f_2\}$. Since $(x_1,x_2)\in\calE$,
\begin{align*}
  1-\kappa^2
  &= \norm{x_1}^2 + \norm{x_2}^2 - 2\kappa\ang{x_1,x_2} \\
  &= \paren{a^2+c^2-2\kappa ac}
   + \paren{b^2+d^2-2\kappa bd}
   + \norm{u}^2 + \norm{v}^2 - 2\kappa\ang{u,v}.
\end{align*}
Now, note that we can rewrite
\begin{gather*}
	 a^2+c^2-2\kappa ac = (1-\kappa^2)a^2 + (c-\kappa a)^2, \\
	  b^2+d^2-2\kappa bd = (1-\kappa^2)d^2 + (b-\kappa d)^2,
\end{gather*}
and also notice that
\[
  \norm{u}^2 + \norm{v}^2 - 2\kappa\ang{u,v}
  = \norm{u-\kappa v}^2 + (1-\kappa^2)\norm{v}^2 \ge 0.
\]
Therefore, we conclude that
\[
  1-\kappa^2 \ge (1-\kappa^2)(a^2+d^2),
\]
which proves \eqref{eq:wigner-null-geometry-main}, since
$a=\ang{f_1,x_1}$ and $d=\ang{f_2,x_2}$.
\end{proof}

\begin{proof}[Proof of Lemma~\ref{lem:Wigner-AuxiliaryOpt}]
We first carefully analyze the auxilary optimization problem. We first minimize over $y$ for fixed $x$ and $z$. For 
$\gamma\in\{\alpha,\beta\}$, define
\[
  L_\gamma(x,y,z;g,h)
  :=
  2\sqrt{\frac{\gamma}{n}}
  \paren{\norm{y}g^\intercal x + \norm{x}h^\intercal y}
  + \norm{y-z}^2 - \norm{z}^2.
\]
If we first minimize over the direction of $y$, and then minimize over the magnitude of $\norm{y}$, we will obtain that
\begin{equation}\label{eq:wigner-null-ymin}
  \min_{y\in\RR^n} L_\gamma(x,y,z;g,h)
  =
  -\sqb{
    \norm{z-\sqrt{\frac{\gamma}{n}}\norm{x}h}
    - \sqrt{\frac{\gamma}{n}}\,g^\intercal x
  }_+^2.
\end{equation}
Using \eqref{eq:wigner-null-ymin}, if $\gamma_1 = \alpha$ and $\gamma _2 = \beta$, we get that 
\begin{equation}\label{eq:wigner-null-AO-after-y}
\begin{aligned}
  \mathrm{AO}
  =
  \max_{x\in\calE}
  \max_{z\in(\RR^n)^2}
  \sum_{i=1}^2
  &\Bigg\{
    \sqrt{2\gamma_i}\,z_i^\intercal x_i
    - \paren{\gamma_i^2+\frac{\gamma_i}{2}}\norm{x_i}^2 \\
        - &\sqb{
      \norm{z_i-\sqrt{\frac{\gamma_i}{n}}\norm{x_i}h_i}
      - \sqrt{\frac{\gamma_i}{n}}\,g_i^\intercal x_i
    }_+^2
  \Bigg\},
\end{aligned}
\end{equation}

To apply the Lemma~\ref{lem:wigner-null-onedim}, let us define
\[
  \ell := \sqrt{2\gamma}\,\norm{x},
  \qquad
  b := \sqrt{\frac{\gamma}{n}}\norm{x}h,
  \qquad
  d := \sqrt{\frac{\gamma}{n}}\,g^\intercal x.
\]
Indeed, writing $\tilde z:= z-b$ and using
$z^\intercal x = b^\intercal x + \tilde z^\intercal x \le b^\intercal x + \norm{\tilde z}\norm{x}$,
we obtain
\begin{align*}
  \sup_{z\in\RR^n}
  \set{
    \sqrt{2\gamma}\,z^\intercal x
    - \sqb{\norm{z-b}-d}_+^2
  } & \le
  \sqrt{2\gamma}\,b^\intercal x
  + \sup_{\norm{\tilde{z}}\ge 0}\set{\sqrt{2\gamma}\,\norm{x}\norm{\tilde z} - \sqb{\norm{\tilde z}-d}_+^2}\\
  &\le
  \sqrt{2\gamma}\,b^\intercal x
  + d\sqrt{2\gamma}\,\norm{x}
  + \frac{\gamma}{2}\norm{x}^2 \\
  &=  \sqrt{2}\gamma \frac{\norm{x}}{\sqrt{n}}(h + g)^\intercal x + \frac{\gamma}{2}\norm{x}^2.
\end{align*}
Now, substituting back and using
\eqref{eq:wigner-null-AO-after-y}, we arrive at the deterministic upper bound
\begin{equation}\label{eq:wigner-null-AO-zbound}
  \mathrm{AO}
  \le
  \max_{x\in\calE}
  \Bigg\{
    -\alpha^2\norm{x_1}^2 - \beta^2\norm{x_2}^2
    + \sqrt{2}\alpha\frac{\norm{x_1}}{\sqrt n}(g_1+h_1)^\intercal x_1
    + \sqrt{2}\beta\frac{\norm{x_2}}{\sqrt n}(g_2+h_2)^\intercal x_2
  \Bigg\}.
\end{equation}

\medskip
\noindent
Now, we analyze the right hand side. Let us set $a_i = g_i +h_i$. Then $a_1$ and $a_2$ are independent $\calN(0,2I_n)$ vectors. Fix small $\eta$. By standard Gaussian concentration arguments, with probability at least $1 - e^{-\Omega(\eta^2 n)}$, we have
\begin{equation}\label{eq:wigner-null-good-event}
    \abs{\frac{\norm{a_1}}{\sqrt n} - \sqrt{2}}\le \eta, \qquad 
     \abs{\frac{\norm{a_2}}{\sqrt n} - \sqrt{2}}\le \eta \qquad 
     \abs{\ang{e_1,e_2}}\le \eta,
\end{equation}
where $e_i = a_i / \norm{a_i}$ are unit vectors. Let $f_1 = e_1$ and $f_2$ is rescaled unit projection of $e_2$ against $e_1$. More precisely, 
\[
  f_2 := \frac{e_2 - \ang{e_1,e_2}e_1}{\sqrt{1-\ang{e_1,e_2}^2}}.
\]
Then $f_1$  and $f_2$ are orthonormal and $\norm{f_2-e_2}\le C\eta$ under the same event. 
Now fix $x=(x_1,x_2)\in\calE$ and write $r_i:=\norm{x_i}$. From
\eqref{eq:wigner-null-AO-zbound}
\begin{align*}
  -\alpha^2 r_1^2
  + \sqrt{2}\alpha\frac{r_1}{\sqrt n}a_1^\intercal x_1
  &=
  -\alpha^2 r_1^2
  + \sqrt{2}\alpha\frac{\norm{a_1}}{\sqrt n}r_1\ang{e_1,x_1} \\
  &\le
  -\alpha^2 r_1^2 + (2+O(\eta))\alpha r_1\abs{\ang{f_1,x_1}} \\
  &\le
  (1 + O(\eta))\ang{f_1,x_1}^2
\end{align*}
and similarly, we have that under the same event 
\[
  -\beta^2 r_2^2
  + \sqrt{2}\beta\frac{r_2}{\sqrt n}a_2^\intercal x_2
  \le (1 + O(\eta))\ang{f_2, x_2}^2.
\]
Hence, we deduce that under the same event
\begin{equation}\label{eq:wigner-null-AO-pregeom}
  \mathrm{AO}
  \le
  (1 +O(\eta))\max_{x\in\calE}
  \set{
    \ang{f_1,x_1}^2 + \ang{f_2,x_2}^2}
\end{equation}

The remaining input is to apply the deterministic lemma about the ellipsoid $\calE$. Combining \eqref{eq:wigner-null-AO-pregeom} with Lemma~\ref{lem:wigner-null-geometry}, we find that with probability at least $1-e^{-\Omega(n \eta^2)}$, we have that 
\[
  \mathrm{AO} \le 1 + C\eta. 
\]

\end{proof}

\subsection{Correlated spiked Wishart model}

Throughout this subsection, we work with the CSWish null model from Section~\ref{subsec:wishart}. More precisely,
$U,V\in\RR^{n\times m}$ are Gaussian matrices with independent
$\calN(0,m^{-1})$ entries, with aspect ratio $n/m\to\tau$. We further assume the regime
\[
  \alpha\sqrt\tau<1,
  \qquad
  \beta\sqrt\tau<1,
  \qquad
  \kappa^4=\frac{(1-\tau\alpha^2)(1-\tau\beta^2)}{\tau^2\alpha^2\beta^2}<1.
\]
Define matrices $\wt U$, $\wt V$ and $W$ as in Section~\ref{subsec:wishart}:
\[
  W:=
  \begin{pmatrix}
    \widetilde U & \kappa\widetilde V\\
    \kappa\widetilde U & \widetilde V
  \end{pmatrix}, \qquad \widetilde U:=\frac{\alpha}{1+\alpha}U^\intercal U-\alpha\tau I_m,
  \qquad
  \widetilde V:=\frac{\beta}{1+\beta}V^\intercal V-\beta\tau I_m.
\]

\begin{theorem}\label{thm:WishartGordonEdge}
For every fixed $\varepsilon\in(0,1)$,
\[
  \PP\paren{\lambda_{\max}(W)\ge 1+\varepsilon}
  \le e^{-\Omega(n\varepsilon^2)}.
\]
\end{theorem}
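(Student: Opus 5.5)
Write $\gamma_1=\alpha/(1+\alpha)$ and $\gamma_2=\beta/(1+\beta)$. The plan follows the Wigner argument of Theorem~\ref{thm:WignerGordonEdge}.

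\emph{Step 1: symmetrize and reduce to an ellipsoid.} Since $W=(A\otimes I_m)\,\diag(\widetilde U,\widetilde V)$ with $A=\begin{pmatrix}1&\kappa\\ \kappa&1\end{pmatrix}$, conjugating by $A^{1/2}\otimes I_m$ gives
\[
\lambda_{\max}(W)=\max_{(x_1,x_2)\in\calE}\set{\gamma_1\norm{Ux_1}^2-\alpha\tau\norm{x_1}^2+\gamma_2\norm{Vx_2}^2-\beta\tau\norm{x_2}^2},
\]
where $\calE$ is the ellipsoid \eqref{eq:wigner-null-ellipsoid}, now with $\RR^m$ in place of $\RR^n$.

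\emph{Step 2: pass to the auxiliary optimization.} Linearize $\gamma_i\norm{Ux_i}^2=\max_{z_i}\{2\sqrt{\gamma_i}\ang{z_i,Ux_i}-\norm{z_i}^2\}$. This is a max--max problem, so the operator-norm version, Theorem~\ref{thm:gordon-useful-opnorm}, applies after truncating $\norm{z_i}\le R$ and letting $R\to\infty$. Replacing $\ang{z_i,Ux_i}$ by $m^{-1/2}(\norm{z_i}g_i^\intercal x_i+\norm{x_i}h_i^\intercal z_i)$, with $g_i\in\RR^m$ and $h_i\in\RR^n$ standard Gaussian, yields
\[
\PP(\lambda_{\max}(W)\ge t)\le 4\,\PP(\mathrm{AO}\ge t).
\]
Optimize over $z_i$ in closed form: its direction aligns with $h_i$ and its norm is chosen optimally. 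This gives
\[
\mathrm{AO}=\max_{x\in\calE}\sum_{i}\Big\{\gamma_i\big(\norm{x_i}\norm{h_i}/\sqrt m+g_i^\intercal x_i/\sqrt m\big)^2-c_i\norm{x_i}^2\Big\},\qquad c_1=\alpha\tau,\ c_2=\beta\tau.
\]
On a good event of probability $1-e^{-\Omega(n\varepsilon^2)}$ we have $\norm{h_i}/\sqrt m=\sqrt\tau+O(\varepsilon)$, $\norm{g_i}/\sqrt m=1+O(\varepsilon)$, and $\abs{\ang{g_1,g_2}}/m=O(\varepsilon)$. Orthonormalize $g_1,g_2$ into $f_1,f_2$ as in the Wigner proof, and write $r_i=\norm{x_i}$ and $p_i=\abs{\ang{f_i,x_i}}\le r_i$. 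The $i$-th term is then bounded by
\[
F_i(r_i,p_i)=\gamma_i(\sqrt\tau r_i+p_i)^2-c_i r_i^2+O(\varepsilon).
\]

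\emph{Step 3 (main obstacle): the deterministic bound $\max_{\calE}\le1$.} Unlike the Wigner case, $F_i$ depends on both $r_i$ and $p_i$, so Lemma~\ref{lem:wigner-null-geometry} does not apply verbatim. I will decompose $x_i$ along $\mathrm{span}\{f_1,f_2\}$ and its orthogonal complement, as in that lemma's proof. The target is a quadratic-form inequality
\[
\sum_i F_i \le \norm{x_1}^2+\norm{x_2}^2-2\kappa\ang{x_1,x_2}\quad\text{divided by }(1-\kappa^2).
\]
Concretely, the $6$-variable quadratic form in the coordinates (components on $f_1$, components on $f_2$, norms of orthogonal parts) should have $(1-\kappa^2)^{-1}$ times the ellipsoid form minus $\sum F_i$ positive semidefinite. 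I will reduce this, via Cauchy--Schwarz in the cross term $2\gamma_i\sqrt\tau r_ip_i$, to a $2\times2$ block condition. That condition should become exactly $\kappa^4\ge(1-\tau\alpha^2)(1-\tau\beta^2)/(\tau^2\alpha^2\beta^2)$ when one uses $\gamma_i(1+\sqrt\tau)^2$-type algebra. The check should use $\gamma_i\tau-c_i=-\alpha^2\tau/(1+\alpha)$ and $\gamma_i(1+\sqrt\tau)^2-c_i$ maximized at the edge. A cleaner route, which I would try first, is to maximize over the off-direction mass for fixed $p_i$, reducing to one-dimensional Lemma~\ref{lem:wigner-null-onedim}-type bounds of the form $F_i\le \theta_i p_i^2+\theta_i' q_i^2$. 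One then verifies that the resulting diagonal weights are dominated by the ellipsoid constraint using the completing-the-square identities from Lemma~\ref{lem:wigner-null-geometry}. The equality case should be the threshold relation defining $\kappa$. Combining Steps 2 and 3 gives $\mathrm{AO}\le1+O(\varepsilon)$ with the stated probability, and renaming $\varepsilon$ concludes.
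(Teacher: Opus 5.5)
Steps 1 and 2 are sound and agree with the paper up to packaging. You keep $-\alpha\tau\norm{x_1}^2-\beta\tau\norm{x_2}^2$ as a deterministic term in a max--max problem. What you therefore need is the max--max comparison with a continuous $\psi$. It follows from the same covariance computation as Theorem~\ref{thm:gordon-useful-opnorm}, but it is not literally that statement. The paper avoids the issue by absorbing those terms into the $z$-dependent ellipsoid $\calE_z$, via $\lambda_{\max}(H)<z\iff\norm{[(zI_2+C)\otimes I_m]^{-1/2}B}_{\op}<1$. Also, optimizing over $z_i$ produces $\gamma_i[\,\cdot\,]_+^2$; dropping the positive part only loosens the bound, which is harmless.

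The genuine gap is Step~3. It is the entire deterministic content of the proof, and you leave it as a list of tentative routes. The missing idea is a one-line scalar inequality that decouples the two blocks completely. Write $\alpha_1=\alpha$ and $\alpha_2=\beta$, so that $c_i=\tau\alpha_i$ and $\gamma_i=\alpha_i/(1+\alpha_i)$. Your Cauchy--Schwarz in the cross term, taken with the specific weight $2\sqrt\tau\,r_ip_i\le\alpha_i\tau r_i^2+p_i^2/\alpha_i$, gives exactly
\[
\gamma_i(\sqrt\tau\,r_i+p_i)^2-\alpha_i\tau r_i^2=p_i^2-\frac{(p_i-\alpha_i\sqrt\tau\,r_i)^2}{1+\alpha_i}\le p_i^2 .
\]
Hence $\sum_iF_i\le p_1^2+p_2^2+O(\varepsilon)$, and Lemma~\ref{lem:wigner-null-geometry} then applies verbatim in $\RR^m$. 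Your remark that it does not apply is only true before this reduction. These two steps are exactly Lemmas~\ref{lem:wishart-basic-scalar} and~\ref{lem:wishart-Ainv-geometry} of the paper.

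No $2\times2$ coupling condition arises, and the defining relation of $\kappa$ is never used: $\max_{\calE}\sum_iF_i\le 1$ holds for every $\kappa\in(0,1)$. Your intuition about that relation is correct only for the equality case. Equality forces $p_i=\alpha_i\sqrt\tau\,r_i$ together with equality in the geometry lemma, and these are compatible if and only if $\kappa^4=(1-\tau\alpha^2)(1-\tau\beta^2)/(\tau^2\alpha^2\beta^2)$. A route built to produce ``$\kappa^4\ge\cdots$'' as a positivity condition is therefore searching for a coupling that is not there.
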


\begin{proof}
We follow the same pattern as in the Wigner case. We first symmetrize the block matrix,
rewrite the edge event as a primary optimization over an ellipsoid, compare it to an
auxiliary optimization via Theorem~\ref{thm:gordon-useful}, and then analyze the auxiliary
problem deterministically. Let
\begin{equation}\label{eq:ADef_WishartAO}
  A:=\begin{pmatrix}1&\kappa\\ \kappa&1\end{pmatrix},
  \qquad
  A^{1/2}=\begin{pmatrix} u&v\\ v&u\end{pmatrix}.
\end{equation}
so that $u^2+v^2=1$ and $2uv=\kappa$. Define the symmetric matrix
\[
  H:=(A^{1/2}\otimes I_m)
  \begin{pmatrix}
    \widetilde U & 0\\
    0 & \widetilde V
  \end{pmatrix}
  (A^{1/2}\otimes I_m).
\]
As in the proof of Theorem~\ref{thm:WignerGordonEdge}, we have that 
\[
  W=(A\otimes I_m)
  \begin{pmatrix}
    \widetilde U & 0\\
    0 & \widetilde V
  \end{pmatrix}
  =(A^{1/2}\otimes I_m)H(A^{-1/2}\otimes I_m),
\]
the matrices $W$ and $H$ are similar, and therefore $  \lambda_{\max}(W)=\lambda_{\max}(H).$ For notational convenience, let us introduce
\[
  \gamma_1:=\alpha,
  \qquad
  \gamma_2:=\beta,
  \qquad
  \theta_i:=\frac{\gamma_i}{1+\gamma_i}
  \qquad  G_1:=\sqrt m\,U^\intercal,  \qquad
  G_2:=\sqrt m\,V^\intercal.
\]
Then $G_1,G_2\in\RR^{m\times n}$ have independent standard Gaussian entries, and
\[
  \widetilde U=\frac{\theta_1}{m}G_1G_1^\intercal-\gamma_1\tau I_m,
  \qquad
  \widetilde V=\frac{\theta_2}{m}G_2G_2^\intercal-\gamma_2\tau I_m.
\]
Define auxilary matrix $ C:=\tau\gamma_1 (u,v)^{\intercal}(u,v)+\tau\gamma_2(v,u)^{\intercal}(v,u) \in \RR^{2 \times 2}$  and 
\[
  B:=(A^{1/2}\otimes I_m)
  \begin{pmatrix}
    \sqrt{\theta_1/m}\,G_1 & 0\\
    0 & \sqrt{\theta_2/m}\,G_2
  \end{pmatrix}.
\]
Now, a direct computation gives that for every $z > 0$ 
\[
  H=BB^\intercal-(C\otimes I_m) \quad \text{ and } \quad   zI_{2m}-H=(zI_2+C)\otimes I_m-BB^\intercal.
\]
By Schur complement, we conclude the initial form of the primary optimization
\begin{equation}\label{eq:eig-wishart-equivalence}
  \lambda_{\max}(W)<z
  \iff
  \lambda_{\max}(H)<z
  \iff
  \norm{\sqb{(zI_2+C)\otimes I_m}^{-1/2}B}_{\op}<1.
\end{equation}
Let us rewrite the argument of the latter expression as follows.
\[
  \sqb{(zI_2+C)\otimes I_m}^{-1/2}B
  =
  \sqb{(zI_2+C)^{-1/2}A^{1/2}\otimes I_m}
  \begin{pmatrix}
    \sqrt{\theta_1/m}\,G_1 & 0\\
    0 & \sqrt{\theta_2/m}\,G_2
  \end{pmatrix}.
\]
Observe that the image of $\SS^{2m-1}$ to under the map $x \mapsto A^{1/2}(zI_2 + C)^{-1/2} \otimes I_m$ is precisely the ellipsoid defined by 
\[
  \calE_z:=\setcond{x = (x_1,x_2)\in\RR^m\times\RR^m}{
    \ang{x ,(M_z\otimes I_m)x}=1},
\]
where, matrix $M_z$ satisfies the following identity. 
\begin{align}\label{eq:wishart-Mz-definition}
  M_z:=A^{-1/2}(zI_2+C)A^{-1/2}
  &=zA^{-1}+\tau\diag(\gamma_1,\gamma_2)\\
  &=
  \begin{pmatrix}
    \dfrac{z}{1-\kappa^2}+\tau\alpha & -\dfrac{z\kappa}{1-\kappa^2}\\[2mm]
    -\dfrac{z\kappa}{1-\kappa^2} & \dfrac{z}{1-\kappa^2}+\tau\beta
  \end{pmatrix},
\end{align}
then, \eqref{eq:eig-wishart-equivalence} can be written compactly as the following primary optimization problem
\begin{equation}\label{eq:wishart-lmax-via-PO}
  \lambda_{\max}(W)\ge z
  \iff
  \mathrm{PO}_z\ge 1,
\end{equation}
where $\mathrm{PO}_z$ is defined by the following standard form
\begin{equation}\label{eq:wishart-po-ellipsoid}
  \mathrm{PO}_z
  :=
  \max_{x\in\calE_z}
  \max_{\norm{y}=1}
  \Phi_z(G;x,y),
\end{equation}
where $x=(x_1,x_2)$, $y=(y_1,y_2)$, and
\[
  \Phi_z(G;x,y)
  :=
  \sqrt{\frac{\theta_1}{m}}\,x_1^\intercal G_1y_1
  +
  \sqrt{\frac{\theta_2}{m}}\,x_2^\intercal G_2y_2.
\]

We now introduce the corresponding auxiliary optimization. Let
$g_1,g_2\sim\calN(0,\II_m)$ and $h_1,h_2\sim\calN(0,\II_n)$ be independent Gaussian
vectors and define
\[
  \phi_z(g,h;x,y)
  :=
  \sqrt{\frac{\theta_1}{m}}
  \paren{\norm{y_1}\,g_1^\intercal x_1 + \norm{x_1}\,h_1^\intercal y_1}
  +
  \sqrt{\frac{\theta_2}{m}}
  \paren{\norm{y_2}\,g_2^\intercal x_2 + \norm{x_2}\,h_2^\intercal y_2},
\]
and the auxiliary optimization
\begin{equation}\label{eq:WishartAODefinition}
  \mathrm{AO}_z
  :=
  \max_{x\in\calE_z}
  \max_{\norm{y}=1}
  \phi_z(g,h;x,y).
\end{equation}
We claim that for every $z,t>0$,
\[
  \PP\paren{\mathrm{PO}_z>t}
  \le 4\PP\paren{\mathrm{AO}_z\ge t}.
\]
This is an immediate corollary of Theorem~\ref{thm:gordon-useful-opnorm}. The claim follows by Lemma~\ref{lem:Wishart-AuxiliaryOpt} below, 
\[
  \PP\paren{\lambda_{\max}(W) > 1 + \epsilon}
  \le 4\PP\paren{\mathrm{AO}_{1+\epsilon} \ge 1}
  \le e^{-\Omega(n\epsilon^2)}.
\]
\end{proof}

\begin{lemma}\label{lem:Wishart-AuxiliaryOpt}
  The auxiliary optimization problem defined in \eqref{eq:WishartAODefinition} is bounded with probability $1-\exp{\set{-\Omega(n\varepsilon^2)}}$,
    \[\mathrm{AO}_{1+\varepsilon} \leq 1- \Omega(\varepsilon).\] 

\end{lemma}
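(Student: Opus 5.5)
The plan is to reduce $\mathrm{AO}_{1+\varepsilon}$, exactly as in the proofs of Lemmas~\ref{lem:cca-AuxiliaryOpt} and \ref{lem:Wigner-AuxiliaryOpt}, to a deterministic low-dimensional quadratic inequality, with an $O(\eta)$ error on a Gaussian concentration event.

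\textbf{Step 1: optimize out $y$.} Write $s_i=\norm{y_i}$ with $s_1^2+s_2^2=1$. For fixed norms, the term $\norm{x_i}h_i^\intercal y_i$ is maximized by taking $y_i$ parallel to $h_i$. The remaining maximization over $(s_1,s_2)$ on the unit circle is a linear functional, so Cauchy--Schwarz gives
\[
\mathrm{AO}_z=\max_{x\in\calE_z}\Bigl(\sum_{i=1}^2\tfrac{\theta_i}{m}\bigl[g_i^\intercal x_i+\norm{x_i}\norm{h_i}\bigr]_+^2\Bigr)^{1/2}.
\]

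\textbf{Step 2: concentration.} On an event of probability $1-e^{-\Omega(n\eta^2)}$ we have $\norm{h_i}/\sqrt m=\sqrt\tau+O(\eta)$, $\norm{g_i}/\sqrt m=1+O(\eta)$, and $|\ang{g_1,g_2}|/m\le\eta$. As in the Wigner case, replace $g_i/\norm{g_i}$ by an orthonormal pair $f_1,f_2$, at cost $O(\eta)\norm{x}$. Since $M_z\succeq cI$ on $\calE_z$, we have $\norm{x}=O(1)$ there. Hence, with $a_i:=|\ang{f_i,x_i}|\le r_i:=\norm{x_i}$,
\[
\mathrm{AO}_z^2\le\max_{x\in\calE_z}\sum_{i}\theta_i\,(a_i+\sqrt\tau\, r_i)^2+O(\eta).
\]

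\textbf{Step 3: remove $\varepsilon$ by homogeneity.} By \eqref{eq:wishart-Mz-definition}, $M_{1+\varepsilon}=M_1+\varepsilon A^{-1}\succeq(1+c\varepsilon)M_1$ for some $c>0$, because $A^{-1}\succ0$ and $M_1$ is bounded. Every $x\in\calE_{1+\varepsilon}$ is therefore a multiple $tx'$ with $x'\in\calE_1$ and $t^2\le(1+c\varepsilon)^{-1}$. The objective is $2$-homogeneous in $x$, so
\[
\mathrm{AO}_{1+\varepsilon}^2\le\frac{Q^*}{1+c\varepsilon}+O(\eta),\qquad Q^*:=\sup_{\ang{x,(M_1\otimes I)x}\le 1}\sum_i\theta_i(a_i+\sqrt\tau\, r_i)^2 .
\]
Choosing $\eta$ a small multiple of $\varepsilon$ then gives $\mathrm{AO}_{1+\varepsilon}\le1-\Omega(\varepsilon)$, provided we establish the deterministic bound $Q^*\le1$.

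\textbf{Step 4: the deterministic inequality $Q^*\le 1$ (main obstacle).} First reduce the dimension, as in Lemma~\ref{lem:wigner-null-geometry}. Decompose $x_1=a f_1+b f_2+u$ and $x_2=c f_1+d f_2+v$ with $u,v\perp\mathrm{span}\{f_1,f_2\}$. The constraint form is
\[
\Bigl(\tfrac{1}{1-\kappa^2}+\tau\alpha\Bigr)r_1^2+\Bigl(\tfrac{1}{1-\kappa^2}+\tau\beta\Bigr)r_2^2-\tfrac{2\kappa}{1-\kappa^2}\ang{x_1,x_2}.
\]
For fixed $a,d,r_1,r_2$, the cross term $\ang{x_1,x_2}$ is maximized by aligning the remaining components. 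This leaves a problem in the four scalars $(a,r_1,d,r_2)$, and the cross term is bounded by an explicit expression in them; I expect the resulting bound to be $\sqrt{(r_1^2-a^2)(r_2^2-d^2)}+ad$ or similar.

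I then need to show that
\[
\sum_i\theta_i(a_i+\sqrt\tau\, r_i)^2\le \text{(the reduced constraint form)},
\]
i.e.\ that a certain $4\times4$ quadratic form (or, after fixing ratios $a_i/r_i$, a $2\times2$ form) is positive semidefinite. I would use a Lagrange multiplier / Cauchy--Schwarz argument, bounding $2a_i\sqrt\tau\, r_i\le \lambda_i a_i^2+\tau r_i^2/\lambda_i$ with tuned weights $\lambda_i$. The goal is to make the resulting $2\times2$ determinant reduce exactly to the identity $\kappa^4\tau^2\alpha^2\beta^2=(1-\tau\alpha^2)(1-\tau\beta^2)$, using $\theta_i=\gamma_i/(1+\gamma_i)$. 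Equality at the threshold is expected, since the null bulk edge is exactly $1$.

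Finding the correct weights, so that the determinant condition collapses to the definition of $\kappa$, is the step I expect to be delicate. If the four-variable reduction proves unwieldy, I would first check numerically where the maximizer lies (whether $x_i$ is parallel to $f_i$, or partially aligned with the other block) and then tailor the Cauchy--Schwarz split to that configuration.
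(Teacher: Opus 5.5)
Your Steps 1--3 are sound and match the paper's reduction. The homogeneity argument $M_{1+\varepsilon}=M_1+\varepsilon A^{-1}\succeq(1+c\varepsilon)M_1$ is equivalent to the paper's bound $\langle x,(M_1\otimes I_m)x\rangle\le 1-c_0\varepsilon$ on $\calE_{1+\varepsilon}$.

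The gap is Step~4. The inequality $Q^*\le 1$ is the whole content of the lemma, and you only describe a search for it. The route you outline also does not work as written, because your guessed bound on the cross term is false. For $x_1=af_1$ and $d=0$, the maximum of $\langle x_1,x_2\rangle$ is $ar_2$, not $0$. Since $\langle x_1,x_2\rangle$ enters the constraint form with a minus sign, using that guess would give an invalid lower bound on the form. In general, the maximum of the cross term at fixed $(a,r_1,d,r_2)$ is piecewise and involves $\sqrt{r_1^2-a^2}$ and $\sqrt{r_2^2-d^2}$. So this reduction does not produce a quadratic form in these scalars whose semidefiniteness you could simply check.

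The missing idea is to never optimize the cross term, and instead to decouple through $M_1=A^{-1}+\tau\diag(\alpha,\beta)$. Taking the weight $\lambda_i=1/\gamma_i$ in your own Cauchy--Schwarz split gives, block by block,
\[
\theta_i\bigl(a_i+\sqrt\tau\, r_i\bigr)^2\le a_i^2+\tau\gamma_i r_i^2 .
\]
This is just $(1+\gamma_i)(a_i^2+\tau\gamma_i r_i^2)-\gamma_i(a_i+\sqrt\tau\, r_i)^2=(a_i-\gamma_i\sqrt\tau\, r_i)^2\ge 0$, which is Lemma~\ref{lem:wishart-basic-scalar}.

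It then remains to show $a_1^2+a_2^2\le\langle x,(A^{-1}\otimes I_m)x\rangle$. This follows from the decomposition you already wrote down, together with
\begin{itemize}
\item $a^2+c^2-2\kappa ac=(1-\kappa^2)a^2+(c-\kappa a)^2$,
\item $b^2+d^2-2\kappa bd=(1-\kappa^2)d^2+(b-\kappa d)^2$,
\item $\|u\|^2+\|v\|^2-2\kappa\langle u,v\rangle\ge 0$.
\end{itemize}
This is Lemma~\ref{lem:wishart-Ainv-geometry}, the analogue of Lemma~\ref{lem:wigner-null-geometry}. Adding the $\tau\gamma_i r_i^2$ terms gives $\sum_i\theta_i(a_i+\sqrt\tau\, r_i)^2\le\langle x,(M_1\otimes I_m)x\rangle$, hence $Q^*\le 1$, for every $\kappa\in(0,1)$.

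So nothing has to be tuned to the identity $\kappa^4\tau^2\alpha^2\beta^2=(1-\tau\alpha^2)(1-\tau\beta^2)$; that identity only governs tightness. Equality in both steps forces $x_1=af_1+\kappa d f_2$, $x_2=\kappa a f_1+df_2$, $|a|=\alpha\sqrt\tau\, r_1$ and $|d|=\beta\sqrt\tau\, r_2$. Given $\tau\alpha^2,\tau\beta^2<1$, these conditions admit $x\neq 0$ precisely because $\kappa$ satisfies that identity. You were right to expect equality at the threshold, but that fact plays no role in proving the upper bound.
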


Before proving Lemma~\ref{lem:Wishart-AuxiliaryOpt}, we prove the following two elementary inequalities: 

\begin{lemma}\label{lem:wishart-basic-scalar}
Let $\gamma>0$, let $\theta=\gamma/(1+\gamma)$, and let $0\le p\le r$. Then
\begin{equation}\label{eq:wishart-basic-scalar}
  \theta\paren{p+\sqrt\tau\,r}^2
  \le
  p^2+\tau\gamma\,r^2.
\end{equation}
\end{lemma}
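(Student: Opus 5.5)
The inequality follows from a single weighted Cauchy--Schwarz (equivalently, AM--GM) step, with no need for the ordering hypothesis $p\le r$. The plan is to write the left-hand side as the square of a two-term inner product, with weights chosen so that the two resulting quadratic terms are exactly $p^2$ and $\tau\gamma r^2$.

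Concretely, I would split
\[
  p+\sqrt\tau\,r \;=\; 1\cdot p \;+\; \gamma^{-1/2}\cdot\bigl(\sqrt{\tau\gamma}\,r\bigr),
\]
and apply Cauchy--Schwarz in $\RR^2$ to the vectors $(1,\gamma^{-1/2})$ and $(p,\sqrt{\tau\gamma}\,r)$. This gives
\[
  \paren{p+\sqrt\tau\,r}^2 \;\le\; \paren{1+\gamma^{-1}}\paren{p^2+\tau\gamma\,r^2}.
\]
Since $1+\gamma^{-1} = (1+\gamma)/\gamma = \theta^{-1}$ and $\theta>0$, multiplying through by $\theta$ yields \eqref{eq:wishart-basic-scalar}.

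An equivalent route is to expand both sides. The difference is
\[
  (1-\theta)p^2 + \tau(\gamma-\theta)r^2 - 2\theta\sqrt\tau\,pr ,
\]
and using $1-\theta = 1/(1+\gamma)$ and $\gamma-\theta = \gamma^2/(1+\gamma)$, it equals
\[
  \frac{1}{1+\gamma}\paren{p-\gamma\sqrt\tau\,r}^2 \;\ge\; 0 .
\]
I would record this perfect-square form as well, because it also identifies the equality case $p=\gamma\sqrt\tau\,r$. That case is presumably the one that is saturated when the lemma is used in the proof of Lemma~\ref{lem:Wishart-AuxiliaryOpt}.

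The only step requiring care is the choice of weights $(1,\gamma^{-1/2})$, which is dictated by matching the target right-hand side. There is no genuine obstacle here. In particular, the assumption $0\le p\le r$ is not needed for this inequality, since it holds for all real $p,r$; that assumption is presumably relevant only where the lemma is applied.
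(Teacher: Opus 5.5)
Your proof is correct, and your expansion route is exactly the paper's argument: the paper proves the identity $(1+\gamma)(p^2+\tau\gamma r^2)-\gamma(p+\sqrt\tau\,r)^2=(p-\gamma\sqrt\tau\,r)^2\ge 0$ and divides by $1+\gamma$. Your weighted Cauchy--Schwarz step is just an equivalent repackaging of that identity, and you are right that the hypothesis $0\le p\le r$ is not needed for the inequality itself.
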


\begin{proof}
A direct expansion gives
\[
  (1+\gamma)\paren{p^2+\tau\gamma\,r^2}
  -
  \gamma\paren{p+\sqrt\tau\,r}^2
  =
  \paren{p-\gamma\sqrt\tau\,r}^2\ge 0.
\]
Dividing by $1+\gamma$ yields \eqref{eq:wishart-basic-scalar}.
\end{proof}

\begin{lemma}\label{lem:wishart-Ainv-geometry}
Let $e_1,e_2\in\RR^m$ be orthonormal, and let $x=(x_1,x_2)\in\RR^m\times\RR^m$.
Define overlaps $p_1:=\abs{\langle e_1,x_1\rangle}$ and $ p_2:=\abs{\langle e_2,x_2\rangle}.$ Let $A$ be the matrix defined in \eqref{eq:ADef_WishartAO}. Then
\begin{equation}\label{eq:wishart-Ainv-geometry}
  p_1^2+p_2^2 \le \ang{ x,(A^{-1}\otimes I_m)x}
\end{equation}
\end{lemma}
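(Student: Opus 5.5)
We need $p_1^2+p_2^2 \le \langle x,(A^{-1}\otimes I_m)x\rangle$ for orthonormal $e_1,e_2$, where $A^{-1}=\frac{1}{1-\kappa^2}\begin{pmatrix}1&-\kappa\\-\kappa&1\end{pmatrix}$. This is the same geometric fact as Lemma~\ref{lem:wigner-null-geometry}, except now stated as an inequality for arbitrary $x$ rather than on the ellipsoid. We follow the same argument directly. The computation is a Schur-type completion of squares, and the key input is only that $\kappa\in(0,1)$, so $A^{-1}\succ 0$.

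First, multiply through by $1-\kappa^2>0$. The claim becomes
\[
(1-\kappa^2)(p_1^2+p_2^2)\le \|x_1\|^2+\|x_2\|^2-2\kappa\langle x_1,x_2\rangle .
\]
Decompose $x_1=ae_1+be_2+u$ and $x_2=ce_1+de_2+v$ with $u,v\perp\mathrm{span}\{e_1,e_2\}$, so that $p_1=|a|$ and $p_2=|d|$. By orthogonality the right-hand side splits into three groups:
\[
\bigl(a^2+c^2-2\kappa ac\bigr)+\bigl(b^2+d^2-2\kappa bd\bigr)+\bigl(\|u\|^2+\|v\|^2-2\kappa\langle u,v\rangle\bigr).
\]

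Next, complete squares in each group. For the first, $a^2+c^2-2\kappa ac=(1-\kappa^2)a^2+(c-\kappa a)^2\ge(1-\kappa^2)a^2$. For the second, $b^2+d^2-2\kappa bd=(1-\kappa^2)d^2+(b-\kappa d)^2\ge(1-\kappa^2)d^2$. For the third, $\|u\|^2+\|v\|^2-2\kappa\langle u,v\rangle=\|u-\kappa v\|^2+(1-\kappa^2)\|v\|^2\ge 0$. Summing the three bounds gives the right-hand side $\ge(1-\kappa^2)(a^2+d^2)=(1-\kappa^2)(p_1^2+p_2^2)$, which is the claim.

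There is no genuine obstacle. The only point requiring care is choosing which variable to isolate in each square: isolate $a$ in the $e_1$-block and $d$ in the $e_2$-block, matching the overlaps $p_1=|\langle e_1,x_1\rangle|$ and $p_2=|\langle e_2,x_2\rangle|$. One should also check that the normalization of $A^{-1}$ matches the ellipsoid in \eqref{eq:wigner-null-ellipsoid}.
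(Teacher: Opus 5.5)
Your proposal is correct and follows the paper's proof essentially line for line. The paper uses the same decomposition $x_1=ae_1+be_2+u$, $x_2=ce_1+de_2+v$ and the same three completed squares, isolating $a$ and $d$, and likewise uses only $1-\kappa^2>0$.
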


\begin{proof}
Write $x_1$ and $x_2$ in coordinates as
\[
  x_1=a e_1+b e_2+u,
  \qquad
  x_2=c e_1+d e_2+v,
\]
where $u,v\perp\mathrm{span}\set{e_1,e_2}$. Then $p_1^2=a^2$ and $p_2^2=d^2$, and
\begin{align*}
  (1-\kappa^2)&
 \ang{x,(A^{-1}\otimes I_m)x} = \\
  &\qquad=
  \norm{x_1}^2+\norm{x_2}^2-2\kappa\langle x_1,x_2\rangle \\
  &\qquad=
  \paren{a^2+c^2-2\kappa ac}
  +
  \paren{b^2+d^2-2\kappa bd}
  +
  \norm{u}^2+\norm{v}^2-2\kappa\langle u,v\rangle.
\end{align*}
Now as in the proof of Lemma~\ref{lem:wigner-null-geometry}, we have that 
\begin{gather*}
      a^2+c^2-2\kappa ac=(1-\kappa^2)a^2+(c-\kappa a)^2 \\
       b^2+d^2-2\kappa bd=(1-\kappa^2)d^2+(b-\kappa d)^2 \\
       \norm{u}^2+\norm{v}^2-2\kappa\langle u,v\rangle
  =
  \norm{u-\kappa v}^2+(1-\kappa^2)\norm{v}^2\ge 0.
\end{gather*}
Therefore, we can conclude that 
\[
  (1-\kappa^2)
 \ang{ x,(A^{-1}\otimes I_m)x}   \ge
  (1-\kappa^2)(a^2+d^2),
\]
which is exactly \eqref{eq:wishart-Ainv-geometry}.
\end{proof}

\begin{proof}[Proof of Lemma~\ref{lem:Wishart-AuxiliaryOpt}]
We now solve the auxiliary optimization. Fix $z = 1 + \varepsilon$. For a parameter $\delta\in(0,1)$ to be chosen later, consider the event
event that
\begin{equation}\label{eq:wishart-good-event}
  \Bigl|\frac{\norm{g_i}}{\sqrt m}-1\Bigr|\le \delta,
  \qquad
  \Bigl|\frac{\norm{h_i}}{\sqrt m}-\sqrt\tau\Bigr|\le \delta,
  \qquad
  \frac{\abs{\langle g_1,g_2\rangle}}{m}\le \delta,
  \qquad i=1,2.
\end{equation}
Since $m=\Theta(n)$, standard Gaussian norm concentration and concentration of
the inner product of two independent Gaussian vectors imply that this event happens with probability at least $1 - \exp \set{-\Omega(n\delta^2)}$. From now on we work on this event. Define
\[
  e_1:=\frac{g_1}{\norm{g_1}},
  \qquad
  e_2:=\frac{g_2-\langle g_2,e_1\rangle e_1}{\norm{g_2-\langle g_2,e_1\rangle e_1}}.
\]
Then $e_1$ and $ e_2$ are orthonormal, and in fact from the estimates above 
\[
  g_1=(1+O(\delta))\sqrt m\,e_1,
  \qquad
  g_2=(1+O(\delta))\sqrt m\,e_2+O(\delta)\sqrt m\,e_1.
\]
Hence for every $x=(x_1,x_2)\in\calE_z$, if we write
\[
  r_i:=\norm{x_i},
  \qquad
  p_i:=\abs{\langle e_i,x_i\rangle},
  \qquad i=1,2,
\]
then we can upper bound 
\begin{equation}\label{eq:wishart-gi-bound}
  \abs{g_i^\intercal x_i}
  \le \sqrt m\,(p_i+C\delta r_i),
  \qquad
  \norm{h_i}\le \sqrt m(\sqrt\tau+C\delta).
\end{equation}
For fixed $x$, maximizing first over the directions of $y_1,y_2$ and then over
their norms subject to $\norm{y_1}^2+\norm{y_2}^2=1$ gives
\[
  \max_{\norm{y}=1}\phi_z(g,h;x,y)
  =
  \sqrt{\sqb{f_1(x_1)}_+^2+\sqb{f_2(x_2)}_+^2}
  \le
  \sqrt{f_1(x_1)^2+f_2(x_2)^2},
\]
where functions $f_i$ are defined by 
\[
  f_i(x_i):=
  \sqrt{\frac{\theta_i}{m}}
  \paren{g_i^\intercal x_i+\norm{h_i}\,\norm{x_i}}.
\]
By \eqref{eq:wishart-gi-bound}, we therefore have
\[
  f_i(x_i)^2
  \le
  \theta_i\paren{p_i+(\sqrt\tau+C\delta)r_i}^2.
\]
Consequently, the auxiliary optimization is upper bounded by 
\begin{equation}\label{eq:wishart-AO-pre-det}
  \mathrm{AO}_z^2
  \le
  \max_{x\in\calE_z}
  \sum_{i=1}^2
  \theta_i\paren{p_i+(\sqrt\tau+C\delta)r_i}^2.
\end{equation}
Now define $ L_0(x):=\theta_1\paren{p_1+\sqrt\tau\,r_1}^2+\theta_2\paren{p_2+\sqrt\tau\,r_2}^2.$  Since $p_i\le r_i$ and $\delta\le 1$, for a larger constant $C$ we obtain an easy upper bound
\[
  \sum_{i=1}^2 \theta_i\paren{p_i+(\sqrt\tau+C\delta)r_i}^2
  \le L_0(x)+C\delta(r_1^2+r_2^2).
\]
Also, by \eqref{eq:wishart-Mz-definition}, the eigenvalues of $M_z$ is bounded away from $0$:
\[
  M_z\succeq zA^{-1}\succeq \frac{z}{1+\kappa}I_2 \succeq \frac{1}{1+\kappa}I_2.
\]
Since $x\in\calE_z$ satisfies $\langle x,(M_z\otimes I_m)x\rangle=1$, this implies that $\calE_z$ is bounded:
\begin{equation}\label{eq:wishart-ellipsoid-norm-bound}
  r_1^2+r_2^2\le 1+\kappa\le 2.
\end{equation}
Combining this with \eqref{eq:wishart-AO-pre-det}, we arrive at
\begin{equation}\label{eq:wishart-AO-det-reduction}
  \mathrm{AO}_z^2
  \le
  \max_{x\in\calE_z}L_0(x)+C\delta.
\end{equation}

By Lemma~\ref{lem:wishart-basic-scalar}, applied with $(\gamma,p,r)=(\alpha,p_1,r_1)$ and
$(\gamma,p,r)=(\beta,p_2,r_2)$,
\[
  L_0(x)
  \le
  p_1^2+p_2^2+\tau\alpha r_1^2+\tau\beta r_2^2.
\]
Then Lemma~\ref{lem:wishart-Ainv-geometry} and the fact that $M_1=A^{-1}+\tau\diag(\alpha,\beta) $ gives that 
\begin{equation}\label{eq:wishart-L0-bound}
  L_0(x)
  \le
  \ang{x,(A^{-1}\otimes I_m)x}
  +\tau\alpha\norm{x_1}^2+\tau\beta\norm{x_2}^2 = \ang{x, (M_1 \otimes I_m)  x},
\end{equation}
holds for all $x \in \RR^{2m}$. We now return to $x\in\calE_z$ with $z=1+\varepsilon$. Since
\[
  M_z-M_1=(z-1)A^{-1}=\varepsilon A^{-1},
\]
we have that $\ang{x,(M_1\otimes I_m)x}
  =
  1-\varepsilon
 \ang{x,(A^{-1}\otimes I_m)x}$. Write
\[
 x
  =
  \sqb{A^{1/2}(zI_2+C)^{-1/2}\otimes I_m}r
\]
for some $r\in\SS^{2m-1}$. Then, note that 
\[
 \ang{x,(A^{-1}\otimes I_m)x}
  =
  \langle r,\sqb{(zI_2+C)^{-1}\otimes I_m}r\rangle
  \ge
  \frac{1}{\lambda_{\max}(zI_2+C)}.
\]
Since $z\in[1,2]$ and $C \succeq 0$ is fixed, there exists a constant
$c_0>0$ such that
\begin{equation}\label{eq:wishart-margin}
 \ang{x,(M_1\otimes I_m)x}
  \le
  1-c_0\varepsilon
  \qquad\text{for all } x\in\calE_{1+\varepsilon}.
\end{equation}
Combining \eqref{eq:wishart-AO-det-reduction}, \eqref{eq:wishart-L0-bound}, and
\eqref{eq:wishart-margin}, we conclude that 
\[
  \mathrm{AO}_{1+\varepsilon}^2
  \le
  1-c_0\varepsilon+C\delta.
\]
If we choose $\delta/\varepsilon$ to be small constant, we conclude that $\mathrm{AO}_{1+\varepsilon} \leq 1- \Omega(\varepsilon)$. Since this happens with with $1-\exp{\set{-\Omega(n\varepsilon^2)}}$ probability, we are done.

\end{proof}

%% file: convergence.tex
\section{Analysis of the Stieltjes transform}

In this section, we will study in more depth the spectral properties of random matrices we are working with. In particular, we will analyze Stieltjes transform of each of our null models (see Section~\ref{sec:Model_Def}). More precisely, we obtain estimates for the rate of convergence in the region $\Omega_\eta $ defined in \eqref{eq:OmegaDef}. Let us recall the notion of `with overwhelming probability'.

\begin{definition}[Overwhelming probability]
    We say that a sequence of events $\{\mathcal E_n\}$ holds with overwhelming probability (w.o.p.) if for any $C>0$, $\PP(\mathcal E_n) \ge 1 - n^{-C}$ for all sufficiently large $n$. By a union bound, the union of polynomially many events that hold with overwhelming probability also holds with overwhelming probability.
\end{definition}

\subsection{Canonical correlation model and proof of Theorem~\ref{thm:resolvConvCCAMain}, Lemma~\ref{lem:extended-solution-of-finite-systemMain}}\label{subsec:CCA_ConvProof}

We briefly recall the definitions from Section~\ref{sec:convergenceMain}. Throughout this subsection, $U \in \RR^{n \times m}$ and $V \in \RR^{n \times k}$ are two independent Gaussian matrices with i.i.d. entries  $U_{ij} \sim \calN(0, m^{-1})$ and $V_{ij} \sim \calN(0, k^{-1})$. Suppose
\begin{equation*}
    \frac{n}{m} \to \tau_m \geq 1 \text{ and } \frac{n}{ k} \to \tau_k \geq 1 \text{ and } \kappa := (\tau_m\tau_k)^{-1/4}  < 1
\end{equation*}
We are interested in the block matrix $W$ given by
\begin{equation*}
	W = \begin{pmatrix}
		-\kappa U^\intercal U & U^\intercal V \\
		V^\intercal U & -\kappa V^\intercal V
	\end{pmatrix}.
\end{equation*}
We will define by $\lambda_* := (1-\kappa^2)/\kappa$. To study spectral properties of $W$, the relevant object is the resolvent $(W - z I_{m+k})^{-1}$. However, it turns out to be more convenient to work with linearized resolvent given by
  \begin{equation}\label{eq:linMatrixCCA}
    \calL(z) := \begin{pmatrix}
            -z I_m & 0 & \sqrt{\kappa} U^\intercal  & 0 \\
            0 & -z I_k & 0 & \sqrt{\kappa} V^\intercal  \\
            \sqrt{\kappa} U & 0 & \alpha \kappa  I_n & \alpha I_n  \\ 
            0 & \sqrt{\kappa} V & \alpha  I_n & \alpha \kappa  I_n
              \end{pmatrix} \quad \text{ and } \quad \calG (z) = \calL(z)^{-1}
  \end{equation}
where $\alpha = \kappa/(\kappa^2 - 1)$. We observe that by Schur's complement lemma, the top left $(m+k) \times (m+k)$ block $\calG (z)$ is precisely $(W - zI_{m+k})^{-1}$.
We will investigate the limiting form of $\calG $ and deduce that it is close to some deterministic $\calM (z)$ in the sense below. Let us first define 
\begin{equation}\label{eq:OmegaDef}
    \Omega_{\eta} = \{z \in \CC_+ : \Im z > \eta, \abs{z} < \eta^{-1}\} 
\end{equation}

\begin{definition}\label{def:det_system}
For each $z\in \CC_+$, define scalars $r(z),s(z)\in\CC_+$ and a $2\times2$ complex matrix
$T(z)=(t_{ij}(z))_{i,j\in\{1,2\}}$ as the unique solution to the coupled system
\begin{gather}\label{eq:finite_system}
\begin{aligned}
r(z) &= \frac{1}{-z - \kappa \tau_m \,t_{11}(z)},\\
s(z) &= \frac{1}{-z - \kappa \tau_k\,t_{22}(z)},\\
\end{aligned} 
\\
\begin{pmatrix}
    t_{11}(z) & t_{12}(z) \\
    t_{21}(z) & t_{22}(z)
\end{pmatrix}\begin{pmatrix}
    \alpha \kappa - \kappa  r(z) & \alpha \\
    \alpha & \alpha \kappa  - \kappa s(z)
\end{pmatrix} = I_{2}
\end{gather}
Extend $(r, s, T)$ analytically to $\DD := \CC \setminus (-\infty, \lambda_*]$ and set for all $z \in \DD$
\begin{equation}\label{eq:MNdef}
\calM (z):=
\begin{pmatrix}
r(z) I_m & 0 & 0 & 0\\
0 & s(z) I_k & 0 & 0\\
0 & 0 & t_{11}(z) I_n & t_{12}(z) I_n\\
0 & 0 & t_{21}(z) I_n & t_{22}(z) I_n
\end{pmatrix}.
\end{equation}
\end{definition}
\begin{rmk}
    The existence and uniqueness of the solution in Definition~\ref{def:det_system} is non-trivial and is the content of Lemma~\ref{lem:convergence-to-finite-system}. Analytic continuation result is also nontrivial and is the content of Lemma~\ref{lem:extended-solution-of-finite-system}.
\end{rmk}

\begin{theorem}[Theorem~\ref{thm:resolvConvCCAMain}]\label{thm:resolvConvCCA}
 Let $\calG$ be as in \eqref{eq:linMatrixCCA} and $\calM$ be as in Definition~\ref{def:det_system}. Fix $\eta, \varepsilon > 0$. Then, for any unit vectors $x, y\in \RR^{m+k+2n}$, 
  \begin{equation}\label{eq:resolvExpectation} \sup_{z\in \Omega_\eta } \abs{x^\intercal  \calG (z) y - x^\intercal  \calM (z) y} \le O(n^{-1/2 + \varepsilon})\end{equation}
 with overwhelming probability, where the hidden constant depends only on $\tau_m, \tau_k, \eta, $ and $\varepsilon$.
\end{theorem}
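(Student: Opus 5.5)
The plan follows the two steps announced in Section~\ref{sec:convergenceMain}: show that $x^\intercal\calG(z)y$ concentrates around $x^\intercal\EE\calG(z)y$, show that $\EE\calG(z)$ is close to $\calM(z)$, and then make both estimates uniform over $z\in\Omega_\eta$ by a net argument.

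\textbf{A priori bounds.} First I need deterministic control of $\calG(z)$ on $\Omega_\eta$. By Schur complements, the top-left block of $\calG$ is $(W-z)^{-1}$, whose norm is at most $\eta^{-1}$. The remaining blocks can be written in terms of this block times $U,V$ together with the constant block
\[
\begin{pmatrix}\alpha\kappa I_n&\alpha I_n\\ \alpha I_n&\alpha\kappa I_n\end{pmatrix},
\]
which is invertible because $\kappa<1$. Since $\norm{U},\norm{V}=O(1)$ w.o.p., this gives $\norm{\calG(z)}\le C(\eta)$ w.o.p. To make expectations well defined, I will work on the event $\{\norm{U},\norm{V}\le C\}$, either with a smooth cutoff or simply by noting that the event holds w.o.p.

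\textbf{Concentration.} For fixed unit vectors $x,y$ and fixed $z$, the map $(U,V)\mapsto x^\intercal\calG(z)y$ is Lipschitz on the good event, with constant $O(n^{-1/2})$ in the entries of $(\sqrt m\,U,\sqrt k\,V)$. This is because $\partial\calG=-\calG(\partial\calL)\calG$ and the blocks of $\calG$ are bounded. Gaussian concentration (Lemma~\ref{lem:gaussianConLip}) then gives fluctuations of order $n^{-1/2+\varepsilon}$ w.o.p. Uniformity in $z$ comes from a union bound over an $n^{-2}$-net of $\Omega_\eta$, using that $z\mapsto\calG(z)$ is Lipschitz with constant $C(\eta)^2$.

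\textbf{Deterministic equivalent.} This is the main obstacle. I will apply Gaussian integration by parts to the identity $\EE[\calL\calG]=I$. Write $\calL=\calL_0+\sqrt\kappa\,X$, where $X$ carries $U,V$ off-diagonally. Then
\[
\EE[\calG X]=-\EE[\calS(\calG)\calG]
\]
with $\calS$ the variance-profile superoperator. This operator only involves normalized traces of the diagonal blocks: $\tfrac1m\Tr\calG^{(11)}$, $\tfrac1k\Tr\calG^{(22)}$, and $\tfrac1n\Tr$ of the $2\times2$ lower blocks (the $\tau_m,\tau_k$ factors enter through $n/m,n/k$). The result is
\[
\EE\calG\bigl(\calL_0(z)-\kappa\calS(\EE\calG)\bigr)=I+\mathcal E,
\]
where the error $\mathcal E$ is controlled by variances of normalized traces. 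By the concentration step these variances are $O(n^{-1})$, and in quadratic form the error is $O(n^{-1/2+\varepsilon})$. Block-projecting this equation yields a perturbed version of system \eqref{eq:finite_systemMain} for the block traces $(\tilde r,\tilde s,\tilde T)$.

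\textbf{Stability.} I then need stability of \eqref{eq:finite_systemMain} on $\Omega_\eta$: a perturbation of size $\delta$ in the equations should move the solution by $O_\eta(\delta)$. The plan is to use the existence and uniqueness from Lemma~\ref{lem:convergence-to-finite-system}. On $\Omega_\eta$ one has the a priori bounds $\Im r,\Im s\ge c(\eta)>0$ and $\abs{r},\abs{s}\le\eta^{-1}$, since these are Stieltjes transforms. The system is then a contraction in a suitable hyperbolic-type metric, in the spirit of Ajanki--Erd\H{o}s--Kr\"uger and Hachem et al. Alternatively, I can bound the inverse of the Jacobian directly: the Jacobian is $I-\kappa\,\calM\,\calS(\cdot)\,\calM$ restricted to the block-trace space, and its invertibility follows from $\Im z\ge\eta$ via a Ward-type identity.

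Plugging the resulting trace estimates back into the resolvent equation, $\EE\calG$ equals $(\calL_0-\kappa\calS(\calM))^{-1}+O(n^{-1/2+\varepsilon})=\calM+O(n^{-1/2+\varepsilon})$ in operator norm. Combined with the concentration step, this gives \eqref{eq:resolvExpectation}.
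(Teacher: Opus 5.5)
Your proposal is correct and follows the paper's proof step for step. The shared steps are Lipschitz/Gaussian concentration of $x^\intercal\calG(z)y$ with a net in $z$; Gaussian integration by parts on $\calL\calG=I$, with the product of normalized traces factorized by concentration and the trace-of-products terms being $O(n^{-1})$; and stability of the reduced $2\times2$ system via the imaginary-part (Ward) identities, which force the Jacobian of the fixed-point map to have spectral radius below one. Block-projecting the matrix equation instead of invoking orthogonal invariance to get the exact block-scalar form of $\EE\calG$ is only a cosmetic difference.

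The one point to make explicit is that the Jacobian bound gives only local stability, through the inverse function theorem. So, as the paper does, you must first know that the block traces converge to $(r,s,T)$ uniformly on $\bar\Omega_\eta$; this follows from Montel plus uniqueness, or from your a priori bounds plus compactness and uniqueness. Your hyperbolic-contraction alternative is not immediate here: the lower $2\times2$ block is real, so the fixed-point map does not send $\CC_+^2$ into a relatively compact subset.
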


We show Theorem~\ref{thm:resolvConvCCA} in two main steps. We first show a concentration $\calG (z) \approx \EE \calG (z)$ and then we show a deterministic stability $\calG (z) \approx \calM(z)$. As a corollary of the theorem, we will obtain another more useful theorem below. First, write $\calG(z)$ in the block form
\begin{equation}
    \calG = \begin{pmatrix}
        \calG^{(11)} & \calG^{(12)} & \calG^{(13)} & \calG^{(14)} \\
        \calG^{(21)} & \calG^{(22)} & \calG^{(23)} & \calG^{(24)} \\ 
        \calG^{(31)} & \calG^{(32)} & \calG^{(33)} & \calG^{(34)} \\
        \calG^{(41)} & \calG^{(42)} & \calG^{(43)} & \calG^{(44)} \\
    \end{pmatrix}
\end{equation}
with $\calG^{(11)} \in \RR^{m \times m}$, $\calG^{(22)} \in \RR^{k \times k}$, $\calG^{(33)} \in \RR^{n \times n}$, and $\calG^{(44)} \in \RR^{n \times  n}$. This matches the block form of $\calL(z)$. 

\begin{theorem}[Theorem~\ref{thm:real-stieltjes-transform-CCAMain}]\label{thm:real-stieltjes-transform-CCA}
Let $\calG$ be as in \eqref{eq:linMatrixCCA} and $\calM$ and $\DD$ be as in Definition~\ref{def:det_system}. Then, for any compact set $K \subset (\lambda_*, \infty)$, we have that 
\begin{equation*}
   \sup_{z \in K}  \frac{1}{n}\abs{\Tr \, \calG^{(ij)}(z)  - \Tr \, \calM^{(ij)}(z)} = o(1),
\end{equation*}
with overwhelming probability.
\end{theorem}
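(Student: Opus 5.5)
We deduce this from Theorem~\ref{thm:resolvConvCCA} together with the edge bound of Theorem~\ref{thm:CCAGordonEdgeAppendix}, by an analytic-continuation and equicontinuity argument. Fix a compact $K\subset(\lambda_*,\infty)$ and let $\delta>0$ with $K\subset[\lambda_*+3\delta,\delta^{-1}]$.

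\textbf{Step 1 (no spectrum near $K$).} By Theorem~\ref{thm:CCAGordonEdgeAppendix}, with probability $1-e^{-\Omega(n\delta^2)}$ (hence w.o.p.) we have $\lambda_{\max}(W)\le\lambda_*+\delta$. On this event $W-z$ is invertible on the complex neighbourhood
\[
K_\delta=\{z:\mathrm{dist}(z,K)<\delta\},
\]
with $\|(W-z)^{-1}\|\le \delta^{-1}$. The other blocks of $\cG$ are obtained from $(W-z)^{-1}$ by the Schur complement formula. Writing $B=\begin{pmatrix}\sqrt\kappa U^\intercal&0\\0&\sqrt\kappa V^\intercal\end{pmatrix}$ and $D=\alpha\begin{pmatrix}\kappa&1\\1&\kappa\end{pmatrix}\otimes I_n$ (invertible since $\kappa<1$), the lower blocks are
\[
D^{-1}+D^{-1}B^\intercal(W-z)^{-1}BD^{-1},
\]
and the off-diagonal blocks are of the form $-(W-z)^{-1}BD^{-1}$. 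Since $\|U\|,\|V\|=O(1)$ w.o.p., every block $\frac1n\Tr\cG^{(ij)}$ is analytic on $K_\delta$ and bounded by a deterministic constant $C_\delta$. By Cauchy estimates, these functions are also uniformly Lipschitz on $K_{\delta/2}$.

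\textbf{Step 2 (deterministic side).} By Lemma~\ref{lem:extended-solution-of-finite-system}, $r,s,T$ extend analytically to $\DD\supset K_\delta$, so the entries of $\cM$ are bounded and Lipschitz on $K_{\delta/2}$. The block traces of $\cM$ are multiples of $r,s,t_{ij}$ with ratios $m/n,k/n\to\tau_m^{-1},\tau_k^{-1}$.

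\textbf{Step 3 (transfer from $\CC_+$).} Consider the difference
\[
f_n(z)=\tfrac1n\Tr\cG^{(ij)}(z)-\tfrac1n\Tr\cM^{(ij)}(z).
\]
It is analytic and uniformly bounded on $K_\delta$ w.o.p. Theorem~\ref{thm:resolvConvCCA}, applied with $x=y$ equal to coordinate vectors and averaged over coordinates, gives $|f_n|\le O(n^{-1/2+\varepsilon})$ on $\Omega_\eta$. Here we need a uniform-in-$z$ version: we take a polynomial net plus the Lipschitz bound, combined with a union bound over polynomially many coordinates, which is fine by the w.o.p.\ convention. Taking $\eta\ll\delta$ small, the segment $\{x+i\delta/2: x\in K\}\subset\Omega_\eta\cap K_\delta$.

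We then use a two-constants / Hadamard three-lines type argument, or more simply the following. Let $\Sigma$ be the closed rectangle $[\min K-\delta/2,\max K+\delta/2]\times[-\delta/2,\delta/2]$. The function $f_n$ is bounded by $2C_\delta$ on $\Sigma$, is $o(1)$ on the top edge, and $f_n(\bar z)=\overline{f_n(z)}$ by real symmetry, so it is also $o(1)$ on the bottom edge. A harmonic-measure bound (two-constants theorem for $\log|f_n|$) then gives
\[
|f_n|\le (2C_\delta)^{1-\omega}\,o(1)^{\omega}
\]
on $K$. Here $\omega>0$ is the harmonic measure of the top and bottom edges seen from points of $K$, which is bounded below uniformly. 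Hence $\sup_K|f_n|=o(1)$ w.o.p.

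\textbf{Main obstacle.} The key difficulty is the passage to the real axis in Step 3. The pointwise error on $\CC_+$ does not directly control the real line, and moving from $\Im z=\delta/2$ down to $\Im z=0$ requires analyticity together with a uniform bound in the strip; that bound is exactly what the Gordon edge estimate provides. An alternative to the two-constants theorem is Vitali/Montel: any subsequence of $f_n$ has a locally uniformly convergent further subsequence whose limit vanishes on an open set, hence vanishes on $K$. This gives convergence in probability, which we would then upgrade using the quantitative rates available above.
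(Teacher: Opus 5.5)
Your argument is correct. Steps 1--2 use the same ingredients as the paper: the Gordon edge bound gives uniform control of $\calG$ near $K$, and the entries of $\calM$ are analytic on $\DD$. Step 3, however, takes a different route.

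\textbf{How the paper does Step 3.} It uses neither a fixed height nor harmonic measure. It applies Theorem~\ref{thm:resolvConvCCA} at $x+i\eta$ for a small $\eta$. It then moves down to the real axis via the resolvent identity $\calG(x+i\eta)-\calG(x)=i\eta\,\calG(x+i\eta)(I_{m+k}\oplus 0_{2n})\calG(x)$, using $\|\calG\|_{\op}=O(1)$ on the Gordon event, at cost $O(\eta)$. The analogous Lipschitz bound handles $\calM$. This gives an error $O(\eta)+C_\eta n^{-1/4}$, and the paper lets $n\to\infty$ before $\eta\downarrow 0$. The route is fully elementary; the Lipschitz bound you record at the end of Step 1 already suffices for it. But it only yields $o(1)$, because the dependence of $C_\eta$ on $\eta$ is not tracked.

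\textbf{What your route gives.} Your two-constants argument works at the single height $\delta/2$ and uses conjugation symmetry to control the bottom edge. At points of $K$, the harmonic measure of the horizontal edges of $\Sigma$ is at least $1/2$. To see this, compare with the square of side $\delta$ centred at the point: its top and bottom edges lie in those of $\Sigma$, and they carry harmonic measure exactly $1/2$ at the centre by symmetry. You then get the quantitative bound $\sup_K|f_n|\le \sqrt{2C_\delta}\,O(n^{-1/4+\varepsilon/2})$ with overwhelming probability, at the price of invoking harmonic measure.

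\textbf{Bookkeeping.} Replace $K$ by its convex hull, so that the rectangle $\Sigma$ genuinely lies in the region where $f_n$ is analytic and bounded. Also note that the \emph{entire} top edge of $\Sigma$ (not only the segment over $K$) lies in $\Omega_\eta$; this is what the two-constants theorem needs. The Vitali/Montel alternative you mention is unnecessary, and as you say it would only give the weaker in-probability statement.
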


\subsubsection{Concentration of linearized resolvent}

In this part, we adopt the notation introduced in the subsection. The main result is the following proposition and its corollaries.

\begin{prop}\label{prop:CCA-concentration}
   Let $\eta, \varepsilon > 0$ be arbitrary. Then for any unit $x, y\in \RR^{m+k+2n}$
  \begin{equation*} \sup_{z\in \Omega_\eta } \abs{x^\intercal  \calG (z) y - x^\intercal  \EE \calG (z) y} \le O(n^{-1/2 + \varepsilon})\end{equation*}
 with overwhelming probability, and the hidden constant depends only on $\tau_m$,  $\tau_k$, $\varepsilon$, and $\eta$. 
\end{prop}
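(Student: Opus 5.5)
Fix $z\in\Omega_\eta$ and unit vectors $x,y$. We view $F(U,V):=x^\intercal\calG(z)y$ as a function of the Gaussian entries. Write $U=m^{-1/2}X$ and $V=k^{-1/2}Y$, where $X,Y$ have i.i.d.\ $\calN(0,1)$ entries. The plan is to apply Gaussian concentration for Lipschitz functions (Lemma~\ref{lem:gaussianConLip}) to the real and imaginary parts of $F$. There are three steps: a deterministic bound on $\norm{\calG(z)}_\op$ on a high-probability set, a Lipschitz estimate for $F$ on that set, and a net argument over $z$.

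\textbf{Step 1 (operator norm of $\calG$).} By Schur complement, the top-left block of $\calG$ is $(W-z)^{-1}$, with norm at most $1/\Im z\le\eta^{-1}$. Let $B$ be the $2n\times2n$ block $\begin{pmatrix}\alpha\kappa I&\alpha I\\ \alpha I&\alpha\kappa I\end{pmatrix}$. It is invertible, with eigenvalues $\alpha(\kappa\pm1)$, which are nonzero because $\kappa<1$. The remaining blocks of $\calG$ are then expressed through $B^{-1}$, the off-diagonal blocks $\sqrt\kappa U$, $\sqrt\kappa V$, and $(W-z)^{-1}$. Consequently $\norm{\calG(z)}_\op\le C(\eta,\kappa)(1+\norm U_\op+\norm V_\op)^2$. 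On the event $\cE=\{\norm U_\op,\norm V_\op\le C_0\}$, which holds with probability $1-e^{-cn}$ by standard Gaussian bounds (or Theorem~\ref{thm:gordon-useful-opnorm}), this gives a uniform bound $\norm{\calG}_\op\le C_1$.

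\textbf{Step 2 (Lipschitz bound).} The derivative is $\partial F=-x^\intercal\calG(\partial\calL)\calG y$, and $\partial\calL$ involves only $m^{-1/2}\sqrt\kappa\,\partial X$ and $k^{-1/2}\sqrt\kappa\,\partial Y$. Hence on $\cE$ the Lipschitz constant of $F$ in the Frobenius norm of $(X,Y)$ is $O(n^{-1/2})$. Because the bound holds only on $\cE$, we pass to a globally Lipschitz function: either compose with a smooth cutoff in $\norm U_\op$, or take the McShane extension of $F|_{\cE}$, which has the same Lipschitz constant. Gaussian concentration then gives $|\tilde F-\EE\tilde F|\le n^{-1/2+\varepsilon}$ with probability $1-e^{-cn^{2\varepsilon}}$, which is overwhelming.

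\textbf{Step 3 (expectation and uniformity in $z$).} We need $|\EE\tilde F-\EE F|=o(n^{-1/2})$. On $\cE$ the two functions agree, and $\PP(\cE^c)\le e^{-cn}$. The crude polynomial moment bound $\norm{\calG}_\op\le C(1+\norm U+\norm V)^2$ holds everywhere, so Cauchy--Schwarz gives $|\EE(F-\tilde F)|\le e^{-cn/2}\cdot O(1)$. For uniformity in $z$, note that on $\cE$ both $z\mapsto\calG(z)$ and $z\mapsto\EE\calG(z)$ are $C$-Lipschitz on $\Omega_\eta$, since $\partial_z\calG=\calG E\calG$ with $E$ the projection onto the first $m+k$ coordinates. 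So it suffices to take an $n^{-1}$-net of $\Omega_\eta$ with $O(n^2)$ points and apply a union bound, which preserves overwhelming probability.

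The main obstacle I expect is Step 1. The linearization $\calL$ is not a resolvent of a Hermitian matrix in $z$, so $\norm{\calG}_\op$ is not automatically bounded by $1/\Im z$. This is why we need the explicit Schur-complement bound together with the restriction to $\cE$ and the truncation in Step 2. Steps 2 and 3 are routine once this bound is available.
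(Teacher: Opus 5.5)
Your proposal is correct and follows the paper's proof essentially step for step. Both arguments use the same ingredients in the same order:
\begin{itemize}
\item the Schur-complement bound $\norm{\calG(z)}_{\op}\le O(\eta^{-1})(1+\norm{U}_{\op}+\norm{V}_{\op})^2$;
\item the gradient bound from $\partial\calG=-\calG(\partial\calL)\calG$;
\item a McShane extension off a high-probability event, followed by Gaussian concentration at $t=n^{-1/2+\varepsilon}$;
\item a Cauchy--Schwarz correction for $\EE\tilde F-\EE F$;
\item an $n^{-1}$-net in $z$, using $\partial_z\calG=\calG(I_m\oplus I_k\oplus 0_{2n})\calG$.
\end{itemize}

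Taking $\cE$ to be an operator-norm ball in $(U,V)$, rather than the paper's event $\{\norm{\calG(z)}_{\op}\le C\}$, is a small improvement. Your $\cE$ is convex and does not depend on $z$, so the gradient bound directly gives a Lipschitz bound on $\cE$ before you extend.

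One harmless slip: $\EE|\tilde F|^2$ is $O(n)$, not $O(1)$, unless you clip $\tilde F$. The correction term is then $e^{-cn/2}\cdot O(\sqrt n)$, which is still negligible.
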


To prove Proposition~\ref{prop:CCA-concentration}, we will show that for all $z\in \Omega_\eta $ the quadratic form $x^\intercal \calG (z)y$ is Lipschitz in the disorder and apply Gaussian concentration. Since we are working away from the real line, the spectrum of $W$ is bounded away from $\Omega_\eta $ so everything will be controlled. 

\begin{lemma}\label{lem:GNOpNormCCA}
Suppose $\eta > 0$. Then for the following hold 
\begin{equation*}
    \EE \sup_{z \in \Omega_\eta }\norm{\calG (z)}_{\op}^4 \leq O(\eta^{-4}) \quad \text{ and } \quad \sup_{z \in \Omega_\eta }\norm{\calG (z)}_{\op} \leq O(\eta^{-1}) \text{ w.o.p.}
\end{equation*}
where the hidden constant depends only on $\tau_m$ and $\tau_k$.
\end{lemma}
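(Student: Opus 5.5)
The plan is to invert $\calL(z)$ by a block Schur complement, so that every block of $\calG(z)$ is written in terms of the resolvent $R(z):=(W-zI_{m+k})^{-1}$ and the Gaussian matrices $U,V$. Write
\[
\calL(z)=\begin{pmatrix} -zI_{m+k} & B^\intercal \\ B & D\end{pmatrix},\qquad B:=\begin{pmatrix}\sqrt\kappa U & 0\\ 0 & \sqrt\kappa V\end{pmatrix},\qquad D:=\begin{pmatrix}\alpha\kappa & \alpha\\ \alpha & \alpha\kappa\end{pmatrix}\otimes I_n .
\]
Here $D$ is deterministic. Since $\kappa<1$, the $2\times 2$ factor has determinant $\alpha^2(\kappa^2-1)\neq 0$, so $\norm{D^{-1}}_{\op}\le C(\kappa)$.

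First, the Schur complement of $D$ in $\calL(z)$ is $-zI-B^\intercal D^{-1}B$. As noted after \eqref{eq:linMatrixCCA}, this equals $W-zI_{m+k}$. I would verify this with a one-line computation: $\kappa\,(D^{-1})$ produces the blocks $-\kappa U^\intercal U$, $U^\intercal V$, $-\kappa V^\intercal V$ once $\alpha=\kappa/(\kappa^2-1)$ is inserted. The standard block inverse formula then gives
\[
\calG(z)=\begin{pmatrix} R & -RB^\intercal D^{-1}\\ -D^{-1}BR & D^{-1}+D^{-1}BRB^\intercal D^{-1}\end{pmatrix}.
\]
Since $W$ is real symmetric, $\norm{R(z)}_{\op}\le (\Im z)^{-1}<\eta^{-1}$ for every $z\in\Omega_\eta$. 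Combining this with the bound on $\norm{D^{-1}}_{\op}$ yields the deterministic estimate
\[
\sup_{z\in\Omega_\eta}\norm{\calG(z)}_{\op}\le C(\kappa)\,(1+\norm{B}_{\op})^2\,(1+\eta^{-1})\le C'(\kappa)\,(1+\norm{U}_{\op}+\norm{V}_{\op})^2\,\eta^{-1},
\]
where the last step uses that $\Omega_\eta\neq\emptyset$ forces $\eta<1$. This bound is uniform in $z$, so the supremum over $\Omega_\eta$ costs nothing.

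It remains to control $\norm{U}_{\op}$ and $\norm{V}_{\op}$. The map $U\mapsto\norm{U}_{\op}$ is $m^{-1/2}$-Lipschitz in the underlying standard Gaussian entries, and $\EE\norm{U}_{\op}\le 1+\sqrt{n/m}\to 1+\sqrt{\tau_m}$. Gaussian concentration (Lemma~\ref{lem:gaussianConLip}) therefore gives $\PP(\norm{U}_{\op}\ge 1+\sqrt{\tau_m}+1+t)\le e^{-cnt^2}$, and the same holds for $V$. Taking $t=1$ shows that $\norm{U}_{\op},\norm{V}_{\op}=O(1)$ with overwhelming probability, which gives the second claim. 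The same exponential tails imply $\EE(1+\norm{U}_{\op}+\norm{V}_{\op})^8=O(1)$, with a constant depending only on $\tau_m,\tau_k$. Raising the displayed bound to the fourth power and taking expectations then gives $\EE\sup_{z\in\Omega_\eta}\norm{\calG(z)}_{\op}^4=O(\eta^{-4})$.

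The only step that needs care is the algebraic identification of the Schur complement with $W-zI$, in particular checking the signs and the choice of $\alpha$. Everything else is the routine resolvent bound and concentration of Gaussian operator norms.
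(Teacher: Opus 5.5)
Your proposal is correct and follows essentially the same route as the paper. The paper also takes the Schur complement with respect to the deterministic lower-right block, identifies the top-left block of $\calG(z)$ with $(W-zI_{m+k})^{-1}$, bounds the other blocks using $\norm{(W-zI_{m+k})^{-1}}_{\op}\le\eta^{-1}$ and $\norm{D^{-1}}_{\op}=O(1)$, and concludes from moment and overwhelming-probability bounds on $\norm{U}_{\op},\norm{V}_{\op}$. The differences are only in care. You check the Schur-complement identity and the sign of $\alpha$ explicitly, and you derive the Gaussian operator-norm tails by Lipschitz concentration, where the paper simply cites them. You also handle the $\eta$-independent $D^{-1}$ term via $\eta<1$, which the paper absorbs silently.
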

\begin{proof}
Let us rewrite $\calL(z)$ in the block form 
  \begin{equation*}
    \calL(z) = \begin{pmatrix}
             -z I_{m+k} & A^\intercal  \\
             A & B
              \end{pmatrix} \text{ with } 
    A := \begin{pmatrix}
            \sqrt{\kappa} U & 0 \\
            0 & \sqrt{\kappa} V
         \end{pmatrix},\ 
    B := \begin{pmatrix}
             \alpha \kappa  I_n &\alpha I_n  \\ 
            \alpha  I_n & \alpha \kappa I_n
         \end{pmatrix}
  \end{equation*}
Then by Schur's complement lemma, $\calG (z)$ is given by
  \begin{equation} \label{eq:SchurGnCCA}
    \calG (z) = \begin{pmatrix}
      (W - zI_{m+k})^{-1} & -(W - zI_{m+k})^{-1} A^\intercal  B^{-1} \\
      -B^{-1} A (W - zI_{m+k})^{-1} & B^{-1} + B^{-1}A(W - zI_{m+k})^{-1}A^\intercal B^{-1}
    \end{pmatrix}
  \end{equation}
Since $\kappa <1$, we have $\norm{B^{-1}}_{\op} = O(1)$. Note also that since $\Im z \geq \eta$, we also have
\begin{equation*}
    \norm{(W - zI_{m+k})^{-1}}_{\op} \leq \eta^{-1}.
\end{equation*}
Since the operator norm of a matrix is bounded by the sum of operator norms of its blocks, we see that 
\begin{equation*}
    \sup_{z \in \Omega_\eta }\norm{\calG (z)} \leq O(\eta^{-1})(1 + \norm{A}_{\op} + \norm{A}_{\op}^2).
\end{equation*}
To finish, observe that 
\begin{equation*}
    \norm{A}_{\op} \leq \sqrt{\kappa} \norm{U}_{\op} + \sqrt{\kappa} \norm{V}_{\op},
\end{equation*}
so with standard Gaussian matrix moment bounds  (see \cite{TaoBook} for example), the conclusion follows. 
\end{proof}

\begin{lemma}\label{lem:lipCCA}
  Fix $x, y\in \CC^{m+k+2n}$ with unit norm and $z \in \CC_+$. Then the function $(U,V) \mapsto x^\intercal \calG (z)y$ is Lipschitz. In particular,
    \[\norm{\grad_{U,V} (x^\intercal \calG (z) y)}^2 \le 4\kappa \norm{\calG (z)}_{\op}^4.\]
\end{lemma}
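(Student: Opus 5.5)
We compute the gradient of $x^\intercal \calG(z) y$ with respect to the entries of $U$ and $V$ directly, using the derivative of the inverse, $\partial \calG = -\calG (\partial \calL) \calG$. Since $\calL(z)$ depends on $U$ only through the two blocks $\sqrt{\kappa}U$ (position $(3,1)$) and $\sqrt{\kappa}U^\intercal$ (position $(1,3)$), differentiating in the entry $U_{ij}$ gives
\[
\partial_{U_{ij}}\calL = \sqrt{\kappa}\,\big(E^{(3,1)}_{ij} + E^{(1,3)}_{ji}\big),
\]
where $E^{(3,1)}_{ij}$ denotes the elementary matrix with a single $1$ in row $i$ of block $3$ and column $j$ of block $1$, and similarly for $E^{(1,3)}_{ji}$. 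Writing $a := \calG(z)^\intercal x$ and $b := \calG(z) y$, and splitting them into blocks $a = (a_1,a_2,a_3,a_4)$ and $b = (b_1,\dots,b_4)$ matching $\calL$, we get
\[
\partial_{U_{ij}} (x^\intercal \calG y) = -\sqrt{\kappa}\,\big(a_{3,i} b_{1,j} + a_{1,j} b_{3,i}\big).
\]
The analogous formula for $V$ involves the blocks $(2,4)$: $\partial_{V_{ij}}(x^\intercal\calG y) = -\sqrt{\kappa}(a_{4,i}b_{2,j} + a_{2,j}b_{4,i})$.

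Next we sum the squares. Using $(p+q)^2 \le 2p^2 + 2q^2$,
\[
\sum_{i,j} |\partial_{U_{ij}}|^2 \le 2\kappa\big(\|a_3\|^2\|b_1\|^2 + \|a_1\|^2\|b_3\|^2\big),
\]
and similarly for $V$ with the blocks $(4,2)$ and $(2,4)$. Adding the two,
\[
\|\grad\|^2 \le 2\kappa\big[(\|a_3\|^2+\|a_4\|^2+\dots)\big]\dots
\]
More precisely, we bound each product $\|a_p\|^2\|b_q\|^2 \le \|a_p\|^2\|b\|^2$ and then sum over $p$. The four terms produce $\|a_3\|^2+\|a_4\|^2+\|a_1\|^2+\|a_2\|^2 \le \|a\|^2$ after grouping appropriately: the terms with $b_1,b_2$ pair with $a_3,a_4$, and the terms with $b_3,b_4$ pair with $a_1,a_2$. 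This gives
\[
\|\grad\|^2 \le 2\kappa\big(\|a\|^2\|b\|^2 + \|a\|^2\|b\|^2\big) = 4\kappa\|a\|^2\|b\|^2 .
\]
Finally, $\|a\|,\|b\| \le \|\calG(z)\|_{\op}$ since $x$ and $y$ are unit vectors, which yields the claimed bound $4\kappa\|\calG(z)\|_{\op}^4$.

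Lipschitzness then follows on any region where $\|\calG\|_{\op}$ is controlled. For $z\in\CC_+$ fixed, however, $\|\calG\|_{\op}$ grows with $\|U\|,\|V\|$ (see Lemma~\ref{lem:GNOpNormCCA}), so the function is only locally Lipschitz. The intended use is therefore: on the event $\|U\|_{\op},\|V\|_{\op}\le C$ (w.o.p.), the map is Lipschitz with constant $O(\eta^{-2})$, and Gaussian concentration is applied after a standard truncation. The only care needed is the bookkeeping for complex $x,y$: use $|\cdot|^2$ throughout and bound by the real-gradient norm. There is no real obstacle here.
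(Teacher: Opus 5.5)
Your proposal is correct and follows the paper's proof essentially line by line: differentiate $\calL$ in $U_{ij}$ and $V_{ij}$, apply $\partial\calG = -\calG(\partial\calL)\calG$, use $(p+q)^2\le 2p^2+2q^2$ entrywise, and sum to get $4\kappa\|\calG x\|^2\|\calG y\|^2\le 4\kappa\|\calG(z)\|_{\op}^4$. Two side remarks: your own bookkeeping, bounding each $\|b_q\|^2$ by $\|b\|^2$, actually gives the sharper constant $2\kappa$. Your observation that the bound yields Lipschitzness only where $\|\calG(z)\|_{\op}$ is controlled is also accurate, and matches how the paper uses the lemma through the McShane extension in Proposition~\ref{prop:CCA-concentration}.
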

\begin{proof}
Let $F(z) = x^\intercal \calG (z)y$, and note that this is a function of disorder $U, V$. For readability, we can fix $z$ and drop it from the notation. We first compute the partial derivatives of $F$ with respect to the disorder. Recall that for any invertible matrix $A$ dependent on some variable $x$, the derivative of its inverse is
  \begin{equation}\label{eq:derivativeInverse} \frac{dA^{-1}}{dx} = -A^{-1}\frac{dA}{dx} A^{-1}. \end{equation}
 So to compute the partial derivatives of $\calG $, it suffices to understand the partial derivatives of the linearized matrix $\calL$, defined in \eqref{eq:linMatrixCCA}. Notice that $U$ and $V$ only show up in the off-diagonal blocks of $\calL$. In particular because $\calG $ is symmetric,
  \begin{align}\label{eq:partialDerU}
   \begin{split}
      \pp{F}{U_{ij}} & = -x^\intercal \calG \pp{\calL}{U_{ij}}\calG y = -\sqrt{\kappa} \paren{(\calG  x)_{m+k+i}(\calG  y)_{j} + (\calG  x)_{j} (\calG  y)_{m+k+i}}.
   \end{split}
  \end{align}
 By the same argument we obtain the formula for $\pp{F}{V_{ij}}$,
 \begin{align} \label{eq:partialDerV}
      \pp{F}{V_{ij}} & = -\sqrt{\kappa} \paren{(\calG  x)_{m+k+n+i}(\calG  y)_{m+j} + (\calG  x)_{m+j} (\calG  y)_{m+k+n+i}}.  
  \end{align}
  Using Cauchy-Schwarz inequality,
    \[\abs{\pp{F}{U_{ij}}}^2\le2\kappa \paren{\abs{(\calG  x)_{m+k+i}}^2\abs{(\calG  y)_{j}}^2 + \abs{(\calG  x)_{j}}^2\abs{(\calG  y)_{m+k+i}}^2}.\]\
  Repeating this for the partial derivatives in $V$ in \eqref{eq:partialDerV} and summing over indices:
    \[\norm{\grad_{U,V} F}^2 \le 4\kappa \norm{\calG x}^2\norm{\calG y}^2 \le 4\kappa \norm{\calG }_{\op}^4.\]
\end{proof}

Now we recall the Gaussian concentration inequality \cite{VRbook} for Lipschitz functions.

\begin{lemma}[Gaussian concentration for Lipschitz functions]\label{lem:gaussianConLip}
  Let $g \sim \cN(0, \sigma^2\II_d)$ be a Gaussian vector and $f:\RR^d \to \RR$ be an $L$-Lipschitz function. Then 
    \[\PP\paren{\abs{f(g) - \EE f(g)} \ge x} \le 2\exp\paren{-\frac{x^2}{2L^2\sigma^2}}.\]
\end{lemma}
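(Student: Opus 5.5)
This is the standard Gaussian concentration inequality for Lipschitz functions (Lemma~\ref{lem:gaussianConLip}). I would prove it by the Gaussian Poincaré/log-Sobolev route via the Herbst argument. The first step is a reduction. Scale $g = \sigma \xi$ with $\xi \sim \cN(0, \II_d)$; then $f(\sigma\,\cdot)$ is $\sigma L$-Lipschitz. So it suffices to treat $\sigma = 1$ and show
\[
\PP(\abs{f(\xi)-\EE f(\xi)}\ge x)\le 2\exp(-x^2/(2L^2)).
\]
By symmetry, replacing $f$ with $-f$, it is enough to bound the upper tail by $\exp(-x^2/(2L^2))$ and then take a union bound. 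By a standard mollification, convolving $f$ with a small Gaussian kernel, we may assume $f$ is smooth with $\norm{\grad f}\le L$ pointwise. The mollified functions converge uniformly to $f$, so tail probabilities and means pass to the limit.

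The key step is a sub-Gaussian bound on the moment generating function:
\[
\EE e^{\lambda (f(\xi)-\EE f(\xi))}\le e^{\lambda^2 L^2/2}\quad\text{for all } \lambda\in\RR.
\]
I would derive this from the Gaussian log-Sobolev inequality $\mathrm{Ent}(h^2)\le 2\,\EE\norm{\grad h}^2$, applied to $h=e^{\lambda f/2}$. Writing $H(\lambda)=\EE e^{\lambda f}$, this gives
\[
\lambda H'(\lambda)-H(\lambda)\log H(\lambda)\le \tfrac{\lambda^2L^2}{2}H(\lambda).
\]
Equivalently, $\frac{d}{d\lambda}\bigl(\lambda^{-1}\log H(\lambda)\bigr)\le L^2/2$. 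Integrating from $0$, where $\lambda^{-1}\log H(\lambda)\to \EE f$, yields $\log H(\lambda)\le \lambda\EE f+\lambda^2L^2/2$.

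An alternative that avoids log-Sobolev is the Maurey–Pisier interpolation. Take an independent copy $\xi'$ and set $\xi_\theta=\xi\sin\theta+\xi'\cos\theta$. Then Jensen's inequality gives $\EE e^{\lambda(f(\xi)-f(\xi'))}\le e^{\pi^2\lambda^2L^2/8}$. This yields the constant $\pi^2/8$ instead of $1/2$, so I would use log-Sobolev to get the sharp constant stated.

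Finally, apply Chernoff's bound:
\[
\PP(f-\EE f\ge x)\le \inf_{\lambda > 0} e^{-\lambda x+\lambda^2L^2/2}=e^{-x^2/(2L^2)},
\]
with the infimum attained at $\lambda=x/L^2$. Then add the bound for the lower tail.

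The main obstacle is the log-Sobolev inequality itself. If it cannot be cited, I would prove it first in dimension one via the two-point inequality plus the central limit theorem, or via the Ornstein–Uhlenbeck semigroup with $\Gamma_2$ commutation $\grad P_t h=e^{-t}P_t\grad h$. I would then tensorize, using subadditivity of entropy, to get dimension $d$. Since the lemma is classical, citing \cite{VRbook} for it is acceptable.
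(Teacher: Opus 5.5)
Your proof is correct. The reduction to $\sigma=1$ and the mollification both work. Applying the Gaussian log-Sobolev inequality to $h=e^{\lambda f/2}$ gives the Herbst inequality $\lambda H'-H\log H\le \tfrac{\lambda^2L^2}{2}H$. Integrating $\tfrac{d}{d\lambda}(\lambda^{-1}\log H)\le L^2/2$ from $0$, then running Chernoff at $\lambda=x/L^2$, yields exactly the stated bound $2\exp(-x^2/(2L^2\sigma^2))$.

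The paper does not prove this lemma at all. It simply recalls it with a citation to \cite{VRbook}. So the comparison is between your self-contained argument and a textbook reference.

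What your route buys is the sharp exponent $x^2/(2L^2\sigma^2)$. Textbook statements phrased through sub-Gaussian norms often carry an unspecified absolute constant. The Maurey--Pisier interpolation you mention gives only $2x^2/(\pi^2L^2\sigma^2)$.

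For the paper's purposes this sharpness is immaterial. Every place the lemma is used needs only a tail of the form $\exp(-\Omega(nt^2))$. These are the McShane-extension concentration of resolvent quadratic forms and the norm concentration of $g_i,h_i$ in the Gordon auxiliary problems. So the weaker interpolation bound, or the bare citation, would suffice there. The log-Sobolev machinery, and your plan to prove it via two-point plus CLT or via the Ornstein--Uhlenbeck commutation $\grad P_t h=e^{-t}P_t\grad h$, is needed only to justify the constant as literally stated.
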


Now we are now ready to prove Proposition~\ref{prop:CCA-concentration}. The idea is simply to apply Lemma~\ref{lem:gaussianConLip} together Lemma~\ref{lem:lipCCA} and Lemma~\ref{lem:GNOpNormCCA}.

\begin{proof}[Proof of Proposition~\ref{prop:CCA-concentration}]
 Throughout this proof, $C$ may change line to line and will denote a constant dependent only on $\tau_m$, $\tau_k$, $\varepsilon$, and $\eta$. We fix $x$ and $y$ and define $F(z) = x^\intercal \calG (z)y$. Note that $F(z)$ depends on $U$ and $V$. For clarity, we sometimes drop $z$ from the notation.
 
 From Lemma~\ref{lem:GNOpNormCCA}, for all $z \in \Omega_\eta $ with overwhelming probability $\norm{\calG (z)}_{\op} \leq O(1)$. We will denote this event by $\cE = \cE_z$. Recall that by Lemma~\ref{lem:lipCCA} the $F(z)$ is $O(\norm{\calG (z)}_{\op}^2)$ Lipschitz with respect to the disorder $(U,V)$. In particular, $F(z)$ is $C$ Lipschitz on $\cE$.

Note that since $F(z)$ is not necessarily Lipschitz on $\cE^c$, so we cannot immediately apply Lemma~\ref{lem:gaussianConLip}. To resolve this issue, we define $\tilde{F}(z)$ be McShane's Lipschitz extension of $F(z)|_{(U,V) \in\cE}$ to all of $\RR^{nm}\times \RR^{nk}$ by 
\begin{gather*}
    \Im \tilde{F}(z)\vert_{(U, V)} := \inf_{(U', V')\in \cE}\paren{\Im F(z)\vert_{(U', V')} + C\norm{(U, V) - (U', V')}_2} \\
    \Re \tilde{F}(z)\vert_{(U, V)} := \inf_{(U', V')\in \cE}\paren{\Re F(z)\vert_{(U', V')} + C\norm{(U, V) - (U', V')}_2}
\end{gather*}
 where we treat $(U,V)$ as a vectors in $\RR^{nm}\times \RR^{nk}$.  now $\tilde F(z)$ is $O(1)$ Lipschitz over entire disorder set and $\tilde F(z) = F(z)$ when $(U,V) \in \calE$.
 
 Recall that $U_{ij}$ and $V_{ij}$ are independent Gaussians with variances $\Theta(n^{-1})$. Fix $z\in \Omega_\eta $.  As $F(z) = \tilde{F}(z)$ when $(U, V)\in \cE$, we may apply Lemma~\ref{lem:GNOpNormCCA} and Lemma~\ref{lem:gaussianConLip} to both real and imaginary parts to see that
  \begin{align*}
    \PP\paren{\abs{F(z) - \EE \tilde{F}(z)} \ge t} & 
    \le \PP\paren{\calE^c} + \PP\paren{\abs{\tilde{F}(z) - \EE \tilde{F}(z)} \ge t} \\
    & \le \PP\paren{\calE^c} + \exp\set{- \Omega(nt^2)}
  \end{align*}
 Choosing $t = n^{-1/2+\varepsilon}$, we see that for every fixed $z\in \Omega_\eta $ with overwhelming probability, 
\begin{equation}\label{eq:FBoundCCA}
\abs{F(z) - \EE \tilde{F}(z)} \le O(n^{-1/2 + \varepsilon}).
\end{equation}
We now show that we can replace $\EE \tilde{F}(z)$ by $\EE F(z)$ at no cost in the error. Since $\tilde{F}(z)$ and $F(z)$ agree on $(U, V)\in \cE$, we can apply Cauchy-Schwarz
   \begin{align*}
     \abs{\EE F(z) - \EE \tilde{F}(z) } & \leq \EE |F(z) - \tilde F(z)| \II_{\calE^c} \\
        & \le \sqrt{ \PP\paren{\cE^c} \EE|F(z) - \tilde{F}(z)|^2}  \\
        & \le \sqrt{ 2\PP\paren{\cE^c} (\EE\abs{F(z)}^2  + \EE |{\tilde F}(z)|^2)}
   \end{align*}
In particular, it suffices to show that the second moment of $F(z)$ and $\tilde{F}(z)$ grows at most polynomially in $n$. As $\tilde{F}(z)$ is Lipschitz and $(0, 0)\in \cE$, its second moment is bounded by
  \[\EE |\tilde{F}(z)\vert_{(U, V)}|^2 \le 2|\tilde{F}(z)\vert_{(0,0)}|^2 + 2C\EE \norm{(U, V)}^2 \le O(n).\]
  On the other hand, the second moment of $F(z) = x^\intercal  \calG (z) y$ is bounded by the second moment of $\norm{\calG (z)}_{\op}$, which is polynomially bounded by Lemma~\ref{lem:GNOpNormCCA}. Combining all bounds together, we see that for each fixed $z \in \Omega_\eta $, we have 
\begin{equation*}
   \abs{F(z) - \EE F(z)} \leq O(n^{-1/2 + \varepsilon}),
\end{equation*}
with overwhelming probability.
It now remains to show that we can obtain the same conclusion simultaneously for all $z \in \Omega_\eta $. We will do this by the net argument.

First, we show that $F(z)$ is Lipschitz in $z$ on $\Omega_\eta $. Applying \eqref{eq:derivativeInverse}, we get that with overwhelming probability, for all $z \in \Omega_\eta $

\begin{equation}\label{eq:zLipCCAArgument}
    \abs{{\pp{F}{z}(z)}} = \abs{x^\intercal  \calG (z) (I_m\oplus I_k \oplus 0_{2n}) \calG (z) y}\le \norm{\calG (z)}_{\op}^2 =O(1) .
\end{equation}
In fact from this bound, it also follows that $z \mapsto \EE F(z)$ is $O(1)$ Lipschitz on $\Omega_\eta $. Let $\calS$ be a $1/n$-net of $\Omega_\eta $ with $\abs{\calS} = O(n^2)$. Taking a union bound over $z\in \calS$ of \eqref{eq:FBoundCCA}, we see with overwhelming probability, for all $z\in \calS$,
    \[\abs{F(z) - \EE F(z)} \le O(n^{-1/2+\varepsilon}).\]
Since $z \mapsto F(z) - \EE F(z)$ is $O(1)$ Lipschitz on $\Omega_\eta $ also with overwhelming probability, we conclude that for all $z \in \Omega_\eta $
\begin{equation*}
     \abs{F(z) - \EE F(z)} \le O(n^{-1/2 + \varepsilon}),
\end{equation*}
which proves the claim.  
\end{proof}

\subsubsection{Derivations of approximate equations}
In this section, we will describe $\EE \calG (z)$. In particular, we will prove the following Proposition~\ref{prop:approxSelfEqCCA}, which shows that $\EE \calG (z)$ has a form that is very similar to $\calM (z)$ described in Definition~\ref{def:det_system}.

\begin{prop}\label{prop:approxSelfEqCCA}
  For all $z \in \CC_+$, the expectation $\EE \calG (z)$ has the following block form
    \begin{equation}\label{eq:exptGn}
     \EE \calG (z) = \begin{pmatrix} 
                  r_n(z) I_m & 0 & 0 & 0 \\
                  0 & s_n(z) I_k & 0 & 0 \\
                  0 & 0 & t_{11, n}(z) I_n & t_{12, n}(z) I_n \\
                  0 & 0 & t_{21, n}(z) I_n & t_{22, n}(z) I_n
                \end{pmatrix},
    \end{equation}
where $ r_n(z), s_n(z), t_{ij, n}(z) \in \CC$. In fact, for any fixed $\eta, \varepsilon > 0$, whenever $z \in \Omega_\eta $, these coefficients satisfy the following system of equations
\begin{gather}\label{eq:finite_systemApprox}
\begin{aligned}
r_n(z) &= \frac{1 + O(n^{-1 + \varepsilon})}{-z - \kappa \tau_m \,t_{11, n}(z)},\\
s_n(z) &= \frac{1 + O(n^{-1 + \varepsilon})}{-z - \kappa \tau_k\, t_{22, n}(z)},\\
\end{aligned} 
\\
\begin{pmatrix}
    t_{11, n}(z) & t_{12, n}(z) \\
    t_{21, n}(z) & t_{22, n}(z)
\end{pmatrix}\begin{pmatrix}
    \alpha \kappa - \kappa  r_n(z) & \alpha \\
    \alpha & \alpha \kappa  - \kappa  s_n(z)
\end{pmatrix} = I_{2} + O(n^{-1 + \varepsilon}) \cdot 11^\intercal
\end{gather}

\end{prop}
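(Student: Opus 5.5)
The plan has two parts. First we establish the block structure of $\EE\calG(z)$ by symmetry. Then we derive the approximate self-consistent equations through the resolvent identity $\calL\calG = I$ combined with Gaussian integration by parts.

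For the block form, we use the invariance of the null law under $U\mapsto O_1 U O_2$ and $V\mapsto O_1 V O_3$, where $O_1\in O(n)$ acts on rows and $O_2\in O(m)$, $O_3\in O(k)$ act on columns. The same $O_1$ must be used for both matrices, because the $n$-blocks are coupled through $\alpha I_n$. Under this transformation $\calL(z)\mapsto Q\calL(z)Q^\intercal$ with $Q=\diag(O_2^\intercal,O_3^\intercal,O_1,O_1)$, and hence $\EE\calG = Q\,\EE\calG\, Q^\intercal$ for all such $Q$.

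Restricting to sign-flip diagonal matrices $O_1,O_2,O_3$ shows that every block is diagonal. Blocks pairing an $m$-index (or $k$-index) with any other index group vanish, since flipping only $O_2$ (respectively $O_3$) changes their sign. Permutation invariance then makes each diagonal block a multiple of the identity. This gives \eqref{eq:exptGn}. The $(3,4)$ and $(4,3)$ blocks survive, since both transform by the same $O_1$.

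For the equations, we write the identity $\calL\calG=I$ blockwise. The $(1,1)$ block reads
\[
-z\calG^{(11)}+\sqrt\kappa\, U^\intercal\calG^{(31)}=I_m.
\]
We take expectations and apply Gaussian integration by parts to $\EE[U_{\mu i}\calG^{(31)}_{\mu j}]$, using the derivative formula \eqref{eq:partialDerU}. This yields
\[
\EE[U_{\mu i}\calG_{\mu j}^{(31)}]=-\tfrac{\sqrt\kappa}{m}\,\EE[\calG^{(33)}_{\mu\mu}\calG^{(11)}_{ij}+\calG^{(31)}_{\mu i}\calG^{(31)}_{\mu j}].
\]
Summing over $\mu$ gives a leading term $-\sqrt\kappa\,\tau_m\,\EE[\tfrac1n\Tr\calG^{(33)}\,\calG^{(11)}]$, plus a term of order $1/n$ in normalized trace (the $\calG^{(31)}\calG^{(31)\intercal}$ term).

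Proposition~\ref{prop:CCA-concentration} shows that normalized traces concentrate with fluctuations $O(n^{-1/2+\varepsilon})$. However, we need an $O(n^{-1+\varepsilon})$ error. We therefore decouple the product by writing $\EE[XY]=\EE X\,\EE Y+\operatorname{Cov}(X,Y)$. For $X=\tfrac1n\Tr\calG^{(33)}$, the Poincaré inequality and Lemma~\ref{lem:lipCCA} give $\operatorname{Var}(X)=O(n^{-2})$, because the gradient of a normalized trace has norm $O(n^{-1})$ times powers of $\norm{\calG}_{\op}$. For $Y$ we take the normalized trace of $\calG^{(11)}$, which is a multiple of the identity in expectation. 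The covariance is then $O(n^{-3/2})$, which is comfortably within the required error.

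The resulting equation is $r_n(-z-\kappa\tau_m t_{11,n})=1+O(n^{-1+\varepsilon})$. The analogous computation for the $(2,2)$ block gives the equation for $s_n$.

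For the $(3,3)$, $(3,4)$, $(4,3)$ and $(4,4)$ blocks, the identity reads
\[
\sqrt\kappa\, U\calG^{(13)}+\alpha\kappa\,\calG^{(33)}+\alpha\,\calG^{(43)}=I_n,
\]
together with its analogues. Integration by parts on $\EE[U\calG^{(13)}]$ produces $-\sqrt\kappa\,\EE[\tfrac1m\Tr\calG^{(11)}\,\calG^{(33)}]$ plus a lower-order term. This accounts for the entry $-\kappa r_n$ in the $2\times 2$ matrix and yields the stated matrix equation with an error of size $O(n^{-1+\varepsilon})\,11^\intercal$.

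Throughout, the error terms are controlled using Lemma~\ref{lem:GNOpNormCCA}, which gives the moment bounds on $\sup_{\Omega_\eta}\norm{\calG}_{\op}$. Since these bounds hold only with overwhelming probability, we work on a high-probability event and handle its complement by Cauchy–Schwarz, exactly as in the concentration proof.

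The main obstacle is obtaining the sharp $n^{-1+\varepsilon}$ rate rather than $n^{-1/2}$. This requires bounding $\operatorname{Var}(\tfrac1n\Tr\calG^{(ij)})$ by $O(n^{-2+\varepsilon})$ through the Poincaré inequality, and checking that the off-diagonal self-interaction terms such as $\tfrac1n\sum_\mu\EE[\calG^{(31)}_{\mu i}\calG^{(31)}_{\mu j}]$ are genuinely $O(1/n)$ after taking traces. I would first verify the gradient bound $\norm{\grad\tfrac1n\Tr\calG^{(ij)}}^2\lesssim n^{-2}\norm{\calG}_{\op}^4\norm{A}_{\op}^2$, adapting Lemma~\ref{lem:lipCCA}.
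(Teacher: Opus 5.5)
Your proposal is correct and follows the paper's proof: the block form comes from orthogonal invariance of $\calL(z)$, and the equations come from the blocks of $\calL\calG=I$ with Gaussian integration by parts, the trace-of-products term bounded by $m\norm{\calG(z)}_{\op}^2$, and the product of traces split as $\EE X\,\EE Y+\operatorname{Cov}(X,Y)$. The one divergence is the covariance step, and the obstacle you flag there is not real: since $\abs{\operatorname{Cov}(X,Y)}\le(\operatorname{Var}X\,\operatorname{Var}Y)^{1/2}$ is quadratic in the fluctuations, applying the $O(n^{-1/2+\varepsilon})$ bound of Proposition~\ref{prop:CCA-concentration} to both factors (upgraded to $L^2$ via the fourth-moment bound of Lemma~\ref{lem:GNOpNormCCA}) already gives $O(n^{-1+2\varepsilon})$, which is exactly what the paper does, so your sharper Poincar\'e estimate $\operatorname{Var}(X)=O(n^{-2})$ is valid but not needed.
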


Let us introduce the following notation for this section: express $\calG (z) = \calL(z)^{-1}$ in blocks of the same size as in \eqref{eq:linMatrixCCA}. Let $\calG^{(ij)}$ denote the block in the $i$-th row and $j$-th column. We use the same notation for any matrix of the same shape as $\calG (z)$.

\begin{lemma}\label{lem:exptGNCCA}
  For all $z \in \CC_+$, the expectation $\EE \calG (z)$ has the following block form
    \begin{equation}
     \EE \calG (z) = \begin{pmatrix} 
                  r_n(z) I_m & 0 & 0 & 0 \\
                  0 & s_n(z) I_k & 0 & 0 \\
                  0 & 0 & t_{11, n}(z) I_n & t_{12, n}(z) I_n \\
                  0 & 0 & t_{21, n}(z) I_n & t_{22, n}(z) I_n
                \end{pmatrix},
    \end{equation}
where $r_n(z), s_n(z), t_{ij, n}(z) \in \CC$..
\end{lemma}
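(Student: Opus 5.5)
The plan is to prove the block structure of $\EE\calG(z)$ purely by invariance of the null distribution, without any computation. Under the null model, $U$ and $V$ are independent with i.i.d. centered Gaussian entries. Hence for orthogonal matrices $O_m\in O(m)$, $O_k\in O(k)$ and $Q\in O(n)$, the pair $(QUO_m^\intercal, QVO_k^\intercal)$ has the same law as $(U,V)$. Note that the same $Q$ acts on both, because $U$ and $V$ share the row index $\mu\in[n]$.

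Define the block orthogonal matrix $\calO:=O_m\oplus O_k\oplus Q\oplus Q$. Directly from \eqref{eq:linMatrixCCA}, and using that the $\alpha\kappa I_n$, $\alpha I_n$ blocks and the $-zI$ blocks commute with the relevant orthogonal matrices, we get
\[
\calO\,\calL(z;U,V)\,\calO^\intercal=\calL(z;QUO_m^\intercal,QVO_k^\intercal).
\]
Inverting gives $\calO\,\calG(z;U,V)\,\calO^\intercal=\calG(z;QUO_m^\intercal,QVO_k^\intercal)$. Taking expectations and using equality in law, we obtain $\calO\,\EE\calG(z)\,\calO^\intercal=\EE\calG(z)$ for all such $\calO$. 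The expectation is finite for $z\in\CC_+$ by Lemma~\ref{lem:GNOpNormCCA}, or simply because $\norm{\calG}_{\op}$ has polynomial moments.

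Next we read off the block structure blockwise. Each diagonal block $\EE\calG^{(11)}$ commutes with all of $O(m)$, so by Schur's lemma it is a scalar multiple $r_n(z)I_m$. Likewise $\EE\calG^{(22)}=s_n(z)I_k$, and each of $\EE\calG^{(ab)}$ for $a,b\in\{3,4\}$ commutes with all of $O(n)$, so it equals $t_{ab,n}(z)I_n$.

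For the off-diagonal blocks we use sign flips. Taking $O_m=-I_m$ with the other factors equal to the identity gives $\EE\calG^{(1b)}=-\EE\calG^{(1b)}$ for $b\neq1$, so these blocks vanish. The same argument with $O_k=-I_k$ kills $\EE\calG^{(2b)}$ for $b\neq 2$, and the symmetric blocks follow by transposition. The one point needing a line of care is $\EE\calG^{(12)}$: the flip $O_m=-I_m$ alone already shows $\EE\calG^{(12)}=-\EE\calG^{(12)}$, so it is $0$.

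There is no real obstacle here. The only step to check carefully is the conjugation identity, in particular that both $n$-blocks are acted on by the same $Q$, which is exactly what the shared row index provides.
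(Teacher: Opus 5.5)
Your proof is correct and is the same argument as the paper's: conjugation invariance of $\calL(z)$ under $O_m\oplus O_k\oplus O_n\oplus O_n$ with a common $O_n$, followed by reading off scalar blocks, where your sign-flip step makes explicit the vanishing of the off-diagonal blocks that the paper leaves implicit. One clarification on your remark about the common $Q$: under the null, the law of $(U,V)$ is invariant even under independent row rotations of $U$ and $V$, so what actually forces a common $Q$ is that the coupling blocks $\alpha I_n$ in $\calL(z)$ (equivalently, the $U^\intercal V$ term in $W$) must be preserved.
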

\begin{proof}
    Observe that $\calL(z)$ is invariant in distribution under block-wise orthogonal transformations. That is, $O^\intercal  \calL(z)O \overset{d}{=} \calL(z)$ for $O = O_m \oplus O_k \oplus O_n \oplus O_n$ the direct sum of orthogonal matrices. Since $\calG (z) = \calL^{-1}(z)$, it is also invariant in distribution under $O$. This means that $\EE \calG (z)$ also has this property. Since only deterministic matrices that are invariant under orthogonal transformations are multiples of identity, we conclude the form \eqref{eq:exptGn}. 
\end{proof}

We now set out to derive a system of equations which describe $r_n(z), s_n(z), t_{ij, n}(z)$. The underlying idea here is to expand out the equation 
\begin{equation}\label{eq:LNGNIdentity}
    \calL(z)  \calG (z) = I_{m+k+2n}
\end{equation}
in terms of each individual block and apply Gaussian integration by parts. Recalling $\calL(z)$ as defined in \eqref{eq:linMatrixCCA}, the terms which contribute to the top left block $I_m$ of \eqref{eq:LNGNIdentity}:
  \[-z \calG^{(11)} + \sqrt{\kappa} U^\intercal  \calG^{(31)} = I_m.\]
Taking trace and expectations of both sides, we obtain an equation for $r_n$,
  \begin{equation} \label{eq:utildeEq} -z r_n(z) + \sqrt{\kappa}\frac{1}{m}\EE \Tr(U^\intercal  \calG^{(31)}) = 1.\end{equation}
We will look at the terms of the form $\frac{1}{m}\EE \Tr(U^\intercal  \calG^{(31)})$ more closely using Gaussian integration by parts, which we recall here for convenience.
\begin{lemma}[Gaussian integration by parts]\label{lem:GaussianIBP}
  Let $g \sim \cN(0, \sigma^2)$. Let $f: \RR \to \RR$ be a differentiable function so that $\EE f'(g)$ exists.  Then
    \[\EE g f(g) = \sigma^2 \EE f'(g)\]
\end{lemma}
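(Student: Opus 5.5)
The plan is to reduce to the standard normal and then use the identity $\varphi'(x)=-x\varphi(x)$ for the standard Gaussian density $\varphi$, via Fubini rather than naive integration by parts, so that no growth condition on $f$ at infinity is needed.

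\textbf{Reduction to $\sigma=1$.} Write $g=\sigma\xi$ with $\xi\sim\cN(0,1)$ and set $f_\sigma(x):=f(\sigma x)$. Then $f_\sigma'(x)=\sigma f'(\sigma x)$. Hence the claim $\EE g f(g)=\sigma^2\EE f'(g)$ is equivalent to $\EE \xi f_\sigma(\xi)=\EE f_\sigma'(\xi)$. It therefore suffices to treat $\sigma=1$.

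\textbf{Case $\sigma=1$.} Since $\EE\xi=0$, we have $\EE \xi f(\xi)=\EE \xi\,(f(\xi)-f(0))$. For differentiable $f$ with integrable derivative, which I take as the meaning of ``$\EE f'(g)$ exists'' (interpreted as $\EE|f'(g)|<\infty$), write $f(x)-f(0)=\int_0^x f'(t)\,dt$. This gives
\[
\EE \xi f(\xi)=\int_0^\infty x\varphi(x)\int_0^x f'(t)\,dt\,dx-\int_{-\infty}^0 x\varphi(x)\int_x^0 f'(t)\,dt\,dx .
\]
On the positive half-line, exchange the order of integration, using $\int_t^\infty x\varphi(x)\,dx=\varphi(t)$ for $t\ge0$, to obtain $\int_0^\infty f'(t)\varphi(t)\,dt$. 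On the negative half-line, use $\int_{-\infty}^t x\varphi(x)\,dx=-\varphi(t)$ for $t\le 0$, to obtain $\int_{-\infty}^0 f'(t)\varphi(t)\,dt$. Summing the two pieces gives $\EE f'(\xi)$.

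\textbf{Main obstacle: justifying the exchange.} The only delicate step is the use of Fubini. The integrand $|x\varphi(x) f'(t)|\mathbf 1\{0\le t\le x\}$ has iterated integral equal to $\int_0^\infty |f'(t)|\varphi(t)\,dt$, by the same computation applied to $|f'|$, and this is finite by the integrability hypothesis. The negative half-line is handled identically. So Tonelli gives absolute integrability and Fubini applies.

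This route also shows that $\EE|\xi f(\xi)|<\infty$ follows automatically, so there are no boundary terms to control. In the application to resolvent entries, $f$ is smooth with polynomially bounded derivatives (by Lemma~\ref{lem:GNOpNormCCA}), so the hypothesis is easily met.
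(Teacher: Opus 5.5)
The paper does not prove this lemma; it is quoted as a standard fact (``which we recall here for convenience'') and used only in the integration-by-parts computations of Lemma~\ref{lem:coeffIdentitiesCCA} and its Wigner/Wishart analogues. Your argument is correct and is essentially Stein's original proof.

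\textbf{What your route buys.} By writing $f(x)-f(0)=\int_0^x f'(t)\,dt$ and exchanging integrals against the tail identities $\int_t^\infty x\varphi(x)\,dx=\varphi(t)$ and $\int_{-\infty}^t x\varphi(x)\,dx=-\varphi(t)$, you never have to show that the boundary terms $f(x)\varphi(x)$ vanish at $\pm\infty$. The naive integration-by-parts proof needs exactly that. You also obtain $\EE|g f(g)|<\infty$ as a by-product, which the statement tacitly assumes. The Tonelli bound and the reduction to $\sigma=1$ are both fine; $\sigma=0$ is trivial.

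\textbf{One step to justify.} You should not simply assert the fundamental theorem of calculus for an $f$ that is merely differentiable everywhere. It does hold here, for two reasons:
\begin{enumerate}
\item The hypothesis $\EE|f'(\xi)|<\infty$ forces $f'\in L^1_{\mathrm{loc}}$, because the Gaussian density is bounded below on compact sets.
\item An everywhere-differentiable function with locally integrable derivative is absolutely continuous. This is a classical but nontrivial result; see e.g.\ Rudin's \emph{Real and Complex Analysis}, Ch.~7.
\end{enumerate}
Without the second fact, ``differentiable'' alone would not license $f(x)-f(0)=\int_0^x f'$. Either cite it, or simply assume $f\in C^1$.

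The $C^1$ assumption already covers every use in the paper. There $f$ is the real or imaginary part of a resolvent entry, viewed as a function of a single Gaussian coordinate. Since $\calL(z)$ is invertible for every realization when $z\in\CC_+$, this function is smooth. Its derivative grows at most polynomially in that coordinate, so the integrability hypothesis holds.
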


\begin{lemma}\label{lem:coeffIdentitiesCCA}
  The following identities hold:
    \begin{align*}
      \EE \Tr(U^\intercal \calG^{(31)}) & = -\frac{\sqrt{\kappa}}{m} \EE \Big[\Tr (\calG^{(33)})\Tr (\calG^{(11)}) + \Tr (\calG^{(13)}\calG^{(31)})\Big] \\
       \EE \Tr(V^\intercal \calG^{(42)}) &=  -\frac{\sqrt{\kappa}}{k} \EE \Big[\Tr (\calG^{(44)})\Tr (\calG^{(22)}) + \Tr (\calG^{(24)}\calG^{(42)})\Big]\\
      \EE \Tr(U \calG^{(14)}) &= -\frac{\sqrt{\kappa}}{m} \EE \Big[\Tr(\calG^{(34)}) \Tr(\calG^{(11)})  + \Tr(\calG^{(13)}\calG^{(41)})\Big]\\
      \EE \Tr(V \calG^{(23)}) &= -\frac{\sqrt{\kappa}}{k} \EE \Big[\Tr(\calG^{(43)}) \Tr(\calG^{(22)})  + \Tr(\calG^{(24)}\calG^{(32)})\Big]
    \end{align*}
\end{lemma}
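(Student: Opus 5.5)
The plan is to apply Gaussian integration by parts (Lemma~\ref{lem:GaussianIBP}) entry by entry. I will differentiate $\calG(z)$ with respect to the disorder using the formula \eqref{eq:derivativeInverse}, as in the proof of Lemma~\ref{lem:lipCCA}. Fix $z\in\CC_+$ and index the rows and columns of $\calL(z)$ by pairs $(p,\ell)$, where $p\in[4]$ is the block and $\ell$ the position inside the block. Since $U_{ij}$ appears in $\calL$ only at the positions $((3,i),(1,j))$ and $((1,j),(3,i))$, each with coefficient $\sqrt\kappa$, we have
\[
\pp{\calG_{ab}}{U_{ij}} = -\sqrt{\kappa}\,\Big(\calG_{a,(3,i)}\calG_{(1,j),b} + \calG_{a,(1,j)}\calG_{(3,i),b}\Big).
\]
The analogous formula holds for $V_{ij}$ with the blocks $(1,3)$ replaced by $(2,4)$. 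Throughout I use that $\calL(z)$ is complex symmetric, so $\calG(z)$ is symmetric and hence $\calG^{(pq)} = (\calG^{(qp)})^\intercal$.

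For the first identity, write $\Tr(U^\intercal\calG^{(31)}) = \sum_{i\in[n],j\in[m]} U_{ij}\calG^{(31)}_{ij}$, where $\calG^{(31)}_{ij} = \calG_{(3,i),(1,j)}$. Since $U_{ij}\sim\calN(0,m^{-1})$, Lemma~\ref{lem:GaussianIBP} gives $\EE[U_{ij}\calG^{(31)}_{ij}] = m^{-1}\EE[\partial_{U_{ij}}\calG_{(3,i),(1,j)}]$. Plugging $a=(3,i)$ and $b=(1,j)$ into the derivative formula yields
\[
-\sqrt\kappa\,\Big(\calG^{(33)}_{ii}\calG^{(11)}_{jj} + (\calG^{(31)}_{ij})^2\Big).
\]
Summing over $i,j$ gives $\Tr\calG^{(33)}\Tr\calG^{(11)} + \Tr\big((\calG^{(31)})^\intercal\calG^{(31)}\big)$, and the last term equals $\Tr(\calG^{(13)}\calG^{(31)})$ by symmetry. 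This is the claimed identity.

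The third identity is proved the same way. Write $\Tr(U\calG^{(14)}) = \sum_{i,j} U_{ij}\calG_{(1,j),(4,i)}$ and differentiate with $a=(1,j)$, $b=(4,i)$ to get
\[
-\sqrt\kappa\,\Big(\calG_{(1,j),(3,i)}\calG_{(1,j),(4,i)} + \calG^{(11)}_{jj}\calG^{(34)}_{ii}\Big).
\]
Summing gives $\Tr\calG^{(11)}\Tr\calG^{(34)} + \sum_{i,j}\calG^{(13)}_{ji}\calG^{(14)}_{ji}$. The double sum equals $\Tr(\calG^{(13)}\calG^{(41)})$ because $\calG^{(41)} = (\calG^{(14)})^\intercal$. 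The second and fourth identities follow verbatim with $(U,m,\text{blocks }1,3)$ replaced by $(V,k,\text{blocks }2,4)$.

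The only point requiring care is justifying Gaussian integration by parts, i.e.\ that the relevant expectations are finite. This is where I expect the (mild) main obstacle to lie. The entries of $\calG$ and of its derivatives are bounded by $\norm{\calG(z)}_{\op}$ and $\sqrt\kappa\norm{\calG(z)}_{\op}^2$ respectively. By \eqref{eq:SchurGnCCA}, for fixed $z\in\CC_+$ we have $\norm{\calG(z)}_{\op}\le C(\Im z)^{-1}\big(1+\norm{U}_{\op}+\norm{V}_{\op}\big)^2$. This has all moments finite by the Gaussian operator-norm moment bounds used in Lemma~\ref{lem:GNOpNormCCA}, so every term is integrable. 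Moreover $\calG$ is smooth in $(U,V)$ because $\calL(z)$ stays invertible for $\Im z>0$. Hence Lemma~\ref{lem:GaussianIBP} applies coordinatewise, conditionally on the remaining entries, and the identities follow by taking expectations of the finite sums above.
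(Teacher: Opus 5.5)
Your proof is correct and is essentially the paper's argument: entrywise Gaussian integration by parts, the derivative-of-inverse formula with $\partial\calL/\partial U_{ij}$ supported on the $(1,3)$ and $(3,1)$ blocks, and the symmetry $\calG^{(13)}=(\calG^{(31)})^\intercal$ to turn the sum of squares into a trace of products. Your integrability justification via moment bounds on $\norm{U}_{\op}$ and $\norm{V}_{\op}$ is a useful addition, since the paper leaves this step implicit.
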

\begin{proof}
 We can first express the trace as 
    \[\Tr(U^\intercal  \calG^{(31)}) = \sum_{ij} U_{ij}\calG^{(31)}_{ij}.\]
 So it suffices to compute the expectations $\EE[U_{ij}\calG^{(31)}_{ij}]$. Treating $\calG^{(31)}$ as a function of the Gaussian variable $U_{ij}$, we can apply Gaussian integration by parts Lemma~\ref{lem:GaussianIBP} and \eqref{eq:derivativeInverse},
   \[\EE[U_{ij}\calG^{(31)}_{ij}] = \frac{1}{m}\EE\paren{\pp{\calG^{(31)}_{ij}}{U_{ij}}} = -\frac{1}{m} \EE \paren{\calG \pp{\calL}{U_{ij}}\calG }^{(31)}_{ij}.\]
 Recalling that $U_{ij}$ only shows up in $\calL$ in the $(31)$ and $(13)$ blocks, we see that,
   \[\paren{\calG \pp{\calL}{U_{ij}}\calG }^{(31)}_{ij} = \sqrt{\kappa}\paren{\calG^{(33)}_{ii} \calG^{(11)}_{jj} + (\calG^{(31)}_{ij})^2}.\]
 Putting everything together, we see that
   \[\EE \Tr(U^\intercal \calG^{(31)}) = -\frac{\sqrt{\kappa}}{m} \sum_{ij} \EE \paren{G_{ii}^{(33)} G_{jj}^{(11)} + (G_{ij}^{(31)})^2}.\]
 Writing this in terms of the trace we obtain the first formula. Each other formula follows from here.
\end{proof}

Notice that the equations in Lemma~\ref{lem:coeffIdentitiesCCA} are composed of a term containing a product of traces and one containing a trace of products. In particular, by Proposition~\ref{prop:CCA-concentration} we expect that the trace of products should be small while the product of traces should be close to their expectations.

\begin{lemma}\label{lem:hard-traces-into-easy-ones}
Fix $\eta, \varepsilon> 0$, then for all $z \in \Omega_{\eta}$, 
    \begin{align}
     \EE \Tr(U^\intercal \calG^{(31)}) & = -
      \sqrt{\kappa} n  \, t_{11, n}(z) \, r_n(z) + O(n^{\varepsilon}) \label{eq:UTG31CCA}\\
       \EE \Tr(V^\intercal \calG^{(42)}) &=  - \sqrt{\kappa} n  \, t_{22, n}(z)\,   s_n(z) + \, O(n^\varepsilon) \label{eq:VTG42CCA}\\
      \EE \Tr(U \calG^{(14)}) &= -\sqrt{\kappa} n \,  t_{12, n}(z) \, r_{n}(z) + O(n^\varepsilon) \label{eq:UG14CCA}\\
      \EE \Tr(V \calG^{(23)}) &= - \sqrt{\kappa} n \,  t_{21, n}(z)  \, s_n(z) + O(n^\varepsilon) \label{eq:VG23CCA}
    \end{align}
\end{lemma}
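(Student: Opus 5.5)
Starting from Lemma~\ref{lem:coeffIdentitiesCCA}, each of the four identities expresses the left-hand side as $-\frac{\sqrt\kappa}{m}$ (or $\frac{\sqrt\kappa}{k}$) times the expectation of a product of two traces plus a trace of a product. The plan is to show that the product-of-traces term equals the product of the expectations up to $O(n^{1+\varepsilon})$, and that the trace-of-products term is $O(n)$. Dividing by $m$ or $k$ then gives an error of $O(n^{\varepsilon})$. The main terms come from Lemma~\ref{lem:exptGNCCA}: $\EE\Tr\calG^{(33)}=n\,t_{11,n}$, $\EE\Tr\calG^{(11)}=m\,r_n$, $\EE\Tr\calG^{(34)}=n\,t_{12,n}$, and similarly for the other blocks. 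For the first identity the leading term is $-\frac{\sqrt\kappa}{m}\cdot n t_{11,n}\cdot m r_n=-\sqrt\kappa\, n\, t_{11,n} r_n$, which is \eqref{eq:UTG31CCA}. The other three identities follow identically.

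\emph{Trace-of-products terms.} We have $|\Tr(\calG^{(13)}\calG^{(31)})|\le \min(m,n)\,\|\calG\|_{\op}^2$, since the rank of the product is at most $m$. By Lemma~\ref{lem:GNOpNormCCA}, $\EE\sup_{\Omega_\eta}\|\calG\|^4_{\op}=O(\eta^{-4})$, so $\EE|\Tr(\calG^{(13)}\calG^{(31)})|=O(n)$. After multiplying by $\sqrt\kappa/m$ this term is $O(1)\subset O(n^\varepsilon)$. The same bound applies to $\Tr(\calG^{(13)}\calG^{(41)})$ and the other cross terms.

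\emph{Product-of-traces terms.} Write $X=\frac1n\Tr\calG^{(33)}$ and $Y=\frac1m\Tr\calG^{(11)}$. Then
\[
\EE[XY]-\EE X\,\EE Y=\EE[(X-\EE X)(Y-\EE Y)].
\]
Normalized traces of blocks are averages of quadratic forms $e_i^\intercal\calG e_i$, so they inherit concentration from Proposition~\ref{prop:CCA-concentration}. The cleaner route is to note that $X$ is itself Lipschitz in $(U,V)$, with $\|\nabla X\|^2\le \frac{1}{n}\cdot O(\|\calG\|_{\op}^4)$. This follows from the same computation as in Lemma~\ref{lem:lipCCA}, applied to $\frac1n\sum_i e_i^\intercal\calG e_i$ and combined with Cauchy--Schwarz. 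Using the same McShane-extension argument as in the proof of Proposition~\ref{prop:CCA-concentration}, we get $|X-\EE X|\le n^{-1/2+\varepsilon/2}$ (indeed $O(n^{-1+\varepsilon/2})$) with overwhelming probability. On the complementary event, Cauchy--Schwarz together with the polynomial moment bound of Lemma~\ref{lem:GNOpNormCCA} shows its contribution is negligible. Hence $|\EE[(X-\EE X)(Y-\EE Y)]|=O(n^{-1+\varepsilon})$. Multiplying by $nm\cdot\frac{\sqrt\kappa}{m}$ gives an error of $O(n^{\varepsilon})$ in the trace identity, as required.

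The main obstacle is carrying out this covariance bound carefully. It requires the Lipschitz estimate for the \emph{normalized} trace, which gains the factor $n^{-1/2}$ in the gradient, together with control of the bad event where $\|\calG\|_{\op}$ is large. Even if one only uses the weaker quadratic-form concentration $O(n^{-1/2+\varepsilon})$ from Proposition~\ref{prop:CCA-concentration}, the resulting error $n\cdot n^{-1+2\varepsilon}$ is still $O(n^{2\varepsilon})$. This suffices after renaming $\varepsilon$.
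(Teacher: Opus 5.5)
Your proposal is correct and follows the paper's proof essentially step for step. It uses the same split coming from Lemma~\ref{lem:coeffIdentitiesCCA}, bounds the trace-of-products term by $O(n)\|\calG\|_{\op}^2$, and controls the covariance of the normalized traces by concentration plus Cauchy--Schwarz. Your fallback (Proposition~\ref{prop:CCA-concentration} applied to each diagonal entry, yielding $O(n^{2\varepsilon})$ and then renaming $\varepsilon$) is exactly what the paper does.

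One caution about your optional ``cleaner route''. The bound $\|\nabla X\|^2=O(n^{-1}\|\calG\|_{\op}^4)$ does not come from applying Lemma~\ref{lem:lipCCA} termwise, nor from Cauchy--Schwarz in the summation index; both lose the factor $1/n$. It comes from writing $\nabla_U X=-\tfrac{2\sqrt\kappa}{n}(\calG\Pi_3\calG)^{(31)}$, with $\Pi_3$ the projection onto the third block, and then using $\|\calG\Pi_3\calG\|_F\le\|\calG\|_{\op}\|\Pi_3\calG\|_F\le\sqrt n\,\|\calG\|_{\op}^2$.
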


\begin{proof}
We prove only the first identity; the remaining three follow by the same argument. For this proof, set
\[
X(z):= n^{-1} \Tr \, \calG^{(33)}(z),
\qquad
Y(z):= m^{-1} \Tr \, \calG^{(11)}(z).
\]
Then Lemma~\ref{lem:coeffIdentitiesCCA} gives
\begin{equation}\label{eq:UY-start}
\EE \Tr(U^\top \calG^{(31)})
=
-\sqrt{\kappa}\,n\,\EE\!\big[X(z)Y(z)\big]
-\frac{\sqrt{\kappa}}{m}\,\EE \Tr\!\big(\calG^{(13)} \calG^{(31)}\big).
\end{equation}
We first dispose of the trace-of-products term. Since \(\calG (z)\) is symmetric, we have $\calG^{(13)}=(\calG^{(31)})^\intercal$ and hence
\[
\big|\Tr(\calG^{(13)}\calG^{(31)})\big|
=
\Big|\sum_{i, j} (\calG^{(31)}_{ij})^2\Big|
\le
\|\calG^{(31)}\|_F^2
\le
m \|\calG (z)\|_{\op}^2.
\]
By Lemma~\ref{lem:GNOpNormCCA}, we have $\EE \norm{\calG (z)}_\op^2 \leq O(1)$, so this error term is of the lower order. Next we factorize the product of traces. By Lemma~\ref{lem:exptGNCCA},
\[
\EE X(z)= t_{11,n}(z),
\qquad
\EE Y(z)= r_n(z).
\]
So it remains to show that the covariance of \(X(z)\) and \(Y(z)\) is negligible. We claim that
\begin{equation}\label{eq:var-small}
\EE\big|X(z)-\EE X(z)\big|^2 = O(n^{-1 + 2\varepsilon}),
\qquad
\EE\big|Y(z)-\EE Y(z)\big|^2 = O(n^{-1 + 2\varepsilon}).
\end{equation}
To prove this claim, note that by Proposition~\ref{prop:CCA-concentration}, if apply anisotropic concentration to each basis vector and take a union bound, we get that with overwhelming probability
\begin{equation*}
    |X(z) - \EE X(z)| \leq O(n^{-1/2 + \varepsilon}).
\end{equation*}
Moreover, since $|X(z)| \leq \norm{\calG (z)}_{\op}$ and $\EE \norm{\calG (z)}_\op^4 $ is $O(1)$ from Lemma~\ref{lem:GNOpNormCCA}, we see that indeed $\EE\big|X(z)-\EE X(z)\big|^2 = O(n^{-1 + 2\varepsilon})$, and the same argument works for $Y(z)$.
Applying Cauchy-Schwarz implies
\[
\abs{\EE(X - \EE X) (Y - \EE Y)}\leq 
\sqrt{\EE \abs{X -\EE X}^2 \EE \abs{Y - \EE Y}^2} \leq O(n^{-1 + 2\varepsilon}).
\]
Hence, we see that 
\[
\EE[X(z)Y(z)]
=
 t_{11,n}(z)\, r_n(z)+O(n^{-1 + 2\varepsilon}).
\]
Combining this and above together, we conclude the first identity. The other three identities are proved in similar fashion.
\end{proof}

\begin{proof}[Proof of Proposition~\ref{prop:approxSelfEqCCA}]
  We already know from Lemma~\ref{lem:exptGNCCA} the form of $\EE \calG (z)$. It now suffices to identify the equations. Starting from $\calL(z)\calG (z) = I_{m+k+2n}$ as in \eqref{eq:LNGNIdentity}, and looking at the terms contributing to the top-left $I_m$ block we get \eqref{eq:utildeEq},
    \[-z r_n(z) + \frac{\sqrt{\kappa}}{m}\EE \Tr(U^\intercal  \calG^{(31)}) = 1\]
  Plugging in \eqref{eq:UTG31CCA} and collecting the terms containing $r_n(z)$, we obtain the first equation.
    \[r_n(z) = \frac{1 + O(n^{-1 + \varepsilon})}{-z - \kappa \tau_m t_{11, n}(z)}\]
 Repeating this for the second diagonal block of $I_k$ gives us the second equation, for $s_n(z)$. In similar fashion, we can obtain the matrix equation for $t_{ij}$. For example, the terms which contribute to the third diagonal block $I_n$ gives the equation
   \[\frac{\sqrt{\kappa}}{n}\Tr(U\calG^{(13)}) + \frac{\alpha \kappa }{n}\Tr(\calG^{(33)}) + \frac{\alpha}{ n} \Tr(\calG^{(43)}) = 1\]
 Recalling that $G$ is symmetric, and plugging in \eqref{eq:UTG31CCA} we see that
   \[-\kappa t_{11, n}(z) r_n(z) + \alpha \kappa \, t_{11, n}(z) + \alpha \, t_{21, n}(z) = 1 + O(n^{-1 + \varepsilon})\]
 Using the fact that $t_{12, n} = t_{21, n}$ and repeating this for the remaining blocks we obtain the final equation.
\end{proof}

\subsubsection{Stability of expectation}
We are now ready to show that indeed $\calM(z)$ defined in \eqref{eq:MNdef} is close to $\EE \calG (z)$, which would finish the proof of Theorem~\ref{thm:resolvConvCCA}. We will do this by showing the self-consistent equations for the coefficients \eqref{eq:finite_system} are stable under small perturbations, such as in \eqref{eq:finite_systemApprox}.

\begin{prop}\label{prop:detStabilityCCA}
Let $\eta, \varepsilon > 0$, then the following holds. 
\begin{enumerate}
\item For each \(z\in\CC_+\), the system \eqref{eq:finite_system} has a unique solution
  \[ r(z),s(z)\in\CC_+\quad  \text{ and } \qquad T(z)=\begin{pmatrix}t_{11}(z)&t_{12}(z)\\t_{21}(z)&t_{22}(z)\end{pmatrix}. \]
\item Let \( (r_n(z), s_n(z), T_n(z))\) be the coefficients from Proposition~\ref{prop:approxSelfEqCCA}. Then
  \[\sup_{z\in \Omega_\eta }
\Big( | r_n(z)-r(z)|+ | s_n(z)-s(z)|+ \| T_n(z)-T(z)\|_{\op} \Big) \le O(n^{-1+\varepsilon}).\]
\end{enumerate}
Combined with Proposition~\ref{prop:CCA-concentration}, this implies Theorem~\ref{thm:resolvConvCCA} and Theorem~\ref{thm:resolvConvCCAMain}.
\end{prop}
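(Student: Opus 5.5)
The plan is to recast \eqref{eq:finite_system} as a fixed-point equation for an operator-valued (matrix-valued) Stieltjes transform, and then to use the standard Earle--Hamilton / Helton--Rashidi Far--Speicher machinery for both uniqueness and stability.

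\textbf{Reformulation.} Write $\Lambda(z):=\diag(-z,-z)\oplus B$ with $B=\begin{pmatrix}\alpha\kappa&\alpha\\ \alpha&\alpha\kappa\end{pmatrix}$. For a $4\times 4$ block-diagonal argument $M=\diag(r,s)\oplus T$, define the linear map
\[
\Sigma(M):=\diag\bigl(\kappa\tau_m t_{11},\,\kappa\tau_k t_{22}\bigr)\oplus \diag(\kappa r,\kappa s).
\]
This $\Sigma$ is exactly the ``variance'' map produced by Lemma~\ref{lem:coeffIdentitiesCCA}. Then \eqref{eq:finite_system} is equivalent to the Dyson-type equation
\[
M=\bigl(\Lambda(z)-\Sigma(M)\bigr)^{-1}.
\]
The map $\Sigma$ is positivity preserving: it sends matrices with positive semidefinite imaginary part to matrices with positive semidefinite imaginary part. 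Proposition~\ref{prop:approxSelfEqCCA} says that $M_n:=\diag(r_n,s_n)\oplus T_n$ satisfies the same equation up to an additive error $O(n^{-1+\varepsilon})$.

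\textbf{Existence and uniqueness.} First I would record a priori bounds on $\Omega_\eta$.
\begin{itemize}
\item From Lemma~\ref{lem:GNOpNormCCA}, $|r_n|,|s_n|,\|T_n\|\le C(\eta)$.
\item $r_n$ and $s_n$ are Stieltjes transforms of spectral measures of $W$ (averages of $\langle e_i,(W-z)^{-1}e_i\rangle$), so $\Im r_n,\Im s_n\ge c(\eta)>0$.
\item Taking imaginary parts in $TB_r=I$, where $B_r$ is the matrix in \eqref{eq:finite_system}, gives $\Im T=\kappa\,T\,\diag(\Im r,\Im s)\,T^*\succeq 0$.
\end{itemize}
For existence, the functions $r_n,s_n,T_n$ are analytic and uniformly bounded on $\Omega_\eta$. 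By Montel, some subsequence converges locally uniformly, and the limit solves the exact system with $r,s\in\CC_+$.

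For uniqueness at fixed $z\in\CC_+$, I would follow Helton--Rashidi Far--Speicher. Restrict to the domain
\[
\mathcal D_\delta=\{M:\ \Im M\succeq \delta,\ \|M\|\le R\},
\]
using the imaginary-part identities above. Show that $F(M)=(\Lambda-\Sigma(M))^{-1}$ maps $\mathcal D_\delta$ strictly inside itself; the strictness comes from the $\Im z>0$ shift in the first two blocks. Earle--Hamilton then says $F$ is a strict contraction in the Carathéodory metric, so the fixed point is unique.

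A hands-on alternative is to subtract two solutions $M,M'$, which gives
\[
M-M'=M\,\Sigma(M-M')\,M'.
\]
Applying Cauchy--Schwarz with weights given by the $\Im$-identities, $\Im M=M(\Im z\,P+\Sigma(\Im M))M^*$ with $P$ the projection onto the first two blocks, bounds the operator $X\mapsto M\Sigma(X)M'$ by $\sqrt{\rho(M)\rho(M')}<1$. This forces $M=M'$.

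\textbf{Stability.} Write $M_n=F(M_n)+E_n$ with $\|E_n\|=O(n^{-1+\varepsilon})$ uniformly on $\Omega_\eta$. Then
\[
(I-\mathcal T)(M_n-M)=E_n+O(\|M_n-M\|^2),\qquad \mathcal T(X):=M\,\Sigma(X)\,M_n .
\]
The weighted bound from the previous step gives $\|\mathcal T\|\le 1-c(\eta)$ in a suitable norm. The constant $c(\eta)$ is uniform because the lower bounds on $\Im r,\Im s$ and $\Im z$, and the upper bounds on $\|M\|$ and $\|M_n\|$, hold uniformly on $\Omega_\eta$. Inverting $I-\mathcal T$ then gives $\|M_n-M\|=O(n^{-1+\varepsilon})$. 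The quadratic term is absorbed by a continuity argument in $z$: near $\Im z=\eta^{-1}$ the difference is already small, since both $M_n$ and $M$ are close to $-z^{-1}P\oplus B^{-1}$ there, and $\Omega_\eta$ is connected.

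\textbf{Main obstacle.} The step I expect to be hardest is the uniform invertibility of $I-\mathcal T$ on $\Omega_\eta$. The $T$-block is not a scalar Stieltjes transform, and $B$ is not definite because $\alpha<0$. So the positivity argument has to be run on the full $4\times4$ Hermitian structure, using $\Im T=\kappa T\diag(\Im r,\Im s)T^*$, rather than blockwise. I would first try to verify the strict contraction directly with the weighted Cauchy--Schwarz computation, and fall back on Earle--Hamilton if that computation does not close. Combining the resulting bound with Proposition~\ref{prop:CCA-concentration} then yields Theorem~\ref{thm:resolvConvCCA}.
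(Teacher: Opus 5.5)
Your three ingredients are the paper's. Existence is by Montel applied to $(r_n,s_n,T_n)$ and passage to the limit in Proposition~\ref{prop:approxSelfEqCCA}. Uniqueness is by a weighted Cauchy--Schwarz bound on the difference of two solutions, built from the imaginary-part identities. Stability comes from a linearization with spectral radius $<1$.

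The paper, however, first eliminates $T=B(r,s)^{-1}$ and runs everything on the two-variable map $F_z(r,s)$. This removes your ``main obstacle''. In your $4\times4$ formulation the $T$-block of $\Im M-M\Sigma(\Im M)M^*$ vanishes identically, since $\Im T=\kappa T\diag(\Im r,\Im s)T^*$ has no $\Im z$ term. So the weighted estimate gives no strict contraction in the $T$-directions in one step. After elimination one has $|DF_z|(\Im r,\Im s)^\intercal=(\Im r-|r|^2\Im z,\ \Im s-|s|^2\Im z)^\intercal$. This gives $\rho(DF_z)<1$ at once, and with your a priori bounds a contraction factor uniform on $\bar\Omega_\eta$ in the corresponding weighted norm.

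The same degeneracy breaks the Earle--Hamilton fallback. Since $B$ is real, $\Im(-\Lambda(z))=\Im z\,(I_2\oplus 0)$ is only semidefinite. The $T$-block of $\Im F(M)$ is $\kappa T'\diag(\Im r,\Im s)(T')^*$ with $T'=(B-\kappa\diag(r,s))^{-1}$, and this need not dominate $\delta I$. At $r=s=i\delta$ its smallest eigenvalue is about $\kappa\delta/\|B\|^2$, and $\|B\|=\kappa/(1-\kappa)$ exceeds $\sqrt\kappa$ once $\kappa>(3-\sqrt5)/2$. Moreover, uniqueness inside $\mathcal D_\delta$ is not uniqueness among all solutions with $r,s\in\CC_+$. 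Use the hands-on route instead; unpacked in blocks, it is exactly the paper's inequality $d\le Ad$ with $Ax<x$.

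The step that does not go through as written is absorbing the nonlinear remainder by continuity in $z$ from a point near $z=i\eta^{-1}$. In general there is no valid starting point. For $\eta$ close to $1$, $\Omega_\eta$ is a small region around $i$, where neither $M$ nor $M_n$ is quantitatively close to $-z^{-1}P\oplus B^{-1}$. For small $\eta$, you would still need the a priori difference at the starting point to lie below the absorption threshold set by the worst stability constant over $\Omega_\eta$. That constant degrades as $\eta\to0$, and nothing in your proposal compares the two. A version with $z$-dependent constants on an enlarged domain can be made to work, but it is extra bookkeeping.

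The missing idea is already in your existence step. Montel together with uniqueness gives convergence of the whole sequence $(r_n,s_n,T_n)\to(r,s,T)$ locally uniformly on $\CC_+$, hence uniformly on the compact set $\bar\Omega_\eta\subset\CC_+$. So for large $n$ the difference is below any fixed threshold at every $z\in\bar\Omega_\eta$ simultaneously, and the uniform linear stability bound gives $O(n^{-1+\varepsilon})$ directly. This is how the paper closes. It combines $\sup_{z\in\bar\Omega_\eta}\|DH_z(r(z),s(z))^{-1}\|=O_\eta(1)$ with the complex inverse function theorem on a fixed $\delta$-neighbourhood of $(r(z),s(z))$. The qualitative convergence places $(r_n(z),s_n(z))$ inside that neighbourhood for all large $n$.
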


We will first prove the existence, uniqueness and convergence result in Lemma~\ref{lem:convergence-to-finite-system}, then we will make the convergence quantitative. 

\begin{lemma}\label{lem:convergence-to-finite-system}
    For each $z \in \CC_+$, the system \eqref{eq:finite_system} has a unique solution 
    \begin{equation*}
        r(z), s(z) \in \CC_+ \quad \and \quad T(z)=\begin{pmatrix}t_{11}(z)&t_{12}(z)\\t_{21}(z)&t_{22}(z)\end{pmatrix}
    \end{equation*}
    Moreover, the functions if $(r_n(z), s_n(z), T_n(z))$ are coefficients from  Proposition~\ref{prop:approxSelfEqCCA}, then 
    \begin{equation*}
        r_n \to r,  \quad s_n \to s, \text{ and } T_n \to T,
    \end{equation*}
    locally uniformly on $\CC_+$.
\end{lemma}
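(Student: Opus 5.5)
We use a normal-families (Montel) compactness argument, followed by identification of every subsequential limit as a solution of \eqref{eq:finite_system}. Uniqueness is proved separately.

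\textbf{Compactness.} The functions $r_n,s_n,t_{ij,n}$ are analytic on $\CC_+$, since each is an expected normalized trace of a block of $\calG(z)$. By Lemma~\ref{lem:GNOpNormCCA} they are bounded by $O(\eta^{-1})$ on each $\Omega_\eta$, uniformly in $n$. Montel's theorem then says that every subsequence has a further subsequence converging locally uniformly on $\CC_+$ to analytic limits $(r,s,T)$. Passing to the limit in \eqref{eq:finite_systemApprox}, whose errors are $O(n^{-1+\varepsilon})$ uniformly on $\Omega_\eta$, shows that $(r,s,T)$ solves \eqref{eq:finite_system}, provided the denominators $-z-\kappa\tau_m t_{11}$ do not vanish.

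To rule out vanishing denominators, note that $r_n(z)=m^{-1}\EE\Tr (W-z)^{-1}$ is the Stieltjes transform of the expected spectral measure of the $m$-block of $W$. Hence $\Im r_n\ge 0$ and $|r_n|\le 1/\Im z$. Tightness of these measures, which follows from the moment bounds in Lemma~\ref{lem:GNOpNormCCA}, gives $\Im r(z)>0$, and $r$ is the Stieltjes transform of a probability measure. In particular $r\neq 0$, so $-z-\kappa\tau_m t_{11}=1/r$ is finite and nonzero. The same argument applies to $s$. This establishes existence.

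\textbf{Uniqueness.} This is the main obstacle. Eliminating $T$ reduces \eqref{eq:finite_system} to a closed system for $(r,s)$:
\[
r=\frac{1}{-z-\kappa\tau_m F_1(r,s)},\qquad s=\frac{1}{-z-\kappa\tau_k F_2(r,s)},
\]
where $F_1,F_2$ are explicit rational functions obtained by inverting the $2\times 2$ matrix. My first approach is the standard argument from the theory of vector/matrix Dyson equations (Ajanki--Erd\H{o}s--Kr\"uger; Helton--Far--Speicher). One shows that the map $\Phi(r,s)$ sends $\CC_+^2$ into itself and is a strict contraction in the Carath\'eodory/Earle--Hamilton sense on a bounded domain $\{(r,s):\Im r,\Im s>0,\ |r|,|s|<1/\Im z\}$. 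Uniqueness then follows from the Earle--Hamilton fixed point theorem.

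Mapping into the upper half plane should follow from a positivity structure. The linearization $\calL(z)$ has imaginary part $\Im z\,(I\oplus I\oplus 0)\ge 0$, so the deterministic equivalent $\calM$ satisfies a matrix Dyson equation
\[
-\calM^{-1}=zJ-A_0+\mathcal S[\calM],
\]
where $\mathcal S$ is a positivity-preserving self-energy. This gives $\Im \calM$ positive semidefinite whenever $\Im$ of the argument is.

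If the holomorphic-map route proves messy, I would fall back on the alternative. Suppose $(r,s)$ and $(r',s')$ are two solutions. Subtracting the equations and using
\[
|r-r'|\le |r||r'|\,\kappa\tau_m\,|t_{11}-t_{11}'|
\]
together with Cauchy--Schwarz, one derives an estimate of the form
\[
\|\Delta\|\le q\,\|\Delta\|\quad\text{with } q<1
\]
for $\Im z$ large. Uniqueness for all $z\in\CC_+$ then follows by the identity theorem for analytic solutions. This requires checking that any solution in $\CC_+$ is analytic, or else using the monotone quantities $\Im r/(|r|^2\Im z)$.

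\textbf{Conclusion.} Once uniqueness holds, every subsequential limit coincides with $(r,s,T)$. Therefore the full sequence converges locally uniformly on $\CC_+$, which proves the statement.
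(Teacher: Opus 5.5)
Your compactness/existence half is fine and matches the paper: Montel, passing to the limit in \eqref{eq:finite_systemApprox}, and the subsequence argument. The denominators cannot vanish, simply because the approximate equation reads $r_n(-z-\kappa\tau_m t_{11,n})=1+o(1)$. Also, $\Im r\ge |r|^2\Im z>0$ follows from the imaginary-part identities recalled below, so the tightness detour is unnecessary.

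The genuine gap is uniqueness. You correctly call it the crux, but you do not prove it, and neither of your two routes works as stated.

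\emph{Earle--Hamilton route.} The theorem needs $\Phi(D)$ to lie at positive distance from $\partial D$. On your domain $D=\{\Im r,\Im s>0,\ |r|,|s|<1/\Im z\}$ this fails whenever $\Im z<1+\kappa$. To see this, write $t_{11}=(\kappa+(1-\kappa^2)s)/(\kappa\, d(r,s))$ with $d(r,s)=1-\kappa(r+s)-(1-\kappa^2)rs$. The real point $r=s=1/(1+\kappa)$ lies in $\bar D$, has $d=0$, and has numerator $1$. So along points of $D$ near it, $|t_{11}|\to\infty$ and $\Phi_1\to 0\in\partial D$. The matrix Dyson equation framework does not repair this. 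The linearization has degenerate imaginary part $\Im z\,(I\oplus I\oplus 0)$, whereas the standard Earle--Hamilton argument for such equations needs a positive definite imaginary part of the spectral parameter.

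\emph{Fallback route.} It fails for a different reason. A contraction for large $\Im z$ plus the identity theorem only shows that two \emph{analytic families} of solutions agreeing for large $\Im z$ agree everywhere. It does not exclude a second solution at a fixed $z_0$. For that you would have to show that this solution continues analytically inside $\CC_+^2$ all the way to the large-$\Im z$ regime. That requires a priori bounds and a nondegenerate Jacobian at every solution, which is essentially the whole problem.

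The missing idea is a pointwise contraction, in a weighted norm built from the imaginary-part identities. This is what the paper does, and what your remark on ``monotone quantities'' gestures at.
\begin{itemize}
\item From $T=B^{-1}$ and $B^*-B=2i\kappa\,\diag(\Im r,\Im s)$ one gets $\Im T=\kappa\,T^*\diag(\Im r,\Im s)\,T$.
\item The scalar equations give $\Im r=|r|^2(\Im z+\kappa\tau_m\Im t_{11})$ and $\Im s=|s|^2(\Im z+\kappa\tau_k\Im t_{22})$.
\item For two solutions, $r-\hat r=\kappa\tau_m r\hat r\,(t_{11}-\hat t_{11})$ and $T-\hat T=\kappa\, T\,\diag(r-\hat r,s-\hat s)\,\hat T$. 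These yield $v\le Av$ entrywise, for $v=(|r-\hat r|,|s-\hat s|)$ and an explicit entrywise nonnegative $2\times 2$ matrix $A$.
\item Cauchy--Schwarz with the identities above, using $t_{12}=t_{21}$, gives $Ax<x$ for the positive vector $x=(\sqrt{\Im r\,\Im\hat r},\sqrt{\Im s\,\Im\hat s})$.
\item Hence the spectral radius of $A$ is below $1$, and $v=0$.
\end{itemize}
This holds at every $z\in\CC_+$ directly, with no continuation argument.
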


\begin{proof}[Proof of uniqueness in Lemma~\ref{lem:convergence-to-finite-system}]
For uniqueness, we proceed via standard contraction argument (see \cite{BS10} for example). Fix $z \in \CC_+$ and assume that \((r(z),s(z),T(z))\) and \((\hat r(z),\hat s(z),\hat T(z))\) are two solutions of \eqref{eq:finite_system}. For notational convenience, we drop the dependence on $z$ in this proof only. In particular, we define 
\begin{equation*}
   r = r(z) \quad s = s(z) \quad T = T(z) \quad B = T(z)^{-1},
\end{equation*}
and likewise $(\hat r, \hat s, \hat T, \hat B)$. We first record the usual imaginary-part identities. Since
  \[T-T^*=T^*(B^*-B)T  \text{ and }  B^*-B= 2i\kappa \begin{pmatrix} \Im r & 0\\ 0 & \Im s \end{pmatrix},\]
we obtain that 
\begin{equation}\label{eq:ImT-identity}
  \Im T = \kappa T^* \begin{pmatrix} \Im r & 0\\ 0 & \Im s \end{pmatrix} T.
\end{equation}
In particular, looking at the diagonal entries, we see
\begin{equation}\label{eq:Imq11} 
  \Im t_{11} = \kappa\bigl(|t_{11}|^2\Im r+|t_{21}|^2\Im s\bigr), \qquad 
  \Im t_{22} = \kappa\bigl(|t_{12}|^2\Im r+|t_{22}|^2\Im s\bigr).
\end{equation}
Taking imaginary parts in \eqref{eq:finite_system} gives
\begin{equation} \label{eq:Imu-identity}
  \Im r = |r|^2\bigl(\Im z+\kappa\tau_m\Im t_{11}\bigr), \qquad 
  \Im s = |s|^2\bigl(\Im z+\kappa\tau_k\Im t_{22}\bigr).
\end{equation}
Now compare the two solutions. From \eqref{eq:finite_system},
\begin{equation} \label{eq:u-diff}
  r - \hat r = \kappa\tau_m r\hat r (t_{11}-\hat t_{11}), \qquad 
  s - \hat s = \kappa\tau_k s\hat s (t_{22}-\hat t_{22}). 
\end{equation}
Also, since \(T=B^{-1}\) and \(\hat T= \hat B^{-1}\), we get
  \[T-\hat T = T\bigl(\hat B - B\bigr)\hat T = \kappa T \begin{pmatrix} r-\hat r & 0\\ 0 & s-\hat s \end{pmatrix} \hat T. \]
Hence, looking at the diagonal elements
\begin{align}
  t_{11}-\hat t_{11} &= \kappa\Bigl( t_{11}\hat t_{11}(r-\hat r)+t_{12}\hat t_{21}(s-\hat s) \Bigr), \label{eq:q11-diff} \\
  t_{22}-\hat t_{22} &= \kappa\Bigl(t_{21}\hat t_{12}(r-\hat r)+t_{22}\hat t_{22}(s-\hat s) \Bigr).\label{eq:q22-diff}
\end{align}
Substituting \eqref{eq:q11-diff} and \eqref{eq:q22-diff} into
\eqref{eq:u-diff}, and taking absolute values, we get component-wise $d \leq Ad$, where 
\[d :=  \begin{pmatrix} 
          |r-\hat r| \\ |s-\hat s| 
        \end{pmatrix} 
  \quad \text{ and } \quad 
  A:= \kappa^2 
  \begin{pmatrix} 
    \tau_m |r||\hat r||t_{11}||\hat t_{11}| & \tau_m |r||\hat r||t_{12}||\hat t_{21}| 
    \\[0.3em] 
    \tau_k |s||\hat s||t_{21}||\hat t_{12}| & \tau_k |s||\hat s||t_{22}||\hat t_{22}|
  \end{pmatrix}.\]
Now consider a vector given by
\[x:= \begin{pmatrix}
\sqrt{\Im r\,\Im \hat r}\\
\sqrt{\Im s\,\Im \hat s}
\end{pmatrix}.\]
By Cauchy-Schwarz, \eqref{eq:Imq11}, and \eqref{eq:Imu-identity}, we see that 
\begin{align*}
(Ax)_1 & \le \kappa^2\tau_m |r||\hat r| \sqrt{\bigl(|t_{11}|^2\Im r+|t_{12}|^2\Im s\bigr)
      \bigl(|\hat t_{11}|^2\Im\hat r+|\hat t_{21}|^2\Im\hat s\bigr)} \\
       & = |r||\hat r|\kappa\tau_m\sqrt{\Im t_{11}\,\Im\hat t_{11}} \\
       & = \sqrt{\bigl(\Im r-|r|^2\Im z\bigr)\bigl(\Im\hat r-|\hat r|^2\Im z\bigr)} < \sqrt{\Im r\,\Im\hat r} = x_1.
\end{align*}
Analogously \((Ax)_2<x_2\). Thus \(Ax<x\) component-wise. Since $A$ has non-negative entries and $x$ has strictly positive entries, we see that $\rho(A)< 1$, which contradicts $Ad \geq d$. 

Alternatively, note that $A^px \to 0$ as $p \to \infty$. Since entries of $x$ are strictly positive, we see that $A^p \to 0$ as $p \to\infty$. However, this means that 
\begin{equation*}
   0 \leq  d \leq A^p d \to 0 \text{ as } p \to \infty.
\end{equation*}
This implies that $d = 0$, and hence $r = \hat r$ and $s = \hat s$, which proves the uniqueness.
\end{proof}

\smallskip

\begin{proof}[Proof of existence in Lemma~\ref{lem:convergence-to-finite-system} via convergence]
We will now show that for all \(z\in\CC_+\), the system \eqref{eq:finite_system} has at least one solution $(r(z), s(z), T(z))$ with $r(z), s(z) \in \CC_+$. Moreover, if $\eta > 0$, then on the family $\bigl( r_n(z), s_n(z), T_n(z)\bigr)$ converges uniformly on $\bar \Omega_{\eta}$ and the limit solves \eqref{eq:finite_system}. Fix \(z\in\CC_+\) and recall that by Lemma~\ref{lem:exptGNCCA},
\begin{equation*}
     r_n(z)=\frac{1}{m} \, \EE\, \Tr \, \calG^{(11)}(z) \text{ and } s_n(z)=\frac{1}{k} \EE\,\Tr \, \calG^{(22)}(z).
\end{equation*}
Since the upper-left \((m+k)\times(m+k)\) block of \(\calG (z)\) is
\((W-zI)^{-1}\), both \( r_n(z)\) and \( s_n(z)\) are Stieltjes transforms
of probability measures. In particular,
  \[ r_n(z), s_n(z)\in\CC_+ \quad \text{ and } \quad 
    | r_n(z)|,\,| s_n(z)|\le (\Im z)^{-1}.\]
Also, by Lemma~\ref{lem:GNOpNormCCA} and triangle inequality, for each $\eta > 0$, 
  \[\sup_{n\ge 1}\sup_{z\in \bar \Omega_{\eta}}\| T_n(z)\|_{\op} \leq \sup_{n\ge 1}\sup_{z\in  \bar \Omega_{\eta}}\| \EE \calG (z)\|_{\op} <O_\eta(1).\]
Thus the families \( r_n(z), s_n(z), T_n(z)\) are locally bounded and analytic on \(\CC_+\), so by Montel's theorem (See \cite{SteinBook} for example) every subsequence admits a further subsequence converging locally uniformly on \(\CC_+\):
  \[ r_{n_j}\to r_*, \qquad  s_{n_j}\to s_*, \qquad  T_{n_j}\to T_*.\]

Now pass to the limit in the approximate self-consistent equations from
Proposition~\ref{prop:approxSelfEqCCA}. Since the errors are \(O(n^{-1 + \varepsilon})\), we obtain that for every $z \in \CC_+$
  \[r_*(z)=\frac{1}{-z-\kappa\tau_m t_{*,11}(z)},
    \quad \text{ and } \quad 
    s_*(z)=\frac{1}{-z-\kappa\tau_k t_{*,22}(z)},\]
and
  \[T_*(z)
    \begin{pmatrix}
    \alpha \kappa -\kappa r_*(z) & \alpha\\
    \alpha  & \alpha \kappa -\kappa s_*(z)
    \end{pmatrix} = I_2.\]
Thus \((r_*(z),s_*(z),T_*(z))\) solves \eqref{eq:finite_system}. Let us now check $r_*(z), s_*(z) \in \CC_+$. Note that since
\(\Im  r_n(z)>  0\) and \(\Im  s_n(z)>0\) for all \(n\), the locally uniform limit satisfies
\begin{equation*}
    \Im r_*(z)\ge0 \quad \text{ and } \quad \Im s_*(z)\ge0
\end{equation*}
Since $z\in \CC_+$, \eqref{eq:Imq11} combined with \eqref{eq:Imu-identity} implies that $\Im r_*(z) > 0$ and $\Im s_*(z) > 0$. 

To finish, note that since all subsequences have further subsequence that converge to the unique solution of \eqref{def:det_system}, the sequence itself converges to that limit locally uniformly. This proves the claim.
\end{proof}

\begin{proof}[Proof of Proposition~\ref{prop:detStabilityCCA}]
For \(r,s\in\CC_+\), define a matrix valued functions
  \[ B(r,s):= \begin{pmatrix}
                \alpha \kappa -\kappa r & \alpha\\
                \alpha  & \alpha \kappa -\kappa s
              \end{pmatrix} \quad \text{ and } \quad T(r,s):=B(r,s)^{-1} =:\bigl(t_{ij}(r,s)\bigr)_{i,j=1}^2, \]
whenever \(B(r,s)\) is invertible. Then \eqref{eq:finite_system} is equivalent to
\begin{equation}\label{eq:fixed-point-T}
r(z)=\frac1{-z-\kappa\tau_m t_{11}(r(z),s(z))}, \qquad s(z)=\frac1{-z-\kappa\tau_k t_{22}(r(z),s(z))}.
\end{equation}
For any $z \in \CC_+$ define a map $F_z : \CC_+^2 \mapsto \CC_+^2$ by 
  \[F_z(r,s):=
    \begin{pmatrix}
     \dfrac{1}{-z-\kappa\tau_m t_{11}(r,s)} \\
     \dfrac{1}{-z-\kappa\tau_k t_{22}(r,s)}
    \end{pmatrix}.\]
Then the solution of \eqref{eq:finite_system} satisfies $F_z(r(z),s(z)) = (r(z),s(z))^\intercal$. Since \(T(r,s)=B(r,s)^{-1}\), applying \eqref{eq:derivativeInverse}, we get that
  \begin{gather*}
    \partial_r T(r,s)=\kappa\,T(r,s)E_{11}T(r,s), \qquad
    \partial_s T(r,s)=\kappa\,T(r,s)E_{22}T(r,s),
  \end{gather*}
where $E_{ij}$ denotes the matrix with only non-zero entry $1$ in the $i$th row and $j$th column. Hence, since $t_{12}(z) = t_{21}(z)$ we see that the Jacobian matrix is 
  \[D F_z(r(z),s(z))= \kappa^2
    \begin{pmatrix}
    \tau_m r(z)^2 t_{11}(z)^2 & \tau_m r(z)^2 t_{12}(z)^2\\
    \tau_k s(z)^2 t_{21}(z)^2 & \tau_k s(z)^2 t_{22}(z)^2
    \end{pmatrix}.\]
Taking absolute values,  we get
  \[|DF_z(r(z),s(z))| := \kappa^2
    \begin{pmatrix}
    \tau_m |r(z)|^2 |t_{11}(z)|^2 & \tau_m |r(z)|^2 |t_{12}(z)|^2\\
    \tau_k |s(z)|^2 |t_{21}(z)|^2 & \tau_k |s(z)|^2 |t_{22}(z)|^2
    \end{pmatrix}.\]
Now, at the exact solution $(r(z),s(z)),$ we can apply equations \eqref{eq:Imq11} and \eqref{eq:Imu-identity}, to see
  \[|DF_z(r(z),s(z))|
    \begin{pmatrix}\Im r(z)\\ \Im s(z)\end{pmatrix} =
    \begin{pmatrix}
    \Im r(z)-|r(z)|^2\Im z\\
    \Im s(z)-|s(z)|^2\Im z
    \end{pmatrix}
    < \begin{pmatrix}\Im r(z)\\ \Im s(z)\end{pmatrix}. \]
Hence, this implies that the spectral radius of $DF_z$ at $(r(z),s(z))$ is bounded
\begin{equation*}
    \rho(DF_z(r(z),s(z)))\le \rho(|DF_z(r(z),s(z))|) < 1.
\end{equation*}
Consequently, if  $H_z(r,s) = (r,s)^\intercal - F_z(r,s)$ then $DH_z(r(z),s(z))$ is invertible for all $z \in \CC_+$. In fact, by continuity of $DH_z(r(z), s(z))$ and compactness of $\bar\Omega_{\eta}$, it follows that
  \[\sup_{z\in \bar \Omega_\eta } \|DH_z(r(z),s(z))^{-1}\|_{\op} \le O_\eta(1).\]
By the complex inverse function theorem, for every $\eta > 0$, there exists $\delta >0$ such that if $z \in \bar{\Omega}_\eta$ and $\|(r',s')-(r(z),s(z))\| \le \delta$, then 
\begin{equation}\label{eq:local-inverse-bound}
    \|(r',s')-(r(z),s(z))\| \le O_\eta(\norm {H_z(r' ,s')}).
\end{equation}
We will use this to tightly control errors. Let us now rewrite the approximate equations from Proposition~\ref{prop:approxSelfEqCCA} as follows
\begin{gather}
 r_n(z)
=
\frac{1+\epsilon_{1,n}(z)}{-z-\kappa\tau_m  t_{11,n}(z)},
\label{eq:approx-u-resid}
\\
 s_n(z)
=
\frac{1+\epsilon_{2,n}(z)}{-z-\kappa\tau_k  t_{22,n}(z)},
\label{eq:approx-v-resid}
\\
 T_n(z) B_n(z)
=
I_2+E_n(z),
\label{eq:approx-T-resid}
\end{gather}
where, $B_n(z):=B( r_n(z), s_n(z))$ and the error terms satisfy
\[
\sup_{z\in \Omega_\eta }
\bigl(|\epsilon_{1,n}(z)|+|\epsilon_{2,n}(z)|+\|E_n(z)\|_{\op}\bigr)
\le C n^{-1 + \varepsilon}.
\]
For \(n\) large enough, \(\|E_n(z)\|_{\op}\le 1/2\) uniformly on \(\Omega_\eta \), so
\(I_2+E_n(z)\) is invertible and \eqref{eq:approx-T-resid} implies that \(B_n(z)\) is invertible. Define $\hat T_n(z):=T(r_n(z),s_n(z)) = B_n(z)^{-1}$ and note that 
\[
\hat T_n(z)=(I_2+E_n(z))^{-1} T_n(z),
\]
and since \(\norm{ T_n(z)}_\op \leq O(1)\) uniformly on \(\Omega_\eta \), we get that for all large $n$,  
\begin{equation}\label{eq:Tsharp-close}
\sup_{z\in \Omega_\eta }
\|\hat T_n(z)- T_n(z)\|_{\op}
\le O(n^{-1 + \varepsilon }).
\end{equation}
Next, since \(| r_n(z)|\le \eta^{-1}\), from \eqref{eq:approx-u-resid} we get that for all large $n$
\[
\left|
 r_n(z)-\frac1{-z-\kappa\tau_m t_{11,n}(z)}
\right|
= \abs{\frac{{ r_n(z) \epsilon_{1,n} (z)}}{1 + \epsilon_{1,n}(z)}}
\le O(n^{-1+\varepsilon}),
\]
uniformly on \(\Omega_\eta \). Now we want to replace $t_{11,n}(z)$ by $\hat t_{11,n}(z) = t_{11}(r_n(z), s_n(z))$. To see why we can do this, since $|r_n(z)| \leq \eta^{-1} = O(1)$, 
\begin{equation*}
    |-z - \kappa \tau_m t_{11,n}(z)| \geq \Omega(1).
\end{equation*}
Using \eqref{eq:Tsharp-close} and the fact that the $x \mapsto 1/x$ map is $O(1)$-Lipschitz on $\Omega_\eta $, we get that 
\[
\left|
 r_n(z)-\frac1{-z-\kappa\tau_m t_{11}\bigl( r_n(z), s_n(z)\bigr)}
\right|
\le O(n^{-1 + \varepsilon}).
\]
The same argument gives
\[
\left|
 s_n(z)-\frac1{-z-\kappa\tau_k t_{22}\bigl( r_n(z), s_n(z)\bigr)}
\right|
\le O(n^{-1 + \varepsilon}).
\]
In other words, we get that
\begin{equation}\label{eq:Phi-residual}
\sup_{z\in \bar \Omega_\eta }
\|H_z( r_n(z), s_n(z))\|
\le O(n^{-1 + \varepsilon}).
\end{equation}
In the argument above, we showed that \(( r_n(z), s_n(z))\to (r(z),s(z))\) locally uniformly on $\CC_+$. Hence, for all $\eta, \delta > 0$ and sufficiently large $n$, 
\[
\sup_{z\in \bar \Omega_{\eta}}
\|( r_n(z), s_n(z))-(r(z),s(z))\|<\delta,
\]
so \eqref{eq:local-inverse-bound} applies at every \(z\in \bar \Omega_{\eta}\). Combining this with
\eqref{eq:Phi-residual}, we obtain
\[
\sup_{z\in \bar \Omega_{\eta}}
\Big(
| r_n(z)-r(z)|+| s_n(z)-s(z)|
\Big)
\le O(n^{-1 + \varepsilon}).
\]
Finally, we bound $\| T_n(z) - T(z)\|_{\op}$ as follows.
\[
 T_n(z)-T(z)
=
\sqb{ T_n(z)-\hat T_n{(z)}}
+
\sqb{T( r_n(z), s_n(z))-T(r(z),s(z))}.
\]
The first term is \(O(n^{-1 + \varepsilon})\) uniformly by \eqref{eq:Tsharp-close}; the second is
\(O(n^{-1 + \varepsilon})\) uniformly on \(\bar \Omega_{\eta}\), since \(T(r,s)=B(r,s)^{-1}\) is analytic in a neighborhood of the compact set
\(\{(r(z),s(z)):z\in \bar \Omega_{\eta}\}\). Therefore, we conclude that
\[
\sup_{z\in \Omega_\eta }
\| T_n(z)-T(z)\|_{\op}
\le O(n^{-1+ \varepsilon})
\] 
This proves the proposition.
\end{proof}

\begin{lemma}[Includes Lemma~\ref{lem:extended-solution-of-finite-systemMain}]\label{lem:extended-solution-of-finite-system}
Let $(r,s,T)$ be the unique solution of \eqref{eq:finite_system} on $\CC_+$. Define
\[
\lambda_*:=\frac{1-\kappa^2}{\kappa},
\qquad
\DD:=\CC\setminus(-\infty,\lambda_*].
\]
Then $r$, $s$, and each entry of $T$ extend analytically to $\DD$ and still satisfy $\eqref{eq:finite_system}$ on whole $\DD$. 
Moreover, there exists probability measures $\mu$ and $\nu$ supported on $(-\infty, \lambda_*]$ such that
\begin{equation*}
    r(z) = \int_\RR \frac{\mu(dx)}{x - z} \quad \text{ and } \quad s(z) = \int_\RR \frac{\nu(dx)}{x - z}.
\end{equation*}
\end{lemma}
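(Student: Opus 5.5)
Our plan is to get the measures $\mu,\nu$ as weak limits of spectral measures of the random matrix $W$, and to obtain the analytic extension from the fact that these limits live on $(-\infty,\lambda_*]$. First, by Lemma~\ref{lem:exptGNCCA}, $r_n(z)=\frac1m\EE\Tr(W-zI)^{-1}_{(11)}$ is the Stieltjes transform of the probability measure $\mu_n:=\EE\,\frac1m\sum_{i}\sum_j\langle e_i,w_j\rangle^2\delta_{\lambda_j(W)}$. Here $(e_i)_{i\le m}$ are the first-block coordinate vectors and $(\lambda_j,w_j)$ is the eigendecomposition of $W$. The measure $\nu_n$ for $s_n$ is defined analogously with the second block.

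Second, we show tightness and identify the support. Tightness follows because $\|W\|_{\op}\le C(\|U\|_{\op}^2+\|V\|_{\op}^2)$ has bounded expectation. By Lemma~\ref{lem:convergence-to-finite-system}, $r_n\to r$ locally uniformly on $\CC_+$. Pointwise convergence of Stieltjes transforms then gives $\mu_n\to\mu$ weakly for some probability measure $\mu$ with Stieltjes transform $r$, and no mass is lost because $-z\,r(z)\to1$ as $z=iy\to i\infty$, using tightness. For the support, Theorem~\ref{thm:CCAGordonEdge} gives $\PP(\lambda_{\max}(W)\ge\lambda_*+\varepsilon)\le e^{-\Omega(n\varepsilon^2)}$, so $\mu_n((\lambda_*+\varepsilon,\infty))\le e^{-\Omega(n\varepsilon^2)}\to0$. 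By the portmanteau theorem applied to the open set $(\lambda_*+\varepsilon,\infty)$, this forces $\mu((\lambda_*+\varepsilon,\infty))=0$ for every $\varepsilon$, hence $\operatorname{supp}\mu\subset(-\infty,\lambda_*]$. The same argument applies to $\nu$.

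Third, we define the extensions by $r(z):=\int\mu(dx)/(x-z)$ and $s(z):=\int\nu(dx)/(x-z)$. These are analytic on $\DD$ and agree with the original $r,s$ on $\CC_+$. We then set $T(z):=B(r(z),s(z))^{-1}$, using the notation of the proof of Proposition~\ref{prop:detStabilityCCA}, wherever $B$ is invertible.

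The main obstacle is to show that $\det B(r(z),s(z))$ has no zeros in $\DD$, so that $T$ is analytic there. Once that is known, the scalar identities $r=1/(-z-\kappa\tau_m t_{11})$ and $s=1/(-z-\kappa\tau_k t_{22})$ hold on $\CC_+$, and both sides are meromorphic on the connected domain $\DD$, so the identity theorem propagates them to all of $\DD$. Moreover $r$ never vanishes on $\DD$, because it is a Stieltjes transform of a probability measure: it is nonzero off the real axis, and on $(\lambda_*,\infty)$ we have $r(x)<0$. To handle $\det B$, we use that $\det B$ is real on $(\lambda_*,\infty)$ and nonzero off the real axis. The latter follows from the imaginary-part identities, since $\Im B$ has a definite sign off the real axis. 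It therefore suffices to rule out real zeros $x>\lambda_*$. For this we argue by continuity from $+\infty$, where $r,s\approx-1/x$ and $B\approx\begin{pmatrix}\alpha\kappa&\alpha\\ \alpha&\alpha\kappa\end{pmatrix}$, which has $\det=\alpha^2(\kappa^2-1)\neq0$. We also use the monotonicity $r'(x),s'(x)>0$ on $(\lambda_*,\infty)$. As an alternative, we can identify $T_n$ with $\EE$ of the lower blocks of $\calG$ via Theorem~\ref{thm:real-stieltjes-transform-CCAMain}; these blocks are bounded on compact subsets of $(\lambda_*,\infty)$ with overwhelming probability, which prevents $T$ from blowing up. We would try this second route first, since it only uses results already established.
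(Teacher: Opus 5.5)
Your first three steps are sound and close to the paper. The paper obtains $\mu(\RR)=1$ from the normalization $i\eta\, r(i\eta)\to-1$, read off the equations, rather than from tightness via $\EE\norm{W}_{\op}$, but either works. Your remark that the imaginary part of $B$ is $-\kappa$ times a positive diagonal matrix on $\CC_+$ also correctly rules out zeros of $\det B$ off the real axis. The gap is exactly at the step you flag as the main obstacle: neither route shows $\det B(r(x),s(x))\neq 0$ for real $x>\lambda_*$.

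\textbf{Second route (circular).} Theorem~\ref{thm:real-stieltjes-transform-CCAMain} is stated in terms of $\calM(z)$ for real $z\in K\subset(\lambda_*,\infty)$. That object is only defined through the analytic extension this lemma provides, and the theorem's proof invokes this lemma to get analyticity of $\calM$ near $K$. So it is not ``already established''.

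\textbf{First route (incomplete).} Continuity from $+\infty$ and $r',s'>0$ say nothing about whether the curve $x\mapsto(r(x),s(x))$ meets the zero set of $\det B=\alpha\kappa\, d(r,s)$, where $d(r,s):=1-\kappa(r+s)-(1-\kappa^2)rs$. That zero set does meet the negative quadrant, for example at $r=s=-1/(1-\kappa)$. You also have no a priori lower bound on $r,s$ near $\lambda_*$. The bound $r,s>-1/\lambda_*$ appears only in Lemma~\ref{lem:S-has-negative-determinant-at-edge}, and its proof uses $d>0$, which rests on the present lemma.

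\textbf{The missing idea: clear denominators.} Using $\alpha=\kappa/(\kappa^2-1)$ one has $B(r,s)^{-1}=\frac{1}{\kappa\, d(r,s)}\begin{pmatrix}\kappa+(1-\kappa^2)s&-1\\-1&\kappa+(1-\kappa^2)r\end{pmatrix}$. Hence on $\CC_+$ the system is equivalent to the two polynomial identities $(1+zr)d+\tau_m(\kappa+(1-\kappa^2)s)r=0$ and $(1+zs)d+\tau_k(\kappa+(1-\kappa^2)r)s=0$.
\begin{itemize}
\item These involve only $z,r,s$, so they continue to all of $\DD$ with no knowledge of $T$.
\item At a zero of $d$ they give $r(\kappa+(1-\kappa^2)s)=s(\kappa+(1-\kappa^2)r)=0$. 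This forces either $r=s=0$, where $d=1$, or $r=s=-\kappa/(1-\kappa^2)$, where $d=1/(1-\kappa^2)$. Either way this is a contradiction, and it holds on all of $\DD$ at once.
\item One then defines $T$ by the adjugate formula and reads the scalar equations, together with $r,s\neq0$, directly off the polynomial identities.
\end{itemize}

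\textbf{Alternatively, a non-circular version of your second idea.}
\begin{itemize}
\item For $z=x+i\eta$ with $x\in K$, bound the lower blocks of $\EE\calG(z)$ via \eqref{eq:SchurGnCCA}.
\item Split on the event $\{\lambda_{\max}(W)\le\lambda_*+\delta/2\}$ from Theorem~\ref{thm:CCAGordonEdge}. Its contribution is $O_K(1)$, and the complement contributes $O(\eta^{-1}e^{-cn})$.
\item Letting $n\to\infty$ using Lemma~\ref{lem:convergence-to-finite-system} gives $\norm{T(x+i\eta)}_{\op}\le C_K$ uniformly in $\eta$.
\item Since $B(r(z),s(z))$ is continuous on a neighbourhood of $K$ and has uniformly bounded inverses there, the limit $B(r(x),s(x))$ is invertible.
\end{itemize}
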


\begin{proof}[Proof of Lemma~\ref{lem:extended-solution-of-finite-system}, Lemma~\ref{lem:extended-solution-of-finite-systemMain}]
Recall that both $r_n$ and $s_n$ are Stieltjes transforms of probability measures. That is, there are probability measures $\mu_n,\nu_n$ on $\RR$ such that
\[
r_n(z)=\int_{\RR}\frac{1}{x-z}\,\mu_n(dx),
\qquad
s_n(z)=\int_{\RR}\frac{1}{x-z}\,\nu_n(dx),
\qquad z\in\CC_+.
\]
By Lemma~\ref{lem:convergence-to-finite-system}, we have $r_n \to r$ and $s_n \to s$ locally uniformly on $\CC_+$. We would like to apply to Stieltjes continuity theorem. To do so, recall that functions $r$ and $s$ are analytic on $\CC_+$ and  $\Im r(z), \Im s(z) >0$ by definition, so we need to check the normalization condition.
\begin{equation*}
    \lim_{\eta \to \infty} i \eta s(i\eta) = \lim_{\eta \to \infty} i \eta r(i\eta) = -1.
\end{equation*}
To this end, let us set
\[
B(r,s):=
\begin{pmatrix}
\alpha\kappa-\kappa r & \alpha\\
\alpha & \alpha\kappa-\kappa s
\end{pmatrix},
\qquad
d(r,s):=1-\kappa(r+s)-(1-\kappa^2)rs.
\]
Using $\alpha=\kappa/(\kappa^2-1)$, a direct computation gives
\[
B(r,s)^{-1}
=
\frac{1}{\kappa\, d(r,s)}
\begin{pmatrix}
\kappa+(1-\kappa^2)s & -1\\
-1 & \kappa+(1-\kappa^2)r
\end{pmatrix}.
\]
Hence, on $\CC_+$, the system \eqref{eq:finite_system} is equivalent to
\begin{gather}\label{eq:poly-r-s}
(1+zr)\,d(r,s)+\tau_m \bigl(\kappa+(1-\kappa^2)s\bigr) r=0, \\
(1+zs)\,d(r,s)+\tau_k \bigl(\kappa+(1-\kappa^2)r\bigr)s=0.
\end{gather}
Recall that $|r(i\eta)| \leq O(\eta^{-1})$ and $|s(i\eta)| \leq O(\eta^{-1})$ are inherited from $r_n$ and $s_n$, so $d(r(i\eta), s(i\eta)) \to 1$ as $\eta \to \infty$, but then from \eqref{eq:poly-r-s}, we see that 
\begin{equation*}
    1+ i\eta r(i\eta) \to 0 \quad \text{ and } \quad  1+ i\eta s(i\eta) \to 0.
\end{equation*}
Hence by \cite{GeronimoHill2003} and the Stieltjes continuity theorem, there exist probability measures $\mu,\nu$ such that $\mu_n\Rightarrow \mu$,
$\nu_n\Rightarrow \nu$, and
\[
r(z)=\int_{\RR}\frac{1}{x-z}\,\mu(dx),
\qquad
s(z)=\int_{\RR}\frac{1}{x-z}\,\nu(dx),
\qquad z\in\CC_+.
\]

We now show that $ \mathrm{supp}\, \mu, \mathrm{supp} \, \nu \subset (-\infty,\lambda_*]$. Fix $\varepsilon>0$. Since $\mu_n$ and $\nu_n$ are expectations of random spectral measures supported on $\sigma(W)$, their mass above
$\lambda_*+\varepsilon$ is bounded by the probability that $W$ has an
eigenvalue there:
\[
\mu_n([\lambda_*+\varepsilon,\infty))
\le \PP\!\bigl(\lambda_{\max}(W)\ge \lambda_*+\varepsilon\bigr) \to 0
\]
by Theorem~\ref{thm:CCAGordonEdgeAppendix} and similarly $\nu_n([\lambda_*+\varepsilon,\infty)) \to 0.$ Passing to the weak limit and recalling that $\varepsilon$ was arbitrary, it follows that 
\[
 \mathrm{supp}\, \mu, \mathrm{supp} \, \nu \subset (-\infty,\lambda_*].
\]
Therefore the Stieltjes transforms above extend analytically to $\DD$. We continue to denote these analytic extensions by $r$ and $s$. 

Now let us go back to \eqref{eq:poly-r-s}. Both left-hand sides are analytic on $\DD$ so \eqref{eq:poly-r-s} hold on all of $\DD$. We claim that $d(r(z),s(z))\neq 0$ for all $z\in \DD.$ Indeed, suppose that $d(r(z_0),s(z_0))=0$ for some $z_0\in \DD$. Then
\eqref{eq:poly-r-s}  implies that
\[
r(z_0)\bigl(\kappa+(1-\kappa^2)s(z_0)\bigr)=0,
\qquad
s(z_0)\bigl(\kappa+(1-\kappa^2)r(z_0)\bigr)=0.
\]
If $r(z_0)=0$, then the second identity gives $s(z_0)=0$, and hence
$d(r(z_0),s(z_0))=1$, a contradiction. Thus
\[
\kappa+(1-\kappa^2)s(z_0)=0 \quad  \text{ and } \quad \kappa+(1-\kappa^2)r(z_0)=0.
\]
Therefore, we deduce that $r(z_0)=s(z_0)=-\kappa / (1-\kappa^2).$ However, then
\[
d(r(z_0),s(z_0))
=
1-\kappa\Bigl(-\frac{2\kappa}{1-\kappa^2}\Bigr)
-(1-\kappa^2)\frac{\kappa^2}{(1-\kappa^2)^2}
=
\frac{1}{1-\kappa^2}\neq 0,
\]
again a contradiction. This proves the claim. We may therefore define, for $z\in \DD$,
\[
\tilde T(z)
:=
\frac{1}{\kappa\, d(r(z),s(z))}
\begin{pmatrix}
\kappa+(1-\kappa^2)s(z) & -1\\
-1 & \kappa+(1-\kappa^2)r(z)
\end{pmatrix}.
\]
Since $d(r(z),s(z))\neq 0$ on $\DD$, the matrix $\tilde T$ is analytic on
$\DD$. On $\CC_+$ it agrees with the original $T$, so $\tilde T$ is the
analytic continuation of $T$. Moreover, by construction,
\[
\tilde T(z)
\begin{pmatrix}
\alpha\kappa-\kappa r(z) & \alpha\\
\alpha & \alpha\kappa-\kappa s(z)
\end{pmatrix}
=I_2,
\qquad z\in D.
\]
Also, using
\[
\kappa \tilde t_{11}(z)
=
\frac{\kappa+(1-\kappa^2)s(z)}{d(r(z),s(z))},
\qquad
\kappa \tilde t_{22}(z)
=
\frac{\kappa+(1-\kappa^2)r(z)}{d(r(z),s(z))},
\]
the identities \eqref{eq:poly-r-s}  become
\[
1+zr+\kappa\tau_m r\,\tilde t_{11}=0,
\qquad
1+zs+\kappa\tau_k s\,\tilde t_{22}=0.
\]
Since $d(r(z),s(z))\neq 0$, these identities imply in that
$r(z)\neq 0$ and $s(z)\neq 0$, hence
\[
r(z)=\frac{1}{-z-\kappa\tau_m \tilde t_{11}(z)},
\qquad
s(z)=\frac{1}{-z-\kappa\tau_k \tilde t_{22}(z)}.
\]
Thus $(r,s,\tilde T)$ solves \eqref{eq:finite_system} on $\DD$.
Renaming $\tilde T$ as $T$ completes the proof.
\end{proof}

Finally, we give the proof of Theorem~\ref{thm:real-stieltjes-transform-CCA} using all the ingredients that we have now. 

\begin{proof}[Proof of Theorem~\ref{thm:real-stieltjes-transform-CCA}, Theorem~\ref{thm:real-stieltjes-transform-CCAMain}]
    Fix a compact set $K \subset (\lambda_*, \infty)$ and fix small $\delta < d(\lambda_*, K)$. We also fix $\eta < \delta /4$, but we will eventually take this to $0$.
    
    As a first, step note that if we apply the deterministic standard basis vectors to Theorem~\ref{thm:resolvConvCCA} with parameter $\eta / 2$ and take union bound, we get that 
    \begin{equation*}
        \sup_{z \in K} \frac{1}{n}\abs{\Tr \,\calG^{(ij)}(z+ i \eta) - \Tr\, \calM^{(ij)}(z+i\eta)} \leq C_\eta n^{-1/4},
    \end{equation*}
    with probability of failure at most $n^{-p}$, whenever $n > n_0(\eta, p)$ for any $p > 0$. Our goal is to remove $i \eta$ from both of the arguments. On one hand, note that 
    \begin{align*}
        \|\calG(z + i\eta) - \calG(z)\|_{\op} &= \|\calG(z + i\eta)\sqb{\calL(z) - \calL(z + i\eta)}\calG(z)\|_\op \\
        & \leq \norm{\calG(z)}_\op \norm{\calG(z + i\eta)}_\op \eta
    \end{align*}
    By Theorem~\ref{thm:CCAGordonEdgeAppendix}, we know that $\lambda_{\max}(W) \leq \lambda_* + \delta/4$ with overwhelming probability. By the proof of Lemma~\ref{lem:GNOpNormCCA}, we get that with overwhelming probability 
    \begin{equation*}
        \norm{\calG(z)}_\op \leq O(1) \text{ and }\norm{\calG(z+i\eta)}_\op \leq O(1).
    \end{equation*}
    In particular, this means that $\|\calG(z + i\eta) - \calG(z)\|_{\op} \leq O(\eta)$, which in turn implies that 
    \begin{equation*}
        \frac{1}{n}\abs{\Tr \,\calG^{(ij)}(z+ i \eta) -  \Tr \,\calG^{(ij)}(z)} \leq O(\eta). 
    \end{equation*}
    On the other hand, recall that by Lemma~\ref{lem:extended-solution-of-finite-system}, entries of $\calM^{(ij)}$ are analytic on the open neighborhood of $K$. This means that we have 
    \begin{equation*}
        \frac{1}{n}\abs{\Tr\, \calM^{(ij)}(z+i\eta) - \Tr\, \calM^{(ij)}(z)} \leq O(\eta).
    \end{equation*}
    Hence, we see that by triangle inequality, for any $p >0$ with probability $1 - n^{-p}$ we have
    \begin{equation*}
        \sup_{z \in K}\frac{1}{n}\abs{\Tr\, \calG^{(ij)}(z) - \Tr\, \calM^{(ij)}(z)} \leq O(\eta) + C_\eta n^{-1/4},
    \end{equation*}
    whenever $n > n_0(\eta,p)$. That means that for any fixed $\varepsilon > 0$, we can take $\eta$ small enough so that the difference of normalized traces is bounded by $\varepsilon$ with overwhelming probability. This concludes the proof.
\end{proof}

\subsection{Correlated spiked Wigner model}\label{subsec:Wigner-convergence}

Throughout this subsection we work under the null model from Section~\ref{subsec:wigner}. That is, $U, V\sim \mathrm{GOE}(1/n)$ are independent. We keep the notation and write
\begin{equation}\label{eq:kappaDefWignerSectionSomething}
\kappa^4 = \frac{(1-\alpha^2)_+(1-\beta^2)_+}{\alpha^2\beta^2},
\qquad
\widetilde U := \alpha U -\alpha^2 I_n,
\qquad
\widetilde V := \beta V -\beta^2 I_n.
\end{equation}
The corresponding null spectral matrix is
\[
W :=
\begin{pmatrix}
\widetilde U & \kappa \widetilde V \\
\kappa \widetilde U & \widetilde V
\end{pmatrix}.
\]
For convenience, let us introduce vectors and matrices 
\begin{equation}\label{eq:defPQWigner}
A := \begin{pmatrix}1&\kappa\\ \kappa&1\end{pmatrix},
\qquad
p:=A^{1/2}e_1,
\qquad
q:=A^{1/2}e_2,
\qquad
P:=pp^\intercal,
\qquad
Q:=qq^\intercal.
\end{equation}
Note that $\norm{p} = \norm{q} = 1$. Then $W$ is similar to the symmetric matrix
\begin{align}\label{eq:wigner-symmetrized-null}
H 
&:=(A^{1/2}\otimes I_n)
\begin{pmatrix}
\widetilde U&0\\0&\widetilde V
\end{pmatrix}
(A^{1/2}\otimes I_n) \\
&=
-\alpha^2 P\otimes I_n
-\beta^2 Q\otimes I_n
+\alpha P\otimes U
+\beta Q\otimes V.
\end{align}
Hence $W$ and $H$ have the same eigenvalues. In the rest of this subsection we therefore study the resolvent 
\[
\calG(z):=(H-zI_{2n})^{-1}
\qquad \text{ for } z\in\CC_+,
\]
and we continue to use the spectral domain $\Omega_\eta$ from \eqref{eq:OmegaDef}.

\begin{definition}\label{def:det_system_Wigner}
For each $z\in\CC_+$, define scalars $r(z),s(z)\in \CC_+$ and a matrix
$T(z)\in \CC^{2\times 2}$ as the unique solution to the coupled system
\begin{gather}
\begin{aligned}
r(z)&=p^\intercal T(z)p,\\
s(z)&=q^\intercal T(z)q,
\end{aligned}
\label{eq:wigner-finite-system-scalars}
\\
T(z)\Bigl(-zI_2-\alpha^2(1+r(z))P-\beta^2(1+s(z))Q\Bigr)=I_2.
\label{eq:wigner-finite-system}
\end{gather}
Let $m(z):=\frac12\Tr T(z)$. Then $m$ is the Stieltjes transform of a compactly supported
probability measure $\mu$ on $\RR$. We write
\[
\lambda_*:=1,
\qquad
\DD:=\CC\setminus \mathrm{supp}(\mu),
\qquad
\calM(z):=T(z)\otimes I_n,
\qquad z\in \DD.
\]
\end{definition}

\begin{theorem}\label{thm:resolvConvWigner}
Let $\calG$ be the resolvent of $H$ and let $\calM$ be as in
Definition~\ref{def:det_system_Wigner}. Fix $\eta,\varepsilon>0$. Then, for any unit vectors
$x,y\in \RR^{2n}$,
\begin{equation}\label{eq:wigner-resolv-conv}
\sup_{z\in\Omega_\eta}
\abs{x^\intercal\calG(z)y-x^\intercal\calM(z)y}
\le O(n^{-1/2+\varepsilon})
\end{equation}
with overwhelming probability, where the hidden constant depends only on $\eta$. 
\end{theorem}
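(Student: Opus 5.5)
We follow the same two-step scheme as for Theorem~\ref{thm:resolvConvCCA}: first concentration $\calG(z)\approx\EE\calG(z)$, then deterministic stability $\EE\calG(z)\approx\calM(z)$. Here no linearization is needed, because $H$ is already symmetric and linear in the disorder, $H=-\alpha^2P\otimes I_n-\beta^2Q\otimes I_n+\alpha P\otimes U+\beta Q\otimes V$.

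\textbf{Concentration.} For $z\in\Omega_\eta$ we have $\norm{\calG(z)}_{\op}\le\eta^{-1}$ deterministically, which makes this step easier than the CCA case. Writing $U=(G_1+G_1^\intercal)/\sqrt{2n}$ and $V=(G_2+G_2^\intercal)/\sqrt{2n}$, the derivative formula \eqref{eq:derivativeInverse} shows that $F(z)=x^\intercal\calG(z)y$ has gradient norm $O(\norm{\calG}_{\op}^2\,n^{-1/2})$ in the $\calN(0,1)$ entries, using $\norm{p}=\norm{q}=1$. Lemma~\ref{lem:gaussianConLip} then gives fluctuations $O(n^{-1/2+\varepsilon})$ with overwhelming probability at each fixed $z$. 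No McShane extension is needed. We then take a $1/n$-net of $\Omega_\eta$ and use the Lipschitz bound $|\partial_zF|\le\eta^{-2}$, exactly as in the proof of Proposition~\ref{prop:CCA-concentration}.

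\textbf{Form of the expectation.} Orthogonal invariance of the pair $(U,V)$ under $U\mapsto OUO^\intercal$, $V\mapsto OVO^\intercal$ gives $\EE\calG(z)=T_n(z)\otimes I_n$ for some $2\times2$ matrix $T_n(z)$.

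\textbf{Approximate equations.} We expand $(H-z)\calG=I$ and apply Gaussian integration by parts to the GOE entries. For $U$ with variance $1/n$ off-diagonal, the standard identity reads
\[
\EE\,(P\otimes U)\calG=-\frac{1}{n}\,\EE\,(P\otimes I)\,\mathrm{ptr}\bigl[(P\otimes I)\calG\bigr]\,\calG+O(n^{-1}),
\]
where $\mathrm{ptr}$ is the normalized partial trace over the $n$-factor. Since $\mathrm{ptr}[(P\otimes I)\calG]$ concentrates around $\Tr(PT_n)=p^\intercal T_np$, the factorization of product-of-traces terms goes through the variance estimate of Lemma~\ref{lem:hard-traces-into-easy-ones}. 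The result is
\[
\alpha\,\EE(P\otimes U)\calG\approx-\alpha^2\,(p^\intercal T_np)\,P\,T_n,
\]
with an analogous term for $V$. Taking the $2\times2$ block expectation then yields
\[
T_n\Bigl(-z-\alpha^2(1+r_n)P-\beta^2(1+s_n)Q\Bigr)=I_2+O(n^{-1+\varepsilon}),\qquad r_n=p^\intercal T_np,\quad s_n=q^\intercal T_nq.
\]
Up to commuting $T_n$ past the matrix, which is harmless since both are symmetric and $T_n$ is a function of that matrix, this is \eqref{eq:wigner-finite-system}.

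\textbf{Stability (main obstacle).} The remaining and hardest step is to show that the solution of \eqref{eq:wigner-finite-system-scalars}--\eqref{eq:wigner-finite-system} is unique in $\CC_+$ and stable. We would mimic Lemma~\ref{lem:convergence-to-finite-system} and Proposition~\ref{prop:detStabilityCCA}. Write $B=-z-\alpha^2(1+r)P-\beta^2(1+s)Q$. The Ward-type identity
\[
\Im T=T^*\bigl(\Im z+\alpha^2\Im r\,P+\beta^2\Im s\,Q\bigr)T
\]
gives
\[
\Im r=|p^\intercal T|^2\Im z+\alpha^2|p^\intercal Tp|^2\Im r+\beta^2|p^\intercal Tq|^2\Im s,
\]
together with the analogous identity for $\Im s$.

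Hence the nonnegative matrix
\[
A=\begin{pmatrix}\alpha^2|p^\intercal Tp|^2&\beta^2|p^\intercal Tq|^2\\ \alpha^2|q^\intercal Tp|^2&\beta^2|q^\intercal Tq|^2\end{pmatrix}
\]
satisfies $A(\Im r,\Im s)^\intercal<(\Im r,\Im s)^\intercal$ componentwise. This yields both uniqueness, via the Cauchy--Schwarz contraction $d\le\tilde Ad$, and the spectral-radius bound $<1$ on the Jacobian of the fixed-point map $(r,s)\mapsto(p^\intercal B^{-1}p,\,q^\intercal B^{-1}q)$.

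Existence then follows by Montel's theorem applied to $(r_n,s_n,T_n)$, with $\Im r_*,\Im s_*>0$ recovered from the identities above. The inverse function theorem then gives the quantitative bound $\sup_{\Omega_\eta}\norm{T_n-T}=O(n^{-1+\varepsilon})$. Combining this with the concentration step proves \eqref{eq:wigner-resolv-conv}.

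The delicate point is showing $\Im r_*>0$ in the limit, because $P$ and $Q$ are rank one and not orthogonal when $\kappa>0$. We would handle it by using $\Im z>0$ together with the fact that $p$ and $q$ span $\RR^2$, since $A$ is invertible for $\kappa<1$. This makes $|p^\intercal T|^2\Im z>0$ as long as $T\ne0$, and $T\neq0$ holds because $TB=I_2$.
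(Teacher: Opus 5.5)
Your proposal is correct and follows essentially the same route as the paper: a deterministic resolvent bound with Gaussian concentration and a net argument, orthogonal invariance giving $\EE\calG=T_n\otimes I_n$, Gaussian integration by parts with factorization of the product-of-traces term, and uniqueness and stability of the $2\times2$ system via the Ward identity, Montel's theorem and the inverse function theorem. Two cosmetic points. First, the displayed integration-by-parts identity should read $\EE(P\otimes U)\calG\approx-\frac{\alpha}{n}\EE\bigl[\Tr\bigl((P\otimes I)\calG\bigr)\,(P\otimes I)\calG\bigr]$; the consequence you draw from it is right. Second, for $\Im r_*>0$ you need $\Im r_*,\Im s_*\ge0$, inherited from $r_n,s_n$ being Stieltjes transforms, together with $T_*p\neq0$ from the invertibility of $T_*$; the spanning property of $p,q$ is not needed.
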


Write $\calG(z)$ in $n\times n$ blocks as follows
\[
\calG(z)=
\begin{pmatrix}
\calG^{(11)}(z) & \calG^{(12)}(z)\\
\calG^{(21)}(z) & \calG^{(22)}(z)
\end{pmatrix},
\qquad
\calG^{(ij)}(z)\in \RR^{n\times n}.
\]
This matches the block decomposition of $H$ and of $\calM(z)=T(z)\otimes I_n$.

\begin{theorem}\label{thm:real-stieltjes-transform-Wigner}
Let $\calG$ be as above and let $\calM$ and $\lambda_*$ be as in
Definition~\ref{def:det_system_Wigner}. Then, for any compact set $K\subset (\lambda_*,\infty)$, we have
\[
\sup_{z\in K}
\frac1n\abs{\Tr\,\calG^{(ij)}(z)-\Tr\,\calM^{(ij)}(z)}=o(1)
\]
with overwhelming probability.
\end{theorem}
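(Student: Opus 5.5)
The proof transfers the complex-plane convergence of Theorem~\ref{thm:resolvConvWigner} to the real half-line beyond the edge, in the same way as the proof of Theorem~\ref{thm:real-stieltjes-transform-CCA}. Fix a compact $K\subset(1,\infty)$ and a small $\delta<\mathrm{dist}(1,K)$, and let $\eta<\delta/4$ be a parameter that we eventually send to zero. First apply Theorem~\ref{thm:resolvConvWigner} on $\Omega_{\eta/2}$ with the standard basis vectors $x=e_a$, $y=e_b$. A union bound over the $O(n^2)$ index pairs, followed by averaging the diagonal entries of each block, gives
\begin{equation*}
\sup_{z\in K}\frac1n\abs{\Tr\,\calG^{(ij)}(z+i\eta)-\Tr\,\calM^{(ij)}(z+i\eta)}\le C_\eta n^{-1/2+\varepsilon}
\end{equation*}
with overwhelming probability. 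Note that $K+i\eta\subset\Omega_{\eta/2}$ once $\eta$ is small relative to $\sup K$.

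Next, remove the imaginary shift on the random side. By Theorem~\ref{thm:WignerGordonEdge} and the similarity $\lambda_{\max}(W)=\lambda_{\max}(H)$, we have $\lambda_{\max}(H)\le 1+\delta/4$ with probability $1-e^{-\Omega(n)}$, hence with overwhelming probability. Since $H$ is symmetric, on this event
\begin{equation*}
\norm{\calG(z)}_{\op}\le 4/(3\delta)\quad\text{and}\quad\norm{\calG(z+i\eta)}_{\op}\le 4/(3\delta)\qquad\text{for all } z\in K.
\end{equation*}
The resolvent identity then gives $\norm{\calG(z+i\eta)-\calG(z)}_{\op}\le\eta\,\norm{\calG(z)}_{\op}\norm{\calG(z+i\eta)}_{\op}=O(\eta)$. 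Each normalized block trace therefore changes by at most $O(\eta)$.

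On the deterministic side, we need $\calM=T\otimes I_n$ to be analytic, with bounded derivative, on a complex neighborhood of $K$. This is the step needing real input. Definition~\ref{def:det_system_Wigner} sets $\DD=\CC\setminus\mathrm{supp}(\mu)$, so it suffices to show $\mathrm{supp}(\mu)\subset(-\infty,1]$ and that $T$ itself, not only $m=\tfrac12\Tr T$, extends analytically there. We argue as in Lemma~\ref{lem:extended-solution-of-finite-system}. Theorem~\ref{thm:resolvConvWigner} identifies $T(z)$ as the limit of $\EE\calG(z)$ block-traces, so $m$ is the locally uniform limit of Stieltjes transforms of the expected spectral measures $\EE\mu_H$. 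Their mass on $[1+\varepsilon,\infty)$ is at most $\PP(\lambda_{\max}(H)\ge1+\varepsilon)\to0$, so $\mathrm{supp}(\mu)\subset(-\infty,1]$. For the entries of $T$, observe that each of $p^\intercal Tp$, $q^\intercal Tq$ and $u^\intercal Tu$, for arbitrary real unit $u\in\RR^2$, is a limit of Stieltjes transforms of the probability measures $\EE\,\mu_H^{u\otimes\cdot}$ obtained by averaging the spectral measures of $H$ at the vectors $u\otimes e_\ell$. The same Gordon bound controls the support of these measures, so every quadratic form of $T$, and by polarization every entry, is a Stieltjes transform of a measure supported in $(-\infty,1]$. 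Consequently $T$ is analytic on $\CC\setminus(-\infty,1]$, and $|T(z+i\eta)-T(z)|=O(\eta)$ uniformly on $K$, with constants depending only on $\delta$.

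Combining the three estimates by the triangle inequality gives
\begin{equation*}
\sup_{z\in K}\frac1n\abs{\Tr\,\calG^{(ij)}(z)-\Tr\,\calM^{(ij)}(z)}\le O(\eta)+C_\eta n^{-1/2+\varepsilon}
\end{equation*}
with overwhelming probability. Choosing $\eta$ small for a given target accuracy and then letting $n\to\infty$ yields the $o(1)$ bound. The main obstacle is the analytic continuation of the full matrix $T$, beyond its trace, across $(1,\infty)$. If the polarization argument proves awkward, a fallback is to solve \eqref{eq:wigner-finite-system} explicitly, writing $T=(-zI_2-\alpha^2(1+r)P-\beta^2(1+s)Q)^{-1}$, and to check that this $2\times2$ matrix stays invertible for $z$ near $K$ once $r$ and $s$ are known to be analytic there.
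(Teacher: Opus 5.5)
Your proposal is correct and follows the paper's proof essentially step for step. It uses convergence at $z+i\eta$ from Theorem~\ref{thm:resolvConvWigner} via a union bound over basis vectors, then removes the shift on the random side by the resolvent identity together with the Gordon edge bound of Theorem~\ref{thm:WignerGordonEdge}, and finishes with an $O(\eta)$ Lipschitz bound on the deterministic side. The analytic continuation of the full matrix $T$ that you flag as the main obstacle is exactly Lemma~\ref{lem:WignerDeterministicExtension}, which the paper proves by the same polarization argument you sketch (quadratic forms $u^*Tu$ as limits of Stieltjes transforms of expected spectral measures, with supports controlled by the Gordon bound), so you can simply cite it.
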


\subsubsection{Concentration of the resolvent}

As a first step, we will prove the concentration of anisotropic resolvent forms on $\Omega_\eta$. 
\begin{prop}\label{prop:Wigner-concentration}
Let $\eta,\varepsilon>0$. Then for any unit vectors $x,y\in\RR^{2n}$,
\[
\sup_{z\in\Omega_\eta}
\abs{x^\intercal\calG(z)y-x^\intercal\EE\calG(z)y}
\le O(n^{-1/2+\varepsilon})
\]
with overwhelming probability, where the hidden constant depends only on $\alpha,\beta$ and $\eta$.
\end{prop}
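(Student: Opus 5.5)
The plan is to mirror the proof of Proposition~\ref{prop:CCA-concentration}, which is simpler here because $\calG(z)=(H-zI_{2n})^{-1}$ is a genuine resolvent of a symmetric matrix and no linearization is needed. Since $H$ is real symmetric, $\norm{\calG(z)}_{\op}\le (\Im z)^{-1}<\eta^{-1}$ deterministically for every $z\in\Omega_\eta$. This removes the need for the overwhelming-probability operator norm event and the McShane extension used in the CCA case.

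\textbf{Step 1 (Lipschitz bound in the disorder).} Write $U=(G_1+G_1^\intercal)/\sqrt{2n}$ and $V=(G_2+G_2^\intercal)/\sqrt{2n}$ with $G_1,G_2$ having i.i.d.\ $\calN(0,1)$ entries, as in the proof of Theorem~\ref{thm:WignerGordonEdge}. Fix unit vectors $x,y$ and set $F(z)=x^\intercal\calG(z)y$. By \eqref{eq:derivativeInverse},
\[
\partial_{(G_1)_{ij}}F=-\tfrac{\alpha}{\sqrt{2n}}\,x^\intercal\calG\,\big(P\otimes(E_{ij}+E_{ji})\big)\calG y .
\]
Now
\[
\sum_{ij}\big|u^\intercal(P\otimes E_{ij})w\big|^2=\norm{(p^\intercal\otimes I_n)u}^2\,\norm{(p^\intercal\otimes I_n)w}^2\le\norm{u}^2\norm{w}^2,
\]
using $\norm{p}=1$. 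Taking $u=\calG x$ and $w=\calG y$, and treating $G_2$ and $Q$ in the same way, gives
\[
\norm{\grad_{G_1,G_2}F}^2\le C(\alpha^2+\beta^2)\,n^{-1}\eta^{-4}.
\]
So the real and imaginary parts of $F(z)$ are $O(n^{-1/2}\eta^{-2})$-Lipschitz functions of a standard Gaussian vector.

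\textbf{Step 2 (pointwise concentration).} Lemma~\ref{lem:gaussianConLip} gives $\PP(|F(z)-\EE F(z)|\ge t)\le 4\exp(-c\eta^4 n t^2)$. Choosing $t=n^{-1/2+\varepsilon}$ makes this $\exp(-cn^{2\varepsilon})$, so the bound holds with overwhelming probability for each fixed $z$.

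\textbf{Step 3 (uniformity in $z$).} We have
\[
|\partial_z F|=|x^\intercal\calG(z)^2y|\le\eta^{-2}
\]
deterministically on $\Omega_\eta$, and likewise for $\EE F$. Take a $1/n$-net $\calS$ of the bounded region $\Omega_\eta$, with $|\calS|=O(n^2)$. A union bound over $\calS$ together with this Lipschitz continuity in $z$ extends the estimate to all of $\Omega_\eta$, at the cost of an additive $O(n^{-1})$ error.

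No step should be a serious obstacle. The only point needing care is the bookkeeping in Step 1: handling the symmetrization through $E_{ij}+E_{ji}$ (which at most doubles the constant) and checking that the Kronecker factors $P,Q$ do not inflate the gradient, which holds because $p,q$ are unit vectors. The hidden constant depends only on $\alpha,\beta,\eta$, as claimed.
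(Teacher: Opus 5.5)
Your proposal is correct and follows the paper's proof step by step: the deterministic bound $\|\calG(z)\|_{\op}\le\eta^{-1}$ yields a global Lipschitz bound in the disorder, Gaussian concentration is applied at each fixed $z$ with $t=n^{-1/2+\varepsilon}$, and uniformity comes from a $1/n$-net of $\Omega_\eta$ together with $|\partial_z F|\le\eta^{-2}$. The only difference is cosmetic: you differentiate with respect to the i.i.d.\ entries of $G_1,G_2$ (gradient $O(n^{-1/2})$, unit variance), whereas the paper uses the upper-triangular entries of $U,V$ (gradient $O(1)$, variance $\Theta(n^{-1})$). The two are equivalent, and your parametrization has the small advantage of avoiding the unequal diagonal and off-diagonal variances.
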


We begin with the deterministic resolvent bounds that hold throughout the spectral domain.

\begin{lemma}\label{lem:GNOpNormWigner}
For every $\eta>0$, we have that 
\[
\sup_{z\in\Omega_\eta}\norm{\calG(z)}_{\op}\le \eta^{-1},
\qquad
\sup_{z\in\Omega_\eta}\norm{\pp{\calG(z)}{z}}_{\op}\le \eta^{-2}.
\]
\end{lemma}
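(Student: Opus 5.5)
The plan is to use two standard facts about resolvents of a real symmetric matrix. Since $H$ is real symmetric, diagonalize it as $H=O\Lambda O^\intercal$ with $O$ orthogonal and $\Lambda$ real diagonal. Then
\[
\calG(z)=O(\Lambda-zI_{2n})^{-1}O^\intercal .
\]
So $\calG(z)$ is a normal matrix whose eigenvalues are $(\lambda_i-z)^{-1}$, with $\lambda_i\in\RR$ the eigenvalues of $H$ (equivalently of $W$, since the two are similar). Its operator norm is therefore $\max_i|\lambda_i-z|^{-1}$. For every $i$ we have $|\lambda_i-z|\ge \Im z$, so on $\Omega_\eta$, where $\Im z>\eta$, this gives $\norm{\calG(z)}_{\op}\le (\Im z)^{-1}<\eta^{-1}$. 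The bound is deterministic: it holds for every realization of $U,V$, and the condition $|z|<\eta^{-1}$ plays no role.

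For the derivative, we differentiate the identity $\calG(z)(H-zI_{2n})=I_{2n}$ in $z$, or equivalently apply \eqref{eq:derivativeInverse} with $\partial_z(H-zI_{2n})=-I_{2n}$. This gives
\[
\pp{\calG(z)}{z}=\calG(z)^2 .
\]
Submultiplicativity of the operator norm then yields $\norm{\partial_z\calG(z)}_{\op}\le\norm{\calG(z)}_{\op}^2\le\eta^{-2}$ on $\Omega_\eta$. Alternatively, $\calG(z)^2$ is again diagonalized by $O$, with eigenvalues $(\lambda_i-z)^{-2}$, which gives the same bound directly.

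None of these steps is a real obstacle. The one point to check is that the resolvent is being taken of the \emph{symmetrized} matrix $H$ rather than of $W$. For the non-symmetric $W$, the inverse $(W-zI)^{-1}$ would have operator norm controlled only up to the condition number of $A^{1/2}\otimes I_n$. For $H$, which is real symmetric by \eqref{eq:wigner-symmetrized-null}, the spectral theorem gives the clean bounds $\eta^{-1}$ and $\eta^{-2}$ with no constants depending on $\alpha,\beta$ or $\kappa$.
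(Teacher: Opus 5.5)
Your proof is correct and follows the paper's argument: the spectral theorem for the real symmetric $H$ gives $\norm{\calG(z)}_{\op}\le(\Im z)^{-1}\le\eta^{-1}$ on $\Omega_\eta$. The identity $\partial_z\calG(z)=\calG(z)^2$ together with submultiplicativity of the operator norm then yields the $\eta^{-2}$ bound.
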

\begin{proof}
Since $H$ is real symmetric, the spectral theorem gives
\[
\norm{\calG(z)}_{\op}=\norm{(H-zI_{2n})^{-1}}_{\op}\le (\Im z)^{-1}
\]
for every $z\in\CC_+$. On $\Omega_\eta$ we have $\Im z\ge \eta$, proving the first bound. The second follows from \eqref{eq:derivativeInverse}, since
\[
\pp{}{z}\calG(z)=\calG(z)^2,
\qquad
\norm{\calG(z)^2}_{\op}\le \norm{\calG(z)}_{\op}^2\le \eta^{-2}.
\]
\end{proof}

Next we show that the quadratic form of the resolvent is Lipschitz in the Gaussian disorder.

\begin{lemma}\label{lem:lipWigner}
Fix $x,y\in\CC^{2n}$ with $\norm{x}=\norm{y}=1$ and $z\in\CC_+$. Then the gradient of the smooth map $(U,V) \mapsto x^\intercal \calG(z)y$ satisfies 
\[
\norm{\grad_{U,V}F}^2\le 4(\alpha^2+\beta^2)\norm{\calG(z)}_{\op}^4.
\]
In particular by Lemma~\ref{lem:GNOpNormWigner}, for $z\in\Omega_\eta$, the map $(U,V)\mapsto x^\intercal\calG(z)y$ is globally $O_\eta(1)$-Lipschitz.
\end{lemma}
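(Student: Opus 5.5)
We compute the partial derivatives of $F := x^\intercal\calG(z)y$ with respect to the disorder directly, exactly as in Lemma~\ref{lem:lipCCA}, and then sum the squares. Write $H = -\alpha^2 P\otimes I_n - \beta^2 Q\otimes I_n + \alpha P\otimes U + \beta Q\otimes V$ as in \eqref{eq:wigner-symmetrized-null}. The entries $U_{ij}$ with $i\le j$ are the independent Gaussian coordinates; by symmetry, $U_{ij}$ enters both positions $(i,j)$ and $(j,i)$. Therefore
\[
\pp{H}{U_{ij}} = \alpha\, P\otimes (E_{ij}+E_{ji}) \quad (i\neq j), \qquad \pp{H}{U_{ii}} = \alpha\, P\otimes E_{ii},
\]
and the same holds with $Q$ and $\beta$ for $V$.

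By \eqref{eq:derivativeInverse}, $\partial F/\partial U_{ij} = -x^\intercal \calG\, (\partial H/\partial U_{ij})\, \calG y$. Set $a := \calG^\intercal x = \calG x$ and $b := \calG y$; $\calG$ is complex symmetric since $H$ is real symmetric. Write $a = (a^{(1)},a^{(2)})$ in $n$-blocks and define $\tilde a_i := p_1 a^{(1)}_i + p_2 a^{(2)}_i$, i.e.\ $\tilde a = (p^\intercal\otimes I_n)a$, and similarly $\tilde b$. Since $P = pp^\intercal$,
\[
\pp{F}{U_{ij}} = -\alpha\bigl(\tilde a_i \tilde b_j + \tilde a_j \tilde b_i\bigr) \quad (i\neq j), \qquad \pp{F}{U_{ii}} = -\alpha\, \tilde a_i\tilde b_i .
\]

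Using $|u+v|^2\le 2|u|^2+2|v|^2$ and summing over $i\le j$ gives
\[
\sum_{i\le j}\abs{\pp{F}{U_{ij}}}^2 \le 2\alpha^2\sum_{i,j}|\tilde a_i|^2|\tilde b_j|^2 = 2\alpha^2\norm{\tilde a}^2\norm{\tilde b}^2 .
\]
Here each unordered pair with $i\ne j$ produces the two ordered terms, and the diagonal terms are trivially covered. Since $\norm{p}=1$, the map $p^\intercal\otimes I_n$ has operator norm $1$, so $\norm{\tilde a}\le\norm{a}\le\norm{\calG}_{\op}$, and likewise $\norm{\tilde b}\le\norm{\calG}_{\op}$. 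The same computation with $q$, $\norm{q}=1$, bounds the $V$ part by $2\beta^2\norm{\calG}_{\op}^4$. Adding the two parts gives $\norm{\grad_{U,V}F}^2 \le 2(\alpha^2+\beta^2)\norm{\calG}_{\op}^4$, which is within the claimed bound $4(\alpha^2+\beta^2)\norm{\calG(z)}_{\op}^4$.

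For the Lipschitz statement, Lemma~\ref{lem:GNOpNormWigner} gives the deterministic bound $\norm{\calG(z)}_{\op}\le\eta^{-1}$ on $\Omega_\eta$, uniformly in $(U,V)$. The gradient is therefore uniformly bounded by $O(\eta^{-2})$, and $F$ is globally $O_\eta(1)$-Lipschitz by the mean value theorem applied to the real and imaginary parts separately.

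The only point requiring care is the bookkeeping of the GOE symmetry constraint: the independent coordinates are $i\le j$, and each off-diagonal coordinate perturbs two entries. This is handled by the factor-$2$ Cauchy--Schwarz step above. Everything else is routine.
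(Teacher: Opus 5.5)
Your proposal is correct and is essentially the paper's proof. You use the same derivative $\partial H/\partial U_{ij}=\alpha P\otimes(E_{ij}+E_{ji})$, the same reduction to the projected vectors $(p^\intercal\otimes I_n)\calG x$ and $(p^\intercal\otimes I_n)\calG y$, and the same use of $\norm{p}=\norm{q}=1$.

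The only difference is minor. Your termwise inequality $|u+v|^2\le 2|u|^2+2|v|^2$ gives the constant $2(\alpha^2+\beta^2)$. The paper gets $4(\alpha^2+\beta^2)$ because it bounds the sum by $\alpha^2\norm{\tilde a\tilde b^\intercal+\tilde b\tilde a^\intercal}_F^2\le 4\alpha^2\norm{\tilde a}^2\norm{\tilde b}^2$. Your constant is slightly sharper and implies the stated bound.
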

\begin{proof}
Fix $z\in\CC_+$ and write $\xi:=\calG(z)x$ and $\zeta:=\calG(z)y$. Partition these vectors according to the $2\times2$ block structure of $H$. That is we write $\xi=(\xi^{(1)},\xi^{(2)}) \in \CC^n \times \CC^n$ and similarly $\zeta=(\zeta^{(1)},\zeta^{(2)}) \in \CC^n \times \CC^n$. Define
\[
a:=(p^\intercal\otimes I_n)\xi=p_1\xi^{(1)}+p_2\xi^{(2)},
\qquad
b:=(p^\intercal\otimes I_n)\zeta=p_1\zeta^{(1)}+p_2\zeta^{(2)},
\]
For $1\le i<j\le n$, let $S_{ij}:=e_ie_j^\intercal+e_je_i^\intercal$, and for $1\le i\le n$, let $S_{ii}:=e_ie_i^\intercal$. Since
\[
H=-\alpha^2P\otimes I_n-\beta^2Q\otimes I_n+\alpha P\otimes U+\beta Q\otimes V,
\]
differentiating with respect to $U_{ij}$ for $i \leq j$  gives $\partial H/\partial U_{ij} = \alpha P \otimes S_{ij}$. Therefore, by \eqref{eq:derivativeInverse}, we can write 
\[
\pp{F}{U_{ij}}
=-x^\intercal\calG(z)\pp{H}{U_{ij}}\calG(z)y
=-\alpha\,a^\intercal S_{ij}b.
\]
Therefore, we can write 
\[
\pp{F}{U_{ij}}=-\alpha(a_i b_j+a_j b_i) \quad \text{ for } i < j \quad \text{ and }\quad  \pp{F}{U_{ii}}=-\alpha a_i b_i \quad \text{ for } i = j.
\]
Squaring and summing up over the indices $i$ and $j$, we get that
\begin{equation*}
\sum_{1\le i\le j\le n}\abs{\pp{F}{U_{ij}}}^2
\le \alpha^2\norm{ab^\intercal+ba^\intercal}_F^2
\le 4\alpha^2\norm{a}^2\norm{b}^2,
\end{equation*}
Since $\norm{p}=\norm{q}=1$, by Cauchy-Schwarz $\norm{a}\le \norm{\xi}, \norm{b} \le \norm{\zeta}$. Hence, if we perform the same computation for $\partial F/ \partial V$ we conclude that 
\[
\norm{\grad_{U,V}F}^2
\le 4(\alpha^2+\beta^2)\norm{\xi}^2\norm{\zeta}^2
\le 4(\alpha^2+\beta^2)\norm{\calG(z)}_{\op}^4,
\]
as claimed.
\end{proof}

We can now prove the concentration proposition.

\begin{proof}[Proof of Proposition~\ref{prop:Wigner-concentration}]
Fix unit vectors $x,y\in\RR^{2n}$ and define
\[
F(z):=x^\intercal\calG(z)y,
\qquad z\in\Omega_\eta.
\]
Throughout the proof, $C$ denotes a positive constant depending only on $\alpha,\beta$ and $\eta$ and it may change from line to line. By Lemma~\ref{lem:lipWigner}, $\norm{\grad_{U,V}F(z)}
\le C\norm{\calG(z)}_{\op}^2$. On the other hand, by Lemma~\ref{lem:GNOpNormWigner}, $\norm{\calG(z)}_{\op}\le \eta^{-1}$ for every $z\in\Omega_\eta$, deterministically. Therefore, both the real and imaginary parts of $F(z)$ are globally $C$-Lipschitz. Applying Lemma~\ref{lem:gaussianConLip} to $\Re F(z)$ and $\Im F(z)$ separately and combining the two tail bounds, we obtain that for every fixed $z\in\Omega_\eta$,
\[
\PP\paren{\abs{F(z)-\EE F(z)}\ge t}
\le 4\exp\paren{-c n t^2}
\]
for some constant $C>0$. Taking $t=n^{-1/2+\varepsilon}$, we conclude that
\begin{equation}\label{eq:FBoundWigner}
\abs{F(z)-\EE F(z)}\le O(n^{-1/2+\varepsilon})
\end{equation}
with overwhelming probability, for every fixed $z\in\Omega_\eta$. It remains to make the estimate uniform in $z$. By Lemma~\ref{lem:GNOpNormWigner} and \eqref{eq:derivativeInverse},
\[
\pp{}{z}F(z)=x^\intercal\calG(z)^2y,
\qquad
\sup_{z\in\Omega_\eta}\abs{\pp{F(z)}{z}}\le \eta^{-2}.
\]
The same bound holds for $\pp{}{z}\EE F(z)$ by taking expectations. Let $\cS$ be an $1/n$-net of $\Omega_\eta$ with cardinality $|\cS|=O_\eta(n^2)$. Applying \eqref{eq:FBoundWigner} to each point of $\cS$ and taking a union bound, we obtain that with overwhelming probability,
\[
\sup_{z\in\cS}\abs{F(z)-\EE F(z)}\le O(n^{-1/2+\varepsilon}).
\]
If $z\in\Omega_\eta$, choose $w\in\cS$ with $|z-w|\le n^{-1}$. Then
\[
\abs{F(z)-F(w)}+\abs{\EE F(z)-\EE F(w)}\le 2\eta^{-2}n^{-1}=o(n^{-1/2+\varepsilon}).
\]
Hence the same bound holds uniformly over $z\in\Omega_\eta$, proving the proposition.
\end{proof}

\subsubsection{Derivations of approximate equations}

In this subsection we identify the structure of $\EE\calG(z)$ and derive the finite size system satisfied by its coefficients. 

\begin{prop}\label{prop:approxSelfEqWigner}
For every $z\in\CC_+$, the expectation $\EE\calG(z)$ has the form
\begin{equation}\label{eq:WignerExpectedResolventForm}
\EE\calG(z)=T_n(z)\otimes I_n,
\end{equation}
where $T_n(z)\in\CC^{2\times 2}$ is symmetric matrix. Define $r_n(z):=p^\intercal T_n(z)p$ and $s_n(z):=q^\intercal T_n(z)q$. Then for every fixed $\eta,\varepsilon>0$, uniformly for $z\in\Omega_\eta$,
\begin{equation}\label{eq:WignerApproxSelfEq}
\Bigl(-zI_2-\alpha^2P-\beta^2Q-\alpha^2 r_n(z)P-\beta^2 s_n(z)Q\Bigr)T_n(z)
=I_2+O(n^{-1+\varepsilon})\cdot 11^\intercal.
\end{equation}
The hidden constants depend only on $\alpha,\beta,\eta$, and $\varepsilon$.
\end{prop}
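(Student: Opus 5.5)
The plan mirrors the CCA case (Lemma~\ref{lem:exptGNCCA}, Lemma~\ref{lem:coeffIdentitiesCCA}, Lemma~\ref{lem:hard-traces-into-easy-ones}), adapted to the symmetric $2n\times 2n$ resolvent of $H$.

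\textbf{Block structure.} The structural form \eqref{eq:WignerExpectedResolventForm} follows from invariance. The pair $(U,V)$ of independent $\mathrm{GOE}(1/n)$ matrices is invariant in law under simultaneous conjugation $U\mapsto OUO^\intercal$, $V\mapsto OVO^\intercal$ with $O\in O(n)$. Hence $H\overset{d}{=}(I_2\otimes O)H(I_2\otimes O)^\intercal$, and so
\[
\EE\calG(z)=(I_2\otimes O)\,\EE\calG(z)\,(I_2\otimes O)^\intercal
\]
for all $O$. Each $n\times n$ block of $\EE\calG$ therefore commutes with all orthogonal matrices, so it is a multiple of $I_n$. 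This gives $\EE\calG=T_n\otimes I_n$, and $T_n$ is symmetric because $\calG(z)^\intercal=\calG(z)$.

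\textbf{Gaussian integration by parts.} Start from the identity
\[
(-zI_{2n}-\alpha^2P\otimes I_n-\beta^2Q\otimes I_n)\calG+\alpha(P\otimes U)\calG+\beta(Q\otimes V)\calG=I_{2n},
\]
take the partial trace over the $n$-dimensional factor, divide by $n$, and take expectations. The only nontrivial terms are $\frac1n\EE\,\mathrm{tr}_n[(P\otimes U)\calG]$ and the analogous $V$ term. Write $U=\sum_{i\le j}U_{ij}S_{ij}$ with variances $1/n$ off the diagonal and $2/n$ on it, and apply Lemma~\ref{lem:GaussianIBP} together with $\partial\calG/\partial U_{ij}=-\alpha\,\calG(P\otimes S_{ij})\calG$. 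The GOE covariance structure $\EE U_{ab}U_{cd}=\frac1n(\delta_{ac}\delta_{bd}+\delta_{ad}\delta_{bc})$ yields two kinds of terms, written in $2\times2$ block-trace notation $\mathrm{tr}_n$:
\[
\frac1n\EE\,\mathrm{tr}_n[(P\otimes U)\calG]=-\frac{\alpha}{n^2}\EE\Big[P\,\mathrm{tr}_n(\calG)\,P\,\mathrm{tr}_n(\calG)\Big]-\frac{\alpha}{n^2}\EE\Big[\text{(trace of products: } P\,\calG^{\intercal\text{-blocks}} P\,\calG)\Big].
\]
Up to bookkeeping of block indices, the first term is $-\alpha\,P\,\EE[\tfrac1n\mathrm{tr}_n\calG]\,P\,\tfrac1n\mathrm{tr}_n\calG$. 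Since $P\,X\,P=(p^\intercal Xp)P$, it becomes $-\alpha\,\EE[(p^\intercal \tfrac1n\mathrm{tr}_n\calG\, p)\,P\,\tfrac1n\mathrm{tr}_n\calG]$. Multiplied by $\alpha$, this produces $-\alpha^2 r_nPT_n$ once the expectation is factorized.

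\textbf{Error control.} Two error terms have to be shown to be of size $O(n^{-1+\varepsilon})$.
\begin{enumerate}
\item The trace-of-products term is bounded by $\frac1{n^2}\cdot n\norm{\calG}_{\op}^2=O(\eta^{-2}/n)$, using Lemma~\ref{lem:GNOpNormWigner}.
\item The covariance error from factorizing $\EE[XY]$, where $X=p^\intercal\frac1n\mathrm{tr}_n\calG\,p$ and $Y$ is an entry of $\frac1n\mathrm{tr}_n\calG$. Proposition~\ref{prop:Wigner-concentration} applied to basis vectors, with a union bound, gives $|X-\EE X|,|Y-\EE Y|\le O(n^{-1/2+\varepsilon})$ with overwhelming probability. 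Combined with the deterministic bound $\norm{\calG}_{\op}\le\eta^{-1}$, this gives variances $O(n^{-1+2\varepsilon})$, and Cauchy--Schwarz bounds the covariance. Here the argument is easier than in the CCA case, since everything is deterministically bounded.
\end{enumerate}

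The diagonal-variance correction (variance $2/n$ on the diagonal) only changes the trace-of-products term, so it is also $O(1/n)$. Collecting terms, the $P$ and $Q$ contributions combine into $(-zI_2-\alpha^2P-\beta^2Q-\alpha^2r_nP-\beta^2s_nQ)T_n=I_2+O(n^{-1+\varepsilon})$ entrywise. Uniformity over $\Omega_\eta$ holds because all constants depend only on $\eta$.

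\textbf{Main obstacle.} The step I expect to require the most care is the index bookkeeping in the integration by parts. One has to verify that the symmetric-GOE structure produces exactly $P\,T_n\,P\,T_n$ with the correct ordering of factors, and not a transposed version. This is where the symmetry of $T_n$ and of $P$ is used. Getting left versus right multiplication consistent with \eqref{eq:WignerApproxSelfEq} also matters. The first thing to try is to write everything via $\calG=\sum_{a,b}E_{ab}\otimes\calG^{(ab)}$ and compute a single block explicitly.
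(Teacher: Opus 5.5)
Your proposal is correct and follows essentially the same route as the paper. Both use orthogonal invariance under $I_2\otimes O$ for the block form, then Gaussian integration by parts on $(H-zI_{2n})\calG=I_{2n}$, which produces a product-of-traces term plus an $O(1/n)$ trace-of-products term. The product-of-traces term is then factorized via Proposition~\ref{prop:Wigner-concentration} and the deterministic bound $\norm{\calG(z)}_{\op}\le\eta^{-1}$.

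The ordering issue you flag is handled in the paper by working with $A_p=(p^\intercal\otimes I_n)\calG(p\otimes I_n)$ and $B_{pb}=(p^\intercal\otimes I_n)\calG(e_b\otimes I_n)$, together with $P\otimes S_{ij}=(p\otimes I_n)S_{ij}(p^\intercal\otimes I_n)$. This gives $p_a\, r_n\,(p^\intercal T_n e_b)=r_n(PT_n)_{ab}$ directly, confirming your $-\alpha^2 r_n P T_n$.
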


\begin{lemma}\label{lem:exptGNWigner}
For every $z\in\CC_+$, the expectation $\EE\calG(z)$ has the form \eqref{eq:WignerExpectedResolventForm}, where $T_n(z)$ is symmetric.
\end{lemma}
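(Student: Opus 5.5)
We will establish Lemma~\ref{lem:exptGNWigner} by an orthogonal invariance argument, mirroring the proof of Lemma~\ref{lem:exptGNCCA}. Let $O\in\RR^{n\times n}$ be any deterministic orthogonal matrix. Since $U,V\sim\mathrm{GOE}(1/n)$ are independent, the pair $(OUO^\intercal, OVO^\intercal)$ has the same law as $(U,V)$. Recall from \eqref{eq:wigner-symmetrized-null} that
\[
H=-\alpha^2P\otimes I_n-\beta^2Q\otimes I_n+\alpha P\otimes U+\beta Q\otimes V .
\]
Conjugating by $I_2\otimes O$ fixes the deterministic terms $P\otimes I_n$ and $Q\otimes I_n$, and it replaces $U,V$ by $OUO^\intercal,OVO^\intercal$. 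Hence $(I_2\otimes O)H(I_2\otimes O)^\intercal\overset{d}{=}H$, and consequently $(I_2\otimes O)\calG(z)(I_2\otimes O)^\intercal\overset{d}{=}\calG(z)$ for every $z\in\CC_+$. The expectation $\EE\calG(z)$ exists because $\norm{\calG(z)}_{\op}\le(\Im z)^{-1}$ deterministically. Taking expectations therefore shows that every $n\times n$ block $B_{ij}:=(\EE\calG(z))^{(ij)}$ satisfies $OB_{ij}O^\intercal=B_{ij}$ for all orthogonal $O$.

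Next we show that any complex $n\times n$ matrix $B$ commuting with the whole orthogonal group is a scalar multiple of the identity. Taking $O$ to be a diagonal sign matrix shows that $B$ is diagonal. Taking $O$ to be a permutation matrix then shows that all diagonal entries are equal. Thus $B_{ij}=t_{ij,n}(z)I_n$, which gives $\EE\calG(z)=T_n(z)\otimes I_n$ with $T_n(z)=(t_{ij,n}(z))$.

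Finally, we check symmetry. The matrix $H$ is real symmetric, so $\calG(z)^\intercal=(H^\intercal-zI)^{-1}=\calG(z)$; note that this is the plain transpose, not the adjoint. Hence $\EE\calG(z)$ is symmetric, and comparing the blocks $(12)$ and $(21)$ gives $t_{12,n}=t_{21,n}$, so $T_n(z)$ is symmetric.

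No step here is a genuine obstacle. The only point needing care is the transpose-versus-adjoint distinction in the symmetry claim, since $z$ is complex.
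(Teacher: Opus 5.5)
Your proposal is correct and follows the same orthogonal-invariance argument as the paper. Conjugation by $I_2\otimes O$ preserves the law of $H$, hence of $\calG(z)$, so each $n\times n$ block of $\EE\calG(z)$ commutes with every $O\in O(n)$ and is a scalar multiple of $I_n$, and symmetry of $T_n(z)$ follows from $\calG(z)^\intercal=\calG(z)$. You also spell out two steps the paper leaves implicit: existence of the expectation via $\norm{\calG(z)}_{\op}\le(\Im z)^{-1}$, and the sign-matrix/permutation-matrix argument that the commutant of the orthogonal group consists of scalar multiples of $I_n$.
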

\begin{proof}
Let $O\in O(n)$ and set $\calO:= I_2\otimes O$. Since $U$ and $V$ are independent GOE matrices, we have $(O^\intercal U O,\,O^\intercal V O)\overset{d}{=}(U,V).$ Recalling \eqref{eq:wigner-symmetrized-null}, it follows that distribution of $H$ is invariant under $\calO$, that is $\calO^\intercal H \calO\overset{d}{=}H.$ Taking inverse, we see
\[
\calO^\intercal\calG(z)\calO\overset{d}{=}\calG(z).
\]
Therefore $\calO \EE \calG(z) \calO = \EE \calG(z)$. Since $\EE \calG(z)$ is deterministic, we must have that 
\[
\EE\calG(z)=T_n(z)\otimes I_n
\]
for some $T_n(z)\in\CC^{2\times 2}$. Finally, since $H$ is symmetric, so is $\calG(z)$, and therefore $T_n(z)$ is symmetric as well.
\end{proof}

We next derive equations for the coefficients of $T_n(z)$ from the block identity
\begin{equation}\label{eq:WignerHGidentity}
(H-zI_{2n})\calG(z)=I_{2n}.
\end{equation}
For indices $b\in\{1,2\}$ and standard basis vector $e_b \in \CC^2$ we set 
\begin{align*}
A_p(z)&:=(p^\intercal\otimes I_n)\calG(z)(p\otimes I_n),
&
B_{pb}(z)&:=(p^\intercal\otimes I_n)\calG(z)(e_b\otimes I_n),\\
A_q(z)&:=(q^\intercal\otimes I_n)\calG(z)(q\otimes I_n),
&
B_{qb}(z)&:=(q^\intercal\otimes I_n)\calG(z)(e_b\otimes I_n).
\end{align*}
Then $A_p(z)$ and $A_q(z)$ are symmetric, because $\calG(z)^\intercal=\calG(z)$.

\begin{lemma}\label{lem:coeffIdentitiesWigner}
For every $b\in\{1,2\}$ and $z\in\CC_+$, we have 
\begin{align}
\frac1n\EE\Tr\bigl(U B_{pb}(z)\bigr)
&=-\alpha\,\EE\Bigl[\frac1n\Tr A_p(z)\cdot \frac1n\Tr B_{pb}(z)\Bigr]
-\frac{\alpha}{n^2}\EE\Tr\bigl(A_p(z)B_{pb}(z)\bigr),
\label{eq:WignerCoeffIdentityU}
\\
\frac1n\EE\Tr\bigl(V B_{qb}(z)\bigr)
&=-\beta\,\EE\Bigl[\frac1n\Tr A_q(z)\cdot \frac1n\Tr B_{qb}(z)\Bigr]
-\frac{\beta}{n^2}\EE\Tr\bigl(A_q(z)B_{qb}(z)\bigr).
\label{eq:WignerCoeffIdentityV}
\end{align}
\end{lemma}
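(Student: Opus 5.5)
The plan is a direct Gaussian integration by parts (Lemma~\ref{lem:GaussianIBP}) over the independent entries $\{U_{ij}\}_{i\le j}$, in the same spirit as Lemma~\ref{lem:coeffIdentitiesCCA}. The only new feature is the GOE variance profile: off-diagonal entries have variance $1/n$ and diagonal entries have variance $2/n$. I will prove \eqref{eq:WignerCoeffIdentityU}; \eqref{eq:WignerCoeffIdentityV} follows verbatim after replacing $(U,\alpha,p,P)$ by $(V,\beta,q,Q)$. Throughout, write $A:=A_p(z)$ and $B:=B_{pb}(z)$.

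\emph{Step 1 (derivative of $B$).} As computed in the proof of Lemma~\ref{lem:lipWigner}, $\partial H/\partial U_{ij}=\alpha P\otimes S_{ij}$. By \eqref{eq:derivativeInverse}, together with $P=pp^\intercal$ and $(p^\intercal\otimes I_n)(P\otimes S_{ij})=p^\intercal\otimes S_{ij}$, we get
\[
\pp{B}{U_{ij}}=-\alpha\,(p^\intercal\otimes I_n)\calG(p\otimes I_n)\,S_{ij}\,(p^\intercal\otimes I_n)\calG(e_b\otimes I_n)=-\alpha\,A S_{ij} B .
\]

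\emph{Step 2 (integration by parts entrywise).} Since $U$ is symmetric, $\Tr(UB)=\sum_i U_{ii}B_{ii}+\sum_{i<j}U_{ij}(B_{ij}+B_{ji})$. For $i<j$, Lemma~\ref{lem:GaussianIBP} and Step 1 give
\[
\EE[U_{ij}(B_{ij}+B_{ji})]=-\tfrac{\alpha}{n}\,\EE\bigl[A_{ii}B_{jj}+A_{ij}B_{ij}+A_{jj}B_{ii}+A_{ji}B_{ji}\bigr],
\]
using $(AS_{ij}B)_{ij}=A_{ii}B_{jj}+A_{ij}B_{ij}$ and $(AS_{ij}B)_{ji}=A_{jj}B_{ii}+A_{ji}B_{ji}$. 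For the diagonal entries, $S_{ii}=e_ie_i^\intercal$ and the variance is $2/n$, so
\[
\EE[U_{ii}B_{ii}]=-\tfrac{2\alpha}{n}\,\EE[A_{ii}B_{ii}].
\]
The integrability hypothesis of Lemma~\ref{lem:GaussianIBP} holds because $\norm{\calG}_{\op}\le(\Im z)^{-1}$ deterministically (Lemma~\ref{lem:GNOpNormWigner}).

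\emph{Step 3 (resummation).} This step needs the most care, since the factor $2$ on the diagonal must be distributed correctly. Splitting $2A_{ii}B_{ii}=A_{ii}B_{ii}+A_{ii}B_{ii}$, the first copies together with the terms $A_{ii}B_{jj}+A_{jj}B_{ii}$ for $i<j$ form $\sum_{i,j}A_{ii}B_{jj}=\Tr A\,\Tr B$. The second copies together with $A_{ij}B_{ij}+A_{ji}B_{ji}$ for $i<j$ form $\sum_{i,j}A_{ij}B_{ij}=\Tr(A^\intercal B)=\Tr(AB)$, where the last equality uses that $A$ is symmetric. Hence
\[
\EE\Tr(UB)=-\tfrac{\alpha}{n}\bigl(\EE[\Tr A\,\Tr B]+\EE\Tr(AB)\bigr).
\]
Dividing by $n$ yields exactly \eqref{eq:WignerCoeffIdentityU}, and repeating the argument for $V$ gives \eqref{eq:WignerCoeffIdentityV}.
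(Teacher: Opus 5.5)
Your proof is correct and is essentially the paper's argument: entrywise Gaussian integration by parts using $\partial B_{pb}/\partial U_{ij}=-\alpha A_pS_{ij}B_{pb}$. The diagonal variance $2/n$ is absorbed exactly as in the paper's identity $\sum_{i<j}\Tr(S_{ij}A_pS_{ij}B_{pb})+2\sum_i\Tr(S_{ii}A_pS_{ii}B_{pb})=\Tr A_p\,\Tr B_{pb}+\Tr(A_pB_{pb})$; the paper parametrizes by standard Gaussians $g_{ij}$ rather than by $U_{ij}$ directly, and your Step 3 writes out the resummation the paper leaves implicit. One cosmetic point: in Step 1, the identity you quote, $(p^\intercal\otimes I_n)(P\otimes S_{ij})=p^\intercal\otimes S_{ij}$, is true but does not apply, because $\calG$ sits between those two factors. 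What actually places $A_p$ and $B_{pb}$ on either side of $S_{ij}$ is the factorization $P\otimes S_{ij}=(p\otimes I_n)\,S_{ij}\,(p^\intercal\otimes I_n)$.
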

\begin{proof}
We prove only \eqref{eq:WignerCoeffIdentityU} as the proof of \eqref{eq:WignerCoeffIdentityV} is identical. For convenience, let $g_{ij} \sim \calN(0,1)$ be independent standard Gaussians for $1 \leq i \leq j \leq n$. Write  
\[
U=\frac1{\sqrt n}\sum_{1\le i<j\le n} g_{ij}S_{ij}
+\sqrt{\frac2n}\sum_{i=1}^n g_{ii}S_{ii},
\]
where $S_{ij}:=e_ie_j^\intercal+e_je_i^\intercal$ for $i<j$ and  $S_{ii}:=e_ie_i^\intercal$. Therefore
\[
\frac1n\Tr\bigl(U B_{pb}\bigr)
=\frac1{n^{3/2}}\sum_{1\le i<j\le n} g_{ij}\Tr\bigl(S_{ij}B_{pb}\bigr)
+\frac{\sqrt2}{n^{3/2}}\sum_{i=1}^n g_{ii}\Tr\bigl(S_{ii}B_{pb}\bigr).
\]
Applying Gaussian integration by parts Lemma~\ref{lem:GaussianIBP} to each $g_{ij}$ and using \eqref{eq:derivativeInverse}, we get
\begin{align}
\frac1n\EE\Tr\bigl(U B_{pb}\bigr)
&=
\frac1{n^{3/2}}\sum_{1\le i<j\le n}\EE\,\pp{}{g_{ij}}\Tr\bigl(S_{ij}B_{pb}\bigr)
+\frac{\sqrt2}{n^{3/2}}\sum_{i=1}^n\EE\,\pp{}{g_{ii}}\Tr\bigl(S_{ii}B_{pb}\bigr).
\label{eq:WignerCoeffIdentityU-start}
\end{align}
Now note that we can write 
\[
\pp{H}{g_{ij}}=\frac{\alpha}{\sqrt n}P\otimes S_{ij}
\quad \text{ for } i<j \quad \text{ and } \quad 
\pp{H}{g_{ii}}=\alpha\sqrt{\frac2n}\,P\otimes S_{ii},
\]
so we can conveniently write the derivative   
\[
\pp{B_{pb}}{g_{ij}}
=-(p^\intercal\otimes I_n)\calG\pp{H}{g_{ij}}\calG(e_b\otimes I_n)
=-\frac{\alpha}{\sqrt n}A_p S_{ij}B_{pb}
\quad \text{ for } i<j,
\]
and similarly for diagonal terms 
\[
\pp{B_{pb}}{g_{ii}}=-\alpha\sqrt{\frac2n}\,A_pS_{ii}B_{pb}.
\]
Substituting this into \eqref{eq:WignerCoeffIdentityU-start}, we obtain
\[
\frac1n\EE\Tr\bigl(U B_{pb}\bigr)
=-\frac{\alpha}{n^2}\EE\Bigg[
\sum_{1\le i<j\le n}\Tr\bigl(S_{ij}A_pS_{ij}B_{pb}\bigr)
+2\sum_{i=1}^n\Tr\bigl(S_{ii}A_pS_{ii}B_{pb}\bigr)
\Bigg].
\]
Since $A_p$ is symmetric, a direct coordinate expansion gives
\[
\sum_{1\le i<j\le n}\Tr\bigl(S_{ij}A_pS_{ij}B_{pb}\bigr)
+2\sum_{i=1}^n\Tr\bigl(S_{ii}A_pS_{ii}B_{pb}\bigr)
=
\Tr A_p\,\Tr B_{pb}+\Tr\bigl(A_pB_{pb}\bigr).
\]
This is exactly \eqref{eq:WignerCoeffIdentityU}.
\end{proof}

As before, the identities in Lemma~\ref{lem:coeffIdentitiesWigner} split into a product of traces term and a trace of products term. The second one is lower order, while the first factorizes by Proposition~\ref{prop:Wigner-concentration}. We formalize this in the next lemma.

\begin{lemma}\label{lem:hard-traces-into-easy-ones-Wigner}
Fix $\eta,\varepsilon>0$. Uniformly for $z\in\Omega_\eta$ and $b\in\{1,2\}$,
\begin{align}
\frac1n\EE\Tr\bigl(U B_{pb}(z)\bigr)
&=-\alpha\,r_n(z)\,\bigl(p^\intercal T_n(z)e_b\bigr)+O(n^{-1+\varepsilon}),
\label{eq:WignerTraceReductionU}
\\
\frac1n\EE\Tr\bigl(V B_{qb}(z)\bigr)
&=-\beta\,s_n(z)\,\bigl(q^\intercal T_n(z)e_b\bigr)+O(n^{-1+\varepsilon}).
\label{eq:WignerTraceReductionV}
\end{align}
\end{lemma}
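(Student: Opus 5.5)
We follow the same scheme as the proof of Lemma~\ref{lem:hard-traces-into-easy-ones}, and prove only \eqref{eq:WignerTraceReductionU}; \eqref{eq:WignerTraceReductionV} is identical with $(p,U,\alpha)$ replaced by $(q,V,\beta)$. The starting point is the identity \eqref{eq:WignerCoeffIdentityU} of Lemma~\ref{lem:coeffIdentitiesWigner}, which splits the left-hand side into a product-of-traces term and a trace-of-products term.

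\emph{The trace-of-products term.} We show it is negligible. Since $A_p=(p^\intercal\otimes I_n)\calG(p\otimes I_n)$ and $B_{pb}=(p^\intercal\otimes I_n)\calG(e_b\otimes I_n)$, and $\norm{p}=1$, both have operator norm at most $\norm{\calG(z)}_{\op}\le \eta^{-1}$ by Lemma~\ref{lem:GNOpNormWigner}. Hence
\[
\abs{\Tr(A_pB_{pb})}\le n\,\eta^{-2},
\]
so this term contributes $O(n^{-1})$ after the prefactor $\alpha/n^2$. The bound is deterministic and uniform on $\Omega_\eta$.

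\emph{The product-of-traces term.} Set $X:=\frac1n\Tr A_p$ and $Y:=\frac1n\Tr B_{pb}$. By Lemma~\ref{lem:exptGNWigner}, $\EE\calG=T_n\otimes I_n$. Therefore
\[
\EE X=p^\intercal T_np=r_n(z),\qquad \EE Y=p^\intercal T_ne_b .
\]
It remains to show $\operatorname{Cov}(X,Y)=O(n^{-1+2\varepsilon})$, and by Cauchy--Schwarz it suffices to bound each variance by $O(n^{-1+2\varepsilon})$.

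Write $\frac1n\Tr A_p=\frac1n\sum_i (p\otimes e_i)^\intercal\calG(p\otimes e_i)$, and similarly $\frac1n\Tr B_{pb}=\frac1n\sum_i (p\otimes e_i)^\intercal\calG(e_b\otimes e_i)$. Each summand is a quadratic form in unit vectors. Proposition~\ref{prop:Wigner-concentration}, together with a union bound over the $n$ indices, gives $\abs{X-\EE X}\le O(n^{-1/2+\varepsilon})$ with overwhelming probability, and likewise for $Y$. On the complementary event we use the deterministic bound $\abs{X},\abs{Y}\le\eta^{-1}$, which costs only $O(n^{-C})$ in the second moment. This yields
\[
\EE\abs{X-\EE X}^2=O(n^{-1+2\varepsilon}),\qquad \EE\abs{Y-\EE Y}^2=O(n^{-1+2\varepsilon}).
\]
Consequently $\EE[XY]=r_n(z)\,(p^\intercal T_ne_b)+O(n^{-1+2\varepsilon})$. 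Substituting this into \eqref{eq:WignerCoeffIdentityU} and renaming $2\varepsilon$ as $\varepsilon$ gives \eqref{eq:WignerTraceReductionU}.

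\emph{Uniformity in $z$ and $b$.} All constants above depend only on $\alpha,\beta,\eta$ and $\varepsilon$, and $b$ takes only two values. The estimate is stated in expectation, so no net argument over $z$ is needed.

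\emph{Main obstacle.} The only delicate point is applying Proposition~\ref{prop:Wigner-concentration} to the non-diagonal bilinear forms $(p\otimes e_i)^\intercal\calG(e_b\otimes e_i)$ rather than to $\calG$ entries directly. This is legitimate because the proposition covers arbitrary deterministic unit vectors $x,y$. Unlike the CCA case, $\norm{\calG}_{\op}$ is controlled deterministically here, so no Lipschitz-extension argument is required.
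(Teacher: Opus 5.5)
Your proposal is correct and follows essentially the same route as the paper. You split via \eqref{eq:WignerCoeffIdentityU}, bound the trace-of-products term deterministically by $O(1/n)$ (using operator norms where the paper uses Frobenius norms and Cauchy--Schwarz), and factorize the product of traces. For the latter you upgrade the overwhelming-probability estimate from Proposition~\ref{prop:Wigner-concentration} to an $L^2$ bound using the deterministic bound $\norm{\calG(z)}_{\op}\le\eta^{-1}$, then apply Cauchy--Schwarz to the covariance term.
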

\begin{proof}
We prove only \eqref{eq:WignerTraceReductionU} as the proof of \eqref{eq:WignerTraceReductionV} is identical. Set
\[
X_p(z):=\frac1n\Tr A_p(z),
\qquad
Y_{pb}(z):=\frac1n\Tr B_{pb}(z).
\]
Then in this language \eqref{eq:WignerCoeffIdentityU} becomes
\begin{equation}\label{eq:WignerReduction-start}
\frac1n\EE\Tr\bigl(U B_{pb}\bigr)
=-\alpha\,\EE\bigl[X_p(z)Y_{pb}(z)\bigr]
-\frac{\alpha}{n^2}\EE\Tr\bigl(A_p(z)B_{pb}(z)\bigr).
\end{equation}
We first bound the error term. Since  $\norm{p}=\norm{e_b}=1$, we have 
\[
\norm{A_p}_F\le \sqrt n\,\norm{\calG(z)}_{\op}
\quad \text{ and } \quad 
\norm{B_{pb}}_F\le \sqrt n\,\norm{\calG(z)}_{\op}.
\]
Therefore, by Cauchy-Schwarz and Lemma~\ref{lem:GNOpNormWigner}
\[
\frac1{n^2}\abs{\Tr\bigl(A_pB_{pb}\bigr)}
\le \frac{1}{n^2}\norm{A_p}_F\norm{B_{pb}}_F \le  \frac1n\norm{\calG(z)}_{\op}^2
\le \frac{1}{n\eta^2}
\]
Therefore the trace of products term is bounded by $O(1/n)$ uniformly on $\Omega_\eta$. Next we factorize the first term of \eqref{eq:WignerReduction-start}. By Lemma~\ref{lem:exptGNWigner},
\[
\EE X_p(z)=p^\intercal T_n(z)p=r_n(z) \quad  \text{ and } \quad 
\qquad
\EE Y_{pb}(z)=p^\intercal T_n(z)e_b.
\]
We claim that uniformly for $z\in\Omega_\eta$
\begin{equation}\label{eq:WignerTraceVarianceBounds}
\EE\abs{X_p(z)-\EE X_p(z)}^2
+
\EE\abs{Y_{pb}(z)-\EE Y_{pb}(z)}^2
\le O(n^{-1+\varepsilon})
\end{equation}
Indeed, by definition
\[
X_p(z)=\frac1n\sum_{i=1}^n (p\otimes e_i)^\intercal\calG(z)(p\otimes e_i),
\qquad
Y_{pb}(z)=\frac1n\sum_{i=1}^n (p\otimes e_i)^\intercal\calG(z)(e_b\otimes e_i).
\]
Applying Proposition~\ref{prop:Wigner-concentration} with $\varepsilon/2$ to the $O(n)$ pairs of unit vectors appearing here, and taking a union bound, we obtain that with overwhelming probability,
\[
\sup_{z\in\Omega_\eta}
\abs{X_p(z)-\EE X_p(z)}
+
\sup_{z\in\Omega_\eta}
\abs{Y_{pb}(z)-\EE Y_{pb}(z)}
\le O(n^{-1/2+\varepsilon/2}).
\]
Since $|X_p(z)|+|Y_{pb}(z)|\le 2\|{\calG(z)\|}_{\op}\le 2/\eta$ deterministically on $\Omega_\eta$, this overwhelming-probability estimate upgrades to the $L^2$ bound \eqref{eq:WignerTraceVarianceBounds}.
Therefore Cauchy-Schwarz gives that the correlation term
\[
\abs{\EE\Bigl[(X_p-\EE X_p)(Y_{pb}-\EE Y_{pb})\Bigr]}
\le O(n^{-1+\varepsilon}).
\]
In particular, we have that 
\[
\EE\bigl[X_p(z)Y_{pb}(z)\bigr]
=r_n(z)\,\bigl(p^\intercal T_n(z)e_b\bigr)+O(n^{-1+\varepsilon}).
\]
Combining this with above proves \eqref{eq:WignerTraceReductionU}.
\end{proof}

\begin{proof}[Proof of Proposition~\ref{prop:approxSelfEqWigner}]
The block form \eqref{eq:WignerExpectedResolventForm} is exactly Lemma~\ref{lem:exptGNWigner}, so it remains to derive the approximate equation. Fix $b\in\{1,2\}$ and let $a \in \set{1,2}$. Expanding the $(ab)$ block of the identity \eqref{eq:WignerHGidentity} gives 
\[
-z\calG^{(ab)}
-\alpha^2\sum_{c=1}^2 P_{ac}\calG^{(cb)}
-\beta^2\sum_{c=1}^2 Q_{ac}\calG^{(cb)}
+\alpha\sum_{c=1}^2 P_{ac}U\calG^{(cb)}
+\beta\sum_{c=1}^2 Q_{ac}V\calG^{(cb)}
=\delta_{ab}I_n.
\]
Taking expectations of normalized traces, and recalling that $\EE\calG=T_n\otimes I_n$, we obtain the identity 
\begin{align*}
    -z\,t_{ab,n}(z)
-\alpha^2(PT_n(z))_{ab}
&-\beta^2(QT_n(z))_{ab} \\
&+\alpha p_a\frac1n\EE\Tr\bigl(UB_{pb}(z)\bigr)
+\beta q_a\frac1n\EE\Tr\bigl(VB_{qb}(z)\bigr)
=\delta_{ab}.
\end{align*}
Applying \eqref{eq:WignerTraceReductionU} and \eqref{eq:WignerTraceReductionV} and stacking as a matrix, we find \eqref{eq:WignerApproxSelfEq}.
\end{proof}

\subsubsection{Stability of expectation}
We now pass from the finite size approximate system of Proposition~\ref{prop:approxSelfEqWigner} to the deterministic system
\eqref{eq:wigner-finite-system-scalars}--\eqref{eq:wigner-finite-system}. In particular, we will prove the following proposition.

\begin{prop}\label{prop:detStabilityWigner}
Let $\eta,\varepsilon>0$. Then the following holds.
\begin{enumerate}
\item For each $z\in\CC_+$, the system
\eqref{eq:wigner-finite-system-scalars}--\eqref{eq:wigner-finite-system} has a unique solution
\[
r(z),s(z)\in\CC_+,
\qquad
T(z)=\begin{pmatrix}t_{11}(z)&t_{12}(z)\\ t_{21}(z)&t_{22}(z)\end{pmatrix}.
\]
\item Let $(r_n(z),s_n(z),T_n(z))$ be the coefficients from Proposition~\ref{prop:approxSelfEqWigner}. Then
\[
\sup_{z\in\Omega_\eta}
\Bigl(
|r_n(z)-r(z)|+|s_n(z)-s(z)|+\|T_n(z)-T(z)\|_{\op}
\Bigr)
\le O(n^{-1+\varepsilon}).
\]
\end{enumerate}
Combined with Proposition~\ref{prop:Wigner-concentration}, this implies Theorem~\ref{thm:resolvConvWigner}.
\end{prop}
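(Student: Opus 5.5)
We follow the template of the CCA case (Lemma~\ref{lem:convergence-to-finite-system} and Proposition~\ref{prop:detStabilityCCA}), now for the Wigner system \eqref{eq:wigner-finite-system-scalars}--\eqref{eq:wigner-finite-system}. The key observation is that the matrix $B(r,s):=-zI_2-\alpha^2(1+r)P-\beta^2(1+s)Q$ is linear in $(r,s)$. Moreover, the scalar equations $r=p^\intercal T p$, $s=q^\intercal T q$ turn the system into a fixed-point problem $F_z(r,s)=(p^\intercal B(r,s)^{-1}p,\;q^\intercal B(r,s)^{-1}q)$ on $\CC_+^2$.

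\textbf{Imaginary-part identities.} We start from $T-T^*=T^*(B^*-B)T$ with
\[
B^*-B=2i\bigl(\Im z\,I_2+\alpha^2\Im r\,P+\beta^2\Im s\,Q\bigr),
\]
which gives
\[
\Im T=T^*\bigl(\Im z\,I+\alpha^2\Im r\,P+\beta^2\Im s\,Q\bigr)T.
\]
Sandwiching with $p$ and with $q$ yields
\begin{align*}
\Im r&=\Im z\,\|Tp\|^2+\alpha^2|p^\intercal Tp|^2\Im r+\beta^2|q^\intercal Tp|^2\Im s,\\
\Im s&=\Im z\,\|Tq\|^2+\alpha^2|p^\intercal Tq|^2\Im r+\beta^2|q^\intercal Tq|^2\Im s.
\end{align*}
These are exactly the analogues of \eqref{eq:Imq11}--\eqref{eq:Imu-identity}.

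\textbf{Uniqueness.} Suppose $(r,s)$ and $(\hat r,\hat s)$ are two solutions. Then $T-\hat T=T(\hat B-B)\hat T$, so
\[
r-\hat r=\alpha^2(p^\intercal Tp)(p^\intercal\hat Tp)(r-\hat r)+\beta^2(p^\intercal Tq)(q^\intercal\hat Tp)(s-\hat s),
\]
and similarly for $s$. This gives $d\le Ad$ entrywise, where $d=(|r-\hat r|,|s-\hat s|)$ and $A$ is a nonnegative $2\times2$ matrix. Set $x=(\sqrt{\Im r\Im\hat r},\sqrt{\Im s\Im\hat s})$. Cauchy--Schwarz together with the identities above gives $Ax<x$ entrywise, where the strict inequality comes from the $\Im z\|Tp\|^2>0$ term; this term is nonzero because $T$ is invertible. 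Hence $\rho(A)<1$ and $d=0$. One small point: we need $\Im r,\Im s>0$ for solutions in $\CC_+$, and this follows directly from the identities, since $\|Tp\|>0$.

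\textbf{Existence via the finite-$n$ coefficients.} Since $r_n=p^\intercal T_np=\frac1n\EE\Tr A_p$ is a quadratic form of a resolvent, it is analytic on $\CC_+$ with $\Im r_n>0$ and $|r_n|\le 1/\Im z$. The family $(r_n,s_n,T_n)$ is therefore locally bounded by Lemma~\ref{lem:GNOpNormWigner}, and Montel's theorem gives subsequential limits. Passing to the limit in \eqref{eq:WignerApproxSelfEq} shows that every limit solves the system. Nonnegativity of $\Im$ survives the limit, and the identities upgrade it to strict positivity. By uniqueness, the whole sequence converges locally uniformly.

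\textbf{Quantitative rate.} We rewrite \eqref{eq:WignerApproxSelfEq} as $B_nT_n=I+E_n$ with $\|E_n\|=O(n^{-1+\varepsilon})$, uniformly on $\Omega_\eta$. Define $\hat T_n:=B(r_n,s_n)^{-1}=T_n(I+E_n)^{-1}$, which is within $O(n^{-1+\varepsilon})$ of $T_n$ since $\|T_n\|\le\eta^{-1}$. It follows that $H_z(r_n,s_n):=(r_n,s_n)-F_z(r_n,s_n)=O(n^{-1+\varepsilon})$. The Jacobian is
\[
DF_z=\begin{pmatrix}\alpha^2(p^\intercal Tp)^2&\beta^2(p^\intercal Tq)^2\\ \alpha^2(q^\intercal Tp)^2&\beta^2(q^\intercal Tq)^2\end{pmatrix},
\]
using $\partial_r T=\alpha^2TPT$ and the symmetry of $T$. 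Its entrywise absolute value satisfies $|DF_z|(\Im r,\Im s)^\intercal<(\Im r,\Im s)^\intercal$, so $I-DF_z$ is invertible at the solution. Continuity and compactness of $\bar\Omega_\eta$ then give a uniform bound on $(I-DF_z)^{-1}$. Local convergence places $(r_n,s_n)$ in the neighbourhood where the inverse function theorem applies, so $|r_n-r|+|s_n-s|=O(n^{-1+\varepsilon})$. Finally,
\[
T_n-T=(T_n-\hat T_n)+\bigl(B(r_n,s_n)^{-1}-B(r,s)^{-1}\bigr),
\]
and both terms are $O(n^{-1+\varepsilon})$.

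\textbf{Main obstacle.} The main obstacle is the uniformity of the stability constant, that is, keeping $(I-DF_z)^{-1}$ bounded on all of $\bar\Omega_\eta$. This relies on the strict positivity margin $\Im z\|Tp\|^2\ge\eta\|Tp\|^2$ together with a lower bound on $\|Tp\|$. We get that lower bound from $\|B\|=O(1)$ on $\Omega_\eta$, which holds because $|z|<\eta^{-1}$ and $|r|,|s|\le\eta^{-1}$.
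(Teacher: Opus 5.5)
Your proposal is correct and follows the paper's proof essentially step for step. It uses the imaginary-part identities to obtain a Perron--Frobenius contraction for uniqueness, and Montel's theorem plus passage to the limit in \eqref{eq:WignerApproxSelfEq} for existence. For the $O(n^{-1+\varepsilon})$ rate it combines $|DF_z|(\Im r,\Im s)^\intercal<(\Im r,\Im s)^\intercal$ with $\hat T_n=T_n(I+E_n)^{-1}$ and the inverse function theorem, exactly as the paper does. Your quantitative lower bound $\|Tp\|\ge\|B\|^{-1}$ is a harmless supplement to the continuity-and-compactness argument that you, like the paper, already invoke for the uniform stability constant.
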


As before, we first prove the qualitative existence and uniqueness statement and the local uniform convergence of the finite size coefficients.

\begin{lemma}\label{lem:wigner-convergence-to-finite-system}
For each $z\in\CC_+$, the system
\eqref{eq:wigner-finite-system-scalars}--\eqref{eq:wigner-finite-system} has a unique solution
\[
r(z),s(z)\in\CC_+,
\qquad
T(z)=\begin{pmatrix}t_{11}(z)&t_{12}(z)\\ t_{21}(z)&t_{22}(z)\end{pmatrix}.
\]
Moreover, if $(r_n,s_n,T_n)$ are the coefficients from Proposition~\ref{prop:approxSelfEqWigner}, then
\[
r_n\to r,
\qquad
s_n\to s,
\qquad
T_n\to T
\]
locally uniformly on $\CC_+$.
\end{lemma}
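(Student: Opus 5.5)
We follow the same route as Lemma~\ref{lem:convergence-to-finite-system}: a contraction argument for uniqueness, and Montel compactness for existence and convergence. Throughout, write
\[
B(r,s):=-zI_2-\alpha^2(1+r)P-\beta^2(1+s)Q,
\]
so that \eqref{eq:wigner-finite-system} says $T=B(r,s)^{-1}$. The system \eqref{eq:wigner-finite-system-scalars}--\eqref{eq:wigner-finite-system} is then the fixed point problem $(r,s)=F_z(r,s):=(p^\intercal B^{-1}p,\,q^\intercal B^{-1}q)$.

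\textbf{Imaginary-part identities.} For a solution with $r,s\in\CC_+$ we have $T$ symmetric, since $B$ is complex symmetric. Using $T-T^*=T^*(B^*-B)T$ and
\[
\Im B=-\Im z\,I_2-\alpha^2\Im r\,P-\beta^2\Im s\,Q,
\]
we get
\[
\Im T=T^*\bigl(\Im z\,I_2+\alpha^2\Im r\,pp^\intercal+\beta^2\Im s\,qq^\intercal\bigr)T .
\]
Applying $p^\intercal(\cdot)p$ and $q^\intercal(\cdot)q$, with $p,q$ real, gives
\begin{align*}
\Im r&=\Im z\,\|Tp\|^2+\alpha^2\Im r\,|p^\intercal Tp|^2+\beta^2\Im s\,|q^\intercal Tp|^2,\\
\Im s&=\Im z\,\|Tq\|^2+\alpha^2\Im r\,|p^\intercal Tq|^2+\beta^2\Im s\,|q^\intercal Tq|^2 .
\end{align*}
Since $T$ is invertible, $\|Tp\|,\|Tq\|>0$.

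\textbf{Uniqueness.} Take two solutions $(r,s,T)$ and $(\hat r,\hat s,\hat T)$. We have
\[
T-\hat T=T(\hat B-B)\hat T=T\bigl(\alpha^2(r-\hat r)P+\beta^2(s-\hat s)Q\bigr)\hat T .
\]
Hence $d=(|r-\hat r|,|s-\hat s|)$ satisfies $d\le Ad$ componentwise, where
\[
A=\begin{pmatrix}\alpha^2|p^\intercal Tp||p^\intercal\hat Tp| & \beta^2|p^\intercal Tq||q^\intercal\hat Tp|\\ \alpha^2|q^\intercal Tp||p^\intercal\hat Tq| & \beta^2|q^\intercal Tq||q^\intercal\hat Tq|\end{pmatrix}.
\]
Take $x=(\sqrt{\Im r\Im\hat r},\sqrt{\Im s\Im\hat s})$. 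Cauchy--Schwarz together with the identities above gives
\[
(Ax)_1\le\sqrt{(\Im r-\Im z\|Tp\|^2)(\Im\hat r-\Im z\|\hat Tp\|^2)}<x_1,
\]
and likewise $(Ax)_2<x_2$, using the symmetry $q^\intercal Tp=p^\intercal Tq$. So $Ax<x$ with $x>0$, which forces $\rho(A)<1$. This contradicts $d\le Ad$ unless $d=0$. Once $r=\hat r$ and $s=\hat s$, we get $T=\hat T$ because $T=B(r,s)^{-1}$.

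\textbf{Existence and convergence.} We have
\[
r_n=\tfrac1n\EE\Tr\bigl((p^\intercal\otimes I_n)\calG(p\otimes I_n)\bigr),
\]
with $\|p\|=1$, so $r_n$ is the Stieltjes transform of a probability measure (the expected spectral measure of $H$ at vectors $p\otimes e_i$, averaged over $i$). The same holds for $s_n$, so $r_n,s_n\in\CC_+$. Also $\|T_n\|_{\op}\le(\Im z)^{-1}$ by Lemma~\ref{lem:GNOpNormWigner}. These families are analytic and locally bounded, so Montel's theorem gives subsequential local uniform limits $(r_*,s_*,T_*)$.

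Passing to the limit in \eqref{eq:WignerApproxSelfEq}, whose errors are $O(n^{-1+\varepsilon})$, gives $B(r_*,s_*)T_*=I_2$, so $T_*=B^{-1}$ is symmetric. We also get $r_*=p^\intercal T_*p$ and $s_*=q^\intercal T_*q$ by definition. The limits satisfy $\Im r_*,\Im s_*\ge0$, and the first identity then forces strict positivity: $\Im r_*\ge\Im z\|T_*p\|^2>0$, and similarly for $s_*$.

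By uniqueness all subsequential limits coincide, so the full sequence converges locally uniformly.

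\textbf{Main obstacle.} The delicate step is the strict contraction $Ax<x$. It needs the symmetry of $T$ so that the off-diagonal Cauchy--Schwarz pairing matches the identities, and it needs $\Im z\|Tp\|^2>0$ to make the inequality strict. Both are established above.
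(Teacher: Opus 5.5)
Your proposal is correct and follows essentially the same route as the paper: the imaginary-part identities for $\Im r,\Im s$, then the comparison $d\le Ad$ with test vector $x=(\sqrt{\Im r\,\Im\hat r},\sqrt{\Im s\,\Im\hat s})$ forcing $\rho(A)<1$, then Montel compactness, strict positivity via $\Im r_*\ge \Im z\,\|T_*p\|^2>0$, and uniqueness of subsequential limits. Your matrix $A$ agrees entrywise with the paper's $E$ because $T$ and $\hat T$ are symmetric, and the imaginary-part identity you state for solutions with $r,s\in\CC_+$ is purely algebraic, so applying it to the limit $(r_*,s_*,T_*)$ before knowing $\Im r_*,\Im s_*>0$ is legitimate.
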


\begin{proof}[Proof of uniqueness in Lemma~\ref{lem:wigner-convergence-to-finite-system}]
Fix $z\in\CC_+$. Suppose $(r,s,T)$ and $(\hat r,\hat s,\hat T)$ are two solutions of \eqref{eq:wigner-finite-system-scalars}--\eqref{eq:wigner-finite-system}. In this proof only, we suppress the dependence on $z$ and write
\begin{align*}
    B&:=-zI_2-\alpha^2(1+r)P-\beta^2(1+s)Q \\
    \hat B&:=-zI_2-\alpha^2(1+\hat r)P-\beta^2(1+\hat s)Q
\end{align*}
so that $T=B^{-1}$ and $\hat T=\hat B^{-1}$. We first record the imaginary-part identities. Since we have $T-T^*=T^*(B^*-B)T$ and $B^*-B = 2i[(\Im z)I_2+\alpha^2(\Im r)P+\beta^2(\Im s)Q]$ we obtain
\begin{equation}\label{eq:WignerImTIdentity}
\Im T
=
T^*\Bigl((\Im z)I_2+\alpha^2(\Im r)P+\beta^2(\Im s)Q\Bigr)T.
\end{equation}
Taking quadratic forms of \eqref{eq:WignerImTIdentity} against $p$ and $q$, and recalling that $r=p^\intercal Tp$ and $s=q^\intercal Tq$, yields
\begin{align}
\Im r
&=
(\Im z)\|Tp\|^2
+\alpha^2|r|^2\Im r
+\beta^2|q^\intercal Tp|^2\Im s,
\label{eq:WignerImrIdentity}
\\
\Im s
&=
(\Im z)\|Tq\|^2
+\alpha^2|p^\intercal Tq|^2\Im r
+\beta^2|s|^2\Im s.
\label{eq:WignerImsIdentity}
\end{align}
The same identities hold for $(\hat r,\hat s,\hat T)$. We now compare the two solutions. By the resolvent identity,
\[
T-\hat T
=
T(\hat B-B)\hat T
=
\alpha^2(r-\hat r)TP\hat T+\beta^2(s-\hat s)TQ\hat T.
\]
Taking quadratic forms against $p$ and $q$, and using again that $r=p^\intercal Tp$, $s=q^\intercal Tq$, $\hat r=p^\intercal\hat Tp$, and $\hat s=q^\intercal\hat Tq$, we get
\begin{align*}
r-\hat r
&=
\alpha^2 r\hat r\,(r-\hat r)
+\beta^2(p^\intercal Tq)(p^\intercal\hat Tq)\,(s-\hat s),
\\
s-\hat s
&=
\alpha^2(q^\intercal Tp)(q^\intercal\hat Tp)\,(r-\hat r)
+\beta^2 s\hat s\,(s-\hat s).
\end{align*}
Taking absolute values, we obtain componentwise $d\le Ed$, where
\[
d:=
\begin{pmatrix}
|r-\hat r|\\
|s-\hat s|
\end{pmatrix} \quad \text{ and } \quad E:=
\begin{pmatrix}
\alpha^2|r||\hat r| & \beta^2|p^\intercal Tq|\,|p^\intercal\hat Tq|\\[0.4em]
\alpha^2|q^\intercal Tp|\,|q^\intercal\hat Tp| & \beta^2|s||\hat s|
\end{pmatrix}.
\]
Now set $x = (\sqrt{\Im r\,\Im\hat r},\sqrt{\Im s\,\Im\hat s})^\intercal.$ By Cauchy-Schwarz together with \eqref{eq:WignerImrIdentity} and its analogue identity  for $\hat T$, we have
\begin{align*}
(Ex)_1
&\le
\sqrt{\Bigl(\alpha^2|r|^2\Im r+\beta^2|q^\intercal Tp|^2\Im s\Bigr)
\Bigl(\alpha^2|\hat r|^2\Im\hat r+\beta^2|q^\intercal\hat Tp|^2\Im\hat s\Bigr)}
\\
&=
\sqrt{\Bigl(\Im r-(\Im z)\|Tp\|^2\Bigr)
\Bigl(\Im\hat r-(\Im z)\|\hat Tp\|^2\Bigr)}
<
\sqrt{\Im r\,\Im\hat r}
=
x_1.
\end{align*}
Exactly the same argument, using \eqref{eq:WignerImsIdentity}, shows that $(Ex)_2<x_2$. Hence $Ex<x$ component-wise. Since $E$ has nonnegative entries and $x$ has strictly positive entries, it follows that $\rho(E)<1$.

Now $0\le d\le Ed$, hence $0\le d\le E^k d$ for every $k\ge1$. Because $\rho(E)<1$, we have $E^k\to0$ as $k\to\infty$, and therefore $d=0$. Thus $r=\hat r$ and $s=\hat s$. Returning to the resolvent identity above, we also get $T=\hat T$. This proves uniqueness.
\end{proof}

\smallskip

\begin{proof}[Proof of existence in Lemma~\ref{lem:wigner-convergence-to-finite-system}]
Fix $z\in\CC_+$ and recall from Lemma~\ref{lem:exptGNWigner} that we have $\EE\calG(z)=T_n(z)\otimes I_n$. For every unit vector $u\in\RR^2$,
\[
\nu_{u,n}(z):=u^\intercal T_n(z)u
=\frac1n\EE\Tr\Bigl((uu^\intercal\otimes I_n)\calG(z)\Bigr).
\]
Since $(uu^\intercal\otimes I_n)$ is positive semidefinite, by a direct computation, $\nu_{u,n}$ is the Stieltjes transform of a positive measure of total mass $\|u\|^2=1$ (see for instance proof of Lemma~\ref{lem:WignerDeterministicExtension}). In particular, taking $u=p$ or $u=q$, we obtain that $r_n(z),s_n(z)\in\CC_+.$ Also,
\begin{equation}\label{eq:WignerTBoundEquation}
\|T_n(z)\|_{\op}
=
\|T_n(z)\otimes I_n\|_{\op}
=
\|\EE\calG(z)\|_{\op}
\le
\EE\|\calG(z)\|_{\op}
\le
(\Im z)^{-1},
\end{equation}
where the last inequality follows from Lemma~\ref{lem:GNOpNormWigner}. Hence the families $r_n$, $s_n$, and the entries of $T_n$ are locally bounded and analytic on $\CC_+$. By Montel's theorem, every subsequence admits a further subsequence, which we still denote by $n_j$, such that
\[
r_{n_j}\to r_*,
\qquad
s_{n_j}\to s_*,
\qquad
T_{n_j}\to T_*
\]
locally uniformly on $\CC_+$.

We now pass to the limit in the approximate equation from Proposition~\ref{prop:approxSelfEqWigner}. Fix $z\in\CC_+$ and choose $\eta>0$ so that $z\in\Omega_\eta$. Since $r_n(z)=p^\intercal T_n(z)p$ and $s_n(z)=q^\intercal T_n(z)q$ exactly, and since the error in \eqref{eq:WignerApproxSelfEq} is $O(n^{-1+\varepsilon})$, we obtain
\begin{equation}\label{eq:WignerLimitMatrixEq}
\Bigl(-zI_2-\alpha^2P-\beta^2Q-\alpha^2 r_*(z)P-\beta^2 s_*(z)Q\Bigr)T_*(z)=I_2,
\end{equation}
and $r_*(z)=p^\intercal T_*(z)p$ and $s_*(z)=q^\intercal T_*(z)q$. Thus $(r_*,s_*,T_*)$ solves \eqref{eq:wigner-finite-system-scalars}--\eqref{eq:wigner-finite-system}.

It remains to check that $r_*(z),s_*(z)\in\CC_+$. Since $\Im r_n(z)>0$ and $\Im s_n(z)>0$ for every $n$, the locally uniform limit satisfies
\[
\Im r_*(z)\ge0,
\qquad
\Im s_*(z)\ge0.
\]
Also, \eqref{eq:WignerLimitMatrixEq} shows that $T_*(z)$ is invertible. Therefore $T_*(z)p\neq0$ and $T_*(z)q\neq0$. Applying \eqref{eq:WignerImrIdentity} and \eqref{eq:WignerImsIdentity} to $(r_*,s_*,T_*)$, we obtain
\[
\Im r_*(z)
\ge
(\Im z)\|T_*(z)p\|^2
>
0,
\qquad
\Im s_*(z)
\ge
(\Im z)\|T_*(z)q\|^2
>
0.
\]
Hence $(r_*,s_*,T_*)$ is the unique solution of \eqref{eq:wigner-finite-system-scalars}--\eqref{eq:wigner-finite-system} at the point $z$.

By uniqueness of the limit, since every subsequence has a further subsequence converging locally uniformly to the same limit, the whole sequence $(r_n,s_n,T_n)$ converges locally uniformly on $\CC_+$ to $(r,s,T)$. This proves the lemma.
\end{proof}

\begin{proof}[Proof of Proposition~\ref{prop:detStabilityWigner}]
For $z\in\CC_+$ and $(r,s)\in\CC^2$, define a function $B_z : \CC^{2\times2} \to \CC^{2\times2}$
\[
B_z(r,s):=-zI_2-\alpha^2P-\beta^2Q-\alpha^2 rP-\beta^2 sQ,
\]
and, whenever $B_z(r,s)$ is invertible, $T_z(r,s):=B_z(r,s)^{-1}$. Then \eqref{eq:wigner-finite-system-scalars}--\eqref{eq:wigner-finite-system} is equivalent to the fixed-point system
\begin{equation}\label{eq:WignerFixedPoint}
r(z)=p^\intercal T_z(r(z),s(z))p 
\quad \text{ and } \quad 
s(z)=q^\intercal T_z(r(z),s(z))q.
\end{equation}
Accordingly, define
\[
F_z(r,s):=
\begin{pmatrix}
p^\intercal T_z(r,s)p\\[0.3em]
q^\intercal T_z(r,s)q
\end{pmatrix}
\quad \text{ and } \quad 
H_z(r,s):=
\begin{pmatrix}r\\ s\end{pmatrix}-F_z(r,s).
\]
By Lemma~\ref{lem:wigner-convergence-to-finite-system}, the exact solution satisfies $H_z(r(z),s(z))=0$. We first compute the Jacobian of $F_z$ at the exact solution. Since
\[
\partial_r B_z(r,s)=-\alpha^2P
\quad \text{ and } \quad 
\partial_s B_z(r,s)=-\beta^2Q,
\]
Then \eqref{eq:derivativeInverse} gives that 
\[
\partial_r T_z(r,s)=\alpha^2T_z(r,s)PT_z(r,s),
\qquad
\partial_s T_z(r,s)=\beta^2T_z(r,s)QT_z(r,s).
\]
Therefore, if we write $c(z):=p^\intercal T(z)q=q^\intercal T(z)p,$ then
\[
DF_z(r(z),s(z))
=
\begin{pmatrix}
\alpha^2 r(z)^2 & \beta^2 c(z)^2\\[0.4em]
\alpha^2 c(z)^2 & \beta^2 s(z)^2
\end{pmatrix}.
\]
Taking absolute values, we get
\[
|DF_z(r(z),s(z))|
=
\begin{pmatrix}
\alpha^2 |r(z)|^2 & \beta^2 |c(z)|^2\\[0.4em]
\alpha^2 |c(z)|^2 & \beta^2 |s(z)|^2
\end{pmatrix}.
\]
Now apply \eqref{eq:WignerImrIdentity} and \eqref{eq:WignerImsIdentity} at the exact solution. As $T$ is invertible and $p\ne 0$, we obtain
\[
|DF_z(r(z),s(z))|
\begin{pmatrix}
\Im r(z)\\
\Im s(z)
\end{pmatrix}
=
\begin{pmatrix}
\Im r(z)-(\Im z)\|T(z)p\|^2\\[0.4em]
\Im s(z)-(\Im z)\|T(z)q\|^2
\end{pmatrix}
<
\begin{pmatrix}
\Im r(z)\\
\Im s(z)
\end{pmatrix}.
\]
Hence $\rho\bigl(DF_z(r(z),s(z))\bigr) \le \rho\bigl(|DF_z(r(z),s(z))|\bigr) <1.$ This means $DH_z(r(z),s(z))$ is invertible for every $z\in\CC_+$. By continuity and compactness,
\[
\sup_{z\in\bar\Omega_\eta}
\|DH_z(r(z),s(z))^{-1}\|_{\op}
\le O_\eta(1).
\]
Therefore, by the complex inverse function theorem, there exists $\delta_\eta>0$ such that if $z\in\bar\Omega_\eta$ and $\|(r',s')-(r(z),s(z))\|\le \delta_\eta$ then since $H_z(r(z), s(z)) = 0$,
\begin{equation}\label{eq:WignerLocalInverseBound}
\|(r',s')-(r(z),s(z))\|
\le
O_\eta(\|H_z(r',s')\|).
\end{equation}

We now estimate the residual produced by the approximate system. Rewrite Proposition~\ref{prop:approxSelfEqWigner} as $B_n(z)T_n(z)=I_2+E_n(z),$ where
\[
B_n(z):=-zI_2-\alpha^2P-\beta^2Q-\alpha^2 r_n(z)P-\beta^2 s_n(z)Q
\]
and the error  $\|E_n(z)\|_{\op}\le O(n^{-1+\varepsilon})$ uniformly over $\Omega_\eta$. For $n$ large enough, the operator norm of $E_n(z)$ is at most $1/2$ uniformly on $\Omega_\eta$, so $I_2+E_n(z)$ is invertible. Define
\[
\hat T_n(z):=T_n(z)(I_2+E_n(z))^{-1}.
\]
Then, we have that $B_n(z)\hat T_n(z)=I_2$, and hence
\[
\hat T_n(z)=B_n(z)^{-1}=T_z(r_n(z),s_n(z)).
\]
By \eqref{eq:WignerTBoundEquation}, we have that $\|T_n(z)\|_{\op}\le O_\eta(1)$ uniformly on $\Omega_\eta$, so we obtain
\begin{equation}\label{eq:WignerTsharpClose}
\sup_{z\in\Omega_\eta}\|\hat T_n(z)-T_n(z)\|_{\op}\le O(n^{-1+\varepsilon}).
\end{equation}
Because $r_n(z)=p^\intercal T_n(z)p$ and $s_n(z)=q^\intercal T_n(z)q$ exactly, this implies
\begin{align}
\sup_{z\in\Omega_\eta}
\abs{r_n(z)-p^\intercal T_z(r_n(z),s_n(z))p}
&\le O(n^{-1+\varepsilon}),
\label{eq:WignerResidualR}
\\
\sup_{z\in\Omega_\eta}
\abs{s_n(z)-q^\intercal T_z(r_n(z),s_n(z))q}
&\le O(n^{-1+\varepsilon}).
\label{eq:WignerResidualS}
\end{align}
In other words, we have 
\begin{equation}\label{eq:WignerPhiResidual}
\sup_{z\in\bar\Omega_\eta}
\|H_z(r_n(z),s_n(z))\|
\le O(n^{-1+\varepsilon}).
\end{equation}

By the qualitative convergence in Lemma~\ref{lem:wigner-convergence-to-finite-system}, for all large $n$,
\[
\sup_{z\in\bar\Omega_\eta}
\|(r_n(z),s_n(z))-(r(z),s(z))\|
<\delta_\eta,
\]
so the local inverse estimate \eqref{eq:WignerLocalInverseBound} applies at every $z\in\bar\Omega_\eta$. Combining it with \eqref{eq:WignerPhiResidual}, we get that 
\[
\sup_{z\in\bar\Omega_\eta}
\Bigl(
|r_n(z)-r(z)|+|s_n(z)-s(z)|
\Bigr)
\le O(n^{-1+\varepsilon}).
\]
Finally, to get the same bound for $T$, note that 
\[
T_n(z)-T(z)
=
\Bigl[T_n(z)-T_z(r_n(z),s_n(z))\Bigr]
+
\Bigl[T_z(r_n(z),s_n(z))-T_z(r(z),s(z))\Bigr].
\]
The first term is $O(n^{-1+\varepsilon})$ uniformly on $\Omega_\eta$ by \eqref{eq:WignerTsharpClose}. The second term is also $O(n^{-1+\varepsilon})$ uniformly on $\bar\Omega_\eta$, because $(r,s)\mapsto T_z(r,s)$ is analytic in a neighborhood of the compact set $\{(r(z),s(z)):z\in\bar\Omega_\eta\}$ and we already control $(r_n,s_n)-(r,s)$. Therefore
\[
\sup_{z\in\Omega_\eta}\|T_n(z)-T(z)\|_{\op}\le O(n^{-1+\varepsilon}).
\]
This proves the proposition.
\end{proof}

\begin{proof}[Proof of Theorem~\ref{thm:resolvConvWigner}]
By Lemma~\ref{lem:exptGNWigner}, $\EE\calG(z)=T_n(z)\otimes I_n$ and $\calM(z)=T(z)\otimes I_n$. Hence Proposition~\ref{prop:detStabilityWigner} gives
\[
\sup_{z\in\Omega_\eta}
\|\EE\calG(z)-\calM(z)\|_{\op}
\le O(n^{-1+\varepsilon}).
\]
Therefore, for any unit vectors $x,y\in\RR^{2n}$,
\[
\sup_{z\in\Omega_\eta}
\abs{x^\intercal\calG(z)y-x^\intercal\calM(z)y}
\le
\sup_{z\in\Omega_\eta}
\abs{x^\intercal\bigl(\calG(z)-\EE\calG(z)\bigr)y}
+
O(n^{-1+\varepsilon}).
\]
Applying Proposition~\ref{prop:Wigner-concentration} to the first term proves \eqref{eq:wigner-resolv-conv}, and hence the theorem.
\end{proof}

We now show the analytic continuation result, that will be useful in the proof of Theorem~\ref{thm:real-stieltjes-transform-Wigner}.

\begin{lemma}\label{lem:WignerDeterministicExtension}
Let $(r,s,T)$ be the unique solution of
\eqref{eq:wigner-finite-system-scalars}--\eqref{eq:wigner-finite-system} on $\CC_+$. Then,
for every $u\in\CC^2$, the scalar function $m_u(z):=u^*T(z)u$ is the Stieltjes transform of a positive finite measure $\mu_u$ on $\RR$, of total mass $\|u\|^2$, supported on $(-\infty,1]$. 

In particular, each entry of $T$ extends
analytically to $\DD:=\CC\setminus(-\infty,1].$ We continue to denote this extension by $T$, and define
\[
\calM(z):=T(z)\otimes I_n,
\qquad z\in\DD.
\]
\end{lemma}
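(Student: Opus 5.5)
The plan follows the CCA argument (Lemma~\ref{lem:extended-solution-of-finite-system}): realize $m_u$ as a limit of Stieltjes transforms of finite-$n$ measures, then push the support bound through from the Gordon estimate. For $u\in\CC^2$ set
\[
m_{u,n}(z):=u^*T_n(z)u=\frac1n\EE\Tr\bigl((uu^*\otimes I_n)\calG(z)\bigr).
\]
Write $H=\sum_j \lambda_j v_jv_j^\intercal$ in an orthonormal eigenbasis. Then
\[
m_{u,n}(z)=\EE\sum_j \frac{\frac1n\langle v_j,(uu^*\otimes I_n)v_j\rangle}{\lambda_j-z},
\]
which is the Stieltjes transform of the positive measure
\[
\mu_{u,n}:=\EE\sum_j w_j\delta_{\lambda_j},\qquad w_j:=\tfrac1n\langle v_j,(uu^*\otimes I_n)v_j\rangle\ge 0 .
\]
Its total mass is $\frac1n\Tr(uu^*\otimes I_n)=\|u\|^2$.

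By Lemma~\ref{lem:wigner-convergence-to-finite-system}, $T_n\to T$ locally uniformly on $\CC_+$, so $m_{u,n}\to m_u$ pointwise on $\CC_+$. The sequence $\mu_{u,n}$ is tight:
\[
\mu_{u,n}(\{|x|>R\})\le\|u\|^2\,\PP(\|H\|_{\op}>R),
\]
and $\|H\|_{\op}\le C(1+\|U\|_{\op}+\|V\|_{\op})$, which is bounded with exponentially high probability. Hence every subsequential weak limit has mass $\|u\|^2$. By the Stieltjes continuity theorem, $\mu_{u,n}\Rightarrow\mu_u$ for some positive measure $\mu_u$ of total mass $\|u\|^2$ with $m_u=\mathrm{Stieltjes}_{\mu_u}$. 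Alternatively, one can check the normalization $i\eta\, m_u(i\eta)\to-\|u\|^2$ directly from \eqref{eq:wigner-finite-system}, since $T(z)\approx -z^{-1}I_2$ for large $|z|$.

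For the support, fix $\varepsilon>0$. Since $W$ and $H$ are similar, Theorem~\ref{thm:WignerGordonEdge} gives
\[
\mu_{u,n}([1+\varepsilon,\infty))\le\|u\|^2\,\PP\bigl(\lambda_{\max}(H)\ge1+\varepsilon\bigr)\le\|u\|^2e^{-\Omega(n\varepsilon^2)}\to0 .
\]
By the portmanteau theorem applied to the open set $(1+\varepsilon,\infty)$, $\mu_u((1+\varepsilon,\infty))=0$ for every $\varepsilon>0$. Hence $\mathrm{supp}\,\mu_u\subset(-\infty,1]$, and $m_u$ extends analytically to $\DD=\CC\setminus(-\infty,1]$.

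To extend $T$ itself, use polarization:
\[
t_{ij}=\tfrac14\sum_{k=0}^{3} i^{k}\,m_{e_j+i^ke_i},
\]
so each entry of $T$ is a finite linear combination of functions $m_u$ and inherits the analytic extension to $\DD$. Defining $\calM(z)=T(z)\otimes I_n$ on $\DD$ then completes the lemma. By uniqueness of analytic continuation the identities \eqref{eq:wigner-finite-system-scalars}--\eqref{eq:wigner-finite-system} persist on $\DD$ wherever they make sense, although the lemma does not require this.

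The main obstacle is the total-mass and normalization step, i.e.\ making sure no mass escapes in the weak limit. Here that is handled by tightness from the operator-norm bound on $H$, which is easier than in the CCA case because $H$ is already symmetric and bounded.
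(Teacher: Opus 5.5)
Your proposal is correct and follows essentially the same route as the paper. Both arguments use the finite-$n$ spectral measures $\mu_{u,n}$ built from $\EE\calG(z)=T_n(z)\otimes I_n$, pass to the limit via Lemma~\ref{lem:wigner-convergence-to-finite-system} and the Stieltjes continuity theorem, bound the support using Theorem~\ref{thm:WignerGordonEdge}, and recover the entries of $T$ by polarization. The one difference is how you secure total mass $\|u\|^2$: you use tightness from the operator-norm bound on $H$ (which, via portmanteau, also gives compact support), whereas the paper checks the normalization $i\eta\,m_u(i\eta)\to-\|u\|^2$ from the large-$\eta$ asymptotics of \eqref{eq:wigner-finite-system}; both are valid.
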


\begin{proof}
Fix $u\in\CC^2$, and let $T_n$ be the matrix from Proposition~\ref{prop:approxSelfEqWigner}, so that
\[
\EE(H-zI_{2n})^{-1}=T_n(z)\otimes I_n.
\]
Set a sequence of functions $m_{u,n}$ to be defined by 
\[
m_{u,n}(z):=u^*T_n(z)u
=
\frac1n\EE\Tr\Bigl((uu^*\otimes I_n)(H-zI_{2n})^{-1}\Bigr).
\]
Thus $m_{u,n}$ is the Stieltjes transform of the positive measure
\[
\mu_{u,n}(A):=
\frac1n\EE\Tr\Bigl((uu^*\otimes I_n)\II_A(H)\Bigr),
\qquad A\subset\RR \text{ Borel},
\]
whose total mass is
\[
\mu_{u,n}(\RR)=\frac1n\Tr(uu^*\otimes I_n)=\|u\|^2.
\]
By Lemma~\ref{lem:wigner-convergence-to-finite-system}, $T_n\to T$ locally uniformly on $\CC_+$, hence
$m_{u,n}\to m_u:=u^*Tu$ locally uniformly on $\CC_+$. Moreover, from \eqref{eq:WignerTBoundEquation} for every $\eta>0$, we have $|m_{u,n}(i\eta)|\le {\|u\|^2}/{\eta}$ so passing to the limit gives $|m_u(i\eta)|\le {\|u\|^2}/{\eta}.$ 

For $u=e_1,e_2,e_1+e_2,e_1+ie_2$, we conclude that each entry of $T(i\eta)$ is $O(1/\eta)$. Therefore $r(i\eta)=p^\intercal T(i\eta)p=O(1/\eta)$ and $s(i\eta)=q^\intercal T(i\eta)q=O(1/\eta)$, and the matrix equation \eqref{eq:wigner-finite-system} yields
\[
T(i\eta)^{-1}
=
-i\eta I_2+O(1),
\qquad
T(i\eta)=-(i\eta)^{-1}I_2+O(\eta^{-2}).
\]
In particular, we get that $i\eta\,m_u(i\eta)\to -\|u\|^2$ as $ \eta\to\infty.$  Then by \cite{GeronimoHill2003} and the Stieltjes continuity theorem, there exists a positive finite measure $\mu_u$ on $\RR$ such that
\[
m_u(z)=\int_\RR \frac{\mu_u(dt)}{t-z},
\qquad z\in\CC_+.
\]
Finally, since $H$ is similar to $W$, Theorem~\ref{thm:WignerGordonEdge} implies that for every
$\delta>0$,
\[
\mu_{u,n}([1+\delta,\infty))
\le
\|u\|^2\,\PP\bigl(\lambda_{\max}(H)\ge 1+\delta\bigr)\to0.
\]
Passing to the weak limit gives $\operatorname{supp}\mu_u\subset(-\infty,1].$ Therefore $m_u$ extends analytically to $\DD$. By polarization,
\[
u^*T(z)v
=
\frac14\sum_{\ell=0}^3 i^\ell m_{u+i^\ell v}(z),
\qquad u,v\in\CC^2,
\]
so every sesquilinear form $u^*T(z)v$, and hence each entry of $T$, extends analytically to
$\DD$. This immediately implies that $r $ and $s$ also extend analytically.
\end{proof}

We are now ready to prove our final theorem.

\begin{proof}[Proof of Theorem~\ref{thm:real-stieltjes-transform-Wigner}]
As proved above, the deterministic right edge satisfies $\lambda_*\le 1$. Fix a compact set
$K\subset (\lambda_*,\infty)\subset(1,\infty)$ and choose $\delta>0$ so that $K\subset (1+2\delta,\infty).$ Let $\eta\in (0,\delta/4)$ be a small constant. 

For $i\in[n]$ and $a\in\{1,2\}$, write $e_i^{(a)}:=e_a\otimes e_i\in\RR^{2n}$. Applying
Theorem~\ref{thm:resolvConvWigner} with parameter $\eta/2$ and exponent $\varepsilon=1/4$, and then taking a union bound over the $4n$ pairs $(e_i^{(a)}, e_i^{(b)})$ for $a,b \in \set{1,2}$ and $i \in [n]$,  we obtain that with overwhelming probability,
\[
\sup_{x\in K}
\abs{(e_i^{(a)})^\intercal\bigl(\calG(x+i\eta)-\calM(x+i\eta)\bigr)e_i^{(b)}}
\le C_\eta n^{-1/4}
\]
Summing over $i \in [n]$ yields
\begin{equation}\label{eq:WignerRealTraceAboveAxis}
\sup_{x\in K}
\frac1n\abs{\Tr\,\calG^{(ab)}(x+i\eta)-\Tr\,\calM^{(ab)}(x+i\eta)}
\le C_\eta n^{-1/4}=o_\eta(1)
\end{equation}
with overwhelming probability, for each $a,b\in\{1,2\}$. We now remove the imaginary part. Since $H$ is similar to $W$, Theorem~\ref{thm:WignerGordonEdge} implies that $\PP\bigl(\lambda_{\max}(H)\le 1+\delta\bigr)\to 1$. On this event, for every $x\in K$ we have
\[
\sup_{x\in K}\norm{\calG(x)}_{\op}
\le \delta^{-1},
\qquad
\sup_{x\in K}\norm{\calG(x+i\eta)}_{\op}
\le \delta^{-1}.
\]
Using the resolvent identity, the difference can be written
\begin{align*}
\calG(x+i\eta)-\calG(x)
&=
\calG(x+i\eta)\bigl[(H-xI_{2n})-(H-(x+i\eta)I_{2n})\bigr]\calG(x) \\
&=
 i\eta\,\calG(x+i\eta)\calG(x),
\end{align*}
so we obtain the bound
\[
\sup_{x\in K}
\norm{\calG(x+i\eta)-\calG(x)}_{\op}
\le \eta\delta^{-2}
=O_K(\eta).
\]
Hence, for each $a,b\in\{1,2\}$, we have 
\begin{equation}\label{eq:WignerRealTraceRemoveEta}
\sup_{x\in K}
\frac1n\abs{\Tr\,\calG^{(ab)}(x+i\eta)-\Tr\,\calG^{(ab)}(x)}
\le O_K(\eta)
\end{equation}
with overwhelming probability.
On the deterministic side, Lemma~\ref{lem:WignerDeterministicExtension} shows that each entry of $T$
extends analytically to $\DD=\CC\setminus(-\infty,1]$, hence on an open neighborhood of the compact
set $K\subset (\lambda_*,\infty)=(1,\infty)$. Therefore,
\begin{equation}\label{eq:WignerDeterministicRemoveEta}
\sup_{x\in K}
\frac1n\abs{\Tr\,\calM^{(ij)}(x+i\eta)-\Tr\,\calM^{(ij)}(x)}
=
\sup_{x\in K}\abs{T_{ij}(x+i\eta)-T_{ij}(x)}
\le O_K(\eta).
\end{equation}
Combining \eqref{eq:WignerRealTraceAboveAxis}, \eqref{eq:WignerRealTraceRemoveEta}, and
\eqref{eq:WignerDeterministicRemoveEta}, we conclude that with overwhelming probability,
\[
\sup_{x\in K}
\frac1n\abs{\Tr\,\calG^{(ij)}(x)-\Tr\,\calM^{(ij)}(x)}
\le O_K(\eta)+o_\eta(1)
\]
for each $i,j\in\{1,2\}$. Since $\eta>0$ was arbitrary, we first let $n\to\infty$ and then let
$\eta\to 0$. This proves the theorem.
\end{proof}

\begin{cor}\label{cor:wigner-critical-identities-outlier}
Assume that $\alpha,\beta < 1$ and $\kappa \in (0,1)$. Then the right limits exists:
\[T(1):=\lim_{x\downarrow1}T(x) = -I_2,
\qquad
r(1):=\lim_{x\downarrow1}r(x) = -1,
\qquad
s(1):=\lim_{x\downarrow1}s(x) = -1\]
\end{cor}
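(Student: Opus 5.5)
The plan is to first exhibit an explicit solution of the deterministic system at $z=1$, and then show that the analytic continuation from Lemma~\ref{lem:WignerDeterministicExtension} converges to this particular solution as $x\downarrow 1$.

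\textbf{Step 1: the candidate solution.} Substitute $r=s=-1$ into \eqref{eq:wigner-finite-system}. The coefficients $\alpha^2(1+r)$ and $\beta^2(1+s)$ vanish, so the matrix in parentheses becomes $-zI_2$. At $z=1$ this forces $T=-I_2$. Since $\norm{p}=\norm{q}=1$, we then get $p^\intercal Tp=-1=r$ and $q^\intercal Tq=-1=s$. Hence $(r,s,T)=(-1,-1,-I_2)$ is a real solution of \eqref{eq:wigner-finite-system-scalars}--\eqref{eq:wigner-finite-system} at $z=1$.

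\textbf{Step 2: existence of the one-sided limits.} For real $x>1$, Lemma~\ref{lem:WignerDeterministicExtension} represents $m_u(x)=u^\intercal T(x)u=\int \mu_u(dt)/(t-x)$ with $\operatorname{supp}\mu_u\subset(-\infty,1]$. So $m_u(x)$ is real and negative. It is also strictly increasing in $x$, since its derivative is $\int \mu_u(dt)/(t-x)^2>0$. Therefore $m_u$ decreases monotonically as $x\downarrow 1$, and $\lim_{x\downarrow 1}m_u(x)$ exists in $[-\infty,0)$. Applying this with $u=p$, $u=q$, and, through polarization, with $u=e_1,e_2,e_1+e_2$ gives extended-real limits for $r$, $s$ and every entry of $T$.

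\textbf{Step 3: ruling out infinite limits.} Write $a(x)=\alpha^2(1+r(x))$ and $b(x)=\beta^2(1+s(x))$. On $(1,\infty)$ the system gives $T(x)=-(xI_2+aP+bQ)^{-1}$. I will compute $p^\intercal(xI_2+aP+bQ)^{-1}p$ explicitly using the $2\times 2$ inverse, together with $\norm{p}=\norm{q}=1$ and $p^\intercal q=\kappa$. Suppose $r\to-\infty$. Then $a\to-\infty$, and the explicit formula would make $p^\intercal T p$ tend to $0$ (or change sign) rather than to $-\infty$, which contradicts $r=p^\intercal Tp$. The same argument applies to $s$. With $r,s$ bounded, the entries of $T$ stay bounded as well. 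Passing to the limit in the equations, the limit $(r(1),s(1),T(1))$ is a real solution of the system at $z=1$ with $r(1),s(1)<0$.

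\textbf{Step 4: selecting the correct branch (main obstacle).} It remains to show that $(-1,-1)$ is the only admissible solution. Clearing denominators turns the two scalar equations at $z=1$ into a polynomial system in $(r,s)$, which generally has several real roots. My plan is to factor this system using the defining relation $\kappa^4\alpha^2\beta^2=(1-\alpha^2)(1-\beta^2)$ and check that $(-1,-1)$ is the only root in the region reached by the continuous branch. That branch starts near $(0^-,0^-)$ as $x\to\infty$ and moves monotonically downward in both coordinates as $x$ decreases.

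If the root bookkeeping turns out to be messy, I will instead argue by continuation. Let $H_x(r,s)$ denote the fixed-point residual. As long as $DH_x$ is invertible along the real branch, the implicit function theorem prevents the branch from jumping between roots. The branch can only terminate at the support edge, where $DH_x$ degenerates. I would then verify that at $(-1,-1,-I_2)$ the Jacobian $DF_1$, computed as in the proof of Proposition~\ref{prop:detStabilityWigner} with $c=p^\intercal Tq=-\kappa$, is $\begin{pmatrix}\alpha^2&\beta^2\kappa^2\\ \alpha^2\kappa^2&\beta^2\end{pmatrix}$. Its Perron eigenvalue equals $1$ exactly when $\kappa^4\alpha^2\beta^2=(1-\alpha^2)(1-\beta^2)$. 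This identifies $(-1,-1)$ as the edge point where the branch must end. I expect this branch-selection step to require the most care.
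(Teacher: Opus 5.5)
\textbf{The gap is in Step 4: nothing in your argument forces the branch to converge to $(-1,-1)$ rather than to another real root of the $z=1$ system.} Steps 1--3 are fine in outline: the triple $(-1,-1,-I_2)$ does solve the system at $z=1$, and monotonicity of the Stieltjes transforms gives one-sided limits. But negativity and monotonicity cannot select the root.

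Already for $\alpha=\beta$ (so $\kappa^2=(1-\alpha^2)/\alpha^2$ and $\alpha^2>1/2$), set $\nu=1+r=1+s$. On the diagonal the $z=1$ system reduces to $\alpha^2\nu^2\bigl(2-(2\alpha^2-1)(1-\nu)\bigr)=0$. Besides the double root $r=s=-1$, which reflects the degenerate Jacobian you found, it has the real root $r=s=-2/(2\alpha^2-1)<-2$. This root is negative, so $r(1),s(1)<0$ does not rule it out. Your remark that the branch moves monotonically downward from $(0^-,0^-)$ only confines it to $[r(1),0)\times[s(1),0)$, which is circular.

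The continuation alternative does not close the gap either, for two reasons:
\begin{enumerate}
\item It presupposes that the right edge of $\operatorname{supp}\mu_u$ is exactly $1$, whereas you only know $\operatorname{supp}\mu_u\subset(-\infty,1]$. If the edge were below $1$, the branch would simply be analytic at $x=1$, and its value there could be any regular root.
\item Even granting that the edge is $1$, showing $\rho(DF_1)=1$ at $(-1,-1)$ only shows that $(-1,-1)$ is a possible endpoint. You would still have to exclude every other degenerate root the branch might reach.
\end{enumerate}

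\textbf{The missing idea is an a priori barrier: $r(x)>-1$ and $s(x)>-1$ for every $x>1$.} This holds for large $x$, since $T(x)=-x^{-1}I_2+O(x^{-2})$. Suppose there is a first crossing as $x$ decreases, say $r(x)=-1$ with $s(x)\ge -1$. Then $c_1:=\alpha^2(1+r)=0$ and $c_2:=\beta^2(1+s)\ge 0$. The explicit $2\times 2$ inverse gives $r=-(x+(1-\kappa^2)c_2)/(x^2+xc_2)$. So $r=-1$ would force $x^2+xc_2=x+(1-\kappa^2)c_2$, which is impossible for $x>1$ because $x^2>x$ and $xc_2\ge(1-\kappa^2)c_2$.

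This barrier has two consequences:
\begin{itemize}
\item It gives boundedness for free, so your Step 3 becomes unnecessary.
\item It confines every subsequential limit to $\nu_1:=1+\lim r\ge 0$ and $\nu_2:=1+\lim s\ge 0$.
\end{itemize}

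In that region uniqueness is short. The limiting equations are $\nu_1\Delta=c_1+\kappa^2c_2+(1-\kappa^2)c_1c_2$ and its mirror. Here $c_1=\alpha^2\nu_1$, $c_2=\beta^2\nu_2$ and $\Delta=1+c_1+c_2+(1-\kappa^2)c_1c_2$. There are two cases:
\begin{itemize}
\item If $\nu_1=0$, the first equation gives $\kappa^2c_2=0$, hence $\nu_2=0$; the case $\nu_2=0$ is symmetric.
\item If $\nu_1,\nu_2>0$, the equations yield $(1-\alpha^2)\nu_1<\kappa^2\beta^2\nu_2$ and $(1-\beta^2)\nu_2<\kappa^2\alpha^2\nu_1$. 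Multiplying them gives $(1-\alpha^2)(1-\beta^2)<\kappa^4\alpha^2\beta^2$, which contradicts the definition of $\kappa$.
\end{itemize}

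This is how the paper's proof proceeds. It replaces both your root bookkeeping and the branch-continuation argument.
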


\begin{proof}
For real \(x>1\), Lemma~\ref{lem:WignerDeterministicExtension} implies
\(T(x)\prec0\). Recall \eqref{eq:wigner-finite-system}
\[-T(x)^{-1} = xI_2+c_1(x)P+c_2(x)Q. \]
where $c_1(x) = \alpha^2(1+r(x))$ and $c_2(x) = \beta^2(1+s(x))$. Recalling the definitions of $P, Q$ from \eqref{eq:defPQWigner}, the determinant $\det(-T(x))$ is given by
\[\Delta =x^2+x(c_1(x)+c_2(x)) + (1-\kappa^2)c_1(x)c_2(x),\]
Thus, by the $2\times2$ matrix inversion, we have that
\begin{equation*}
p^\intercal T(x) p = -\frac{x+(1-\kappa^2)c_2}{\Delta},
\qquad
q^\intercal T(x) q = -\frac{x + (1-\kappa^2)c_1}{\Delta}
\qquad
p^\intercal T(x)q= - \frac{\kappa x}{\Delta}.
\end{equation*}
We first show that 
 \[p^\intercal T(x) p, q^\intercal T(x) q  > -1\]
for every $x>1$. This holds for all large $x$, since $T(x)=-x^{-1}I_2+O(x^{-2})$. If a first crossing occurred, say at $p^\intercal T(x) p= 1$ and $q^\intercal T(x) q  \ge -1$, then $c_1(x) = 0$ and $c_2(x) \ge 0$. Then, at $x$,
  \[x^2 + xc_2 = x + (1-\kappa^2)c_2\]
which is impossible for all $x > 1$, as $x^2 > x$ and $xc_2 \ge (1-\kappa^2)c_2$. The case $q^\intercal T(x) q = -1$ is identical. Now let $x_j\downarrow1$ be any sequence along which $p^\intercal T(x_j) p, q^\intercal T(x_j)q$ converge, and write
\[\nu_1:=1+\lim_j p^\intercal T(x_j) p, \qquad \nu_2:=1+\lim_j q^\intercal T(x_j)q.\]
Then $\nu_1, \nu_2 \ge 0$. In the limit,
  \[c_1 = \alpha^2 \nu_1,\qquad c_2 = \beta^2 \nu_2,\qquad \Delta = 1 + c_1 + c_2 + (1-\kappa^2)c_1c_2\]
Moreover, the limiting forms of quadratic forms of $T(x)$ also show 
\[
\nu_1\Delta=c_1+\kappa^2 c_2+(1-\kappa^2 )c_1c_2,
\qquad
\nu_2\Delta=\kappa^2 c_1+c_2+(1-\kappa^2 )c_1c_2.
\]
If $\nu_1 = 0$, the first identity forces $c_2 = 0$, hence $\nu_2 = 0$. Similarly, if $\nu_2 = 0$, then $\nu_1 = 0$. Suppose therefore that $\nu_1,\nu_2>0$. Since $c_1 = \alpha^2 \nu_1$, the first limiting identity becomes
  \[\nu_1\left\{\Delta - \alpha^2 - (1-\kappa^2) \alpha^2 c_2\right\} = \kappa^2 c_2\]
The expression in brackets is strictly larger than $1-\alpha^2$, since $\alpha < 1$ and $0 < \kappa < 1$ implies,
  \[\left\{\Delta - \alpha^2 - (1-\kappa^2) \alpha^2 c_2\right\} - (1-\alpha^2) = c_1 + (1-\kappa^2)c_1c_2 + (1-(1-\kappa^2)\alpha^2)c_2 > 0\]
Thus we get that,
\[\nu_1(1-\alpha^2) < \kappa^2c_2, \qquad \nu_2(1-\beta^2) < \kappa^2 c_1\]
Multiplying these inequalities and dividing through by $\nu_1 \nu_2$, we get 
 \[(1-\alpha^2)(1-\beta^2) < \kappa^4\alpha^2\beta^2\]
which is a contradiction (recall the definition of $\kappa$ in \eqref{eq:kappaDefWignerSectionSomething}). 
Thus $\nu_1 = \nu_2 = 0$ for every subsequential limit as $x\downarrow 1$, and we get 
\[p^\intercal T(x)p\to -1,\qquad q^\intercal T(x)q\to -1, \qquad p^\intercal T(x)q\to -\kappa.\]
Since $p,q$ form a basis of $\RR^2$ we get $T(x)\to-I_2$. We conclude by noting that $r(x) = p^\intercal T(x) p$ and $s(x) = q^\intercal T(x) q$. 
\end{proof}

\subsection{Correlated spiked Wishart model}\label{subsec:wishart-resolvent}
Throughout this subsection,  we work with the null model (Section~\ref{subsec:wishart}). Let $U,V\in\RR^{n\times m}$ be independent Gaussian matrices with i.i.d. entries $U_{\mu i},V_{\mu i}\sim\calN(0,m^{-1})$, and suppose that the aspect ratio $n/m\to \tau ,$ we let 
\[
 \theta_1 := \frac{\alpha}{1+\alpha},
 \qquad
 \theta_2 := \frac{\beta}{1+\beta}.
\]
Let $\kappa\in(0,1)$ be the threshold and write the matrices that will be our main object of study
\[
\begin{aligned}
\tilde U:=\theta_1 U^\intercal U- \alpha \tau I_m, \\
\tilde V:=\theta_2 V^\intercal V- \beta \tau  I_m,
\end{aligned} 
\quad \text{ and } \quad W = \begin{pmatrix}
    \tilde U & \kappa \tilde V \\
    \kappa \tilde U & \tilde V
\end{pmatrix}.
\]
It will be convenient to symmetrize $W$, so we additionally introduce
\[
A:=
\begin{pmatrix}
1 & \kappa \\
 \kappa  & 1
\end{pmatrix},
\qquad
A^{1/2}=
\begin{pmatrix}
 u & v\\
 v & u
\end{pmatrix}
\]
and we let $p = (u,v)^\intercal$, $q = (v,u)^\intercal$, $P = pp^\intercal$, and $Q = qq^\intercal$. Thus $\|p\|=\|q\|=1$ and $p^\intercal q=\kappa $. Then, note that the associated symmetric null matrix $H$ can be defined by
\begin{equation}\label{eq:wishart-null-symmetric-matrix}
H:=
(A^{1/2}\otimes I_m)
\begin{pmatrix}
\tilde U & 0\\
0 & \tilde V
\end{pmatrix}
(A^{1/2}\otimes I_m) = (A^{-1/2}\otimes I_m) W (A^{1/2}\otimes I_m),
\end{equation}
so $W$ similar to $H$, and they have the same eigenvalues. Write
\[
\tilde B:=
\begin{pmatrix}
\sqrt{\theta_1}\,U & 0\\
0 & \sqrt{\theta_2}\,V
\end{pmatrix},
\quad
B:=  \tilde B(A^{1/2}\otimes I_m),
\quad
C :=(\alpha \tau  P+ \beta \tau  Q) 
\]
Then, we see that 
\begin{equation}\label{eq:wishart-gram-form}
H= B^\intercal B-C \otimes I_m.
\end{equation}
Accordingly, for $z\in\CC$ we introduce the linearized matrix
\begin{equation}\label{eq:linMatrixWishart}
\calL(z):=
\begin{pmatrix}
-(zI_{2m}+C \otimes I_m) & B^\intercal\\
 B & -I_{2n}
\end{pmatrix},
\qquad
\calG(z):=\calL(z)^{-1}.
\end{equation}
By Schur's complement, the top-left $2m\times 2m$ block of $\calG(z)$ is exactly $(H-zI_{2m})^{-1}$. We write $\calG(z)$ in the block form
\[
\calG(z)=
\begin{pmatrix}
\calG^{(11)} & \calG^{(12)} & \calG^{(13)} & \calG^{(14)}\\
\calG^{(21)} & \calG^{(22)} & \calG^{(23)} & \calG^{(24)}\\
\calG^{(31)} & \calG^{(32)} & \calG^{(33)} & \calG^{(34)}\\
\calG^{(41)} & \calG^{(42)} & \calG^{(43)} & \calG^{(44)}
\end{pmatrix},
\]
where $\calG^{(11)},\calG^{(22)}\in\RR^{m\times m}$ and $\calG^{(33)},\calG^{(44)}\in\RR^{n\times n}$.

\begin{definition}\label{def:det_system_wishart}
For each $z\in \CC_+$, define scalars $r(z),s(z)\in \CC_+$ and a $2\times2$ complex matrix
$T(z)=(t_{ij}(z))_{i,j\in\{1,2\}}$ as the unique solution to
\begin{gather}\label{eq:WishartDeterministicSystem}
\begin{aligned}
 r(z) &= \frac{1}{-1- \theta_1 \,p^\intercal T(z)p},\\
 s(z) &= \frac{1}{-1- \theta_2\,q^\intercal T(z)q},
\end{aligned}
\\
T(z)^{-1}=-zI_2-\tau \alpha P- \tau \beta  Q-\tau \theta_1 r(z)P-\tau \theta_2 s(z)Q.
\end{gather}
Set $m(z) = \Tr \, T(z) / 2$. Then $m$ is the Stieltjes transform of a compactly supported probability measure $\mu$ on $(-\infty, 1]$. Write $\lambda_* = 1$ and $\DD:=\CC\setminus(-\infty, \lambda_*].$ We can  extend $(r,s,T)$ analytically to $\DD$. Finally, define for all $z\in\DD$
\begin{equation}\label{eq:WishartDeterministicM}
\calM(z):=
\begin{pmatrix}
 t_{11}(z)I_m & t_{12}(z)I_m & 0 & 0\\
 t_{21}(z)I_m & t_{22}(z)I_m & 0 & 0\\
0 & 0 & r(z)I_n & 0\\
0 & 0 & 0 & s(z)I_n
\end{pmatrix}.
\end{equation}
\end{definition}

Note that existence, uniqueness, and analytic continuation of the system to $\DD$ is non-trivial and will be proved below. Next, we state that the linearized resolvent $\calG$ is close to the deterministic $\calM$ above.

\begin{theorem}\label{thm:resolvConvWishart}
Let $\calG$ be as in \eqref{eq:linMatrixWishart} and let $\calM$ be as in \eqref{eq:WishartDeterministicM}. Fix $\eta,\varepsilon>0$. Then, for any unit vectors $x,y\in\RR^{2m+2n}$,
\[
\sup_{z\in\Omega_\eta}
\abs{x^\intercal\calG(z)y-x^\intercal\calM(z)y}
\le O(n^{-1/2+\varepsilon})
\]
with overwhelming probability, where the hidden constant depends only on $\alpha,\beta,$ and $\eta$.
\end{theorem}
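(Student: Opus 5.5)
We follow the same two-step template as for Theorem~\ref{thm:resolvConvCCA} and Theorem~\ref{thm:resolvConvWigner}: first a concentration estimate $x^\intercal\calG(z)y\approx x^\intercal\EE\calG(z)y$, then deterministic stability $\EE\calG(z)\approx\calM(z)$.

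For the concentration step, we use Schur's complement on \eqref{eq:linMatrixWishart}. The top-left block is $(H-zI_{2m})^{-1}$, which has norm at most $\eta^{-1}$ on $\Omega_\eta$. The other blocks are $(H-z)^{-1}B^\intercal$, $B(H-z)^{-1}$, and $-I_{2n}+B(H-z)^{-1}B^\intercal$. Hence $\norm{\calG(z)}_\op\le O(\eta^{-1})(1+\norm{B}_\op)^2$, exactly as in Lemma~\ref{lem:GNOpNormCCA}. Since $\norm{B}_\op\le O(\norm U_\op+\norm V_\op)$, this is $O(1)$ with overwhelming probability and has bounded moments. Differentiating $\calL$ in $U_{\mu i}$ hits only the off-diagonal $B$ blocks. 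This gives $\norm{\grad_{U,V}x^\intercal\calG y}^2\le O(\norm{\calG}_\op^4)$, as in Lemma~\ref{lem:lipCCA}. We then repeat verbatim the McShane extension, Gaussian concentration (Lemma~\ref{lem:gaussianConLip}), second-moment comparison and $1/n$-net argument from Proposition~\ref{prop:CCA-concentration}, which yields uniformity over $\Omega_\eta$.

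For the expectation, the law of $\calL$ is invariant under $I_2\otimes O_m\oplus O_n\oplus O_n$. Indeed, $U\mapsto O_nUO_m^\intercal$ leaves the law invariant, and the $A^{1/2}\otimes I_m$ factor commutes with $I_2\otimes O_m$. Hence $\EE\calG$ has the block form $T_n\otimes I_m$ on the first $2m$ coordinates, $r_nI_n\oplus s_nI_n$ on the last $2n$, and vanishing cross blocks. To get the self-consistent equations we expand $\calL\calG=I$ blockwise. The $(33)$ block reads $\sqrt{\theta_1}\,U(p^\intercal\otimes I_m)\calG^{(\cdot 3)}-\calG^{(33)}=I_n$. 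Gaussian integration by parts (Lemma~\ref{lem:GaussianIBP}) on $\EE\Tr(U\,\cdot)$ produces a product of traces $\theta_1\cdot\frac1m\Tr[(p^\intercal\otimes I)\calG^{(11)}(p\otimes I)]\cdot\frac1n\Tr\calG^{(33)}\cdot\tau$, up to normalization, plus a trace of products of order $O(1/n)$. Factorizing via concentration as in Lemma~\ref{lem:hard-traces-into-easy-ones} gives $r_n(-1-\theta_1p^\intercal T_np)=1+O(n^{-1+\varepsilon})$, and similarly for $s_n$. The top-left blocks give $\bigl(-z-\tau\alpha P-\tau\beta Q-\tau\theta_1r_nP-\tau\theta_2s_nQ\bigr)T_n=I_2+O(n^{-1+\varepsilon})$. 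The index bookkeeping here ($p$ versus $e_b$ projections, and the $\tau=n/m$ factors) is routine.

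The main obstacle is stability, i.e.\ the analogue of Proposition~\ref{prop:detStabilityCCA}. We first fix the sign conventions. The quantities $r_n,s_n$ arise from the lower $-I_{2n}$ block, and here $\Im r_n$ has the same sign as $\Im z$. To see this, note that the lower block equals $-(I-B(H+C\otimes I+z)^{-1}B^\intercal)^{-1}$-type expressions, equivalently $-1-\theta\,\cdot$ a Stieltjes transform. We check that $(r,s)\in\CC_+^2$ is preserved. Then we derive imaginary-part identities: from $T^{-1}$ we get $\Im T=T^*\bigl((\Im z)I+\tau\theta_1\Im r\,P+\tau\theta_2\Im s\,Q\bigr)T$, and $\Im r=|r|^2\theta_1\Im(p^\intercal Tp)$. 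Combining these gives a positive vector $x$ with $|DF_z|x<x$ componentwise, the strict gap coming from the $\Im z\,\norm{Tp}^2$ term. This yields uniqueness and invertibility of the Jacobian of the fixed-point map $(r,s)\mapsto F_z(r,s)$. Montel's theorem then gives existence, and the complex inverse function theorem gives the quantitative $O(n^{-1+\varepsilon})$ bound, exactly as in Proposition~\ref{prop:detStabilityWigner}. Combining the concentration and stability steps proves the theorem.
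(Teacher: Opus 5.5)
Your proposal is correct and follows the paper's proof essentially step for step. Concentration goes via the Schur-complement bound on $\calG$, the Lipschitz estimate and the McShane/net argument (Proposition~\ref{prop:Wishart-concentration}). The self-consistent equations come from the invariance-based block form of $\EE\calG$ and Gaussian integration by parts (Proposition~\ref{prop:approxSelfEqWishart}). Stability is the imaginary-part and positive-vector argument, followed by Montel and the inverse function theorem (Proposition~\ref{prop:detStabilityWishart}).

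Two small points need care. First, you need independent orthogonal matrices on the two $n$-blocks, or at least the sign flip $V\mapsto -V$, to force the $(34)$ block of $\EE\calG$ to vanish; the same $O_n$ on both blocks does not give this. Second, $\Im r_n\ge 0$ follows most cleanly from $\Im\calG(z)=\calG(z)^*\bigl((\Im z)(I_{2m}\oplus 0_{2n})\bigr)\calG(z)\succeq 0$. Your Woodbury-type remark has the inner resolvent wrong: it should be $(zI_{2m}+C\otimes I_m)^{-1}$.
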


We then we will extend $\calG$ to the real line and show that.

\begin{theorem}\label{thm:real-stieltjes-transform-Wishart}
Let $\calG$ be as in \eqref{eq:linMatrixWishart} and let $\calM$ and $\lambda_*$ be as in Definition~\ref{def:det_system_wishart}. Then, for any compact set $K\subset (\lambda_*,\infty)$, we have
\[
\sup_{z\in K}
\left\|
\frac1m
\begin{pmatrix}
\Tr\calG^{(11)}(z) & \Tr\calG^{(12)}(z)\\
\Tr\calG^{(21)}(z) & \Tr\calG^{(22)}(z)
\end{pmatrix}
-T(z)
\right\|=o(1)
\]
and similarly 
\[
\sup_{z\in K}
\left\|
\frac1n
\begin{pmatrix}
\Tr\calG^{(33)}(z) & \Tr\calG^{(34)}(z)\\
\Tr\calG^{(43)}(z) & \Tr\calG^{(44)}(z)
\end{pmatrix}
-
\begin{pmatrix}
 r(z) & 0\\
 0 & s(z)
\end{pmatrix}
\right\|=o(1)
\]
with overwhelming probability.
\end{theorem}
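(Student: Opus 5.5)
The argument follows the proofs of Theorem~\ref{thm:real-stieltjes-transform-CCA} and Theorem~\ref{thm:real-stieltjes-transform-Wigner}, applied to the Wishart linearization \eqref{eq:linMatrixWishart}. It assumes Theorem~\ref{thm:resolvConvWishart} on $\Omega_\eta$, together with the existence, uniqueness and analytic continuation of $(r,s,T)$ to $\DD=\CC\setminus(-\infty,1]$ asserted in Definition~\ref{def:det_system_wishart}. The continuation is obtained exactly as in Lemma~\ref{lem:WignerDeterministicExtension}. The functions $u^*T(z)u$ and $r,s$ are limits of Stieltjes transforms of the expected spectral measures of $H$ and $BB^\intercal$-type blocks. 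Their mass beyond $1+\delta$ is bounded by $\PP(\lambda_{\max}(W)\ge 1+\delta)$, which tends to zero by Theorem~\ref{thm:WishartGordonEdge}.

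Fix a compact $K\subset(1,\infty)$ and $\delta>0$ with $K\subset(1+2\delta,\infty)$, and let $\eta\in(0,\delta/4)$. The proof has three steps.

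\textbf{Step 1 (above the axis).} Apply Theorem~\ref{thm:resolvConvWishart} with parameter $\eta/2$ to the pairs of standard basis vectors $(e_a\otimes e_i,\, e_b\otimes e_i)$ in each block and take a union bound over $O(n)$ pairs. Summing the diagonal entries then gives, with overwhelming probability,
\[
\sup_{x\in K}\frac1m\bigl|\Tr\calG^{(ab)}(x+i\eta)-\Tr\calM^{(ab)}(x+i\eta)\bigr|\le C_\eta n^{-1/4}.
\]
This covers $a,b\in\{1,2\}$ and the analogous normalized traces for the $(33),(34),(43),(44)$ blocks. The off-diagonal blocks $\calM^{(34)}$ vanish.

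\textbf{Step 2 (removing $i\eta$ for the random side).} This is the main point. Unlike the Wigner case, $\calG$ is not a plain resolvent, so $\|\calG(x)\|_{\op}$ must be controlled via the Schur complement formula for \eqref{eq:linMatrixWishart}. Since the $(2,2)$ block is $-I_{2n}$, the top-left block is $(H-zI)^{-1}$. The remaining blocks are
\[
(H-z)^{-1}B^\intercal,\qquad B(H-z)^{-1},\qquad -I+B(H-z)^{-1}B^\intercal .
\]
On the event $\lambda_{\max}(H)\le 1+\delta$, which holds with overwhelming probability by Theorem~\ref{thm:WishartGordonEdge} since $e^{-\Omega(n)}$ is overwhelming, we have $\|(H-z)^{-1}\|_{\op}\le\delta^{-1}$ for $z=x$ or $x+i\eta$ with $x\in K$. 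We also have $\|B\|_{\op}=O(1)$ with overwhelming probability by standard Gaussian operator-norm bounds. Hence $\|\calG(z)\|_{\op}=O(1)$ at both points.

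The resolvent identity $\calG(x+i\eta)-\calG(x)=\calG(x+i\eta)\bigl(\calL(x)-\calL(x+i\eta)\bigr)\calG(x)$ applies because $\calL(x)-\calL(x+i\eta)=i\eta\,(I_{2m}\oplus 0)$. It gives an operator-norm difference of $O(\eta)$, so each normalized block trace moves by $O(\eta)$.

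\textbf{Step 3 (deterministic side and conclusion).} The entries of $T$ and the functions $r,s$ are analytic on a neighbourhood of $K\subset\DD$. By uniform continuity on a compact neighbourhood, $|T_{ij}(x+i\eta)-T_{ij}(x)|+|r(x+i\eta)-r(x)|+|s(x+i\eta)-s(x)|=O_K(\eta)$ uniformly in $x\in K$.

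Combining the three steps by the triangle inequality gives an error of $O_K(\eta)+C_\eta n^{-1/4}$ with overwhelming probability. Letting $n\to\infty$ and then $\eta\downarrow 0$ yields the $o(1)$ bound for both $2\times 2$ trace matrices.

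The main obstacle is the uniform operator bound on $\calG$ at real $x$ in Step 2. It relies on the Gordon edge bound and on control of $\|B\|_{\op}$. There is also a mild normalization issue between $\frac1m$ and $\frac1n$ for the lower blocks, which is harmless since $n/m\to\tau$.
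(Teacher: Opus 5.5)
Your proposal is correct and follows the paper's proof essentially step for step. The paper also uses the above-axis estimate from Theorem~\ref{thm:resolvConvWishart} with a union bound over diagonal entries. It controls $\calG$ at real $x$ by feeding the Gordon edge bound and $\|B\|_{\op}=O(1)$ into the Schur complement formula \eqref{eq:SchurGnWishart}, then applies the resolvent identity with $\calL(x)-\calL(x+i\eta)=i\eta(I_{2m}\oplus 0)$. Finally, it uses analyticity of $(r,s,T)$ near $K$ from Lemma~\ref{lem:WishartDeterministicExtension} to remove $i\eta$ on the deterministic side.

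The only inessential difference is your side sketch of how $r,s$ are continued. Strictly, it is $r_n+1$, not $r_n$, that is a Stieltjes transform of a positive measure on the spectrum of $H$, because the bottom-right block of $\calG$ is $B(H-z)^{-1}B^\intercal-I_{2n}$. The paper instead continues $r,s$ algebraically through $r=-1/(1+\theta_1 p^\intercal Tp)$, after showing this denominator has no zeros on $\DD$.
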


\subsubsection{Concentration of linearized resolvent}

As in the CCA case, we first prove that the linearized resolvent concentrates around its expectation on the domain $\Omega_\eta$. The argument is again based on Gaussian concentration for Lipschitz functions, together with a resolvent bound that is uniform on $\Omega_\eta$. Our goal in this subsection is to prove the following proposition.

\begin{prop}\label{prop:Wishart-concentration}
Let $\eta,\varepsilon>0$. Then for any unit vectors $x,y\in\RR^{2m+2n}$,
\[
\sup_{z\in\Omega_\eta}
\abs{x^\intercal\calG(z)y-x^\intercal\EE\calG(z)y}
\le O(n^{-1/2+\varepsilon})
\]
with overwhelming probability, where the hidden constant depends only on $\alpha,\beta$ and $\eta$.
\end{prop}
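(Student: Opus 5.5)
The plan is to follow the CCA argument of Proposition~\ref{prop:CCA-concentration}: pointwise Gaussian concentration of $F(z):=x^\intercal\calG(z)y$ via Lemma~\ref{lem:gaussianConLip}, then a net argument in $z$. There is one structural difference from the Wigner case. $\calG(z)$ is the inverse of the linearization $\calL(z)$ in \eqref{eq:linMatrixWishart}, so its operator norm is not bounded deterministically by $(\Im z)^{-1}$. As in the CCA case, we will therefore need a truncation argument combined with a McShane extension.

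\textbf{Step 1: resolvent bound.} First we prove the analogue of Lemma~\ref{lem:GNOpNormCCA}. Schur complement with respect to the lower-right block $-I_{2n}$ gives
\[
\calG(z)=\begin{pmatrix}R & RB^\intercal\\ BR & -I_{2n}+BRB^\intercal\end{pmatrix},\qquad R:=(H-zI_{2m})^{-1}.
\]
This uses $B^\intercal B - C\otimes I_m = H$ from \eqref{eq:wishart-gram-form}. Since $\norm{R}_\op\le\eta^{-1}$ on $\Omega_\eta$, we get
\[
\sup_{z\in\Omega_\eta}\norm{\calG(z)}_\op\le O(\eta^{-1})\bigl(1+\norm{B}_\op+\norm{B}_\op^2\bigr).
\]
Moreover $\norm{B}_\op\le \norm{A^{1/2}}_\op(\norm{U}_\op+\norm{V}_\op)$. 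Standard bounds on Gaussian matrices then yield $\norm{\calG}_\op=O(1)$ w.o.p., together with polynomially bounded moments.

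\textbf{Step 2: Lipschitz bound.} Next, the analogue of Lemma~\ref{lem:lipCCA}. The disorder enters $\calL$ only through the off-diagonal blocks $B$ and $B^\intercal$. Using \eqref{eq:derivativeInverse}, $\partial F/\partial U_{\mu i}$ is a sum of two bilinear terms in $\calG x$ and $\calG y$. Concretely, $\partial B/\partial U_{\mu i}=\sqrt{\theta_1}\,(e_\mu e_i^\intercal\oplus 0)(A^{1/2}\otimes I_m)$, which has rank one with norm $O(1)$. Summing the squares over indices gives
\[
\norm{\grad_{U,V}F}^2\le C\norm{\calG(z)}_\op^4 .
\]

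\textbf{Step 3: concentration and uniformity in $z$.} On the w.o.p. event $\cE$ where $\norm{\calG}_\op=O(1)$, the map $F$ is $C$-Lipschitz. Let $\tilde F$ be its McShane extension, taken separately for the real and imaginary parts. Since the entries have variance $m^{-1}=\Theta(n^{-1})$, Lemma~\ref{lem:gaussianConLip} gives $|\tilde F-\EE\tilde F|\le n^{-1/2+\varepsilon}$ w.o.p. To compare expectations, we bound $|\EE F-\EE\tilde F|$ by Cauchy--Schwarz, using $\PP(\cE^c)\le n^{-C}$ and the polynomial second moments of $F$ and $\tilde F$. For uniformity we use $\partial_zF = x^\intercal\calG(I_{2m}\oplus 0)\calG y$, which is $O(1)$ on $\cE$. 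The same holds for $\EE F$, using the fourth-moment bound. A $1/n$-net of $\Omega_\eta$ of size $O(n^2)$, together with a union bound, then finishes the argument.

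\textbf{Main obstacle.} The one delicate point is the choice of set for the McShane extension. We need $(0,0)\in\cE$, and we need $\cE$ to control $\sup_z\norm{\calG}_\op$ uniformly. To arrange this, we take $\cE=\{\norm{U}_\op,\norm{V}_\op\le K\}$ for a large constant $K$. This event holds w.o.p. by Gaussian concentration of the operator norm, and it is deterministic enough that Step 1 bounds $\norm{\calG}_\op$ on it for every $z\in\Omega_\eta$.
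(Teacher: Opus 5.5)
Your proposal is correct and follows the paper's proof essentially step for step: the Schur-complement bound on $\norm{\calG(z)}_\op$ through $\norm{B}_\op$, the gradient bound $\norm{\grad_{U,V}F}^2\le C\norm{\calG(z)}_\op^4$, a McShane extension combined with Gaussian concentration, a Cauchy--Schwarz comparison of $\EE F$ with $\EE\tilde F$, and a $1/n$-net in $z$. The only deviation is your truncation event $\{\norm{U}_\op,\norm{V}_\op\le K\}$ in place of the paper's $\{\sup_{w\in\Omega_\eta}\norm{\calG(w)}_\op\le C_\eta\}$. Because your event is convex, the pointwise gradient bound really does give the Lipschitz bound on the set that McShane requires, so this is a small gain in rigor rather than a different route.
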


We begin with a uniform operator norm bound for the linearized resolvent.

\begin{lemma}\label{lem:GNOpNormWishart}
For every $\eta>0$,
\[
\EE\sup_{z\in\Omega_\eta}\norm{\calG(z)}_{\op}^4\le O_\eta(1),
\quad \text{ and } \quad 
\sup_{z\in\Omega_\eta}\norm{\calG(z)}_{\op}\le O_\eta(1)
\]
with overwhelming probability.
\end{lemma}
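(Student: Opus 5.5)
The plan is to mimic the proof of Lemma~\ref{lem:GNOpNormCCA}: use the Schur complement of the linearization $\calL(z)$ in \eqref{eq:linMatrixWishart} with respect to its bottom-right block $-I_{2n}$. Inverting blockwise, the top-left block of $\calG(z)$ is $(-(zI_{2m}+C\otimes I_m)+B^\intercal B)^{-1}=(H-zI_{2m})^{-1}$, by \eqref{eq:wishart-gram-form}. The remaining blocks are explicit in terms of this resolvent:
\[
\calG^{(\cdot,\mathrm{bottom})}=(H-z)^{-1}B^\intercal,\qquad \calG^{(\mathrm{bottom},\cdot)}=B(H-z)^{-1},\qquad \calG^{(\mathrm{bottom},\mathrm{bottom})}=-I_{2n}+B(H-z)^{-1}B^\intercal .
\]
The signs are immaterial for norm bounds. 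The operator norm of $\calG(z)$ is at most the sum of the norms of its blocks, so
\[
\norm{\calG(z)}_{\op}\le 1+\norm{(H-z)^{-1}}_{\op}\bigl(1+2\norm{B}_{\op}+\norm{B}_{\op}^2\bigr).
\]

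Since $H$ is real symmetric and $\Im z\ge\eta$ on $\Omega_\eta$, we have $\norm{(H-zI_{2m})^{-1}}_{\op}\le\eta^{-1}$ deterministically and uniformly over $z\in\Omega_\eta$. Moreover $B=\tilde B(A^{1/2}\otimes I_m)$ with $\norm{A^{1/2}}_{\op}\le\sqrt{1+\kappa}\le\sqrt2$ and $\theta_i<1$, which gives $\norm{B}_{\op}\le\sqrt2(\norm{U}_{\op}+\norm{V}_{\op})$. Combining,
\[
\sup_{z\in\Omega_\eta}\norm{\calG(z)}_{\op}\le C\eta^{-1}\bigl(1+\norm{U}_{\op}+\norm{V}_{\op}\bigr)^2 ,
\]
where the constant $C$ is absolute, using $\eta\le1$ since $\Omega_\eta$ is nonempty. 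The supremum has thus been moved inside a deterministic bound, so no net argument is needed.

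It remains to control $\norm{U}_{\op}$ and $\norm{V}_{\op}$, which are rectangular $n\times m$ Gaussian matrices with entries of variance $m^{-1}$ and $n/m\to\tau$.
\begin{itemize}
\item By standard bounds, or by Gaussian concentration (Lemma~\ref{lem:gaussianConLip}) applied to the $m^{-1/2}$-Lipschitz map $U\mapsto\norm{U}_{\op}$ together with $\EE\norm{U}_{\op}\le 1+\sqrt{n/m}$, we get $\PP(\norm{U}_{\op}\ge 1+\sqrt\tau+1+t)\le 2e^{-cmt^2}$.
\item This yields $\norm{U}_{\op},\norm{V}_{\op}\le O(1)$ with overwhelming probability, and hence the second claim, $\sup_{z\in\Omega_\eta}\norm{\calG(z)}_{\op}\le O_\eta(1)$ with overwhelming probability.
\item The same subgaussian tail gives $\EE\norm{U}_{\op}^8=O(1)$, so
\[
\EE\sup_{z\in\Omega_\eta}\norm{\calG(z)}_{\op}^4\le C\eta^{-4}\,\EE\bigl(1+\norm{U}_{\op}+\norm{V}_{\op}\bigr)^8=O_\eta(1),
\]
which is the first claim.
\end{itemize}

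No step is a real obstacle. The only point needing care is the Schur complement bookkeeping: one must confirm that the top-left block is exactly $(H-z)^{-1}$, which relies on the bottom-right block $-I_{2n}$ being trivially invertible. The dependence of the constant on $\alpha,\beta$ enters only through $\kappa$ and $\theta_i$, both of which are bounded.
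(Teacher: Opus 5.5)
Your proposal is correct and follows the paper's proof essentially verbatim. The steps are the same: the Schur complement of $\calL(z)$ with respect to the block $-I_{2n}$, the deterministic bound $\|(H-zI_{2m})^{-1}\|_{\op}\le\eta^{-1}$ on $\Omega_\eta$, the sum-of-blocks estimate $1+\eta^{-1}(1+\|B\|_{\op})^2$, and Gaussian operator-norm bounds for $U,V$. The only difference is that you spell out the tail and moment bounds via Lemma~\ref{lem:gaussianConLip} and $\EE\|U\|_{\op}\le 1+\sqrt{n/m}$, where the paper just cites standard results. There is one small slip: $U\mapsto\|U\|_{\op}$ is $1$-Lipschitz, and it is $G\mapsto\|m^{-1/2}G\|_{\op}$ that is $m^{-1/2}$-Lipschitz; the resulting tail $e^{-cmt^2}$ is the same.
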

\begin{proof}
Let $R(z):=(H-zI_{2m})^{-1}.$ Since $H$ is real symmetric and $\Im z\ge \eta$ on $\Omega_\eta$, we have that 
\[
\sup_{z\in\Omega_\eta}\norm{R(z)}_{\op}\le \eta^{-1}.
\]
Applying Schur's complement to \eqref{eq:linMatrixWishart} with respect to the lower-right block $-I_{2n}$ gives the following form for $\calG(z)$
\begin{equation}\label{eq:SchurGnWishart}
\calG(z)=
\begin{pmatrix}
R(z) & R(z)B^\intercal\\
BR(z) & BR(z)B^\intercal-I_{2n}
\end{pmatrix}.
\end{equation}
This means that the operatorm norm of $\calG(z)$ is bounded by the sum of operator norm of its blocks
\[
\sup_{z\in\Omega_\eta}\norm{\calG(z)}_{\op}
\le 1+\eta^{-1}\paren{1+2\norm{B}_{\op}+\norm{B}_{\op}^2}.
\]
Now, note that since $\|A^{1/2}\|_\op = O(1)$ and $\tilde B$ is a normalized Gaussian matrix, by standard Gaussian matrix norm bounds, we have that $\|\tilde B\| = O(1)$ with overwhelming probability, and hence $\|B\|_\op = O(1)$ with overwhelming probability. In fact, same bound holds even in expectation of any fixed moment; see for instance \cite{TaoBook}. Combining these bounds together, we are done. 
\end{proof}

Next we estimate the Lipschitz constant of a resolvent quadratic form with respect to the Gaussian disorder.

\begin{lemma}\label{lem:lipWishart}
Fix $x,y\in\CC^{2m+2n}$ with $\norm{x}=\norm{y}=1$ and $z\in\CC_+$. Then the map $(U,V)\mapsto x^\intercal\calG(z)y$ is smooth, and its gradient satisfies
\[
\norm{\grad_{U,V}\paren{x^\intercal\calG(z)y}}^2\le 4\norm{\calG(z)}_{\op}^4.
\]
In particular, on any set where $\norm{\calG(z)}_{\op}\le L$, this map is $2L^2$-Lipschitz.
\end{lemma}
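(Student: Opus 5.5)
We follow the same route as Lemma~\ref{lem:lipCCA}: differentiate the resolvent quadratic form entrywise in the disorder, then sum the squared partial derivatives using Cauchy--Schwarz.

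Set $F:=x^\intercal\calG(z)y$, $\xi:=\calG(z)x$ and $\zeta:=\calG(z)y$. The matrix $\calL(z)$ in \eqref{eq:linMatrixWishart} is symmetric, so $\calG(z)^\intercal=\calG(z)$. Smoothness is immediate: for $z\in\CC_+$ the top-left block is invertible, and the Schur complement formula \eqref{eq:SchurGnWishart} writes $\calG(z)$ as a rational function of the entries of $(U,V)$ with nonvanishing denominator. By \eqref{eq:derivativeInverse}, for every disorder variable $w$,
\[
\pp{F}{w}=-x^\intercal\calG\,\pp{\calL}{w}\,\calG y=-\xi^\intercal\pp{\calL}{w}\zeta .
\]

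The disorder enters $\calL$ only through the off-diagonal blocks $B$ and $B^\intercal$, where $B=\tilde B(A^{1/2}\otimes I_m)$. Take first $w=U_{\mu i}$. Then $\partial B/\partial U_{\mu i}=\sqrt{\theta_1}\,e_\mu(e_1\otimes e_i)^\intercal(A^{1/2}\otimes I_m)=\sqrt{\theta_1}\,e_\mu\,(p\otimes e_i)^\intercal$, using $A^{1/2}e_1=p$. Write $\xi=(\xi_{\rm top},\xi_{\rm bot})\in\CC^{2m}\times\CC^{2n}$, and similarly for $\zeta$. Then
\[
\pp{F}{U_{\mu i}}=-\sqrt{\theta_1}\Big(a_i\,(\zeta_{\rm bot})_{\mu}+(\xi_{\rm bot})_{\mu}\,b_i\Big),
\qquad a:=(p^\intercal\otimes I_m)\xi_{\rm top},\quad b:=(p^\intercal\otimes I_m)\zeta_{\rm top},
\]
where $(\cdot)_\mu$ denotes the coordinate of the first $n$-block of the bottom part. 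Since $\|p\|=1$, Cauchy--Schwarz gives $\|a\|\le\|\xi_{\rm top}\|$ and $\|b\|\le\|\zeta_{\rm top}\|$. Applying $(s+t)^2\le 2s^2+2t^2$ and summing over $(\mu,i)$,
\[
\sum_{\mu,i}\Big|\pp{F}{U_{\mu i}}\Big|^2\le 2\theta_1\big(\|\xi_{\rm top}\|^2\|\zeta_{\rm bot}^{(1)}\|^2+\|\xi_{\rm bot}^{(1)}\|^2\|\zeta_{\rm top}\|^2\big),
\]
where the superscript $(1)$ marks the first $n$-block of the bottom part. The $V$ derivatives are handled the same way, with $q$ and $\theta_2$ in place of $p$ and $\theta_1$, and they involve the second $n$-blocks $\xi_{\rm bot}^{(2)},\zeta_{\rm bot}^{(2)}$.

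Adding the two contributions and using $\theta_1,\theta_2<1$, we get
\[
\|\grad F\|^2\le 2\big(\|\xi_{\rm top}\|^2\|\zeta_{\rm bot}\|^2+\|\xi_{\rm bot}\|^2\|\zeta_{\rm top}\|^2\big)\le 4\|\xi\|^2\|\zeta\|^2\le 4\|\calG(z)\|_{\op}^4,
\]
since $\|\xi\|\le\|\calG\|_{\op}$ and $\|\zeta\|\le\|\calG\|_{\op}$ for unit $x,y$. For the Lipschitz statement, let $\mathcal D=\{(U,V):\|\calG(z)\|_{\op}\le L\}$. On a convex subset of $\mathcal D$, integrating the gradient bound along segments gives Lipschitz constant $2L^2$. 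If $\mathcal D$ is not convex, the same bound holds in the sense actually used later: the McShane extension in the concentration proof only needs the estimate on $\mathcal D$ itself.

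The one step needing care is bookkeeping. Each entry $U_{\mu i}$ appears in both the $B$ and the $B^\intercal$ blocks, which produces two terms, not one. The factor $A^{1/2}$ mixes the two $m$-blocks, but this is absorbed exactly by the unit vectors $p$ and $q$. Beyond that the computation is routine and parallels Lemma~\ref{lem:lipCCA}.
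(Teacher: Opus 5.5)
Your computation is correct and is essentially the paper's proof. It uses the same entrywise derivative $\partial F/\partial U_{\mu i}=-\xi^\intercal(\partial\calL/\partial U_{\mu i})\zeta$, with one term from $B$ and one from $B^\intercal$. It absorbs $A^{1/2}$ the same way, through $\|p\|=\|q\|=1$, and it sums using $\theta_1,\theta_2<1$ exactly as the paper does.

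The only soft spot is the Lipschitz clause when the set is not convex. The McShane extension needs $|F(P)-F(P')|\le C\|P-P'\|$ for every pair $P,P'$ in the set. A pointwise gradient bound does not give this when the segment between them leaves the set. So the claim that it ``only needs the estimate on $\mathcal D$ itself'' is not accurate as stated.

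This can be fixed without any convexity. Write $P=(U,V)$, $P'=(U',V')$, and let $B'$ denote $B$ at $P'$. Use the two-point resolvent identity $\calG(P)-\calG(P')=\calG(P)\bigl(\calL(P')-\calL(P)\bigr)\calG(P')$. The difference $\calL(P')-\calL(P)$ has operator norm $\|B'-B\|_{\op}\le\sqrt{1+\kappa}\,\|P-P'\|_F$. Hence $|F(P)-F(P')|\le\sqrt{1+\kappa}\,L^2\|P-P'\|\le 2L^2\|P-P'\|$ whenever $\|\calG(z)\|_{\op}\le L$ at both points. The paper's own proof stops at the gradient bound and leaves this clause implicit.
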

\begin{proof}
Fix $z\in\CC_+$ and for this proof write
\[
F:=x^\intercal\calG(z)y,
\qquad
\xi:=\calG(z)x,
\qquad
\zeta:=\calG(z)y.
\]
We partition $\xi$ and $\zeta$ according to the four blocks of $\calG$:
\[
\xi=(\xi^{(1)},\xi^{(2)},\xi^{(3)},\xi^{(4)}),
\quad \text{ and } \quad 
\zeta=(\zeta^{(1)},\zeta^{(2)},\zeta^{(3)},\zeta^{(4)}),
\]
with $\xi^{(1)},\xi^{(2)},\zeta^{(1)},\zeta^{(2)}\in\CC^m$ and $\xi^{(3)},\xi^{(4)},\zeta^{(3)},\zeta^{(4)}\in\CC^n$.
For $\mu\in[n]$ and $i\in[m]$, let $E_{\mu i}:=e_\mu e_i^\intercal\in\RR^{n\times m}$ be the elementary matrix. Since
\[
B=
\begin{pmatrix}
\sqrt{\theta_1}\,uU & \sqrt{\theta_1}\,vU\\
\sqrt{\theta_2}\,vV & \sqrt{\theta_2}\,uV
\end{pmatrix},
\]
we can write the derivatives 
\[
\pp{B}{U_{\mu i}}=
\sqrt{\theta_1}
\begin{pmatrix}
u E_{\mu i} & vE_{\mu i}\\
0 & 0
\end{pmatrix},
\qquad
\pp{B}{V_{\mu i}}=
\sqrt{\theta_2}
\begin{pmatrix}
0 & 0\\
vE_{\mu i} & uE_{\mu i}
\end{pmatrix}.
\]
Using \eqref{eq:derivativeInverse} and the observation that only the off-diagonal blocks of $\calL(z)$ depend on $U$ and $V$, we get that $\partial F/\partial U_{\mu i}$ is equal to
\[
-x^\intercal\calG(z)\pp{\calL(z)}{U_{\mu i}}\calG(z)y
=-\sqrt{\theta_1}\paren{(\xi^{(3)}_\mu)^\intercal \paren{u\zeta^{(1)}_i+v\zeta^{(2)}_i}+(\zeta^{(3)}_\mu)^\intercal \paren{u\xi^{(1)}_i+v\xi^{(2)}_i}},
\]
and similarly, we also get that 
\[
\pp{F}{V_{\mu i}}
=-\sqrt{\theta_2}\paren{(\xi^{(4)}_\mu)^\intercal \paren{v\zeta^{(1)}_i+u\zeta^{(2)}_i}+(\zeta^{(4)}_\mu)^\intercal\paren{v\xi^{(1)}_i+u\xi^{(2)}_i}}.
\]
Since $u^2+v^2=1$, Cauchy--Schwarz gives
\[
\norm{u\zeta^{(1)}+v\zeta^{(2)}}^2\le \norm{\zeta^{(1)}}^2+\norm{\zeta^{(2)}}^2,
\qquad
\norm{v\zeta^{(1)}+u\zeta^{(2)}}^2\le \norm{\zeta^{(1)}}^2+\norm{\zeta^{(2)}}^2,
\]
and the same bounds with $\zeta$ replaced by $\xi$. Therefore, summing up over indices
\begin{align*}
\sum_{\mu,i}\abs{\pp{F}{U_{\mu i}}}^2
&\le 2\theta_1\paren{\norm{\xi^{(3)}}^2\paren{\norm{\zeta^{(1)}}^2+\norm{\zeta^{(2)}}^2}+\norm{\zeta^{(3)}}^2\paren{\norm{\xi^{(1)}}^2+\norm{\xi^{(2)}}^2}},\\
\sum_{\mu,i}\abs{\pp{F}{V_{\mu i}}}^2
&\le 2\theta_2\paren{\norm{\xi^{(4)}}^2\paren{\norm{\zeta^{(1)}}^2+\norm{\zeta^{(2)}}^2}+\norm{\zeta^{(4)}}^2\paren{\norm{\xi^{(1)}}^2+\norm{\xi^{(2)}}^2}}.
\end{align*}
Adding these inequalities and using $0<\theta_1,\theta_2<1$, we obtain
\[
\norm{\grad_{U,V}F}^2
\le 4\norm{\xi}^2\norm{\zeta}^2
\le 4\norm{\calG(z)}_{\op}^4,
\]
as claimed.
\end{proof}

We are now ready to prove the concentration proposition.

\begin{proof}[Proof of Proposition~\ref{prop:Wishart-concentration}]
Fix unit vectors $x,y\in\RR^{2m+2n}$ and write
\[
F(z):=x^\intercal\calG(z)y,
\quad \text{ for } z\in\Omega_\eta.
\]
Throughout the proof, $C$ denotes a positive constant depending only on $\alpha,\beta$ and $\tau$, and may change from line to line. Choose $C_\eta>0$ so large that the event
\[
\calE:=\set{\sup_{w\in\Omega_\eta}\norm{\calG(w)}_{\op}\le C_\eta}
\] 
satisfies $\PP(\calE^c)\le n^{-D}$ for every $D>0$ and all sufficiently large $n$; this is possible by Lemma~\ref{lem:GNOpNormWishart}. On $\calE$, the  Lemma~\ref{lem:lipWishart} implies that, for every fixed $z\in\Omega_\eta$, the map $(U,V)\mapsto F(z)$ is $C$-Lipschitz. As in the proof of Proposition~\ref{prop:CCA-concentration}, we therefore define $\tilde F(z)$ to be the McShane extension of the real and imaginary parts of $F(z)|_{\calE}$ to all of $\RR^{nm}\times\RR^{nm},$ that is 
\begin{gather*}
\Im \tilde F(z)\vert_{(U,V)}:=\inf_{(U',V')\in\calE}\paren{\Im F(z)\vert_{(U',V')}+C\norm{(U,V)-(U',V')}_2},\\
\Re \tilde F(z)\vert_{(U,V)}:=\inf_{(U',V')\in\calE}\paren{\Re F(z)\vert_{(U',V')}+C\norm{(U,V)-(U',V')}_2}.
\end{gather*}
Then $\tilde F(z)$ is $C$-Lipschitz on the whole Gaussian space and $\tilde F(z)=F(z)$ on $\calE$.

Fix $z\in\Omega_\eta$. The entries of $U$ and $V$ are independent centered Gaussians of variance $m^{-1}$, so Lemma~\ref{lem:gaussianConLip} applied to the real and imaginary parts of $\tilde F(z)$ yields
\[
\PP\paren{\abs{\tilde F(z)-\EE\tilde F(z)}\ge t}
\le 4\exp\paren{-cn t^2}
\]
for some constant $c>0$. Since $F(z)=\tilde F(z)$ on $\calE$, it follows that
\[
\PP\paren{\abs{F(z)-\EE\tilde F(z)}\ge t}
\le \PP(\calE^c)+4\exp\paren{-cn t^2}.
\]
Taking $t=n^{-1/2+\varepsilon}$, we obtain that for every fixed $z\in\Omega_\eta$,
\begin{equation}\label{eq:FBoundWishart}
\abs{F(z)-\EE\tilde F(z)}\le O(n^{-1/2+\varepsilon})
\end{equation}
with overwhelming probability. We next replace $\EE\tilde F(z)$ by $\EE F(z)$. Since $F(z)=\tilde F(z)$ on $\calE$,
\[
\abs{\EE F(z)-\EE\tilde F(z)}
\le \EE\abs{F(z)-\tilde F(z)}\II_{\calE^c}
\le \PP(\calE^c)^{1/2}\paren{\EE\abs{F(z)}^2+\EE|{\tilde F(z)}|^2}^{1/2}.
\]
Now $\abs{F(z)}\le \norm{\calG(z)}_{\op}$, so Lemma~\ref{lem:GNOpNormWishart} implies $\EE\abs{F(z)}^2\le O_\eta(1)$. Moreover, since $\tilde F(z)$ is Lipschitz and $(0,0)\in\calE$ for all large $m$,
\[
\EE|{\tilde F(z)}|^2
\le 2|{\tilde F(z)\vert_{(0,0)}}|^2+2C^2\EE\norm{(U,V)}_F^2
\le O(n),
\]
because $\EE\norm{(U,V)}_F^2=2n$. Since $\PP(\calE^c)$ decays faster than any power of $n$, we conclude that
\[
\abs{\EE F(z)-\EE\tilde F(z)}\le O(n^{-D})
\]
for every $D>0$ and sufficiently large $n$. Hence \eqref{eq:FBoundWishart} implies that for each fixed $z\in\Omega_\eta$,
\[
\abs{F(z)-\EE F(z)}\le O(m^{-1/2+\varepsilon})
\]
with overwhelming probability. It remains to make the estimate uniform in $z$. Differentiating with respect to $z$ and using \eqref{eq:derivativeInverse},
\begin{equation}\label{eq:zLipWishartArgument}
\pp{F}{z}(z)=x^\intercal\calG(z)\paren{I_{2m}\oplus 0_{2n}}\calG(z)y.
\end{equation}
Therefore, on the event $\calE$, we have 
\[
\sup_{z\in\Omega_\eta}\abs{\pp{F}{z}(z)}\le C_\eta^2.
\]
Taking expectations in \eqref{eq:zLipWishartArgument} and using Lemma~\ref{lem:GNOpNormWishart}, we additionally get
\[
\sup_{z\in\Omega_\eta}\abs{\pp{}{z}\EE F(z)}\le O_\eta(1).
\]
Let $\calS$ be an $n^{-1}$-net of $\Omega_\eta$ with cardinality $\abs{\calS}=O(n^2)$. Applying the fixed-$z$ estimate to every point of $\calS$ and taking a union bound, we obtain that with overwhelming probability,
\[
\sup_{z\in\calS}\abs{F(z)-\EE F(z)}\le O(n^{-1/2+\varepsilon}).
\]
Since both $F$ and $\EE F$ are $O_\eta(1)$-Lipschitz in $z$ on $\Omega_\eta$, the same bound extends from the net to all of $\Omega_\eta$. This proves the proposition.
\end{proof}

\subsubsection{Derivations of approximate equations}

In this subsection we describe $\EE\calG(z)$ and derive the finite-size system satisfied by its coefficients. The argument is analogous to CCA case. In particular, we will prove the following proposition.

\begin{prop}\label{prop:approxSelfEqWishart}
For every $z\in\CC_+$, the expectation $\EE\calG(z)$ has the block form
\begin{equation}\label{eq:WishartExpectedResolventForm}
\EE\calG(z)=
\begin{pmatrix}
 t_{11,n}(z)I_m & t_{12,n}(z)I_m & 0 & 0\\
 t_{21,n}(z)I_m & t_{22,n}(z)I_m & 0 & 0\\
 0 & 0 & r_n(z)I_n & 0\\
 0 & 0 & 0 & s_n(z)I_n
\end{pmatrix},
\end{equation}
where $r_n(z),s_n(z),t_{ab,n}(z)\in\CC$ and $T_n(z):=(t_{ab,n}(z))_{a,b\in\set{1,2}}$ is symmetric.
Moreover, for every fixed $\eta,\varepsilon>0$, uniformly for $z\in\Omega_\eta$,
\begin{align}
r_n(z) = \frac{1+O(n^{-1+\varepsilon})}{-1-\theta_1\,p^\intercal T_n(z)p}
\label{eq:WishartApproxSelfEq-r} \\
s_n(z) = \frac{1+O(n^{-1+\varepsilon})}{-1-\theta_2\,q^\intercal T_n(z)q}
\label{eq:WishartApproxSelfEq-s} \\
\paren{-zI_2-C- \frac{n}{m}\theta_1 r_n(z)P-\frac{n}{m}\theta_2 s_n(z)Q}T_n(z)
&= I_2+O(n^{-1+\varepsilon})\cdot 11^\intercal
\label{eq:WishartApproxSelfEq-T}
\end{align}
The hidden constants depend only on $\alpha,\beta$ and $\eta$.
\end{prop}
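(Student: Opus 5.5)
The proof follows the CCA template (Lemma~\ref{lem:exptGNCCA}, Lemmas~\ref{lem:coeffIdentitiesCCA}--\ref{lem:hard-traces-into-easy-ones}). The plan has three stages: read off the block structure of $\EE\calG(z)$ by invariance, derive the self-consistent equations from $\calL(z)\calG(z)=I$ via Gaussian integration by parts, and then factorize the resulting products of traces using concentration.

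\emph{Block structure.} For $O_m\in O(m)$ and $O_n,O_n'\in O(n)$, set $\calO=(I_2\otimes O_m)\oplus O_n\oplus O_n'$.
\begin{itemize}
\item The law of $(O_n U O_m, O_n' V O_m)$ equals that of $(U,V)$.
\item $B=\tilde B(A^{1/2}\otimes I_m)$ transforms as $B\mapsto (O_n\oplus O_n')B(I_2\otimes O_m)^\intercal$, since $A^{1/2}\otimes I_m$ commutes with $I_2\otimes O_m$.
\item $C\otimes I_m$ is invariant.
\end{itemize}
Hence $\calO\calL\calO^\intercal\overset{d}{=}\calL$, so $\EE\calG$ commutes with every such $\calO$. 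Invariance under $I_2\otimes O_m$ forces the top-left $2m\times 2m$ block to be $T_n\otimes I_m$. Independence of $O_n$ and $O_n'$ forces the $(33)$ and $(44)$ blocks to be scalar multiples of $I_n$. The off-diagonal blocks vanish: the $(34)$ block must satisfy $O_nXO_n'^\intercal=X$ for all independent $O_n,O_n'$, forcing $X=0$, and the $m$-by-$n$ cross blocks vanish because an $m\times n$ matrix $X$ with $O_m X O_n^\intercal=X$ for all $O_m,O_n$ must be $0$. Symmetry of $\calG$ gives $T_n^\intercal=T_n$.

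\emph{Equations for $r_n,s_n$.} Expand the $(33)$ block of $\calL\calG=I$, namely $\sqrt{\theta_1}\,U(u\calG^{(13)}+v\calG^{(23)})-\calG^{(33)}=I_n$, and take normalized traces. Gaussian integration by parts on $U_{\mu i}$, with $\partial\calL/\partial U_{\mu i}$ computed as in Lemma~\ref{lem:lipWishart}, gives
\[
\tfrac1n\EE\Tr\bigl(U(u\calG^{(13)}+v\calG^{(23)})\bigr)=-\sqrt{\theta_1}\,\EE\Bigl[\tfrac1n\Tr\calG^{(33)}\cdot\tfrac1m\Tr\bigl((p^\intercal\otimes I_m)\calG^{[12]}(p\otimes I_m)\bigr)\Bigr]+O(n^{-1}),
\]
where $\calG^{[12]}$ is the top-left $2m\times 2m$ block. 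The trace-of-products term is $O(n^{-1})$, bounded by $\|\calG\|_\op^2/n$ via the Frobenius estimate in Lemma~\ref{lem:hard-traces-into-easy-ones}. The product of traces factorizes to $r_n\,p^\intercal T_n p+O(n^{-1+\varepsilon})$: apply Proposition~\ref{prop:Wishart-concentration} to basis vectors, and upgrade the resulting overwhelming-probability bounds to $L^2$ bounds using the fourth-moment bound of Lemma~\ref{lem:GNOpNormWishart}. This yields $-\theta_1 r_n\,p^\intercal T_np-r_n=1+O(n^{-1+\varepsilon})$, which rearranges to \eqref{eq:WishartApproxSelfEq-r}. The same argument with $V$ and $q$ gives \eqref{eq:WishartApproxSelfEq-s}.

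\emph{Equation for $T_n$.} Expand the $(ab)$ block, $a,b\in\{1,2\}$, of the top-left part: $-(z+C)\calG^{[12]}+B^\intercal\calG^{(\cdot,\cdot)}$ restricted to the $(ab)$ block equals $\delta_{ab}I_m$. The term $B^\intercal$ applied to the $(3b),(4b)$ blocks contributes
\[
\sqrt{\theta_1}\,p_a U^\intercal\calG^{(3b)}+\sqrt{\theta_2}\,q_a V^\intercal\calG^{(4b)}.
\]
Integration by parts on $\tfrac1m\EE\Tr(U^\intercal\calG^{(3b)})$ again produces a product of traces, $-\sqrt{\theta_1}\,\tfrac{n}{m}\,r_n\,(p^\intercal T_n e_b)$, up to $O(n^{-1+\varepsilon})$; the factor $n/m$ arises from summing $n$ rows of variance $1/m$. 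Stacking over $(a,b)$ gives \eqref{eq:WishartApproxSelfEq-T}, with $C=\alpha\tau P+\beta\tau Q$ up to the replacement $\tau\leftrightarrow n/m$, whose discrepancy is absorbed into the error.

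\emph{Main obstacle.} The delicate step is the bookkeeping in the derivatives $\partial\calG/\partial U_{\mu i}$. Because of the $A^{1/2}$ mixing, each derivative couples both $m$-blocks through the combination $u\,\cdot+v\,\cdot$, and one must check that these combinations collapse exactly to $p^\intercal T_n p$ and $p^\intercal T_n e_b$ (respectively $q$), with no stray cross terms. I would handle this by writing $\partial\calL/\partial U_{\mu i}=\sqrt{\theta_1}\bigl(e_\mu^{(3)}(p\otimes e_i)^\intercal+(p\otimes e_i)(e_\mu^{(3)})^\intercal\bigr)$ in rank-two form, which keeps the contractions transparent.
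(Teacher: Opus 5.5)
Your proposal is correct and follows the paper's proof essentially step for step: the block form from orthogonal invariance (your argument for why the mixed blocks vanish is more explicit than the paper's), then Gaussian integration by parts on the $(33)$/$(44)$ and $(1b)$/$(2b)$ blocks of $\calL\calG=I$, then factorization of the product of traces via Proposition~\ref{prop:Wishart-concentration} and the moment bound of Lemma~\ref{lem:GNOpNormWishart}. One small point: no $\tau\leftrightarrow n/m$ replacement is needed in \eqref{eq:WishartApproxSelfEq-T}, because $C=\alpha\tau P+\beta\tau Q$ enters $\calL(z)$ exactly, and the only $n/m$ factors come from the integration by parts, which the statement keeps explicitly; this matters because an $O(|n/m-\tau|)$ discrepancy would not in general be $O(n^{-1+\varepsilon})$.
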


As before, we write $\calG(z)$ in the same $4\times4$ block form as in \eqref{eq:linMatrixWishart}; for any matrix with this block structure, $\calG^{(ab)}(z)$ denotes its $(a,b)$ block.

\begin{lemma}\label{lem:exptGNWishart}
For every $z\in\CC_+$, the expectation $\EE\calG(z)$ has the form \eqref{eq:WishartExpectedResolventForm}.
\end{lemma}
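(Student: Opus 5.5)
The plan is to follow Lemma~\ref{lem:exptGNCCA} and Lemma~\ref{lem:exptGNWigner} and use orthogonal invariance of the null law, together with one extra sign symmetry to kill the off-diagonal blocks between the $m$-part and the $n$-part. The key fact is that $U$ and $V$ are independent with i.i.d.\ centered Gaussian entries. Hence for any orthogonal $O_m\in O(m)$ and $O_n\in O(n)$ we have
\[(O_nUO_m^\intercal,\,O_nVO_m^\intercal)\overset{d}{=}(U,V),\]
with the same $O_m$ acting on both $U$ and $V$, since they share the column index. The $n$-side rotations may even be chosen independently for $U$ and $V$; call them $O_n$ and $O_n'$.

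\textbf{Step 1: invariance of $\calL(z)$.} Set $\calO:=O_m\oplus O_m\oplus O_n\oplus O_n'$. Then $\tilde B\mapsto (O_n\oplus O_n')\tilde B(O_m\oplus O_m)^\intercal$. The factor $A^{1/2}\otimes I_m$ commutes with $O_m\oplus O_m=I_2\otimes O_m$, and likewise $C\otimes I_m$ commutes with $I_2\otimes O_m$. It follows that $\calO\calL(z)\calO^\intercal\overset{d}{=}\calL(z)$. Inverting gives $\calO\,\calG(z)\,\calO^\intercal\overset{d}{=}\calG(z)$, and taking expectations (finite by Lemma~\ref{lem:GNOpNormWishart}) gives $\calO\,\EE\calG(z)\,\calO^\intercal=\EE\calG(z)$ for all such $\calO$.

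\textbf{Step 2: identify the blocks.} Each $m\times m$ block $X$ among $\EE\calG^{(ab)}$, $a,b\in\{1,2\}$, satisfies $O_mXO_m^\intercal=X$ for all $O_m$, so $X=t_{ab,n}I_m$. For the $n\times n$ blocks, $\EE\calG^{(33)}$ commutes with all of $O(n)$, hence equals $r_nI_n$; similarly $\EE\calG^{(44)}=s_nI_n$. The block $\EE\calG^{(34)}$ satisfies $O_nXO_n'^\intercal=X$ for independent $O_n,O_n'$. Taking $O_n=I$ and $O_n'=-I$ gives $X=0$.

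The mixed blocks $\EE\calG^{(a,b)}$ with $a\le 2<b$ satisfy $O_mXO_n^\intercal=X$, so taking $O_n=-I$ (with $O_m=I$) forces $X=0$. Symmetry of $T_n$ follows from $\calG(z)^\intercal=\calG(z)$, which holds because $\calL(z)$ is complex symmetric.

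The only point needing care is Step 1: checking that the rotation on the $m$-side commutes with the mixing by $A^{1/2}$ and $C$. This works precisely because both act as Kronecker products with $I_m$. No other obstacle is expected.
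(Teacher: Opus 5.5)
Your proof is correct and follows the paper's own argument. You conjugate $\calL(z)$ by $O_m\oplus O_m\oplus O_n\oplus O_n'$, which preserves its law because $A^{1/2}\otimes I_m$ and $C\otimes I_m$ commute with $I_2\otimes O_m$; you then read off the block form, and get symmetry of $T_n(z)$ from $\calL(z)^\intercal=\calL(z)$. Your sign-flip choices ($O_n'=-I$ for the $(34)$ block, $O_n=-I$ or $O_n'=-I$ for the mixed blocks) also make explicit why the off-diagonal blocks vanish, which the paper only asserts.
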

\begin{proof}
Let $O\in O(m)$ and $O_3,O_4\in O(n)$ be arbitrary orthogonal matrices, and set $\calO = O \oplus O\oplus O_3\oplus O_4$. Since $(O_3UO^\intercal,\,O_4VO^\intercal)\overset{d}{=}(U,V),$ 
we get $\calO^\intercal\calL(z)\calO\overset{d}{=}\calL(z),$ and also
\[
\calO^\intercal\calG(z)\calO\overset{d}{=}\calG(z),
\qquad
\calO^\intercal\EE\calG(z)\calO=\EE\calG(z).
\]
It follows that the upper left $2m\times2m$ block of $\EE\calG(z)$ commutes with $I_2\otimes O$ for every $O\in O(m)$, and therefore equals $T_n(z)\otimes I_m$ for some $T_n(z)\in\CC^{2\times2}$. Likewise, the lower right $2n\times2n$ block is invariant under independent conjugation by $O_3$ and $O_4$, so it must have the form $\operatorname{diag}(r_n(z)I_n,s_n(z)I_n)$. Finally, all mixed blocks vanish. Since $\calL(z)$ is symmetric, so is $\calG(z)$, and therefore $T_n(z)$ is symmetric.
\end{proof}

We next derive equations for the coefficients in \eqref{eq:WishartExpectedResolventForm} from the block identity
\begin{equation}\label{eq:WishartLGidentity}
\calL(z)\calG(z)=I_{2m+2n}.
\end{equation}
The key input is Gaussian integration by parts, exactly as before.

\begin{lemma}\label{lem:coeffIdentitiesWishart}
For each $b\in\set{1,2}$,
\begin{align}
\frac{1}{m}\EE\Tr\paren{U^\intercal\calG^{(3b)}}
&=-\frac{\sqrt{\theta_1}}{m^2}
\EE\Bigg[
\begin{aligned}
&\Tr\calG^{(33)}\,\Tr\paren{u\calG^{(1b)}+v\calG^{(2b)}}\\
&\quad + u\Tr\paren{\calG^{(13)}\calG^{(3b)}}
+ v\Tr\paren{\calG^{(23)}\calG^{(3b)}}
\end{aligned}
\Bigg],
\label{eq:WishartCoeffIdentityU}
\\
\frac1m\EE\Tr\paren{V^\intercal\calG^{(4b)}}
&=-\frac{\sqrt{\theta_2}}{m^2}
\EE\Bigg[
\begin{aligned}
&\Tr\calG^{(44)}\,\Tr\paren{v\calG^{(1b)}+u\calG^{(2b)}}\\
&\quad + v\Tr\paren{\calG^{(14)}\calG^{(4b)}}
+ u\Tr\paren{\calG^{(24)}\calG^{(4b)}}
\end{aligned}
\Bigg].
\label{eq:WishartCoeffIdentityV}
\end{align}
\end{lemma}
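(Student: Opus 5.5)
The proof is a direct Gaussian integration by parts, in the same way as Lemma~\ref{lem:coeffIdentitiesCCA}. We expand $\Tr(U^\intercal\calG^{(3b)})=\sum_{\mu,i}U_{\mu i}\calG^{(3b)}_{\mu i}$. Since $U_{\mu i}\sim\calN(0,m^{-1})$, Lemma~\ref{lem:GaussianIBP} gives
\[
\EE\,U_{\mu i}\calG^{(3b)}_{\mu i}=\tfrac1m\,\EE\,\partial_{U_{\mu i}}\calG^{(3b)}_{\mu i}.
\]
By \eqref{eq:derivativeInverse}, this derivative equals $-\bigl(\calG\,\partial_{U_{\mu i}}\calL\,\calG\bigr)^{(3b)}_{\mu i}$. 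So the whole computation reduces to identifying $\partial_{U_{\mu i}}\calL$ and reading off the correct entries.

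From the formula for $\partial B/\partial U_{\mu i}$ in the proof of Lemma~\ref{lem:lipWishart}, $U_{\mu i}$ enters $\calL$ only through the $(3,1)$ and $(3,2)$ blocks $B$ and their transposes in the $(1,3)$ and $(2,3)$ blocks. The coefficients are $\sqrt{\theta_1}u\,E_{\mu i}$ and $\sqrt{\theta_1}v\,E_{\mu i}$ respectively. Hence
\[
\partial_{U_{\mu i}}\calL=\sqrt{\theta_1}\bigl(u(E^{(31)}_{\mu i}+E^{(13)}_{i\mu})+v(E^{(32)}_{\mu i}+E^{(23)}_{i\mu})\bigr),
\]
where $E^{(ab)}_{xy}$ denotes the elementary matrix placed in the $(a,b)$ block. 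Substituting this gives four terms:
\[
(\calG\,\partial\calL\,\calG)^{(3b)}_{\mu i}
=\sqrt{\theta_1}\bigl[u\calG^{(33)}_{\mu\mu}\calG^{(1b)}_{ii}+u\calG^{(31)}_{\mu i}\calG^{(3b)}_{\mu i}+v\calG^{(33)}_{\mu\mu}\calG^{(2b)}_{ii}+v\calG^{(32)}_{\mu i}\calG^{(3b)}_{\mu i}\bigr].
\]

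Next we sum over $\mu\in[n]$ and $i\in[m]$. The terms with $\calG^{(33)}_{\mu\mu}$ factorize as $\Tr\calG^{(33)}\,\Tr(u\calG^{(1b)}+v\calG^{(2b)})$. For the remaining terms, symmetry of $\calG$ gives $\calG^{(31)}_{\mu i}=\calG^{(13)}_{i\mu}$, so $\sum_{\mu,i}\calG^{(13)}_{i\mu}\calG^{(3b)}_{\mu i}=\Tr(\calG^{(13)}\calG^{(3b)})$, and similarly with $\calG^{(23)}$. Collecting the prefactor $-\sqrt{\theta_1}/m$ from the integration by parts and the extra $1/m$ normalization yields \eqref{eq:WishartCoeffIdentityU}. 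The identity \eqref{eq:WishartCoeffIdentityV} follows in the same way, with $\partial B/\partial V_{\mu i}$ placing $\sqrt{\theta_2}v$ and $\sqrt{\theta_2}u$ in the $(4,1)$ and $(4,2)$ blocks, so that the roles of $u$ and $v$ swap.

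The only real obstacle is bookkeeping: we must correctly account for both the $B$ and $B^\intercal$ occurrences of $U_{\mu i}$, and attach the right index pattern to each, namely the diagonal pair $(\mu\mu),(ii)$ in one case and $(\mu i)$ twice in the other. To justify the integration by parts, we also need integrability. It follows from the moment bound $\EE\norm{\calG(z)}_{\op}^4=O(1)$ in Lemma~\ref{lem:GNOpNormWishart}, since $\Im z>0$ ensures the matrix $\calL$ is invertible.
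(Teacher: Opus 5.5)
Your proposal is correct and follows the paper's proof: Gaussian integration by parts in each $U_{\mu i}$, the derivative-of-inverse formula, identifying that $U_{\mu i}$ enters $\calL$ only through the $(31),(32),(13),(23)$ blocks with coefficients $\sqrt{\theta_1}u,\sqrt{\theta_1}v$, and summing over $\mu,i$ using the symmetry of $\calG$. Your added integrability remark via Lemma~\ref{lem:GNOpNormWishart} justifies a step the paper leaves implicit.
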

\begin{proof}
We prove only \eqref{eq:WishartCoeffIdentityU}; the proof of \eqref{eq:WishartCoeffIdentityV} is identical. Since
\[
\Tr\paren{U^\intercal\calG^{(3b)}}=
\sum_{\mu=1}^n\sum_{i=1}^m U_{\mu i}\,\calG^{(3b)}_{\mu i},
\]
Gaussian integration by parts and \eqref{eq:derivativeInverse} give
\[
\EE\paren{U_{\mu i}\,\calG^{(3b)}_{\mu i}}
=\frac1m\EE\pp{\calG^{(3b)}_{\mu i}}{U_{\mu i}}
=-\frac1m\EE\paren{\calG(z)\pp{\calL(z)}{U_{\mu i}}\calG(z)}^{(3b)}_{\mu i}.
\]
Now $U_{\mu i}$ appears only in the $(13)$, $(23)$, $(31)$, and $(32)$ blocks of $\calL(z)$, with coefficients $\sqrt{\theta_1}u$ and $\sqrt{\theta_1}v$. Therefore, we get that 
\[
\paren{\calG\pp{\calL}{U_{\mu i}}\calG}^{(3b)}_{\mu i}
=\sqrt{\theta_1}\Bigl[
\calG^{(33)}_{\mu\mu}\paren{u\calG^{(1b)}_{ii}+v\calG^{(2b)}_{ii}}
+u\calG^{(31)}_{\mu i}\calG^{(3b)}_{\mu i}
+v\calG^{(32)}_{\mu i}\calG^{(3b)}_{\mu i}
\Bigr].
\]
Summing over $\mu$ and $i$, and using the symmetry $\calG^{(13)}=(\calG^{(31)})^\intercal$ and $\calG^{(23)}=(\calG^{(32)})^\intercal$, yields \eqref{eq:WishartCoeffIdentityU}.
\end{proof}

The identities in Lemma~\ref{lem:coeffIdentitiesWishart} contain a product of traces term and a trace of products term. By Proposition~\ref{prop:Wishart-concentration}, the former factorizes up to a negligible error, while the latter is lower order.

\begin{lemma}\label{lem:hard-traces-into-easy-ones-Wishart}
Fix $\eta,\varepsilon>0$. Uniformly for $z\in\Omega_\eta$ and $b\in\set{1,2}$,
\begin{align}
\frac1m\EE\Tr\paren{U^\intercal\calG^{(3b)}(z)}
&=-\frac{n}{m}\sqrt{\theta_1}\,r_n(z)\,(p^\intercal T_n(z))_b+O(n^{-1+\varepsilon}),
\label{eq:WishartTraceReductionU}
\\
\frac1m\EE\Tr\paren{V^\intercal\calG^{(4b)}(z)}
&=- \frac{n}{m} \sqrt{\theta_2}\,s_n(z)\,(q^\intercal T_n(z))_b+O(n^{-1+\varepsilon}).
\label{eq:WishartTraceReductionV}
\end{align}
\end{lemma}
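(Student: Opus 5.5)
We follow the proof of Lemma~\ref{lem:hard-traces-into-easy-ones} (CCA) and Lemma~\ref{lem:hard-traces-into-easy-ones-Wigner}. We prove only \eqref{eq:WishartTraceReductionU}; \eqref{eq:WishartTraceReductionV} is identical after swapping $(u,v,\theta_1,U,\calG^{(33)})$ with $(v,u,\theta_2,V,\calG^{(44)})$. The starting point is the identity \eqref{eq:WishartCoeffIdentityU} from Lemma~\ref{lem:coeffIdentitiesWishart}. Its right-hand side splits into two parts: a product-of-traces term $\Tr\calG^{(33)}\Tr(u\calG^{(1b)}+v\calG^{(2b)})$, and two trace-of-products terms.

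\emph{Trace-of-products terms.} Using Cauchy--Schwarz for the Frobenius inner product,
\[
|\Tr(\calG^{(13)}\calG^{(3b)})|\le \|\calG^{(13)}\|_F\|\calG^{(3b)}\|_F\le \min(m,n)\,\|\calG(z)\|_{\op}^2,
\]
since each block has rank at most $\min(m,n)$. Lemma~\ref{lem:GNOpNormWishart} bounds $\EE\|\calG\|_{\op}^2$ by $O_\eta(1)$, so after the prefactor $m^{-2}$ these terms contribute $O(n^{-1})$ uniformly on $\Omega_\eta$.

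\emph{Product-of-traces term.} Set
\[
X:=n^{-1}\Tr\calG^{(33)},\qquad Y:=m^{-1}\Tr(u\calG^{(1b)}+v\calG^{(2b)}).
\]
The product term equals $-\sqrt{\theta_1}\,\frac{n}{m}\,\EE[XY]$. By Lemma~\ref{lem:exptGNWishart}, $\EE X=r_n(z)$ and $\EE Y=u\,t_{1b,n}+v\,t_{2b,n}=(p^\intercal T_n)_b$, because $p=(u,v)^\intercal$. This already gives the claimed leading term, so it remains to show $\mathrm{Cov}(X,Y)=O(n^{-1+\varepsilon})$.

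Both $X$ and $Y$ are averages of $O(n)$ diagonal quadratic forms $e^\intercal\calG e'$ with unit vectors $e,e'$. Proposition~\ref{prop:Wishart-concentration}, applied with $\varepsilon/2$ together with a union bound, gives $|X-\EE X|+|Y-\EE Y|\le O(n^{-1/2+\varepsilon/2})$ uniformly on $\Omega_\eta$ with overwhelming probability. Moreover $|X|,|Y|\le 2\|\calG\|_{\op}$. The overwhelming-probability bound together with the fourth-moment control $\EE\sup\|\calG\|_{\op}^4=O_\eta(1)$ from Lemma~\ref{lem:GNOpNormWishart} upgrades to the second-moment bounds $\EE|X-\EE X|^2,\ \EE|Y-\EE Y|^2=O(n^{-1+\varepsilon})$. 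Cauchy--Schwarz then yields $|\mathrm{Cov}(X,Y)|=O(n^{-1+\varepsilon})$.

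Combining the two parts gives \eqref{eq:WishartTraceReductionU}. The main obstacle is the $L^2$ upgrade. Unlike the Wigner case, $\calG$ is not deterministically bounded here, because the lower blocks involve the random matrix $B$. We therefore split on the good event $\calE$ of Lemma~\ref{lem:GNOpNormWishart}. On $\calE^c$ we use $\EE[|X-\EE X|^2\mathbf 1_{\calE^c}]\le \PP(\calE^c)^{1/2}\,(\EE|X-\EE X|^4)^{1/2}$, which is smaller than any power of $n$ by the fourth-moment bound. Uniformity in $z$ on $\Omega_\eta$ comes for free: all constants depend only on $\eta$, and the concentration estimate is already uniform in $z$.
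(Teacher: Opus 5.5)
Your proposal is correct and follows the paper's proof essentially step for step: the same starting identity \eqref{eq:WishartCoeffIdentityU}, the same Frobenius/Cauchy--Schwarz bound showing the trace-of-products terms are $O(n^{-1})$, and the same factorization of $\EE[XY]$, where the covariance is bounded using Proposition~\ref{prop:Wishart-concentration} together with the fourth-moment control in Lemma~\ref{lem:GNOpNormWishart}. One small precision: in the $L^2$ upgrade you should split on the overwhelming-probability event where the concentration bound for $X,Y$ holds, not merely on the operator-norm event $\calE$; your Cauchy--Schwarz estimate on the complement then applies unchanged.
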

\begin{proof}
We again prove only \eqref{eq:WishartTraceReductionU}; the argument for \eqref{eq:WishartTraceReductionV} is identical. Set
\[
X(z):=\frac1n\Tr\calG^{(33)}(z),
\qquad
Y_b(z):=\frac1m\Tr\paren{u\calG^{(1b)}(z)+v\calG^{(2b)}(z)}.
\]
Then \eqref{eq:WishartCoeffIdentityU} becomes an equation
\begin{equation}\label{eq:WishartU-start}
\frac1m\EE\Tr\paren{U^\intercal\calG^{(3b)}}
=- \frac{n}{m} \sqrt{\theta_1}\,\EE\bigl[X(z)Y_b(z)\bigr]+E_b(z),
\end{equation}
where the error matrix $E_b$ is defined by
\[
E_b(z):=-\frac{\sqrt{\theta_1}}{m^2}\EE\Bigl[u\Tr\paren{\calG^{(13)}\calG^{(3b)}}+v\Tr\paren{\calG^{(23)}\calG^{(3b)}}\Bigr].
\]
We first bound the error term. For $a\in\set{1,2}$, we have $\norm{\calG^{(a3)}(z)}_F^2\le n\norm{\calG(z)}_{\op}^2.$Therefore, by Cauchy-Schwarz, we have that 
\[
\abs{\Tr\paren{\calG^{(a3)}\calG^{(3b)}}}
\le \norm{\calG^{(a3)}}_F\norm{\calG^{(3b)}}_F
\le n\norm{\calG(z)}_{\op}^2.
\]
By Lemma~\ref{lem:GNOpNormWishart}, we have $\EE\norm{\calG(z)}_{\op}^2\le O_\eta(1)$ uniformly on $\Omega_\eta$, so $E_b(z)=O(n^{-1})$. 
Next we factorize the product of traces. By Lemma~\ref{lem:exptGNWishart},
\[
\EE X(z)=r_n(z),
\qquad
\EE Y_b(z)=(p^\intercal T_n(z))_b.
\]
We claim now that 
\begin{equation}\label{eq:WishartTraceVarianceBounds}
\EE\abs{X(z)-\EE X(z)}^2+\EE\abs{Y_b(z)-\EE Y_b(z)}^2\le O(n^{-1+\varepsilon}),
\end{equation}
uniformly for $z\in\Omega_\eta$. To see this, apply Proposition~\ref{prop:Wishart-concentration} with $\varepsilon/2$ to the standard basis vectors corresponding to the diagonal entries of the relevant blocks, and take a union bound over all $O(m)$ and $O(n)$ indices. This gives, with overwhelming probability and uniformly for $z\in\Omega_\eta$,
\[
\abs{X(z)-\EE X(z)}+\abs{Y_b(z)-\EE Y_b(z)}\le O(n^{-1/2+\varepsilon/2}).
\]
Since $\abs{X(z)}+\abs{Y_b(z)}\le C\norm{\calG(z)}_{\op}$ and Lemma~\ref{lem:GNOpNormWishart} gives bounded fourth moments of $\sup_{z\in\Omega_\eta}\norm{\calG(z)}_{\op}$, this overwhelming-probability estimate upgrades to the $L^2$ bound \eqref{eq:WishartTraceVarianceBounds}. By Cauchy--Schwarz,
\[
\abs{\EE\Bigl[(X-\EE X)(Y_b-\EE Y_b)\Bigr]}
\le \sqrt{\EE\abs{X-\EE X}^2\,\EE\abs{Y_b-\EE Y_b}^2}
\le O(n^{-1+\varepsilon}).
\]
Hence $\EE\bigl[X(z)Y_b(z)\bigr]=r_n(z)(p^\intercal T_n(z))_b+O(n^{-1+\varepsilon}).$ Combining this with \eqref{eq:WishartU-start} and the error bound  proves \eqref{eq:WishartTraceReductionU}.
\end{proof}

\begin{proof}[Proof of Proposition~\ref{prop:approxSelfEqWishart}]
The block form \eqref{eq:WishartExpectedResolventForm} is exactly Lemma~\ref{lem:exptGNWishart}, so it remains to derive the equations. We begin with the lower diagonal blocks of \eqref{eq:WishartLGidentity}. The $(33)$ block of the equation $\calL \calG = I$ gives
\[
\sqrt{\theta_1}u\,U\calG^{(13)}+\sqrt{\theta_1}v\,U\calG^{(23)}-\calG^{(33)}=I_n.
\]
Taking normalized traces and expectations, and using the fact that $\calG$ is symmetric, we obtain
\[
\frac{\sqrt{\theta_1}}{n}\EE\Tr\paren{uU^\intercal\calG^{(31)}+vU^\intercal\calG^{(32)}}-r_n(z)=1.
\]
Applying \eqref{eq:WishartTraceReductionU} with $b=1,2$ yields
\[
-\theta_1 r_n(z)\,p^\intercal T_n(z)p-r_n(z)=1+O(n^{-1+\varepsilon}).
\]
This is exactly \eqref{eq:WishartApproxSelfEq-r}. Exactly the same argument, using the $(44)$ block of \eqref{eq:WishartLGidentity} and \eqref{eq:WishartTraceReductionV}, gives \eqref{eq:WishartApproxSelfEq-s}. Next fix $b\in\set{1,2}$ and consider the $(1b)$ and $(2b)$ blocks of \eqref{eq:WishartLGidentity}. Since the upper left deterministic block of $\calL(z)$ equals $-(zI_2+C)\otimes I_m$, we get that 
\[
\paren{-zI_2-C} \otimes I_m
\begin{pmatrix}
\calG^{(1b)}\\
\calG^{(2b)}
\end{pmatrix}
+
\begin{pmatrix}
\sqrt{\theta_1}u\,U^\intercal\calG^{(3b)}+\sqrt{\theta_2}v\,V^\intercal\calG^{(4b)}\\
\sqrt{\theta_1}v\,U^\intercal\calG^{(3b)}+\sqrt{\theta_2}u\,V^\intercal\calG^{(4b)}
\end{pmatrix}
=
\begin{pmatrix}
\delta_{1b}I_m\\
\delta_{2b}I_m
\end{pmatrix}.
\]
Taking normalized traces and expectations, and then using \eqref{eq:WishartTraceReductionU} and \eqref{eq:WishartTraceReductionV}, we find 
\[
\paren{-zI_2-C-\frac{n}{m}\theta_1 r_n(z)P-\frac{n}{m}\theta_2 s_n(z)Q}T_n(z)
=I_2+O(n^{-1+\varepsilon})\cdot 11^\intercal.
\]
This yields \eqref{eq:WishartApproxSelfEq-T}.
\end{proof}

\subsubsection{Stability of expectation}

In this subsection we pass from the finite approximate system of Proposition~ \ref{prop:approxSelfEqWishart}
to the deterministic system \eqref{eq:WishartDeterministicSystem}. This is the Wishart analogue of Proposition~
\ref{prop:detStabilityCCA}. To keep the notation readable, we absorb the deterministic discrepancy
${n}/{m}-\tau$ into the $O(n^{-1+\varepsilon})$ remainder; if one prefers to track it explicitly,
every estimate below acquires an additional term $O(\abs{{n}/{m}-\tau})$. 

\begin{prop}\label{prop:detStabilityWishart}
Let $\eta,\varepsilon>0$. Then the following holds.
\begin{enumerate}
\item For each $z\in\CC_+$, the system \eqref{eq:WishartDeterministicSystem} has a unique solution
\[
r(z),s(z)\in\CC_+,
\qquad
T(z)=\begin{pmatrix}t_{11}(z)&t_{12}(z)\\ t_{21}(z)&t_{22}(z)\end{pmatrix}.
\]
\item Let $(r_n(z),s_n(z),T_n(z))$ be the coefficients from Proposition~\ref{prop:approxSelfEqWishart}. Then
\[
\sup_{z\in\Omega_\eta}
\Bigl(
|r_n(z)-r(z)|+|s_n(z)-s(z)|+\|T_n(z)-T(z)\|_{\op}
\Bigr)
\le O_\eta(n^{-1+\varepsilon}).
\]
\end{enumerate}
Combined with Proposition~\ref{prop:Wishart-concentration}, this implies Theorem~\ref{thm:resolvConvWishart}.
\end{prop}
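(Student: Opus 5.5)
The plan follows the CCA and Wigner stability arguments (Proposition~\ref{prop:detStabilityCCA} and Proposition~\ref{prop:detStabilityWigner}), adapted to the Wishart system \eqref{eq:WishartDeterministicSystem}. We would first recast it as a two-variable fixed point. For $(r,s)$ in a neighborhood of $\CC_+^2$, set
\[
T_z(r,s):=\bigl(-zI_2-\tau\alpha P-\tau\beta Q-\tau\theta_1 rP-\tau\theta_2 sQ\bigr)^{-1},
\]
and define
\[
F_z(r,s):=\Bigl(\frac{1}{-1-\theta_1 p^\intercal T_z p},\ \frac{1}{-1-\theta_2 q^\intercal T_z q}\Bigr).
\]
A solution of \eqref{eq:WishartDeterministicSystem} is then the same thing as a fixed point of $F_z$.

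The next step is to record the imaginary-part identities. Since $T-T^*=T^*(B^*-B)T$, we have
\[
\Im T=T^*\bigl(\Im z\,I_2+\tau\theta_1\Im r\,P+\tau\theta_2\Im s\,Q\bigr)T .
\]
Taking the quadratic form against $p$ gives
\[
\Im(p^\intercal Tp)=\Im z\|Tp\|^2+\tau\theta_1|p^\intercal Tp|^2\Im r+\tau\theta_2|q^\intercal Tp|^2\Im s .
\]
From $r=1/(-1-\theta_1 p^\intercal Tp)$ we also get $\Im r=|r|^2\theta_1\Im(p^\intercal Tp)$, and the analogous identities hold for $s$.

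\textbf{Uniqueness.} Given two solutions, we would subtract them using the resolvent identity $T-\hat T=T(\hat B-B)\hat T$. This produces $d\le Ed$ for $d=(|r-\hat r|,|s-\hat s|)$, where
\[
E=\tau\begin{pmatrix}\theta_1^2|r\hat r||p^\intercal Tp||p^\intercal\hat Tp| & \theta_1\theta_2|r\hat r||p^\intercal Tq||p^\intercal \hat Tq|\\ \cdot&\cdot\end{pmatrix}.
\]
Testing against $x=(\sqrt{\Im r\Im\hat r},\sqrt{\Im s\Im\hat s})$ and applying Cauchy--Schwarz together with the identities above gives $Ex<x$ strictly, because the $\Im z\|Tp\|^2$ term has been dropped. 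Hence $\rho(E)<1$ and $d=0$.

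\textbf{Existence.} By Lemma~\ref{lem:exptGNWishart} and \eqref{eq:SchurGnWishart}, the quantities $u^\intercal T_n u$ are Stieltjes transforms of measures of mass $\|u\|^2$. The $r_n,s_n$ are normalized traces of the $(33),(44)$ blocks of $BRB^\intercal-I$, so $r_n,s_n\in\CC_+$ as well. Lemma~\ref{lem:GNOpNormWishart} makes everything locally bounded on $\CC_+$. Montel's theorem then gives subsequential limits, which solve the exact system by Proposition~\ref{prop:approxSelfEqWishart}, with $n/m\to\tau$ absorbed into the error. Strict positivity $\Im r,\Im s>0$ follows from the identities above and the invertibility of $T$. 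Uniqueness then upgrades subsequential convergence to full locally uniform convergence.

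\textbf{Quantitative rate.} The same Perron-type computation at the exact solution shows $\rho(|DF_z|)<1$. Hence $D(\mathrm{id}-F_z)$ is invertible, with inverse bounded uniformly on the compact set $\bar\Omega_\eta$. The inverse function theorem then gives the local estimate $\|(r',s')-(r,s)\|\lesssim\|H_z(r',s')\|$. Next we define $\hat T_n=T_n(I+E_n)^{-1}=T_z(r_n,s_n)$, which satisfies $\|\hat T_n-T_n\|=O(n^{-1+\varepsilon})$. Plugging this into the scalar equations yields $\|H_z(r_n,s_n)\|=O(n^{-1+\varepsilon})$. Here the denominators $-1-\theta_1 p^\intercal T_n p$ must stay bounded away from $0$, which holds because $|r_n|=O(1)$. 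Combining with the local inverse estimate gives the bound on $|r_n-r|+|s_n-s|$, and analyticity of $T_z(\cdot,\cdot)$ gives the bound on $\|T_n-T\|$.

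\textbf{Main obstacle.} The hardest step is the contraction estimate $Ex<x$. Unlike the CCA case, $r$ enters through the extra Möbius map $1/(-1-\theta_1\,\cdot)$, so the Cauchy--Schwarz bookkeeping has to combine the factor $|r|^2\theta_1$ with the $T$-identity. I would first try working with the variables $\tilde r:=p^\intercal Tp$ and $\tilde s:=q^\intercal Tq$ instead, which makes the structure identical to the Wigner case.
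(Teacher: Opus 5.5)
Your proposal is correct and follows the paper's proof essentially step for step. For uniqueness you use the bound $d\le Ed$ tested against $(\sqrt{\Im r\,\Im\hat r},\sqrt{\Im s\,\Im\hat s})^\intercal$; for existence, Montel's theorem plus the approximate system; and for the rate, the Perron bound on $|DF_z|$ combined with the inverse function theorem and $\hat T_n=T_n(I_2+E_n)^{-1}$. The Cauchy--Schwarz step you flag as the main obstacle closes directly, so no change of variables is needed: multiplying $\tau\bigl(\theta_1|p^\intercal Tp|^2\Im r+\theta_2|p^\intercal Tq|^2\Im s\bigr)=\Im(p^\intercal Tp)-\Im z\,\|Tp\|^2$ by $\theta_1|r|^2$ gives exactly $\Im r-\theta_1|r|^2\Im z\,\|Tp\|^2$.

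The one point the paper makes explicit and you pass over is that $\hat T_n$ is the inverse of a matrix built with $n/m$ rather than $\tau$. So it agrees with $T_z(r_n,s_n)$ only up to an $O(|n/m-\tau|)$ error, which the paper absorbs into the $O(n^{-1+\varepsilon})$ remainder by an explicit convention.
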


We first prove the qualitative existence and uniqueness statement and the local uniform convergence of the finite  coefficients.

\begin{lemma}\label{lem:wishart-convergence-to-finite-system}
For each $z\in\CC_+$, the system \eqref{eq:WishartDeterministicSystem} has a unique solution
\[
r(z),s(z)\in\CC_+,
\qquad
T(z)=\begin{pmatrix}t_{11}(z)&t_{12}(z)\\ t_{21}(z)&t_{22}(z)\end{pmatrix}.
\]
Moreover, if $(r_n,s_n,T_n)$ are the coefficients from Proposition~\ref{prop:approxSelfEqWishart}, then
\[
r_n\to r,\qquad s_n\to s,\qquad T_n\to T
\]
locally uniformly on $\CC_+$.
\end{lemma}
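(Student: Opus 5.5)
I will follow the same two-part template as Lemma~\ref{lem:convergence-to-finite-system} and Lemma~\ref{lem:wigner-convergence-to-finite-system}: uniqueness by a Perron--Frobenius contraction argument, then existence via Montel compactness of the finite-$n$ coefficients.

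\textbf{Uniqueness.} Fix $z\in\CC_+$ and write $B:=T^{-1}=-zI_2-\tau\alpha P-\tau\beta Q-\tau\theta_1 rP-\tau\theta_2 sQ$. Since $B^*-B=2i[(\Im z)I_2+\tau\theta_1(\Im r)P+\tau\theta_2(\Im s)Q]$, we obtain
\[
\Im T=T^*\bigl((\Im z)I_2+\tau\theta_1(\Im r)P+\tau\theta_2(\Im s)Q\bigr)T .
\]
Set $a:=p^\intercal Tp$, $b:=q^\intercal Tq$ and $c:=p^\intercal Tq$. Taking quadratic forms against $p$ and $q$ gives
\[
\Im a=(\Im z)\|Tp\|^2+\tau\theta_1|a|^2\Im r+\tau\theta_2|c|^2\Im s ,
\]
and the analogous identity for $\Im b$. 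From $r=1/(-1-\theta_1 a)$ we get $\Im r=\theta_1|r|^2\Im a$, and likewise $\Im s=\theta_2|s|^2\Im b$. Substituting, the pair $x=(\Im r,\Im s)$ satisfies $\Im r=\theta_1|r|^2(\Im z)\|Tp\|^2+(Mx)_1$, where $M$ is the nonnegative matrix
\[
M=\tau\begin{pmatrix}\theta_1^2|r|^2|a|^2 & \theta_1\theta_2|r|^2|c|^2\\ \theta_1\theta_2|s|^2|c|^2 & \theta_2^2|s|^2|b|^2\end{pmatrix}.
\]
Now take two solutions. The differences obey
\[
r-\hat r=\theta_1 r\hat r(a-\hat a), \qquad T-\hat T=T(\hat B-B)\hat T=\tau\theta_1(r-\hat r)TP\hat T+\tau\theta_2(s-\hat s)TQ\hat T .
\]
Hence $d:=(|r-\hat r|,|s-\hat s|)$ satisfies $d\le Ed$ componentwise, where $E$ is the geometric-mean version of $M$ built from both solutions. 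Cauchy--Schwarz with $x=(\sqrt{\Im r\Im\hat r},\sqrt{\Im s\Im\hat s})$ gives $Ex<x$, so $\rho(E)<1$. Therefore $d=0$, and $T=\hat T$ follows.

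\textbf{Existence and convergence.} By Lemma~\ref{lem:exptGNWishart}, $r_n=\frac1n\EE\Tr\calG^{(33)}$ and $T_n$ is the expectation of the top-left block. By \eqref{eq:SchurGnWishart}, $T_n$ is built from $\EE(H-z)^{-1}$, so $u^*T_nu$ is a Stieltjes transform of a measure with mass $\|u\|^2$, and $T_n$ is bounded by $(\Im z)^{-1}$.

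For $r_n$, the $(33)$ block of \eqref{eq:SchurGnWishart} is $BRB^\intercal-I$, i.e.\ $-(I+BR'B^\intercal)$-type. I will instead use the identity $\calG^{(33)}$-block $=-(I_{2n}-B(zI+C\otimes I)^{-1}B^\intercal)^{-1}$ type, or simply note that $\Im(-\calG_{\text{low}})\succeq0$, since $\Im\calL\preceq0$ gives $\Im\calG\succeq0$ on $\CC_+$. Thus $\Im r_n,\Im s_n\ge0$. Lemma~\ref{lem:GNOpNormWishart} bounds all coefficients locally uniformly on $\CC_+$, and they are analytic.

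Montel's theorem then gives subsequential limits. Passing to the limit in Proposition~\ref{prop:approxSelfEqWishart}, with $n/m\to\tau$, shows that every limit solves \eqref{eq:WishartDeterministicSystem} with $\Im r_*,\Im s_*\ge0$. Strict positivity comes from the identities above: $\Im r_*\ge\theta_1|r_*|^2(\Im z)\|T_*p\|^2>0$, because $T_*$ is invertible and $r_*\ne0$ (indeed $r_*(-1-\theta_1a)=1$). By uniqueness, the whole sequence converges locally uniformly.

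\textbf{Main obstacle.} I expect the delicate step to be the sign and positivity of $\Im r_n$. In Wishart the ``$n$-side'' coefficients are not obviously Stieltjes transforms, unlike the CCA case, which explains the reliance on $\Im\calG\succeq0$. The Cauchy--Schwarz bookkeeping in $Ex<x$ is routine, mirroring the Wigner proof.
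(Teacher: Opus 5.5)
Your proposal is correct and follows the paper's proof essentially step for step. Uniqueness goes through the comparison $d\le Ed$, with Cauchy--Schwarz against $x=(\sqrt{\Im r\,\Im\hat r},\sqrt{\Im s\,\Im\hat s})$ giving $\rho(E)<1$. Existence goes through Montel compactness, $\Im\calG\succeq0$ (from $\Im\calL(z)=-(\Im z)(I_{2m}\oplus 0_{2n})$), passage to the limit in Proposition~\ref{prop:approxSelfEqWishart}, and strict positivity via $\Im r_*\ge\theta_1|r_*|^2(\Im z)\|T_*p\|^2>0$. The only blemish is the aside ``$\Im(-\calG_{\text{low}})\succeq0$'', which has the sign reversed; the fact you actually use, $\Im\calG\succeq0$, is the correct one and is exactly what the paper uses.
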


\begin{proof}[Proof of uniqueness in Lemma~\ref{lem:wishart-convergence-to-finite-system}]
Fix $z\in\CC_+$. Suppose that triples $(r,s,T)$ and $(\hat r,\hat s,\hat T)$ are two solutions of \eqref{eq:WishartDeterministicSystem}. In this proof only, we suppress the
dependence on $z$ and write
\begin{align*}
    D&:=-zI_2-\tau\alpha P-\tau\beta Q-\tau\theta_1 rP-\tau\theta_2 sQ \\
    \hat D&:=-zI_2-\tau\alpha P-\tau\beta Q-\tau\theta_1 \hat rP-\tau\theta_2 \hat sQ,
\end{align*}
so that $T=D^{-1}$ and $\hat T=\hat D^{-1}$. We first record the imaginary part identities. Since
\[
T-T^*=T^*(D^*-D)T
\]
and $D^*-D
=
2i\Bigl((\Im z)I_2+\tau\theta_1(\Im r)P+\tau\theta_2(\Im s)Q\Bigr)$, we obtain
\begin{equation}\label{eq:WishartImTIdentity}
\Im T
=
T^*
\Bigl((\Im z)I_2+\tau\theta_1(\Im r)P+\tau\theta_2(\Im s)Q\Bigr)
T.
\end{equation}
Taking quadratic forms against $p$ and $q$, and using $P=pp^\intercal$ and $Q=qq^\intercal$, we get
\begin{align}
\Im\paren{p^\intercal Tp}
&=
(\Im z)\norm{Tp}^2
+\tau\theta_1(\Im r)\abs{p^\intercal Tp}^2
+\tau\theta_2(\Im s)\abs{q^\intercal Tp}^2,
\label{eq:WishartImpp}
\\
\Im\paren{q^\intercal Tq}
&=
(\Im z)\norm{Tq}^2
+\tau\theta_1(\Im r)\abs{p^\intercal Tq}^2
+\tau\theta_2(\Im s)\abs{q^\intercal Tq}^2.
\label{eq:WishartImqq}
\end{align}
On the other hand, taking imaginary parts in the first two equations of
\eqref{eq:WishartDeterministicSystem} yields
\begin{equation}\label{eq:WishartImrs}
\Im r=\theta_1|r|^2\,\Im\paren{p^\intercal Tp},
\qquad
\Im s=\theta_2|s|^2\,\Im\paren{q^\intercal Tq},
\end{equation}
and the same identities hold for $(\hat r,\hat s,\hat T)$. We now compare the two solutions. From the scalar equations in
\eqref{eq:WishartDeterministicSystem},
\begin{equation}\label{eq:WishartDiffrs}
r-\hat r
=
\theta_1 r\hat r\,p^\intercal(T-\hat T)p,
\qquad
s-\hat s
=
\theta_2 s\hat s\,q^\intercal(T-\hat T)q.
\end{equation}
Also, by the second equation in \eqref{eq:WishartDeterministicSystem} we have that 
\[
T-\hat T
=
T(\hat D-D)\hat T
=
\tau T\Bigl(\theta_1(r-\hat r)P+\theta_2(s-\hat s)Q\Bigr)\hat T.
\]
Taking the quadratic forms against $p$ and $q$ gives
\begin{align}
p^\intercal(T-\hat T)p
&=
\tau\theta_1(p^\intercal Tp)(p^\intercal\hat Tp)(r-\hat r)
+\tau\theta_2(p^\intercal Tq)(p^\intercal\hat Tq)(s-\hat s),
\label{eq:WishartDiffpp}
\\
q^\intercal(T-\hat T)q
&=
\tau\theta_1(q^\intercal Tp)(q^\intercal\hat Tp)(r-\hat r)
+\tau\theta_2(q^\intercal Tq)(q^\intercal\hat Tq)(s-\hat s).
\label{eq:WishartDiffqq}
\end{align}
Substituting \eqref{eq:WishartDiffpp} and \eqref{eq:WishartDiffqq} into
\eqref{eq:WishartDiffrs}, and taking absolute values, we obtain component-wise
$d\le Ed$, where the vector $d$ and matrix $A$ are given by
\[
d:=
\begin{pmatrix}
|r-\hat r|\\
|s-\hat s|
\end{pmatrix} \quad \text{and} \quad E:=
\tau
\begin{pmatrix}
\theta_1^2|r||\hat r|\,|p^\intercal Tp|\,|p^\intercal\hat Tp|
&
\theta_1\theta_2|r||\hat r|\,|p^\intercal Tq|\,|p^\intercal\hat Tq|
\\[0.4em]
\theta_1\theta_2|s||\hat s|\,|q^\intercal Tp|\,|q^\intercal\hat Tp|
&
\theta_2^2|s||\hat s|\,|q^\intercal Tq|\,|q^\intercal\hat Tq|
\end{pmatrix}.
\]
On the other hand, note that if we set $x =(\sqrt{\Im r\,\Im\hat r},\sqrt{\Im s\,\Im\hat s})^\intercal$. Then, by Cauchy-Schwarz, and identities \eqref{eq:WishartImpp}, and \eqref{eq:WishartImrs},
\begin{align*}
(Ex)_1
&\le
\tau\theta_1|r||\hat r|
\sqrt{
\Bigl(\theta_1|p^\intercal Tp|^2\Im r+\theta_2|p^\intercal Tq|^2\Im s\Bigr)
\Bigl(\theta_1|p^\intercal\hat Tp|^2\Im\hat r+\theta_2|p^\intercal\hat Tq|^2\Im\hat s\Bigr)
}
\\
&=
\sqrt{
\Bigl(\Im r-\theta_1|r|^2(\Im z)\norm{Tp}^2\Bigr)
\Bigl(\Im\hat r-\theta_1|\hat r|^2(\Im z)\|{\hat Tp\|}^2\Bigr)
} <
\sqrt{\Im r\,\Im\hat r} = x_1.
\end{align*}
Exactly the same argument, using \eqref{eq:WishartImqq}, shows that $(Ex)_2<x_2$. Hence
$Ex<x$ component-wise. Since $E$ has nonnegative entries and $x$ has strictly positive entries,
it follows that $\rho(E)<1$.

Now $0\le d\le Ed$, hence $0\le d\le E^k d$ for every $k\ge1$. Because $\rho(E)<1$, we have
$E^k\to0$ as $k\to\infty$, and therefore $d=0$. Thus $r=\hat r$ and $s=\hat s$. Returning to
\eqref{eq:WishartDiffpp} and \eqref{eq:WishartDiffqq}, we also get $T=\hat T$. This proves
uniqueness.
\end{proof}

\smallskip

\begin{proof}[Proof of existence via convergence in Lemma~\ref{lem:wishart-convergence-to-finite-system}]
Fix $z\in\CC_+$ and recall from Lemma~\ref{lem:exptGNWishart} that
\[
\EE\calG(z)
=
\begin{pmatrix}
T_n(z)\otimes I_m & 0\\
0 & \operatorname{diag}(r_n(z),s_n(z))\otimes I_n
\end{pmatrix}.
\]
Since $\Im \calG(z)
=
\calG(z)^*\Bigl((\Im z)(I_{2m}\oplus 0_{2n})\Bigr)\calG(z)\succeq0,$ its diagonal blocks have nonnegative imaginary part. Therefore
\[
\Im r_n(z)\ge0,
\qquad
\Im s_n(z)\ge0.
\]
Also, by Lemma~\ref{lem:GNOpNormWishart}, we also have that 
\[
 \max\!\set{\|T_n(z)\|_{\op},|r_n(z)|,|s_n(z)|}
\le \norm{\EE\calG(z)}_{\op}
\le O_z(1).
\]
Hence functions $r_n,s_n$ and the entries of $T_n$ are locally bounded and analytic on
$\CC_+$. By Montel's theorem, every subsequence admits a further subsequence, which we still denote
by $n_j$, such that
\[
r_{n_j}\to r_*,
\qquad
s_{n_j}\to s_*,
\qquad
T_{n_j}\to T_*
\]
locally uniformly on $\CC_+$. We now pass to the limit in the approximate equations of Proposition~\ref{prop:approxSelfEqWishart}. Since
the errors are $O(n^{-1+\varepsilon})$ and $n / m \to\tau$, we obtain for every $z\in\CC_+$ that the limiting solutions satisfy
\begin{equation}\label{eq:WishartLimitEq1}
r_*(z)=\frac{1}{-1-\theta_1\,p^\intercal T_*(z)p},
\qquad
s_*(z)=\frac{1}{-1-\theta_2\,q^\intercal T_*(z)q},
\end{equation}
and
\begin{equation}\label{eq:WishartLimitEq2}
\Bigl(
-zI_2-\tau\alpha P-\tau\beta Q-\tau\theta_1 r_*(z)P-\tau\theta_2 s_*(z)Q
\Bigr)T_*(z)=I_2.
\end{equation}
Thus $(r_*,s_*,T_*)$ solves \eqref{eq:WishartDeterministicSystem}.

It remains to check that $r_*(z),s_*(z)\in\CC_+$. From $\Im r_n(z)\ge0$ and $\Im s_n(z)\ge0$,
the locally uniform limit satisfies $\Im r_*(z)\ge0$ and $\Im s_*(z)\ge0$. Now \eqref{eq:WishartLimitEq2} shows that $T_*(z)$ is invertible, so $T_*(z)p\neq0$ and
$T_*(z)q\neq0$. Therefore the identities \eqref{eq:WishartImTIdentity}--\eqref{eq:WishartImrs},
applied to $(r_*,s_*,T_*)$, imply
\[
\Im r_*(z)\ge \theta_1|r_*(z)|^2(\Im z)\norm{T_*(z)p}^2>0,
\qquad
\Im s_*(z)\ge \theta_2|s_*(z)|^2(\Im z)\norm{T_*(z)q}^2>0.
\]
Hence $(r_*,s_*,T_*)$ is the unique solution of \eqref{eq:WishartDeterministicSystem} at the point
$z$.

Since every subsequence has a further subsequence converging locally uniformly to the same limit,
the whole sequence $(r_n,s_n,T_n)$ converges locally uniformly on $\CC_+$ to $(r,s,T)$. This
proves the lemma.
\end{proof}

\begin{proof}[Proof of Proposition~\ref{prop:detStabilityWishart}]
For $z\in\CC_+$ and $(r,s)\in\CC^2$, define
\[
D_z(r,s):=
-zI_2-\tau\alpha P-\tau\beta Q-\tau\theta_1 rP-\tau\theta_2 sQ,
\]
and, whenever $D_z(r,s)$ is invertible, $T_z(r,s):=D_z(r,s)^{-1}$. Then the system \eqref{eq:WishartDeterministicSystem} is equivalent to the fixed-point system
\begin{equation}\label{eq:WishartFixedPoint}
r(z)=\frac{1}{-1-\theta_1 p^\intercal T_z(r(z),s(z))p},
\qquad
s(z)=\frac{1}{-1-\theta_2 q^\intercal T_z(r(z),s(z))q}.
\end{equation}
Accordingly, define a function
\[
F_z(r,s):=
\begin{pmatrix}
\dfrac{1}{-1-\theta_1 p^\intercal T_z(r,s)p}\\[0.8em]
\dfrac{1}{-1-\theta_2 q^\intercal T_z(r,s)q}
\end{pmatrix},
\qquad
H_z(r,s):=
\begin{pmatrix}r\\ s\end{pmatrix}
-
F_z(r,s).
\]
By Lemma~\ref{lem:wishart-convergence-to-finite-system}, the exact solution satisfies
$H_z(r(z),s(z))=0$. We first compute the Jacobian of $F_z$ at the exact solution. Since
\[
\partial_r D_z(r,s)=-\tau\theta_1P,
\qquad
\partial_s D_z(r,s)=-\tau\theta_2Q,
\]
The equation \eqref{eq:derivativeInverse} gives
\[
\partial_r T_z(r,s)=\tau\theta_1 T_z(r,s)PT_z(r,s),
\qquad
\partial_s T_z(r,s)=\tau\theta_2 T_z(r,s)QT_z(r,s).
\]
Therefore, we can write the Jacobian at the exact solution as 
\[
DF_z(r(z),s(z))
=
\tau
\begin{pmatrix}
\theta_1^2 r(z)^2 \bigl(p^\intercal T(z)p\bigr)^2
&
\theta_1\theta_2 r(z)^2 \bigl(p^\intercal T(z)q\bigr)^2
\\[0.4em]
\theta_1\theta_2 s(z)^2 \bigl(q^\intercal T(z)p\bigr)^2
&
\theta_2^2 s(z)^2 \bigl(q^\intercal T(z)q\bigr)^2
\end{pmatrix}.
\]
Taking absolute values, we get
\[
|DF_z(r(z),s(z))|
=
\tau
\begin{pmatrix}
\theta_1^2 |r(z)|^2 |p^\intercal T(z)p|^2
&
\theta_1\theta_2 |r(z)|^2 |p^\intercal T(z)q|^2
\\[0.4em]
\theta_1\theta_2 |s(z)|^2 |q^\intercal T(z)p|^2
&
\theta_2^2 |s(z)|^2 |q^\intercal T(z)q|^2
\end{pmatrix}.
\]
Now apply \eqref{eq:WishartImpp}, \eqref{eq:WishartImqq}, and \eqref{eq:WishartImrs} at the exact
solution. Exactly as in the uniqueness argument, we obtain
\[
|DF_z(r(z),s(z))|
\begin{pmatrix}
\Im r(z)\\
\Im s(z)
\end{pmatrix}
<
\begin{pmatrix}
\Im r(z)\\
\Im s(z)
\end{pmatrix}.
\]
Hence $\rho\bigl(DF_z(r(z),s(z))\bigr)
\le
\rho\bigl(|DF_z(r(z),s(z))|\bigr)<1$. Hence, $DH_z(r(z),s(z))$ is invertible for every $z\in\CC_+$. By
continuity and compactness argument
\[
\sup_{z\in\bar\Omega_\eta}
\Bigl\|DH_z(r(z),s(z))^{-1}\Bigr\|_{\op}
\le O_\eta(1).
\]
Therefore, by the complex inverse function theorem, there exists $\delta_\eta>0$ such that if
$z\in\bar\Omega_\eta$ and $\|(r',s')-(r(z),s(z))\|\le \delta_\eta,$ then
\begin{equation}\label{eq:WishartLocalInverseBound}
\|(r',s')-(r(z),s(z))\|
\le
O_\eta\!\Bigl(\norm{H_z(r',s')}\Bigr).
\end{equation}

We now estimate the residual produced by the approximate system. Rewrite our system as follows
Proposition~\ref{prop:approxSelfEqWishart} as
\begin{gather}
r_n(z)=\frac{1+\epsilon_{1,n}(z)}{-1-\theta_1 p^\intercal T_n(z)p},
\label{eq:WishartApproxResidualR}
\\
s_n(z)=\frac{1+\epsilon_{2,n}(z)}{-1-\theta_2 q^\intercal T_n(z)q},
\label{eq:WishartApproxResidualS}
\\
D_n(z)T_n(z)=I_2+E_n(z),
\label{eq:WishartApproxResidualT}
\end{gather}
where $D_n(z):=
-zI_2-\tau\alpha P-\tau\beta Q-\frac{n}{m}\theta_1 r_n(z)P-\frac{n}{m}\theta_2 s_n(z)Q$ and 
\[
\sup_{z\in\Omega_\eta}
\Bigl(
|\epsilon_{1,n}(z)|+|\epsilon_{2,n}(z)|+\|E_n(z)\|_{\op}
\Bigr)
\le O(n^{-1+\varepsilon}).
\]

For $n$ large enough, $\|E_n(z)\|_{\op}\le 1/2$ uniformly on $\Omega_\eta$, so $I_2+E_n(z)$ is
invertible. Define $\hat T_n(z):=T_n(z)(I_2+E_n(z))^{-1}.$ Then \eqref{eq:WishartApproxResidualT} yields $D_n(z)\hat T_n(z)=I_2$, hence $D_n(z)$ is invertible $\hat T_n(z)=D_n(z)^{-1}.$ Since $\|T_n(z)\|_{\op}\le O_\eta(1)$ uniformly on $\Omega_\eta$, we obtain
\begin{equation}\label{eq:WishartTsharpClose}
\sup_{z\in\Omega_\eta}\|\hat T_n(z)-T_n(z)\|_{\op}\le O(n^{-1+\varepsilon}).
\end{equation}

Next, by Lemma~\ref{lem:wishart-convergence-to-finite-system}, $(r_n,s_n)\to(r,s)$ locally uniformly on
$\CC_+$. In particular, for $n$ large enough and uniformly on $\bar\Omega_\eta$, the point
$(r_n(z),s_n(z))$ stays in a fixed compact neighborhood of the compact set
$\{(r(z),s(z)):z\in\bar\Omega_\eta\}$. Hence $D_z(r_n(z),s_n(z))$ is invertible, so
$T_z(r_n(z),s_n(z))$ is well defined, and
\[
D_n(z)-D_z(r_n(z),s_n(z))
=
\Bigl(\tau-\frac{n}{m}\Bigr)\Bigl(\theta_1 r_n(z)P+\theta_2 s_n(z)Q\Bigr).
\]
With our bookkeeping convention from the beginning of the subsection, this implies
\[
\sup_{z\in\Omega_\eta}
\|D_n(z)-D_z(r_n(z),s_n(z))\|_{\op}
\le O(n^{-1+\varepsilon}).
\]
Using this bound with the resolvent identity and the uniform boundedness of matrices $\hat T_n$ and
$T_z(r_n(z),s_n(z))$ on $\Omega_\eta$, we get
\begin{equation}\label{eq:WishartTtrueClose}
\sup_{z\in\Omega_\eta}
\Bigl\|
T_z(r_n(z),s_n(z))-\hat T_n(z)
\Bigr\|_{\op}
\le O(n^{-1+\varepsilon}).
\end{equation}
Combining \eqref{eq:WishartTsharpClose} and \eqref{eq:WishartTtrueClose}, we conclude that
\begin{equation}\label{eq:WishartTApproxExactClose}
\sup_{z\in\Omega_\eta}
\Bigl\|
T_z(r_n(z),s_n(z))-T_n(z)
\Bigr\|_{\op}
\le O(n^{-1+\varepsilon}).
\end{equation}

\medskip

We now turn to the scalar equations. Since $|r_n(z)|\le O_\eta(1)$ and
\eqref{eq:WishartApproxResidualR} holds,
\[
\left|
r_n(z)-\frac{1}{-1-\theta_1 p^\intercal T_n(z)p}
\right|
=
\left|
\frac{r_n(z)\epsilon_{1,n}(z)}{1+\epsilon_{1,n}(z)}
\right|
\le O(n^{-1+\varepsilon}),
\]
uniformly on $\Omega_\eta$. Replacing $T_n(z)$ by $T_z(r_n(z),s_n(z))$ and using
\eqref{eq:WishartTApproxExactClose}, together with the fact that the reciprocal map is uniformly
Lipschitz on $\Omega_\eta \subset \CC\setminus\{0\}$, we obtain
\[
\left|
r_n(z)-\frac{1}{-1-\theta_1 p^\intercal T_z(r_n(z),s_n(z))p}
\right|
\le O(n^{-1+\varepsilon})
\]
uniformly on $\Omega_\eta$. The same argument gives the similar estimate for $s_n(z)$. Combining together, we have that 
\begin{equation}\label{eq:WishartPhiResidual}
\sup_{z\in\bar\Omega_\eta}
\Bigl\|
H_z(r_n(z),s_n(z))
\Bigr\|
\le O(n^{-1+\varepsilon}).
\end{equation}
By the qualitative convergence already proved, for all sufficiently large $n$,
\[
\sup_{z\in\bar\Omega_\eta}
\|(r_n(z),s_n(z))-(r(z),s(z))\|
<\delta_\eta,
\]
so the local inverse estimate \eqref{eq:WishartLocalInverseBound} applies at every
$z\in\bar\Omega_\eta$. Combining it with \eqref{eq:WishartPhiResidual}, we get
\[
\sup_{z\in\bar\Omega_\eta}
\Bigl(
|r_n(z)-r(z)|+|s_n(z)-s(z)|
\Bigr)
\le O(n^{-1+\varepsilon}).
\]
Finally, note that we also have 
\[
T_n(z)-T(z)
=
\Bigl[T_n(z)-T_z(r_n(z),s_n(z))\Bigr]
+
\Bigl[T_z(r_n(z),s_n(z))-T_z(r(z),s(z))\Bigr].
\]
The first term is $O(n^{-1+\varepsilon})$ uniformly on $\Omega_\eta$ by
\eqref{eq:WishartTApproxExactClose}. The second term is also $O(n^{-1+\varepsilon})$ uniformly on
$\bar\Omega_\eta$, because $(r,s)\mapsto T_z(r,s)$ is analytic in a neighborhood of the compact set
$\{(r(z),s(z)):z\in\bar\Omega_\eta\}$ and we already control over the arguments. Therefore
\[
\sup_{z\in\Omega_\eta}\|T_n(z)-T(z)\|_{\op}\le O(n^{-1+\varepsilon}).
\]
This proves the proposition.
\end{proof}

\begin{proof}[Proof of Theorem~\ref{thm:resolvConvWishart}]
Note that Lemma~\ref{lem:exptGNWishart} and  Proposition~\ref{prop:detStabilityWishart} gives
\[
\sup_{z\in\Omega_\eta}
\|\EE\calG(z)-\calM(z)\|_{\op}
\le O_\eta(n^{-1+\varepsilon}).
\]
Therefore, for any unit vectors $x,y\in\RR^{2m+2n}$,
\[
\sup_{z\in\Omega_\eta}
\abs{x^\intercal\calG(z)y-x^\intercal\calM(z)y}
\le
\sup_{z\in\Omega_\eta}
\abs{x^\intercal\bigl[\calG(z)-\EE\calG(z)\bigr]y}
+
O_\eta(n^{-1+\varepsilon}).
\]
Applying Proposition~\ref{prop:Wishart-concentration} to the first term proves the theorem.
\end{proof}

We next extend the deterministic solution to the real line outside the spectrum and identify the Stieltjes
transform that appears in Definition~\ref{def:det_system_wishart}.

\begin{lemma}\label{lem:WishartDeterministicExtension}
Let $(r,s,T)$ be the unique solution of \eqref{eq:WishartDeterministicSystem} on $\CC_+$. Then,
for every complex vector $w\in\CC^2$, the scalar function
\[
m_w(z):=w^*T(z)w
\]
is the Stieltjes transform of a compactly supported positive measure $\mu_w$ on $\RR$, of total mass
$\|w\|^2$, supported on $(-\infty,1]$. In particular,
\[
m(z):=\frac12 \, \Tr\,  T(z)
\]
is the Stieltjes transform of a compactly supported probability measure $\mu$ supported on
$(-\infty,1]$, and each entry of $T$ extends analytically to
\[
\DD:=\CC\setminus(-\infty,1].
\]
Moreover, the functions $r$ and $s$ also extend analytically to $\DD$, the system
\eqref{eq:WishartDeterministicSystem} continues to hold on $\DD$, and hence $\calM$ extends
analytically to $\DD$ as well.
\end{lemma}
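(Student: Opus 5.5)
The plan follows the Wigner case (Lemma~\ref{lem:WignerDeterministicExtension}) closely. Fix $w\in\CC^2$ and set
\[
m_{w,n}(z):=w^*T_n(z)w=\frac1m\EE\Tr\bigl((ww^*\otimes I_m)(H-zI_{2m})^{-1}\bigr).
\]
This uses that the top-left $2m\times 2m$ block of $\calG$ is $(H-zI_{2m})^{-1}$ and the block form from Lemma~\ref{lem:exptGNWishart}. Consequently $m_{w,n}$ is the Stieltjes transform of the positive measure $\mu_{w,n}(A)=\frac1m\EE\Tr((ww^*\otimes I_m)\II_A(H))$, whose total mass is $\|w\|^2$.

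By Lemma~\ref{lem:wishart-convergence-to-finite-system}, $m_{w,n}\to m_w$ locally uniformly on $\CC_+$. We also have $|m_{w,n}(i\eta)|\le\|w\|^2/\eta$, and this bound passes to the limit. Applying it with $w=e_1,e_2,e_1+e_2,e_1+ie_2$ shows that every entry of $T(i\eta)$ is $O(1/\eta)$. The scalar equations then give $r(i\eta)=1/(-1-\theta_1 p^\intercal T p)\to -1$ and likewise $s(i\eta)\to-1$. In particular $r$ and $s$ stay bounded, and the matrix equation yields
\[
T(i\eta)^{-1}=-i\eta I_2+O(1),
\]
so that $i\eta\, m_w(i\eta)\to-\|w\|^2$. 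By \cite{GeronimoHill2003} and the Stieltjes continuity theorem, $m_w$ is the Stieltjes transform of a positive measure $\mu_w$ of mass $\|w\|^2$, with $\mu_{w,n}\Rightarrow\mu_w$.

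Next comes support and compactness. Since $H$ is similar to $W$, Theorem~\ref{thm:WishartGordonEdge} gives
\[
\mu_{w,n}([1+\delta,\infty))\le\|w\|^2\,\PP(\lambda_{\max}(H)\ge1+\delta)\to0.
\]
For the lower edge, $H\succeq -C\otimes I_m$, so the spectrum is bounded below by $-\|C\|_{\op}$ deterministically. Weak limits therefore give $\operatorname{supp}\mu_w\subset[-\|C\|,1]$. Taking $w=e_1$ and $w=e_2$ and averaging shows that $m=\frac12\Tr T$ is the Stieltjes transform of a probability measure, and polarization extends every entry of $T$ analytically to $\DD$.

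The step I expect to be the main obstacle is extending $r$ and $s$, because they are not themselves given as quadratic forms of $T$. I would define them on $\DD$ by
\[
r(z):=\bigl(-1-\theta_1 p^\intercal T(z)p\bigr)^{-1},\qquad s(z):=\bigl(-1-\theta_2 q^\intercal T(z)q\bigr)^{-1},
\]
which agrees with the solution on $\CC_+$. The task is then to rule out zeros of the denominators. On $\CC_-$ I would use the conjugate symmetry $T(\bar z)=\overline{T(z)}$. On the real interval $(1,\infty)$, $p^\intercal T(x)p$ is real and negative, being the Stieltjes transform of a positive measure supported below $x$. A zero of $1+\theta_1 p^\intercal T(x)p$ would require $p^\intercal T(x)p=-1/\theta_1<-1$.

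To exclude this, I would use the limiting identity from the $(33)$ block: from Theorem~\ref{thm:real-stieltjes-transform-Wishart}, or directly from \eqref{eq:SchurGnWishart}, the lower-right block $BR(z)B^\intercal-I$ has normalized trace $r_n(z)$, and this converges to the analytic continuation. Alternatively, $r_n$ is itself the normalized trace of $B R B^\intercal-I$. Writing it as $-1+\frac1n\EE\Tr(\cdot)$ with a positive spectral measure supported on $\sigma(H)$, it extends analytically off $(-\infty,1+\delta]$ w.h.p. The limit $r$ then extends analytically to $\DD$ by the same Stieltjes-type argument, and it is finite there. Finiteness of $r$ forces the denominator to be nonzero.

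Finally, the identity theorem shows that both equations in \eqref{eq:WishartDeterministicSystem} persist on $\DD$, since both sides are analytic there and agree on $\CC_+$. Hence $\calM$ extends analytically to $\DD$.
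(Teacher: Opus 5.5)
Your treatment of $m_w$ is the paper's argument: the Stieltjes representation of $w^*T_nw$, the normalization at $i\eta$, the support bound from Theorem~\ref{thm:WishartGordonEdge}, and polarization. Your deterministic lower edge $H\succeq -C\otimes I_m$ is a slightly cleaner substitute for the paper's w.o.p.\ bound on $\|H\|_{\op}$.

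For $r$ and $s$ you take a genuinely different route. The paper stays deterministic once $T$ is extended. It first shows $T(z)$ is invertible on all of $\DD$: $\Im T\succ0$ on $\CC_+$, conjugation handles $\CC_-$, and $T(x)\prec0$ for $x>1$. It then considers the identity $T^{-1}+zI_2+\tau\alpha P+\tau\beta Q+\tau\theta_1f_1^{-1}P+\tau\theta_2f_2^{-1}Q\equiv0$, with $f_1=1+\theta_1p^\intercal Tp$ and $f_2=1+\theta_2q^\intercal Tq$. Every term except the last two is analytic on $\DD$, so linear independence of $P,Q$ forbids poles, i.e.\ zeros of $f_1,f_2$.

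You instead go back to the random matrix. By \eqref{eq:SchurGnWishart}, $r_n(z)+1=\frac1n\EE\Tr\bigl((H-zI_{2m})^{-1}B^\intercal\Pi_3B\bigr)$, where $\Pi_3$ is the projection onto the third block. This is the Stieltjes transform of a positive measure $\nu_n$ of mass $\frac1n\EE\Tr(\theta_1UU^\intercal)=\theta_1$. Hence $r+1$ is a Stieltjes transform of a positive measure on $(-\infty,1]$, and the identity theorem applied to $r\cdot(-1-\theta_1p^\intercal Tp)=1$ on the connected set $\DD$ rules out zeros of the denominator.

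This works, but it needs two repairs.
\begin{enumerate}
\item Only the direct version is admissible. The proof of Theorem~\ref{thm:real-stieltjes-transform-Wishart} itself invokes the present lemma, so citing that theorem, or speaking of ``the analytic continuation'' before you have constructed one, is circular.
\item Your phrase ``extends off $(-\infty,1+\delta]$ w.h.p.'' must be replaced by $\EE\nu_n([1+\delta,\infty))\to0$ for the expected measure. This needs Cauchy--Schwarz together with a second-moment bound on $\frac1n\Tr(UU^\intercal)$, because unlike $ww^*\otimes I_m$ the weight $B^\intercal\Pi_3B$ is random and unbounded.
\end{enumerate}

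As for what each route buys: the paper's needs no probabilistic input beyond the extension of $T$, but it must establish invertibility of $T$ on $\DD$ separately. Yours costs one more limit argument, but it gets invertibility for free from $T(z)\bigl(-zI_2-\tau\alpha P-\tau\beta Q-\tau\theta_1rP-\tau\theta_2sQ\bigr)=I_2$ on $\DD$. It also shows directly that $r+1$ and $s+1$ are Stieltjes transforms of positive measures supported on $(-\infty,1]$. That immediately gives $r,s<-1$ and $r',s'>0$ on $(1,\infty)$, which are exactly the properties used later in the proof of Theorem~\ref{thm:Wishart-outlier}.
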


\begin{proof} 
Fix $w\in\CC^2$ and define $m_{w,n}(z):=w^*T_n(z)w.$ The  upper left block of $\calG(z)$ is the resolvent $(H-zI_{2m})^{-1}$, so we may write
\[
m_{w,n}(z)
=
\frac1m\EE\Tr\Bigl((ww^*\otimes I_m)(H-zI_{2m})^{-1}\Bigr).
\]
Thus $m_{w,n}$ is the Stieltjes transform of the positive measure defined by
\[
\mu_{w,n}(A):=
\frac1m\EE\Tr\Bigl((ww^*\otimes I_m)\,\II_A(H)\Bigr),
\qquad
A\subset\RR \ \text{Borel}.
\]
Here $\II_A(H) = \sum_{\lambda \in \Lambda} \II(\lambda \in A) h_\lambda h_\lambda^\intercal$, where $\Lambda$ is the set of eigenvalues of $H$, and $h_\lambda$ is unit eigenvector corresponding to eigenvalue $\lambda$. Total mass of $\mu_{w,n}$ is given by
\[
\mu_{w,n}(\RR)=\frac1m\Tr(ww^*\otimes I_m)=\|w\|^2.
\]
By Lemma~\ref{lem:wishart-convergence-to-finite-system}, we have  $T_n\to T$ locally uniformly on $\CC_+$, hence
$m_{w,n}\to m_w:=w^*Tw$ locally uniformly on $\CC_+$. To apply the Stieltjes continuity theorem, we verify the normalization at infinity. Since
$|m_{w,n}(i\eta)|\le \|w\|^2/\eta$ for every $\eta>0$, passing to the limit gives
\[
|m_w(i\eta)|\le \frac{\|w\|^2}{\eta}.
\]
Taking $w=e_1$, $w=e_2$, and $w=e_1+e_2$, we see that every entry of $T(i\eta)$ is
$O(\eta^{-1})$, and hence we get that as $\eta \to \infty$
\[
\|T(i\eta)\|_{\op}\le O(\eta^{-1}).
\]
Returning to the scalar equations in \eqref{eq:WishartDeterministicSystem}, we obtain
\[
r(i\eta)=-1+O(\eta^{-1}),
\qquad
s(i\eta)=-1+O(\eta^{-1}).
\]
Substituting this into the matrix equation of \eqref{eq:WishartDeterministicSystem} shows that
\[
T(i\eta)^{-1}
=
-i\eta I_2+O(1) \implies T(i\eta)=-(i\eta)^{-1}I_2+O(\eta^{-2}).
\]
Consequently, we see that 
\[
i\eta\,m_w(i\eta)\to -\|w\|^2
\qquad\text{as }\eta\to\infty.
\]
Then by \cite{GeronimoHill2003} and the Stieltjes continuity theorem, there exists a positive measure $\mu_w$ on $\RR$ with total mass $\norm{w}^2$ such that
\[
m_w(z)=\int_\RR \frac{\mu_w(dx)}{x-z},
\quad \text{ for } z\in\CC_+.
\]
We next locate the support. Fix $\delta>0$. Since $0\preceq ww^*\otimes I_m\preceq \|w\|^2I_{2m}$,
we have
\[
\mu_{w,n}([1+\delta,\infty))
\le
\|w\|^2\,\PP\bigl(\lambda_{\max}(H)\ge 1+\delta\bigr).
\]
Because $H$ is similar to $W$, Theorem~\ref{thm:WishartGordonEdge} implies that the right-hand side tends
to $0$. Passing to the weak limit gives $\operatorname{supp}\mu_w\subset (-\infty,1].$ Analogously, since $\norm{H}_\op = O(1)$ with overwhelming probability, $\operatorname{supp} \mu_w$ is compact. Therefore $m_w$ extends analytically to $\DD$. By polarization,
\[
w_1^*T(z)w_2
=
\frac14\sum_{\ell=0}^3 i^\ell m_{w_1+i^\ell w_2}(z),
\qquad
w_1,w_2\in\CC^2.
\]
Thus every $w_1^*T(z)w_2$ extends analytically to $\DD$, and hence so does each entry
of $T$. We now extend $r$ and $s$. First note that for $z\in\DD$, the matrix $T(z)$ is invertible. Indeed,
if $z\in\CC_+$, then
\[
\Im\paren{w^*T(z)w}>0
\qquad
\text{for every }w\neq0,
\]
so $\Im T(z)\succ0$, hence $T(z)$ is invertible. The same holds on $\CC_-$ by complex conjugation.
If $x>1$ is real, then
\[
w^*T(x)w
=
\int_\RR \frac{\mu_w(dt)}{t-x}
<0
\qquad
\text{for every }w\neq0,
\]
because $\operatorname{supp}\mu_w\subset(-\infty,1]$. Hence $T(x)\prec0$, and again $T(x)$ is
invertible. Thus $T^{-1}$ is analytic on all of $\DD$. Now define on $\DD$
\[
f_1(z):=1+\theta_1 p^\intercal T(z)p,
\qquad
f_2(z):=1+\theta_2 q^\intercal T(z)q.
\]
Consider the matrix-valued meromorphic function
\[
\Phi(z):=
T(z)^{-1}+zI_2+\tau\alpha P+\tau\beta Q+\frac{\tau\theta_1}{f_1(z)}P+\frac{\tau\theta_2}{f_2(z)}Q.
\]
On $\CC_+$, the exact system \eqref{eq:WishartDeterministicSystem} gives $\Phi(z)=0$. Hence
$\Phi$ vanishes identically on $\DD$ as a meromorphic function. Since $P$ and $Q$ are linearly
independent rank-one matrices, $\Phi$ cannot have a pole. Therefore neither $f_1$ nor $f_2$ has a
zero on $\DD$. We may therefore define on $\DD$
\[
r(z):=-\frac{1}{f_1(z)},
\qquad
s(z):=-\frac{1}{f_2(z)}.
\]
These are analytic on $\DD$, extend the original functions on $\CC_+$, and by construction satisfy
\eqref{eq:WishartDeterministicSystem} on all of $\DD$. Finally,
\[
m(z)=\frac12 \Tr\,  T(z)=\frac12 m_{e_1}(z)+\frac12 m_{e_2}(z),
\]
so $m$ is the Stieltjes transform of the probability measure
\[
\mu:=\frac12\mu_{e_1}+\frac12\mu_{e_2}.
\]
This proves the lemma.
\end{proof}

\begin{proof}[Proof of Theorem~\ref{thm:real-stieltjes-transform-Wishart}]
Fix a compact set $K\subset(1,\infty)$ and choose $\delta>0$ so that
\[
K\subset (1+2\delta,\infty).
\]
 $\eta\in(0,\delta/4)$ be fixed for the moment.

Apply Theorem~\ref{thm:resolvConvWishart} with deterministic standard basis vectors and take a union
bound over the diagonal entries of all block positions that appear in the two displayed traces. Since each block contains either $m$ or
$n$ diagonal entries, we obtain that with overwhelming probability,
\begin{equation}\label{eq:WishartRealTraceAboveAxis}
\sup_{x\in K}
\left\|
\frac1m
\begin{pmatrix}
\Tr\calG^{(11)}(x+i\eta) & \Tr\calG^{(12)}(x+i\eta)\\
\Tr\calG^{(21)}(x+i\eta) & \Tr\calG^{(22)}(x+i\eta)
\end{pmatrix}
-T(x+i\eta)
\right\|
=o_\eta(1),
\end{equation}
and similarly
\begin{equation*}
\sup_{x\in K}
\left\|
\frac1n
\begin{pmatrix}
\Tr\calG^{(33)}(x+i\eta) & \Tr\calG^{(34)}(x+i\eta)\\
\Tr\calG^{(43)}(x+i\eta) & \Tr\calG^{(44)}(x+i\eta)
\end{pmatrix}
-
\begin{pmatrix}
r(x+i\eta) & 0\\
0 & s(x+i\eta)
\end{pmatrix}
\right\|
=o_\eta(1).
\end{equation*}
Here $o_\eta(1)$ denotes a deterministic quantity tending to $0$ as $m\to\infty$ for fixed $\eta$. We now remove the imaginary part. By Theorem~\ref{thm:WishartGordonEdge},
\[
\PP\bigl(\lambda_{\max}(H)\le 1+\delta\bigr)\to1.
\]
On this event, the resolvent $R(x):=(H-xI_{2m})^{-1}$ is well defined for every $x\in K$ and satisfies
$\|R(x)\|_{\op}\le \delta^{-1}$. Using the Schur complement formula \eqref{eq:SchurGnWishart},
together with $\|B\|_{\op}\le O(1)$ with overwhelming probability, we get
\[
\sup_{x\in K}\|\calG(x)\|_{\op}\le O_K(1)
\]
with overwhelming probability. The same bound holds for $\calG(x+i\eta)$. Hence, on a overwhelming probability event, we have that ,
\[
\sup_{x\in K}
\|\calG(x+i\eta)-\calG(x)\|_{\op}
\le
\eta\sup_{x\in K}\|\calG(x+i\eta)\|_{\op}\|\calG(x)\|_{\op}
\le O_K(\eta),
\]
where the first inequality follows from the fact that
\begin{align*}
    \calG(x+i\eta)-\calG(x)
&=
\calG(x+i\eta)\bigl(\calL(x)-\calL(x+i\eta)\bigr)\calG(x) \\
&=
i\eta\,\calG(x+i\eta)(I_{2m}\oplus 0_{2n})\calG(x).
\end{align*}
Therefore, we can evaluate the difference  
\begin{equation*}
\sup_{x\in K}
\left\|
\frac1m
\begin{pmatrix}
\Tr\calG^{(11)}(x+i\eta)-\Tr\calG^{(11)}(x) &
\Tr\calG^{(12)}(x+i\eta)-\Tr\calG^{(12)}(x)
\\
\Tr\calG^{(21)}(x+i\eta)-\Tr\calG^{(21)}(x) &
\Tr\calG^{(22)}(x+i\eta)-\Tr\calG^{(22)}(x)
\end{pmatrix}
\right\|
\le O_K(\eta),
\end{equation*}
and likewise
\begin{equation*}
\sup_{x\in K}
\left\|
\frac1n
\begin{pmatrix}
\Tr\calG^{(33)}(x+i\eta)-\Tr\calG^{(33)}(x) &
\Tr\calG^{(34)}(x+i\eta)-\Tr\calG^{(34)}(x)
\\
\Tr\calG^{(43)}(x+i\eta)-\Tr\calG^{(43)}(x) &
\Tr\calG^{(44)}(x+i\eta)-\Tr\calG^{(44)}(x)
\end{pmatrix}
\right\|
\le O_K(\eta).
\end{equation*}

On the deterministic side, Lemma~\ref{lem:WishartDeterministicExtension} shows that $T$, $r$, and $s$
are analytic on an open neighborhood of $K$. Therefore, by Lipschitz bound, we have 
\begin{gather}
\sup_{x\in K}\|T(x+i\eta)-T(x)\|\le O_K(\eta) \\
\sup_{x\in K }\|r(x+i\eta) - r(x)\| \le O_K(\eta) \\
\sup_{x\in K }\|s(x+i\eta) - s(x)\| \le O_K(\eta) 
\end{gather}
Combining \eqref{eq:WishartRealTraceAboveAxis} with these estimates, we obtain
that with overwhelming probability,
\[
\sup_{x\in K}
\left\|
\frac1m
\begin{pmatrix}
\Tr\calG^{(11)}(x) & \Tr\calG^{(12)}(x)\\
\Tr\calG^{(21)}(x) & \Tr\calG^{(22)}(x)
\end{pmatrix}
-T(x)
\right\|
\le O_K(\eta)+o_\eta(1).
\]
By the same argument, we also get that 
\[
\sup_{x\in K}
\left\|
\frac1n
\begin{pmatrix}
\Tr\calG^{(33)}(x) & \Tr\calG^{(34)}(x)\\
\Tr\calG^{(43)}(x) & \Tr\calG^{(44)}(x)
\end{pmatrix}
-
\begin{pmatrix}
r(x) & 0\\
0 & s(x)
\end{pmatrix}
\right\|
\le O_K(\eta)+o_\eta(1).
\]
Since $\eta>0$ was arbitrary, we let first $m\to\infty$ and then $\eta\downarrow0$. This proves the
theorem.
\end{proof}

\begin{cor}\label{cor:wishart-critical-identities-outlier}
Assume $\tau \alpha^2, \tau \beta^2 < 1$ and $\kappa \in (0,1)$
Then the right limits
\[
T(1):=\lim_{x\downarrow1}T(x) = -I_2
\quad
r(1):=\lim_{x\downarrow1}r(x) = -(1+\alpha)
\quad
s(1):=\lim_{x\downarrow1}s(x) = -(1+\beta)
\]
\end{cor}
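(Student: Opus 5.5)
The plan is to mirror the proof of Corollary~\ref{cor:wigner-critical-identities-outlier}. The new ingredient is that the scalar equations for $r,s$ in \eqref{eq:WishartDeterministicSystem} must be rewritten so that the matrix equation takes the same form $-T(x)^{-1}=xI_2+c_1P+c_2Q$.

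\textbf{Step 1: reduce to the Wigner form.} Set $a(x):=p^\intercal T(x)p$, $b(x):=q^\intercal T(x)q$, and $\nu_1:=1+a$, $\nu_2:=1+b$. Since $1-\theta_1=1/(1+\alpha)$, we have $1+\theta_1 a=(1+\alpha\nu_1)/(1+\alpha)$. Hence $r=-(1+\alpha)/(1+\alpha\nu_1)$, and
\[
c_1:=\tau(\alpha+\theta_1 r)=\frac{\tau\alpha^2\nu_1}{1+\alpha\nu_1},
\qquad
c_2=\frac{\tau\beta^2\nu_2}{1+\beta\nu_2}.
\]
The matrix equation then reads $-T(x)^{-1}=xI_2+c_1P+c_2Q$. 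Using $\|p\|=\|q\|=1$ and $p^\intercal q=\kappa$, $2\times2$ inversion gives $\Delta=x^2+x(c_1+c_2)+(1-\kappa^2)c_1c_2$ and
\[
a=-\frac{x+(1-\kappa^2)c_2}{\Delta},\qquad
b=-\frac{x+(1-\kappa^2)c_1}{\Delta},\qquad
p^\intercal Tq=-\frac{\kappa x}{\Delta}.
\]
These are exactly the Wigner formulas. By Lemma~\ref{lem:WishartDeterministicExtension}, $T(x)\prec0$ and $T$, $r$, $s$ are analytic, hence continuous, on $(1,\infty)$.

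\textbf{Step 2: show $\nu_1,\nu_2>0$ on $(1,\infty)$.} For large $x$ this holds because $T(x)=-x^{-1}I_2+O(x^{-2})$. Suppose a first crossing occurs at some $x>1$ with $a=-1$ and $b\ge-1$. Then $\nu_1=0$, so $c_1=0$, and $c_2\ge0$. The formula for $a$ then gives $x^2+xc_2=x+(1-\kappa^2)c_2$, which is impossible for $x>1$. The case $b=-1$ is symmetric.

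\textbf{Step 3: analyze subsequential limits as $x\downarrow1$.} Since $a,b\in(-1,0)$, the pair $(\nu_1,\nu_2)$ lies in $[0,1]^2$, and subsequential limits exist with $c_1,c_2,\Delta$ bounded. In the limit, clearing denominators in the formulas for $a$ and $b$ gives
\[
\nu_1\Delta=c_1+\kappa^2c_2+(1-\kappa^2)c_1c_2,
\qquad
\nu_2\Delta=\kappa^2c_1+c_2+(1-\kappa^2)c_1c_2,
\]
with $\Delta=1+c_1+c_2+(1-\kappa^2)c_1c_2$. If $\nu_1=0$, then $c_1=0$, so the first identity forces $c_2=0$ and hence $\nu_2=0$; symmetrically if $\nu_2=0$.

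Now suppose both $\nu_i>0$. The key bound replacing the Wigner step is $c_1\le\tau\alpha^2\nu_1$. Using it, the first identity rearranges to
\[
\kappa^2c_2=\nu_1\Delta-c_1\bigl(1+(1-\kappa^2)c_2\bigr)
\ge\nu_1\Bigl[(1-\tau\alpha^2)\bigl(1+(1-\kappa^2)c_2\bigr)+c_1+c_2\Bigr]
>\nu_1(1-\tau\alpha^2).
\]
Combining with $c_2\le\tau\beta^2\nu_2$ gives $\nu_1(1-\tau\alpha^2)<\kappa^2\tau\beta^2\nu_2$. Symmetrically, $\nu_2(1-\tau\beta^2)<\kappa^2\tau\alpha^2\nu_1$. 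Multiplying and dividing by $\nu_1\nu_2$ yields $(1-\tau\alpha^2)(1-\tau\beta^2)<\kappa^4\tau^2\alpha^2\beta^2$, which contradicts the definition of $\kappa$.

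\textbf{Step 4: conclude.} Every subsequential limit therefore has $\nu_1=\nu_2=0$, so $c_1,c_2\to0$ and $\Delta\to1$. Consequently $a\to-1$, $b\to-1$ and $p^\intercal Tq\to-\kappa$. Since $p,q$ form a basis of $\RR^2$ and these are the bilinear forms of $-I_2$, we get $T(x)\to-I_2$. Finally, $r=-(1+\alpha)/(1+\alpha\nu_1)\to-(1+\alpha)$, and likewise $s\to-(1+\beta)$.

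The main obstacle is Step 3. One must verify that the Wishart parametrization of $c_i$ reproduces the Wigner contradiction, and the inequality $c_1\le\tau\alpha^2\nu_1$ is exactly what makes it go through.
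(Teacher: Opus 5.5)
Your proposal is correct and follows the paper's proof essentially step for step: the same rewriting $-T(x)^{-1}=xI_2+c_1P+c_2Q$ with $c_1=\tau\alpha^2\nu_1/(1+\alpha\nu_1)$, the same first-crossing argument, and the same subsequential-limit contradiction. The only difference is that you use the bound $c_1\le\tau\alpha^2\nu_1$ where the paper computes the bracket exactly, and there is one small algebra slip there: $\Delta-\tau\alpha^2\bigl(1+(1-\kappa^2)c_2\bigr)$ equals $(1-\tau\alpha^2)\bigl(1+(1-\kappa^2)c_2\bigr)+c_1+\kappa^2c_2+(1-\kappa^2)c_1c_2$, not $(1-\tau\alpha^2)\bigl(1+(1-\kappa^2)c_2\bigr)+c_1+c_2$, but these extra terms are nonnegative and $c_1>0$ when $\nu_1>0$, so $\kappa^2c_2>\nu_1(1-\tau\alpha^2)$ still follows.
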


\begin{proof}
For real \(x>1\), Lemma~\ref{lem:WishartDeterministicExtension} gives
\(T(x)\prec0\). Substituting the scalar equations into the matrix equation in \eqref{eq:WishartDeterministicSystem} gives
\[
-T(x)^{-1}=xI_2+c_1(x)P+c_2(x)Q,
\]
where $c_1(x)$ and $c_2(x)$ are defined by
\[
c_1(x)=\frac{\tau\alpha^2(1+p^\intercal T(x)p)}
{1+\alpha(1+p^\intercal T(x)p)},
\qquad
c_2(x)=\frac{\tau\beta^2(1+q^\intercal T(x)q)}
{1+\beta(1+q^\intercal T(x)q)}.
\]
Direct inversion of $-T(x)^{-1}$ gives 
\[
p^\intercal T(x) p=-\frac{x+(1-\kappa^2)c_2}{\Delta},\quad
q^\intercal T(x) q=-\frac{x+(1-\kappa^2)c_1}{\Delta},\quad
p^\intercal T(x) q=-\frac{\kappa x}{\Delta}.
\]
where the determinant $\Delta=x^2+x(c_1+c_2)+(1-\kappa^2)c_1c_2$. We first show that
\[
p^\intercal T(x)p>-1,\qquad q^\intercal T(x)q>-1
\]
for all \(x>1\). This holds for large \(x\), since
\(T(x)=-x^{-1}I_2+O(x^{-2})\). If a first crossing occurred, say
\(p^\intercal T(x)p=-1\) and \(q^\intercal T(x)q\ge -1\), then \(c_1=0\) and
\(c_2\ge0\). Then, at $x$
\[
x^2+xc_2=x+(1-\kappa^2)c_2,
\]
which is impossible for \(x>1\). The case
\(q^\intercal T(x)q=-1\) is identical. Now take any sequence \(x_j\downarrow1\) along which
\(p^\intercal T(x_j)p\) and \(q^\intercal T(x_j)q\) converge, and write
\[
\nu_1:=1+\lim_j p^\intercal T(x_j)p,\qquad
\nu_2:=1+\lim_j q^\intercal T(x_j)q.
\]
Then \(\nu_1,\nu_2\ge0\). In the limit,
\[
c_1=\frac{\tau \alpha^2 \nu_1}{1+\alpha\nu_1},
\qquad
c_2=\frac{\tau \beta^2 \nu_2}{1+\beta\nu_2},
\qquad
\Delta=1+c_1+c_2+(1-\kappa^2)c_1c_2.
\]
Moreover, the limiting forms of quadratic forms of $T(x)$ also show 
\[
\nu_1\Delta=c_1+\kappa^2 c_2+(1-\kappa^2 )c_1c_2,
\qquad
\nu_2\Delta=\kappa^2 c_1+c_2+(1-\kappa^2 )c_1c_2.
\]
If \(\nu_1=0\), the first identity forces \(c_2=0\), hence
\(\nu_2=0\). Similarly, \(\nu_2=0\) forces \(\nu_1=0\). Suppose therefore that
\(\nu_1,\nu_2>0\). Since \(c_1/\nu_1=\alpha ^2\tau /(1+\alpha\nu_1)\), the first limiting
identity becomes
\[
\nu_1\left\{
\Delta-\frac{\alpha^2 \tau}{1+\alpha\nu_1}
-(1-\kappa^2)\frac{\alpha^2 \tau}{1+\alpha\nu_1}c_2
\right\}=\kappa^2c_2.
\]
The expression in braces is strictly larger than \(1-\alpha^2 \tau\), because
\begin{multline*}
    \left\{
\Delta-\frac{\alpha^2 \tau}{1+\alpha\nu_1}
-\frac{(1-\kappa^2)\alpha^2\tau}{1+\alpha\nu_1}c_2
\right\}-(1-\alpha^2\tau)=
\\=
\left(1-\frac{\alpha^2 \tau(1-\kappa^2)(1-\nu_1)}{1+\alpha\nu_1}\right)c_2
+
\frac{\alpha^2 \tau(1+\alpha)\nu_1}{1+\alpha\nu_1}>0.
\end{multline*}
Thus, we get that 
\begin{gather*}
    (1-\alpha^2 \tau)\nu_1<\kappa^2 c_2<\kappa^2 \beta^2 \tau \,\nu_2\\
(1-\beta^2 \tau )\nu_2<\kappa^2 c_1<\kappa^2 \alpha^2 \tau \,\nu_1.
\end{gather*}
Multiplying, we get that  $(1-\alpha^2 \tau)(1-\beta^2\tau) < \kappa^4 \alpha^2\beta^2 \tau^2 $, which is a contradiction. Hence
\(\nu_1=\nu_2=0\) for every subsequential limit  as $x \downarrow 1$, we get 
\[
p^\intercal T(x)p\to -1,\qquad q^\intercal T(x)q\to -1, \qquad p^\intercal T(x)q\to -\kappa.
\]
Since \(p,q\) form a basis of \(\mathbb R^2\), we get that $T(x) \to -I_2.$ Finally, the scalar equations in terms of $T$ become $r(x) \to -(1+\alpha)$ and $s(x) \to -(1+\beta)$.
\end{proof}

%% file: outlier.tex
\section{Outlier analysis}\label{sec:OutlierAppendix}

In this section we pass from the null two-view models to the correlated models and show that, once the correlation is strong enough, the corresponding
matrix $W$ undergoes a BBP-type phase transition and develops an outlier above the null edge.  The
associated top eigenvector then has non-trivial overlap with the planted
correlated directions.

\subsection{Canonical correlation model deferred proofs, Lemma~\ref{lem:LimitingS_CCAMain}, Theorems~\ref{thm:CCA-outlier1iMain}, \ref{thm:CCA_eigVecOverlapMain}}\label{sec:outlierCCA-Appendix}

We now finish the ourlier analysis of the CCA model, giving proofs of Lemma~\ref{lem:LimitingS_CCAMain} and Theorems~\ref{thm:CCA-outlier1iMain}, \ref{thm:CCA_eigVecOverlapMain}. Throughout this subsection, we adopt notations in Section~\ref{subsec:CCA}.

By orthogonal invariance of the null
model, we may assume without loss of generality that the planted directions are
\(a=e_1\in \RR^m\) and \(b=e_1\in \RR^k\).  We then define
$W$ exactly as above, but under the modified law in which, for each
sample index $i\in[n]$, 
\[
\operatorname{Var}(U_{i1})=\frac1m,
\qquad
\operatorname{Var}(V_{i1})=\frac1k,
\qquad
\operatorname{Cov}(U_{i1},V_{i1})=\frac{\rho}{\sqrt{mk}}.
\]
and all remaining entries are independent Gaussians with the same marginal laws
as in the null model, independent of these pairs.  We denote the ordered eigenvalues of $W$ by 
\[
\lambda_1(W)\ge \lambda_2(W)\ge \cdots \ge \lambda_{m+k}(W),
\]
and we let $\hat  w=(\hat a, \hat  b)\in \RR^m\times \RR^k$ be a unit top eigenvector of $W$. 

Let $(r,s,T)$ be from
Definition~\ref{def:det_system} and $\DD :=  \CC \setminus (\lambda_*,\infty]$. We know by Lemma~\ref{lem:extended-solution-of-finite-system} that $(r,s,T)$ extend to $\DD$ analytically. For real $z>\lambda_*$ we can define
\begin{equation}\label{eq:cca-Sigma-rho}
\cS (z):=
\begin{pmatrix}
 r(z)^{-1} & -\rho\kappa^{-1}t_{12}(z)\\
 -\rho\kappa^{-1}t_{21}(z) & s(z)^{-1}
\end{pmatrix}.
\end{equation}
Important technical lemma that will be used in the proof is that 
\begin{lemma}\label{lem:S-has-negative-determinant-at-edge}
    Suppose $(r,s,T)$ is the unique solution to Definition~\ref{def:det_system}. Then, 
    \begin{equation*}
        \lim_{z \to \lambda_*}r(z) = \lim_{z \to \lambda_*} s(z) = -\frac{1}{\lambda_*}, \text{ and }  \lim_{z \to \lambda_*}T(z)
=
\begin{pmatrix}
0 & -\lambda_*\\
-\lambda_* & 0
\end{pmatrix}.
    \end{equation*}
    In particular, if $\calS$ be defined as in \eqref{eq:cca-Sigma-rho},  
    \begin{equation*}
        \lim_{z \to\lambda_*} \det \calS(z) = \lambda_*^2\Bigl(1-\frac{\rho^2}{\kappa^2}\Bigr).
    \end{equation*}
\end{lemma}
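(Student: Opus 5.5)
The plan is to work entirely on the real half-line $z>\lambda_*$, using the explicit algebraic form of the system obtained in the proof of Lemma~\ref{lem:extended-solution-of-finite-system}. There, \eqref{eq:finite_system} is shown to be equivalent on $\DD$ to the two polynomial identities \eqref{eq:poly-r-s}, with
\[
d(r,s)=1-\kappa(r+s)-(1-\kappa^2)rs ,
\]
together with the closed formula
\[
T=\frac{1}{\kappa d}\begin{pmatrix}\kappa+(1-\kappa^2)s & -1\\ -1 & \kappa+(1-\kappa^2)r\end{pmatrix}.
\]
The same proof shows that $d(r(z),s(z))\neq 0$ on $\DD$. So it suffices to find the limits of $r$ and $s$; the limit of $T$, and hence of $\det\cS$, then follows by substitution.

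\textbf{Step 1: a priori bounds and monotonicity.} By Lemma~\ref{lem:extended-solution-of-finite-system}, for real $z>\lambda_*$ we have $r(z)=\int \mu(dx)/(x-z)<0$. Moreover $r'(z)=\int \mu(dx)/(x-z)^2>0$, so $r$ decreases monotonically as $z\downarrow\lambda_*$, and the same holds for $s$. I will show
\[
r(z),\,s(z)>-\frac{1}{\lambda_*}=-\frac{\kappa}{1-\kappa^2}\qquad\text{for all } z>\lambda_*,
\]
by a first-crossing argument as in Corollary~\ref{cor:wigner-critical-identities-outlier}.
\begin{itemize}
\item For large $z$ the bound holds, since $r,s\approx -1/z$.
\item At a first crossing, say $s(z)=-1/\lambda_*$, we have $\kappa+(1-\kappa^2)s=0$. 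The second identity in \eqref{eq:poly-r-s} then gives $(1+zs)\,d=0$.
\item But $1+zs=1-z/\lambda_*<0$ for $z>\lambda_*$, so this forces $d=0$, contradicting $d\neq0$. The case $r(z)=-1/\lambda_*$ is symmetric.
\end{itemize}
Hence $r,s$ are monotone and bounded on $(\lambda_*,\infty)$, so the limits $r_0,s_0\in[-1/\lambda_*,0)$ exist. Passing to the limit, $(r_0,s_0)$ satisfies \eqref{eq:poly-r-s} at $z=\lambda_*$.

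\textbf{Step 2: identify the limit.} Set $c=1-\kappa^2$ and reparametrize
\[
a=\kappa+c\,s_0,\qquad b=\kappa+c\,r_0,\qquad a,b\in[0,\kappa).
\]
A direct computation gives three simplifications:
\[
1+\lambda_* r_0=\frac{b}{\kappa},\qquad 1+\lambda_* s_0=\frac{a}{\kappa},\qquad d=\frac{1-ab}{c}.
\]
Substituting, the limiting equations become
\[
\frac{b(1-ab)}{\kappa c}=\tau_m\, a\,(-r_0),\qquad \frac{a(1-ab)}{\kappa c}=\tau_k\, b\,(-s_0).
\]
If $a=0$, the first equation forces $b(1-ab)=b=0$, and symmetrically $b=0$ forces $a=0$.

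If instead $a,b>0$, multiply the two equations, divide by $ab$, and use $\tau_m\tau_k=\kappa^{-4}$ and $r_0s_0=(\kappa-a)(\kappa-b)/c^2$. This yields
\[
\kappa^2(1-ab)^2=(\kappa-a)(\kappa-b).
\]
This is impossible. By AM--GM, $\sqrt{(\kappa-a)(\kappa-b)}\le \kappa-\sqrt{ab}$. On the other hand $\kappa(1-ab)>\kappa-\sqrt{ab}$, because $\kappa\sqrt{ab}<\kappa^2<1$. Therefore $a=b=0$, that is, $r_0=s_0=-1/\lambda_*$.

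\textbf{Step 3: conclude.} At the limit point $d=1/(1-\kappa^2)$, and the diagonal entries of the closed formula for $T$ vanish, so
\[
T(\lambda_*^+)=\begin{pmatrix}0&-\lambda_*\\-\lambda_*&0\end{pmatrix}.
\]
Hence, from \eqref{eq:cca-Sigma-rho},
\[
\det\cS(\lambda_*^+)=r_0^{-1}s_0^{-1}-\rho^2\kappa^{-2}t_{12}^2=\lambda_*^2\Bigl(1-\frac{\rho^2}{\kappa^2}\Bigr).
\]

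I expect the main obstacle to be Step 2: excluding a spurious limit with $a,b>0$. The reparametrization in $(a,b)$ is what makes the constraint collapse to a one-line inequality. A secondary point to check is that the limiting identities are legitimately obtained by continuity, which holds because all the quantities involved stay bounded by Step 1.
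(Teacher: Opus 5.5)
Your Step 1 contains an error, and Step 2 depends on it. In \eqref{eq:poly-r-s} the factor $\kappa+(1-\kappa^2)s$ multiplies $\tau_m r$ in the \emph{first} identity. The second identity instead carries $\kappa+(1-\kappa^2)r$, since $\kappa t_{11}=(\kappa+(1-\kappa^2)s)/d$ and $\kappa t_{22}=(\kappa+(1-\kappa^2)r)/d$. So $s(z_0)=-1/\lambda_*$ does not give $(1+z_0s)d=0$; it gives $(1+z_0r)d=0$. At such a point $d=1-\kappa s-r\bigl(\kappa+(1-\kappa^2)s\bigr)=1/(1-\kappa^2)\neq 0$, so this only says $r(z_0)=-1/z_0$, which is perfectly consistent with $r>-1/\lambda_*$. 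No contradiction has been reached.

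This matters for two reasons. Without the bound $r,s\ge -1/\lambda_*$, the monotone limits need not be finite: they can diverge to $-\infty$ if $\mu$ or $\nu$ has enough mass near $\lambda_*$. Also, your case split in Step 2 ($a=0$ versus $a,b>0$, and the use of $\sqrt{ab}$ with $a,b<\kappa$) presupposes $a,b\in[0,\kappa)$.

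The fix is one more sign check, which is what the paper does. The paper treats the symmetric case $r(\lambda_0)=-1/\lambda_*$: it deduces $t_{22}(\lambda_0)=0$, hence $s(\lambda_0)=-1/\lambda_0$, and then $t_{11}(\lambda_0)$ is forced to be both positive and negative. In your notation, take $r(z_0)=-1/z_0$ and substitute into the second identity. Both terms are strictly negative:
\begin{itemize}
\item $(1+z_0s)d=(1-z_0/\lambda_*)/(1-\kappa^2)<0$;
\item $\tau_k\bigl(\kappa+(1-\kappa^2)r\bigr)s=\tau_k\kappa(1-\lambda_*/z_0)\cdot(-1/\lambda_*)<0$.
\end{itemize}
So their sum cannot vanish, and the case $r(z_0)=-1/\lambda_*$ is symmetric.

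With that repair the rest goes through; Steps 2 and 3 are correct. Your $(a,b)$ reparametrization with AM--GM is a clean equivalent of the paper's route. The paper writes $d_*=\kappa^{-1}\sqrt{r_*s_*}$, bounds $-(r_*+s_*)\ge 2\sqrt{r_*s_*}$, and solves a quadratic in $\sqrt{r_*s_*}$ whose positive root is exactly $\kappa/(1-\kappa^2)$.
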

\begin{proof}
Recall from Lemma~\ref{lem:extended-solution-of-finite-system} that \(r\) and \(s\) are
Stieltjes transforms of probability measures supported on
\((-\infty,\lambda_*]\). Hence, for every real \(z>\lambda_*\), we have $r(z), s(z) < 0$ and they are increasing. Let us define $d(z)$ by
\[
d(z):=1-\kappa(r(z)+s(z))-(1-\kappa^2)r(z)s(z).
\]
By the proof of Lemma~\ref{lem:extended-solution-of-finite-system}, we have
\(d(z)\neq 0\) on \(\DD\). Since \(d(z)\to 1\) as \(z\to+\infty\), it follows by
continuity that $d(z) > 0$ for all $z > \lambda_*$. Now we claim that 
\[
r(z) > -\lambda_*^{-1} \text{ and } r(z) > -\lambda_*^{-1}
\]
Suppose contrary that \(r(\lambda_0) = -\lambda_*^{-1}\) for some \(\lambda_0>\lambda_*\). Then from Lemma~\ref{lem:extended-solution-of-finite-system}
\[
r(\lambda_0)=-\frac1{\lambda_*}=-\frac{\kappa}{1-\kappa^2} \implies 
t_{22}(\lambda_0)
=
\frac{\kappa+(1-\kappa^2)r(\lambda_0)}{\kappa d(\lambda_0)}=0.
\]
The second fixed-point equation then gives
\[
s(\lambda_0)=\frac1{-\lambda_0-\kappa\tau_k t_{22}(\lambda_0)}=-\frac1{\lambda_0}.
\]
Hence, we get that 
\[
t_{11}(\lambda_0)
=
\frac{\kappa+(1-\kappa^2)s(\lambda_0)}{\kappa d(\lambda_0)}
=
\frac{\kappa-(1-\kappa^2)/\lambda_0}{\kappa d(\lambda_0)}
=
\frac{1-\lambda_*/\lambda_0}{d(\lambda_0)}>0,
\]
since \(\lambda_0>\lambda_*\) and \(d(\lambda_0)>0\). But the first fixed-point equation gives
\[
-\lambda_*=r(\lambda_0)^{-1}=-\lambda_0-\kappa\tau_m t_{11}(\lambda_0) \implies t_{11}(\lambda_0)=\frac{\lambda_*-\lambda_0}{\kappa\tau_m}<0,
\]
which is a contradiction. Hence, $-1/\lambda_* < r(z), s(z) < 0$. Since $r$ and $s$ are also monotone, we we can define finite limits  \(r_*,s_*,\) and $d_*$ by 
\[
r_*:=\lim_{z\downarrow\lambda_*}r(z)\qquad
s_*:=\lim_{z\downarrow\lambda_*}s(z) \qquad d_* = 1-\kappa(r_* +s_*) - (1-\kappa^2)r_*s_*
\]
Our first goal is to show that $r_* = s_* = -1/\lambda_*$. Recall the polynomial form of the self-consistent equations:
\begin{align}
(1+zr(z))\,d(z)+\tau_m  \bigl(\kappa + (1-\kappa^2)s(z)\bigr)r(z)&=0,
\label{eq:edge-poly-1}\\
(1+zs(z))\,d(z)+\tau_k \bigl(\kappa + (1-\kappa^2) r(z)\bigr)s(z)&=0.
\label{eq:edge-poly-2}
\end{align}
Letting \(z\downarrow\lambda_* = (1-\kappa^2)/\kappa\) in \eqref{eq:edge-poly-1}--\eqref{eq:edge-poly-2}, we get that
\begin{align}
(1+\lambda_*r_*)d_* =- \tau_m \kappa (1+\lambda_* s_*)r_*,
\label{eq:r-edge}\\
(1+\lambda_*s_*)d_* =- \tau_k\kappa (1+\lambda_* r_*)s_*,
\label{eq:s-edge}
\end{align}
Note that if $1+\lambda_*r_* = 0$ then $1 + \lambda_*s_* = 0$, so we are done with our first claim. Otherwise, we can multiply the two equations to get that 
\begin{equation*}
    d_*^2 = \tau_m\tau_k\kappa^2 s_*r_*= \kappa^{-2} s_*r_*.
\end{equation*}
Note that since $s_*r_* > 0$ and $d_* > 0$, so we can write 
\begin{equation*}
    \kappa^{-1}\sqrt{s_*r_*} = 1 - \kappa(r_* + s_*) - (1-\kappa^2)r_*s_* \geq 1 + 2 \kappa \sqrt{r_*s_*} - (1-\kappa^2)r_*s_*.
\end{equation*}
Solving the quadratic in $\sqrt{r_*s_*}$, we get that
\begin{equation*}
    \frac{\kappa }{1-\kappa^2} > \sqrt{r_*s_*} \geq \frac{(2\kappa - \kappa^{-1}) + \sqrt{(2\kappa - \kappa^{-1})^2 + 4(1-\kappa^2)}}{2(1-\kappa^2)} = \frac{\kappa}{1-\kappa^2},
\end{equation*}
which is a contradiction. This means that 
\begin{equation*}
    r_* = s_* = -\frac{1}{\lambda_*} \quad \text{ and } \quad  d_*  = \frac{1}{1-\kappa^2} =  \frac{1}{\kappa\lambda_*}.
\end{equation*}
Now we can use explicit formula of $T(z)$ to find that 
\[
\lim_{z \downarrow \lambda_*}T(z)=\lim_{z \downarrow \lambda_*} \frac1{\kappa d(z)}
\begin{pmatrix}
\kappa+(1-\kappa^2)s(z) & -1\\
-1 & \kappa+(1-\kappa^2)r(z)
\end{pmatrix} = \begin{pmatrix}
0 & -\lambda_*\\
-\lambda_* & 0
\end{pmatrix}.
\]
This finishes the proof
\end{proof}

Let us recall the construction of $\cS_n(z)$ from Section~\ref{sec:outlierMain}. 
Without loss of generality that $a = e_1 \in \RR^m$ and $b = e_1 \in \RR^k$. We isolate directions and write
 \[U = [u\ U_0], \qquad V=[v\ V_0],\]
where $u,v\in\RR^n$ are the first columns and
$U_0\in\RR^{n\times(m-1)}$, $V_0\in\RR^{n\times(k-1)}$ collect the remaining
columns. Define the null minor
\[
W_{0}:=
\begin{pmatrix}
-\kappa U_0^\intercal U_0 & U_0^\intercal V_0\\
V_0^\intercal U_0 & -\kappa V_0^\intercal V_0
\end{pmatrix}.
\]For some permutation matrix,
\begin{equation*}
       \tilde{W} := P W P^\intercal =\begin{pmatrix}
                                C & D^\intercal\\
                                D & W_{0}
                                \end{pmatrix},
   \end{equation*}
where $C \in \RR^{2 \times 2}$ and $D \in \RR^{(m+k-2) \times 2}$ are defined by
  \begin{equation*}
    C= \begin{pmatrix}
        -\kappa\,u^\intercal u & u^\intercal v\\
        v^\intercal u & -\kappa\,v^\intercal v
        \end{pmatrix},
    \qquad
    D= \begin{pmatrix}
        -\kappa U_0^\intercal u & U_0^\intercal v\\
        V_0^\intercal u & -\kappa V_0^\intercal v
        \end{pmatrix}.
  \end{equation*}
In particular, the spectrum of $\tilde{W}$ is the same as $W$. For $z\notin \text{spec}(W_{0})$ define the resolvent and the Schur complement for the top-left block of $\tilde{W}$,
  \begin{equation}\label{eq:cca-Schur-comp}
    G_0(z):=(W_{0}-zI_{m+k-2})^{-1}, \qquad \cS_n(z):=C-zI_2-D^\intercal G_0(z)D.
  \end{equation}
  Recall the Schur complement identity
  \begin{equation*}
    \det({W}-zI_{m+k})= \det(\tilde{W}-zI_{m+k}) = \det(W_{0}-zI_{m+k-2})\det \cS_n(z).
  \end{equation*}
Hence every eigenvalue $\lambda$ of $W$ outside the spectrum of $W_0$ must solve $\det \cS_n(\lambda) = 0$.
Now we are ready to prove the main results of this section.

\begin{lemma}[Lemma~\ref{lem:LimitingS_CCAMain}]\label{lem:convSn-SAppendix}
    Fix a compact set $K\subset (\lambda_*, \infty)$. The matrix $\cS_n(z)$, defined in \eqref{eq:cca-Schur-comp}, converges to $\cS(z)$, defined in \eqref{eq:cca-Sigma-rho}, w.o.p. in operator norm uniformly over $z\in K$.
\end{lemma}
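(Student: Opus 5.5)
The plan is to express every entry of $\cS_n(z)=C-zI_2-D^\intercal G_0(z)D$ through the linearized resolvent of the null minor $W_0$, and then apply Theorem~\ref{thm:real-stieltjes-transform-CCA}. The pair $(u,v)$ is independent of $(U_0,V_0)$, and $(U_0,V_0)$ is an instance of the null model with ratios $\tau_m,\tau_k$.

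\textbf{Step 1: expand $D^\intercal G_0 D$ via the linearization.} Write $D=A_0^\intercal X$, where $A_0=\mathrm{diag}(U_0,V_0)$ and $X$ is the $2n\times 2$ matrix with columns $(-\kappa u, v)$ and $(u,-\kappa v)$. Then
\[
D^\intercal G_0 D = X^\intercal A_0 G_0 A_0^\intercal X .
\]
Let $\cL_0(z)$ be the linearization \eqref{eq:linMatrixCCA} of $W_0$, with $A=\sqrt{\kappa}A_0$ and blocks $B$ as in the proof of Lemma~\ref{lem:GNOpNormCCA}. The Schur formula \eqref{eq:SchurGnCCA} shows that the lower-right $2n\times 2n$ block of $\cG_0=\cL_0^{-1}$ is $B^{-1}+\kappa B^{-1}A_0G_0A_0^\intercal B^{-1}$. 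Solving for the middle factor gives
\[
A_0G_0A_0^\intercal=\kappa^{-1}\bigl(B\,\cG_0^{(\mathrm{low})}B-B\bigr).
\]
Hence $D^\intercal G_0D$ equals a fixed quadratic form in $(u,v)$, namely $X^\intercal(\cdots)X$, whose $2n\times 2n$ kernel is built from $B$ and the blocks $\cG_0^{(33)},\cG_0^{(34)},\cG_0^{(43)},\cG_0^{(44)}$.

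\textbf{Step 2: concentration of the quadratic forms.} Condition on $(U_0,V_0)$. Fix $\delta>0$ with $K\subset(\lambda_*+2\delta,\infty)$. By Theorem~\ref{thm:CCAGordonEdge}, applied to $W_0$, we have $\lambda_{\max}(W_0)\le\lambda_*+\delta$ w.o.p. On this event, the argument in Lemma~\ref{lem:GNOpNormCCA} gives $\|\cG_0(z)\|_\op=O(1)$ uniformly on $K$. The vector $(\sqrt n\,u,\sqrt n\,v)$ consists of i.i.d.\ Gaussian pairs with covariance proportional to $\begin{pmatrix}\tau_m&\rho\sqrt{\tau_m\tau_k}\\ \rho\sqrt{\tau_m\tau_k}&\tau_k\end{pmatrix}$. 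Hanson--Wright then gives, for any bounded matrix $M$ independent of $(u,v)$ and each $a,b\in\{u,v\}$,
\[
a^\intercal M b = \frac{1}{n}\,\EE\bigl[\sqrt n a_1\sqrt n b_1\bigr]\Tr M + O(n^{-1/2+\varepsilon}) \quad\text{w.o.p.}
\]
The same estimate gives $u^\intercal u\to\tau_m$, $v^\intercal v\to\tau_k$ and $u^\intercal v\to\rho\sqrt{\tau_m\tau_k}$ inside $C$. To make this uniform over $K$, I will use an $n^{-1}$-net together with the Lipschitz bound $\|\partial_z\cG_0\|_\op=O(1)$.

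\textbf{Step 3: identify the limit.} Replace $\frac1n\Tr\cG_0^{(ij)}(z)$ by $\Tr\cM^{(ij)}/n$, that is by the entries $t_{ij}(z)$, using Theorem~\ref{thm:real-stieltjes-transform-CCA} applied to $W_0$. The dimension shift from $m,k$ to $m-1,k-1$ is harmless. Then simplify algebraically using \eqref{eq:finite_system}: the relations $TB_T=I$ and $r^{-1}=-z-\kappa\tau_m t_{11}$, $s^{-1}=-z-\kappa\tau_k t_{22}$, together with $\alpha=\kappa/(\kappa^2-1)$ and $\kappa^4\tau_m\tau_k=1$.

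The target is that the diagonal entries of $\cS_n$ collapse to $r^{-1}$ and $s^{-1}$. For example, the $(1,1)$ entry is $-\kappa\tau_m-z$ plus the $\tau_m$-weighted combination of the $t$'s, and this should reduce to $r^{-1}$. The off-diagonal entries should pick up only the $\rho$-weighted cross term and give $-\rho\kappa^{-1}t_{12}$. I will also need a normalization, since the model has $\mathrm{Var}(U_{i1})=1/m$; this introduces factors of $\tau_m,\tau_k$ that must cancel against $\kappa^{4}\tau_m\tau_k=1$.

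I expect this bookkeeping to be the main obstacle: verifying that every non-$\rho$ cross contribution cancels exactly through \eqref{eq:finite_system}, so that the limit matches \eqref{eq:cca-Sigma-rho}. The probabilistic Steps 1–2 are routine. If the direct algebra becomes unwieldy, the first fallback is to check the identities at $z\to\infty$ and then argue by analytic continuation on $\DD$, since both sides are analytic there.
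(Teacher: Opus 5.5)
Your proposal is correct and follows essentially the paper's proof: you express $D^\intercal G_0 D$ through the lower-right block of the null linearized resolvent (the paper's $Q_0$), apply Hanson--Wright conditionally on $(U_0,V_0)$ with a net in $z$, and finish with Theorem~\ref{thm:real-stieltjes-transform-CCA}. The bookkeeping you flag as the main obstacle is in fact an exact finite-$n$ identity. Your $X$ (whose $n\times 2$ row blocks are $(-\kappa u,\ v)$ and $(u,\ -\kappa v)$) equals $-\kappa B^{-1}Y$ with $Y=\diag(u,v)$, so $BX=-\kappa Y$ and $C=-\kappa Y^\intercal B^{-1}Y$, hence $\cS_n(z)=-zI_2-\kappa\, Y^\intercal Q_0(z)\,Y$. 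The limit \eqref{eq:cca-Sigma-rho} then follows using only $r^{-1}=-z-\kappa\tau_m t_{11}$, $s^{-1}=-z-\kappa\tau_k t_{22}$ and $\sqrt{\tau_m\tau_k}=\kappa^{-2}$. You need neither the matrix equation in \eqref{eq:finite_system} nor your analytic-continuation fallback; that fallback would not pin down the limit anyway, since agreement as $z\to\infty$ alone does not force two analytic functions to coincide.
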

\begin{proof}[Proof of Lemma~\ref{lem:convSn-SAppendix}, Lemma~\ref{lem:LimitingS_CCAMain}]
We rewrite the Schur complement $\cS_n$ through the null linearization. Write
\[\calL_{0}(z):= \begin{pmatrix}
                    -zI_{m+k-2} & A_0^\intercal\\
                    A_0 & B_0
              \end{pmatrix},
  \quad
  A_0:= \begin{pmatrix}
            \sqrt\kappa\,U_0 & 0\\
            0 & \sqrt\kappa\,V_0
        \end{pmatrix},
  \quad
  B_0:= \begin{pmatrix}
        \alpha \kappa I_n & \alpha I_n\\
        \alpha I_n & \alpha \kappa I_n
      \end{pmatrix},\]
where $\alpha=\kappa/(\kappa^2-1)$. Let us denote the inverse of $\cL$ by
  \begin{gather*} 
      \cG_{0}(z):= \cL_{0}(z)^{-1} = \begin{pmatrix}
                                    G_{0}(z) & - G_0(z)A_0^\intercal B_0^{-1}\\
                                    - B_0^{-1}A_0 G_0(z) & Q_0(z),
                                 \end{pmatrix} \label{eq:cca-G0-definition}
  \end{gather*}
where $Q_0(z) = B_0^{-1} + B_0^{-1}A_0G_0(z) A_0^\intercal B_0^{-1}$. One can easily check the identities
  \begin{equation*} \label{eq:cca-B-Identity}
    D^\intercal = -\kappa^{1/2}\begin{pmatrix} u^\intercal & 0 \\ 0 & v^\intercal \end{pmatrix} B_0^{-1}A_0 \quad \text{ and } \quad C = -\kappa \begin{pmatrix} u^\intercal & 0 \\ 0 & v^\intercal \end{pmatrix}B_0^{-1}\begin{pmatrix} u^\intercal & 0 \\ 0 & v^\intercal \end{pmatrix}.
  \end{equation*}
Since $B_0$ is symmetric, it follows that
  \[D^\intercal G_0(z) D = \kappa \begin{pmatrix} u^\intercal & 0 \\ 0 & v^\intercal \end{pmatrix}(Q_0(z) - B_0^{-1}) \begin{pmatrix} u & 0 \\ 0 & v \end{pmatrix}.\]
Substituting this into the expression for $\calS_n$, we get that 
\begin{equation}\label{eq:cca-Schur-explicit}
\cS_n(z)= \kappa \begin{pmatrix} u^\intercal & 0 \\ 0 & v^\intercal \end{pmatrix}Q_0(z) \begin{pmatrix} u & 0 \\ 0 & v \end{pmatrix} - z I_2.
\end{equation}
The expression \eqref{eq:cca-Schur-explicit} will be our main object of study in the rest of the proof. This form is convenient because $Q_0(z)$ only depends on $(U_0, V_0)$ and is independent of $(u,v)$. In fact, our main goal will be to prove that $\calS_n(z) \approx \calS (z)$. To this end, can take advantage of Theorem~\ref{thm:real-stieltjes-transform-CCA} to obtain a limiting form for $\cS_n(z)$. 

By Theorem~\ref{thm:CCAGordonEdgeAppendix}, with overwhelming probability $\lambda_1(W_0) \le \lambda_* + o_n(1)$. From standard concentration arguments, operator norm of $A_0$ is $O(1)$ with overwhelming probability. Fix a compact interval $K\subset (\lambda_*,\infty)$. From our discussion above, operator norms of $B_0^{-1}, A_0, G_0(z)$ are all uniformly $O(1)$ bounded uniformly on $z\in K$. In particular, the same conclusion holds for $Q_0(z)$. So for any fixed $z \in K$, conditioned on the value of $Q_0(z)$ by Hanson-Wright inequality (\cite{RV13}), we get that with overwhelming probability 
  \begin{equation*}\label{eq:t11SchurConv}
     \calS_n(z) = \begin{pmatrix}
         -z - \kappa \frac{1}{m}\Tr{\,Q_0^{(11)}(z)} & -\kappa \frac{\rho}{\sqrt{mk}} \Tr{\,Q_0^{(12)}(z)} \\
            -\kappa \frac{\rho}{\sqrt{mk}} \Tr{\,Q_0^{(21)}(z)}  &  -z - \kappa \frac{1}{k}\Tr{\,Q_0^{(22)}(z)}\\
     \end{pmatrix} + E_n(z),
  \end{equation*}
 where $\norm{E_n(z)}_\op = o(1)$. Now observe that 
 \begin{equation*}
     \partial_z Q_0(z) = B_0^{-1}A_0 \, \partial_z G_0(z) \, A_0^\intercal \, B_0^{-1}.
 \end{equation*}
 Using the bound on $\partial_z G_0(z)$ obtained in \eqref{eq:zLipCCAArgument} and repeating the same net argument as in the proof of Proposition~\ref{prop:CCA-concentration}, we see with overwhelming probability the error in \eqref{eq:t11SchurConv} holds uniformly over $z\in K$. Applying Theorem~\ref{thm:real-stieltjes-transform-CCA}, we get that with overwhelming probability 
\begin{equation*}\label{eq:cca-Schur-limit}
\sup_{z\in K}\norm{\cS_n(z)-\cS(z)}_{\op}=o(1).
\end{equation*}
This concludes the proof.
\end{proof}

\begin{theorem}[Theorem~\ref{thm:CCA-outlier1iMain}]\label{thm:CCA-outlier1iAppendix}
  If $\rho > \kappa(\alpha, \beta)$ where $\alpha,\beta < 1$, the top eigenvalue of $W$ converges w.h.p. to an outlier eigenvalue $\lambda_\out$,
      \begin{equation*}
        \lambda_1(W) = \lambda_\out + o(1) \quad \text{ and } \quad    \lambda_2(W) \leq \lambda_* + o(1).
      \end{equation*}
    where $\lambda_* = (1-\kappa^2)/\kappa$, and $\lambda_\out > \lambda_*$ is the unique solution of $\det \calS(z) = 0$. 
\end{theorem}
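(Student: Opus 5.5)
The plan has two parts. First, a deterministic analysis locates the unique zero of $\det\cS$ on $(\lambda_*,\infty)$. Second, a probabilistic transfer argument compares the spectrum of $W$ with that of $W_0$ through the Schur complement factorization \eqref{eq:cca-det-factorizationMain}.

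\emph{Deterministic part.} By Lemma~\ref{lem:S-has-negative-determinant-at-edge}, $\det\cS(\lambda_*^+)=\lambda_*^2(1-\rho^2/\kappa^2)<0$ when $\rho>\kappa$. So the symmetric $2\times 2$ matrix $\cS(\lambda_*^+)$ has exactly one positive eigenvalue and one negative one. For large $z$, Lemma~\ref{lem:extended-solution-of-finite-system} gives $r,s=-1/z+O(z^{-2})$ and $T=O(1)$, hence $\cS(z)=-zI_2+O(1)\prec 0$.

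Monotonicity comes from the finite-$n$ identity \eqref{eq:cca-SN-derivativeMain}, $\partial_z\cS_n\preceq -I_2$ on $(\lambda_{\max}(W_0),\infty)$. The entries of $\cS_n$ are analytic and converge uniformly on compact subsets of $(\lambda_*,\infty)$ by Lemma~\ref{lem:convSn-SAppendix}. Since a uniform limit of analytic functions has derivatives converging as well (Cauchy estimates on a small complex neighborhood; alternatively, apply the same argument to difference quotients), we get $\partial_z\cS\preceq -I_2$. Hence both eigenvalues $\mu_1(z)\ge\mu_2(z)$ of $\cS(z)$ are continuous and strictly decreasing with slope at most $-1$. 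Then $\mu_2<0$ throughout, and $\mu_1$ crosses zero exactly once, at a point $\lambda_\out$. This point is simple: $\det\cS$ changes sign there, and $\det\cS(z)\neq0$ for $z\neq\lambda_\out$.

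\emph{Transfer to $W$.} Fix small $\delta>0$ and set $K=[\lambda_*+\delta,M]$. With overwhelming probability:
\begin{itemize}
\item $\lambda_{\max}(W_0)\le\lambda_*+\delta/2$ (Theorem~\ref{thm:CCAGordonEdge} applied to $W_0$);
\item $\norm{W}_\op\le M$;
\item $\sup_K\norm{\cS_n-\cS}_\op=o(1)$.
\end{itemize}
On this event, the eigenvalues of $W$ in $K$ are exactly the zeros of $\det\cS_n$ in $K$, counted with multiplicity equal to $\dim\ker\cS_n(\lambda)$. The count is justified by the factorization and the fact that $\cS_n$ is strictly decreasing, so the top eigenvalue $\mu_1^{(n)}(z)$ of $\cS_n$ crosses zero transversally.

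The eigenvalues of $\cS_n$ converge uniformly on $K$ to those of $\cS$ (Weyl's inequality). Therefore $\mu_2^{(n)}<0$ on $K$ for large $n$. Also $\mu_1^{(n)}$ is decreasing with slope at most $-1$ and uniformly close to $\mu_1$, so it has exactly one zero in $K$, located within $o(1)$ of $\lambda_\out$.

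The resulting count of eigenvalues is consistent by interlacing: $W_0$ is a compression of $W$ of codimension $2$, so $\lambda_3(W)\le\lambda_1(W_0)$. The counting argument already shows that $W$ has exactly one eigenvalue in $K$. Since $\lambda_\out>\lambda_*+\delta$ for small $\delta$, this yields $\lambda_1(W)=\lambda_\out+o(1)$ and $\lambda_2(W)\le\lambda_*+\delta$. Letting $\delta\downarrow0$ slowly completes the argument.

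\emph{Main obstacle.} The delicate step is handling points near the lower end of $K$ in the transfer argument, together with upgrading convergence of $\cS_n$ to convergence of derivatives or monotonicity of the limit. I would handle both using analyticity on a complex neighborhood of $K$ that lies outside the spectrum of $W_0$ with overwhelming probability. On that neighborhood $\cS_n$ is uniformly bounded, so Cauchy estimates transfer $C^1$ convergence to $\cS$.
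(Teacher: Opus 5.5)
Your proposal is correct and follows essentially the same route as the paper. You locate the unique zero of $\det\cS$ from the edge limit, the large-$z$ asymptotics and the bound $\partial_z\cS\preceq -I_2$ inherited from $\cS_n$, and then transfer to $W$ through the Schur-complement factorization and the uniform convergence $\cS_n\to\cS$. Two of your steps are slightly more careful than the paper's: keeping the smaller eigenvalue of $\cS_n$ negative on $K$ (where the paper only remarks that there is ``at most one solution''), and passing monotonicity to the limit via the integrated inequality $\cS_n(z')-\cS_n(z)\preceq -(z'-z)I_2$.
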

\begin{proof}[Completing the proof of Theorem~\ref{thm:CCA-outlier1iAppendix}, Theorem~\ref{thm:CCA-outlier1iMain}]
    Recall that in the proof of Theorem~\ref{thm:CCA-outlier1iMain} in Section~\ref{sec:outlierMain}, we have shown that $\cS(z) = 0$ has a unique solution $\lambda_\out \in [\lambda_*,\infty)$. It remains to show that the same holds for $\cS_n(z) = 0$ and that the unique solution $\lambda_{\text{out},n}$ converges to $\lambda_{\text{out}}$ as $n\to\infty$. We recall the following:
    \begin{equation}\label{eq:cca-SN-derivative}
        \pp{\cS_n(z)}{z}=-I_2-D^\intercal (W_{0}-zI_{m+k-2})^{-2}D\prec -I_2 \prec 0.
    \end{equation}

Recall now  the outlier eigenvalues of $W$ are solutions of $\det \cS_n(z) = 0$. With overwhelming probability, $\lambda_{\text{max}} (W_0) \le \lambda_* + o(1)$. So from \eqref{eq:cca-SN-derivative}, under this event, $\det \cS_n(z) = 0$ has at most one solution $\lambda_{\text{out},n}$ greater than $\lambda_* + o_n(1)$. By \eqref{eq:cca-SN-derivative} the eigenvalues of $\cS_n(z)$ vary continuously with $z$ for $z \in (\lambda_* + \delta/2,+\infty)$. Thus with overwhelming probability $W$ has exactly one outlier $\lambda_{\text{out},n} \in (\lambda_* + \delta/2,+\infty)$. 

Recall for $z \ge \lambda_* + o(1)$, as shown in Lemma~\ref{lem:convSn-SAppendix}, $\cS_n(z) \to \cS(z)$ converges w.h.p. in operator norm uniformly on a neighborhood of $\lambda_{\text{out}}$. So the eigenvalues of $\cS_n(z)$ must also converge to those of $\cS(z)$ uniformly on said neighborhood, with overwhelming probability. We can thus conclude that the outlier $\lambda_{\text{out}, n}$ converges to $\lambda_{\text{out}}$ as $n\to\infty$. This concludes the proof of the theorem.
\end{proof}

\begin{theorem}[Theorem~\ref{thm:CCA_eigVecOverlapMain}]\label{thm:CCA_eigVecOverlapAppendix}
  If $\hat w = (\hat a, \hat b)$ is a unit eigenvector corresponding to the largest eigenvalue of $W$, then w.h.p. the overlaps are strictly positive and satisfies,
              \begin{equation*}
                |{\langle \hat a, a \rangle}|^2 =
                \frac{x_{*,1}^2}{-x_*^\intercal  \, \partial_z \cS(\lambda_{\mathrm{out}}) \,  x_*} +o(1), 
                \quad 
                |{\langle \hat b, b\rangle }|^2 =
                \frac{x_{*,2}^2}{-x_*^\intercal  \, \partial_z \cS(\lambda_{\mathrm{out}}) \,  x_*} +o(1).
              \end{equation*}
        where $x_* = (x_{*, 1}, x_{*, 2})$ is any unit vector in $\ker \cS(\lambda_{\mathrm{out}})$.
\end{theorem}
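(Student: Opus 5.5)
The plan is to finish the argument begun in the main-text proof of Theorem~\ref{thm:CCA_eigVecOverlapMain}. There we obtained the exact finite-$n$ formula \eqref{eq:cca-individual-overlap-finiteMain},
\[
|\langle \hat a,a\rangle|^2=\frac{x_{n,1}^2}{-x_n^\intercal \partial_z\cS_n(\lambda_{\out,n})x_n},\qquad
|\langle \hat b,b\rangle|^2=\frac{x_{n,2}^2}{-x_n^\intercal \partial_z\cS_n(\lambda_{\out,n})x_n},
\]
with $x_n$ a unit vector in $\ker\cS_n(\lambda_{\out,n})$. By Theorem~\ref{thm:CCA-outlier1iAppendix}, the top eigenvalue is simple w.h.p., so $\hat w$ is this normalized vector up to sign. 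It therefore suffices to show three things w.h.p.:
\begin{itemize}
\item $\lambda_{\out,n}\to\lambda_\out$ (already available);
\item $\partial_z\cS_n\to\partial_z\cS$ uniformly on a compact neighbourhood $K\subset(\lambda_*,\infty)$ of $\lambda_\out$;
\item $x_n$ converges, up to sign, to a unit vector $x_*\in\ker\cS(\lambda_\out)$.
\end{itemize}

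For the derivative convergence, fix $K=[\lambda_\out-\delta,\lambda_\out+\delta]$ with $\lambda_\out-2\delta>\lambda_*$. By Lemma~\ref{lem:convSn-SAppendix}, $\cS_n\to\cS$ uniformly on a slightly larger complex neighbourhood. The Schur-complement representation \eqref{eq:cca-Schur-explicit} is analytic in $z$ off the spectrum of $W_0$. The Gordon bound (Theorem~\ref{thm:CCAGordonEdgeAppendix}) keeps $\mathrm{spec}(W_0)$ at distance at least $\delta$ from $K$, and $\|A_0\|_{\op}=O(1)$ w.o.p., so $\cS_n$ is analytic and uniformly bounded on a complex $\delta/2$-neighbourhood of $K$. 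On the deterministic side, $\cS$ is analytic there by Lemma~\ref{lem:extended-solution-of-finite-system}.

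The convergence in Lemma~\ref{lem:convSn-SAppendix} is stated on real compacts. To reach the complex neighbourhood, I would rerun the same argument at $z=x+iy$ with $|y|\le\delta/2$: Hanson--Wright gives concentration of the quadratic forms, and resolvent traces converge by the argument of Theorem~\ref{thm:real-stieltjes-transform-CCA}, which only used boundedness of $\calG$ near $K$. Alternatively, one can take real-axis convergence and apply Vitali/Montel to the uniformly bounded analytic family. Either way, Cauchy's integral formula then upgrades uniform convergence to convergence of $\partial_z\cS_n$ uniformly on $K$. Continuity of $\partial_z\cS$ and $\lambda_{\out,n}\to\lambda_\out$ then give $\partial_z\cS_n(\lambda_{\out,n})\to\partial_z\cS(\lambda_\out)$.

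For the kernel vectors, $\cS(\lambda_\out)$ is a symmetric $2\times2$ matrix. Its smaller eigenvalue is $0$, and its other eigenvalue is strictly negative: from the main-text argument, both eigenvalues are strictly decreasing (since $\partial_z\cS\preceq -I_2$), exactly one is positive at $\lambda_*^+$, and the negative one stays negative. Hence $\ker\cS(\lambda_\out)$ is one-dimensional, and there is a spectral gap $g>0$. Since $\cS_n(\lambda_{\out,n})\to\cS(\lambda_\out)$ in operator norm, Davis--Kahan gives $\min_\pm\|x_n\mp x_*\|\to0$. The formula is even in $x_*$ and is the same for any unit vector in the kernel, which justifies the phrase ``any unit vector in $\ker\cS(\lambda_\out)$''.

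It remains to check that the limiting expression is bounded away from zero. The denominator satisfies $-x_*^\intercal\partial_z\cS x_*\ge 1$, and it is finite. For the numerators, note that $\cS(\lambda_\out)x_*=0$ reads
\[
r^{-1}x_{*,1}=\rho\kappa^{-1}t_{12}x_{*,2},\qquad s^{-1}x_{*,2}=\rho\kappa^{-1}t_{21}x_{*,1}.
\]
Here $r^{-1},s^{-1}\neq0$ because $r,s\in(-1/\lambda_*,0)$, as shown in the proof of Lemma~\ref{lem:S-has-negative-determinant-at-edge}. Consequently, if $x_{*,1}=0$ then $x_{*,2}=0$, and vice versa, so both coordinates are nonzero. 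The main obstacle I expect is the derivative convergence: getting $\cS_n\to\cS$ uniformly on a complex neighbourhood rather than only on real $K$. I would handle it through analyticity plus Montel as described above.
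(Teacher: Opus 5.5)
Your proposal is correct and follows essentially the paper's route: the exact finite-$n$ overlap formula, convergence of $x_n$ to $\pm x_*$ (you use Davis--Kahan with the spectral gap, the paper uses compactness plus one-dimensionality of $\ker\cS(\lambda_\out)$), derivative convergence from real-axis convergence via Vitali and Cauchy's integral formula, and nonvanishing of both coordinates from $r,s\neq 0$; note that the paper first upgrades the w.o.p.\ convergence to almost sure convergence by Borel--Cantelli so that Vitali applies pathwise, and you need that step (or a subsequence argument) too. One slip to fix: since $\partial_z\cS\preceq -I_2$, at $\lambda_\out$ it is the \emph{larger} eigenvalue of $\cS(\lambda_\out)$ that equals $0$ and the smaller one that is strictly negative, not the other way round.
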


\begin{proof}[Completion of the proof of Theorem~\ref{thm:CCA_eigVecOverlapAppendix}, Theorem~\ref{thm:CCA_eigVecOverlapMain}]
Recall that $x_n$ is any unit vector in $\ker \cS_n(\lambda_{\out, n})$. Recall in the proof of Theorem~\ref{thm:CCA_eigVecOverlapMain} in Section~\ref{sec:outlierMain} we have shown w.h.p. the top eigenvector $\hat{w} = (\hat a, \hat b)$ of $W$ has overlap of with the planted directions $(a, b)\in \RR^m \times \RR^k$ given by 
\begin{align*}
      |{\langle \hat a, a \rangle}|^2 = \frac{x_{n,1}^2}{-x_n^\intercal \partial_z \, \cS_n(\lambda_{\out,n}) \, x_n}, \quad      
      |{\langle \hat b, b\rangle }|^2 = \frac{x_{n,2}^2}{-x_n^\intercal \partial_z \, \cS_n(\lambda_{\out,n}) \, x_n}.
  \end{align*}
We now finish the proof by studying the limiting properties of the expression on the RHS. Because $x_n$ are unit vectors in $\RR^2$, we may use compactness to obtain a subsequence converging to some limiting unit vector $x_*$. Recall Lemma~\ref{lem:convSn-SAppendix} tells us w.o.p. $\cS_n(z)\to \cS(z)$ in operator norm uniformly for $z$ in a compact neighborhood of $\lambda_{\text{out}}$, so it follows that $x_*$ is an eigenvector of $\cS(\lambda_{\text{out}})$. Since $\cS(\lambda_{\text{out}})$ has eigenvalue $0$ with multiplicity one, $x_*$ is the unique unit $0$-eigenvector up to a sign.

We claim that both coordinates of $x_*$ are nonzero. Indeed, for every real $z>\lambda_*$, the quantities $r(z)$ and $s(z)$ are finite and negative, since they are Stieltjes transforms of probability measures supported on $(-\infty,\lambda_*]$. Therefore the diagonal entries $r(\lambda_{\out})^{-1}$ and $s(\lambda_{\out})^{-1}$ of $\cS(\lambda_{\out})$ are nonzero. If a vector in $\ker\cS(\lambda_{\out})$ were supported on only one coordinate, one of these diagonal entries would have to vanish, a contradiction. Hence $x_{*,1}\neq 0$ and $ x_{*,2}\neq 0.$ Since every convergent subsequence of $x_n$ has limit $\pm x_*$, the squared coordinates satisfy $x_{n,1}^2=x_{*,1}^2+o(1)$ and $ x_{n,2}^2=x_{*,2}^2+o(1)$. 

It remains to show that $\partial_z \cS_n(z)$ converges in operator norm uniformly in a neighborhood of $\lambda_{\text{out}}$. To this end, choose small $r > 0$ and define 
\begin{equation*}
    \mathfrak{B}_r = \set{z \in \CC : |z - \lambda_\out | < r}.
\end{equation*}
We choose $r$ small enough so that $\lambda_* \notin  \mathfrak{B}_{5r}$. We know that with overwhelming probability $\lambda_1(W_0) \leq \lambda_* + r$ and $\norm{D}_\op \leq O(1)$. On this event, note that each entry of the matrix $\calS_n$ is holomorphic on $\mathfrak{B}_{3r}$ and the family $\set{\calS_n}$ is locally bounded. Furthermore, by Lemma~\ref{lem:convSn-SAppendix}, we already know that with overwhelming probability,
\begin{equation*}
    \sup_{x\in \bar {\mathfrak{B}}_{3r} \, \cap \, \RR}\|\calS_n(x)-\calS(x)\|_{\op}=o(1)\,.
\end{equation*}

By Borel-Cantelli lemma, we have $\mathcal S_n(x)\to \mathcal S(x)$ for all $x\in \bar{\mathcal B}_{3r}\cap \mathbb{R}$ almost surely.\footnote{The almost sure convergence claim is true under any coupling of the finite $n$ measures. Here and in what follows, we always take the independent coupling for clarity.} Since the entries of $\calS_n$ are holomorphic and locally bounded on $\mathfrak{B}_{3r}$, Vitali's theorem (applied entrywise) implies that almost surely, $\calS_n \to \calS$ locally uniformly on $\mathfrak{B}_{3r}$. Now,  Cauchy's integral formula gives for all $ z \in \mathfrak{B}_{r}$
\begin{equation*}
    \partial_z\bigl(\calS_n-\calS\bigr)(z)
=
\frac{1}{2\pi i}\oint_{|\zeta-z|=r}
\frac{\calS_n(\zeta)-\calS(\zeta)}{(\zeta-z)^2}\,d\zeta.
\end{equation*}
Therefore, we can bound by triangle inequality 
\begin{equation*}
    \|\partial_z \calS_n(z)-\partial_z \cS(z)\|_{\op}
\le
\frac{1}{r}\sup_{|\zeta-z|=r}\|\cS_n(\zeta)-\cS(\zeta)\|_{\op}
\end{equation*}
Taking the supremum over $z\in \mathfrak{B}_r$, we obtain that almost surely,
\[
\sup_{z\in \mathfrak{B}_r}\|\partial_z \cS_n(z)-\partial_z \cS(z)\|_{\op}
\le
\frac{1}{r}\sup_{\zeta \in \bar{\mathfrak{B}}_{2r}}\|\cS_n(\zeta)-\cS(\zeta)\|_{\op}\longrightarrow 0\,.
\]
Finally, because of each term of \eqref{eq:cca-SN-derivative} has bounded operator norm with overwhelming probability, with the bound holding uniformly in $n$, the operator norm of $\partial_z\cS(\lambda_{\text{out}})$ is bounded. Hence we conclude that almost surely,
  \[-{x_n^\intercal \partial_z \, \cS_n(\lambda_{\out,n}) \, x_n} \longrightarrow -{x_*^\intercal \partial_z \, \cS(\lambda_{\out}) \, x_*} \in (0,\infty)\]
Combining this with the above estimates, we conclude that almost surely, 
\begin{equation*}
\begin{aligned}
   |{\langle \hat a, a \rangle}|^2
   &\longrightarrow
   \frac{x_{*,1}^2}{-x_*^\intercal  \, \partial_z \cS(\lambda_{\mathrm{out}}) \,  x_*} > 0,\\
   |{\langle \hat b, b\rangle }|^2
   &\longrightarrow
   \frac{x_{*,2}^2}{-x_*^\intercal  \, \partial_z \cS(\lambda_{\mathrm{out}}) \,  x_*} > 0.
\end{aligned}
\end{equation*}
In particular, $|\langle \hat a,a\rangle|^2=\Omega(1)$ and $|\langle \hat b,b\rangle|=\Omega(1)$ with high probability, as desired.
\end{proof}

\subsection{Correlated spiked Wigner model}
Recall the correlated spiked Wigner model from Section~\ref{subsec:wigner}. Throughout this subsection we assume that we are in the following parametric regime 
\begin{equation}\label{eq:kappaWignerThresholdAppendix}
0<\alpha,\beta <1,
\qquad
\kappa=\left(\frac{(1-\alpha^2)(1-\beta^2)}{\alpha^2\beta^2}\right)^{1/4}<\rho.
\end{equation}
We observe $U=\alpha aa^\intercal+Z_a$ and $V=\beta bb^\intercal+Z_b$, where $Z_a,Z_b\sim \mathrm{GOE}(1/n)$ are independent, and $(a,b)$ is a jointly Gaussian spike pair with marginals
\[
\sqrt n\,a\sim\calN(0,I_n),
\qquad
\sqrt n\,b\sim\calN(0,I_n),
\qquad
n\,\EE[ab^\intercal]=\rho I_n.
\]
As in Section~\ref{subsec:wigner}, we set
\[
\tilde U:=\alpha U-\alpha^2I_n,
\qquad
\tilde V:=\beta V-\beta^2I_n,
\qquad
W:=
\begin{pmatrix}
\tilde U & \kappa\tilde V\\
\kappa\tilde U & \tilde V
\end{pmatrix}.
\]
Let us also introduce
\[
A:=\begin{pmatrix}1&\kappa\\ \kappa&1\end{pmatrix},
\qquad
p:=A^{1/2}e_1,
\qquad
q:=A^{1/2}e_2,
\qquad
\Theta:=\diag(\alpha^2,\beta^2).
\]
Since $0<\kappa<1$, we see $A\succeq 0$. We also introduce the symmetric conjugate
\[
H:=(A^{1/2}\otimes I_n)
\begin{pmatrix}
\tilde U&0\\
0&\tilde V
\end{pmatrix}
(A^{1/2}\otimes I_n).
\]
Recall that since $W=(A^{1/2}\otimes I_n)H(A^{-1/2}\otimes I_n)$, we have that $W$ is similar to $H$, and hence all eigenvalues $\lambda_j(W)$ of $W$ are real. We order them in decreasing order. Let us also define the parallel pure version matrices 
\[
\tilde U_0:=\alpha Z_a-\alpha^2I_n,
\qquad
\tilde V_0:=\beta Z_b-\beta^2I_n,
\]
so that $\tilde{U} = \tilde{U}_0 + \alpha^2 aa^\intercal$ and likewise for $\tilde{V}$. Define the corresponding null symmetric matrix
\begin{equation}\label{eq:wigner-null-symmetric-outlier}
H_0:=(A^{1/2}\otimes I_n)
\begin{pmatrix}
\tilde U_0&0\\
0&\tilde V_0
\end{pmatrix}
(A^{1/2}\otimes I_n).
\end{equation}
Then, writing $B_n:=\bigl[p\otimes a\ \ q\otimes b\bigr]\in \RR^{2n\times 2}$, a direct expansion gives
\begin{equation}\label{eq:wigner-rank-two-decomp}
H=H_0+B_n\Theta B_n^\intercal.
\end{equation}
Let $T(z)$ be the deterministic $2\times 2$ solution from Definition~\ref{def:det_system_Wigner}. For real $z>1$, define
\begin{equation}\label{eq:wigner-Sigma-rho}
\cS(z)=
\begin{pmatrix}
1+\alpha^2 p^\intercal T(z)p & \rho\alpha\beta\, p^\intercal T(z)q\\
\rho\alpha\beta\, q^\intercal T(z)p & 1+\beta^2 q^\intercal T(z)q
\end{pmatrix}.
\end{equation}
Equivalently, we can write the matrix as 
\[
\cS(z)
=
I_2+
\Theta^{1/2}
\begin{pmatrix}
 p^\intercal T(z)p & \rho\, p^\intercal T(z)q\\
 \rho\, q^\intercal T(z)p & q^\intercal T(z)q
\end{pmatrix}
\Theta^{1/2}.
\]

The analogue of Lemma~\ref{lem:S-has-negative-determinant-at-edge} is immediate in the Wigner case.
\begin{lemma}\label{lem:wigner-S-has-negative-determinant-at-edge}
Let $\cS$ be defined by \eqref{eq:wigner-Sigma-rho}. Then, we have that 
\[
\cS(1)=
\begin{pmatrix}
1-\alpha^2 & -\rho\alpha\beta\kappa\\
-\rho\alpha\beta\kappa & 1-\beta^2
\end{pmatrix}.
\]
Hence, $\det \cS(1)
=(1-\alpha^2)(1-\beta^2)-\rho^2\alpha^2\beta^2\kappa^2$, which is strictly negative if $\kappa < \rho$.
\end{lemma}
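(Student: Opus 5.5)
The plan is to read $\cS(1)$ off Corollary~\ref{cor:wigner-critical-identities-outlier} and then compute the determinant. That corollary, stated under the standing assumptions $0<\alpha,\beta<1$ and $\kappa\in(0,1)$, gives $T(1):=\lim_{x\downarrow 1}T(x)=-I_2$. Here $\cS(1)$ is understood as the right limit $\lim_{z\downarrow 1}\cS(z)$, in the same way as in the CCA analogue, Lemma~\ref{lem:S-has-negative-determinant-at-edge}.

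The entries of $\cS(z)$ in \eqref{eq:wigner-Sigma-rho} are continuous functions of the entries of $T(z)$ through the fixed vectors $p=A^{1/2}e_1$ and $q=A^{1/2}e_2$. The limit therefore passes inside, and we get
\[
p^\intercal T(1)p=-\norm{p}^2=-1,\qquad q^\intercal T(1)q=-\norm{q}^2=-1,\qquad p^\intercal T(1)q=-p^\intercal q .
\]
For the last quantity, $p^\intercal q=e_1^\intercal A e_2=\kappa$, because $A^{1/2}$ is symmetric. Substituting these values into \eqref{eq:wigner-Sigma-rho} yields exactly the displayed matrix, with diagonal entries $1-\alpha^2$, $1-\beta^2$ and off-diagonal entries $-\rho\alpha\beta\kappa$.

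The determinant is then $(1-\alpha^2)(1-\beta^2)-\rho^2\alpha^2\beta^2\kappa^2$. By the definition \eqref{eq:kappaWignerThresholdAppendix} of $\kappa$,
\[
(1-\alpha^2)(1-\beta^2)=\kappa^4\alpha^2\beta^2 ,
\]
so the determinant equals $\alpha^2\beta^2\kappa^2(\kappa^2-\rho^2)$. This is strictly negative whenever $\rho>\kappa>0$.

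There is no genuine obstacle, since the only nontrivial input, $T(x)\to -I_2$ as $x\downarrow 1$, is already supplied by Corollary~\ref{cor:wigner-critical-identities-outlier}. The one point to check is that the hypotheses of that corollary ($\alpha,\beta<1$ and $0<\kappa<1$) hold in the present regime \eqref{eq:kappaWignerThresholdAppendix}. They do, since $\kappa<\rho\le 1$.
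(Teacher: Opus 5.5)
Your proposal is correct and follows the paper's proof: both take $T(1)=-I_2$ from Corollary~\ref{cor:wigner-critical-identities-outlier} and substitute $p^\intercal p=q^\intercal q=1$, $p^\intercal q=\kappa$ into \eqref{eq:wigner-Sigma-rho}. Your rewriting of the determinant as $\alpha^2\beta^2\kappa^2(\kappa^2-\rho^2)$, using the definition of $\kappa$, makes explicit the sign claim that the paper leaves unstated.
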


\begin{proof}
By Corollary~\ref{cor:wigner-critical-identities-outlier}, we have $T(1)=-I_2$. Since, we also have 
\[
p^\intercal p=q^\intercal q=1,
\qquad
p^\intercal q=e_1^\intercal A e_2=\kappa,
\]
substituting $T(1)=-I_2$ into \eqref{eq:wigner-Sigma-rho} yields
\[
p^\intercal T(1)p=-1,
\qquad
q^\intercal T(1)q=-1,
\qquad
p^\intercal T(1)q=-\kappa.
\]
This gives the displayed formula for $\cS(1)$.
\end{proof}

We can now state the Wigner analogue of Theorems~\ref{thm:CCA-outlier1iAppendix} and \ref{thm:CCA_eigVecOverlapAppendix}.

\begin{theorem}[Theorem~\ref{thm:Spiked-outlierMainBody} (i)-(ii) CSWig]
\label{thm:Wigner-outlier}
Assume that $\rho>\kappa$. Then there exists a unique $\lambda_{\out}>1$ such that $\det \cS(\lambda_{\out})=0.$
Moreover, with high probability,
\[
\lambda_1(W)=\lambda_{\out}+o(1)
\qquad\text{and}\qquad
\lambda_2(W)\le 1+o(1).
\]
Furthermore, if $\hat w= (\hat a, \hat b)$ is the unit eigenvector of $W$ corresponding to $\lambda_1(W)$, then the individual overlaps satisfy with high probability
\begin{equation*}
\begin{aligned}
   |\langle \hat a,a\rangle|^2
   &\ge
   \frac{x_{*,1}^2}{\alpha^2(1+\kappa)\,x_*^\intercal\partial_z\cS(\lambda_{\out})x_*}
   +o(1),\\
   |\langle \hat b,b\rangle|^2
   &\ge
   \frac{x_{*,2}^2}{\beta^2(1+\kappa)\,x_*^\intercal\partial_z\cS(\lambda_{\out})x_*}
   +o(1),
\end{aligned}
\end{equation*}
where the unit vector $x_*\in\ker \cS(\lambda_{\out})$ is normalized by equation 
\begin{equation*}
    x_*^\intercal\Theta^{-1/2}\begin{pmatrix}
1&\kappa\rho\\
\kappa\rho&1
\end{pmatrix}^{-1}\Theta^{-1/2}x_*=1.
\end{equation*}
\end{theorem}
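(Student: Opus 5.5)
The plan is to mirror the CCA argument of Section~\ref{sec:outlierCCA-Appendix}, working with the symmetric conjugate $H$ and the rank-two decomposition \eqref{eq:wigner-rank-two-decomp}. Set $G_0(z):=(H_0-zI_{2n})^{-1}$. By Theorem~\ref{thm:WignerGordonEdge}, w.o.p.\ $\lambda_1(H_0)\le 1+o(1)$. For $z$ above this edge, the determinant identity
\[
\det(H-zI_{2n})=\det(H_0-zI_{2n})\det\bigl(I_2+\Theta^{1/2}B_n^\intercal G_0(z)B_n\Theta^{1/2}\bigr)
\]
shows that the outliers of $H$, hence of $W$, are exactly the zeros of $\det\cS_n(z)$. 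Here
\[
\cS_n(z):=I_2+\Theta^{1/2}B_n^\intercal G_0(z)B_n\Theta^{1/2}.
\]

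\textbf{Step 1: convergence $\cS_n\to\cS$ uniformly on compacts $K\subset(1,\infty)$.} The key fact is that $(a,b)$ is independent of $(Z_a,Z_b)$, hence of $G_0$. Conditionally on $H_0$, each entry of $B_n^\intercal G_0 B_n$ is a quadratic form in the Gaussian vector $\sqrt n(a,b)\in\RR^{2n}$, whose covariance is $\begin{pmatrix}1&\rho\\ \rho&1\end{pmatrix}\otimes I_n/n$. Hanson--Wright then gives, for instance,
\[
(p\otimes a)^\intercal G_0(q\otimes b)=\rho\,\tfrac1n\Tr\bigl((p^\intercal\otimes I_n)G_0(q\otimes I_n)\bigr)+o(1),
\]
and similarly $(p\otimes a)^\intercal G_0(p\otimes a)=\tfrac1n\Tr((p^\intercal\otimes I_n)G_0(p\otimes I_n))+o(1)$. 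Theorem~\ref{thm:real-stieltjes-transform-Wigner} identifies these normalized traces with $p^\intercal T(z)q$ and $p^\intercal T(z)p$ respectively, which recovers \eqref{eq:wigner-Sigma-rho}. Uniformity in $z\in K$ follows from the same net and Lipschitz argument as in Lemma~\ref{lem:convSn-SAppendix}.

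\textbf{Step 2: the outlier.} For $z>\lambda_1(H_0)$,
\[
\partial_z\cS_n(z)=\Theta^{1/2}B_n^\intercal G_0(z)^2B_n\Theta^{1/2}.
\]
This is $\succ0$ w.o.p., because $B_n$ has full rank ($\|a\|,\|b\|\to1$ and $\rho<1$). So the eigenvalues of $\cS_n$, and in the limit those of $\cS$, are increasing in $z$. Lemma~\ref{lem:wigner-S-has-negative-determinant-at-edge} gives $\det\cS(1^+)<0$ when $\rho>\kappa$, so exactly one eigenvalue of $\cS$ is negative near the edge. As $z\to\infty$ we have $T(z)\to0$, hence $\cS(z)\to I_2\succ0$. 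Therefore exactly one eigenvalue crosses zero, at a unique $\lambda_{\out}>1$. Transferring this to $\cS_n$ by uniform convergence, exactly as in Theorem~\ref{thm:CCA-outlier1iAppendix}, yields $\lambda_1(W)=\lambda_{\out}+o(1)$ and $\lambda_2(W)\le1+o(1)$.

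\textbf{Step 3: the eigenvector.} Let $y_n\in\ker\cS_n(\lambda_{\out,n})$ and put $w:=-G_0(\lambda_{\out,n})B_n\Theta^{1/2}y_n$. Then $Hw=\lambda_{\out,n}w$ and, by the kernel relation, $B_n^\intercal w=\Theta^{-1/2}y_n$. Moreover
\[
\|w\|^2=y_n^\intercal\,\partial_z\cS_n(\lambda_{\out,n})\,y_n .
\]
The corresponding eigenvector of $W$ is $(A^{1/2}\otimes I_n)w$. Since $A^{1/2}$ is symmetric, its first block is $(p^\intercal\otimes I_n)w$, so its overlap with $a$ is exactly $(p\otimes a)^\intercal w=y_{n,1}/\alpha$, and likewise the overlap with $b$ is $y_{n,2}/\beta$. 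Also $\|(A^{1/2}\otimes I_n)w\|^2\le(1+\kappa)\|w\|^2$. Together these give
\[
|\langle\hat a,a\rangle|^2\ge\frac{y_{n,1}^2}{\alpha^2(1+\kappa)\,y_n^\intercal\partial_z\cS_n(\lambda_{\out,n})y_n},
\]
and the analogous bound for $\hat b$.

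The kernel vector is scale-free, so I fix its normalization, in the limit, to match the stated one. Passing to limits, $y_n\to x_*$ along subsequences, with $x_*$ spanning the one-dimensional $\ker\cS(\lambda_{\out})$. Both coordinates of $x_*$ are nonzero: since $1+\alpha^2p^\intercal T(\lambda_{\out})p>0$ for $\lambda_{\out}>1$ (the first-crossing argument of Corollary~\ref{cor:wigner-critical-identities-outlier} gives $p^\intercal Tp>-1$ and $\alpha<1$), and likewise for $q$, the off-diagonal entry $\rho\alpha\beta\,p^\intercal T q=-\rho\alpha\beta\kappa x/\Delta$ is nonzero, so a kernel vector supported on a single coordinate is impossible.

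\textbf{Main obstacle.} I expect the hardest step to be the convergence of $\partial_z\cS_n(\lambda_{\out,n})$ to $\partial_z\cS(\lambda_{\out})$. I would handle it as in Theorem~\ref{thm:CCA_eigVecOverlapAppendix}: holomorphy of $\cS_n$ on a complex disc around $\lambda_{\out}$ that stays away from the edge, Vitali's theorem, and then Cauchy's integral formula for the derivative.
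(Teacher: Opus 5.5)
Your proposal is correct and follows essentially the paper's proof. The ingredients match: the rank-two determinant reduction for $H=H_0+B_n\Theta B_n^\intercal$; Hanson--Wright plus Theorem~\ref{thm:real-stieltjes-transform-Wigner} for $\cS_n\to\cS$; monotonicity in $z$ between $\det\cS(1)<0$ and $\cS(z)\to I_2$; and the kernel-vector eigenvector $-G_0(\lambda_{\out,n})B_n\Theta^{1/2}y_n$ with the $(1+\kappa)$ bound and Vitali--Cauchy derivative convergence. Using scale invariance of the overlap ratio instead of the paper's special normalization is harmless.

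The one point to tighten is that pointwise limits of increasing eigenvalue functions are only nondecreasing, so uniqueness of $\lambda_{\out}$ is not yet justified. Either pass a uniform bound $\partial_z\cS_n\succeq c_K I_2$ on compacts to the limit via the same Vitali--Cauchy argument, as the paper does; this bound follows from $(H_0-zI)^{-2}\succeq (R+\|H_0\|_{\op})^{-2}I$ and $B_n^\intercal B_n$ converging to a positive definite matrix. Or note that $\det\cS$ is analytic on $(1,\infty)$ and so cannot vanish on a nondegenerate interval.
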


\begin{rmk}
    The two deterministic constants on the right hand side of the theorem are strictly positive. In particular, $|\langle\hat a,a\rangle|^2=\Omega(1)$ and $|\langle\hat b,b\rangle|=\Omega(1)$ with high probability.
\end{rmk}

\begin{proof}[Proof of Theorem~\ref{thm:Wigner-outlier}, Theorem~\ref{thm:Spiked-outlierMainBody} (i)-(ii) CSWig]
We follow the same strategy as in the proof of Theorems~\ref{thm:CCA_eigVecOverlapAppendix}, \ref{thm:CCA-outlier1iAppendix}, replacing the Schur complement there by the rank-two perturbation formula coming from \eqref{eq:wigner-rank-two-decomp}. For $z>\lambda_{\max}(H_0)$, let $\cG(z):=(H_0-zI_{2n})^{-1}$.
We define the random $2\times 2$ matrix
\begin{equation}\label{eq:wigner-Schur-comp}
\cS_n(z):=I_2+\Theta^{1/2}B_n^\intercal \cG(z) B_n\Theta^{1/2}.
\end{equation}
Since $H=H_0+B_n\Theta B_n^\intercal$, the matrix determinant lemma gives
\begin{equation}\label{eq:wigner-det-factorization}
\det(H-zI_{2n})
=
\det(H_0-zI_{2n})\det \cS_n(z).
\end{equation}
Consequently, for every $z>\lambda_{\max}(H_0)$, we have that 
\begin{equation}\label{eq:wigner-outlier-eq}
z\in\sigma(H)
\qquad\Longleftrightarrow\qquad
\det \cS_n(z)=0.
\end{equation}
Thus the outlier analysis reduces to the study of the $2\times 2$ matrix $\cS_n(z)$ as well as its limit. 
\vspace{0.5\baselineskip}

\noindent
\textbf{Step 1: convergence of $\mathcal S_n(z)$.}
We now show the convergence of $\mathcal S_n$ and identify its deterministic limit. Fix a compact interval $K\subset (1,\infty)$. Write $\cG(z)$ in $n\times n$ blocks, as follows 
\[
\cG(z)=
\begin{pmatrix}
\cG^{(11)}(z) & \cG^{(12)}(z)\\
\cG^{(21)}(z) & \cG^{(22)}(z)
\end{pmatrix}.
\]
By expanding against the columns of $B_n$, we can write 
\begin{align*}
(p\otimes a)^\intercal \cG(z)(p\otimes a)
&=
\sum_{i,j=1}^2 p_i p_j\, a^\intercal \cG^{(ij)}(z)a,\\
(q\otimes b)^\intercal \cG(z)(q\otimes b)
&=
\sum_{i,j=1}^2 q_i q_j\, b^\intercal \cG^{(ij)}(z)b,\\
(p\otimes a)^\intercal \cG(z)(q\otimes b)
&=
\sum_{i,j=1}^2 p_i q_j\, a^\intercal \cG^{(ij)}(z)b.
\end{align*}
We claim that, uniformly in $z\in K$ and $i,j\in\{1,2\}$, we have that with overwhelming probability
\begin{align}
 a^\intercal \cG^{(ij)}(z)a
 &= \frac1n\Tr\,\cG^{(ij)}(z)+o(1),
 \label{eq:wigner-quad-a}
 \\
 b^\intercal \cG^{(ij)}(z)b
 &= \frac1n\Tr\,\cG^{(ij)}(z)+o(1),
 \label{eq:wigner-quad-b}
 \\
 a^\intercal \cG^{(ij)}(z)b
 &= \rho\,\frac1n\Tr\,\cG^{(ij)}(z)+o(1)
 \label{eq:wigner-bilin-ab}
\end{align}
Indeed, by Theorem~\ref{thm:WignerGordonEdge},  $\lambda_{\mathrm{max}}(H_0) \leq 1 + o(1)$ with overwhelming probability, so the the resolvent satisfies
\[
\sup_{z\in K}\|\cG(z)\|_{\op}=O_K(1),
\qquad
\sup_{z\in K}\|\partial_z\cG(z)\|_{\op}=O_K(1),
\]
where we used $\partial_z\cG(z)=\cG(z)^2$. Conditional on $H_0$, the blocks $\cG^{(ij)}(z)$ are deterministic and independent of $(a,b)$. Write
\[
a=\frac{g}{\sqrt n},
\qquad
b=\frac{\rho g+\sqrt{1-\rho^2}\,h}{\sqrt n},
\]
where $g,h\sim\calN(0,I_n)$ are independent. For every symmetric $A$ with $\|A\|_{\op}\le C$, Hanson-Wright applied to $A$ gives
\[
\PP\paren{\abs{a^\intercal Aa-\frac1n\Tr A}>t}
\le 2e^{- (t^2 \wedge\,  t) \times \Omega(n) \,},
\]
and the same bound holds with $a$ replaced by $b$. Moreover,
\[
a^\intercal Ab-\rho\frac1n\Tr A
=
\rho\paren{a^\intercal Aa-\frac1n\Tr A}
+\sqrt{1-\rho^2}\,\frac1n g^\intercal Ah,
\]
and, conditional on $g$, the last term is centered Gaussian with variance at most $O_K(n^{-2}\|g\|^2)$. Applying these bounds to $A=\cG^{(ij)}(z)$ for a fine net of $z\in K$, and then using the uniform Lipschitz bounds in $z$ from Proposition~\ref{thm:real-stieltjes-transform-Wigner}, proves \eqref{eq:wigner-quad-a}--\eqref{eq:wigner-bilin-ab}. Now combining \eqref{eq:wigner-quad-a}--\eqref{eq:wigner-bilin-ab} with Theorem~\ref{thm:real-stieltjes-transform-Wigner}, we obtain
\begin{equation}\label{eq:wigner-Schur-limit}
\sup_{z\in K}\|\cS_n(z)-\cS(z)\|_{\op}=o(1)
\end{equation}
holds with overwhelming probability, where $\cS$ is defined in \eqref{eq:wigner-Sigma-rho}. Borel-Cantelli lemma implies that $\mathcal S_n(z)\to \mathcal S(z)$ for all $z\in \mathcal K$ almost surely. 
\vspace{0.5\baselineskip}

\noindent
\textbf{Step 2: properties of the limiting object.} 
We now use the almost sure convergence result to study the deterministic matrix $\cS(z)$. By Definition~\ref{def:det_system_Wigner}, $T(z)$ is real symmetric for real $z>1$, so $\cS(z)$ is a real symmetric $2\times 2$ matrix for such $z$. By Lemma~\ref{lem:wigner-S-has-negative-determinant-at-edge}, when $\rho>\kappa$ we have $\det \cS(1)<0$. Since the diagonal entries of $\cS(1)$ are $1-\alpha^2>0$ and $1-\beta^2>0$, it follows that $\cS(1)$ has exactly one positive eigenvalue and exactly one negative eigenvalue.

On the other hand, since $T$ is a Stieltjes transform, as $z\to\infty$ we have
\[
T(z)=-\frac1zI_2+O(z^{-2}) \quad \text{ and hence } \quad \cS(z)=I_2-\frac1z\Theta+O(z^{-2}).
\]
In particular, $\cS(z)$ is positive definite for all sufficiently large $z$.

To prove uniqueness of the zero of $\det \cS(z)$, we show that each eigenvalue of $\cS(z)$ is strictly increasing on $(1,\infty)$. Fix a compact interval $K=[1+\delta,R]\subset (1,\infty)$. Differentiating \eqref{eq:wigner-Schur-comp}, we obtain
\begin{equation}\label{eq:wigner-SN-derivative}
\partial_z\cS_n(z)
=
\Theta^{1/2}B_n^\intercal (H_0-zI_{2n})^{-2}B_n\Theta^{1/2}
\succ 0,
\quad \text{ for } z>\lambda_{\max}(H_0).
\end{equation}
Moreover, by Theorem~\ref{thm:WignerGordonEdge}, we have that $\|H_0\|_{\op}=1 + o(1)$ with overwhelming probability. Also, note that 
\begin{equation}\label{eq:wigner-B-gram}
B_n^\intercal B_n
=
\begin{pmatrix}
\|a\|^2 & \kappa\ang{a,b}\\
\kappa\ang{a,b} & \|b\|^2
\end{pmatrix}
    \stackrel{\text{in prob.}}{\longrightarrow}
\begin{pmatrix}
1&\kappa\rho\\
\kappa\rho&1
\end{pmatrix} =: M_\rho,
\end{equation}
 and $M_\rho\succ 0$ because $\kappa\rho<1$. In particular, $B_n$ has full column rank. Therefore there exists deterministic constant $c_K > 0$ such that with overwhelming probability, we have 
\[
\partial_z\cS_n(z)
\succeq c_K I_2
\quad \text{ uniformly for }z\in K.
\]
We now pass this lower bound to the limit. Choose $r>0$ so small that
\[
\mathfrak{B}_{3r}(K):=\setcond{z\in\CC}{\mathrm{dist}(z,K)\le 3r}
\subset \DD,
\qquad
\Re z>1+\delta/2\quad\text{for all }z\in \mathfrak{B}_{3r}(K).
\]
On the event $\lambda_{\max}(H_0)\le 1+\delta/4$, the entries of $\cS_n(z)$ are holomorphic on $\mathfrak{B}_{3r}(K)$ and locally uniformly bounded in $n$. Since \eqref{eq:wigner-Schur-limit} gives convergence on the real interval $K$, Borel-Cantelli and Vitali's theorem (with same argument as before) implies that almost surely $\cS_n\to \cS$ locally uniformly on $\mathfrak{B}_{3r}(K)$, entrywise. Cauchy's integral formula therefore yields
\begin{equation}\label{eq:WignerDerivativeSConvergence}
\sup_{z\in K}\|\partial_z\cS_n(z)-\partial_z\cS(z)\|_{\op}\longrightarrow 0
\end{equation}
almost surely. Hence $\partial_z\cS(z)\succeq c_K I_2$, for $z \in K$. Because $K\subset (1,\infty)$ was arbitrary, every eigenvalue of $\cS(z)$ is strictly increasing on $(1,\infty)$. Since one eigenvalue is negative at $z=1$ and both are positive for large $z$, there exists a unique $\lambda_{\out}>1$ such that $\det \cS(\lambda_{\out})=0.$ This proves existence and uniqueness of the deterministic outlier location.
\vspace{0.5\baselineskip}

\noindent
\textbf{Step 3: Convergence of outlier and eigenvector overlap.} We now return to the random matrix $\cS_n$. Choose $\delta>0$ such that $\lambda_\out > 1 + \delta$ and such that the smallest eigenvalue of $\cS(z)$ is negative at $z=\lambda_{\out}-\delta$ and positive at $z=\lambda_{\out}+\delta$. By \eqref{eq:wigner-Schur-limit}, the same holds for $\cS_n(z)$ with overwhelming probability for all sufficiently large $n$. On the same event, Theorem~\ref{thm:WignerGordonEdge} gives $\lambda_{\max}(H_0)\le 1+o(1)<\lambda_{\out}-\delta$. Therefore, by \eqref{eq:wigner-outlier-eq}, the matrix $H$ has a unique eigenvalue $\lambda_{\out,n}$ in $(\lambda_{\out}-\delta,\lambda_{\out}+\delta)$, and $\lambda_{\out,n}\to\lambda_{\out}$ in probability.

To prove that there is no second outlier, fix $\delta>0$ so small that $\det \cS(1+\delta)<0$. Then, by \eqref{eq:wigner-Schur-limit}, with high probability the matrix $\cS_n(1+\delta)$ also has one positive and one negative eigenvalue. Since both eigenvalues of $\cS_n(z)$ are strictly increasing in $z$ by \eqref{eq:wigner-SN-derivative}, only the smaller eigenvalue can cross zero, and it can do so at most once. By \eqref{eq:wigner-outlier-eq}, this implies that $H$ has exactly one eigenvalue above $1+\delta$. Hence, with high probability, $\lambda_1(H)=\lambda_{\out,n}$ and $\lambda_2(H)\le 1+\delta$. Since $\delta>0$ is arbitrary and $W$ is similar to $H$, we conclude that $\lambda_1(W)=\lambda_{\out}+o(1)$ and $
\lambda_2(W)\le 1+o(1)$ with high probability.

It remains to prove the overlap statement. Let $\lambda_{\out,n}:=\lambda_1(H)$ and choose any nonzero vector $x_n\in\ker \cS_n(\lambda_{\out,n})$. Since $B_n^\intercal B_n\succ 0$ with overwhelming probability by \eqref{eq:wigner-B-gram}, we may normalize $x_n$ so that
\begin{equation}\label{eq:wigner-kernel-normalization}
x_n^\intercal\Theta^{-1/2}(B_n^\intercal B_n)^{-1}\Theta^{-1/2}x_n=1.
\end{equation}
Now we define vectors 
\[
\hat r:=\cG(\lambda_{\out,n})B_n\Theta^{1/2}x_n,
\qquad
\hat h:=\frac{\hat r}{\|\hat r\|}.
\]
Using $\cS_n(\lambda_{\out,n})x_n=0$, we compute
\begin{align*}
(H-\lambda_{\out,n}I_{2n})\hat r
&=(H_0-\lambda_{\out,n}I_{2n})\hat r+B_n\Theta B_n^\intercal\hat r \\
&=B_n\Theta^{1/2}x_n+B_n\Theta B_n^\intercal\cG(\lambda_{\out,n})B_n\Theta^{1/2}x_n \\
&=B_n\Theta^{1/2}\cS_n(\lambda_{\out,n})x_n=0.
\end{align*}
Thus $\hat h$ is a top unit eigenvector of $H$. Now, since $\cG(\lambda_{\out,n})$ is symmetric and its derivative is given by \eqref{eq:wigner-SN-derivative}, we get that 
\begin{equation}\label{eq:wigner-normalization-identity}
\|\hat r\|^2
=x_n^\intercal\Theta^{1/2}B_n^\intercal \cG(\lambda_{\out,n})^2 B_n\Theta^{1/2}x_n
=x_n^\intercal\partial_z\cS_n(\lambda_{\out,n})x_n.
\end{equation}
Moreover, note that $B_n^\intercal \hat r
=B_n^\intercal \cG(\lambda_{\out,n})B_n\Theta^{1/2}x_n
=-\Theta^{-1/2}x_n$, again because of the fact that  $\cS_n(\lambda_{\out,n})x_n=0$.  Let $\Pi$ be a projection onto  $\mathrm{span}\set{p\otimes a,\ q\otimes b}.$ Then, $\Pi=B_n(B_n^\intercal B_n)^{-1}B_n^\intercal$, so we obtain
\begin{align*}
\|\Pi\hat h\|^2
&=
\frac{(B_n^\intercal \hat r)^\intercal (B_n^\intercal B_n)^{-1}(B_n^\intercal \hat r)}{\|\hat r\|^2} \\
&=
\frac{x_n^\intercal\Theta^{-1/2}(B_n^\intercal B_n)^{-1}\Theta^{-1/2}x_n}{x_n^\intercal\partial_z\cS_n(\lambda_{\out,n})x_n} \\
&=
\frac{1}{x_n^\intercal\partial_z\cS_n(\lambda_{\out,n})x_n},
\end{align*}
where in the last step we used the normalization \eqref{eq:wigner-kernel-normalization}.

By compactness and \eqref{eq:wigner-kernel-normalization}, along every subsequence we may extract a further subsequence such that $x_n\to x_*$. From \eqref{eq:wigner-Schur-limit}, the convergence $\lambda_{\out,n}\to\lambda_{\out}$ in probability, and the identity $\cS_n(\lambda_{\out,n})x_n=0$, it follows that $\cS(\lambda_{\out})x_*=0$. Moreover, by \eqref{eq:wigner-B-gram}, the normalization \eqref{eq:wigner-kernel-normalization} passes to the limit and gives
\[
x_*^\intercal\Theta^{-1/2}M_\rho^{-1}\Theta^{-1/2}x_*=1.
\]
Since $\ker \cS(\lambda_{\out})$ is one-dimensional, this determines $x_*$ uniquely up to sign. Using again the derivative convergence obtained above \eqref{eq:WignerDerivativeSConvergence}, we conclude that
\[
\|\Pi\hat h\|^2
\stackrel{\text{in prob.}}{\longrightarrow}
\frac{1}{x_*^\intercal\partial_z\cS(\lambda_{\out})x_*}.
\]
The limit is strictly positive because $\partial_z\cS(\lambda_{\out})\succ 0$. 

We record that neither coordinate of $x_*$ can vanish. Indeed, the preceding monotonicity argument in Step 2 gives $\partial_z\cS(z)\succ0$ for every $z>1$. Since
\[
\cS_{11}(1)=1-\alpha^2>0,
\qquad
\cS_{22}(1)=1-\beta^2>0,
\]
both diagonal entries $\cS_{11}(\lambda_{\out})$ and $\cS_{22}(\lambda_{\out})$ are strictly positive. If a nonzero vector in $\ker\cS(\lambda_{\out})$ were supported on only one coordinate, one of these diagonal entries would have to be zero. Hence $x_{*,1}\neq0$ and $x_{*,2}\neq0$. 

It remains to translate this into individual overlaps for the right eigenvector of the original matrix $W$. Let $S:=A^{1/2}\otimes I_n$ and $C_n:=\bigl[e_1\otimes a\ \ e_2\otimes b\bigr]$ and $B_n=SC_n$. The corresponding unit right eigenvector of $W$ is
\[
\hat w=(\hat a,\hat b):=\frac{S\hat h}{\|S\hat h\|},
\]
up to an overall sign. Since $S$ is symmetric and $B_n=SC_n$, we have
\begin{equation}\label{eq:wigner-individual-overlap-vector}
\begin{pmatrix}
\langle\hat a,a\rangle\\
\langle\hat b,b\rangle
\end{pmatrix}
=C_n^\intercal\hat w
=\frac{B_n^\intercal\hat h}{\|S\hat h\|}
=-\frac{\Theta^{-1/2}x_n}{\|\hat r\|\,\|S\hat h\|},
\end{equation}
where the last identity uses $B_n^\intercal\hat r=-\Theta^{-1/2}x_n$ and $\hat h=\hat r/\|\hat r\|$. Combining \eqref{eq:wigner-individual-overlap-vector} with \eqref{eq:wigner-normalization-identity} gives the finite-$n$ identities
\begin{equation}\label{eq:wigner-individual-overlap-finite}
\begin{aligned}
|\langle\hat a,a\rangle|^2
&=
\frac{x_{n,1}^2}{\alpha^2\,x_n^\intercal\partial_z\cS_n(\lambda_{\out,n})x_n\,\|S\hat h\|^2},\\
|\langle\hat b,b\rangle|^2
&=
\frac{x_{n,2}^2}{\beta^2\,x_n^\intercal\partial_z\cS_n(\lambda_{\out,n})x_n\,\|S\hat h\|^2}.
\end{aligned}
\end{equation}
Because $\rho>\kappa$ and $\rho\le1$, we have $\kappa<1$, and therefore
\[
\|S\hat h\|^2\le \|S\|_{\op}^2=\|A\|_{\op}=1+\kappa.
\]
Using $x_{n}\to x_*$ and the derivative convergence \eqref{eq:WignerDerivativeSConvergence} at $\lambda_{\out}$, \eqref{eq:wigner-individual-overlap-finite} yields
\begin{equation*}
\begin{aligned}
   |\langle \hat a,a\rangle|^2
   &\ge
   \frac{x_{*,1}^2}{\alpha^2(1+\kappa)\,x_*^\intercal\partial_z\cS(\lambda_{\out})x_*}
   +o(1),\\
   |\langle \hat b,b\rangle|^2
   &\ge
   \frac{x_{*,2}^2}{\beta^2(1+\kappa)\,x_*^\intercal\partial_z\cS(\lambda_{\out})x_*}
   +o(1).
\end{aligned}
\end{equation*}
The constants on the right hand side are strictly positive by $\partial_z\cS(\lambda_{\out})\succ0$. This proves the theorem.
\end{proof}

\subsubsection{Mismatched Wigner outlier}
\label{subsubsec:mismatched-wigner-outlier}

In this subsection, we prove the form of the Wigner outlier argument that is
needed for the paramete free grid search.  The true model is still
\begin{equation}\label{eq:mismatched-wigner-true-model}
    U=\alpha aa^\intercal+Z_a,
    \qquad
    V=\beta bb^\intercal+Z_b,
\end{equation}
where the true parameters $\alpha,\beta\in(0,1)$ are unknown.  We build the
spectral matrix using a possibly different pair
$(\tilde\alpha,\tilde\beta)$.  Throughout this subsection the pair
$(\tilde\alpha,\tilde\beta)$ is fixed and admissible, meaning that
\begin{equation}\label{eq:mismatched-kappa-def}
    0<\tilde\alpha,\tilde\beta<1,
    \qquad
    \tilde\kappa^4
    =\frac{(1-\tilde\alpha^2)(1-\tilde\beta^2)}
    {\tilde\alpha^2\tilde\beta^2},
    \qquad
    \tilde\kappa\in(0,1).
\end{equation}
Grid points for which $\tilde\kappa\notin(0,1)$ are not used in the outlier
argument.  Define
\begin{equation}\label{eq:mismatched-normalized-matrices}
\begin{aligned}
     U_{\tilde\alpha}:=\tilde\alpha U-\tilde\alpha^2 I_n \\
     V_{\tilde\beta}:=\tilde\beta V-\tilde\beta^2 I_n 
\end{aligned}
   \quad \text{ and } \quad 
     W_{\tilde\alpha,\tilde\beta}
    :=
    \begin{pmatrix}
        U_{\tilde\alpha} & \tilde\kappa V_{\tilde\beta}\\
        \tilde\kappa U_{\tilde\alpha} & V_{\tilde\beta}
    \end{pmatrix}.
\end{equation}
Let us define 
\begin{equation}\label{eq:mismatched-A-pq-def}
    \tilde A:=
    \begin{pmatrix}
        1&\tilde\kappa\\
        \tilde\kappa&1
    \end{pmatrix},
    \qquad
    \tilde p:=\tilde A^{1/2}e_1,
    \qquad
    \tilde q:=\tilde A^{1/2}e_2,
    \qquad
    \tilde S:=\tilde A^{1/2}\otimes I_n.
\end{equation}
The symmetric conjugate of $W_{\tilde\alpha,\tilde\beta}$ is
\begin{equation}\label{eq:mismatched-H-def}
    \tilde H
    :=
    \tilde S
    \begin{pmatrix}
        U_{\tilde\alpha}&0\\
        0&V_{\tilde\beta}
    \end{pmatrix}
    \tilde S \implies   W_{\tilde\alpha,\tilde\beta}=\tilde S\tilde H\tilde S^{-1},
\end{equation}
so $W_{\tilde\alpha,\tilde\beta}$ and $\tilde H$ have the same eigenvalues. Now define the corresponding null quantities
\begin{equation}\label{eq:mismatched-null-blocks}
\begin{aligned}
      U_{\tilde\alpha,0}:=\tilde\alpha Z_a-\tilde\alpha^2I_n \\
       V_{\tilde\beta,0}:=\tilde\beta Z_b-\tilde\beta^2I_n,
\end{aligned}
\quad \text{ and } \quad 
  \tilde H_0
    :=
    \tilde S
    \begin{pmatrix}
        U_{\tilde\alpha,0}&0\\
        0&V_{\tilde\beta,0}
    \end{pmatrix}
    \tilde S.
\end{equation}
The only change relative to the matched analysis is the strength of the rank two
perturbation.  Namely, if $\tilde B_n:=\bigl[\tilde p\otimes a\ \ \tilde q\otimes b\bigr]$ and $\tilde\Theta:=\diag(\alpha\tilde\alpha,\beta\tilde\beta)$, then expanding \eqref{eq:mismatched-wigner-true-model} gives
\begin{equation}\label{eq:mismatched-rank-two-decomp}
    \tilde H=\tilde H_0+\tilde B_n\tilde\Theta\tilde B_n^\intercal.
\end{equation}
Thus the matched matrix $\Theta=\diag(\alpha^2,\beta^2)$ is replaced by
$\tilde\Theta=\diag(\alpha\tilde\alpha,\beta\tilde\beta)$, while $p,q,\kappa$
are replaced by $\tilde p,\tilde q,\tilde\kappa$. Let $(\tilde r,\tilde s,\tilde T)$ denote the deterministic Wigner solution from
Definition~\ref{def:det_system_Wigner} with
$(\alpha,\beta,\kappa,p,q)$ replaced by
$(\tilde\alpha,\tilde\beta,\tilde\kappa,\tilde p,\tilde q)$.  Equivalently,
\begin{gather}\label{eq:mismatched-finite-system}
    \tilde r(z)=\tilde p^\intercal\tilde T(z)\tilde p,
    \qquad
    \tilde s(z)=\tilde q^\intercal\tilde T(z)\tilde q,\\
    \tilde T(z)
    \Bigl(
    -zI_2
    -\tilde\alpha^2(1+\tilde r(z))\tilde p\tilde p^\intercal
    -\tilde\beta^2(1+\tilde s(z))\tilde q\tilde q^\intercal
    \Bigr)=I_2.
\end{gather}
For real $z>1$, define
\begin{equation}\label{eq:mismatched-Sigma-rho}
\tilde\cS(z)
:=
I_2+
\tilde\Theta^{1/2}
\begin{pmatrix}
 \tilde p^\intercal\tilde T(z)\tilde p
 &
 \rho\,\tilde p^\intercal\tilde T(z)\tilde q\\
 \rho\,\tilde q^\intercal\tilde T(z)\tilde p
 &
 \tilde q^\intercal\tilde T(z)\tilde q
\end{pmatrix}
\tilde\Theta^{1/2}.
\end{equation}
The mismatched threshold associated with the grid point
$(\tilde\alpha,\tilde\beta)$ is
\begin{equation}\label{eq:mismatched-rho-out-def}
    \rho_{\out}(\tilde\alpha,\tilde\beta;\alpha,\beta)
    :=
    \frac{
    \sqrt{(1-\alpha\tilde\alpha)(1-\beta\tilde\beta)}
    }{
    \tilde\kappa\sqrt{\alpha\beta\tilde\alpha\tilde\beta}
    }.
\end{equation}
At the true parameters, this reduces to the original threshold:
$\rho_{\out}(\alpha,\beta;\alpha,\beta)=\kappa$.

\begin{lemma}\label{lem:mismatched-S-edge}
For $\tilde\cS$ defined in \eqref{eq:mismatched-Sigma-rho},
\begin{equation}\label{eq:mismatched-S-edge}
\tilde\cS(1)=
\begin{pmatrix}
1-\alpha\tilde\alpha
&
-\rho\tilde\kappa\sqrt{\alpha\beta\tilde\alpha\tilde\beta}\\
-\rho\tilde\kappa\sqrt{\alpha\beta\tilde\alpha\tilde\beta}
&
1-\beta\tilde\beta
\end{pmatrix}.
\end{equation}
Consequently, $\det \tilde\cS(1)
    =(1-\alpha\tilde\alpha)(1-\beta\tilde\beta)
    -\rho^2\tilde\kappa^2\alpha\beta\tilde\alpha\tilde\beta$. In particular, $\det\tilde\cS(1)<0$ whenever
$\rho>\rho_{\out}(\tilde\alpha,\tilde\beta;\alpha,\beta)$.
\end{lemma}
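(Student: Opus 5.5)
The plan mirrors the proof of Lemma~\ref{lem:wigner-S-has-negative-determinant-at-edge}. The key observation is that the deterministic system \eqref{eq:mismatched-finite-system} is exactly the system of Definition~\ref{def:det_system_Wigner} with parameters $(\tilde\alpha,\tilde\beta,\tilde\kappa)$. Since this pair is admissible ($0<\tilde\alpha,\tilde\beta<1$ and $\tilde\kappa\in(0,1)$), Corollary~\ref{cor:wigner-critical-identities-outlier} applies verbatim to $\tilde T$. That corollary's proof uses only the defining relation between $\tilde\kappa$ and $(\tilde\alpha,\tilde\beta)$, and never the true parameters. Consequently the right limit exists and equals $\tilde T(1)=\lim_{x\downarrow 1}\tilde T(x)=-I_2$. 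This is the only analytic input, and the rest is algebra.

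Next I would record the Gram relations for $\tilde p,\tilde q$. Since $\tilde p=\tilde A^{1/2}e_1$ and $\tilde q=\tilde A^{1/2}e_2$, we have
\[
\tilde p^\intercal\tilde p=\tilde A_{11}=1,\qquad
\tilde q^\intercal\tilde q=\tilde A_{22}=1,\qquad
\tilde p^\intercal\tilde q=\tilde A_{12}=\tilde\kappa.
\]
Substituting $\tilde T(1)=-I_2$ then gives
\[
\tilde p^\intercal\tilde T(1)\tilde p=\tilde q^\intercal\tilde T(1)\tilde q=-1,\qquad
\tilde p^\intercal\tilde T(1)\tilde q=-\tilde\kappa.
\]
Plugging these into \eqref{eq:mismatched-Sigma-rho} with $\tilde\Theta^{1/2}=\diag\bigl(\sqrt{\alpha\tilde\alpha},\sqrt{\beta\tilde\beta}\bigr)$ yields the diagonal entries $1-\alpha\tilde\alpha$ and $1-\beta\tilde\beta$. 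The off-diagonal entries become $\rho\sqrt{\alpha\tilde\alpha}\sqrt{\beta\tilde\beta}\,(-\tilde\kappa)$, which is exactly \eqref{eq:mismatched-S-edge}. Here $\tilde\cS(1)$ is understood, as in the matched case, as the limit $x\downarrow 1$; this limit is justified because $\tilde\cS$ depends continuously (linearly) on $\tilde T$.

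The determinant formula is then immediate. For the final claim, I would rewrite $\det\tilde\cS(1)<0$ as
\[
\rho^2>\frac{(1-\alpha\tilde\alpha)(1-\beta\tilde\beta)}{\tilde\kappa^2\alpha\beta\tilde\alpha\tilde\beta}=\rho_{\out}^2.
\]
Since all the quantities involved are positive ($\alpha\tilde\alpha<1$ and $\beta\tilde\beta<1$ because every parameter lies in $(0,1)$), taking square roots shows this is equivalent to $\rho>\rho_{\out}(\tilde\alpha,\tilde\beta;\alpha,\beta)$.

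The only step requiring any care is the first one: confirming that Corollary~\ref{cor:wigner-critical-identities-outlier} indeed transfers to the tilde system. I expect no real obstacle there, because the true parameters $(\alpha,\beta)$ enter only through $\tilde\Theta$ and $\rho$ in the definition of $\tilde\cS$, and not at all in the system for $\tilde T$.
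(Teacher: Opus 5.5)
Your proposal is correct and follows the paper's proof. The paper likewise applies Corollary~\ref{cor:wigner-critical-identities-outlier} to the system with grid parameters $(\tilde\alpha,\tilde\beta,\tilde\kappa)$ to obtain $\tilde T(1)=-I_2$, then uses $\tilde p^\intercal\tilde p=\tilde q^\intercal\tilde q=1$ and $\tilde p^\intercal\tilde q=\tilde\kappa$ and substitutes into \eqref{eq:mismatched-Sigma-rho}; your additional remarks (the corollary involves only the grid parameters, and positivity of $1-\alpha\tilde\alpha$, $1-\beta\tilde\beta$ turns the determinant condition into $\rho>\rho_{\out}$) just spell out steps the paper leaves implicit.
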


\begin{proof}
Apply Corollary~\ref{cor:wigner-critical-identities-outlier} to the null system with the
parameters $(\tilde\alpha,\tilde\beta,\tilde\kappa)$.  This gives
$\tilde T(1)=-I_2$.  Since $\tilde p^\intercal\tilde p=\tilde q^\intercal\tilde q=1$ and $\tilde p^\intercal\tilde q=e_1^\intercal \tilde A e_2=\tilde\kappa$ substitution into \eqref{eq:mismatched-Sigma-rho} gives
\eqref{eq:mismatched-S-edge}. The final claim is exactly the definition
\eqref{eq:mismatched-rho-out-def}.
\end{proof}

\begin{theorem}[Mismatched Wigner outlier]
\label{thm:mismatched-Wigner-outlier}
Fix true parameters $0<\alpha,\beta<1$, and let
$(\tilde\alpha,\tilde\beta)$ be admissible in the sense of
\eqref{eq:mismatched-kappa-def}.  Suppose $\rho>\rho_{\out}(\tilde\alpha,\tilde\beta;\alpha,\beta).$ Then there exists a unique $\tilde\lambda_{\out}>1$ such that $ \det\tilde\cS(\tilde\lambda_{\out})=0.$ In fact, w.h.p,
\[
    \lambda_1(W_{\tilde\alpha,\tilde\beta})
    =\tilde\lambda_{\out}+o(1),
    \qquad
    \lambda_2(W_{\tilde\alpha,\tilde\beta})\le 1+o(1).
\]
Furthermore, if $\tilde w=(\hat a,\hat b)$ is a unit eigenvector of $W_{\tilde\alpha,\tilde\beta}$ corresponding to $\lambda_1(W_{\tilde\alpha,\tilde\beta})$, then with high probability,
\begin{equation*}
\begin{aligned}
   |\langle\hat a,a\rangle|^2
   &\ge
   \frac{\tilde x_{*,1}^2}{\alpha\tilde\alpha(1+\tilde\kappa)\,\tilde x_*^\intercal\partial_z\tilde\cS(\tilde\lambda_{\out})\tilde x_*}
   +o(1),\\
   |\langle\hat b,b\rangle|^2
   &\ge
   \frac{\tilde x_{*,2}^2}{\beta\tilde\beta(1+\tilde\kappa)\,\tilde x_*^\intercal\partial_z\tilde\cS(\tilde\lambda_{\out})\tilde x_*}
   +o(1),
\end{aligned}
\end{equation*}
where $\tilde x_* \in \ker\tilde\cS(\tilde\lambda_{\out}) $ is normalized by 
\begin{equation*}
    \tilde x_*^\intercal\tilde\Theta^{-1/2} \begin{pmatrix}
1&\tilde\kappa\rho\\
\tilde\kappa\rho&1
\end{pmatrix}^{-1}\tilde\Theta^{-1/2}\tilde x_*=1
\end{equation*}
\end{theorem}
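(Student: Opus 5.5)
The plan is to rerun the proof of Theorem~\ref{thm:Wigner-outlier} essentially verbatim, with $(\alpha,\beta,\kappa,p,q,\Theta,B_n,H_0)$ replaced by $(\tilde\alpha,\tilde\beta,\tilde\kappa,\tilde p,\tilde q,\tilde\Theta,\tilde B_n,\tilde H_0)$. The key observation is that $\tilde H_0$ depends only on the noise $(Z_a,Z_b)$ and on the admissible pair $(\tilde\alpha,\tilde\beta)$. It is therefore exactly the symmetrized null matrix of Section~\ref{subsec:Wigner-convergence} built with the parameters $(\tilde\alpha,\tilde\beta,\tilde\kappa)$. Consequently, every null input applies without change:
\begin{itemize}
\item Theorem~\ref{thm:WignerGordonEdge} gives $\lambda_{\max}(\tilde H_0)\le 1+o(1)$ with overwhelming probability;
\item Theorem~\ref{thm:real-stieltjes-transform-Wigner} gives convergence of the normalized block traces of $\tilde\cG(z)=(\tilde H_0-zI_{2n})^{-1}$ to $\tilde T(z)$ on compacts of $(1,\infty)$;
\item Corollary~\ref{cor:wigner-critical-identities-outlier} gives $\tilde T(1)=-I_2$.
\end{itemize}
The true parameters $(\alpha,\beta)$ enter only through the rank-two perturbation \eqref{eq:mismatched-rank-two-decomp}, namely through $\tilde\Theta$.

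\textbf{Step 1: reduce to a $2\times 2$ problem.} I define $\tilde\cS_n(z):=I_2+\tilde\Theta^{1/2}\tilde B_n^\intercal\tilde\cG(z)\tilde B_n\tilde\Theta^{1/2}$. The matrix determinant lemma then shows that eigenvalues of $\tilde H$ above $\lambda_{\max}(\tilde H_0)$ are exactly the zeros of $\det\tilde\cS_n$. The spike $(a,b)$ is independent of $\tilde H_0$, so I condition on $\tilde H_0$ and use the same Hanson--Wright, net and Lipschitz argument as in Step 1 of Theorem~\ref{thm:Wigner-outlier}. Combined with Theorem~\ref{thm:real-stieltjes-transform-Wigner}, this gives $\sup_{z\in K}\|\tilde\cS_n(z)-\tilde\cS(z)\|_{\op}=o(1)$ with overwhelming probability, with $\tilde\cS$ as in \eqref{eq:mismatched-Sigma-rho}.

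\textbf{Step 2: the deterministic limit.} By Lemma~\ref{lem:mismatched-S-edge}, the hypothesis $\rho>\rho_{\out}$ gives $\det\tilde\cS(1)<0$. The diagonal entries $1-\alpha\tilde\alpha$ and $1-\beta\tilde\beta$ are positive, so $\tilde\cS(1)$ has one positive and one negative eigenvalue. As $z\to\infty$ we have $\tilde\cS(z)=I_2-z^{-1}\tilde\Theta+O(z^{-2})\succ 0$.

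For monotonicity, I use $\partial_z\tilde\cS_n=\tilde\Theta^{1/2}\tilde B_n^\intercal\tilde\cG^2\tilde B_n\tilde\Theta^{1/2}$. The Gram matrix $\tilde B_n^\intercal\tilde B_n\to\begin{pmatrix}1&\tilde\kappa\rho\\\tilde\kappa\rho&1\end{pmatrix}$, which is positive definite since $\tilde\kappa\rho<1$, and $\tilde\Theta\succ0$. Together with the operator-norm bounds on $\tilde\cG$, this gives a uniform lower bound $\partial_z\tilde\cS_n\succeq c_K I_2$ on compacts $K$. I transfer this bound to $\partial_z\tilde\cS$ via Borel--Cantelli, Vitali's theorem and the Cauchy integral formula, exactly as in \eqref{eq:WignerDerivativeSConvergence}. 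Hence both eigenvalues of $\tilde\cS$ are strictly increasing on $(1,\infty)$, and there is a unique zero $\tilde\lambda_{\out}>1$ of $\det\tilde\cS$. The same sign-and-monotonicity argument as in Step 3 of Theorem~\ref{thm:Wigner-outlier} then shows that $\tilde H$, and hence $W_{\tilde\alpha,\tilde\beta}$, has exactly one eigenvalue above $1+\delta$, and that this eigenvalue converges to $\tilde\lambda_{\out}$.

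\textbf{Step 3: eigenvector overlaps.} I pick $x_n\in\ker\tilde\cS_n(\lambda_{\out,n})$, normalized by $x_n^\intercal\tilde\Theta^{-1/2}(\tilde B_n^\intercal\tilde B_n)^{-1}\tilde\Theta^{-1/2}x_n=1$, and set $\hat r=\tilde\cG(\lambda_{\out,n})\tilde B_n\tilde\Theta^{1/2}x_n$. This vector is an eigenvector of $\tilde H$, it satisfies $\|\hat r\|^2=x_n^\intercal\partial_z\tilde\cS_n x_n$, and $\tilde B_n^\intercal\hat r=-\tilde\Theta^{-1/2}x_n$. Mapping back through $\tilde S$, whose squared operator norm is $\|\tilde S\|_{\op}^2=1+\tilde\kappa$, gives the finite-$n$ identities. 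The only difference from \eqref{eq:wigner-individual-overlap-finite} is that $\alpha^2,\beta^2$ become $\alpha\tilde\alpha,\beta\tilde\beta$. Passing to the limit yields the stated lower bounds.

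\textbf{Main obstacle.} I expect no genuinely new obstacle, since everything is inherited from the matched case. The point needing the most care is positivity of the limiting constants. This requires $\tilde x_{*,1},\tilde x_{*,2}\neq 0$, which holds because the diagonal entries of $\tilde\cS(\tilde\lambda_{\out})$ are strictly positive: they are positive at $z=1$ and strictly increasing. It also requires $\partial_z\tilde\cS(\tilde\lambda_{\out})\succ0$ with a finite bound, which comes from the uniform derivative convergence near $\tilde\lambda_{\out}$.
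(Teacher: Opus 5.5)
Your proposal is correct and follows the paper's proof essentially step for step. It uses the same Schur complement $\tilde\cS_n$, the same null inputs (Gordon edge, real-axis Stieltjes convergence, $\tilde T(1)=-I_2$) applied with the grid parameters, the same edge-sign-plus-monotonicity argument for the unique zero and for excluding a second outlier, and the same overlap identities with $\alpha^2,\beta^2$ replaced by $\alpha\tilde\alpha,\beta\tilde\beta$. One trivial slip: the $z^{-1}$ term of $\tilde\cS(z)$ is $-z^{-1}\tilde\Theta^{1/2}\tilde M_\rho\tilde\Theta^{1/2}$ rather than $-z^{-1}\tilde\Theta$, but only $\tilde\cS(z)\to I_2$ is needed, so nothing changes.
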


\begin{rmk}
    The two deterministic constants on the right-hand side are strictly positive. In particular, $|\langle\hat a,a\rangle|^2=\Omega(1)$ and $|\langle\hat b,b\rangle|=\Omega(1)$ with high probability.
\end{rmk}

\begin{proof}
We repeat the proof of Theorem~\ref{thm:Wigner-outlier}, keeping only the changes caused
by the mismatch.  For $z>\lambda_{\max}(\tilde H_0)$, set
\[
    \tilde\cG_0(z):=(\tilde H_0-zI_{2n})^{-1}
\]
and define the random $2\times2$ Schur complement
\begin{equation}\label{eq:mismatched-Schur-comp}
    \tilde\cS_n(z)
    :=I_2+
    \tilde\Theta^{1/2}\tilde B_n^\intercal
    \tilde\cG_0(z)
    \tilde B_n\tilde\Theta^{1/2}.
\end{equation}
By the matrix determinant lemma and the rank-two decomposition
\eqref{eq:mismatched-rank-two-decomp},
\begin{equation}\label{eq:mismatched-det-factorization}
\det(\tilde H-zI_{2n})
=
\det(\tilde H_0-zI_{2n})\det\tilde\cS_n(z).
\end{equation}
Thus every eigenvalue of $\tilde H$, equivalently of
$W_{\tilde\alpha,\tilde\beta}$, lying above $\lambda_{\max}(\tilde H_0)$ is a
zero of $\det\tilde\cS_n(z)$.

The convergence of $\tilde\cS_n$ is exactly the convergence proved in the first
half of Theorem~\ref{thm:Wigner-outlier}, with
$(\alpha,\beta,\kappa,p,q,\Theta,B_n,H_0)$ replaced by
$(\tilde\alpha,\tilde\beta,\tilde\kappa,\tilde p,\tilde q,
\tilde\Theta,\tilde B_n,\tilde H_0)$.  More explicitly, if
$K\subset(1,\infty)$ is compact, then
\begin{equation}\label{eq:mismatched-Schur-limit}
    \sup_{z\in K}\|\tilde\cS_n(z)-\tilde\cS(z)\|_{\op}\longrightarrow0
\end{equation}
almost surely. The reason is the same as before: conditional on
$\tilde H_0$, the block resolvent $\tilde\cG_0^{(ij)}(z)$ is independent of
$(a,b)$, and the quadratic-form estimates
\begin{align*}
 a^\intercal\tilde\cG_0^{(ij)}(z)a
 &=\frac1n\Tr\tilde\cG_0^{(ij)}(z)+o(1),\\
 b^\intercal\tilde\cG_0^{(ij)}(z)b
 &=\frac1n\Tr\tilde\cG_0^{(ij)}(z)+o(1),\\
 a^\intercal\tilde\cG_0^{(ij)}(z)b
 &=\rho\,\frac1n\Tr\tilde\cG_0^{(ij)}(z)+o(1)
\end{align*}
hold uniformly on $K$.  Combining these estimates with
Theorem~\ref{thm:real-stieltjes-transform-Wigner}, applied to the null model with
parameters $(\tilde\alpha,\tilde\beta)$, gives that with overwhelming probability,
\[
\sup_{z\in K}\|\tilde\cS_n(z)-\tilde\cS(z)\|_{\op}=o(1),
\]
and thus
\eqref{eq:mismatched-Schur-limit} holds almost surely by Borel-Cantelli lemma. 

We next analyze the deterministic matrix $\tilde\cS$.  By
Lemma~\ref{lem:mismatched-S-edge} and the assumption
$\rho>\rho_{\out}(\tilde\alpha,\tilde\beta;\alpha,\beta)$, we have
$\det\tilde\cS(1)<0$.  Since the diagonal entries of $\tilde\cS(1)$ are
$1-\alpha\tilde\alpha>0$ and $1-\beta\tilde\beta>0$, the matrix
$\tilde\cS(1)$ has one positive and one negative eigenvalue.  On the other hand,
from the large-$z$ asymptotics of the Wigner deterministic system,
\[
    \tilde T(z)=-\frac1zI_2+O(z^{-2}),
    \qquad
    \tilde\cS(z)=I_2+O(z^{-1}),
\]
so $\tilde\cS(z)$ is positive definite for all sufficiently large real $z$.

It remains to recall the monotonicity argument.  Differentiating
\eqref{eq:mismatched-Schur-comp} gives, for
$z>\lambda_{\max}(\tilde H_0)$,
\begin{equation}\label{eq:mismatched-SN-derivative}
    \partial_z\tilde\cS_n(z)
    =
    \tilde\Theta^{1/2}\tilde B_n^\intercal
    (\tilde H_0-zI_{2n})^{-2}
    \tilde B_n\tilde\Theta^{1/2}
    \succeq0.
\end{equation}
Moreover,
\begin{equation}\label{eq:mismatched-B-gram}
    \tilde B_n^\intercal\tilde B_n
    =
    \begin{pmatrix}
        \|a\|^2&\tilde\kappa\langle a,b\rangle\\
        \tilde\kappa\langle a,b\rangle&\|b\|^2
    \end{pmatrix}
    \stackrel{\text{in prob.}}{\longrightarrow}
    \tilde M_\rho
    :=
    \begin{pmatrix}
        1&\tilde\kappa\rho\\
        \tilde\kappa\rho&1
    \end{pmatrix},
\end{equation}
and $\tilde M_\rho\succ0$ because $\tilde\kappa\rho<1$.  Consequently, on every compact interval
$K=[1+\delta,R]\subset(1,\infty)$, the same argument used after
\eqref{eq:wigner-SN-derivative} gives a deterministic constant $c_K>0$ such
that, with overwhelming probability,
\[
    \partial_z\tilde\cS_n(z)\succeq c_K I_2,
    \qquad z\in K.
\]
Indeed, on the event $\lambda_{\max}(\tilde H_0)\le 1+\delta/4$ and
$\|\tilde H_0\|_{\op}=O(1)$, the matrix
$(\tilde H_0-zI)^{-2}$ is bounded below by a positive multiple of the identity
uniformly for $z\in K$, while
$\tilde\Theta^{1/2}\tilde B_n^\intercal\tilde B_n\tilde\Theta^{1/2}$ is
positive definite with a limiting spectral gap.  Passing this lower bound to
$\partial_z\tilde\cS(z)$ uses the same Vitali/Cauchy derivative convergence (as in the proof of Theorem~\ref{thm:Wigner-outlier} above).  Therefore the eigenvalues of
$\tilde\cS(z)$ are strictly increasing on $(1,\infty)$.

Since $\tilde\cS(1)$ has one negative eigenvalue and $\tilde\cS(z)$ is positive
definite for large $z$, this strict monotonicity gives a unique
$\tilde\lambda_{\out}>1$ with
$\det\tilde\cS(\tilde\lambda_{\out})=0$.

We now return to the random matrix $\tilde{S}_n$.  By Theorem~\ref{thm:WignerGordonEdge},
applied with $(\tilde\alpha,\tilde\beta,\tilde\kappa)$,
\begin{equation}\label{eq:mismatched-null-edge}
    \lambda_{\max}(\tilde H_0)\le 1+o(1)
\end{equation}
with overwhelming probability.  The uniform convergence
\eqref{eq:mismatched-Schur-limit} near $\tilde\lambda_{\out}$ implies that
$\det\tilde\cS_n(z)$ has a zero
$\tilde\lambda_{\out,n}=\tilde\lambda_{\out}+o(1)$.  By
\eqref{eq:mismatched-det-factorization}, this zero is an eigenvalue of
$\tilde H$ and hence of $W_{\tilde\alpha,\tilde\beta}$.

Finally, to exclude a second outlier, choose $\delta>0$ so small that
$\det\tilde\cS(1+\delta)<0$.  Then \eqref{eq:mismatched-Schur-limit} implies that
$\tilde\cS_n(1+\delta)$ has one positive and one negative eigenvalue with overwhelming
probability.  Since the eigenvalues of $\tilde\cS_n(z)$ are increasing for
$z>\lambda_{\max}(\tilde H_0)$ by \eqref{eq:mismatched-SN-derivative}, only the
negative eigenvalue can cross zero, and it can cross at most once.  Therefore
there is exactly one eigenvalue of $\tilde H$ above $1+\delta$.  As $\delta>0$ is
arbitrary and $W_{\tilde\alpha,\tilde\beta}$ is similar to $\tilde H$, we obtain
\[
    \lambda_1(W_{\tilde\alpha,\tilde\beta})
    =\tilde\lambda_{\out}+o(1),
    \qquad
    \lambda_2(W_{\tilde\alpha,\tilde\beta})\le 1+o(1),
\]
with high probability.

It remains to prove the displayed individual-overlap bounds. Choose a nonzero vector $\tilde x_n\in\ker\tilde\cS_n(\tilde\lambda_{\out,n})$ normalized by
\begin{equation}\label{eq:mismatched-kernel-normalization}
\tilde x_n^\intercal\tilde\Theta^{-1/2}(\tilde B_n^\intercal\tilde B_n)^{-1}\tilde\Theta^{-1/2}\tilde x_n=1.
\end{equation}
Set $\tilde r$ and $\tilde h$ to be defined by
\[
\tilde r:=\tilde\cG_0(\tilde\lambda_{\out,n})\tilde B_n\tilde\Theta^{1/2}\tilde x_n,
\qquad
\tilde h:=\frac{\tilde r}{\|\tilde r\|}.
\]
Exactly as in the matched proof, $\tilde h$ is a top unit eigenvector of $\tilde H$, and
\begin{equation}\label{eq:mismatched-normalization-identity}
\|\tilde r\|^2
=\tilde x_n^\intercal\partial_z\tilde\cS_n(\tilde\lambda_{\out,n})\tilde x_n,
\qquad
\tilde B_n^\intercal\tilde r=-\tilde\Theta^{-1/2}\tilde x_n.
\end{equation}
By compactness, the Schur convergence, and \eqref{eq:mismatched-B-gram}, every subsequential limit of $\tilde x_n$ is $\pm\tilde x_*$, where $\tilde x_*$ is the normalized kernel vector from the theorem statement. The same derivative convergence gives that almost surely,
\[
\tilde x_n^\intercal\partial_z\tilde\cS_n(\tilde\lambda_{\out,n})\tilde x_n
\longrightarrow
\tilde x_*^\intercal\partial_z\tilde\cS(\tilde\lambda_{\out})\tilde x_*.
\]
Moreover, neither coordinate of $\tilde x_*$ can vanish: by \eqref{eq:mismatched-S-edge}, the diagonal entries of $\tilde\cS(1)$ are $1-\alpha\tilde\alpha$ and $1-\beta\tilde\beta$, both strictly positive, and the monotonicity argument above gives $\partial_z\tilde\cS(z)\succ0$ for $z>1$. Hence $\tilde x_{*,1}\neq0$ and $\tilde x_{*,2}\neq0$.

Now let $\tilde C_n:=\bigl[e_1\otimes a\ \ e_2\otimes b\bigr]$, so that $\tilde B_n=\tilde S\tilde C_n$. The corresponding unit right eigenvector of $W_{\tilde\alpha,\tilde\beta}$ is
\[
\tilde w=(\hat a,\hat b):=\frac{\tilde S\tilde h}{\|\tilde S\tilde h\|},
\]
up to an overall sign. Therefore
\[
\begin{pmatrix}
\langle\hat a,a\rangle\\
\langle\hat b,b\rangle
\end{pmatrix}
=\tilde C_n^\intercal\tilde w
=\frac{\tilde B_n^\intercal\tilde h}{\|\tilde S\tilde h\|}
=-\frac{\tilde\Theta^{-1/2}\tilde x_n}{\|\tilde r\|\,\|\tilde S\tilde h\|}.
\]
Since $\|\tilde S\tilde h\|^2\le\|\tilde S\|_{\op}^2=1+\tilde\kappa$, the preceding display and \eqref{eq:mismatched-normalization-identity} imply with high probability,
\begin{equation*}
\begin{aligned}
   |\langle\hat a,a\rangle|^2
   &\ge
   \frac{\tilde x_{*,1}^2}{\alpha\tilde\alpha(1+\tilde\kappa)\,\tilde x_*^\intercal\partial_z\tilde\cS(\tilde\lambda_{\out})\tilde x_*}
   +o(1),\\
   |\langle\hat b,b\rangle|^2
   &\ge
   \frac{\tilde x_{*,2}^2}{\beta\tilde\beta(1+\tilde\kappa)\,\tilde x_*^\intercal\partial_z\tilde\cS(\tilde\lambda_{\out})\tilde x_*}
   +o(1)
\end{aligned}
\end{equation*}
The constants are strictly positive by the nonvanishing of the coordinates of $\tilde x_*$ and by $\partial_z\tilde\cS(\tilde\lambda_{\out})\succ0$.
\end{proof}

The preceding theorem shows that every sufficiently well matched grid point has
an outlier above the null edge.  For the grid search estimator, we also need the
converse implication at the level of eigenvectors: any grid point whose top
eigenvalue is genuinely separated from the null edge must have an informative
top eigenvector.

\begin{lemma}
\label{lem:mismatched-separated-outlier-informative}
Fix $\varepsilon>0$, and let $(\tilde\alpha,\tilde\beta)$ be admissible.  Then there is a constant
$c>0$ only depending on $\varepsilon,\tilde\alpha,\tilde\beta,\alpha,\beta,\rho$ such that, with high probability, the following implication holds.  If
$\lambda_1(W_{\tilde\alpha,\tilde\beta})\ge 1+\varepsilon$ and
$w=(\hat a,\hat b)\in\RR^n\times\RR^n$ is a top unit right eigenvector of
$W_{\tilde\alpha,\tilde\beta}$, then
\[
    |\langle\hat a,a\rangle|^2\ge c,
    \qquad
    |\langle\hat b,b\rangle|^2\ge c .
\]
Moreover, for any sufficiently slowly growing size grid contained in a compact subset of the
admissible region, the constant and the high probability event may be chosen
uniformly over all grid points.
\end{lemma}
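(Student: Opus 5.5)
We reuse the rank-two perturbation machinery from the proof of Theorem~\ref{thm:mismatched-Wigner-outlier}, but now without assuming $\rho>\rho_{\out}$. On the high-probability event of Theorem~\ref{thm:WignerGordonEdge} (applied with parameters $(\tilde\alpha,\tilde\beta,\tilde\kappa)$), we have $\lambda_{\max}(\tilde H_0)\le 1+\varepsilon/4$. Suppose $\lambda:=\lambda_1(\tilde H)=\lambda_1(W_{\tilde\alpha,\tilde\beta})\ge 1+\varepsilon$. Then $\lambda$ lies strictly above the null spectrum, so by \eqref{eq:mismatched-det-factorization} there is a nonzero $x\in\ker\tilde\cS_n(\lambda)$. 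Exactly as in the matched proof, the top eigenvector of $\tilde H$ is
\[
\tilde h=\tilde r/\|\tilde r\|,\qquad \tilde r=\tilde\cG_0(\lambda)\tilde B_n\tilde\Theta^{1/2}x,
\]
and it satisfies $\tilde B_n^\intercal\tilde r=-\tilde\Theta^{-1/2}x$ and $\|\tilde r\|^2=x^\intercal\partial_z\tilde\cS_n(\lambda)x$. The right eigenvector of $W$ is $w=\tilde S\tilde h/\|\tilde S\tilde h\|$ (if $\lambda_1$ is simple; otherwise we must first argue simplicity, see below). This gives the identity
\[
\bigl(\langle\hat a,a\rangle,\langle\hat b,b\rangle\bigr)^\intercal=-\tilde\Theta^{-1/2}x\big/\bigl(\|\tilde r\|\|\tilde S\tilde h\|\bigr).
\]
Since $\|\tilde S\tilde h\|^2\le 1+\tilde\kappa$, it suffices to bound $x_i^2/(x^\intercal\partial_z\tilde\cS_n(\lambda)x)$ from below for $i=1,2$.

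\textbf{Denominator.} For $z\ge1+\varepsilon$ we have $(\tilde H_0-z)^{-2}\preceq (3\varepsilon/4)^{-2}I$, and $\|\tilde B_n\|_{\op}=O(1)$ w.h.p. by \eqref{eq:mismatched-B-gram}. Hence $x^\intercal\partial_z\tilde\cS_n(\lambda)x\le C_\varepsilon\|x\|^2$.

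\textbf{Numerator.} We need each coordinate of the normalized kernel vector to be bounded away from $0$. If, say, $x_2=0$, then the kernel equation gives $\tilde\cS_{n,11}(\lambda)=0$ and $\tilde\cS_{n,21}(\lambda)=0$. Quantitatively, if $|x_2|\le\delta\|x\|$, then $|\tilde\cS_{n,11}(\lambda)|\le C\delta$. By \eqref{eq:mismatched-Schur-limit}, uniformly on $[1+\varepsilon,R]$ (with $R$ a w.h.p. bound on $\|\tilde H\|_{\op}$), $\tilde\cS_{n,11}(z)$ is within $o(1)$ of
\[
\tilde\cS_{11}(z)=1+\alpha\tilde\alpha\,\tilde p^\intercal\tilde T(z)\tilde p .
\]
We now show $\tilde\cS_{11}(z)\ge c_0>0$ on $[1,\infty)$. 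Corollary~\ref{cor:wigner-critical-identities-outlier} gives $\tilde\cS_{11}(1)=1-\alpha\tilde\alpha>0$. Moreover $\tilde p^\intercal\tilde T(z)\tilde p$ is a Stieltjes transform of a measure supported on $(-\infty,1]$, hence increasing in $z>1$, and it tends to $0$ at infinity. Thus $\tilde\cS_{11}(z)\ge 1-\alpha\tilde\alpha$ on $[1,\infty)$. Combining this with the uniform convergence yields $|x_2|\ge c_1\|x\|$ w.h.p., and symmetrically $|x_1|\ge c_1\|x\|$. Plugging in gives $|\langle\hat a,a\rangle|^2\ge c_1^2/(\alpha\tilde\alpha(1+\tilde\kappa)C_\varepsilon)$, and similarly for $b$.

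\textbf{Main obstacle: simplicity of the top eigenvalue.} The eigenvector formula requires the top eigenspace of $\tilde H$ to be one-dimensional, i.e.\ $\ker\tilde\cS_n(\lambda)$ is one-dimensional. The eigenvalues of $\tilde\cS_n$ are nondecreasing by \eqref{eq:mismatched-SN-derivative}. If both vanished at $\lambda$, then $\tilde\cS_n(\lambda)=0$, and in particular $\tilde\cS_{n,11}(\lambda)=0$, contradicting the lower bound just proved. So the kernel is one-dimensional and any top eigenvector has the form above.

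\textbf{Uniformity over the grid.} The constants $c_0=1-\alpha\tilde\alpha$, $C_\varepsilon$ and $c_1$ depend continuously on $(\tilde\alpha,\tilde\beta)$ in a compact admissible set. The high-probability events come from Gordon's bound $e^{-\Omega(n)}$ and from the Schur convergence. Both can be union-bounded over a grid of slowly growing size, taking care that the $o(1)$ in \eqref{eq:mismatched-Schur-limit} is uniform in the parameters. Establishing that uniformity, via the Lipschitz dependence of the resolvent and of $\tilde T$ on $(\tilde\alpha,\tilde\beta)$, is the technically delicate point.
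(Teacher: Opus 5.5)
Your proof is correct and follows essentially the paper's route (a kernel vector of $\tilde\cS_n(\lambda)$, non-vanishing coordinates forced by positivity of the diagonal entries of $\tilde\cS$, then the overlap identity with $\|\tilde S\tilde h\|^2\le 1+\tilde\kappa$). Your ratio $x_i^2/(x^\intercal\partial_z\tilde\cS_n(\lambda)x)$ is scale-invariant and needs only the upper bound on $\partial_z\tilde\cS_n$; together with your finite-$n$ quantitative coordinate bound via monotonicity of $\tilde p^\intercal\tilde T\tilde p$, this mildly streamlines the paper's subsequence/compactness argument.

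The simplicity step is unnecessary. For any given top eigenvector $h$ of $\tilde H$, the vector $x:=-\tilde\Theta^{1/2}\tilde B_n^\intercal h$ already lies in $\ker\tilde\cS_n(\lambda)$ and satisfies $h=\tilde\cG_0(\lambda)\tilde B_n\tilde\Theta^{1/2}x$. This is exactly how the paper avoids any multiplicity issue.
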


\begin{proof}

Let $h:={\tilde S^{-1}w}/{\|\tilde S^{-1}w\|}$. By the similarity relation \eqref{eq:mismatched-H-def}, $h$ is a unit top
eigenvector of $\tilde H$ with eigenvalue
$\lambda:=\lambda_1(W_{\tilde\alpha,\tilde\beta})$.  We work on the event
$\{\lambda\ge 1+\varepsilon\}$; if this event does not occur, there is nothing to prove.
By Theorem~\ref{thm:WignerGordonEdge}, applied to the null matrix with the guessed
parameters, $\lambda_{\max}(\tilde H_0)\le 1+o(1)$ with overwhelming probability.  Thus, on a overwhelming probability event and for all sufficiently large $n$,
\[
    \lambda-\lambda_{\max}(\tilde H_0)\ge \varepsilon/2.
\]
Also, $\|\tilde H\|_{\op}\le \|\tilde H_0\|_{\op}+\|\tilde B_n\|_{\op}^2\|\tilde\Theta\|_{\op}=O(1)$ with overwhelming probability, using the null operator norm bound and the Gram convergence \eqref{eq:mismatched-B-gram}.  Hence, for a deterministic constant $R<\infty$, we may assume
\[
    \lambda\in K:=[1+\varepsilon,R]
\]
on the same overwhelming probability event. Let $\tilde\cG_0(z):=(\tilde H_0-zI_{2n})^{-1},$ and let $\tilde\cS_n(z)$ be the finite Schur complement from
\eqref{eq:mismatched-Schur-comp}.  Set $ \tilde y_n:=\tilde B_n^\intercal h$ and $ \tilde x_n:=\tilde\Theta^{1/2}\tilde y_n.$ Using the rank-two decomposition \eqref{eq:mismatched-rank-two-decomp}, the eigenvalue equation gives
\[
    h=-\tilde\cG_0(\lambda)\tilde B_n\tilde\Theta^{1/2}\tilde x_n.
\]
Multiplying on the left by $\tilde\Theta^{1/2}\tilde B_n^\intercal$ gives  $\tilde\cS_n(\lambda)\tilde x_n=0.$ Since $h$ is unit norm, by \eqref{eq:mismatched-SN-derivative},
\begin{equation}\label{eq:mismatched-separated-derivative-normalization}
    1=\|h\|^2
    =\tilde x_n^\intercal \partial_z\tilde\cS_n(\lambda)\tilde x_n .
\end{equation}
On $K$, the same estimates used in the proof of
Theorem~\ref{thm:mismatched-Wigner-outlier} give, with high probability,
\[
    0<c_K I_2\preceq \partial_z\tilde\cS_n(z)\preceq C_K I_2,
    \qquad z\in K,
\]
for deterministic constants $c_K,C_K>0$.  The lower bound is the monotonicity
estimate following \eqref{eq:mismatched-B-gram}; the upper bound follows from
$\|\tilde\cG_0(z)\|_{\op}\le 2/\varepsilon$ and $\|\tilde B_n\|_{\op}=O(1)$.
Thus \eqref{eq:mismatched-separated-derivative-normalization} implies that
$\tilde x_n$ is bounded and bounded away from zero in norm.

We next show that neither coordinate of $\tilde x_n$ can be small.  Suppose, to
the contrary, that along a subsequence on the above high probability events,
$\lambda\in K$ and one coordinate of $\tilde x_n$ tends to zero.  Passing to a
further subsequence, we may assume $\lambda\to z_*\in K$ and
$\tilde x_n\to x_*$ with $x_*\neq0$.  By the uniform Schur convergence
\eqref{eq:mismatched-Schur-limit} and the derivative convergence used in the
proof of Theorem~\ref{thm:mismatched-Wigner-outlier},
\[
    \tilde\cS(z_*)x_*=0,
    \qquad
    x_*^\intercal\partial_z\tilde\cS(z_*)x_*=1 .
\]
However, a nonzero kernel vector of $\tilde\cS(z)$ for any $z>1$ cannot have a
zero coordinate.  Indeed, if $x_1=0$ and $\tilde\cS(z)x=0$, then
$e_2^\intercal\tilde\cS(z)e_2=0$, which is impossible because
$e_2^\intercal\tilde\cS(1)e_2=1-\beta\tilde\beta>0$ by
\eqref{eq:mismatched-S-edge} and $\partial_z\tilde\cS(z)\succ0$ for $z>1$.
The same argument with the two coordinates interchanged rules out $x_2=0$.
This contradiction proves that there is a deterministic $\gamma>0$, depending
only on $\varepsilon,\tilde\alpha,\tilde\beta,\alpha,\beta,\rho$, such that,
with high probability, on the event $\{\lambda\ge1+\varepsilon\}$, we have $ |\tilde x_{n,1}|\ge\gamma$ and $ |\tilde x_{n,2}|\ge\gamma.$ Finally define
\[
    \tilde C_n:=\bigl[e_1\otimes a\ \ e_2\otimes b\bigr],
    \qquad
    \tilde B_n=\tilde S\tilde C_n .
\]
Since $w=\tilde S h/\|\tilde S h\|$, we have the coordinate identity
\[
\begin{pmatrix}
\langle\hat a,a\rangle\\
\langle\hat b,b\rangle
\end{pmatrix}
=\tilde C_n^\intercal w
=\frac{\tilde B_n^\intercal h}{\|\tilde S h\|}
=\frac{\tilde\Theta^{-1/2}\tilde x_n}{\|\tilde S h\|}.
\]
Because $\|\tilde S h\|^2\le\|\tilde S\|_{\op}^2=1+\tilde\kappa$ and
$\tilde\Theta=\diag(\alpha\tilde\alpha,\beta\tilde\beta)$,
the lower bound above gives
\[
    |\langle\hat a,a\rangle|^2
    \ge \frac{\gamma^2}{\alpha\tilde\alpha(1+\tilde\kappa)},
    \qquad
    |\langle\hat b,b\rangle|^2
    \ge \frac{\gamma^2}{\beta\tilde\beta(1+\tilde\kappa)}.
\]
Taking $c$ to be the smaller of the two displayed constants proves the fixed
parameter claim.

For a slowly growing size grid contained in a compact subset of the admissible
region, choose $R$ and the derivative upper and lower bounds uniformly over the
compact set.  The Schur convergence, derivative convergence, Gordon edge
estimate, and Gram concentration are uniform over the grid by a union bound.
The deterministic compactness argument above is also uniform: the set of
possible limits $(\tilde\alpha,\tilde\beta,z,x)$ with $z\in[1+\varepsilon,R]$,
$\tilde\cS(z)x=0$, and
$x^\intercal\partial_z\tilde\cS(z)x=1$ is compact, and the coordinate
nonvanishing argument rules out $x_1=0$ and $x_2=0$ everywhere on this set.
Thus the same positive coordinate lower bound, and hence the same overlap
constant $c$, may be chosen uniformly over all grid points.
\end{proof}

\subsubsection{Parameter free detection and recovery, proof of Theorem~\ref{thm:Spiked-parameter-free}}\label{subsec:parameterFreeDetection}
We note that the proof of the following theorem for the CSWish model is identical, up to citing the appropriate theorems from Section~\ref{subsec:wishart-outlier} and replacing $\alpha$, $\beta$, $\tilde{\alpha}$, $\tilde{\beta}$ by $\tau^{1/2}\alpha$, $\tau^{1/2}\beta$, $\tau^{1/2}\tilde{\alpha}$, $\tau^{1/2}\tilde{\beta}$ respectively, where appropriate. We prove it for the CSWig model below. 

\begin{theorem}[Parameter-free detection and recovery]\label{thm:Spiked-parameter-freeAppendix}
Fix $\varepsilon\in(0,1/4)$.  
Let $(U, V)$ be sampled from either the null correlated Wigner model or the correlated Wigner model with correlation $\rho$. Let $\kappa(\tilde{\alpha}, \tilde{\beta})$ be defined as in \eqref{eq:kappaWignerThresholdAppendix}. Let us define the admissible grid
\[
    \cZ_{\varepsilon}
    := \Bigl\{ (\tilde\alpha,\tilde\beta)\ :\ \tilde{\alpha}, \tilde{\beta}\in \varepsilon^9\mathbb Z \cap (0,1),\ \kappa(\tilde\alpha,\tilde\beta)<1
    \Bigr\}.
\]
We define the test statistic, $ \Lambda_* = \max_{\cZ_\varepsilon} \lambda_1(W(\tilde \alpha, \tilde \beta)).$
\begin{enumerate}[label=(\roman*)]
   \item Under the null model, $ \Lambda_*\le 1+n^{-1/3}$ with high probability. 

    \item Under the planted model, if $\alpha,\beta \in (2\varepsilon, 1]$ and $\rho > \kappa + \varepsilon$, then with high probability $\Lambda_* > 1+\Omega(1)$ and the corresponding eigenvector achieves weak recovery:
    \[|\langle \hat a,a\rangle|^2\ge \Omega(1),
        \qquad
        |\langle \hat b,b\rangle|^2\ge \Omega(1).\]
\end{enumerate}
Additionally, note that it suffices to search over the $\epsilon^2$-grid when $\alpha,\beta \in (2\epsilon, 1-2\epsilon)$.
\end{theorem}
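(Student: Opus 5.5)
Part (i) follows from Theorem~\ref{thm:WignerGordonEdge} and a union bound over the grid. Part (ii) is handled in two stages. First, use the mismatched outlier theorem to exhibit one grid point whose top eigenvalue is separated from $1$. Second, use Lemma~\ref{lem:mismatched-separated-outlier-informative} to show that whichever grid point attains the maximum has an informative eigenvector.

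For (i), the grid $\cZ_\varepsilon$ is finite, with $|\cZ_\varepsilon|=O(\varepsilon^{-18})$ independently of $n$. For each admissible $(\tilde\alpha,\tilde\beta)$, Theorem~\ref{thm:WignerGordonEdge} (applied with $(\tilde\alpha,\tilde\beta,\tilde\kappa)$) gives $\PP(\lambda_{\max}(W(\tilde\alpha,\tilde\beta))\ge 1+t)\le e^{-\Omega(nt^2)}$. Take $t=n^{-1/3}$, so each failure probability is $e^{-\Omega(n^{1/3})}$. The constants in that bound depend on $(\tilde\alpha,\tilde\beta)$, but only through finitely many grid points, so this is harmless. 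A union bound then gives $\Lambda_*\le 1+n^{-1/3}$ w.h.p.

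For (ii), pick a grid point $(\tilde\alpha,\tilde\beta)$ that lies within $\varepsilon^9$ of $(\alpha,\beta)$ and is chosen below them, meaning $\tilde\alpha\le\alpha$ and $\tilde\beta\le\beta$. Then show two things: $\tilde\kappa<1$, and
\[
\rho_{\out}(\tilde\alpha,\tilde\beta;\alpha,\beta)\le\kappa(\alpha,\beta)+\varepsilon/2 .
\]
This continuity estimate is the main obstacle.

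\begin{itemize}
\item Near $\alpha,\beta$ bounded away from $1$, the map $(\tilde\alpha,\tilde\beta)\mapsto\rho_{\out}$ is smooth. Its derivative is bounded polynomially in $\varepsilon^{-1}$, because $\alpha,\beta>2\varepsilon$ keeps the denominators $\sqrt{\alpha\beta\tilde\alpha\tilde\beta}$ of size $\gtrsim\varepsilon$. Hence a mesh of $\varepsilon^9$ suffices.
\item The delicate case is $\alpha$ or $\beta$ near $1$, where $\kappa\to0$. There $\tilde\kappa=\kappa(\tilde\alpha,\tilde\beta)$ behaves like a fourth root $(1-\tilde\alpha^2)^{1/4}$ and is not Lipschitz. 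Write
\[
\rho_{\out}^2=\frac{(1-\alpha\tilde\alpha)(1-\beta\tilde\beta)}{\alpha\beta\tilde\alpha\tilde\beta}\cdot\frac{\tilde\alpha^2\tilde\beta^2}{\sqrt{(1-\tilde\alpha^2)(1-\tilde\beta^2)}} .
\]
Since $\tilde\alpha\le\alpha$, we have $1-\alpha\tilde\alpha\le 1-\tilde\alpha^2$, and similarly for $\beta$. This gives $\rho_{\out}^2\lesssim\tilde\kappa^2\cdot O(1)$ when $\tilde\kappa$ is small. So if $\tilde\kappa^2\lesssim\varepsilon^2$, then $\rho_{\out}\le\varepsilon/2<\rho-\kappa$ directly.
\item Otherwise $1-\tilde\alpha^2$ and $1-\tilde\beta^2$ are at least of order $\varepsilon^{8}$. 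The derivatives of the fourth roots then scale like $(1-\tilde\alpha^2)^{-3/4}\lesssim\varepsilon^{-6}$, and the mesh $\varepsilon^9$ gives an error $\lesssim\varepsilon^{3}\le\varepsilon/2$. This is where the exponent $9$ comes from; one may need to fine-tune the split threshold.
\end{itemize}

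With $\rho>\kappa+\varepsilon\ge\rho_{\out}+\varepsilon/2$, Theorem~\ref{thm:mismatched-Wigner-outlier} gives $\lambda_1(W(\tilde\alpha,\tilde\beta))=\tilde\lambda_{\out}+o(1)$ with $\tilde\lambda_{\out}>1$. The gap $\tilde\lambda_{\out}-1$ must be bounded below by some $c(\varepsilon)>0$ uniformly over the possible $(\alpha,\beta)$. To get this, I would argue by compactness: $\det\tilde\cS(1)\le-c'(\varepsilon)<0$ uniformly, and $\partial_z\tilde\cS$ is bounded above, so the negative eigenvalue needs time $\gtrsim c'$ to cross $0$. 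Hence $\Lambda_*\ge 1+c(\varepsilon)$ w.h.p.

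For recovery, let $(\alpha^\star,\beta^\star)\in\cZ_\varepsilon$ be the maximizing grid point. Then $\lambda_1(W(\alpha^\star,\beta^\star))\ge 1+c(\varepsilon)$. Apply Lemma~\ref{lem:mismatched-separated-outlier-informative} with separation $c(\varepsilon)$, using its uniform-over-grid clause; this is legitimate since the grid is finite and fixed. It shows that every grid point with such a separated top eigenvalue has a top eigenvector with both overlaps at least some $c>0$ w.h.p. In particular this holds at the maximizer, whichever point it is.

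Finally, for the remark on the $\varepsilon^2$-grid: when $\alpha,\beta\le1-2\varepsilon$, all $\tilde\kappa$ near $\kappa$ stay away from the singular fourth-root regime. The function $\rho_{\out}$ is then Lipschitz with constant $O(\varepsilon^{-1})$ there, so a mesh of $\varepsilon^2$ already yields error $O(\varepsilon)$ after adjusting constants.
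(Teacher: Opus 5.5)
\textbf{Verdict: correct, with the same skeleton as the paper but a different certification of the witness grid point.} As in the paper, part (i) is Theorem~\ref{thm:WignerGordonEdge} with $t=n^{-1/3}$ plus a union bound over the finite grid. Part (ii) exhibits one admissible grid point with $\rho>\rho_{\out}(\tilde\alpha,\tilde\beta;\alpha,\beta)$, applies Theorem~\ref{thm:mismatched-Wigner-outlier} there, and uses Lemma~\ref{lem:mismatched-separated-outlier-informative} over the finitely many grid points to make the random maximizer informative.

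\textbf{How the two certifications differ.} The paper splits by the location of $(\alpha,\beta)$.
\begin{itemize}
\item Away from $1$, it rounds \emph{up}. Then $\tilde\kappa\le\kappa<1$ is automatic by monotonicity, and a ratio estimate gives $\rho_{\out}\le(1+\varepsilon)\kappa$. This case is also where the $\varepsilon^2$-grid remark comes from.
\item When $\beta\ge 1-2\varepsilon^3$ (or symmetrically $\alpha$), it rounds strictly down into $[\alpha-2\varepsilon^9,\alpha-\varepsilon^9]$. It uses $(1-\alpha\tilde\alpha)^2=(1-\alpha^2)(1-\tilde\alpha^2)+(\alpha-\tilde\alpha)^2$ to get $\rho_{\out}^2\le(s(\alpha)+\varepsilon^{7/2})(s(\beta)+\varepsilon^{7/2})$, and admissibility comes from $1-\tilde\beta^2=O(\varepsilon^3)$.
\end{itemize}
You always round down and split instead by the size of $\tilde\kappa$.

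\textbf{Your route is stronger than you use it.} First fix a typo: in your formula the second factor should have $\tilde\alpha\tilde\beta$, not $\tilde\alpha^2\tilde\beta^2$, in its numerator. With that, $1-\alpha\tilde\alpha\le 1-\tilde\alpha^2$ gives $\rho_{\out}^2\le\tilde\kappa^2\,\tilde\alpha\tilde\beta/(\alpha\beta)\le\tilde\kappa^2$ for \emph{every} grid point below $(\alpha,\beta)$, not only when $\tilde\kappa$ is small. So it suffices to show $\tilde\kappa<\kappa+\varepsilon$, which gives $\rho_{\out}<\rho$ and $\tilde\kappa<\rho\le 1$ at once. This needs no case split. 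Use $(1-\tilde\alpha^2)^{1/4}\le(1-\alpha^2)^{1/4}+(\alpha^2-\tilde\alpha^2)^{1/4}$ together with $\min(\tilde\alpha,\tilde\beta)\ge 2\varepsilon-\varepsilon^9$; this gives $\tilde\kappa\le\kappa+2^{1/4}\varepsilon^{5/4}+O(\varepsilon^{7/2})<\kappa+\varepsilon$. Your approach thus yields one uniform argument with a witness arbitrarily close below $(\alpha,\beta)$. The paper's yields explicit constants and the coarser grid in the interior.

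\textbf{Two points to repair.}
\begin{itemize}
\item In your third bullet, the mean-value step needs $(1-x^2)^{-3/4}$ controlled on all of $[\tilde\alpha,\alpha]$. It is largest at $x=\alpha$, not at $\tilde\alpha$, and $\alpha$ may be arbitrarily close to $1$. This is repairable: a lower bound on $\tilde\kappa$ forces $1-\tilde\alpha^2\gtrsim\varepsilon^6$, well above the mesh. The H\"older bound above avoids the issue entirely.
\item Your paragraph on a gap $c(\varepsilon)$ uniform in $(\alpha,\beta)$ is unnecessary. The $\Omega(1)$ in the statement may depend on $\alpha,\beta,\rho$, so $\lambda_1(W(\tilde\alpha,\tilde\beta))=\tilde\lambda_{\out}+o(1)$ at your fixed witness already gives $\Lambda_*\ge 1+(\tilde\lambda_{\out}-1)/2$ w.h.p. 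Its justification also fails as stated. $\partial_z\tilde\cS$ is built from $\tilde T'(z)=\int (t-z)^{-2}\,\mu(dt)$, which need not stay bounded as $z\downarrow 1$ (for instance at a square-root edge), so ``$\partial_z\tilde\cS$ bounded above'' is not available near the edge.
\end{itemize}
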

\begin{proof}[Proof of Theorem~\ref{thm:Spiked-parameter-freeAppendix} and Theorem~\ref{thm:Spiked-parameter-free}]
  We note that (i) follows directly from the tail bounds for the top eigenvalue from Theorem~\ref{thm:WignerGordonEdge} by a union bound with $\varepsilon = n^{-1/3}$. 

  For (ii), in light of Theorem~\ref{thm:mismatched-Wigner-outlier} and Lemma~\ref{lem:mismatched-separated-outlier-informative}, it suffices to verify there exists $\tilde{\alpha},\tilde{\beta}$ which satisfies the two conditions: $\rho>\rho_{\out}(\tilde\alpha,\tilde\beta;\alpha,\beta)$ whenever $\rho > \kappa + \epsilon$, and $\tilde{\kappa}(\tilde{\alpha}, \tilde{\beta}) < 1$. A union bound over $\tilde{\alpha},\tilde{\beta} \in \epsilon^9 \ZZ\cap (0,1)$ with Theorem~\ref{thm:mismatched-Wigner-outlier} and Lemma~\ref{lem:mismatched-separated-outlier-informative} gives the guarantees of strong detection and weak recovery. Let us first consider when $\alpha,\beta \in [2\epsilon, 1-2\epsilon]$, we will show that,
   \begin{equation}\label{eq:Wigner_inequalityCor}
   \frac{\rho_{\out}(\tilde\alpha,\tilde\beta;\alpha,\beta)^2}{\tilde{\kappa}^2} = \frac{(1-\tilde{\alpha}\alpha)(1-\tilde{\beta} \beta)}{(1-\tilde{\alpha}^2)(1-\tilde{\beta}^2)}\frac{\tilde{\alpha} \tilde{\beta}}{\alpha\beta} < \frac{(\kappa + \epsilon)^2}{\tilde{\kappa}^2}.
   \end{equation}
  Let us take $\tilde{\alpha} \in [\alpha, \alpha + \epsilon^2], \tilde{\beta} \in [\beta, \beta + \epsilon^2]$. Then for $\epsilon < 1/4$,
   \[\frac{\tilde{\alpha}\epsilon^2}{1-\tilde{\alpha}^2} < \frac{\epsilon^2}{1-(1-\epsilon)^2} \le \epsilon, \quad \frac{\tilde{\alpha}\tilde{\beta}}{\alpha\beta} \le \sqrt{\frac{(1-\alpha^2)(1-\beta^2)}{(1-\tilde{\alpha}^2)(1-\tilde{\beta}^2)} \frac{\tilde{\alpha}^2\tilde{\beta}^2}{\alpha^2\beta^2}} = \frac{\kappa^2}{\tilde{\kappa}^2}.\]
  and similarly for $\tilde{\beta}$. Then,
   \[\frac{(1-\tilde{\alpha}\alpha)(1-\tilde{\beta} \beta)}{(1-\tilde{\alpha}^2)(1-\tilde{\beta}^2)}\frac{\tilde{\alpha} \tilde{\beta}}{\alpha\beta} \le \frac{(1-\tilde{\alpha}(\tilde{\alpha}-\epsilon^2))(1-\tilde{\beta} (\tilde{\beta}-\epsilon^2))}{(1-\tilde{\alpha}^2)(1-\tilde{\beta}^2)}\frac{\kappa^2}{\tilde{\kappa}^2} < (1 + 2\epsilon +\epsilon^2)\frac{\kappa^2}{\tilde{\kappa}^2} \le \frac{(\kappa + \epsilon)^2}{\tilde{\kappa}^2}.\]
 By the same argument, for $\alpha,\beta \in [2\epsilon, 1-2\epsilon^{3}]$, we may choose $\tilde{\alpha}\in [\alpha, \alpha+\epsilon^{4}]$ and $\tilde{\beta}\in[\beta, \beta + \epsilon^{4}]$ which satisfies the inequality \eqref{eq:Wigner_inequalityCor}. Note that as $\tilde{\kappa}$ is decreasing in $\tilde{\alpha}, \tilde{\beta}$, we have that $\tilde{\kappa} \le \kappa < 1$. Now consider $\alpha \in [2\epsilon, 1]$, and $\beta \in [1-2\epsilon^3, 1]$. Recall that
  \[\kappa(\alpha,\beta)^2=s(\alpha)s(\beta), \text{ where } s(x):=\frac{\sqrt{1-x^2}}{x}.\]
We may write,
  \[\rho_{\out}(\tilde\alpha,\tilde\beta;\alpha,\beta)^2  
  = \sqrt{s(\alpha)^2 + \frac{(\alpha - \tilde\alpha)^2}{\alpha^2(1-\tilde{\alpha}^2)}}\sqrt{s(\alpha)^2 + \frac{(\beta - \tilde\beta)^2}{\beta^2(1-\tilde{\beta}^2)}}.\]
Let $\tilde{\alpha} \in [\alpha-2\epsilon^{9}, \alpha - \epsilon^{9}]$ and $\tilde{\beta} \in [\beta-2\epsilon^{9}, \beta - \epsilon^{9}]$. Then for $\epsilon < \frac{1}{4}$, using $\sqrt{x^2+y^2}\le x+y$,
  \[\frac{1-\alpha\tilde{\alpha}}{\alpha\sqrt{1-\tilde{\alpha}^2}} \le s(\alpha) +\frac{\alpha-\tilde \alpha}{\alpha\sqrt{1-\tilde \alpha^2}} \le s(\alpha) +\frac{2\epsilon^9}{2\epsilon\sqrt {\epsilon^9}} = s(\alpha)+\epsilon^{7/2}.\]
and likewise for the $\beta$ term. Therefore
  \[\rho_{\out}(\tilde\alpha,\tilde\beta;\alpha,\beta)^2 \le \paren{s(\alpha)+\epsilon^{7/2}}\paren{s(\beta)+\epsilon^{7/2}} \le \kappa(\alpha,\beta)^2 + \epsilon^{7/2}\paren{s(\alpha) + s(\beta)}.\]
Since $\alpha,\beta\ge 2\epsilon$, we have $s(\alpha)+s(\beta)\le \epsilon^{-1}$. Hence
  \[\rho_{\out}(\tilde\alpha,\tilde\beta;\alpha,\beta)^2 \le \kappa(\alpha,\beta)^2+\epsilon^{5/2} < (\kappa + \epsilon)^2.\]
Additionally, for $\tilde{\alpha} \in [\alpha-2\epsilon^{9}, \alpha - \epsilon^{9}]$ and $\tilde{\beta} \in [\beta-2\epsilon^{9}, \beta - \epsilon^{9}]$, we have that
  \[\tilde{\kappa}^4 = \frac{(1 - \tilde{\alpha}^2)(1-\tilde{\beta}^2)}{\tilde{\alpha}^2\tilde{\beta}^2} \le \frac{4\epsilon^3}{\epsilon^2} < 1.
  \]
  This concludes the proof.
\end{proof}

\subsubsection{Estimating the signal strengths, proof of Theorem~\ref{thm:recoverySignalStrengths-MainBody} (CSWig)}\label{subsubsec:wigner-strength-estimation}

We next show that, once weak recovery is possible, the signal strengths can also be estimated consistently. We do this by the standard trick (see \cite{BrennanBreslerHuleihel18}, \cite{CaiMaWu13}) of introducing independent Gaussian noise and use one copy to reconstruct estimators.

\begin{theorem}[Theorem~\ref{thm:recoverySignalStrengths-MainBody} (CSWig)]\label{thm:wigner-strength-estimation}
Let $(U,V)$ be sampled from the correlated spiked Wigner model in Section~\ref{subsec:wigner}, with fixed parameters $0<\alpha,\beta\le 1$ and correlation $\rho>\kappa$.  Then there exist randomized estimators $\widehat\alpha = \widehat\alpha(U,V)$ and $\widehat\beta = \hat \beta(U,V)$ such that, with high probability
\[
    \widehat\alpha=\alpha+o(1),
    \qquad
    \widehat\beta=\beta+o(1).
\]
\end{theorem}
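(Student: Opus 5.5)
The estimator is built from the weak-recovery eigenvector of Theorem~\ref{thm:Spiked-parameter-freeAppendix}, combined with the sample-splitting trick of \cite{BrennanBreslerHuleihel18,CaiMaWu13}.

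\textbf{Step 1: splitting.} Draw independent $G_1,G_2\sim\mathrm{GOE}(1/n)$ and set
\[
U^{\pm}:=\tfrac{1}{\sqrt2}(U\pm G_1), \qquad V^{\pm}:=\tfrac{1}{\sqrt2}(V\pm G_2).
\]
Then $U^+=\tfrac{\alpha}{\sqrt2}aa^\intercal+Z_a^+$ and $U^-=\tfrac{\alpha}{\sqrt2}aa^\intercal+Z_a^-$, where $Z_a^\pm$ are independent $\mathrm{GOE}(1/n)$ matrices that are independent of $(a,b)$. The same holds for $V^\pm$ with $\beta$ and $b$.

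This halves the effective signal to $\alpha/\sqrt2$, which may push the pair below threshold. So instead I would use an asymmetric split with weights $(\sqrt{1-t^2},t)$ and noise $G$ scaled by $t/\sqrt{1-t^2}$, with $t$ small. The first copy then has strength $\alpha_t=\alpha\sqrt{1-t^2}$ and the second has strength $\alpha t$. Since $\kappa$ is continuous and $\rho>\kappa$, the bound $\rho>\kappa(\alpha_t,\beta_t)+\varepsilon'$ holds for $t$ small enough.

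\textbf{Step 2: recover the direction.} Run the grid search of Theorem~\ref{thm:Spiked-parameter-freeAppendix} on the first copy $(U^+,V^+)$. By (ii) of that theorem and Lemma~\ref{lem:mismatched-separated-outlier-informative}, this produces a unit eigenvector $(\hat a,\hat b)$ with $|\langle \hat a,a\rangle|^2\ge c$ and $|\langle\hat b,b\rangle|^2\ge c$ w.h.p. Because $(\hat a,\hat b)$ is measurable with respect to the first copy, it is independent of $Z_a^-,Z_b^-$.

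\textbf{Step 3: form the statistics.} Compute
\[
\hat a^\intercal U^-\hat a=\alpha t\,\langle \hat a,a\rangle^2+\hat a^\intercal Z_a^-\hat a .
\]
Conditionally on $\hat a$, the noise term is $\mathcal N(0,2/n)$, so this quantity equals $\alpha t\langle\hat a,a\rangle^2+o(1)$. The overlap $\langle\hat a,a\rangle^2$ is unknown, so a second equation is needed. For this I would do a three-way split. The third copy $U^{(3)}=s\alpha aa^\intercal+Z^{(3)}$ gives, in the same way, $\hat a^\intercal U^{(3)}\hat a=s\alpha\langle\hat a,a\rangle^2+o(1)$, but that is proportional to the first and adds no information.

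A better route is to compute $\|U^-\hat a\|^2$, or alternatively $\hat a^\intercal (U^-)^2\hat a$. Because $\hat a$ is independent of $Z^-$,
\[
\|U^-\hat a\|^2=(\alpha t)^2\langle\hat a,a\rangle^2\|a\|^2+2\alpha t\langle\hat a,a\rangle\, a^\intercal Z^-\hat a+\|Z^-\hat a\|^2 .
\]
The cross term is $o(1)$, and $\|Z^-\hat a\|^2=1+o(1)$ by concentration, conditioning on $\hat a$ and using GOE invariance. Writing $X=\hat a^\intercal U^-\hat a\approx \alpha t\,\omega$ and $Y=\|U^-\hat a\|^2-1\approx(\alpha t)^2\omega$ with $\omega=\langle\hat a,a\rangle^2$, define
\[
\widehat\alpha:=\frac{Y}{tX}.
\]
Then $\widehat\alpha\to\alpha$, because $\omega\ge c>0$ keeps the denominator away from zero. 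The estimator $\widehat\beta$ is built the same way from $V^-$ and $\hat b$.

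\textbf{Main obstacle.} The hard step is Step 1 combined with Step 2. The grid theorem requires the margin condition $\rho>\kappa+\varepsilon$ and $\alpha,\beta$ bounded below for the perturbed parameters $(\alpha_t,\beta_t)$. One also has to check that the first copy is exactly of CSWig form: its noise variance must be $1/n$ after the rescaling, which means dividing the first copy by $\sqrt{1-t^2}\,\cdot$(normalization) and adjusting the strength accordingly.

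If $\alpha=1$ exactly, the single-view top eigenvector applies instead. The edge case $\alpha,\beta\le1$ with $\kappa=0$ is covered by the footnote remark after Theorem~\ref{thm:Spiked-parameter-free}.
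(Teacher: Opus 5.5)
Your construction is essentially the paper's. The paper's split $U^{(1)}=(U+\sqrt\eta\,G_U)/\sqrt{1+\eta}$, $U^{(2)}=(U-\eta^{-1/2}G_U)/\sqrt{1+\eta^{-1}}$ is exactly your rotation with $t=\sqrt{\eta/(1+\eta)}$. Its estimator $\sqrt{1+\eta^{-1}}\,(\|U^{(2)}x\|^2-1)/(x^\intercal U^{(2)}x)$ is literally your $Y/(tX)$.

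\textbf{One step fails as written.} The block $\hat a$ of the unit eigenvector $(\hat a,\hat b)\in\RR^{2n}$ is not a unit vector. Conditionally on $\hat a$ one has $\|Z^-\hat a\|^2=\|\hat a\|^2+o(1)$, not $1+o(1)$. So $Y=\|U^-\hat a\|^2-1$ carries the bias $\|\hat a\|^2-1\le-\|\hat b\|^2$, which is of order one because $\langle\hat b,b\rangle^2\ge c$. After dividing by $tX\asymp t^2$, this bias destroys the estimator. You must first normalize, $x:=\hat a/\|\hat a\|$, as the paper does. This is harmless: $\hat a\neq 0$, and $\|\hat a\|\le 1$ gives $\langle x,a\rangle^2\ge\langle\hat a,a\rangle^2\ge c$.

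\textbf{Fixed $t$ versus vanishing $t_n$.} Apart from this, the only real difference is your fixed $t$ against the paper's $t_n\asymp n^{-1/10}$.
\begin{itemize}
\item With $t$ fixed, the first copy is an exact CSWig instance with fixed parameters $(\alpha_t,\beta_t)$. The weak-recovery results then apply verbatim, and the second-copy errors need only be $o(1)$.
\item The price is that $t$ must be chosen. For the estimator to be data-driven, $t$ may depend only on the known margin $\varepsilon$ of Theorem~\ref{thm:recoverySignalStrengths-MainBody}, not on the unknown $(\alpha,\beta,\rho)$. This is possible because $\kappa$ is uniformly continuous on $[\varepsilon/2,1]^2$, but you should say so explicitly.
\item The paper's choice $\eta=n^{-1/5}$ needs no tuning. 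In exchange, it applies weak recovery to strengths $\alpha/\sqrt{1+\eta}$ that drift with $n$, which implicitly requires the overlap constant to be locally uniform in the parameters. It also has to control errors at the finer scale $o(\gamma_n^2)=o(n^{-1/5})$.
\end{itemize}

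\textbf{A false side remark.} You say the single-view top eigenvector works at $\alpha=1$. It does not: at the BBP point the single-view overlap tends to zero. The remark is also unnecessary, since your shrinking argument already covers $\alpha=1$.
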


\begin{proof}[Proof of Theorem~\ref{thm:wigner-strength-estimation}, Theorem~\ref{thm:recoverySignalStrengths-MainBody} (CSWig)]
Set $\eta=n^{-1/5}$ and draw, independently of $(U,V)$, two independent GOE matrices $G_U,G_V\sim\mathrm{GOE}(1/n)$.  Define
\[
U^{(1)}:=\frac{U+\sqrt{\eta} \, G_U}{\sqrt{1+\eta}},
\qquad
U^{(2)}:=\frac{U-\eta^{-1/2}G_U}{\sqrt{1+\eta^{-1}}},
\]
and define $V^{(1)},V^{(2)}$ analogously.  Apply the weak recovery algorithm to $(U^{(1)},V^{(1)})$, and let $(\bar a,\bar b)$ be its output.  Normalize
\[
    x:=\frac{\bar a}{\|\bar a\|},
    \qquad
    y:=\frac{\bar b}{\|\bar b\|},
\]
with an arbitrary unit vector choice if one of the two denominators is zero.  Define on the event that the denominators below are nonzero,
\begin{equation}\label{eq:wigner-strength-estimator}
    \widehat\alpha
    :=\sqrt{1+\eta^{-1}}\,\frac{\|U^{(2)}x\|^2-1}{x^\intercal U^{(2)}x},
    \qquad
    \widehat\beta
    :=\sqrt{1+\eta^{-1}}\,\frac{\|V^{(2)}y\|^2-1}{y^\intercal V^{(2)}y},
\end{equation}
and set the estimator equal to $0$ on the complementary event.  
We prove the claim for $\widehat\alpha$; the proof for $\widehat\beta$ is identical.  Write
\[
Z_U^{(1)}:=\frac{Z_a+\sqrt{\eta}G_U}{\sqrt{1+\eta}},
\qquad
Z_U^{(2)}:=\frac{Z_a-\eta^{-1/2}G_U}{\sqrt{1+\eta^{-1}}}.
\]
The pair $(Z_U^{(1)},Z_U^{(2)})$ is jointly Gaussian.  Each marginal is distributed as $\mathrm{GOE}(1/n)$, and their covariance is zero; hence they are independent.  Therefore
\[
    U^{(1)}=\frac{\alpha}{\sqrt{1+\eta}}aa^\intercal+Z_U^{(1)},
    \qquad
    U^{(2)}=\gamma_n aa^\intercal+Z_U^{(2)},
    \qquad
    \gamma_n:=\frac{\alpha}{\sqrt{1+\eta^{-1}}}.
\]
The same construction applies to $V$.  Since $\eta\to0$ and $\rho>\kappa(\alpha,\beta)$, the parameters converging to $(\alpha,\beta)$ and remains above the weak recovery threshold for all sufficiently large $n$.  By the preceding weak recovery result Lemma~\ref{lem:mismatched-separated-outlier-informative}, there is a deterministic constant $c>0$ such that, with high probability,
\begin{equation}\label{eq:wigner-split-overlap}
    q_n:=\langle x,a\rangle^2\ge c
    \quad \text{ and } \quad 
    \langle y,b\rangle^2\ge c.
\end{equation}
We work on this event.  Conditional on $(U^{(1)},V^{(1)})$ and spikes $a, b$, the vector $x$ is fixed and is independent of $Z_U^{(2)}$.  Standard GOE concentration, together with $\|a\|^2=1+o(1)$ whp, gives
\[
    x^\intercal Z_U^{(2)}x=o(\gamma_n),
    \qquad
    a^\intercal Z_U^{(2)}x=o(\gamma_n),
    \qquad
    \|Z_U^{(2)}x\|^2=1+o(\gamma_n^2)
\]
with high probability.  Here we used $\gamma_n^2\asymp \eta$ and $\eta\gg n^{-1/2}$.  Hence
\begin{align*}
    x^\intercal U^{(2)}x
    &=\gamma_n\langle x,a\rangle^2+x^\intercal Z_U^{(2)}x
      =\gamma_n q_n+o(\gamma_n),\\
    \|U^{(2)}x\|^2-1
    &=\gamma_n^2\langle x,a\rangle^2\|a\|^2
      +2\gamma_n\langle x,a\rangle a^\intercal Z_U^{(2)}x
      +\|Z_U^{(2)}x\|^2-1 \\
    &=\gamma_n^2q_n+o(\gamma_n^2).
\end{align*}
Using \eqref{eq:wigner-split-overlap}, the denominator in \eqref{eq:wigner-strength-estimator} is nonzero with high probability, and
\[
    \frac{\|U^{(2)}x\|^2-1}{x^\intercal U^{(2)}x}
    =\gamma_n(1+o(1)).
\]
Multiplying by $\sqrt{1+\eta^{-1}}$ yields $\hat \alpha = \alpha + o(1)$ with high probability.\end{proof}

\subsection{Correlated spiked Wishart model}
\label{subsec:wishart-outlier}
We now prove the Wishart analogue of Theorems~\ref{thm:CCA_eigVecOverlapAppendix}, \ref{thm:CCA-outlier1iAppendix}, following the same ideas as in the previous Wigner subsection. Throughout this subsection we work in the regime
\[
0<\alpha,\beta<\tau^{-1/2},
\qquad
\kappa=\left(\frac{(1-\tau\alpha^2)(1-\tau\beta^2)}{\tau^2\alpha^2\beta^2}\right)^{1/4}<\rho.
\]
We keep the notation from subsection~\ref{subsec:wishart-resolvent}, thus we define 
\[
A:=
\begin{pmatrix}
1&\kappa\\
\kappa&1
\end{pmatrix},
\qquad
p:=A^{1/2}e_1,
\qquad
q:=A^{1/2}e_2,
\qquad
P:=pp^\intercal,
\qquad
Q:=qq^\intercal,
\]
Moreover, we set $\theta_1:={\alpha}/({1+\alpha})$, $\theta_2:={\beta}/({1+\beta})$, and define $(r,s,T)$ from Definition~\ref{def:det_system_wishart}. We observe that by Corollary~\ref{cor:wishart-critical-identities-outlier} at $z = 1$, we have
\begin{equation}\label{eq:wishart-critical-identities-outlier}
T(1_+)=-I_2,
\qquad
r(1_+)=-(1+\alpha),
\qquad
s(1_+)=-(1+\beta).
\end{equation}
We may write also write the planted model in the following form
\begin{equation}\label{eq:wishart-model-normalized}
U=U_0+\sqrt{\frac{\alpha}{{m}}}\,u a^\intercal,
\qquad
V=V_0+\sqrt{\frac{\beta}{{m}}}\,v b^\intercal,
\end{equation}
where $U_0,V_0\in\RR^{n\times m}$ have independent $\calN(0,m^{-1})$ entries, and the hidden $a,b\in\RR^m$ are $\rho$ correlated and the latent factors $u,v\in\RR^n$ are independent of $(U_0,V_0)$. We recall that 
\[
\tilde U:=\theta_1 U^\intercal U-\alpha \tau  I_m,
\qquad
\tilde V:=\theta_2 V^\intercal V-\beta \tau I_m,
\]
and the block matrix
\begin{equation}\label{eq:wishart-outlier-matrix}
W:=
\begin{pmatrix}
\tilde U & \kappa\tilde V\\
\kappa\tilde U & \tilde V
\end{pmatrix}.
\end{equation}
Let $\lambda_1(W)\ge\cdots\ge\lambda_{2m}(W)$ denote the ordered eigenvalues of $W$. We also introduce the symmetric conjugate
\[
H:=
(A^{1/2}\otimes I_m)
\begin{pmatrix}
\tilde U&0\\
0&\tilde V
\end{pmatrix}
(A^{1/2}\otimes I_m).
\]
Exactly as in the Wigner subsection, $W$ is similar to $H$, so it is enough to study the spectrum of $H$. Let $H_0$ be the null symmetric matrix from Section~\ref{subsec:wishart-resolvent}; explicitly,
\[
H_0=
(A^{1/2}\otimes I_m)
\begin{pmatrix}
\tilde U_0&0\\
0&\tilde V_0
\end{pmatrix}
(A^{1/2}\otimes I_m),
\]
where, the block matrices $\tilde U_0$, and $\tilde V_0$ are given by.
\[
\tilde U_0:=\theta_1 U_0^\intercal U_0-\alpha \tau  I_m,
\qquad
\tilde V_0:=\theta_2 V_0^\intercal V_0-\beta \tau I_m.
\]
Let $\calL_0(z)$, $\calG_0(z)=\calL_0(z)^{-1}$, and $\calM(z)$ be the corresponding null linearization, resolvent, and deterministic approximation from Section~\ref{subsec:wishart-resolvent}. For real $z>1$, define the deterministic $2\times2$ matrix
\begin{equation}\label{eq:wishart-Sigma-rho}
\cS(z):=
\begin{pmatrix}
\alpha \theta_1\,p^\intercal T(z)p-\dfrac{1}{\tau r(z)}
&
\rho\sqrt{\alpha \theta_1\beta \theta_2}\,p^\intercal T(z)q\\[2mm]
\rho\sqrt{\alpha \theta_1\beta \theta_2}\,q^\intercal T(z)p
&
\beta \theta_2\,q^\intercal T(z)q-\dfrac{1}{\tau s(z)}
\end{pmatrix}.
\end{equation}
The analogue of Lemma~\ref{lem:S-has-negative-determinant-at-edge} is immediate from
\eqref{eq:wishart-critical-identities-outlier}.

\begin{lemma}\label{lem:wishart-S-negative-at-edge}
Suppose $\calS$ is defined from \eqref{eq:wishart-Sigma-rho},
\[
\cS(1_+)=
\begin{pmatrix}
\dfrac{1-\tau\alpha^2}{\tau(1+\alpha)}
&
-\dfrac{\rho\alpha\beta\kappa}{\sqrt{(1+\alpha)(1+\beta)}}\\[3mm]
-\dfrac{\rho\alpha\beta\kappa}{\sqrt{(1+\alpha)(1+\beta)}}
&
\dfrac{1-\tau\beta^2}{\tau(1+\beta)}
\end{pmatrix},
\]
and consequently
\begin{equation}\label{eq:wishart-det-at-edge}
\det \cS(1_+)
=
\frac{\alpha^2\beta^2\kappa^2(\kappa^2-\rho^2)}{(1+\alpha)(1+\beta)}.
\end{equation}
In particular, if $\rho>\kappa$, then $\det\cS(1_+)<0$.
\end{lemma}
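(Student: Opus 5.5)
This follows the Wigner case (Lemma~\ref{lem:wigner-S-has-negative-determinant-at-edge}): substitute the edge values \eqref{eq:wishart-critical-identities-outlier} from Corollary~\ref{cor:wishart-critical-identities-outlier} into the definition \eqref{eq:wishart-Sigma-rho}, entry by entry. First, $T(1_+)=-I_2$ together with $\|p\|=\|q\|=1$ and $p^\intercal q=e_1^\intercal A e_2=\kappa$ gives
\[
p^\intercal T(1_+)p=q^\intercal T(1_+)q=-1,\qquad p^\intercal T(1_+)q=-\kappa .
\]
Second, $r(1_+)=-(1+\alpha)$ and $s(1_+)=-(1+\beta)$ are nonzero, so the terms $-1/(\tau r)$ and $-1/(\tau s)$ have finite limits.

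For the $(1,1)$ entry, with $\theta_1=\alpha/(1+\alpha)$,
\[
-\alpha\theta_1+\frac{1}{\tau(1+\alpha)}=\frac{1-\tau\alpha^2}{\tau(1+\alpha)},
\]
and the $(2,2)$ entry is the same with $\beta$ in place of $\alpha$. For the off-diagonal entry, $\sqrt{\alpha\theta_1\beta\theta_2}=\alpha\beta/\sqrt{(1+\alpha)(1+\beta)}$, so the entry equals $-\rho\kappa\alpha\beta/\sqrt{(1+\alpha)(1+\beta)}$. This gives the displayed matrix $\cS(1_+)$.

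For the determinant, take the product of the diagonal entries and use the definition of $\kappa$ in the form $(1-\tau\alpha^2)(1-\tau\beta^2)=\tau^2\alpha^2\beta^2\kappa^4$. The diagonal product becomes $\alpha^2\beta^2\kappa^4/\bigl((1+\alpha)(1+\beta)\bigr)$. Subtracting the squared off-diagonal entry $\rho^2\kappa^2\alpha^2\beta^2/\bigl((1+\alpha)(1+\beta)\bigr)$ gives \eqref{eq:wishart-det-at-edge}. When $\rho>\kappa>0$, the factor $\kappa^2-\rho^2$ is negative and every other factor is positive, so $\det\cS(1_+)<0$.

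The only point that needs care is that $\cS(1_+)$ is a one-sided limit. This limit exists because $T$, $r$ and $s$ have finite right limits at $1$ (Corollary~\ref{cor:wishart-critical-identities-outlier}), and $r(1_+)$, $s(1_+)$ are nonzero. So we can pass to the limit entrywise, and the determinant, being continuous in the entries, converges as well.
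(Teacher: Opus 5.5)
Your proposal is correct and matches the paper's proof. The paper substitutes $T(1_+)=-I_2$, $r(1_+)=-(1+\alpha)$ and $s(1_+)=-(1+\beta)$ from Corollary~\ref{cor:wishart-critical-identities-outlier} into \eqref{eq:wishart-Sigma-rho}, then simplifies the determinant using $(1-\tau\alpha^2)(1-\tau\beta^2)=\tau^2\alpha^2\beta^2\kappa^4$; your entrywise computations and the remark on the one-sided limit simply write out details the paper leaves implicit.
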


\begin{proof}
Substituting the identities from Corollary~\ref{cor:wishart-critical-identities-outlier} into \eqref{eq:wishart-Sigma-rho}, the displayed formula for $\cS(1_+)$ follows immediately. Taking the determinant and using
\[
\kappa^4=
\frac{(1-\tau\alpha^2)(1-\tau\beta^2)}{\tau^2\alpha^2\beta^2}
\]
yields \eqref{eq:wishart-det-at-edge}.
\end{proof}

We can now state the Wishart counterpart of Theorems~\ref{thm:CCA_eigVecOverlapAppendix}, \ref{thm:CCA-outlier1iAppendix}.
\begin{theorem}[Theorem~\ref{thm:Spiked-outlierMainBody} (i)-(ii) CSWish]
\label{thm:Wishart-outlier}
Assume $\rho>\kappa$. Then there exists a unique real number $\lambda_{\out}>1$ such that $\det \cS(\lambda_{\out})=0.$ Moreover, with high probability,
\[
\lambda_1(W)=\lambda_{\out}+o(1)
\qquad\text{and}\qquad
\lambda_2(W)\le 1+o(1).
\]
Furthermore, if $\hat w=(\hat a,\hat b)\in\RR^m\times\RR^m$ is a top unit right eigenvector of $W$, and if $x_* \in\ker\cS(\lambda_{\out})$ is any unit vector, then the individual overlaps satisfy
\begin{equation*}
\begin{aligned}
   |\langle\hat a,a\rangle|^2
   &\ge
   \frac{x_{*,1}^2}{\tau^2 r(\lambda_{\out})^2\,\alpha\theta_1(1+\kappa)\,x_*^\intercal\partial_z\cS(\lambda_{\out})x_*}
   +o(1),\\
   |\langle\hat b,b\rangle|^2
   &\ge
   \frac{x_{*,2}^2}{\tau^2 s(\lambda_{\out})^2\,\beta\theta_2(1+\kappa)\,x_*^\intercal\partial_z\cS(\lambda_{\out})x_*}
   +o(1)
\end{aligned}
\end{equation*}
The two deterministic constants on the right hand are bounded away from zero.
\end{theorem}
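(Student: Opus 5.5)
The proof mirrors Theorem~\ref{thm:Wigner-outlier}, with the Wigner rank-two perturbation replaced by a Schur complement coming from the linearization \eqref{eq:linMatrixWishart}. The perturbation enters through the rank-one pieces $\sqrt{\alpha/m}\,ua^\intercal$ and $\sqrt{\beta/m}\,vb^\intercal$ of $U$ and $V$. So the linearized matrix $\calL(z)$ of the planted model equals $\calL_0(z)$ plus a perturbation of rank at most four. Its range is spanned by $p\otimes a$, $q\otimes b$ and the latent directions $u,v$ in the $n$-blocks. Its size is set by $\sqrt{\theta_1\alpha}$ and $\sqrt{\theta_2\beta}$.

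\textbf{Reduction to a $2\times 2$ problem.} I first apply the Schur complement with respect to $\calL_0$ and obtain a small determinant condition $\det(\cdot)=0$ for eigenvalues of $H$ above $\lambda_{\max}(H_0)$. The natural reduction integrates out the $n$-side latent variables: I expand $U^\intercal U$, note that the cross term $U_0^\intercal u a^\intercal$ couples the spike to $U_0$, and group terms so that $H = H_0 + B_n \Xi_n B_n^\intercal$. Here $B_n=[p\otimes a\ \ q\otimes b]$, and $\Xi_n$ is a $2\times2$ random matrix involving the quadratic forms $u^\intercal(\cdot)u/m$, $u^\intercal U_0\,\cdot$, and so on. A cleaner route is a generalized Schur complement of the linearization, with the four vectors $(a,b,u,v)$ removed. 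I expect this to collapse to exactly the matrix $\cS_n(z)$ whose limit is \eqref{eq:wishart-Sigma-rho}. The terms $-1/(\tau r)$ and $-1/(\tau s)$ would come from the $n$-side diagonal entries $r,s$ of $\calM$, and the $\alpha\theta_1\,p^\intercal Tp$ terms from the $m$-side block $T$.

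\textbf{Convergence to $\cS(z)$.} Conditionally on $(U_0,V_0)$, the resolvent blocks are independent of $(a,b,u,v)$. Hanson--Wright gives concentration of the quadratic forms of $a,b$ against $\calG_0^{(ij)}$, with the cross term $a^\intercal(\cdot)b$ concentrating around $\rho$ times the normalized trace. The same holds for $u,v$ against the $n$-blocks, with $u,v$ independent, so their cross term vanishes. Theorem~\ref{thm:real-stieltjes-transform-Wishart} then gives $\sup_{z\in K}\|\cS_n(z)-\cS(z)\|_{\op}=o(1)$ with overwhelming probability. This uses Theorem~\ref{thm:WishartGordonEdge} to keep $\calG_0$ bounded on $K\subset(1,\infty)$, together with a net argument in $z$ as before.

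\textbf{Existence and uniqueness of $\lambda_{\out}$.} From Lemma~\ref{lem:wishart-S-negative-at-edge}, $\det\cS(1_+)<0$ and its diagonal entries are positive. Large-$z$ asymptotics give $T\approx -I/z$ and $r,s\to -1$, so $\cS(z)\to \mathrm{diag}(1/\tau,1/\tau)\succ0$. I need strict monotonicity $\partial_z\cS_n\succeq c_K I$. This is the main obstacle: the Schur complement of the linearization is not literally of the form $I+\Theta^{1/2}B^\intercal\calG B\Theta^{1/2}$. I would try to verify that $\partial_z$ of the reduced matrix equals $Y^\intercal \calG_0(z)(I_{2m}\oplus 0)\calG_0(z)Y$ for a suitable $4$-column $Y$, which is positive semidefinite. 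The sign normalization, dividing rows by $\tau r$ and $\tau s$, has to be chosen so that it is positive definite. The resulting lower bound then passes to $\cS$ via Vitali and Cauchy exactly as in the Wigner proof.

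\textbf{Counting outliers and overlaps.} With monotonicity in hand, the outlier counting argument (only one eigenvalue of $\cS_n$ crosses zero, and does so once) gives $\lambda_1\to\lambda_{\out}$ and $\lambda_2\le1+o(1)$. For the overlaps, I build the eigenvector from a kernel vector $x_n$ through the resolvent. Its projection onto $a$ and $b$ carries the extra factors $\tau r$ and $\tau s$ that appear in the stated bounds, and $\|A^{1/2}\|^2=1+\kappa$ controls the conversion to the right eigenvector of $W$. Nonvanishing of both coordinates of $x_*$ follows from positivity of the diagonal of $\cS$ on $(1,\infty)$.
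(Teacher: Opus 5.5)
Your proposal follows the paper's proof: there one writes $\calL(z)=\calL_0(z)+\calU_n\calJ\calU_n^\intercal$ with $\calU_n=\mathrm{diag}(X,Y)$ built from $(a,b)$ and $(u,v)$, and reduces the $4\times4$ matrix $\mathfrak S_n=\calJ+\calU_n^\intercal\calG_0\calU_n$ by Schur-complementing out the negative definite block $D_n=Y^\intercal\calG_0^{\mathrm{bb}}Y\to\tau\,\mathrm{diag}(r,s)$, which directly yields the $-1/(\tau r)$, $-1/(\tau s)$ entries of $\cS$ (so no rescaling of rows by $\tau r,\tau s$ is needed); the monotonicity you flag is exactly the identity you planned to check, namely $\cS_n'=L\,\mathfrak S_n'\,L^\intercal$ with $L=(I_2,\,-E_nD_n^{-1})$, $E_n=I_2+X^\intercal\calG_0^{\mathrm{tb}}Y$ and $\mathfrak S_n'=\calU_n^\intercal\calG_0\Pi\calG_0\calU_n\succeq0$ (where $\Pi=I_{2m}\oplus0$), with the derivatives of the outer factors dropping out because $\mathfrak S_nL^\intercal=\binom{\cS_n}{0}$. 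The only thing to discard is your first idea $H=H_0+B_n\Xi_nB_n^\intercal$ with $B_n=[p\otimes a\ \ q\otimes b]$: the cross term $U_0^\intercal u\,a^\intercal$ in $U^\intercal U$ has range outside $\mathrm{span}(a)$, so that decomposition is false, and the linearization is genuinely needed rather than merely cleaner.
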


\begin{proof}[Proof of Theorem~\ref{thm:Wishart-outlier}, Theorem~\ref{thm:Spiked-outlierMainBody} (i)-(ii) CSWish]
We argue exactly as in the Wigner case: we first reduce the eigenvalue equation to a $2\times2$ Schur complement, then pass to its deterministic limit, and finally extract the overlap bound. Set
\[
X:=\bigl[\sqrt{\alpha\theta_1}\,p\otimes a\ \ \sqrt{\beta\theta_2}\,q\otimes b\bigr]\in\RR^{2m\times 2},
\qquad
Y:= \frac{1}{\sqrt{m}} \bigl[e_1\otimes u\ \ e_2\otimes v\bigr]\in\RR^{2n\times 2}.
\]
By \eqref{eq:wishart-model-normalized} and the linearization from Section~\ref{subsec:wishart-resolvent},
\[
\calL(z)=\calL_0(z)+\calU_n\calJ\calU_n^\intercal,
\qquad
\calU_n:=\begin{pmatrix}X&0\\0&Y\end{pmatrix},
\qquad
\calJ:=\begin{pmatrix}0&I_2\\ I_2&0\end{pmatrix}.
\]
Hence the matrix determinant lemma gives
\[
\det \calL(z)=\det \calL_0(z)\det \mathfrak S_n(z),
\quad
\text{ where } \mathfrak S_n(z):=\calJ+\calU_n^\intercal\calG_0(z)\calU_n.
\]
Write $\calG_0(z)$ in top/bottom block form,
\[
\calG_0(z)=
\begin{pmatrix}
\calG_0^{\mathrm{tt}}(z)&\calG_0^{\mathrm{tb}}(z)\\
\calG_0^{\mathrm{bt}}(z)&\calG_0^{\mathrm{bb}}(z)
\end{pmatrix}.
\]
Then, we have that 
\[
\mathfrak S_n(z)=
\begin{pmatrix}
X^\intercal\calG_0^{\mathrm{tt}}(z)X & I_2+X^\intercal\calG_0^{\mathrm{tb}}(z)Y\\
I_2+Y^\intercal\calG_0^{\mathrm{bt}}(z)X & Y^\intercal\calG_0^{\mathrm{bb}}(z)Y
\end{pmatrix}.
\]
For $z>\lambda_{\max}(H_0)$, the upper-left block $\calG_0^{\mathrm{tt}}(z) = (H_0-zI_{2m})^{-1}\prec0$, and the block inverse formula \eqref{eq:SchurGnWishart} gives $\calG_0^{\mathrm{bb}}(z)\prec -I_{2n}$. Hence $Y^\intercal\calG_0^{\mathrm{bb}}(z)Y\prec0$, so the corresponding Schur complement is well defined:
\begin{equation}\label{eq:wishart-random-Schur-complement}
\cS_n(z):=
X^\intercal\calG_0^{\mathrm{tt}}(z)X
-
\bigl(I_2+X^\intercal\calG_0^{\mathrm{tb}}(z)Y\bigr)
\bigl(Y^\intercal\calG_0^{\mathrm{bb}}(z)Y\bigr)^{-1}
\bigl(I_2+Y^\intercal\calG_0^{\mathrm{bt}}(z)X\bigr).
\end{equation}
Since $\det \mathfrak S_n(z)=\det\bigl(Y^\intercal\calG_0^{\mathrm{bb}}(z)Y\bigr)\det \cS_n(z),$ we obtain the exact reduction
\begin{equation}\label{eq:wishart-outlier-exact-reduction}
z>\lambda_{\max}(H_0),\quad z\in\Lambda(H)
\qquad\Longleftrightarrow\qquad
\det \cS_n(z)=0.
\end{equation}

We next identify the deterministic limit of $\cS_n(z)$. Fix a compact interval $K\subset(1,\infty)$. By Theorem~\ref{thm:WishartGordonEdge}, we have $\lambda_{max}(H_0)\le 1+o(1)$ with overwhelming probability. On this event, the blocks of $\calG_0(z)$ are uniformly bounded and uniformly Lipschitz on $K$, since
\[
\sup_{z\in K}\|\calG_0(z)\|_{\op}=O_K(1),
\qquad
\sup_{z\in K}\|\partial_z\calG_0(z)\|_{\op}=O_K(1),
\]
and $\partial_z\calG_0(z)=\calG_0(z)\Pi\calG_0(z)$ with $\Pi=\diag(I_{2m},0)$.

Now write $\calG_0^{\mathrm{tt}}(z)$ and $\calG_0^{\mathrm{bb}}(z)$ in their $m\times m$ and $n\times n$ subblocks. Then each entry of $X^\intercal\calG_0^{\mathrm{tt}}(z)X$ is a finite linear combination of terms of the form
\[
a^\intercal A(z)a,\qquad
b^\intercal A(z)b,\qquad
a^\intercal A(z)b,
\]
while the entries of $Y^\intercal\calG_0^{\mathrm{bb}}(z)Y$ and $X^\intercal\calG_0^{\mathrm{tb}}(z)Y$ are finite linear combinations of the analogous Gaussian quadratic and bilinear forms involving $u$ and $v$. Conditional on $H_0$, all matrices appearing here are deterministic, independent of $(a,b,u,v)$, and uniformly bounded on $K$. Write
\[
a=\frac{g}{\sqrt m},
\qquad
b=\frac{\rho g+\sqrt{1-\rho^2}\,h}{\sqrt m},
\]
where $g,h\sim\calN(0,I_m)$ are independent. Hanson--Wright, exactly as in the Wigner proof, gives uniformly for $z\in K$
\begin{align*}
    a^\intercal A(z)a&=\frac1m\Tr A(z)+o(1) \\
    b^\intercal A(z)b&=\frac1m\Tr A(z)+o(1) \\
    a^\intercal A(z)b&=\rho\,\frac1m\Tr A(z)+o(1)
\end{align*}
with overwhelming probability, for every such family $A(z)$. The same argument gives the corresponding approximations for the forms involving $u$ and $v$, with the factor $\tau$ coming from $n/m\to\tau$, and the same Gaussian bilinear estimate controls the mixed top-bottom terms. Combining these concentration bounds with Theorem~\ref{thm:real-stieltjes-transform-Wishart} and the block form of $\calM(z)$ in Definition~\ref{def:det_system_wishart} and Borel-Cantelli lemma, we obtain that almost surely, uniformly for $z\in K$,
\[
X^\intercal\calG_0^{\mathrm{tt}}(z)X\to
\begin{pmatrix}
\alpha\theta_1\,p^\intercal T(z)p
&
\rho\sqrt{\alpha\theta_1\beta\theta_2}\,p^\intercal T(z)q\\[1mm]
\rho\sqrt{\alpha\theta_1\beta\theta_2}\,q^\intercal T(z)p
&
\beta\theta_2\,q^\intercal T(z)q
\end{pmatrix},
\]
\[
Y^\intercal\calG_0^{\mathrm{bb}}(z)Y\to \tau
\begin{pmatrix}
r(z)&0\\
0&s(z)
\end{pmatrix},
\qquad
I_2+X^\intercal\calG_0^{\mathrm{tb}}(z)Y\to I_2.
\]
Since $r(z),s(z)<0$ for $z>1$, the second matrix is uniformly invertible on $K$. Consequently,
\begin{equation}\label{eq:wishart-Schur-limit}
\sup_{z\in K}\|\cS_n(z)-\cS(z)\|\to 0
\end{equation}
holds almost surely, where $\cS(z)$ is exactly the deterministic matrix in \eqref{eq:wishart-Sigma-rho}.

We now study $\cS(z)$. For real $z>1$, the matrix $T(z)$ is real symmetric and numbers $r(z),s(z)$ are real and negative, so $\cS(z)$ is a real symmetric $2\times2$ matrix. By Lemma~\ref{lem:wishart-S-negative-at-edge}, when $\rho>\kappa$ we have $\det \cS(1_+)<0$. Since
\[
\cS_{11}(1)=\frac{1-\tau \alpha^2}{\tau(1+\alpha)}>0,
\qquad
\cS_{22}(1)=\frac{1-\tau \beta^2}{\tau(1+\beta)}>0,
\]
the matrix $\cS(1)$ has one positive and one negative eigenvalue. On the other hand, by Lemma~\ref{lem:WishartDeterministicExtension}
\[
r(z)=-1+O(z^{-1}),
\qquad
s(z)=-1+O(z^{-1}),
\qquad
T(z)=O(z^{-1}),
\]
so $\cS(z)=\tau^{-1}I_2+O(z^{-1}),$ and therefore $\cS(z)\succ0$ for all sufficiently large $z$. To prove uniqueness of the zero of $\det \cS(z)$, fix a compact interval $K=[1+\delta,R]\subset(1,\infty).$ Let
\[
\cA_n(z):=X^\intercal\calG_0^{\mathrm{tt}}(z)X,
\]
and let $\cA(z)$ denote its deterministic limit, that is, the first term in the formula for $\cS(z)$. For real $z>\lambda_{\max}(H_0)$, we have $\cA_n'(z)=X^\intercal(H_0-zI_{2m})^{-2}X.$ By the null version of \eqref{eq:wishart-gram-form},
\[
H_0=B_0^\intercal B_0-\bigl(\tau\alpha P+\tau\beta Q\bigr)\otimes I_m
\succeq -\tau(\alpha+\beta)I_{2m}.
\]
Hence, for every $z\in K$,
\[
(H_0-zI_{2m})^{-2}\succeq \frac{1}{(R+\tau(\alpha+\beta))^2}\,I_{2m}.
\]
Moreover, we have that with overwhelming probability 
\[
X^\intercal X=
\begin{pmatrix}
\alpha\theta_1\|a\|^2 & \kappa\sqrt{\alpha\theta_1\beta\theta_2}\,\langle a,b\rangle\\
\kappa\sqrt{\alpha\theta_1\beta\theta_2}\,\langle a,b\rangle & \beta\theta_2\|b\|^2
\end{pmatrix}
\to
\begin{pmatrix}
\alpha\theta_1 & \kappa\rho\sqrt{\alpha\theta_1\beta\theta_2}\\
\kappa\rho\sqrt{\alpha\theta_1\beta\theta_2} & \beta\theta_2
\end{pmatrix}
\succ0,
\]
because $\kappa\rho<1$. Therefore there exists a deterministic constant $c>0$ such that, with high probability, $\cA_n'(z)\succeq c I_2$ uniformly for $z \in K$.
Choose $r>0$ so small that
\[
\mathfrak B_{3r}(K):=\setcond{z\in\CC}{\operatorname{dist}(z,K)\le 3r}\subset\DD,
\qquad
\Re z>1+\delta/2\quad\text{for all }z\in\mathfrak B_{3r}(K).
\]
On the event $\lambda_{\max}(H_0)\le 1+\delta/4$, the entries of $\cA_n(z)$ are holomorphic on $\mathfrak B_{3r}(K)$ and locally uniformly bounded in $n$. Since the almost sure convergence proved above gives $\cA_n\to\cA$ uniformly on the real interval $K$, Vitali's theorem implies that $\cA_n\to\cA$ locally uniformly on $\mathfrak B_{3r}(K)$, entrywise almost surely. Cauchy's integral formula therefore yields
\[
\sup_{z\in K}\|\cA_n'(z)-\cA'(z)\|\longrightarrow 0,
\]
almost surely. Hence $\cA'(z)\succeq c I_2$ for all $z\in K$.

Since $r$ and $s$ are Stieltjes transforms supported on $(-\infty,1]$, we also have that have $r(z),s(z)<0$ and $r'(z),s'(z)>0$ for $z>1$. Therefore
\[
\partial_z\!\left[-\frac1\tau
\begin{pmatrix}
r(z)^{-1}&0\\
0&s(z)^{-1}
\end{pmatrix}\right]
=
\frac1\tau
\begin{pmatrix}
\dfrac{r'(z)}{r(z)^2}&0\\[2mm]
0&\dfrac{s'(z)}{s(z)^2}
\end{pmatrix}
\succ0
\]
on $(1,\infty)$. Thus $\cS'(z)\succeq c I_2$ on $K$. Hence both eigenvalues of $\cS(z)$ are therefore continuous and strictly increasing on $(1,\infty)$. Since the smaller one is negative at $z=1$ and positive for large $z$, there exists a unique real number $\lambda_{\out}>1$ such that $\det \cS(\lambda_{\out})=0.$ We next return to the random matrix. Let
\[
E_n(z):=I_2+X^\intercal\calG_0^{\mathrm{tb}}(z)Y,
\qquad
B_n(z):=Y^\intercal\calG_0^{\mathrm{bb}}(z)Y.
\]
The Schur complement can be written in the factored form
\[
\cS_n(z)=
\begin{pmatrix}
I_2&-E_n(z)B_n(z)^{-1}
\end{pmatrix}
\mathfrak S_n(z)
\begin{pmatrix}
I_2\\
-B_n(z)^{-1}E_n(z)^\intercal
\end{pmatrix}.
\]
then $\calL_0'(z)=-\Pi$, and hence $\mathfrak S_n'(z)
=
\calU_n^\intercal\calG_0(z)\Pi\calG_0(z)\calU_n
\succeq0.$ Differentiating the factored identity, the derivatives hitting the outer factors vanish because
\[
\mathfrak S_n(z)
\begin{pmatrix}
I_2\\
-B_n(z)^{-1}E_n(z)^\intercal
\end{pmatrix}
=
\binom{\cS_n(z)}{0}.
\]
Hence only the derivative of $\mathfrak S_n$ remains, and we obtain
\begin{equation}\label{eq:wishart-random-Schur-derivative}
\cS_n'(z)=
\begin{pmatrix}
I_2&-E_n(z)B_n(z)^{-1}
\end{pmatrix}
\mathfrak S_n'(z)
\begin{pmatrix}
I_2\\
-B_n(z)^{-1}E_n(z)^\intercal
\end{pmatrix}
\succeq c I
\end{equation}
for all $z>\lambda_{\max}(H_0) + c$ for small constant $c$ with overwhelming probability. Hence the two eigenvalues of $\cS_n(z)$ are continuous and nondecreasing on $(\lambda_{\max}(H_0) + c,\infty)$.

Choose $\delta>0$ so that $1<\lambda_{\out}-\delta$ 
and the smallest eigenvalue of $\cS(z)$ is negative at $z=\lambda_{\out}-\delta$ and positive at $z=\lambda_{\out}+\delta$. By \eqref{eq:wishart-Schur-limit}, the same is true for $\cS_n(z)$ with overwhelming probability. On the same event, $\lambda_{\max}(H_0)<\lambda_{\out}-\delta.$ Therefore the smaller eigenvalue of $\cS_n(z)$ has a unique zero
\[
\lambda_{\out,n}\in(\lambda_{\out}-\delta,\lambda_{\out}+\delta),
\]
and $\lambda_{\out,n}\to\lambda_{\out}$ in probability. By \eqref{eq:wishart-outlier-exact-reduction}, $\lambda_{\out,n}$ is an eigenvalue of $H$.

To rule out a second outlier, choose $\delta>0$ so small that $\det \cS(1+\delta)<0$. Since the diagonal entries of $\cS(1+\delta)$ are still positive, $\cS(1+\delta)$ has one positive and one negative eigenvalue. By \eqref{eq:wishart-Schur-limit}, the same holds for $\cS_n(1+\delta)$ with overwhelming probability. Since the eigenvalues of $\cS_n(z)$ are nondecreasing, only the smaller eigenvalue can hit zero, and it can do so at most once. Combining this with \eqref{eq:wishart-outlier-exact-reduction}, we conclude that with overwhelming probability $H$ has exactly one eigenvalue above $1+\delta$, namely $\lambda_{\out,n}$. Therefore,
\[
\lambda_1(H)=\lambda_{\out,n}=\lambda_{\out}+o(1)
\qquad\text{and}\qquad
\lambda_2(H)\le 1+\eta
\]
with high probability. Since $H$ and $W$ are similar and $\eta>0$ is arbitrary, this proves
\[
\lambda_1(W)=\lambda_{\out}+o(1)
\qquad\text{and}\qquad
\lambda_2(W)\le 1+o(1).
\]

It remains to prove the overlap statement. Let $x_n\in\RR^2$ be a unit vector spanning $\ker \cS_n(\lambda_{\out,n})$, and define $y_n:=-B_n(\lambda_{\out,n})^{-1}E_n(\lambda_{\out,n})^\intercal x_n$. Then $(x_n, y_n)$ is in the kernel of $\mathfrak S_n(\lambda_{\out,n})$. Set
\[
\psi_n:=-\calG_0(\lambda_{\out,n})\calU_n\binom{x_n}{y_n}:=\binom{h_n}{g_n}.
\]
Because $(x_n, y_n)\in\ker \mathfrak S_n(\lambda_{\out,n})$, a direct computation shows that $\calL(\lambda_{\out,n})\psi_n=0.$ Hence $h_n$ is an eigenvector of $H$ with eigenvalue $\lambda_{\out,n}$. Therefore
\[
\hat w_n:=\frac{(A^{1/2}\otimes I_m)h_n}{\|(A^{1/2}\otimes I_m)h_n\|}
\]
is a top unit eigenvector of $W$, and so it coincides with $\hat w$ up to a sign. Moreover,
\[
\calU_n^\intercal\psi_n
=-\calU_n^\intercal\calG_0(\lambda_{\out,n})\calU_n\binom{x_n}{y_n}
=\calJ\binom{x_n}{y_n},
\]
so the first two coordinates give $X^\intercal h_n=y_n.$ We first identify the limiting coordinates of $y_n$. Since $\lambda_{\out,n}{\to}\lambda_{\out}$ in probability and the block convergence above is uniform near $\lambda_{\out}$, we have
\[
B_n(\lambda_{\out,n})\stackrel{\text{in prob.}}{\longrightarrow} \tau
\begin{pmatrix}
r(\lambda_{\out})&0\\
0&s(\lambda_{\out})
\end{pmatrix},
\qquad
E_n(\lambda_{\out,n})\stackrel{\text{in prob.}}{\longrightarrow} I_2.
\]

The limiting diagonal matrix is invertible as $r(\lambda_{\out}),s(\lambda_{\out})<0$. Along every subsequence, compactness gives a further subsequence on which $x_n\to x_*$. By \eqref{eq:wishart-Schur-limit} and $\cS_n(\lambda_{\out,n})x_n=0$, the limit satisfies $x_*\in\ker\cS(\lambda_{\out})$ and $\|x_*\|=1$. Since the zero of the smaller eigenvalue of $\cS$ is simple, this kernel is one-dimensional, so $x_*$ is determined up to sign.

The coordinates of $x_*$ cannot vanish. Indeed, the monotonicity argument above gives $\partial_z\cS(z)\succ0$ for $z>1$, while
\[
\cS_{11}(1)=\frac{1-\tau\alpha^2}{\tau(1+\alpha)}>0,
\qquad
\cS_{22}(1)=\frac{1-\tau\beta^2}{\tau(1+\beta)}>0.
\]
Thus $\cS_{11}(\lambda_{\out})>0$ and $\cS_{22}(\lambda_{\out})>0$. If a nonzero vector in $\ker\cS(\lambda_{\out})$ were supported on only one coordinate, one of these diagonal entries would have to be zero. Therefore $ x_{*,1}\neq0$ and $ x_{*,2}\neq0$. Consequently, every convergent subsequence of $x_n$ has limit $\pm x_*$. Combining this with the preceding limit for $B_n^{-1}E_n^\intercal$, we get 
\begin{equation}\label{eq:wishart-y-coordinate-convergence}
    y_{n,1}^2\stackrel{\text{in prob.}}{\longrightarrow} \frac{x_{*,1}^2}{\tau^2 r(\lambda_{\out})^2},
    \qquad
    y_{n,2}^2\stackrel{\text{in prob.}}{\longrightarrow} \frac{x_{*,2}^2}{\tau^2 s(\lambda_{\out})^2}.
\end{equation}

We next express the normalization of $h_n$ through the derivative of the Schur complement. Since $\calL_0'(z)=-\Pi$, where $\Pi=\diag(I_{2m},0)$, we have
\[
    \mathfrak S_n'(z)=\calU_n^\intercal\calG_0(z)\Pi\calG_0(z)\calU_n.
\]
Differentiating the factored Schur complement identity and then testing against $x_n$ at $z=\lambda_{\out,n}$ gives
\begin{equation}\label{eq:wishart-normalization-identity}
    x_n^\intercal\partial_z\cS_n(\lambda_{\out,n})x_n
    =
    \binom{x_n}{y_n}^\intercal
    \mathfrak S_n'(\lambda_{\out,n})
    \binom{x_n}{y_n}
    =\|h_n\|^2.
\end{equation}
The derivative terms hitting the outer Schur complement factors vanish because $(x_n,y_n)\in\ker\mathfrak S_n(\lambda_{\out,n})$. The same holomorphic extension and Cauchy integral argument used above for $\cA_n$ applies to the full Schur complement in a neighborhood of $\lambda_{\out}$, where $B_n(z)$ is uniformly invertible with high probability. Hence
\begin{equation}\label{eq:wishart-Schur-derivative-limit-overlap}
    x_n^\intercal\partial_z\cS_n(\lambda_{\out,n})x_n
   \stackrel{\text{in prob.}}{ \longrightarrow}
    x_*^\intercal\partial_z\cS(\lambda_{\out})x_*.
\end{equation}

Finally, write $S:=A^{1/2}\otimes I_m$ and $\hat w_n=(\hat a_n,\hat b_n)$. Since $p=A^{1/2}e_1$ and $q=A^{1/2}e_2$, we see by the identity $y_n = X^\intercal h_n$ that 
\[
    y_n
    =
    \|S h_n\|
    \begin{pmatrix}
        \sqrt{\alpha\theta_1}\,\langle \hat a_n,a\rangle\\[1mm]
        \sqrt{\beta\theta_2}\,\langle \hat b_n,b\rangle
    \end{pmatrix}.
\]
Because $\|S h_n\|^2\le\|S\|_{\op}^2\|h_n\|^2=(1+\kappa)\|h_n\|^2$, \eqref{eq:wishart-normalization-identity} gives the bounds
\begin{equation}\label{eq:wishart-individual-overlap-finite}
\begin{aligned}
|\langle\hat a_n,a\rangle|^2
&\ge
\frac{y_{n,1}^2}{\alpha\theta_1(1+\kappa)\,x_n^\intercal\partial_z\cS_n(\lambda_{\out,n})x_n},\\
|\langle\hat b_n,b\rangle|^2
&\ge
\frac{y_{n,2}^2}{\beta\theta_2(1+\kappa)\,x_n^\intercal\partial_z\cS_n(\lambda_{\out,n})x_n}.
\end{aligned}
\end{equation}
Using \eqref{eq:wishart-y-coordinate-convergence} and \eqref{eq:wishart-Schur-derivative-limit-overlap}, we obtain that with high probability,
\begin{equation*}
\begin{aligned}
   |\langle\hat a,a\rangle|^2
   &\ge
   \frac{x_{*,1}^2}{\tau^2 r(\lambda_{\out})^2\,\alpha\theta_1(1+\kappa)\,x_*^\intercal\partial_z\cS(\lambda_{\out})x_*}
   +o(1),\\
   |\langle\hat b,b\rangle|^2
   &\ge
   \frac{x_{*,2}^2}{\tau^2 s(\lambda_{\out})^2\,\beta\theta_2(1+\kappa)\,x_*^\intercal\partial_z\cS(\lambda_{\out})x_*}
   +o(1).
\end{aligned}
\end{equation*}
The constants are strictly positive as $r(\lambda_{\out}),s(\lambda_{\out})<0$ and $\partial_z\cS(\lambda_{\out})\succ0$. Since $\hat w_n=\pm\hat w$, this proves the theorem.
\end{proof}

\begin{rmk}
The same argument also shows that if $\rho<\kappa$, then $\lambda_1(W)\le 1+o(1)$ with high probability. Indeed, in that case \eqref{eq:wishart-det-at-edge} gives $\cS(1)\succ0$, so the deterministic Schur complement stays positive on $[1,\infty)$ and no outlier can emerge above the null edge.
\end{rmk}

\subsubsection{Mismatched Wishart outlier}
\label{subsubsec:mismatched-wishart-outlier}

In this subsection, we record the Wishart outlier argument in the form needed
for the parameter-free grid search.  The true model is
\begin{equation}\label{eq:mismatched-wishart-true-model}
    U=U_0+\sqrt{\frac{\alpha}{m}}\,u a^\intercal,
    \qquad
    V=V_0+\sqrt{\frac{\beta}{m}}\,v b^\intercal,
\end{equation}
where the true parameters $\alpha,\beta$ are unknown and satisfy
$\alpha\sqrt\tau\leq1$ and $\beta\sqrt\tau\leq1$.  We construct the spectral matrix
using a possibly different pair $(\tilde\alpha,\tilde\beta)$.  Throughout this
subsection this pair is fixed and admissible, meaning that
\begin{equation}\label{eq:mismatched-wishart-admissible}
    0<\tilde\alpha,\tilde\beta<\tau^{-1/2},
    \qquad
    \tilde\kappa
    =
    \left(\frac{(1-\tau\tilde\alpha^2)(1-\tau\tilde\beta^2)}
    {\tau^2\tilde\alpha^2\tilde\beta^2}\right)^{1/4}\in(0,1).
\end{equation}
Grid points for which \eqref{eq:mismatched-wishart-admissible} fails are not
used in the outlier argument.  Set paramteres $ \tilde\theta_1 = \tilde\alpha / (1 + \tilde \alpha)$ and $\tilde \theta _2  = \tilde\beta / (1 + \tilde \beta).$ The matrices and the corresponding block matrix built from the grid point are
\begin{equation}\label{eq:mismatched-wishart-normalized-blocks}
\begin{aligned}
    U_{\tilde\alpha}:=\tilde\theta_1 U^\intercal U-\tilde\alpha\tau I_m \\
     V_{\tilde\beta}:=\tilde\theta_2 V^\intercal V-\tilde\beta\tau I_m
\end{aligned} \quad \text{ and } \quad   W_{\tilde\alpha,\tilde\beta}
    :=
    \begin{pmatrix}
        U_{\tilde\alpha} & \tilde\kappa V_{\tilde\beta}\\
        \tilde\kappa U_{\tilde\alpha} & V_{\tilde\beta}
    \end{pmatrix}.
    \quad 
\end{equation}
Let us define 
\begin{equation}\label{eq:mismatched-wishart-A-pq-def}
    \tilde A:=
    \begin{pmatrix}
        1&\tilde\kappa\\
        \tilde\kappa&1
    \end{pmatrix},
    \qquad
    \tilde p:=\tilde A^{1/2}e_1,
    \qquad
    \tilde q:=\tilde A^{1/2}e_2,
\end{equation}
and write $\tilde P:=\tilde p\tilde p^\intercal$ and
$\tilde Q:=\tilde q\tilde q^\intercal$.  Also set $\tilde S:=\tilde A^{1/2}\otimes I_m.$ The symmetric conjugate of $W_{\tilde\alpha,\tilde\beta}$ is given by the matrix 
\begin{equation}\label{eq:mismatched-wishart-H-def}
    \tilde H
    :=
    \tilde S
    \begin{pmatrix}
        U_{\tilde\alpha}&0\\
        0&V_{\tilde\beta}
    \end{pmatrix}
    \tilde S \implies  W_{\tilde\alpha,\tilde\beta}=\tilde S\tilde H\tilde S^{-1}.
\end{equation}
so $W_{\tilde\alpha,\tilde\beta}$ and $\tilde H$ have the same eigenvalues. We next define the null objects associated with the grid point.  Let
\begin{equation}\label{eq:mismatched-wishart-null-blocks}
\begin{aligned}
      U_{\tilde\alpha,0}:=\tilde\theta_1 U_0^\intercal U_0-\tilde\alpha\tau I_m,\\
        V_{\tilde\beta,0}:=\tilde\theta_2 V_0^\intercal V_0-\tilde\beta\tau I_m, 
\end{aligned}
  \quad \text{ and } \quad  \tilde H_0
    :=
    \tilde S
    \begin{pmatrix}
        U_{\tilde\alpha,0}&0\\
        0&V_{\tilde\beta,0}
    \end{pmatrix}
    \tilde S.
\end{equation}
Equivalently, if $ \tilde B_0 = \diag(\sqrt{\tilde \theta_1} U_0, \sqrt{
\tilde \theta_2} V_0) \tilde S$ and $\tilde C:=\tau\tilde\alpha\tilde P+\tau\tilde\beta\tilde Q $ then
\begin{equation}\label{eq:mismatched-wishart-H0-gram-form}
    \tilde H_0=\tilde B_0^\intercal\tilde B_0-\tilde C\otimes I_m.
\end{equation}
The null linearization is therefore
\begin{equation}\label{eq:mismatched-wishart-null-linearization}
    \tilde\calL_0(z)
    :=
    \begin{pmatrix}
        -(zI_{2m}+\tilde C\otimes I_m)&\tilde B_0^\intercal\\
        \tilde B_0&-I_{2n}
    \end{pmatrix},
    \qquad
    \tilde\calG_0(z):=\tilde\calL_0(z)^{-1}.
\end{equation}
The top-left block of $\tilde\calG_0(z)$ is
$(\tilde H_0-zI_{2m})^{-1}$ whenever $z$ lies outside the spectrum of
$\tilde H_0$. Let $(\tilde r,\tilde s,\tilde T)$ denote the deterministic Wishart solution
from Definition~\ref{def:det_system_wishart} with
$(\alpha,\beta,\kappa,p,q,P,Q,\theta_1,\theta_2)$ replaced by
$(\tilde\alpha,\tilde\beta,\tilde\kappa,\tilde p,\tilde q,
\tilde P,\tilde Q,\tilde\theta_1,\tilde\theta_2)$.  Thus
\begin{gather}\label{eq:mismatched-wishart-deterministic-system}
\begin{aligned}
    \tilde r(z)
    &=
    \frac{1}{-1-\tilde\theta_1\tilde p^\intercal\tilde T(z)\tilde p},\\
    \tilde s(z)
    &=
    \frac{1}{-1-\tilde\theta_2\tilde q^\intercal\tilde T(z)\tilde q},
\end{aligned}
\\
\tilde T(z)^{-1}
=
-zI_2
-\tau\tilde\alpha\tilde P
-\tau\tilde\beta\tilde Q
-\tau\tilde\theta_1\tilde r(z)\tilde P
-\tau\tilde\theta_2\tilde s(z)\tilde Q .
\end{gather}
By Corollary~\ref{cor:wishart-critical-identities-outlier}, applied to the null system
with the grid parameters,
\begin{equation}\label{eq:mismatched-wishart-critical-identities}
    \tilde T(1)=-I_2,
    \qquad
    \tilde r(1)=-(1+\tilde\alpha),
    \qquad
    \tilde s(1)=-(1+\tilde\beta).
\end{equation}
The finite-rank perturbation of the linearization is described by
\begin{equation}\label{eq:mismatched-wishart-X-Y-def}
    \tilde X
    :=
    \bigl[
        \sqrt{\alpha\tilde\theta_1}\,\tilde p\otimes a
        \ \ 
        \sqrt{\beta\tilde\theta_2}\,\tilde q\otimes b
    \bigr]
    \in\RR^{2m\times2},
    \qquad
    Y
    :=
    \frac1{\sqrt m}
    \bigl[e_1\otimes u\ \ e_2\otimes v\bigr]
    \in\RR^{2n\times2}.
\end{equation}
Here the spike amplitudes in
\eqref{eq:mismatched-wishart-true-model} contribute the true factors
$\alpha,\beta$, whereas the symmetrization contribute
$\tilde\theta_1,\tilde\theta_2,\tilde p,\tilde q$.  Thus the true factors
$\alpha\theta_1$ and $\beta\theta_2$ are replaced by
$\alpha\tilde\theta_1$ and $\beta\tilde\theta_2$. Let
\begin{equation}\label{eq:mismatched-wishart-U-J-def}
    \tilde\calU_n
    :=
    \begin{pmatrix}
        \tilde X&0\\
        0&Y
    \end{pmatrix},
    \qquad
    \calJ:=
    \begin{pmatrix}
        0&I_2\\
        I_2&0
    \end{pmatrix}.
\end{equation}
If $\tilde\calL(z)$ denotes the linearization associated with the planted matrix
$\tilde H$, then expanding
\eqref{eq:mismatched-wishart-true-model} gives the exact identity
\begin{equation}\label{eq:mismatched-wishart-linearization-perturbation}
    \tilde\calL(z)
    =
    \tilde\calL_0(z)+\tilde\calU_n\calJ\tilde\calU_n^\intercal.
\end{equation}
This is the Wishart analogue of the rank-two decomposition in the mismatched
Wigner section. For real $z>1$, define the deterministic $2\times2$ matrix
\begin{equation}\label{eq:mismatched-wishart-Sigma-rho}
\tilde\cS(z)
:=
\begin{pmatrix}
\alpha\tilde\theta_1\,\tilde p^\intercal\tilde T(z)\tilde p
-\dfrac{1}{\tau\tilde r(z)}
&
\rho\sqrt{\alpha\tilde\theta_1\beta\tilde\theta_2}\,
\tilde p^\intercal\tilde T(z)\tilde q\\[2mm]
\rho\sqrt{\alpha\tilde\theta_1\beta\tilde\theta_2}\,
\tilde q^\intercal\tilde T(z)\tilde p
&
\beta\tilde\theta_2\,\tilde q^\intercal\tilde T(z)\tilde q
-\dfrac{1}{\tau\tilde s(z)}
\end{pmatrix}.
\end{equation}
The mismatched Wishart threshold associated with the grid point is
\begin{equation}\label{eq:mismatched-wishart-rho-out-def}
    \rho_{\out}(\tilde\alpha,\tilde\beta;\alpha,\beta)
    :=
    \frac{
        \sqrt{(\tau^{-1}-\alpha\tilde\alpha)
        (\tau^{-1}-\beta\tilde\beta)}
    }{
        \tilde\kappa\sqrt{\alpha\beta\tilde\alpha\tilde\beta}
    }.
\end{equation}
At the matched point, this reduces to the original threshold $\rho_{\out}(\alpha,\beta;\alpha,\beta)=\kappa.$ 

\begin{lemma}\label{lem:mismatched-wishart-S-edge}
For $\tilde\cS$ defined in \eqref{eq:mismatched-wishart-Sigma-rho},
\begin{equation}\label{eq:mismatched-wishart-S-edge}
\tilde\cS(1)
=
\begin{pmatrix}
\dfrac{\tau^{-1}-\alpha\tilde\alpha}{1+\tilde\alpha}
&
-\dfrac{
\rho\tilde\kappa\sqrt{\alpha\beta\tilde\alpha\tilde\beta}
}{
\sqrt{(1+\tilde\alpha)(1+\tilde\beta)}
}\\[3mm]
-\dfrac{
\rho\tilde\kappa\sqrt{\alpha\beta\tilde\alpha\tilde\beta}
}{
\sqrt{(1+\tilde\alpha)(1+\tilde\beta)}
}
&
\dfrac{\tau^{-1}-\beta\tilde\beta}{1+\tilde\beta}
\end{pmatrix}.
\end{equation}
Consequently,
\begin{equation}\label{eq:mismatched-wishart-S-edge-det}
    \det\tilde\cS(1)
    =
    \frac{
    (\tau^{-1}-\alpha\tilde\alpha)(\tau^{-1}-\beta\tilde\beta)
    -\rho^2\tilde\kappa^2\alpha\beta\tilde\alpha\tilde\beta
    }
    {(1+\tilde\alpha)(1+\tilde\beta)}.
\end{equation}
In particular, $\det\tilde\cS(1)<0$ whenever $\rho>\rho_{\out}(\tilde\alpha,\tilde\beta;\alpha,\beta).$
\end{lemma}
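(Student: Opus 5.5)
The plan mirrors the proof of Lemma~\ref{lem:wishart-S-negative-at-edge}: substitute the critical identities \eqref{eq:mismatched-wishart-critical-identities} into \eqref{eq:mismatched-wishart-Sigma-rho} and simplify entry by entry. These identities come from Corollary~\ref{cor:wishart-critical-identities-outlier} applied to the null system with the grid parameters $(\tilde\alpha,\tilde\beta,\tilde\kappa)$. The corollary applies because admissibility \eqref{eq:mismatched-wishart-admissible} gives exactly $\tau\tilde\alpha^2,\tau\tilde\beta^2<1$ and $\tilde\kappa\in(0,1)$. As in the Wishart case, $\tilde\cS(1)$ is understood as the right limit $z\downarrow1$; this limit exists because $\tilde T,\tilde r,\tilde s$ have right limits there and $\tilde r(1),\tilde s(1)\neq0$.

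We evaluate $\tilde T(1)=-I_2$ against $\tilde p,\tilde q$. Since $\tilde p^\intercal\tilde p=e_1^\intercal\tilde Ae_1=1$, and likewise $\tilde q^\intercal\tilde q=1$, while $\tilde p^\intercal\tilde q=e_1^\intercal\tilde Ae_2=\tilde\kappa$, we get $\tilde p^\intercal\tilde T(1)\tilde p=\tilde q^\intercal\tilde T(1)\tilde q=-1$ and $\tilde p^\intercal\tilde T(1)\tilde q=-\tilde\kappa$. For the $(1,1)$ entry, using $\tilde\theta_1=\tilde\alpha/(1+\tilde\alpha)$ and $\tilde r(1)=-(1+\tilde\alpha)$, we obtain
\[
-\frac{\alpha\tilde\alpha}{1+\tilde\alpha}+\frac{1}{\tau(1+\tilde\alpha)}=\frac{\tau^{-1}-\alpha\tilde\alpha}{1+\tilde\alpha}.
\]
The $(2,2)$ entry is symmetric. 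For the off-diagonal entry,
\[
\rho\sqrt{\alpha\tilde\theta_1\beta\tilde\theta_2}\,(-\tilde\kappa)=-\frac{\rho\tilde\kappa\sqrt{\alpha\beta\tilde\alpha\tilde\beta}}{\sqrt{(1+\tilde\alpha)(1+\tilde\beta)}}.
\]
Together these give \eqref{eq:mismatched-wishart-S-edge}.

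The determinant \eqref{eq:mismatched-wishart-S-edge-det} follows directly, since every entry carries the common factor $(1+\tilde\alpha)(1+\tilde\beta)$ in its denominator. For the final claim, we first need the diagonal entries to be positive: $\tau^{-1}-\alpha\tilde\alpha>0$ holds because $\alpha\le\tau^{-1/2}$ and $\tilde\alpha<\tau^{-1/2}$, and the same argument gives $\tau^{-1}-\beta\tilde\beta>0$. Then $\det\tilde\cS(1)<0$ is equivalent to
\[
\rho^2>\frac{(\tau^{-1}-\alpha\tilde\alpha)(\tau^{-1}-\beta\tilde\beta)}{\tilde\kappa^2\alpha\beta\tilde\alpha\tilde\beta}=\rho_{\out}^2,
\]
using \eqref{eq:mismatched-wishart-rho-out-def}. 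Since both sides are nonnegative, this is the same as $\rho>\rho_{\out}$.

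There is no real obstacle here. The only point needing care is that the corollary is applied to the grid-parameter null system, and that the edge value means the right limit $z\downarrow 1$.
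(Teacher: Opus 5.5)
Your proposal is correct and follows essentially the paper's proof. Like the paper, you substitute $\tilde T(1)=-I_2$, $\tilde r(1)=-(1+\tilde\alpha)$, $\tilde s(1)=-(1+\tilde\beta)$ (from the corollary applied with the grid parameters), simplify entrywise, and read off the threshold from the definition of $\rho_{\out}$; your extra check that $\tau^{-1}-\alpha\tilde\alpha>0$ and $\tau^{-1}-\beta\tilde\beta>0$, so that $\rho_{\out}$ is real and the final equivalence is clean, is a small point the paper leaves implicit.
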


\begin{proof}
By \eqref{eq:mismatched-wishart-critical-identities},
$\tilde T(1)=-I_2$, $\tilde r(1)=-(1+\tilde\alpha)$, and
$\tilde s(1)=-(1+\tilde\beta)$.  Since
\[
    \tilde p^\intercal\tilde p=\tilde q^\intercal\tilde q=1,
    \qquad
    \tilde p^\intercal\tilde q=\tilde\kappa,
\]
we have $ \tilde p^\intercal\tilde T(1)\tilde p
    =
    \tilde q^\intercal\tilde T(1)\tilde q
    =-1,$ and $\tilde p^\intercal\tilde T(1)\tilde q=-\tilde\kappa.$  The first diagonal entry of \eqref{eq:mismatched-wishart-Sigma-rho} at $z=1$
is therefore
\[
    -\alpha\tilde\theta_1+\frac{1}{\tau(1+\tilde\alpha)}
    =
    -\frac{\alpha\tilde\alpha}{1+\tilde\alpha}
    +
    \frac{\tau^{-1}}{1+\tilde\alpha}
    =
    \frac{\tau^{-1}-\alpha\tilde\alpha}{1+\tilde\alpha},
\]
and the second diagonal entry is identical with
$(\alpha,\tilde\alpha)$ replaced by $(\beta,\tilde\beta)$.  The off-diagonal
entry is
\[
    \rho\sqrt{\alpha\tilde\theta_1\beta\tilde\theta_2}
    \;\tilde p^\intercal\tilde T(1)\tilde q
    =
    -\rho\tilde\kappa
    \sqrt{\frac{\alpha\beta\tilde\alpha\tilde\beta}
    {(1+\tilde\alpha)(1+\tilde\beta)}}.
\]
This proves \eqref{eq:mismatched-wishart-S-edge}.  Taking the determinant gives
\eqref{eq:mismatched-wishart-S-edge-det}, and the final claim follows directly
from the definition \eqref{eq:mismatched-wishart-rho-out-def}.
\end{proof}

\begin{theorem}[Mismatched Wishart outlier]
\label{thm:mismatched-Wishart-outlier}
Fix true parameters $\alpha,\beta$ with
$0<\alpha^2 \tau , \beta^2 \tau\le1$, and let
$(\tilde\alpha,\tilde\beta)$ be admissible in the sense of
\eqref{eq:mismatched-wishart-admissible}.  Suppose
\[
    \rho>\rho_{\out}(\tilde\alpha,\tilde\beta;\alpha,\beta).
\]
Then there exists a unique $\tilde\lambda_{\out}>1$ such that $\det\tilde\cS(\tilde\lambda_{\out})=0.$ Moreover, with high probability,
\[
    \lambda_1(W_{\tilde\alpha,\tilde\beta})
    =\tilde\lambda_{\out}+o(1),
    \qquad
    \lambda_2(W_{\tilde\alpha,\tilde\beta})\le 1+o(1).
\]
Furthermore, if $\tilde w=(\hat a,\hat b)$ is a top unit right eigenvector of $W_{\tilde\alpha,\tilde\beta}$, and if $\tilde x_*=(\tilde x_{*,1},\tilde x_{*,2})^\intercal\in\ker\tilde\cS(\tilde\lambda_{\out})$ is any unit vector, then
\begin{equation*}
\begin{aligned}
   |\langle\hat a,a\rangle|^2
   &\ge
   \frac{\tilde x_{*,1}^2}{\tau^2\tilde r(\tilde\lambda_{\out})^2\,\alpha\tilde\theta_1(1+\tilde\kappa)\,\tilde x_*^\intercal\partial_z\tilde\cS(\tilde\lambda_{\out})\tilde x_*}
   +o(1),\\
   |\langle\hat b,b\rangle|^2
   &\ge
   \frac{\tilde x_{*,2}^2}{\tau^2\tilde s(\tilde\lambda_{\out})^2\,\beta\tilde\theta_2(1+\tilde\kappa)\,\tilde x_*^\intercal\partial_z\tilde\cS(\tilde\lambda_{\out})\tilde x_*}
   +o(1).
\end{aligned}
\end{equation*}
The two deterministic constants on the right-hand side are strictly positive. In particular, $|\langle\hat a,a\rangle|^2=\Omega(1)$ and $|\langle\hat b,b\rangle|=\Omega(1)$ with high probability.
\end{theorem}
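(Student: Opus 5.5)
The plan is to run the proof of Theorem~\ref{thm:Wishart-outlier} again, changing only the grid-dependent quantities, in the same way Theorem~\ref{thm:mismatched-Wigner-outlier} was obtained from Theorem~\ref{thm:Wigner-outlier}. By \eqref{eq:mismatched-wishart-linearization-perturbation} and the matrix determinant lemma, $\det\tilde\calL(z)=\det\tilde\calL_0(z)\det\tilde{\mathfrak S}_n(z)$, where $\tilde{\mathfrak S}_n(z):=\calJ+\tilde\calU_n^\intercal\tilde\calG_0(z)\tilde\calU_n$.

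For $z>\lambda_{\max}(\tilde H_0)$, the bottom block $Y^\intercal\tilde\calG_0^{\mathrm{bb}}(z)Y\prec0$. This lets us form the Schur complement $\tilde\cS_n(z)$ exactly as in \eqref{eq:wishart-random-Schur-complement}, with $X$ replaced by $\tilde X$. It gives the exact reduction: for $z>\lambda_{\max}(\tilde H_0)$, $z$ is an eigenvalue of $\tilde H$ if and only if $\det\tilde\cS_n(z)=0$. Theorem~\ref{thm:WishartGordonEdge} with the admissible parameters $(\tilde\alpha,\tilde\beta,\tilde\kappa)$ gives $\lambda_{\max}(\tilde H_0)\le1+o(1)$ with overwhelming probability.

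The limit of $\tilde\cS_n$ comes from the same three ingredients as before, applied to the null model with grid parameters:
\begin{itemize}
\item conditional Hanson--Wright for the quadratic and bilinear forms in $(a,b)$ and $(u,v)$, with $a^\intercal A b\approx\rho\frac1m\Tr A$;
\item Theorem~\ref{thm:real-stieltjes-transform-Wishart};
\item a net argument in $z$, followed by Borel--Cantelli.
\end{itemize}
Together these give $\sup_{z\in K}\|\tilde\cS_n(z)-\tilde\cS(z)\|\to0$ on compact $K\subset(1,\infty)$. The only change from the matched case is that the true spike amplitudes enter through $\tilde X$, so that $\alpha\tilde\theta_1$ and $\beta\tilde\theta_2$ appear. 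The bottom block still converges to $\tau\diag(\tilde r,\tilde s)$, and the mixed block $I_2+\tilde X^\intercal\tilde\calG_0^{\mathrm{tb}}Y$ still converges to $I_2$, because $(u,v)$ are independent of $(a,b)$. This yields exactly \eqref{eq:mismatched-wishart-Sigma-rho}.

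Next comes the deterministic analysis. Lemma~\ref{lem:mismatched-wishart-S-edge} gives $\det\tilde\cS(1)<0$. Its diagonal entries $(\tau^{-1}-\alpha\tilde\alpha)/(1+\tilde\alpha)$ are positive, since $\alpha\sqrt\tau\le1$ and $\tilde\alpha\sqrt\tau<1$. Hence $\tilde\cS(1)$ has exactly one negative eigenvalue. For large $z$, $\tilde\cS(z)=\tau^{-1}I_2+O(z^{-1})\succ0$.

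To get strict monotonicity, split $\tilde\cS=\tilde\cA-\tau^{-1}\diag(\tilde r^{-1},\tilde s^{-1})$. The second term has positive derivative, because $\tilde r,\tilde s$ are negative and increasing on $(1,\infty)$. For the first term, $\tilde\cA_n'(z)=\tilde X^\intercal(\tilde H_0-z)^{-2}\tilde X\succeq cI_2$ on $[1+\delta,R]$. This uses two facts:
\begin{itemize}
\item $\tilde H_0\succeq-\tau(\tilde\alpha+\tilde\beta)I$, from \eqref{eq:mismatched-wishart-H0-gram-form};
\item $\tilde X^\intercal\tilde X$ converges to a positive definite matrix, since $\tilde\kappa\rho<1$.
\end{itemize}
This lower bound passes to $\tilde\cA$ by Vitali's theorem and the Cauchy integral formula. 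Therefore the eigenvalues of $\tilde\cS$ increase strictly, and there is a unique zero $\tilde\lambda_{\out}>1$ of $\det\tilde\cS$.

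For the random matrix, the factored identity shows $\tilde\cS_n'\succeq0$ beyond the null edge. So only the smaller eigenvalue of $\tilde\cS_n$ can cross zero, and it crosses only once. This gives a single outlier $\tilde\lambda_{\out,n}\to\tilde\lambda_{\out}$, with $\lambda_2\le1+o(1)$. Similarity of $\tilde H$ and $W_{\tilde\alpha,\tilde\beta}$ transfers both statements to $W_{\tilde\alpha,\tilde\beta}$.

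For the overlaps, take $x_n\in\ker\tilde\cS_n(\tilde\lambda_{\out,n})$ and set $y_n=-B_n^{-1}E_n^\intercal x_n$. Build $\psi_n=-\tilde\calG_0\tilde\calU_n(x_n,y_n)$, whose top part $h_n$ is an eigenvector of $\tilde H$. This gives $\tilde X^\intercal h_n=y_n$ and $\|h_n\|^2=x_n^\intercal\partial_z\tilde\cS_n x_n$. Now use $y_n\to(\tilde x_{*,1}/(\tau\tilde r),\tilde x_{*,2}/(\tau\tilde s))$ and $\|\tilde Sh_n\|^2\le(1+\tilde\kappa)\|h_n\|^2$. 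Reading off $y_n=\|\tilde Sh_n\|(\sqrt{\alpha\tilde\theta_1}\langle\hat a,a\rangle,\sqrt{\beta\tilde\theta_2}\langle\hat b,b\rangle)$ then gives the stated bounds.

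The constants are positive for two reasons. Kernel vectors of $\tilde\cS(\tilde\lambda_{\out})$ have no zero coordinate, since both diagonal entries stay positive (they are positive at $z=1$ and increasing). And $\partial_z\tilde\cS\succ0$.

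I expect the main obstacle to be convergence of the derivative $x_n^\intercal\partial_z\tilde\cS_n(\tilde\lambda_{\out,n})x_n$. This needs holomorphic extension of the full Schur complement near $\tilde\lambda_{\out}$, with $B_n(z)$ uniformly invertible on a complex neighbourhood. I would first handle this by continuity from the real limit $\tau\diag(\tilde r,\tilde s)\prec0$, and then apply Vitali's theorem.
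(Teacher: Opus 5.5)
Your proposal is correct and follows the paper's proof essentially step for step. Both arguments proceed in the same order:
\begin{enumerate}
\item the determinant-lemma reduction to the Schur complement $\tilde\cS_n$ taken over the negative-definite bottom block;
\item convergence of $\tilde\cS_n$ via Hanson--Wright together with Theorem~\ref{thm:real-stieltjes-transform-Wishart} at the grid parameters;
\item the edge, large-$z$ and monotonicity analysis of $\tilde\cS$, using the split $\tilde\cS=\tilde\cA-\tau^{-1}\operatorname{diag}(\tilde r^{-1},\tilde s^{-1})$;
\item the overlap extraction through $\tilde X^\intercal h_n=y_n$ and $\|h_n\|^2=x_n^\intercal\partial_z\tilde\cS_n(\tilde\lambda_{\out,n})x_n$.
\end{enumerate}
The derivative-convergence step you flag as the main obstacle is handled in the paper just as you suggest, by the holomorphic-extension and Cauchy-integral argument carried over from the matched case.
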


\begin{proof}
The proof is the matched Wishart proof with the deterministic null law and the
finite rank perturbation replaced by their grid dependent versions.  We give the
full set of replacements. For $z>\lambda_{\max}(\tilde H_0)$, the matrix
$\tilde\calG_0(z)$ is well defined.  By the determinant lemma and
\eqref{eq:mismatched-wishart-linearization-perturbation},
\begin{equation}\label{eq:mismatched-wishart-det-factorization}
    \det\tilde\calL(z)
    =
    \det\tilde\calL_0(z)\det\tilde{\mathfrak S}_n(z),
    \qquad
    \tilde{\mathfrak S}_n(z)
    :=
    \calJ+\tilde\calU_n^\intercal\tilde\calG_0(z)\tilde\calU_n.
\end{equation}
Write $\tilde\calG_0(z)$ in top/bottom block form,
\[
\tilde\calG_0(z)
=
\begin{pmatrix}
\tilde\calG_0^{\mathrm{tt}}(z)&\tilde\calG_0^{\mathrm{tb}}(z)\\
\tilde\calG_0^{\mathrm{bt}}(z)&\tilde\calG_0^{\mathrm{bb}}(z)
\end{pmatrix},
\]
where the top block has dimension $2m$ and the bottom block has dimension $2n$.
Then
\[
\tilde{\mathfrak S}_n(z)
=
\begin{pmatrix}
\tilde X^\intercal\tilde\calG_0^{\mathrm{tt}}(z)\tilde X
&
I_2+\tilde X^\intercal\tilde\calG_0^{\mathrm{tb}}(z)Y\\
I_2+Y^\intercal\tilde\calG_0^{\mathrm{bt}}(z)\tilde X
&
Y^\intercal\tilde\calG_0^{\mathrm{bb}}(z)Y
\end{pmatrix}.
\]
As in Theorem~\ref{thm:Wishart-outlier}, for
$z>\lambda_{\max}(\tilde H_0)$ the lower-right block is negative definite.
Therefore the Schur complement with respect to this block is well defined.  Set
\begin{equation}\label{eq:mismatched-wishart-E-D-def}
    \tilde E_n(z)
    :=
    I_2+\tilde X^\intercal\tilde\calG_0^{\mathrm{tb}}(z)Y,
    \qquad
    \tilde D_n(z)
    :=
    Y^\intercal\tilde\calG_0^{\mathrm{bb}}(z)Y,
\end{equation}
and $ \tilde\cS_n(z)
    :=
    \tilde X^\intercal\tilde\calG_0^{\mathrm{tt}}(z)\tilde X
    -
    \tilde E_n(z)\tilde D_n(z)^{-1}\tilde E_n(z)^\intercal .$ Since
\[
    \det\tilde{\mathfrak S}_n(z)
    =
    \det\tilde D_n(z)\det\tilde\cS_n(z),
\]
we obtain the exact reduction
\begin{equation}\label{eq:mismatched-wishart-exact-reduction}
    z>\lambda_{\max}(\tilde H_0),
    \qquad
    z\in\Lambda(\tilde H)
    \quad\Longleftrightarrow\quad
    \det\tilde\cS_n(z)=0.
\end{equation}

We next identify the deterministic limit of \(\tilde\cS_n\).  Fix a compact
interval \(K\subset(1,\infty)\).  By Theorem~\ref{thm:WishartGordonEdge}, applied to
the null model with parameters \((\tilde\alpha,\tilde\beta,\tilde\kappa)\),
\begin{equation}\label{eq:mismatched-wishart-null-edge}
    \lambda_{\max}(\tilde H_0)\le 1+o(1)
\end{equation}
with overwhelming probability.  On this event the blocks of
\(\tilde\calG_0(z)\) are uniformly bounded and uniformly Lipschitz on \(K\).
Conditioning on \(\tilde H_0\), the matrices appearing in the quadratic forms
below are deterministic and independent of \(a,b,u,v\).  The same
Hanson Wright and Gaussian bilinear estimates used in the proof of
Theorem~\ref{thm:Wishart-outlier} give that with overwhelming probability, uniformly for \(z\in K\),
\begin{gather*}
a^\intercal A(z)a=\frac1m\Tr A(z)+o(1), \\
b^\intercal A(z)b=\frac1m\Tr A(z)+o(1),\\
a^\intercal A(z)b=\rho\,\frac1m\Tr A(z)+o(1),
\end{gather*}
and the analogous estimates for the \(u,v\) quadratic forms, with the
normalizing factor \(n/m\to\tau\), also holds with overwhelming probability.  Combining these estimates with Borel-Cantelli lemma and
Theorem~\ref{thm:real-stieltjes-transform-Wishart}, again with the grid parameters,
yields that almost surely, uniformly over $z\in K$, 
\begin{align}
\tilde X^\intercal\tilde\calG_0^{\mathrm{tt}}(z)\tilde X
&\longrightarrow
\begin{pmatrix}
\alpha\tilde\theta_1\,\tilde p^\intercal\tilde T(z)\tilde p
&
\rho\sqrt{\alpha\tilde\theta_1\beta\tilde\theta_2}\,
\tilde p^\intercal\tilde T(z)\tilde q\\[1mm]
\rho\sqrt{\alpha\tilde\theta_1\beta\tilde\theta_2}\,
\tilde q^\intercal\tilde T(z)\tilde p
&
\beta\tilde\theta_2\,\tilde q^\intercal\tilde T(z)\tilde q
\end{pmatrix},
\label{eq:mismatched-wishart-top-limit}\\
\tilde D_n(z)
&\longrightarrow
\tau
\begin{pmatrix}
\tilde r(z)&0\\
0&\tilde s(z)
\end{pmatrix},
\qquad
\tilde E_n(z)\longrightarrow I_2 .
\label{eq:mismatched-wishart-bottom-limit}
\end{align}
 Since
\(\tilde r(z),\tilde s(z)<0\) for \(z>1\), the limiting matrix in
\eqref{eq:mismatched-wishart-bottom-limit} is uniformly invertible on \(K\).
Therefore
\begin{equation}\label{eq:mismatched-wishart-Schur-limit}
    \sup_{z\in K}\|\tilde\cS_n(z)-\tilde\cS(z)\|\longrightarrow0
\end{equation}
holds almost surely, where \(\tilde\cS(z)\) is the deterministic matrix in
\eqref{eq:mismatched-wishart-Sigma-rho}.

We now analyze \(\tilde\cS(z)\).  By
Lemma~\ref{lem:mismatched-wishart-S-edge} and the assumption on \(\rho\),
\(\det\tilde\cS(1)<0\).  The diagonal entries of \(\tilde\cS(1)\) are positive,
because $ \alpha\tilde\alpha, \beta \tilde \beta<\tau^{-1}$ 
Thus \(\tilde\cS(1)\) has exactly one positive and one negative eigenvalue.  On
the other hand, from the large-\(z\) asymptotics of the Wishart deterministic
system,
\[
    \tilde r(z)=-1+O(z^{-1}),
    \qquad
    \tilde s(z)=-1+O(z^{-1}),
    \qquad
    \tilde T(z)=O(z^{-1}),
\]
and hence $ \tilde\cS(z)=\tau^{-1}I_2+O(z^{-1}).$ Therefore \(\tilde\cS(z)\succ0\) for all sufficiently large real \(z\). It remains to recall the uniqueness argument.  Let
\(K=[1+\delta,R]\subset(1,\infty)\), and define $\tilde\cA_n(z)
    :=
    \tilde X^\intercal\tilde\calG_0^{\mathrm{tt}}(z)\tilde X. $ For \(z>\lambda_{\max}(\tilde H_0)\),
\[
    \tilde\cA_n'(z)
    =
    \tilde X^\intercal(\tilde H_0-zI_{2m})^{-2}\tilde X .
\]
The null Gram representation \eqref{eq:mismatched-wishart-H0-gram-form} gives
\[
    \tilde H_0\succeq
    -\tau(\tilde\alpha\tilde P+\tilde\beta\tilde Q)\otimes I_m
    \succeq
    -\tau(\tilde\alpha+\tilde\beta)I_{2m}.
\]
Thus, uniformly for \(z\in K\),
\[
    (\tilde H_0-zI_{2m})^{-2}
    \succeq
    \frac{1}{(R+\tau(\tilde\alpha+\tilde\beta))^2}I_{2m}.
\]
Moreover, using concentration estimates and Borel-Cantelli lemma, we see that almost surely, 
\begin{equation}\label{eq:mismatched-wishart-X-gram}
\tilde X^\intercal\tilde X
\longrightarrow
\begin{pmatrix}
\alpha\tilde\theta_1
&
\rho\tilde\kappa\sqrt{\alpha\tilde\theta_1\beta\tilde\theta_2}\\
\rho\tilde\kappa\sqrt{\alpha\tilde\theta_1\beta\tilde\theta_2}
&
\beta\tilde\theta_2
\end{pmatrix},
\end{equation}
and the limiting matrix is semi-positive definite because \(\rho\tilde\kappa<1\).  Hence, almost surely,
\(\tilde\cA_n'(z)\succeq c_K I_2\) on \(K\), for a deterministic constant
\(c_K>0\) and all sufficiently large $n$.  Passing this lower bound to the deterministic limit via Vitali's theorem and Cauchy derivative arguments as in the proof of Theorem~\ref{thm:Wishart-outlier},
we see the deterministic limit \(\tilde\cA(z)\) also satisfies
\(\tilde\cA'(z)\succeq c_K I_2\) on \(K\).

Finally, since \(\tilde r\) and \(\tilde s\) are Stieltjes transforms supported
on \((-\infty,1]\), we have \(\tilde r(z),\tilde s(z)<0\) and
\(\tilde r'(z),\tilde s'(z)>0\) for \(z>1\).  Therefore
\[
\partial_z\!\left[
-\frac1\tau
\begin{pmatrix}
\tilde r(z)^{-1}&0\\
0&\tilde s(z)^{-1}
\end{pmatrix}
\right]
=
\frac1\tau
\begin{pmatrix}
\dfrac{\tilde r'(z)}{\tilde r(z)^2}&0\\[2mm]
0&\dfrac{\tilde s'(z)}{\tilde s(z)^2}
\end{pmatrix}
\succeq0.
\]
Consequently \(\tilde\cS'(z)\succeq c_KI_2\) on every compact
\(K\subset(1,\infty)\).  The two eigenvalues of \(\tilde\cS(z)\) are therefore
strictly increasing on \((1,\infty)\).  Since the smaller one is negative at
\(z=1\) and both are positive for large \(z\), there is a unique
\(\tilde\lambda_{\out}>1\) such that
\[
    \det\tilde\cS(\tilde\lambda_{\out})=0.
\]

We now return to the random matrix.  Choose \(\delta>0\) so that
\(1<\tilde\lambda_{\out}-\delta\) and the smallest eigenvalue of
\(\tilde\cS(z)\) is negative at \(z=\tilde\lambda_{\out}-\delta\) and positive
at \(z=\tilde\lambda_{\out}+\delta\).  By
\eqref{eq:mismatched-wishart-Schur-limit}, the same is true for
\(\tilde\cS_n(z)\) with overwhelming probability.  On the same event,
\eqref{eq:mismatched-wishart-null-edge} gives $ \lambda_{\max}(\tilde H_0)<\tilde\lambda_{\out}-\delta .$  Therefore the smaller eigenvalue of \(\tilde\cS_n(z)\) has a unique zero
\[
    \tilde\lambda_{\out,n}
    \in
    (\tilde\lambda_{\out}-\delta,\tilde\lambda_{\out}+\delta),
\]
and \(\tilde\lambda_{\out,n}\to\tilde\lambda_{\out}\) in probability.  By
\eqref{eq:mismatched-wishart-exact-reduction}, this zero is an eigenvalue of
\(\tilde H\), and hence also of \(W_{\tilde\alpha,\tilde\beta}\).

To rule out a second outlier, choose \(\delta>0\) so small that
\(\det\tilde\cS(1+\delta)<0\).  Then \(\tilde\cS(1+\delta)\) has one positive and
one negative eigenvalue.  By \eqref{eq:mismatched-wishart-Schur-limit}, the same
is true for \(\tilde\cS_n(1+\delta)\) with overwhelming probability.  The random
monotonicity argument from the proof of Theorem~\ref{thm:Wishart-outlier}, applied to
\(\tilde{\mathfrak S}_n\), shows that the eigenvalues of
\(\tilde\cS_n(z)\) are nondecreasing for
\(z>\lambda_{\max}(\tilde H_0)\).  Hence only the smaller eigenvalue can hit
zero, and it can do so at most once.  Therefore, with overwhelming probability,
\(\tilde H\) has exactly one eigenvalue above \(1+\delta\), namely
\(\tilde\lambda_{\out,n}\).  Since \(W_{\tilde\alpha,\tilde\beta}\) is similar
to \(\tilde H\), and since \(\delta>0\) is arbitrary, we conclude that
\[
    \lambda_1(W_{\tilde\alpha,\tilde\beta})
    =
    \tilde\lambda_{\out}+o(1),
    \qquad
    \lambda_2(W_{\tilde\alpha,\tilde\beta})\le 1+o(1).
\]

It remains to prove the displayed individual-overlap bounds. Let $\tilde x_n\in\RR^2$ be a unit vector spanning $\ker\tilde\cS_n(\tilde\lambda_{\out,n})$, and set
\[
    \tilde y_n:=-\tilde D_n(\tilde\lambda_{\out,n})^{-1}\tilde E_n(\tilde\lambda_{\out,n})^\intercal\tilde x_n.
\]
Then $(\tilde x_n,\tilde y_n)\in\ker\tilde{\mathfrak S}_n(\tilde\lambda_{\out,n})$. Define
\[
    \tilde\psi_n
    :=
    -\tilde\calG_0(\tilde\lambda_{\out,n})\tilde\calU_n
    \binom{\tilde x_n}{\tilde y_n}
    =:\binom{\tilde h_n}{\tilde g_n}.
\]
As before, $\tilde\calL(\tilde\lambda_{\out,n})\tilde\psi_n=0$, so $\tilde h_n$ is an eigenvector of $\tilde H$ with eigenvalue $\tilde\lambda_{\out,n}$, and
\[
    \tilde w_n:=\frac{\tilde S\tilde h_n}{\|\tilde S\tilde h_n\|}
\]
is a top unit right eigenvector of $W_{\tilde\alpha,\tilde\beta}$, hence agrees with $\tilde w$ up to a sign. Moreover,
\[
\tilde\calU_n^\intercal\tilde\psi_n
=-\tilde\calU_n^\intercal\tilde\calG_0(\tilde\lambda_{\out,n})\tilde\calU_n
\binom{\tilde x_n}{\tilde y_n}
=\calJ\binom{\tilde x_n}{\tilde y_n},
\]
and therefore $\tilde X^\intercal\tilde h_n=\tilde y_n.$ Since $\lambda_{\operatorname{out},n}\to \tilde{\lambda}_{\operatorname{out}}$ in probability, by \eqref{eq:mismatched-wishart-bottom-limit}, we have
\[
\tilde D_n(\tilde\lambda_{\out,n})\stackrel{\text{in prob.}}{\longrightarrow}\tau
\begin{pmatrix}
\tilde r(\tilde\lambda_{\out})&0\\
0&\tilde s(\tilde\lambda_{\out})
\end{pmatrix},
\qquad
\tilde E_n(\tilde\lambda_{\out,n})\stackrel{\text{in prob.}}{\longrightarrow} I_2.
\]
The limiting diagonal matrix is invertible because $\tilde r(\tilde\lambda_{\out}),\tilde s(\tilde\lambda_{\out})<0$. Along every subsequence, compactness gives a further subsequence on which $\tilde x_n\to\tilde x_*$. By \eqref{eq:mismatched-wishart-Schur-limit} and $\tilde\cS_n(\tilde\lambda_{\out,n})\tilde x_n=0$, the limit satisfies $\tilde x_*\in\ker\tilde\cS(\tilde\lambda_{\out})$ and $\|\tilde x_*\|=1$. The kernel is one-dimensional by the strict monotonicity of the smaller eigenvalue at the unique zero, so $\tilde x_*$ is determined up to sign.

The coordinates of $\tilde x_*$ are both nonzero. Indeed, $\partial_z\tilde\cS(z)\succ0$ for $z>1$, and the diagonal entries at the edge are
\[
\tilde\cS_{11}(1)=\frac{1-\tau\alpha\tilde\alpha}{\tau(1+\tilde\alpha)}>0,
\qquad
\tilde\cS_{22}(1)=\frac{1-\tau\beta\tilde\beta}{\tau(1+\tilde\beta)}>0.
\]
Thus $\tilde\cS_{11}(\tilde\lambda_{\out})$ and $\tilde\cS_{22}(\tilde\lambda_{\out})$ are strictly positive. A nonzero kernel vector supported on only one coordinate would force one of these diagonal entries to vanish. Hence $\tilde x_{*,1}\neq0$ and  $ \tilde x_{*,2}\neq0$. Consequently, we have 
\begin{equation}\label{eq:mismatched-wishart-y-coordinate-convergence}
    \tilde y_{n,1}^2\stackrel{\text{in prob.}}{\longrightarrow}
    \frac{\tilde x_{*,1}^2}{\tau^2\tilde r(\tilde\lambda_{\out})^2},
    \qquad
    \tilde y_{n,2}^2\stackrel{\text{in prob.}}{\longrightarrow}
    \frac{\tilde x_{*,2}^2}{\tau^2\tilde s(\tilde\lambda_{\out})^2}.
\end{equation}

The derivative identity is unchanged from the matched case. Since $\tilde\calL_0'(z)=-\Pi$ with $\Pi=\diag(I_{2m},0)$,
\[
    \tilde{\mathfrak S}_n'(z)
    =
    \tilde\calU_n^\intercal\tilde\calG_0(z)\Pi\tilde\calG_0(z)\tilde\calU_n.
\]
Differentiating the factored Schur-complement identity at $z=\tilde\lambda_{\out,n}$ and testing against $\tilde x_n$ gives
\begin{equation}\label{eq:mismatched-wishart-normalization-identity}
    \tilde x_n^\intercal\partial_z\tilde\cS_n(\tilde\lambda_{\out,n})\tilde x_n
    =
    \binom{\tilde x_n}{\tilde y_n}^\intercal
    \tilde{\mathfrak S}_n'(\tilde\lambda_{\out,n})
    \binom{\tilde x_n}{\tilde y_n}
    =\|\tilde h_n\|^2.
\end{equation}
As in the matched proof, the holomorphic-extension and Cauchy-integral argument applied to the full Schur complement yields
\begin{equation}\label{eq:mismatched-wishart-Schur-derivative-limit-overlap}
    \tilde x_n^\intercal\partial_z\tilde\cS_n(\tilde\lambda_{\out,n})\tilde x_n
   \stackrel{\text{in prob.}}{\longrightarrow}
    \tilde x_*^\intercal\partial_z\tilde\cS(\tilde\lambda_{\out})\tilde x_*.
\end{equation}
Since $\tilde p=\tilde A^{1/2}e_1$, $\tilde q=\tilde A^{1/2}e_2$, $\tilde w_n=\tilde S\tilde h_n/\|\tilde S\tilde h_n\|$, and $y = \tilde X^\intercal h$, we get
\[
    \tilde y_n
    =
    \|\tilde S\tilde h_n\|
    \begin{pmatrix}
        \sqrt{\alpha\tilde\theta_1}\,\langle\hat a,a\rangle\\[1mm]
        \sqrt{\beta\tilde\theta_2}\,\langle\hat b,b\rangle
    \end{pmatrix},
\]
up to the harmless overall sign of $\tilde w_n$. Because $\|\tilde S\tilde h_n\|^2\le(1+\tilde\kappa)\|\tilde h_n\|^2$, we obtain
\begin{equation}\label{eq:mismatched-wishart-individual-overlap-finite}
\begin{aligned}
|\langle\hat a,a\rangle|^2
&\ge
\frac{\tilde y_{n,1}^2}{\alpha\tilde\theta_1(1+\tilde\kappa)\,\tilde x_n^\intercal\partial_z\tilde\cS_n(\tilde\lambda_{\out,n})\tilde x_n},\\
|\langle\hat b,b\rangle|^2
&\ge
\frac{\tilde y_{n,2}^2}{\beta\tilde\theta_2(1+\tilde\kappa)\,\tilde x_n^\intercal\partial_z\tilde\cS_n(\tilde\lambda_{\out,n})\tilde x_n}.
\end{aligned}
\end{equation}
Combining \eqref{eq:mismatched-wishart-y-coordinate-convergence}, \eqref{eq:mismatched-wishart-Schur-derivative-limit-overlap}, and \eqref{eq:mismatched-wishart-individual-overlap-finite} gives the claimed individual-overlap bounds holds with high probability. The constants are strictly positive by $\tilde r(\tilde\lambda_{\out}),\tilde s(\tilde\lambda_{\out})<0$, and by $\partial_z\tilde\cS(\tilde\lambda_{\out})\succ0$.
\end{proof}

The theorem above shows that every sufficiently dense grid point produces an
outlier.  For the parameter free estimator, we also need the converse direction
at the level of eigenvectors: any grid point whose top eigenvalue is separated
from the null edge must have an informative eigenvector.

\begin{lemma}
\label{lem:mismatched-Wishart-separated-informative}
Fix \(\varepsilon>0\), and let \((\tilde\alpha,\tilde\beta)\) be admissible.  Then there is a constant
\(c>0\) independent of $n$ such that, with high probability, the following implication holds.  If
\[
    \lambda_1(W_{\tilde\alpha,\tilde\beta})\ge 1+\varepsilon
\]
and \(w=(\hat a,\hat b)\in\RR^m\times\RR^m\) is a top unit right eigenvector of
\(W_{\tilde\alpha,\tilde\beta}\), then
\[
    |\langle\hat a,a\rangle|^2\ge c,
    \qquad
    |\langle\hat b,b\rangle|^2\ge c .
\]
\end{lemma}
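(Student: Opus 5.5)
I would follow the proof of Lemma~\ref{lem:mismatched-separated-outlier-informative} closely, replacing the rank-two Wigner perturbation by the linearization perturbation \eqref{eq:mismatched-wishart-linearization-perturbation}. Let $\lambda:=\lambda_1(W_{\tilde\alpha,\tilde\beta})$ and let $h:=\tilde S^{-1}w/\|\tilde S^{-1}w\|$, which is a unit top eigenvector of $\tilde H$. Theorem~\ref{thm:WishartGordonEdge}, applied with the grid parameters, gives $\lambda_{\max}(\tilde H_0)\le 1+o(1)$ with overwhelming probability. Hence on $\{\lambda\ge1+\varepsilon\}$ we have $\lambda-\lambda_{\max}(\tilde H_0)\ge\varepsilon/2$. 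Since $\|\tilde H\|_{\op}=O(1)$ with overwhelming probability (the Gram form plus $\|\tilde X\|,\|Y\|=O(1)$), we also get $\lambda\in K:=[1+\varepsilon,R]$ for a deterministic $R$.

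\textbf{Step 1: reduction to the Schur complement.} Extend $h$ to a kernel vector of $\tilde\calL(\lambda)$ by setting $g:=\tilde B h$ in the bottom block. Put $(x,y):=\calJ\,\tilde\calU_n^\intercal(h,g)$. Then \eqref{eq:mismatched-wishart-linearization-perturbation} gives $(h,g)=-\tilde\calG_0(\lambda)\tilde\calU_n(x,y)$ and $(x,y)\in\ker\tilde{\mathfrak S}_n(\lambda)$. It follows that $x\in\ker\tilde\cS_n(\lambda)$ and $y=-\tilde D_n^{-1}\tilde E_n^\intercal x$. Moreover $\tilde X^\intercal h=y$, where $y$ is the second component as in the proof of Theorem~\ref{thm:mismatched-Wishart-outlier}. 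The derivative identity \eqref{eq:mismatched-wishart-normalization-identity} then yields
\[
x^\intercal\partial_z\tilde\cS_n(\lambda)x=\|h\|^2=1.
\]

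\textbf{Step 2: uniform two-sided bounds on $K$.} On $K$ I want deterministic constants $c_K,C_K$ with $c_KI_2\preceq\partial_z\tilde\cS_n(z)\preceq C_KI_2$ and $\|\tilde D_n(z)^{-1}\|\le C_K$ with high probability. The lower bound comes from the monotonicity estimate in the proof of Theorem~\ref{thm:mismatched-Wishart-outlier}. The upper bound comes from the resolvent bounds once $z$ is at distance at least $\varepsilon/2$ from the spectrum of $\tilde H_0$. Together with Step~1, these give $C_K^{-1}\le\|x\|^2\le c_K^{-1}$.

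\textbf{Step 3: coordinates of $x$ bounded below.} This is the main obstacle. I would argue by contradiction with compactness. Suppose that along a subsequence of $n$ (on the high-probability events) one coordinate of $x$ tends to $0$. Extract further subsequences with $\lambda\to z_*\in K$ and $x\to x_*\neq0$. Using the uniform Schur convergence \eqref{eq:mismatched-wishart-Schur-limit}, together with the Vitali/Cauchy derivative convergence, we get $\tilde\cS(z_*)x_*=0$ and $x_*^\intercal\partial_z\tilde\cS(z_*)x_*=1$. But a kernel vector of $\tilde\cS(z)$ for $z>1$ has both coordinates nonzero. This is because the diagonal entries $\tilde\cS_{11}(1)$ and $\tilde\cS_{22}(1)$ are positive by Lemma~\ref{lem:mismatched-wishart-S-edge} (using $\alpha\tilde\alpha,\beta\tilde\beta<\tau^{-1}$), and they increase since $\partial_z\tilde\cS\succ0$. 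So this is a contradiction.

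The delicate point is making the contradiction rigorous with a random $\lambda$. I would pass to almost sure statements using Borel--Cantelli, as the paper does, so that the convergences hold uniformly on $K$ for a.e.\ realization. This yields a deterministic $\gamma>0$ with $|x_1|,|x_2|\ge\gamma$. By the convergence \eqref{eq:mismatched-wishart-bottom-limit}, $\tilde D_n^{-1}\tilde E_n^\intercal$ is uniformly close to $\tau^{-1}\diag(\tilde r^{-1},\tilde s^{-1})$ on $K$. Since $\tilde r,\tilde s$ are bounded on $K$, this transfers to $|y_i|\ge\gamma'>0$.

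\textbf{Step 4: conclusion.} From $\tilde X^\intercal h=y$ and $w=\tilde Sh/\|\tilde Sh\|$ we get $y=\|\tilde S h\|\,(\sqrt{\alpha\tilde\theta_1}\langle\hat a,a\rangle,\sqrt{\beta\tilde\theta_2}\langle\hat b,b\rangle)$. Since $\|\tilde Sh\|^2\le 1+\tilde\kappa$, this gives
\[
|\langle\hat a,a\rangle|^2\ge\frac{\gamma'^2}{\alpha\tilde\theta_1(1+\tilde\kappa)},\qquad
|\langle\hat b,b\rangle|^2\ge\frac{\gamma'^2}{\beta\tilde\theta_2(1+\tilde\kappa)},
\]
and we take $c$ to be the minimum of the two.
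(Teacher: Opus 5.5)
Your proposal is correct and follows essentially the same route as the paper's proof. You lift $h$ to the kernel vector $(h,\tilde B h)$ of $\tilde\calL(\lambda)$ and reduce to $x\in\ker\tilde\cS_n(\lambda)$ with $y=-\tilde D_n^{-1}\tilde E_n^\intercal x=\tilde X^\intercal h$. You rule out a vanishing coordinate of $x$ by compactness together with positivity of the diagonal of $\tilde\cS(z)$ for $z>1$, then transfer this to $y$ and to the overlaps. The one minor difference is how you bound $\|x\|$: you use the normalization $x^\intercal\partial_z\tilde\cS_n(\lambda)x=\|h\|^2=1$ with two-sided bounds on $\partial_z\tilde\cS_n$, whereas the paper uses $\|\psi\|\ge\|h\|=1$, operator-norm bounds on $\tilde\calG_0,\tilde\calU_n$, and $x=Y^\intercal g$, which avoids needing the lower bound on $\partial_z\tilde\cS_n$.
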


\begin{proof}
Let \(h:=\tilde S^{-1}w/\|\tilde S^{-1}w\|\).  By the similarity relation \eqref{eq:mismatched-wishart-H-def}, \(h\) is a unit top eigenvector of \(\tilde H\) with eigenvalue
\(\lambda:=\lambda_1(W_{\tilde\alpha,\tilde\beta})\).  We work on the event \(\{\lambda\ge1+\varepsilon\}\); if this event does not occur, there is nothing to prove.  By Theorem~\ref{thm:WishartGordonEdge}, applied to the null matrix with the guessed parameters,
\(\lambda_{\max}(\tilde H_0)\le1+o(1)\) with overwhelming probability.  Thus, on a overwhelming-probability event and for all sufficiently large \(n\),
\[
    \lambda-\lambda_{\max}(\tilde H_0)\ge\varepsilon/2.
\]
Also, on a overwhelming-probability event, \(\lambda\) lies in a deterministic compact interval
\[
    K:=[1+\varepsilon,R]
\]
for some \(R<\infty\); this follows from the standard operator norm bounds for the Gaussian Wishart matrices and the finite rank perturbation. Let \(\tilde B\) be the planted rectangular matrix in the Gram representation of \(\tilde H\), namely
\[
    \tilde B
    :=
    \begin{pmatrix}
        \sqrt{\tilde\theta_1}\,U&0\\
        0&\sqrt{\tilde\theta_2}\,V
    \end{pmatrix}
    (\tilde A^{1/2}\otimes I_m),
    \qquad
    \tilde H=\tilde B^\intercal\tilde B-\tilde C\otimes I_m .
\]
Set \(g:=\tilde B h\) and \(\psi:=(h,g)\).  Then \(\tilde\calL(\lambda)\psi=0\).  Since
\(\lambda>\lambda_{\max}(\tilde H_0)\), the null linearization
\(\tilde\calL_0(\lambda)\) is invertible, and
\[
    \psi
    =
    -\tilde\calG_0(\lambda)\tilde\calU_n
    \calJ\tilde\calU_n^\intercal\psi .
\]
Define $(x,y) = \calJ\tilde\calU_n^\intercal\psi.$ Then \((x,y)\neq0\), and \((x,y)\in\ker\tilde{\mathfrak S}_n(\lambda)\).  Indeed,
\[
    \tilde{\mathfrak S}_n(\lambda)\binom{x}{y}
    =
    \calJ\binom{x}{y}
    +
    \tilde\calU_n^\intercal\tilde\calG_0(\lambda)\tilde\calU_n
    \binom{x}{y}
    =
    \tilde\calU_n^\intercal\psi-\tilde\calU_n^\intercal\psi
    =0.
\]
Moreover, we also have that 
\[
    \calJ\binom{x}{y}
    =
    \tilde\calU_n^\intercal\psi
    =
    \binom{\tilde X^\intercal h}{Y^\intercal g},
\]
so $  \tilde X^\intercal h=y .$ We now use the reduced Schur complement, rather than only the norm of \(y\).  Writing the block equation
\(\tilde{\mathfrak S}_n(\lambda)(x,y)^\intercal=0\) and using the notation from \eqref{eq:mismatched-wishart-E-D-def}, the lower block gives
\[
    \tilde D_n(\lambda)y+\tilde E_n(\lambda)^\intercal x=0,
    \qquad
    y=-\tilde D_n(\lambda)^{-1}\tilde E_n(\lambda)^\intercal x .
\]
Substituting this into the upper block gives
\begin{equation}\label{eq:mismatched-wishart-separated-reduced-kernel}
    \tilde\cS_n(\lambda)x=0 .
\end{equation}
In particular \(x\neq0\), because \(\tilde D_n(\lambda)\) is invertible. On \(K\), the convergence in \eqref{eq:mismatched-wishart-bottom-limit} implies that
\(\tilde D_n(z)\) is uniformly invertible and that almost surely,
\[
    -\tilde D_n(z)^{-1}\tilde E_n(z)^\intercal
    \longrightarrow
    -\frac1\tau
    \begin{pmatrix}
        \tilde r(z)^{-1}&0\\
        0&\tilde s(z)^{-1}
    \end{pmatrix}
\]
uniformly for \(z\in K\).  Hence this matrix has singular values bounded above and below by positive deterministic constants, uniformly for \(z\in K\), with overwhelming probability.  Since
\[
    \psi
    =
    -\tilde\calG_0(\lambda)\tilde\calU_n
    \binom{x}{y}
\]
and \(\tilde\calG_0(\lambda)\) and \(\tilde\calU_n\) have operator norm \(O(1)\) uniformly for \(\lambda\in K\), while \(\|\psi\|\ge\|h\|=1\), we have
\(\|(x,y)\|\ge c_0\) for a deterministic \(c_0>0\).  The preceding singular-value comparison between \(x\) and \(y\) therefore gives $  \|x\|\ge c_1>0.$ On the same event, \(\|x\|\) is also bounded above by a deterministic constant, because
\(x=Y^\intercal g\), \(g=\tilde B h\), \(\|h\|=1\), and \(\|Y\|,\|\tilde B\|=O(1)\).

We next show that the two coordinates of \(x\) are separately bounded away from zero.  Suppose, to the contrary, that along a subsequence on the above overwhelming-probability events, one coordinate of \(x\) tends to zero.  Passing to a further subsequence, we may assume
\(\lambda\to z_*\in K\) and \(x\to x_*\).  Note that \(x_*\neq0\).  By the uniform Schur convergence \eqref{eq:mismatched-wishart-Schur-limit} and \eqref{eq:mismatched-wishart-separated-reduced-kernel},
\[
    \tilde\cS(z_*)x_*=0 .
\]
But a nonzero kernel vector of \(\tilde\cS(z)\), for any \(z>1\), cannot have a zero coordinate.  Indeed, the edge formula \eqref{eq:mismatched-wishart-S-edge} gives
\[
    \tilde\cS_{11}(1)=\frac{1-\tau \alpha\tilde\alpha}{\tau(1+\tilde\alpha)}>0,
    \qquad
    \tilde\cS_{22}(1)=\frac{1-\tau\beta\tilde\beta}{\tau(1+\tilde\beta)}>0,
\]
and the monotonicity argument in the proof of Theorem~\ref{thm:mismatched-Wishart-outlier} gives
\(\partial_z\tilde\cS(z)\succ0\) for \(z>1\).  Thus both diagonal entries of \(\tilde\cS(z)\) are strictly positive for \(z>1\).  If, for instance, \(x_{*,1}=0\), then \(\tilde\cS(z_*)x_*=0\) would force \(\tilde\cS_{22}(z_*)=0\), a contradiction; the other coordinate is identical.  Therefore there is a deterministic \(\gamma_x>0\), depending only on \(\varepsilon,\tilde\alpha,\tilde\beta,\alpha,\beta,\rho,\tau\), such that, with overwhelming probability on \(\{\lambda\ge1+\varepsilon\}\), we have $|x_1| \geq \gamma_x$ and $|x_2| \geq \gamma_x$.

The lower block relation now transfers this coordinatewise lower bound from \(x\) to \(y\).  Indeed, the matrix
\[
    -\frac1\tau
    \begin{pmatrix}
        \tilde r(z)^{-1}&0\\
        0&\tilde s(z)^{-1}
    \end{pmatrix}
\]
has diagonal entries bounded away from zero on \(K\), because \(\tilde r(z),\tilde s(z)<0\) and are finite there.  Combining this with the uniform convergence of
\(-\tilde D_n(z)^{-1}\tilde E_n(z)^\intercal\), the upper bound on \(\|x\|\), and equation defining $y$, we obtain another deterministic constant \(\gamma_y>0\) such that, with overwhelming probability on \(\{\lambda\ge1+\varepsilon\}\), we have $ |y_1|\ge\gamma_y$ and $ |y_2|\ge\gamma_y$. It remains to translate this into overlaps with the true spikes.  Since
\(\tilde p=\tilde A^{1/2}e_1\), \(\tilde q=\tilde A^{1/2}e_2\),
\(w=\tilde S h/\|\tilde S h\|\), and $y = \tilde X^\intercal h$ we get
\[
    y
    =
    \|\tilde S h\|
    \begin{pmatrix}
        \sqrt{\alpha\tilde\theta_1}\,\langle\hat a,a\rangle\\[1mm]
        \sqrt{\beta\tilde\theta_2}\,\langle\hat b,b\rangle
    \end{pmatrix},
\]
up to the harmless overall sign of \(w\).  Because \(\|h\|=1\) and
\(\|\tilde S\|_{\op}^2=\|\tilde A\|_{\op}=1+\tilde\kappa\), we get
\[
    |\langle\hat a,a\rangle|^2
    \ge
    \frac{\gamma_y^2}{\alpha\tilde\theta_1(1+\tilde\kappa)},
    \qquad
    |\langle\hat b,b\rangle|^2
    \ge
    \frac{\gamma_y^2}{\beta\tilde\theta_2(1+\tilde\kappa)}.
\]
Taking \(c\) to be the smaller of these two constants proves the fixed-parameter claim.
\end{proof}

\subsubsection{Estimating the signal strengths, proof of Theorem~\ref{thm:recoverySignalStrengths-MainBody} (CSWish)}\label{subsec:strength-estimation-Wishart}

Next we show that we can estimate the signal strengths from the data.

\begin{theorem}[Theorem~\ref{thm:recoverySignalStrengths-MainBody} (CSWish)]\label{thm:wishart-strength-estimation}
Let $n/m\to\tau$, and let $(U,V)$ be sampled from the correlated spiked Wishart model in Section~\ref{subsec:wishart}, with fixed parameters $0<\alpha,\beta\le 1/\sqrt{\tau}$ and correlation $\rho>\kappa$.  Then there exist randomized estimators $\widehat\alpha$ and $\widehat\beta$ such that, with high probability,
\[
    \widehat\alpha=\alpha+o(1),
    \qquad
    \widehat\beta=\beta+o(1).
\]
\end{theorem}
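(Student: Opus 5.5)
We mirror the proof of Theorem~\ref{thm:wigner-strength-estimation}: split the data into two independent noisy copies, run the weak recovery procedure on one copy to obtain directions correlated with the spikes, and read off $\alpha,\beta$ from quadratic statistics of the other copy along those directions. Set $\eta=n^{-1/5}$ and draw $G_U,G_V\in\RR^{n\times m}$ with i.i.d.\ $\calN(0,m^{-1})$ entries, independent of $(U,V)$. Define
\[
U^{(1)}:=\frac{U+\sqrt\eta\,G_U}{\sqrt{1+\eta}},\qquad U^{(2)}:=\frac{U-\eta^{-1/2}G_U}{\sqrt{1+\eta^{-1}}},
\]
and $V^{(1)},V^{(2)}$ analogously. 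The noise parts $Z_U^{(1)},Z_U^{(2)}$ are jointly Gaussian, each with i.i.d.\ $\calN(0,m^{-1})$ entries, and uncorrelated, hence independent. We obtain $U^{(1)}=\sqrt{\alpha_1/m}\,ua^\intercal+Z^{(1)}_U$ with $\alpha_1=\alpha/(1+\eta)\to\alpha$, and $U^{(2)}=\gamma_n\,ua^\intercal/\sqrt m+Z_U^{(2)}$ with $\gamma_n:=\sqrt{\alpha/(1+\eta^{-1})}\asymp\sqrt\eta$. The latent factor $u$ is common to both copies.

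Apply the parameter-free procedure of Theorem~\ref{thm:Spiked-parameter-free} to $(U^{(1)},V^{(1)})$. Since $\rho>\kappa$ and $\kappa(\alpha_1,\beta_1)\to\kappa(\alpha,\beta)$, the pair stays above threshold for large $n$. Theorem~\ref{thm:mismatched-Wishart-outlier} and Lemma~\ref{lem:mismatched-Wishart-separated-informative} then give unit vectors $x,y\in\RR^m$, measurable with respect to $(U^{(1)},V^{(1)})$, with $q_n:=\langle x,a\rangle^2\ge c$ and $\langle y,b\rangle^2\ge c$ w.h.p. The hypothesis $\alpha,\beta\in\tau^{-1/2}(\varepsilon,1]$ of Theorem~\ref{thm:recoverySignalStrengths-MainBody} covers the boundary case $\alpha=\tau^{-1/2}$, where the grid version is needed. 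Conditional on $(U^{(1)},V^{(1)},a,b,u)$, the matrix $Z_U^{(2)}$ is independent of $x$. Expanding,
\[
\|U^{(2)}x\|^2=\gamma_n^2\tfrac{\|u\|^2}{m}\langle a,x\rangle^2+2\gamma_n\langle a,x\rangle\tfrac{u^\intercal Z^{(2)}_Ux}{\sqrt m}+\|Z_U^{(2)}x\|^2 .
\]
Here $\|u\|^2/m\to\tau$, $\|Z_U^{(2)}x\|^2=n/m+O(n^{-1/2+\delta})$, and the cross term is $O(\gamma_n n^{-1/2+\delta})$. Since $\eta\gg n^{-1/2}$, this gives
\[
\|U^{(2)}x\|^2-\tfrac nm=\tau\gamma_n^2q_n(1+o(1)).
\]

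The main obstacle is that $q_n$ is unknown, so a second statistic is needed to cancel it. The Wigner-type bilinear form $x^\intercal U^{(2)}x$ is unavailable, since $U^{(2)}$ is rectangular. We use the fourth-order statistic $\|U^{(2)}U^{(2)\intercal}U^{(2)}x\|$, or more simply the pair
\[
Q_1:=\|U^{(2)}x\|^2-\tfrac nm,\qquad Q_2:=x^\intercal\big((U^{(2)\intercal}U^{(2)})^2-\EE_0(U^{(2)\intercal}U^{(2)})^2\big)x,
\]
where $\EE_0$ denotes the null value, which is a deterministic constant depending only on $n/m$. The leading spike contribution to $Q_2$ is $2\gamma_n^2q_n\tau(1+\tau)+O(\gamma_n^4)$, so $Q_2$ carries the same $\gamma_n^2q_n$ and does not separate $\gamma_n$ from $q_n$. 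Therefore we switch to a different ingredient: estimate $q_n$ directly from the independent copy by a third split. Apply the variance-splitting trick twice to create $U^{(2)},U^{(3)}$ with independent noise, both of strength of order $\sqrt\eta$. Then form $\hat u:=U^{(2)}x$, whose projection onto $u$ is $\gamma_n\langle a,x\rangle\|u\|/\sqrt m$ plus noise, and compute $\hat u^\intercal U^{(3)}x\approx\gamma_n^{(2)}\gamma_n^{(3)}\tau q_n$. The noise term here is of order $\sqrt\eta\,n^{-1/2}$ or smaller. Together with $Q_1=\tau(\gamma^{(2)}_n)^2q_n(1+o(1))$ and the known deterministic ratio $\gamma_n^{(2)}/\gamma_n^{(3)}$, this yields $\gamma^{(2)}_n=\sqrt{Q_1\,(\gamma^{(2)}_n/\gamma^{(3)}_n)/\hat u^\intercal U^{(3)}x}\,(1+o(1))$. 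We then set $\hat\alpha:=\gamma_n^{(2)}$ rescaled by its known deterministic factor, and similarly for $\hat\beta$. The step to check carefully is that every noise fluctuation is $o(\eta)$ relative to the signal, which holds because $\eta=n^{-1/5}\gg n^{-1/2+\delta}$, uniformly on the w.h.p.\ event $q_n\ge c$.
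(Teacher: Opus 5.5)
Your final identification step fails. To leading order, both statistics you combine are $\tau q_n$ times a product of two spike amplitudes,
\[
Q_1=\tau\bigl(\gamma^{(2)}_n\bigr)^2q_n\,(1+o(1)),\qquad \hat u^\intercal U^{(3)}x=\tau\,\gamma^{(2)}_n\gamma^{(3)}_n\,q_n\,(1+o(1)),
\]
where $\gamma^{(2)}_n,\gamma^{(3)}_n$ are \emph{known} multiples of $\sqrt\alpha$. Hence $Q_1/\hat u^\intercal U^{(3)}x\to\gamma^{(2)}_n/\gamma^{(3)}_n$, which is a deterministic constant. The expression under your square root is $(\gamma^{(2)}_n/\gamma^{(3)}_n)^2(1+o(1))$, so your formula returns $\gamma^{(2)}_n/\gamma^{(3)}_n$, not $\gamma^{(2)}_n$.

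This is not a fixable slip. Any statistic whose signal part is quadratic in the amplitudes and proportional to $q_n=\langle x,a\rangle^2$ only sees the product $\alpha q_n$. So no function of $Q_1$ and $\hat u^\intercal U^{(3)}x$ separates $\alpha$ from $q_n$, and the third split buys nothing. (Separately, the cross term $(Z^{(2)}_Ux)^\intercal Z^{(3)}_Ux$ is of order $n^{-1/2}$, not $\sqrt\eta\,n^{-1/2}$; this is harmless.)

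The missing ingredient is a signal term in which $\alpha$ enters to a higher power with the same $q_n$-dependence. It is exactly the $O(\gamma_n^4)$ term you discarded in $Q_2$. Write $\nu=\|u\|^2/m$ and $\tau_n=n/m$. The spike contributes $\gamma_n^2\nu\langle a,x\rangle\,a$ to $U^{(2)\intercal}U^{(2)}x$. Pairing this with itself gives $\gamma_n^4\nu^2\|a\|^2q_n$, because the direction $a$ reappears with $\|a\|^2\approx1$ instead of a second factor $\langle a,x\rangle$.

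A careful expansion gives
\[
Q_1=\nu\gamma_n^2q_n+o(\gamma_n^4),\qquad Q_2=(1+2\tau_n)\nu\gamma_n^2q_n+\nu^2\|a\|^2\gamma_n^4q_n+o(\gamma_n^4).
\]
So your leading constant $2\tau(1+\tau)$ should be $\tau(1+2\tau)$. All fluctuations here are $O(n^{-1/2+\delta})=o(\gamma_n^4)$, since $\gamma_n^4\asymp n^{-2/5}$. Therefore
\[
\frac{Q_2-(1+2\tau_n)Q_1}{\tau_n\,Q_1}=\gamma_n^2\,(1+o(1)),\qquad (1+\eta^{-1})\gamma_n^2=\alpha .
\]

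This is precisely the paper's estimator. With $T=U^{(2)\intercal}U^{(2)}-\tau_nI_m$, its $A=x^\intercal Tx$ is your $Q_1$. Its $B=\|Tx\|^2-\tau_n$ equals $Q_2-2\tau_nQ_1+O(n^{-1})$, so $B-A$ is the combination above.

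The order-$\gamma_n^2$ terms must cancel to within $o(\gamma_n^4)$. This forces the coefficient $1+2\tau_n$ to use the observable ratio $n/m$ rather than $\tau$. It also requires bounding each Gaussian fluctuation term individually, as the paper does: for example $\|Z^{(2)\intercal}_Uu\|^2/m-\nu$, and the projections of $(Z^{(2)\intercal}_UZ^{(2)}_U-\tau_nI_m)x$ onto $a$ and onto $Z^{(2)\intercal}_Uu$.
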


\begin{proof}[Proof of Theorem~\ref{thm:wishart-strength-estimation}, Theorem~\ref{thm:recoverySignalStrengths-MainBody} (CSWish)]
Let $\eta=n^{-1/5}$ and draw, independently of $(U,V)$, two independent matrices
$G_U,G_V\in\RR^{n\times m}$ with independent $\calN(0,1/m)$ entries.  Define
\[
U^{(1)}:=\frac{U+\sqrt{\eta}G_U}{\sqrt{1+\eta}},
\qquad
U^{(2)}:=\frac{U-\eta^{-1/2}G_U}{\sqrt{1+\eta^{-1}}},
\]
and define $V^{(1)},V^{(2)}$ analogously.  Apply the weak recovery algorithm to
$(U^{(1)},V^{(1)})$, and let $(\bar a,\bar b)$ be its output.  Normalize
\[
    x:=\frac{\bar a}{\|\bar a\|},
    \qquad
    y:=\frac{\bar b}{\|\bar b\|},
\]
with an arbitrary unit-vector choice if one of the two denominators is zero. Let $\tau_n:=n/m$, and set $ S_U:=(U^{(2)})^\intercal U^{(2)}$ and $  S_V:=(V^{(2)})^\intercal V^{(2)}$. Define
\begin{equation}\label{eq:wishart-strength-estimator}
\begin{aligned}
    A_U&:=x^\intercal(S_U-\tau_n I_m)x,
    &
    B_U&:=\|(S_U-\tau_n I_m)x\|^2-\tau_n,\\
    A_V&:=y^\intercal(S_V-\tau_n I_m)y,
    &
    B_V&:=\|(S_V-\tau_n I_m)y\|^2-\tau_n .
\end{aligned}
\end{equation}
On the event that $A_U,A_V$ are nonzero, set
\begin{equation}\label{eq:wishart-strength-estimator-final}
    \widehat\alpha
    :=\frac{1+\eta^{-1}}{\tau_n}\left(\frac{B_U}{A_U}-1\right),
    \qquad
    \widehat\beta
    :=\frac{1+\eta^{-1}}{\tau_n}\left(\frac{B_V}{A_V}-1\right),
\end{equation}
and set the corresponding estimator equal to $0$ when its denominator is zero.

We prove the claim for $\widehat\alpha$; the proof for $\widehat\beta$ is identical.
Let $Z_U$ denote the noise matrix in $U$, and write
\[
    Z^{(1)}:=\frac{Z_U+\sqrt{\eta}G_U}{\sqrt{1+\eta}},
    \qquad
    Z^{(2)}:=\frac{Z_U-\eta^{-1/2}G_U}{\sqrt{1+\eta^{-1}}}.
\]
The pair $(Z^{(1)},Z^{(2)})$ is jointly Gaussian.  Each marginal has independent
$\calN(0,1/m)$ entries, and their covariance is zero; hence $Z^{(1)}$ and $Z^{(2)}$
are independent.  Consequently the first $U^{(1)}$ and $V^{(1)}$ are again a spiked Wishart instance,
with effective strength $\alpha/(1+\eta)$, and
\[
    U^{(2)}
    =
    \sqrt{\frac{\gamma}{m}}\,u a^\intercal+Z^{(2)},
    \qquad
    \gamma:=\frac{\alpha}{1+\eta^{-1}} .
\]
Since $\eta\to0$, the parameters of the first split converge to those of the
original model.  Hence, by $\rho>\kappa$ and the weak recovery theorem, there is a
deterministic constant $c>0$ such that, with high probability,
\begin{equation}\label{eq:wishart-split-overlap}
    \langle x,a\rangle^2\ge c
    \quad \text{and} \quad
    \langle y,b\rangle^2\ge c .
\end{equation}

We work on this event and condition on $(U^{(1)},V^{(1)})$ and on the latent
variables.  Then $x$ is fixed and independent of $Z^{(2)}$.  In the rest of the
proof, write $ S:=S_U$, $A:=A_U$, $ B:=B_U$ and $s:=\langle x,a\rangle$. Define
\[
    T:=S-\tau_n I_m,\qquad
    H:=(Z^{(2)})^\intercal Z^{(2)}-\tau_n I_m,\qquad
    \nu:=\frac{\|u\|^2}{m},\qquad
    r:=\frac{(Z^{(2)})^\intercal u}{\sqrt m}.
\]
Then, we have that
\begin{equation}\label{eq:wishart-T-decomposition}
    T
    =
    H+\sqrt{\gamma}\,(a r^\intercal+r a^\intercal)
      +\gamma\nu\,aa^\intercal .
\end{equation}
Observe that $\gamma\asymp \eta=n^{-1/5}$ and hence $\gamma^2\asymp n^{-2/5}$. We next record the concentration estimates needed below.  Let $\delta_n:=n^{-9/20},$ for example. On the overwhelming-probability event where $\|a\|=1+o(1)$ and $\nu=\tau_n+o(1)$, we have,
conditional on $(U^{(1)},V^{(1)})$ and on the latent variables,
\begin{equation}\label{eq:wishart-direct-gaussian-bounds}
   |x^\intercal Hx| +|\langle Hx,a\rangle|
    +|\langle Hx,r\rangle|
    +\bigl|\|Hx\|^2-\tau_n\bigr| 
    +|\langle x,r\rangle|
    +|\langle a,r\rangle|
    +\bigl|\|r\|^2-\nu\bigr|
    \le \delta_n.
\end{equation}
with overwhelming probability.  We justify this briefly.  By rotational invariance of the
Gaussian noise in the column space, after conditioning on $x,a,u$ we may assume
$x=e_1$.  Let $z_1,\ldots,z_m\in\RR^n$ denote the columns of $Z^{(2)}$.  Then
\[
    H e_1
    =
    \bigl(\|z_1\|^2-\tau_n,\,
          z_2^\intercal z_1,\ldots,
          z_m^\intercal z_1\bigr)^\intercal,
    \qquad
    r_j=\frac{z_j^\intercal u}{\sqrt m}.
\]
The bounds for $x^\intercal Hx$, $\langle x,r\rangle$, $\langle a,r\rangle$, and
$\|r\|^2-\nu$ follow from standard Gaussian and chi-square tail bounds.

For $\langle Hx,a\rangle$, condition on $z_1$.  The contribution from the columns
$z_j$, $j\ge2$, is Gaussian with conditional variance
\[
    \frac{\|z_1\|^2}{m}\sum_{j=2}^m a_j^2=O(n^{-1})
\]
with overwhelming probability, while the remaining contribution is
$a_1(\|z_1\|^2-\tau_n)=O(n^{-1/2})$.  Thus
$|\langle Hx,a\rangle|\le \delta_n$ with overwhelming probability. Similarly,
\[
    \|Hx\|^2
    =
    (\|z_1\|^2-\tau_n)^2
    +
    \sum_{j=2}^m (z_j^\intercal z_1)^2 .
\]
Conditional on $z_1$, the second term has law
$(\|z_1\|^2/m)\chi_{m-1}^2$.  Since
$\|z_1\|^2=\tau_n+O(n^{-1/2})$, this gives $ \|Hx\|^2-\tau_n=O(n^{-1/2})$, and hence the desired $\delta_n$ bound with overwhelming probability. Finally,
\[
    \langle Hx,r\rangle
    =
    (\|z_1\|^2-\tau_n)\frac{z_1^\intercal u}{\sqrt m}
    +
    \sum_{j=2}^m
    (z_j^\intercal z_1)\frac{z_j^\intercal u}{\sqrt m}.
\]
Conditional on $z_1$ and $u$, the summands in the second term are independent
second order Gaussians.  Their total conditional mean is
\[
    \frac{m-1}{m^{3/2}}\,z_1^\intercal u=O(n^{-1/2}),
\]
and their total conditional variance is $O(n^{-1})$.  Standard Gaussian
moment bounds therefore imply $ \langle Hx,r\rangle=O(n^{-1/2})$,  and hence $|\langle Hx,r\rangle|\le\delta_n$ with overwhelming probability.  This proves
\eqref{eq:wishart-direct-gaussian-bounds}.  Since $\delta_n = o(\gamma^2)$, all errors controlled by \eqref{eq:wishart-direct-gaussian-bounds} are negligible at the second order scale $\gamma^2$. We now estimate values $A$ and $B$.  From \eqref{eq:wishart-T-decomposition},
\begin{equation*}
	 A
    =
    x^\intercal T x =
    x^\intercal Hx
      +2\sqrt{\gamma}\,s\langle x,r\rangle
      +\gamma\nu s^2  =
    \gamma\nu s^2+o(\gamma^2)
\end{equation*}
  Since $\nu=\tau_n+o(1)$, this also implies $    A=\tau_n\gamma s^2+o(\gamma)$. Next,
\[
    Tx
    =
    Hx+\sqrt{\gamma}\bigl(\langle x,r\rangle a+s r\bigr)
      +\gamma\nu s a .
\]
Therefore, we can write the 
\begin{align*}
    B
    =
    \|Tx\|^2-\tau_n
    &=
    \|Hx\|^2-\tau_n 
      +2\sqrt{\gamma}
        \Bigl(
            \langle x,r\rangle\langle Hx,a\rangle
            +s\langle Hx,r\rangle
        \Bigr) 
      +2\gamma\nu s\langle Hx,a\rangle  \\
    &\quad
      +\gamma\|\langle x,r\rangle a+s r\|^2  
      +2\gamma^{3/2}\nu s
        \langle \langle x,r\rangle a+s r,a\rangle  
      +\gamma^2\nu^2s^2\|a\|^2 .
\end{align*}

Using \eqref{eq:wishart-direct-gaussian-bounds}, all terms above are
$o(\gamma^2)$ except for the leading part of
$\gamma\|\langle x,r\rangle a+s r\|^2$ and the final signal squared term.  More
precisely,
\[
    \gamma\|\langle x,r\rangle a+s r\|^2
    =
    \gamma\left(
        \langle x,r\rangle^2\|a\|^2
        +2s\langle x,r\rangle\langle a,r\rangle
        +s^2\|r\|^2
    \right) =
    \gamma\nu s^2+o(\gamma^2),
\]
Hence, we conclude that
\begin{equation}\label{eq:wishart-B-asymptotic}
    B
    =
    \gamma\nu s^2
    +
    \gamma^2\nu^2s^2\|a\|^2
    +
    o(\gamma^2).
\end{equation}
Subtracting $A$ in its sharper form
$A=\gamma\nu s^2+o(\gamma^2)$ from \eqref{eq:wishart-B-asymptotic}, and using
$\nu=\tau_n+o(1)$ and $\|a\|^2=1+o(1)$, gives
\[
    B-A
    =
    \gamma^2\tau_n^2s^2+o(\gamma^2).
\]
On the weak recovery event \eqref{eq:wishart-split-overlap}, we have $s^2\ge c$.
Thus the denominator $A$ is nonzero with high probability, and
\[
    \frac{1}{\tau_n}\left(\frac{B}{A}-1\right)
    =
    \frac{1}{\tau_n}\frac{B-A}{A}
    =
    \gamma(1+o(1)).
\]
Therefore $\widehat\alpha = \alpha+o(1)$  with high probability.  The proof for $\widehat\beta$ is identical.
\end{proof}

%% file: lower-bounds.tex
\section{Lower bounds for CSWig and CSWish}

In this section, we prove information theoretic lower bounds for correlated spiked models CSWig and CSWish. The proof of the Wigner case can be deduced from \cite{YLS25}, but we also provide a self-contained proof here for completeness.

\subsection{Correlated spiked Wigner model}

Recall the definition of the correlated spiked Wigner model Section~\ref{subsec:wigner}. In this section, we will be adopting the notation introduced in Section~\ref{subsec:wigner}.  For each $\rho \in [0,1)$, let $\mathcal P_\rho$ denote the law of the correlated spiked Wigner
model from Section~\ref{subsec:wigner} with spike-correlation parameter $\rho$, and let $\PP_\rho$ be its marginal on $(U,V)$. Thus
$\PP_{\pl} = \PP_\rho$ and $\PP_{\nul} = \PP_0$. Let $\QQ$ denote the pure-noise law,
under which
\[
U = Z_a,
\qquad
V = Z_b,
\]
with $Z_a,Z_b \sim \mathrm{GOE}(1/n)$ independent. For $\rho \in [0,1)$, write
\[
L_\rho := \frac{\mathrm{d}\PP_\rho}{\mathrm{d}\QQ}.
\]

\subsubsection{Impossibility of strong detection}

In this subsection, we will show that below threshold, strong detection is impossible.

\begin{prop}\label{prop:wigner-second-moment-under-Q}
Suppose $\rho < \kappa$. Then there exists $\varepsilon > 0$ and  $ L_{\rho,\varepsilon} \geq 0 $ such that
\[
0 \leq  L_{\rho,\varepsilon} \leq L_\rho,
\qquad
\EE_{\QQ}[ L_{\rho,\varepsilon}]=1-o(1),
\qquad
\EE_{\QQ}[ L_{\rho,\varepsilon}^2]=O(1).
\]
\end{prop}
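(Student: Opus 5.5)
We will run a truncated second moment argument, with a truncated likelihood ratio. Write $L_\rho = \EE_{(a,b)}[\Lambda(a,b;U,V)]$, where $\Lambda$ is the conditional likelihood ratio of $(U,V)$ given the spikes. Under $\QQ$, each Wigner factor is $\exp\{\tfrac{n\alpha}{2}\langle a,Ua\rangle - \tfrac{n\alpha^2}{4}\|a\|^4\}$, and similarly for $V$ with $\beta, b$; this uses the $\mathrm{GOE}(1/n)$ normalization.

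For two independent replicas $(a^1,b^1),(a^2,b^2)$, the Gaussian computation gives
\[
\EE_\QQ[\Lambda(a^1,b^1)\Lambda(a^2,b^2)] = \exp\Big\{\tfrac{n\alpha^2}{2}\langle a^1,a^2\rangle^2+\tfrac{n\beta^2}{2}\langle b^1,b^2\rangle^2\Big\}.
\]
The untruncated second moment $\EE_\QQ L_\rho^2$ is then $\EE \exp\{\tfrac n2(\alpha^2\theta^2+\beta^2\phi^2)\}$, with $\theta=\langle a^1,a^2\rangle$ and $\phi=\langle b^1,b^2\rangle$.

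\textbf{Overlap law at fluctuation scale.} Since $(a^i,b^i)$ are $\rho$-correlated Gaussian pairs, $\sqrt n(\theta,\phi)$ is asymptotically a centered Gaussian with covariance $\begin{pmatrix}1&\rho^2\\ \rho^2&1\end{pmatrix}$. Indeed, $\theta=\frac1n\sum_i g^1_ig^2_i$ with $\EE[g^1_ig^2_i h^1_ih^2_i]=\rho^2$. The local contribution is therefore finite exactly when the quadratic form
\[
\mathrm{diag}(1-\alpha^2,1-\beta^2)\text{-type precision } \Sigma^{-1}-\mathrm{diag}(\alpha^2,\beta^2) \succ 0
\]
holds, i.e. when $\Sigma^{-1}=\frac{1}{1-\rho^4}\begin{pmatrix}1&-\rho^2\\-\rho^2&1\end{pmatrix}$ minus $\mathrm{diag}(\alpha^2,\beta^2)$ is positive definite. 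Its determinant condition reduces to $(1-\alpha^2(1-\rho^4))(1-\beta^2(1-\rho^4))>\rho^4$. Rearranged, this should be equivalent to $\rho^4\alpha^2\beta^2<(1-\alpha^2)(1-\beta^2)$, i.e. $\rho<\kappa$. (I expect this algebra to check out; the threshold matches the CCA-style computation.)

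\textbf{Truncation.} Large deviations of the norms and overlaps, for example $\|a\|^2$ far from $1$ or $|\theta|$ of order one, can make the untruncated moment blow up. We therefore define
\[
L_{\rho,\varepsilon}:=\EE_{(a,b)}\big[\Lambda(a,b;U,V)\mathbf 1_{\mathcal G_\varepsilon}(a,b,U,V)\big],
\]
where $\mathcal G_\varepsilon$ is the good event: $|\|a\|^2-1|,|\|b\|^2-1|,|\langle a,b\rangle-\rho|\le\varepsilon$, together with $|\langle a,Ua\rangle-\alpha|\le\varepsilon$ and $|\langle b,Vb\rangle-\beta|\le\varepsilon$ (planted-side typicality). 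By construction $0\le L_{\rho,\varepsilon}\le L_\rho$. Moreover $\EE_\QQ L_{\rho,\varepsilon}=\mathcal P_\rho(\mathcal G_\varepsilon)=1-o(1)$ by the law of large numbers and GOE concentration under the planted law.

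\textbf{Second moment bound.} Under the truncation, the second moment is bounded by
\[
\EE\big[e^{\frac n2(\alpha^2\theta^2+\beta^2\phi^2)}\mathbf 1\{\text{replica norms typical}\}\big]
\]
plus the effect of the $U$-linear constraints. To handle those constraints, we tilt: conditional on the replicas, under the tilted measure $U$ has mean $\frac{\alpha}{1}(a^1a^{1\top}+a^2a^{2\top})$-type shift. On the event $|\theta|\ge\delta$, the constraint $\langle a^1,Ua^1\rangle\approx\alpha$ is then atypical, which gives an exponential penalty $e^{-cn\theta^2}$ that compensates the growth. We split into $|\theta|,|\phi|\le\delta$ and the complement.

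\textbf{Small overlaps.} On $|\theta|,|\phi|\le\delta$, a Laplace / large-deviation computation for the density of $(\theta,\phi)$ applies. Conditioned on typical norms, this density is $\approx \exp\{-n I(\theta,\phi)\}$ with $I(\theta,\phi)=\frac12(\theta,\phi)\Sigma^{-1}(\theta,\phi)^\top+O(|\cdot|^3)$. The exponent $\frac12(\alpha^2\theta^2+\beta^2\phi^2)-I$ is negative definite near $0$ when $\rho<\kappa$, so this region contributes $O(1)$.

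\textbf{Large overlaps.} On the complementary region $\max(|\theta|,|\phi|)>\delta$, the main obstacle is a global bound on $\frac12(\alpha^2\theta^2+\beta^2\phi^2)-I(\theta,\phi)$, possibly after the tilt correction from the $U,V$ truncation. I would first try to handle it via the truncation on $\langle a,Ua\rangle$, following the standard spiked-Wigner argument: this restricts the effective exponent to $\frac{\alpha^2}{2}\theta^2$ only up to $|\theta|\lesssim$ small, and beyond that the second moment is controlled by $e^{-cn}$. If that fails, I would fall back on the approach in \cite{YLS25}, i.e. conditional second moment with the mutual-information bound.

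The conclusion is $\EE_\QQ L_{\rho,\varepsilon}^2=O(1)$.
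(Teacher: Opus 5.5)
Your setup is correct. The replica identity $\EE_\QQ[\Lambda_1\Lambda_2]=\exp\{\frac n2(\alpha^2\theta^2+\beta^2\phi^2)\}$ holds, as does the first-moment identity $\EE_\QQ L_{\rho,\varepsilon}=\mathcal P_\rho(\mathcal G_\varepsilon)$. Your determinant algebra also checks out: it does reduce to $(1-\alpha^2)(1-\beta^2)>\alpha^2\beta^2\rho^4$.

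The gap is the large-overlap region $\max(|\theta|,|\phi|)>\delta$. You leave it open with a tentative plan and a fallback, and the mechanism you propose there does not work as stated. Under the two-replica tilt, $U$ has mean $\alpha(a^1a^{1\intercal}+a^2a^{2\intercal})$, so $\langle a^1,Ua^1\rangle$ concentrates at $\alpha(\|a^1\|^4+\theta^2)$ with $O(n^{-1/2})$ fluctuations. The constraint $|\langle a^1,Ua^1\rangle-\alpha|\le\varepsilon$ therefore costs nothing once $\alpha\theta^2\lesssim\varepsilon$. Otherwise it costs only about $\exp\{-cn(\alpha\theta^2-\varepsilon)^2\}$, which is quartic in $\theta$ rather than the $e^{-cn\theta^2}$ you claim, so it is not what controls this region. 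Your small-overlap Laplace step is also only heuristic: getting $O(1)$ rather than $e^{o(n)}$ would need local-CLT control of the overlap density.

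The missing idea is that no large-deviation analysis is needed, because the prior is Gaussian. Condition on the first replica $(a^1,b^1)$. Since $(a^2,b^2)$ is an independent Gaussian vector with covariance $\frac1n\bigl(\begin{smallmatrix}I_n&\rho I_n\\ \rho I_n&I_n\end{smallmatrix}\bigr)$, the vector $Z=\sqrt n(\langle a^1,a^2\rangle,\langle b^1,b^2\rangle)$ is \emph{exactly} $\mathcal N(0,\Gamma)$, where
\[
\Gamma=\begin{pmatrix}\|a^1\|^2&\rho\langle a^1,b^1\rangle\\ \rho\langle a^1,b^1\rangle&\|b^1\|^2\end{pmatrix}.
\]
With $D=\mathrm{diag}(\alpha^2,\beta^2)$, this gives, globally in the overlaps,
\[
\EE\bigl[e^{\frac12 Z^\intercal DZ}\,\big|\,a^1,b^1\bigr]=\det\bigl(I_2-D^{1/2}\Gamma D^{1/2}\bigr)^{-1/2}.
\]
So your ``local'' quadratic form is in fact the exact one, with no cubic corrections and no separate large-overlap regime.

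It therefore suffices to truncate only on $\|a\|^2,\|b\|^2,\langle a,b\rangle$, an event not depending on $(U,V)$, and to drop the indicator of the second replica. On the first replica's typical event the determinant equals $(1-\alpha^2\|a^1\|^2)(1-\beta^2\|b^1\|^2)-\alpha^2\beta^2\rho^2\langle a^1,b^1\rangle^2$. For $\varepsilon$ small this is at least some $\delta>0$, precisely because $\rho<\kappa$, and hence $\EE_\QQ L_{\rho,\varepsilon}^2\le\delta^{-1/2}$ directly.

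The norm truncation is what is genuinely needed. Without it the conditional determinant degenerates when $\alpha^2\|a^1\|^2\ge 1$, and the untruncated second moment is infinite. Your $(U,V)$-dependent constraints are harmless but unnecessary, since they can simply be dropped in the upper bound.
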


\begin{proof}
Since $\rho < \kappa$, we have
\[
\Delta := (1-\alpha^2)(1-\beta^2) - \alpha^2\beta^2\rho^4 > 0.
\]
Choose $\varepsilon,\delta > 0$ to be a sufficiently small constant so that 
\begin{gather*}
    (1-\alpha^2 x)(1-\beta^2 y) - \alpha^2\beta^2\rho^2 z^2 > \delta \\
    \alpha^2(1+\varepsilon) < 1 \\
\beta^2(1+\varepsilon) < 1 
\end{gather*}
whenever $x, y,z$ satisfy $|x- 1|, |y-1|, |z - \rho| \leq \varepsilon$.  Let $\pi_\rho$ denote the law of the planted spike pair $(a,b)$, and for fixed $(a,b)$ write the conditional likelihood ratio
\[
\Lambda(a,b;U,V)
:=
\exp\Biggl\{
\frac{n\alpha}{2}\ang{U,aa^\intercal}
-
\frac{n\alpha^2}{4}\norm{a}^4
+
\frac{n\beta}{2}\ang{V,bb^\intercal}
-
\frac{n\beta^2}{4}\norm{b}^4
\Biggr\}.
\]
Thus $L_\rho = \EE_{\pi_\rho}[\Lambda(a,b;U,V)]$. Define the event where spikes are controlled
\[
\calE
:=
\set{
\big|\norm{a}^2-1\big|\leq \varepsilon,
\ \big|\norm{b}^2-1\big|\leq \varepsilon,
\ \big|\ang{a,b}-\rho\big|\leq \varepsilon
}.
\]
and note that $\pi_\rho(\calE)=1-o(1)$ by law of large numbers. Now define the truncated likelihood ratio
\[
 L_{\rho,\varepsilon}(U,V)
:=
\EE_{\pi_\rho}\Bigl[\Lambda(a,b;U,V)\,\mathbf 1_{\calE}(a,b)\Bigr].
\]
Clearly $0\leq  L_{\rho,\varepsilon}\leq L_\rho$. Moreover, observe that
\[
\EE_{\QQ}[ L_{\rho,\varepsilon}]
=
\pi_\rho(\calE)=1-o(1)\,.
\]

It remains to bound the second moment. Let $(a_1,b_1)$ and $(a_2,b_2)$ be two independent copies of $(a,b)\sim\pi_\rho$. Expanding the square and doing Gaussian moment generating function computation $(U,V)\sim\QQ$ gives
\[
\EE_{\QQ}[ L_{\rho,\varepsilon}^2]
=
\EE \Biggl[
\exp\set{
\frac{n\alpha^2}{2}\ang{a_1,a_2}^2
+
\frac{n\beta^2}{2}\ang{b_1,b_2}^2}
\mathbf 1_{\calE}(a_1,b_1)
\mathbf 1_{\calE}(a_2,b_2)
\Biggr].
\]
Now condition on $(a_1,b_1)$ and set
\[
X := \sqrt n\,\ang{a_1,a_2},
\qquad
Y := \sqrt n\,\ang{b_1,b_2}.
\]
Because $(a_2,b_2)$ is jointly Gaussian with covariance
$\frac{1}{n}\bigl(\begin{smallmatrix}I_n&\rho I_n\\ \rho I_n&I_n\end{smallmatrix}\bigr)$ and is independent of $(a_1,b_1)$, the pair $(X,Y)$ is conditionally centered Gaussian with covariance matrix
\[
\Gamma(a_1,b_1)
=
\begin{pmatrix}
\norm{a_1}^2 & \rho\,\ang{a_1,b_1}\\
\rho\,\ang{a_1,b_1} & \norm{b_1}^2
\end{pmatrix}.
\]
Let $D:=\diag(\alpha^2,\beta^2)$. Then the Gaussian quadratic-moment formula yields
\[
\EE\Bigl[
\exp\set{\frac{\alpha^2}{2}X^2 + \frac{\beta^2}{2}Y^2}
\,\Big|\, a_1,b_1
\Bigr]
=
\det \bigl(I_2 - D^{1/2}\Gamma(a_1,b_1)D^{1/2}\bigr)^{-1/2},
\]
provided $I_2 - D^{1/2}\Gamma(a_1,b_1)D^{1/2}$ is positive definite. On $\calE(a_1,b_1)$ this holds, because its diagonal entries are positive by our choice of $\varepsilon$, and its determinant equals
\begin{equation*}
\bigl(1-\alpha^2\norm{a_1}^2\bigr)
\bigl(1-\beta^2\norm{b_1}^2\bigr)
-
\alpha^2\beta^2\rho^2\ang{a_1,b_1}^2,
\end{equation*}
which is at least $\delta$ by construction. Therefore,
\begin{equation*}
\EE_{\QQ}[ L_{\rho,\varepsilon}^2]
\le \delta^{-1/2}=O(1)\,.
\end{equation*}
This proves the claim.
\end{proof}

\begin{theorem}\label{thm:wigner-TV-bound}
If $\rho < \kappa$, then we have that
\begin{equation*}
        \TV(\PP_\rho, \QQ) = 1 - \Omega(1) \quad \text{ and } \quad  \TV(\PP_\rho, \PP_0) = 1 - \Omega(1).
    \end{equation*}
\end{theorem}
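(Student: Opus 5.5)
The plan is to deduce both total variation bounds from Proposition~\ref{prop:wigner-second-moment-under-Q} by a truncated second moment argument. There is no need to control $\EE_\QQ L_\rho^2$ directly, which may blow up because of atypical spikes.

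For the first bound, let $\tilde L := L_{\rho,\varepsilon}$ from the proposition. For any event $A$,
\[
\PP_\rho(A) \ge \EE_\QQ[\tilde L\,\mathbf 1_A] \ge \EE_\QQ[\tilde L] - \EE_\QQ[\tilde L\,\mathbf 1_{A^c}] \ge 1-o(1) - \bigl(\EE_\QQ \tilde L^2\bigr)^{1/2}\QQ(A^c)^{1/2}.
\]
With $C:=\sup_n \EE_\QQ \tilde L^2<\infty$, this gives $\PP_\rho(A)\ge 1-o(1)-\sqrt{C\,\QQ(A^c)}$. If $\TV(\PP_\rho,\QQ)\to1$ along a subsequence, there would be events $A_n$ with $\QQ(A_n)\to1$ and $\PP_\rho(A_n)\to0$. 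Then $\QQ(A_n^c)\to0$, and the display forces $\PP_\rho(A_n)\to1$, a contradiction. Hence $\TV(\PP_\rho,\QQ)\le 1-\Omega(1)$.

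For the second bound, the natural route is the same contiguity-type reasoning, applied once to $\PP_\rho$ and once to $\PP_0$. Since $0<\kappa$, Proposition~\ref{prop:wigner-second-moment-under-Q} also applies at $\rho=0$, giving a truncation $\tilde L_0\le L_0$ with the same two properties. The argument above then yields a quantitative statement for each law: if $\QQ(B)\le \delta$, then $\PP_\rho(B)\le o(1)+\sqrt{C\delta}$, and likewise for $\PP_0$. So both $\PP_\rho$ and $\PP_0$ are contiguous to $\QQ$.

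Contiguity to a common $\QQ$ alone does not bound $\TV(\PP_\rho,\PP_0)$ away from $1$. Suppose events $A_n$ had $\PP_\rho(A_n)\to1$ and $\PP_0(A_n)\to0$. I would first derive a lower bound $\QQ(A_n)\ge c>0$ from $\PP_\rho(A_n)\to 1$ together with $\tilde L\le L_\rho$ and the second moment bound: indeed $1-o(1)\le \EE_\QQ[\tilde L\mathbf 1_{A_n}]+o(1)\le \sqrt{C\,\QQ(A_n)}+o(1)$. Symmetrically, $\QQ(A_n^c)\ge c$. This is consistent and gives no contradiction yet.

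The step I expect to be the main obstacle is closing this gap. To do so I would bound the cross moment $\EE_\QQ[\tilde L\,\tilde L_0]$ and show it is $1+o(1)$-comparable, in the sense of a second moment method between the two truncated ratios. This gives $\EE_{\PP_0}$-control of $\tilde L/L_0$. The cleaner alternative is to avoid this altogether: note that $\TV(\PP_\rho,\PP_0)\le \TV(\PP_\rho,\QQ)+\TV(\PP_0,\QQ)$ is too weak, so instead I would compute $\EE_{\PP_0}[(\tilde L/L_0)^2]$ via the same Gaussian determinant calculation as in the proposition. The cross overlap computation yields $\det(I_2-D^{1/2}\Gamma D^{1/2})^{-1/2}$, with $\Gamma$ now mixing correlations $\rho$ and $0$. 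Its determinant $(1-\alpha^2x)(1-\beta^2y)$ is positive under $\alpha,\beta<1$, so the bound stays $O(1)$. With that in hand, the first argument repeats verbatim with $\QQ$ replaced by $\PP_0$.
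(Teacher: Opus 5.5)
Your treatment of $\TV(\PP_\rho,\QQ)$ is correct and is the paper's argument. The truncated ratio $L_{\rho,\varepsilon}$, with $\EE_\QQ[L_{\rho,\varepsilon}]=1-o(1)$ and $\EE_\QQ[L_{\rho,\varepsilon}^2]=O(1)$, makes $\PP_\rho$ contiguous with respect to $\QQ$. You are also right that contiguity of $\PP_\rho$ and $\PP_0$ with respect to a common $\QQ$ does not bound $\TV(\PP_\rho,\PP_0)$.

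The gap is in your proposed fix. The quantity you need is $\EE_{\PP_0}[(\tilde L/L_0)^2]=\EE_\QQ[\tilde L^2/L_0]$. It contains the reciprocal of the mixture $L_0=\EE_{\pi_0}[\Lambda]$, so the Gaussian moment generating function computation cannot reach it. The determinant you describe, with $\Gamma=\diag(\norm{a_1}^2,\norm{b_1}^2)$ from one replica drawn from $\pi_\rho$ and one from $\pi_0$, computes the cross moment $\EE_\QQ[\tilde L\,\tilde L_0]$ instead. Bounds on second and cross moments under $\QQ$ are compatible with $\tilde L$ and $\tilde L_0$ living on disjoint sets: take $\QQ$ uniform on $\{0,1\}$ with densities $2\mathbf 1_{\{0\}}$ and $2\mathbf 1_{\{1\}}$. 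So these bounds give no control of $\tilde L/L_0$ under $\PP_0$.

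A sanity check confirms the problem. Your determinant $(1-\alpha^2x)(1-\beta^2y)$ does not involve $\rho$, so the same reasoning would give $\TV(\PP_\rho,\PP_0)\le 1-\Omega(1)$ for $\rho>\kappa$ as well. That is false. For $\rho>\kappa$, $\lambda_1(W)\to\lambda_{\out}>1$ under $\PP_\rho$. Under $\PP_0$, the limiting Schur complement is $\cS(z)=\diag(1+\alpha^2r(z),1+\beta^2s(z))\succeq\min\{1-\alpha^2,1-\beta^2\}\,I_2$ on $(1,\infty)$. So no outlier appears under $\PP_0$, and $\lambda_1(W)$ separates the two laws.

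The missing ingredient is the reverse direction: $\QQ$ contiguous with respect to $\PP_0$. No second-moment bound under $\QQ$ can produce it. Such a bound yields uniform integrability of $L_0$ under $\QQ$, i.e.\ $\PP_0$ contiguous with respect to $\QQ$. By Le Cam's first lemma, the reverse direction requires that every limit law of $L_0$ under $\QQ$ put no mass at $0$. That is a statement about fluctuations of the likelihood ratio, not its moments.

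The paper imports this from the single-view theory. Under $\PP_0$ the two views are independent rank-one spiked Wigner models with $\alpha,\beta<1$. The subcritical one-view contiguity theorem \cite{PWBAM18} then gives $\QQ$ contiguous with respect to $\PP_0$. Chaining with your first step: $\PP_0(A_n)\to0$ implies $\QQ(A_n)\to0$, which implies $\PP_\rho(A_n)\to0$. Hence $\PP_\rho$ is contiguous with respect to $\PP_0$, and $\TV(\PP_\rho,\PP_0)\to1$ along a subsequence is impossible.
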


\begin{proof}
Recall that $\alpha < 1$ and $\beta < 1$. Under $\PP_0$, the two views are independent, and each marginal is a one-view rank one spiked Wigner model below the BBP threshold. The standard one-view subcritical Wigner contiguity theorem therefore implies that $\QQ$ is contiguous with respect to $\PP_{0}$ (see, e.g. \cite{PWBAM18}).  

We will show that $\PP_\rho$ is contiguous with respect to $\QQ$. By transitivity, this will imply that $\PP_{\rho}$ is contiguous with respect to $\PP_{0}$, which is enough to rule out asymptotically powerful tests.

Let $ L_{\rho,\varepsilon}$ be given by Proposition~\ref{prop:wigner-second-moment-under-Q}. If $A_n$ is any sequence with $\QQ(A_n)=o(1)$, then
\begin{align*}
\PP_\rho(A_n)
&=
\EE_{\QQ}[L_\rho\mathbf 1_{A_n}]
\\
&=
\EE_{\QQ}\sqb{(L_\rho- L_{\rho,\varepsilon})\mathbf 1_{A_n}}
+
\EE_{\QQ}\sqb{ L_{\rho,\varepsilon}\mathbf 1_{A_n}}
\\
&\leq
\EE_{\QQ}[L_\rho- L_{\rho,\varepsilon}]
+
\sqrt{\EE_{\QQ}[ L_{\rho,\varepsilon}^2]\QQ(A_n)}
\\
&=
1-\EE_{\QQ}[ L_{\rho,\varepsilon}]
+
\sqrt{\EE_{\QQ}[ L_{\rho,\varepsilon}^2]\QQ(A_n)}=o(1)\,.
\end{align*}
Thus $\PP_\rho$ is contiguous with respect to $\QQ$.

Now suppose for contradiction that $\TV(\PP_\rho,\PP_{0})\to 1$ along some subsequence. Then, after passing to a further subsequence, we find events $A_n$ such that
\[
\PP_{\rho}(A_n)-\PP_{0}(A_n)\to 1 \implies \PP_{\rho}(A_n)=1-o(1)
\text{ and }
\PP_{ 0 }(A_n)=o(1),
\]
contradicting the fact that $\PP_{\rho}$ is contiguous with respect to $\PP_{0}$. Similarly for $\QQ$.
\end{proof}
\subsubsection{Impossibility of weak recovery}

Let $\pi_\rho$ be the prior law of $(a,b)$, let $\mu_{U,V}$ be the posterior
law under $\PP_\rho$, and write
$R_{12}:=\ang{a^1,a^2}^2+\ang{b^1,b^2}^2$ for two posterior replicas.
Choose $\varepsilon_0>0$ as in the proof of
Proposition~\ref{prop:wigner-second-moment-under-Q}, and set $\calG$ to be the event under which
\[
\max \set{\big|\norm a^2-1\big|,
\big|\norm b^2-1\big|,
\big|\ang{a,b}-\rho\big|} \leq \varepsilon_0.
\]

\begin{prop}\label{prop:wigner-posterior-overlap}
If $\rho<\kappa$, then
\[
\EE_{\PP_\rho}\big[\mu_{U,V}^{\otimes2}
[R_{12}\mathbf 1_{\calG}(a^1,b^1)\mathbf 1_{\calG}(a^2,b^2)]\big]=o(1).
\]
Consequently, for every fixed $\delta>0$,
\[
\PP_\rho\bigl[\mu_{U,V}^{\otimes2}(R_{12}\ge\delta)\ge\delta\bigr]=o(1).
\]
\end{prop}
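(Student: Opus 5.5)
The plan is to mirror the CCA argument (Lemma~\ref{lem:cca-overlap-numeratorMain} and Proposition~\ref{prop:cca-posterior-overlapMain}), but with the truncation to $\calG$ built in. The truncation is needed because the untruncated second moment can blow up on atypical spikes.

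\textbf{Step 1: a truncated overlap numerator.} For $\delta>0$ define
\[
I_\delta(U,V):=\EE_{\pi_\rho^{\otimes2}}\bigl[\Lambda(a^1,b^1)\Lambda(a^2,b^2)\mathbf 1_{\calG}(a^1,b^1)\mathbf 1_{\calG}(a^2,b^2)\mathbf 1\{R_{12}\ge\delta\}\bigr].
\]
The Gaussian computation in Proposition~\ref{prop:wigner-second-moment-under-Q} gives
\[
\EE_\QQ I_\delta=\EE\Bigl[\exp\Bigl\{\tfrac{\alpha^2}{2}X^2+\tfrac{\beta^2}{2}Y^2\Bigr\}\mathbf 1_{\calG}\mathbf 1_{\calG}\mathbf 1\{X^2+Y^2\ge n\delta\}\Bigr],
\]
with $X=\sqrt n\langle a^1,a^2\rangle$ and $Y=\sqrt n\langle b^1,b^2\rangle$.

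Conditionally on $(a^1,b^1)\in\calG$, the pair $(X,Y)$ is Gaussian with covariance $\Gamma$. By the choice of $\varepsilon_0$, the matrix $I_2-D^{1/2}\Gamma D^{1/2}$ is $\succeq c I_2$, where $c>0$ depends only on $\delta_0$ and the bounds $\alpha^2(1+\varepsilon_0),\beta^2(1+\varepsilon_0)<1$. Tilting the Gaussian by the exponential factor gives another centered Gaussian with covariance $(\Gamma^{-1}-D)^{-1}$, which is bounded, times a constant at most $\delta_0^{-1/2}$. Under this tilted law, $\PP(X^2+Y^2\ge n\delta)\le e^{-c'n\delta}$. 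Hence
\[
\EE_\QQ I_\delta\le e^{-\Omega_\delta(n)}.
\]
For the conditional part $\mathbf 1_\calG(a^2,b^2)$ one simply drops the indicator, which is harmless because it is only an upper bound.

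\textbf{Step 2: pass to the posterior.} By Bayes' formula,
\[
\mu_{U,V}^{\otimes2}\bigl(R_{12}\ge\delta,\ \calG\times\calG\bigr)=\frac{I_\delta}{L_\rho^2},
\]
so
\[
\EE_{\PP_\rho}\bigl[\mu^{\otimes2}(\cdot)\bigr]=\EE_\QQ\bigl[I_\delta/L_\rho\bigr].
\]
Split on the event $\{L_\rho\ge e^{-\gamma n}\}$ with $\gamma$ smaller than the exponent from Step 1. On this event the contribution is at most $e^{\gamma n}\EE_\QQ I_\delta=o(1)$. On the complement, use $I_\delta\le L_\rho^2$ to bound the contribution by $e^{-\gamma n}$.

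\textbf{Step 3: conclude.} Since $0\le R_{12}\le 2(1+\varepsilon_0)^2$ on $\calG\times\calG$,
\[
\EE_{\PP_\rho}\bigl[\mu^{\otimes2}[R_{12}\mathbf 1_\calG\mathbf 1_\calG]\bigr]\le\delta+C\cdot o(1).
\]
Letting $n\to\infty$ and then $\delta\downarrow0$ gives the first claim.

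For the consequence, write
\[
\mu^{\otimes2}(R_{12}\ge\delta)\le\mu^{\otimes2}(R_{12}\ge\delta,\calG\times\calG)+2\mu(\calG^c).
\]
The Nishimori identity gives $\EE_{\PP_\rho}\mu(\calG^c)=\pi_\rho(\calG^c)=o(1)$. Markov's inequality, combined with the Step 2 bound, then yields
\[
\PP_\rho\bigl[\mu^{\otimes2}(R_{12}\ge\delta)\ge\delta\bigr]=o(1).
\]

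The main obstacle is Step 1. One must check that the exponential tilt remains uniformly nondegenerate on $\calG$, and that the large-deviation cost of $X^2+Y^2\ge n\delta$ under the tilted Gaussian is genuinely linear in $n$, uniformly over $(a^1,b^1)\in\calG$. This is where the strict inequality $\rho<\kappa$ enters, through the constant $\delta_0>0$.
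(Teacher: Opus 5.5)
Your proposal is correct and follows essentially the same route as the paper. You bound the truncated overlap numerator $I_\delta$ via the tilted two-dimensional Gaussian computation on $\calG$, split on $\{L_\rho\ge e^{-\gamma n}\}$ using $I_\delta\le L_\rho^2$, and conclude with the tower identity $\EE_{\PP_\rho}\mu_{U,V}(\calG^c)=\pi_\rho(\calG^c)$ and Markov's inequality; the differences are only notational, since your $\delta$ and $\delta_0$ play the roles of the paper's $t$ and the determinant constant from Proposition~\ref{prop:wigner-second-moment-under-Q}.
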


\begin{proof}
For fixed $t>0$, define
\[
I_t(U,V):=\EE_{\pi_\rho^{\otimes2}}
\bigl[\Lambda_1\Lambda_2\mathbf 1_{\calG_1}\mathbf 1_{\calG_2}
\mathbf 1\{R_{12}\ge t\}\bigr],
\]
where $\Lambda_i=\Lambda(a^i,b^i;U,V)$ is the conditional likelihood ratio
from Proposition~\ref{prop:wigner-second-moment-under-Q}. The same computation as in
Proposition~\ref{prop:wigner-second-moment-under-Q}, with the extra indicator, gives
\begin{equation}\label{eq:wigner-overlap-numerator-bound}
\EE_\QQ [I_t(U,V)]\le e^{-\Omega_t(n)}.
\end{equation}
Indeed, conditionally on $(a^1,b^1)$, the vector
$Z=(\sqrt n\ang{a^1,a^2},\sqrt n\ang{b^1,b^2})$ is the same two-dimensional
Gaussian as before; on $\calG_1$, the previous determinant lower bound makes
the tilted covariance uniformly bounded, while $R_{12}\ge t$ is the event
$\|Z\|^2\ge nt$. Since
$\mu_{U,V}^{\otimes2}(R_{12}\ge t,\calG_1,\calG_2)=I_t/L_\rho^2$ and
$\mathrm d\PP_\rho=L_\rho\,\mathrm d\QQ$, for any $\gamma\in(0,c)$,
\[
\EE_{\PP_\rho}\big[\mu_{U,V}^{\otimes2}(R_{12}\ge t,\calG_1,\calG_2)\big]
=\EE_\QQ\Big[\frac{I_t}{L_\rho}\Big]
\le e^{\gamma n}\EE_\QQ[I_t]+e^{-\gamma n}=o(1),
\]
where we used $I_t\le L_\rho^2$ on $\{L_\rho<e^{-\gamma n}\}$. Hence
\[
\EE_{\PP_\rho}\big[\mu_{U,V}^{\otimes2}
[R_{12}\mathbf 1_{\calG_1}\mathbf 1_{\calG_2}]\big]
\le t+2(1+\varepsilon_0)^2
\EE_{\PP_\rho}\mu_{U,V}^{\otimes2}(R_{12}\ge t,\calG_1,\calG_2),
\]
and the first claim follows by taking $n\to\infty$ and then $t\downarrow0$.
Finally, by the tower property and concentration of the prior,
$\EE_{\PP_\rho}\big[\mu_{U,V}(\calG^c)\big]=\pi_\rho(\calG^c)=o(1)$. Therefore
\[
\EE_{\PP_\rho}\big[\mu_{U,V}^{\otimes2}(R_{12}\ge\delta)\big]
\le
\EE_{\PP_\rho}\big[\mu_{U,V}^{\otimes2}(R_{12}\ge\delta,\calG_1,\calG_2)\big]
+2\pi_\rho(\calG^c)=o(1),
\]
and the second claim follows from Markov's inequality.
\end{proof}

\begin{theorem}\label{thm:wigner-no-weak-recovery}
Suppose $\rho<\kappa$. For any estimators
$\hat a=\hat a(U,V)$ and $\hat b=\hat b(U,V)$ satisfying
$\norm{\hat a},\|{\hat b\|}\le1$ almost surely, and every fixed
$\eta>0$,
\[
\mathcal P_\rho\bigl[\langle\hat a,a\rangle^2+\langle\hat b,b\rangle\rangle^2\ge\eta\bigr]=o(1).
\]
\end{theorem}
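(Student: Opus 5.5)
We follow the CCA proof of Theorem~\ref{thm:cca-no-weak-recoveryMain}, replacing its overlap input by Proposition~\ref{prop:wigner-posterior-overlap}. The one new point is that the Gaussian prior is unbounded, so the good event $\calG$ must be kept in the argument throughout.

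By Markov's inequality it suffices to show $\EE_{\mathcal P_\rho}[\langle\hat a,a\rangle^2]=o(1)$; the $b$-term is handled identically. First split on $\calG$:
\[
\EE_{\mathcal P_\rho}[\langle\hat a,a\rangle^2]\le \EE_{\mathcal P_\rho}[\langle\hat a,a\rangle^2\mathbf 1_{\calG}(a,b)]+\EE_{\mathcal P_\rho}[\|a\|^2\mathbf 1_{\calG^c}(a,b)].
\]
The second term is $o(1)$. Indeed, $\|a\|^2$ is a normalized $\chi^2_n$ variable, so Cauchy--Schwarz gives the bound $\sqrt{\EE\|a\|^4}\,\pi_\rho(\calG^c)^{1/2}=O(1)\cdot o(1)$.

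For the first term, use the posterior identity. Define the truncated posterior second moment
\[
M_a(U,V):=\EE_{\mu_{U,V}}[aa^\intercal\mathbf 1_{\calG}(a,b)],
\]
which is positive semidefinite. Then
\[
\EE_{\mathcal P_\rho}[\langle\hat a,a\rangle^2\mathbf 1_{\calG}]=\EE_{\PP_\rho}[\hat a^\intercal M_a\hat a]\le \EE_{\PP_\rho}\|M_a\|_{\mathrm F},
\]
since $\|\hat a\|\le 1$. By Jensen this is at most $(\EE_{\PP_\rho}\|M_a\|_{\mathrm F}^2)^{1/2}$. Drawing two independent replicas from the posterior, we have
\[
\|M_a\|_{\mathrm F}^2=\mu_{U,V}^{\otimes2}\big[\langle a^1,a^2\rangle^2\mathbf 1_{\calG}(a^1,b^1)\mathbf 1_{\calG}(a^2,b^2)\big],
\]
and this is bounded by $\mu_{U,V}^{\otimes2}[R_{12}\mathbf 1_{\calG_1}\mathbf 1_{\calG_2}]$. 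Its $\PP_\rho$-expectation is $o(1)$ by the first claim of Proposition~\ref{prop:wigner-posterior-overlap}.

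Combining the two terms gives $\EE[\langle\hat a,a\rangle^2+\langle\hat b,b\rangle^2]=o(1)$, and Markov's inequality finishes the proof. The only delicate step is controlling the off-$\calG$ contribution; without the truncation, $\|M_a\|_{\mathrm F}$ could not be compared to $R_{12}$ using only the bounded-overlap estimate. Handling this by Cauchy--Schwarz with the fourth moment of $\|a\|$ is routine.
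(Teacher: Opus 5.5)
Your proof is correct and follows essentially the same route as the paper. Both split on $\calG$, bound the on-$\calG$ part via the truncated posterior second moment $M_a$, $\|M_a\|_{\mathrm F}$, Jensen and the two-replica identity, and invoke the first claim of Proposition~\ref{prop:wigner-posterior-overlap}; the only difference is that you justify the off-$\calG$ term $\EE_{\pi_\rho}[\|a\|^2\mathbf 1_{\calG^c}]=o(1)$ explicitly via Cauchy--Schwarz and $\EE\|a\|^4=1+2/n$, which the paper simply asserts.
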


\begin{proof}
Let us define the truncated covariance matrix
$M_a(U,V):=\EE_{\mu_{U,V}}[aa^\intercal\mathbf 1_{\calG}(a,b)]$. The posterior
identity gives
\begin{align*}
\EE_{\mathcal P_\rho}[\ang{\hat a,a}^2]&=\mathbb{E}_{\PP_\rho}\big[\hat{a}^\intercal\mathbb{E}_{\mu_{U,V}}[aa^\intercal]\hat{a}\big]\\
&\le \EE_{\pi_\rho}[\norm a^2\mathbf 1_{\calG^c}] 
+\EE_{\PP_\rho}\norm{M_a}_{\mathrm F}\\
&\le o(1)+
\Bigl(\EE_{\PP_\rho}\mu_{U,V}^{\otimes2}
[\ang{a^1,a^2}^2\mathbf 1_{\calG_1}\mathbf 1_{\calG_2}]
\Bigr)^{1/2}=o(1),
\end{align*}
where we used $\norm{\hat a}\le1$ and
$\norm{M_a}_{\mathrm F}^2=
\EE_{\mu_{U,V}^{\otimes2}}[\ang{a^1,a^2}^2\mathbf 1_{\calG_1}\mathbf 1_{\calG_2}]$.
The same argument for $b$ and Markov inequality completes the proof.
\end{proof}

\subsection{Correlated spiked Wishart model}

Recall the definition of the correlated spiked Wishart model Section~\ref{subsec:wishart}. In this section, we will be adopting the notation introduced in Section~\ref{subsec:wishart}. 

As in the previous section, for each $\rho \in [0,1)$, let $\mathcal P_\rho$ denote the law of the correlated spiked Wishart model from Section~\ref{subsec:wigner} with parameter $\rho$ and $\PP_\rho$ be its marginal on $(U,V)$. Thus
$\PP_{\pl} = \PP_\rho$ and $\PP_{\nul} = \PP_0$. Let $\QQ$ denote the pure-noise law under which
    \begin{equation*}
        U = Z_a
        \qquad \text{ and } \qquad
        V = Z_b.
    \end{equation*}
For correlation $\rho \in [0,1)$, write likelihood ratio $L_\rho := \mathrm{d}\PP_\rho / \mathrm{d}\QQ$.

\subsubsection{Impossibility of strong detection}

In this subsection, we will show that below threshold, strong deterction is impossible.
    
\begin{prop}\label{prop:wishart-second-moment-under-Q}
Suppose $\rho < \kappa$. Then there exists $\varepsilon > 0$ and  $ L_{\rho,\varepsilon} \geq 0 $ such that
\[
0 \leq  L_{\rho,\varepsilon} \leq L_\rho,
\qquad
\EE_{\QQ}[ L_{\rho,\varepsilon}]=1-o(1),
\qquad
\EE_{\QQ}[ L_{\rho,\varepsilon}^2]=O(1).
\]
\end{prop}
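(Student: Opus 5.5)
The plan is to mirror the proof of Proposition~\ref{prop:wigner-second-moment-under-Q}, with the Wigner quadratic moment computation replaced by its Wishart analogue. First I integrate out the latent factors $u,v$ row by row. For fixed $(a,b)$ this gives the conditional likelihood ratio
\[
\Lambda(a,b;U,V)=(1+\alpha\|a\|^2)^{-n/2}(1+\beta\|b\|^2)^{-n/2}\exp\Big\{\tfrac{m\alpha}{2(1+\alpha\|a\|^2)}\|Ua\|^2+\tfrac{m\beta}{2(1+\beta\|b\|^2)}\|Vb\|^2\Big\},
\]
so that $L_\rho=\EE_{\pi_\rho}\Lambda$. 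I then truncate to the event
\[
\calE=\set{|\|a\|^2-1|\le\varepsilon,\ |\|b\|^2-1|\le\varepsilon,\ |\ang{a,b}-\rho|\le\varepsilon},
\]
and set $L_{\rho,\varepsilon}:=\EE_{\pi_\rho}[\Lambda\mathbf 1_{\calE}]$. Since $\EE_\QQ\Lambda=1$ for each fixed $(a,b)$, this gives $0\le L_{\rho,\varepsilon}\le L_\rho$ and $\EE_\QQ L_{\rho,\varepsilon}=\pi_\rho(\calE)=1-o(1)$, by the law of large numbers for the Gaussian prior.

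For the second moment I take two independent replicas $(a_1,b_1),(a_2,b_2)$ and compute $\EE_\QQ[\Lambda_1\Lambda_2]$ exactly. The rows of $U$ are i.i.d., and the $U$-part only sees the $2\times2$ Gram matrix of $(a_1,a_2)$. A row-wise Gaussian integral yields a product of determinants. The result is that $\EE_\QQ[\Lambda_1\Lambda_2]$ equals $(1-\gamma_1\gamma_2 m^2\ang{a_1,a_2}^2)^{-n/2}(1-\gamma'_1\gamma'_2m^2\ang{b_1,b_2}^2)^{-n/2}$, up to factors that are $1+o(1)$ on $\calE$, where $\gamma_i=\alpha/(m(1+\alpha\|a_i\|^2))$. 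On $\calE$ this is $\exp\{\frac{n}{2}\theta_\alpha^2 X_a^2+\frac n2\theta_\beta^2X_b^2\}$ to leading order, where $X_a=\ang{a_1,a_2}$, $X_b=\ang{b_1,b_2}$ and $\theta_\alpha\approx\alpha/(1+\alpha)$. The replica overlaps are of order $n^{-1/2}$, so replacing $-\log(1-x)$ by $x$ should be harmless. To make this rigorous I would split into overlaps below and above a small constant $\eta$. Below $\eta$ I use $-\log(1-x)\le x(1+O(\eta))$. Above $\eta$ the overlap density of the Gaussian spherical prior decays like $e^{-cn\eta^2}$, which dominates since the exponent is bounded by $Cn\eta^2$ with a smaller effective constant, as in the saddle bound of Theorem~\ref{thm:CCA-TV-boundMain}.

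Conditionally on $(a_1,b_1)$, the vector $(\sqrt n X_a,\sqrt nX_b)$ is Gaussian with covariance $\tau^{-1}\Gamma(a_1,b_1)$, where $\Gamma$ is as in the Wigner proof (rescaled by $n/m\to\tau$). The Gaussian quadratic moment formula then gives the bound $\det(I_2-\tau D^{1/2}\Gamma D^{1/2})^{-1/2}$, with $D=\diag(\theta_\alpha^2,\theta_\beta^2)(1+O(\varepsilon+\eta))$. Whether this is finite is a determinant condition. \textbf{The main obstacle} is to check that this determinant condition is exactly $\rho<\kappa$ with the Wishart $\kappa$. The naive quadratic-in-$X$ form above gives the condition $(1-\tau\theta_\alpha^2)(1-\tau\theta_\beta^2)>\tau^2\theta_\alpha^2\theta_\beta^2\rho^4$, which is not the stated threshold. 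I therefore expect to need the exact $\QQ$-second moment, keeping $\|Ua_1\|^2$, $\|Ua_2\|^2$ and the overlap jointly, and to exploit the $\chi^2$-type structure rather than a first-order expansion. Concretely, I would first compute the exact per-row factor
\[
\det\!\big(I_2-\Sigma_a\,\mathrm{diag}(\gamma_1,\gamma_2)m\big)^{-1/2}(1+\alpha\|a_1\|^2)^{-1/2}(1+\alpha\|a_2\|^2)^{-1/2}.
\]
This should simplify to $(1-\alpha^2\ang{a_1,a_2}^2/((1+\alpha)^2-\ldots))^{-1/2}$. The goal is to show the effective coefficient is $\alpha^2$ rather than $\theta_\alpha^2$, giving $\exp\{\frac{n}{2}\alpha^2X_a^2+\ldots\}$. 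With that coefficient, the determinant becomes $(1-\tau\alpha^2\|a_1\|^2)(1-\tau\beta^2\|b_1\|^2)-\tau^2\alpha^2\beta^2\rho^2\ang{a_1,b_1}^2$, which on $\calE$ is at least some $\delta>0$ exactly when $\rho<\kappa$. From there, choosing $\varepsilon,\eta$ small finishes the bound $\EE_\QQ L_{\rho,\varepsilon}^2\le C\delta^{-1/2}=O(1)$ exactly as in the Wigner case.
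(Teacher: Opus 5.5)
What you call the main obstacle is only an algebra slip. With $c_i=\alpha/(1+\alpha\|a_i\|^2)$ one has $1-c_i\|a_i\|^2=(1+\alpha\|a_i\|^2)^{-1}$. Hence $\det(I_2-\diag(c_1,c_2)\Sigma_a)=(1-\alpha^2\langle a_1,a_2\rangle^2)/((1+\alpha\|a_1\|^2)(1+\alpha\|a_2\|^2))$, the normalizations cancel exactly, and
\[
\EE_\QQ[\Lambda_1\Lambda_2]=(1-\alpha^2\langle a_1,a_2\rangle^2)^{-n/2}(1-\beta^2\langle b_1,b_2\rangle^2)^{-n/2}.
\]
Your claim that this equals the $\theta_\alpha^2$-version up to ``$1+o(1)$ factors'' is false; the ratio is $\exp\{\frac n2(\alpha^2-\theta_\alpha^2)\langle a_1,a_2\rangle^2\}$.

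The real gap is the large-overlap region, and it is created by integrating out $u,v$ \emph{before} truncating. The hypothesis $\rho<\kappa$ only forces $\tau\alpha^2,\tau\beta^2<1$, so for $\tau<1$ one may have $\alpha\ge1$ (e.g.\ $\tau=1/4$, $\alpha=\beta=3/2$, $\kappa=(7/9)^{1/2}$). Then $\calE(a_1,b_1)\cap\calE(a_2,b_2)\cap\{\langle a_1,a_2\rangle\ge 1/\alpha\}$ has positive probability. The row-wise Gaussian integral diverges there, so your truncated second moment is $+\infty$. In particular the exponent on $\{|\langle a_1,a_2\rangle|\ge\eta\}$ is not $O(n\eta^2)$, and no small/large splitting can rescue the argument.

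Even when $\alpha,\beta<1$, you cannot drop $\mathbf 1_{\calE}(a_2,b_2)$, because an untruncated Gaussian $a_2$ reaches $|\langle a_1,a_2\rangle|\ge 1/\alpha$. So the overlaps are not exactly Gaussian given the first replica. The tail would then require a joint large-deviation bound for $(\langle a_1,a_2\rangle,\langle b_1,b_2\rangle)$. These overlaps are correlated through $\rho$, unlike the independent CCA overlaps whose saddle bound you cite, and the proposal supplies no such bound.

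The missing idea is to keep $u,v$ in the prior and truncate $\|u\|^2/m$ and $\|v\|^2/m$ near $\tau$ as well, using
\[
\Lambda=\exp\{\sqrt{\alpha m}\langle U,ua^\intercal\rangle-\tfrac\alpha2\|u\|^2\|a\|^2+\sqrt{\beta m}\langle V,vb^\intercal\rangle-\tfrac\beta2\|v\|^2\|b\|^2\}.
\]
Then $\EE_\QQ[\Lambda_1\Lambda_2]=\exp\{\alpha\langle u_1,u_2\rangle\langle a_1,a_2\rangle+\beta\langle v_1,v_2\rangle\langle b_1,b_2\rangle\}$. Drop the second replica's indicator and integrate the untruncated $u_2,v_2$; this gives $\exp\{\frac{\alpha^2\|u_1\|^2}{2m}X^2+\frac{\beta^2\|v_1\|^2}{2m}Y^2\}$. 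Here $(X,Y)=\sqrt m(\langle a_1,a_2\rangle,\langle b_1,b_2\rangle)$ is exactly Gaussian with covariance $\Gamma(a_1,b_1)$. The Gaussian quadratic-moment formula then gives $\det(I_2-D^{1/2}\Gamma D^{1/2})^{-1/2}\le\delta^{-1/2}$ on $\calE(u_1,v_1,a_1,b_1)$, with $D\approx\diag(\tau\alpha^2,\tau\beta^2)$. This determinant condition is exactly where $\rho<\kappa$ enters.

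Truncating $\|u_1\|$ is what turns your $(1-\alpha^2x^2)^{-n/2}$ into the purely quadratic $\exp\{\frac n2\alpha^2x^2(1+O(\varepsilon))\}$, so no large-deviation analysis is needed. Your first-moment step ($0\le L_{\rho,\varepsilon}\le L_\rho$ and $\EE_\QQ L_{\rho,\varepsilon}=\pi_\rho(\calE)=1-o(1)$) carries over unchanged to the enlarged truncation event.
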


\begin{proof}
Since $\rho < \kappa$, we necessarily have $\tau\alpha^2 < 1$ and $\tau\beta^2 < 1$, and
\[
\Delta := (1-\tau\alpha^2)(1-\tau\beta^2) - \tau^2\alpha^2\beta^2\rho^4 > 0.
\]
The proof follows analogously to that of Proposition~\ref{prop:wigner-second-moment-under-Q}. Choose $\varepsilon,\delta > 0$ to be a sufficiently small constant so that
\begin{gather*}
(1-\alpha^2 x_1 y_1)(1-\beta^2 x_2 y_2) - \alpha^2\beta^2 x_1x_2\rho^2 z^2 > \delta \\
    \alpha^2(\tau+\varepsilon)(1+\varepsilon) < 1 \\
\beta^2(\tau+\varepsilon)(1+\varepsilon) < 1 
\end{gather*}
whenever $x_1,x_2,y_1,y_2,z$ satisfy
\[
|x_1-\tau|, |x_2-\tau|, |y_1-1|, |y_2-1|, |z-\rho| \leq \varepsilon.
\]
Let $\pi_\rho$ denote the law of the planted latent variables $(u,v,a,b)$. For fixed $(u,v,a,b)$ write the conditional likelihood ratio as
\[
\Lambda(u,v,a,b;U,V)
:=
\exp\Biggl\{
\sqrt{\alpha m}\,\langle U,ua^\intercal\rangle
-\frac\alpha2\|u\|^2\|a\|^2
+\sqrt{\beta m}\,\langle V,vb^\intercal\rangle
-\frac\beta2\|v\|^2\|b\|^2
\Biggr\}.
\]
Thus $L_\rho = \EE_{\pi_\rho}[\Lambda(u,v,a,b;U,V)]$. Define the event $\calE$ where the planted latent variables are controlled:
\begin{equation*}
\begin{aligned}
     \big|m^{-1}\|u\|^2-\tau\big|\leq \varepsilon \\
     \big|m^{-1}\|v\|^2-\tau\big|\leq \varepsilon \\
\end{aligned}
\qquad 
\begin{aligned}
\big|\|b\|^2-1\big|\leq \varepsilon \\
\big|\|a\|^2-1\big|\leq \varepsilon \\
\end{aligned}
\quad \text{ and } \quad 
  \big|\ang{a,b}-\rho\big| \leq \varepsilon.
\end{equation*}
Note that $\pi_\rho(\calE)=1-o(1)$ by law of large numbers. Define truncated likelihood ratio
\[
 L_{\rho,\varepsilon}(U,V)
:=
\EE_{\pi_\rho}\Bigl[\Lambda(u,v,a,b;U,V)\,\mathbf 1_{\calE}(u,v,a,b)\Bigr].
\]
Clearly $0\leq  L_{\rho,\varepsilon}\leq L_\rho$. Moreover, observe that
\[
\EE_{\QQ}[ L_{\rho,\varepsilon}]
=
\pi_\rho(\calE)=1-o(1)\,.
\]

It remains to bound the second moment. Let $(u_1,v_1,a_1,b_1)$ and $(u_2,v_2,a_2,b_2)$ be two independent copies of $(u,v,a,b)\sim\pi_\rho$. Expanding the square and doing Gaussian moment generating function computation under $(U,V)\sim\QQ$ gives
\begin{align*}
\EE_{\QQ}[ L_{\rho,\varepsilon}^2]
=
\EE \Biggl[
\exp\set{\alpha\ang{u_1,u_2}\ang{a_1,a_2} + \beta\ang{v_1,v_2}\ang{b_1,b_2}} \\
\times \mathbf 1_{\calE}(u_1,v_1,a_1,b_1)\mathbf 1_{\calE}(u_2,v_2,a_2,b_2)
\Biggr].
\end{align*}
Dropping the second indicator only increases this expression, so
\begin{equation*}
\EE_{\QQ}[ L_{\rho,\varepsilon}^2]
\leq
\EE \Biggl[
\exp\set{\alpha\ang{u_1,u_2}\ang{a_1,a_2} + \beta\ang{v_1,v_2}\ang{b_1,b_2}} \\
\times \mathbf 1_{\calE}(u_1,v_1,a_1,b_1)
\Biggr].
\end{equation*}
Now condition on $(u_1,v_1,a_1,b_1)$ and set
\begin{equation*}
X := \sqrt m\,\ang{a_1,a_2},
\qquad
Y := \sqrt m\,\ang{b_1,b_2}.
\end{equation*}
Conditionally on $(a_1,b_1)$, the pair $(X,Y)$ is centered Gaussian with covariance 
\[
\Gamma(a_1,b_1)
=
\begin{pmatrix}
\norm{a_1}^2 & \rho\,\ang{a_1,b_1}\\
\rho\,\ang{a_1,b_1} & \norm{b_1}^2
\end{pmatrix},
\]
because $(a_2,b_2)$ is jointly Gaussian with covariance
$\frac{1}{m}\bigl(\begin{smallmatrix}I_m&\rho I_m\\ \rho I_m&I_m\end{smallmatrix}\bigr)$ and is independent of $(u_1,v_1,a_1,b_1)$. Also, conditionally on $(u_1,v_1,a_1,b_1,a_2,b_2)$, the Gaussian vectors $u_2$ and $v_2$ are independent, so
\begin{align*}
&\EE\Bigl[
\exp\set{\alpha\ang{u_1,u_2}\ang{a_1,a_2} + \beta\ang{v_1,v_2}\ang{b_1,b_2}}
\,\Big|\, u_1,v_1,a_1,b_1,a_2,b_2
\Bigr] \\
&\qquad=
\exp\set{
\frac{\alpha^2}{2m}\|u_1\|^2 X^2
+
\frac{\beta^2}{2m}\|v_1\|^2 Y^2}.
\end{align*}
Therefore, if we set 
\[
D:=\diag\!\paren{\alpha^2\frac{\|u_1\|^2}{m},\beta^2\frac{\|v_1\|^2}{m}}
\quad \text{and} \quad
M:=I_2 - D^{1/2}\Gamma(a_1,b_1)D^{1/2},
\]
the Gaussian quadratic-moment formula yields
\[
\EE\Bigl[
\exp\set{\frac{\alpha^2}{2m}\|u_1\|^2 X^2 + \frac{\beta^2}{2m}\|v_1\|^2 Y^2}
\,\Big|\, u_1,v_1,a_1,b_1
\Bigr]
=
\det(M)^{-1/2},
\]
provided $M$ is positive definite. On $\calE(u_1,v_1,a_1,b_1)$ this holds, because its diagonal entries are positive by our choice of $\varepsilon$, and its determinant equals
\begin{equation*}
\paren{1-\alpha^2\frac{\|u_1\|^2}{m}\|a_1\|^2}
\paren{1-\beta^2\frac{\|v_1\|^2}{m}\|b_1\|^2}
-
\alpha^2\beta^2\frac{\|u_1\|^2\|v_1\|^2}{m^2}\rho^2\ang{a_1,b_1}^2,
\end{equation*}
which is at least $\delta$ by construction. Therefore,
\begin{equation*}
\EE_{\QQ}[ L_{\rho,\varepsilon}^2]
\leq \delta^{-1/2}
\end{equation*}
This proves the claim.
\end{proof}

Proposition~\ref{prop:wishart-second-moment-under-Q} together with same arguments as in the proof of Theorem~\ref{thm:wigner-TV-bound} imply the following theorem, which concludes impossibility of strong detection below the threshold.

\begin{theorem}\label{thm:Wishart-TV-bound}
    Suppose $\rho < \kappa$. Then we have that
 \begin{equation*}
        \TV(\PP_\rho, \QQ) = 1 - \Omega(1) \quad \text{ and } \quad  \TV(\PP_\rho, \PP_0) = 1 - \Omega(1).
    \end{equation*}
\end{theorem}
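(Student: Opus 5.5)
The plan is to argue exactly as in the proof of Theorem~\ref{thm:wigner-TV-bound}, using Proposition~\ref{prop:wishart-second-moment-under-Q} in place of Proposition~\ref{prop:wigner-second-moment-under-Q}. There are three steps: show $\PP_\rho \triangleleft \QQ$ via the truncated second moment, show $\QQ \triangleleft \PP_0$ via the one-view subcritical theory, and then convert contiguity into the total variation bounds.

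\textbf{Step 1: $\PP_\rho$ is contiguous with respect to $\QQ$.} Let $A_n$ satisfy $\QQ(A_n)=o(1)$, and split
\[
\PP_\rho(A_n)=\EE_\QQ[(L_\rho-L_{\rho,\varepsilon})\mathbf 1_{A_n}]+\EE_\QQ[L_{\rho,\varepsilon}\mathbf 1_{A_n}].
\]
Since $L_\rho-L_{\rho,\varepsilon}\ge 0$, the first term is at most $1-\EE_\QQ[L_{\rho,\varepsilon}]=o(1)$. By Cauchy--Schwarz, the second term is at most $\sqrt{O(1)\,\QQ(A_n)}=o(1)$. Hence $\PP_\rho(A_n)=o(1)$.

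\textbf{Step 2: $\QQ$ is contiguous with respect to $\PP_0$.} Under $\PP_0$ the two views are independent. The hypothesis $\rho<\kappa\le 1$ forces $\tau\alpha^2<1$ and $\tau\beta^2<1$, since otherwise $\kappa$ is undefined or zero. Each view is therefore a rank-one spiked Wishart (spiked covariance) model below the BBP threshold. The known one-view contiguity result (e.g.\ \cite{PWBAM18}, covering the spiked Wishart case with Gaussian prior) gives mutual contiguity of each marginal with pure noise. Contiguity of product measures then follows, since a uniformly bounded second moment of the likelihood ratio factorizes across independent coordinates. Combining with Step 1 by transitivity gives $\PP_\rho\triangleleft\PP_0$.

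\textbf{Step 3: conversion to total variation.} Suppose $\TV(\PP_\rho,\QQ)\to1$ along a subsequence. Then there are events $A_n$ with $\PP_\rho(A_n)\to1$ and $\QQ(A_n)\to0$, which contradicts Step 1. The same argument with $\PP_0$ in place of $\QQ$, together with $\PP_\rho\triangleleft\PP_0$, gives the second claim.

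\textbf{Main obstacle.} The only new ingredient relative to the Wigner case is Step 2: one must make sure the cited subcritical contiguity applies to this spiked Wishart parametrization with Gaussian latent factor $u$ and Gaussian spike $a$ at ratio $\tau$. If a clean citation is unavailable, I would instead rerun the truncated second moment computation of Proposition~\ref{prop:wishart-second-moment-under-Q} with $\rho$ replaced by $0$. That computation yields a determinant bounded below by $(1-\tau\alpha^2-\varepsilon')(1-\tau\beta^2-\varepsilon')>0$, so $\PP_0\triangleleft\QQ$. For the reverse direction $\QQ\triangleleft\PP_0$ I would invoke the standard small-subgraph/log-normal limit of $L_0$ under $\QQ$ for subcritical spiked covariance.
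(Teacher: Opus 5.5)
Your proposal is correct and follows the paper's route: Proposition~\ref{prop:wishart-second-moment-under-Q} gives $\PP_\rho\triangleleft\QQ$, the one-view subcritical spiked Wishart contiguity gives $\QQ\triangleleft\PP_0$, and transitivity together with the standard contiguity-to-total-variation argument finishes the proof, exactly as in Theorem~\ref{thm:wigner-TV-bound}. One small correction: justify the product step for $\QQ\triangleleft\PP_0$ by Le Cam's first lemma (a product of independent likelihood-ratio factors whose subsequential limits have no atom at zero again has no atom at zero), not by a bounded second moment, since such a bound would only give the opposite direction $\PP_0\triangleleft\QQ$.
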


\subsubsection{Impossibility of weak recovery}
We now prove that weak recovery is impossible below the same threshold.  The
proof is similar to Wigner case. Let $\mu_{U,V}$ denote the posterior law of $(u,v,a,b)$ given $(U,V)$ under
$\PP_\rho$. Thus, with respect to the prior law $\pi_\rho$ of
$(u,v,a,b)$,
\[
\mu_{U,V}(\mathrm du,\mathrm dv,\mathrm da,\mathrm db)
=
\frac{\Lambda(u,v,a,b;U,V)}{L_\rho(U,V)}
\,\pi_\rho(\mathrm du,\mathrm dv,\mathrm da,\mathrm db),
\]
where $\Lambda$ is the conditional likelihood ratio from the proof of
Proposition~\ref{prop:wishart-second-moment-under-Q}, namely
\[
\Lambda(u,v,a,b;U,V)
:=
\exp\Biggl\{
\sqrt{\alpha m}\,\langle U,ua^\intercal\rangle
-\frac\alpha2\|u\|^2\|a\|^2
+
\sqrt{\beta m}\,\langle V,vb^\intercal\rangle
-\frac\beta2\|v\|^2\|b\|^2
\Biggr\}.
\]

\begin{prop}\label{prop:wishart-posterior-overlap}
Suppose $\rho<\kappa$. Then, for every fixed $\delta>0$,
\[
\PP_\rho\Bigl[
\mu_{U,V}^{\otimes 2}
\Bigl(
\ang{a^1,a^2}^2+\ang{b^1,b^2}^2\ge \delta
\Bigr)
\ge \delta
\Bigr]
=o(1).
\]
Moreover, there exists a fixed $\varepsilon_0>0$ such that, 
\begin{equation*}
    \EE_{\PP_\rho}\,
\mu_{U,V}^{\otimes 2}
\Bigl[
\Bigl(\ang{a^1,a^2}^2+\ang{b^1,b^2}^2\Bigr)
\mathbf 1_{\calG}(u^1,v^1,a^1,b^1)
\mathbf 1_{\calG}(u^2,v^2,a^2,b^2)
\Bigr]=o(1),
\end{equation*}
where $\calG$ is an event defined by 
\begin{gather*}
\max\set{  \abs{{\norm{u}^2}/{m}-\tau}, \abs{{\norm{v}^2}/{m}-\tau}, \abs{\norm{a}^2-1}, \abs{\norm{b}^2-1}, \abs{\ang{a,b}-\rho}} \leq \varepsilon_0.
\end{gather*}

\end{prop}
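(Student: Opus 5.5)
The plan is to follow the proof of Proposition~\ref{prop:wigner-posterior-overlap}, with the truncated second-moment computation from Proposition~\ref{prop:wishart-second-moment-under-Q} as input.

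Choose $\varepsilon_0=\varepsilon$ as in that proposition, so that on $\calG$ the tilted $2\times2$ matrix $M$ satisfies $\det M\ge\delta_0>0$ with positive diagonal entries. For $t>0$, set
\[
I_t(U,V):=\EE_{\pi_\rho^{\otimes 2}}\bigl[\Lambda_1\Lambda_2\mathbf 1_{\calG_1}\mathbf 1_{\calG_2}\mathbf 1\{R_{12}\ge t\}\bigr],
\qquad R_{12}:=\ang{a^1,a^2}^2+\ang{b^1,b^2}^2 .
\]

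\textbf{Step 1: exponential smallness of $\EE_\QQ[I_t]$.} Reproduce the Gaussian computation. Integrating out $(U,V)$ under $\QQ$ and then $u_2,v_2$ conditionally gives the integrand $\exp\{\tfrac12 Z^\intercal D Z\}$, where $Z=(X,Y)=\sqrt m(\ang{a_1,a_2},\ang{b_1,b_2})$. Here $Z$ is conditionally $\calN(0,\Gamma)$ given the first replica, and the event is $\|Z\|^2\ge mt$. The tilted density is proportional to $\exp\{-\tfrac12 Z^\intercal(\Gamma^{-1}-D)Z\}$. On $\calG_1$ the matrix $\Gamma^{-1}-D=\Gamma^{-1/2}(\Gamma^{1/2})^{-1}M\cdots$ is uniformly positive definite, with smallest eigenvalue at least some $c_1>0$ depending on $\delta_0$ and $\varepsilon_0$. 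A Gaussian tail bound then gives
\[
\EE\bigl[e^{\frac12 Z^\intercal DZ}\mathbf 1\{\|Z\|^2\ge mt\}\mid\text{replica }1\bigr]\le C\,e^{-c_1 mt/4}.
\]
Hence $\EE_\QQ[I_t]\le e^{-\Omega_t(n)}$, since $m=\Theta(n)$. Dropping $\mathbf 1_{\calG_2}$ here is harmless. This step is the main obstacle, because the $u_2,v_2$ integration must be done before the tail bound, and the bound has to be uniform over $\calG_1$. Everything else is bookkeeping.

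\textbf{Step 2: transfer to the posterior.} Bayes' formula gives $\mu^{\otimes2}_{U,V}(R_{12}\ge t,\calG_1,\calG_2)=I_t/L_\rho^2$. Therefore
\[
\EE_{\PP_\rho}\bigl[\mu^{\otimes2}_{U,V}(R_{12}\ge t,\calG_1,\calG_2)\bigr]=\EE_\QQ[I_t/L_\rho].
\]
Split on $\{L_\rho\ge e^{-\gamma n}\}$ with $\gamma$ smaller than the exponent from Step 1. On the complement use $I_t\le L_\rho^2$. This bounds the expectation by $e^{\gamma n}e^{-cn}+e^{-\gamma n}=o(1)$.

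\textbf{Step 3: conclusions.} On $\calG_1\cap\calG_2$ we have $R_{12}\le 2(1+\varepsilon_0)^2$. So the truncated expectation in the second claim is at most $t+2(1+\varepsilon_0)^2\cdot o(1)$; letting $n\to\infty$ and then $t\downarrow0$ gives the second claim. For the first claim, the tower property gives $\EE_{\PP_\rho}\mu_{U,V}(\calG^c)=\pi_\rho(\calG^c)=o(1)$ by the law of large numbers. Then
\[
\EE_{\PP_\rho}\mu^{\otimes2}(R_{12}\ge\delta)\le \EE_{\PP_\rho}\mu^{\otimes2}(R_{12}\ge\delta,\calG_1,\calG_2)+2\pi_\rho(\calG^c)=o(1),
\]
and Markov's inequality finishes the proof.
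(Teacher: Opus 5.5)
Your proposal is correct and follows essentially the same route as the paper. The shared steps are: the truncated numerator $I_t$; integrating out $u^2,v^2$ after dropping $\mathbf 1_{\calG_2}$; the Gaussian tilt and tail bound, uniform over $\calG_1$; the split on $\{L_\rho\ge e^{-\gamma n}\}$ using $I_t\le L_\rho^2$; and removing the truncation via $\pi_\rho(\calG^c)=o(1)$ and Markov.

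The only cosmetic difference is how the tilt is written. The paper uses the tilted covariance $\Gamma^{1/2}(I_2-\Gamma^{1/2}D\Gamma^{1/2})^{-1}\Gamma^{1/2}$, whereas you use the precision matrix $\Gamma^{-1}-D$. These are equivalent here because $I_2-\Gamma^{1/2}D\Gamma^{1/2}$ has the same spectrum as $M$, and $\Gamma$ is nondegenerate on $\calG_1$ when $\rho<1$.
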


\begin{proof}
Since $\rho<\kappa$, we have $\tau\alpha^2<1$, $\tau\beta^2<1$, and
\[
\Delta
:=
(1-\tau\alpha^2)(1-\tau\beta^2)
-
\tau^2\alpha^2\beta^2\rho^4
>0.
\]
Choose $\varepsilon_0>0$ sufficiently small so that, whenever
\[
\max\set{|x_1-\tau|,\ |x_2-\tau|,\ |y_1-1|,\ |y_2-1|,\ |z-\rho|}
\leq \varepsilon_0,
\]
we have $(1-\alpha^2 x_1y_1)(1-\beta^2 x_2y_2)
-
\alpha^2\beta^2 x_1x_2\rho^2z^2
\ge c_0$  for some constant $c_0>0$, and also $\alpha^2(\tau+\varepsilon_0)(1+\varepsilon_0), \beta^2(\tau+\varepsilon_0)(1+\varepsilon_0) < 1.$ Set $R_{12}
:=
\langle{a^1,a^2\rangle}^2+\langle{b^1,b^2 \rangle}^2.$ For fixed $\delta>0$, define
\[
I_{\delta}(U,V)
:=
\EE_{\pi_\rho^{\otimes 2}}
\Bigl[
\Lambda(u^1,v^1,a^1,b^1;U,V)
\Lambda(u^2,v^2,a^2,b^2;U,V)
\mathbf 1_{\calG_1}
\mathbf 1_{\calG_2}
\mathbf 1\{R_{12}\ge \delta\}
\Bigr],
\]
where $\calG_i=\calG(u^i,v^i,a^i,b^i)$. We first prove that
\begin{equation}\label{eq:wishart-overlap-numerator-exp-bound}
\EE_{\QQ} I_{\delta}(U,V)
\le e^{-\Omega_\delta(n)}.
\end{equation}
Expanding the square and integrating over $(U,V)\sim \QQ$ gives exactly the
same expression as in the proof of
Proposition~\ref{prop:wishart-second-moment-under-Q}, but with the additional overlap
indicator:
\[
\EE_{\QQ} I_{\delta}
=
\EE
\Biggl[
\exp\set{
\alpha\ang{u^1,u^2}\ang{a^1,a^2}
+
\beta\ang{v^1,v^2}\ang{b^1,b^2}
}
\mathbf 1_{\calG_1}
\mathbf 1_{\calG_2}
\mathbf 1\{R_{12}\ge \delta\}
\Biggr].
\]
We condition on $(u^1,v^1,a^1,b^1)$ and drop the indicator
$\mathbf 1_{\calG_2}$. Set
\[
X:=\sqrt m\,\ang{a^1,a^2},
\qquad
Y:=\sqrt m\,\ang{b^1,b^2},
\qquad
Z:=\begin{pmatrix}X\\Y\end{pmatrix}.
\]
Conditionally on $(a^1,b^1)$, the vector $Z$ is centered Gaussian with
covariance
\[
\Gamma(a^1,b^1)
=
\begin{pmatrix}
\norm{a^1}^2 & \rho\,\ang{a^1,b^1}\\
\rho\,\ang{a^1,b^1} & \norm{b^1}^2
\end{pmatrix}.
\]
Moreover, conditionally on $(u^1,v^1,a^1,b^1,a^2,b^2)$, integrating  $u^2,v^2$ gives
\begin{align*}
&\EE\Bigl[
\exp\Bigl\{
\alpha\langle{u^1,u^2\rangle}\langle{a^1,a^2\rangle}
+
\beta\langle{v^1,v^2\rangle}\langle{b^1,b^2\rangle}
\Bigr\}
\mathbf 1_{\calG_1}\mathbf 1_{\calG_2}\mathbf 1\{R_{12}\ge\delta\}
\,\Big|\,u^1,v^1,a^1,b^1,a^2,b^2
\Bigr]
\\
&\qquad\qquad\qquad\qquad\qquad\le
\exp\biggl\{
\frac{\alpha^2\|u^1\|^2X^2+\beta^2\|v^1\|^2Y^2}{2m}
\biggr\}.
\end{align*}
Thus, if $D := \diag ({ \alpha^2{\norm{u^1}^2}/{m}, \beta^2{\norm{v^1}^2}/{m}} ),$  the conditional contribution is bounded by
\[
\EE\Bigl[
\exp\set{\frac12 Z^\intercal D Z}
\mathbf 1\{\norm{Z}^2\ge m\delta\}
\,\Big|\,u^1,v^1,a^1,b^1
\Bigr].
\]
On the event $\calG_1$, the matrix $M
:=
I_2-D^{1/2}\Gamma(a^1,b^1)D^{1/2}$ is uniformly positive definite. Indeed, its diagonal entries are positive
by the choice of $\varepsilon_0$, and its determinant equals
\[
\paren{1-\alpha^2\frac{\norm{u^1}^2}{m}\norm{a^1}^2}
\paren{1-\beta^2\frac{\norm{v^1}^2}{m}\norm{b^1}^2}
-
\alpha^2\beta^2
\frac{\norm{u^1}^2\norm{v^1}^2}{m^2}
\rho^2\ang{a^1,b^1}^2,
\]
which is at least $c_0$. Hence $M\succeq c_1I_2$ for some constant
$c_1>0$. Hence, conditionally on $\calG_1$,
\[
\EE\Bigl[
\exp\set{\frac12 Z^\intercal D Z}
\mathbf 1\{\norm{Z}^2\ge m\delta\}
\,\Big|\,u^1,v^1,a^1,b^1
\Bigr]
=
\det(M)^{-1/2}
\PP\bigl(\|{\tilde Z\|}^2\ge m\delta\bigr),
\]
where $\tilde Z$ is a centered two-dimensional Gaussian vector with
covariance
\[
\tilde\Gamma
=
\Gamma^{1/2}
\bigl(I_2-\Gamma^{1/2}D\Gamma^{1/2}\bigr)^{-1}
\Gamma^{1/2}.
\]
The operator norm of $\widetilde\Gamma$ is uniformly bounded on $\calG_1$.
Thus a standard Gaussian tail bound gives $\PP(\|\tilde Z\|^2 \geq m\delta)\leq \exp\set{-\Omega_\delta(n)}$. We now convert the numerator bound into a posterior-overlap bound. Since
\[
\mu_{U,V}^{\otimes 2}
\bigl(R_{12}\ge \delta,\ \calG_1,\calG_2\bigr)
=
\frac{I_{\delta}(U,V)}{L_\rho(U,V)^2},
\]
and since $\PP_\rho$ has density $L_\rho$ with respect to $\QQ$,
\[
\EE_{\PP_\rho}
\big[\mu_{U,V}^{\otimes 2}
\big(R_{12}\ge \delta,\ \calG_1,\calG_2\big)\big]
=
\EE_{\QQ}\Big[
\frac{I_{\delta}(U,V)}{L_\rho(U,V)}\Big].
\]
Fix $\gamma\in(0,c\tau)$, where $c$ is the constant in
\eqref{eq:wishart-overlap-numerator-exp-bound}. Splitting according to
$\{L_\rho\ge e^{-\gamma m}\}$
\begin{align*}
\EE_{\QQ}
\Big[\frac{I_{\delta}}{L_\rho}\Big]
&\le
e^{\gamma m}\EE_{\QQ}[I_{\delta}]
+
\EE_{\QQ}
\Bigl[
\frac{I_{\delta}}{L_\rho}
\mathbf 1\{L_\rho<e^{-\gamma m}\}
\Bigr].
\end{align*}
Since $I_{\delta}\le L_\rho^2$, the second term is at most $e^{-\gamma m}$. Together with \eqref{eq:wishart-overlap-numerator-exp-bound}, this yields
\[
\EE_{\PP_\rho}
\big[\mu_{U,V}^{\otimes 2}
\bigl(R_{12}\ge \delta,\ \calG_1,\calG_2\bigr)\big]=o(1).
\]
On $\calG_1\cap\calG_2$,  we have $R_{12}
\le 2(1+\varepsilon_0)^2, $. Therefore, for every $\eta>0$,
\[
\EE_{\PP_\rho}
\mu_{U,V}^{\otimes 2}
\bigl[
R_{12}\mathbf 1_{\calG_1}\mathbf 1_{\calG_2}
\bigr]
\le
\eta
+
2(1+\varepsilon_0)^2
\EE_{\PP_\rho}
\mu_{U,V}^{\otimes 2}
\bigl(R_{12}\ge\eta,\ \calG_1,\calG_2\bigr).
\]
Taking $m\to\infty$ and then $\eta\downarrow0$ proves the second displayed
claim. Finally, we remove the truncation. By the tower property of conditional
expectation,
\[
\EE_{\PP_\rho}\mu_{U,V}(\calG^c)
=
\pi_\rho(\calG^c)=o(1),
\]
Hence, we can conclude that 
\[
\EE_{\PP_\rho}
\mu_{U,V}^{\otimes 2}
\bigl(R_{12}\ge\delta\bigr)
\le
\EE_{\PP_\rho}
\mu_{U,V}^{\otimes 2}
\bigl(R_{12}\ge\delta,\ \calG_1,\calG_2\bigr)
+
2\pi_\rho(\calG^c)=o(1).
\]
Markov's inequality gives $\PP_\rho\sqb{
\mu_{U,V}^{\otimes 2}(R_{12}\ge\delta)\ge\delta}
=o(1)$, which concludes the proof.
\end{proof}

Proposition~\ref{prop:wishart-posterior-overlap} together with similar arguments as in the proof of Theorem~\ref{thm:wigner-no-weak-recovery} imply the following theorem, which proves that positive-probability weak recovery is impossible below the threshold.

\begin{theorem}\label{thm:wishart-no-weak-recovery}
Suppose $\rho<\kappa$. Let
$\hat a=\hat a(U,V)$ and $\hat b=\hat b(U,V)$ be arbitrary
estimators satisfying $\norm{\hat a}\le 1$ and $\|\hat b\| \leq 1$, almost surely. Then, for every fixed $\eta>0$,
\[
\mathcal P_\rho\bigl[
\langle \hat a,a\rangle^2+\langle\hat b,b\rangle^2\ge \eta
\bigr]=o(1)\,.
\]
\end{theorem}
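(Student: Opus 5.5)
The plan is to mirror the proof of Theorem~\ref{thm:wigner-no-weak-recovery}, with the latent factors $(u,v)$ carried along in the posterior but ignored in the final estimate.

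First I split off the atypical part of the prior. I take $\calG$ and $\varepsilon_0$ exactly as in Proposition~\ref{prop:wishart-posterior-overlap} and define the truncated second-moment matrices
\[
M_a(U,V):=\EE_{\mu_{U,V}}\bigl[aa^\intercal\mathbf 1_{\calG}(u,v,a,b)\bigr],\qquad M_b(U,V):=\EE_{\mu_{U,V}}\bigl[bb^\intercal\mathbf 1_{\calG}(u,v,a,b)\bigr].
\]
By the posterior identity (Bayes' formula under $\mathcal P_\rho$) and $\|\hat a\|\le 1$,
\[
\EE_{\mathcal P_\rho}\langle\hat a,a\rangle^2\le \EE_{\pi_\rho}\bigl[\|a\|^2\mathbf 1_{\calG^c}\bigr]+\EE_{\PP_\rho}\bigl[\hat a^\intercal M_a\hat a\bigr].
\]
The first term is $o(1)$. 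This follows by Cauchy--Schwarz, using $\EE\|a\|^4=O(1)$ for Gaussian $a$ together with $\pi_\rho(\calG^c)=o(1)$ from the law of large numbers.

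Next I control the second term by the replica overlap. I bound $\hat a^\intercal M_a\hat a\le\|M_a\|_{\op}\le\|M_a\|_{\mathrm F}$. Then I use the identity
\[
\|M_a\|_{\mathrm F}^2=\EE_{\mu_{U,V}^{\otimes2}}\bigl[\langle a^1,a^2\rangle^2\mathbf 1_{\calG_1}\mathbf 1_{\calG_2}\bigr],
\]
which holds because the two replicas are i.i.d.\ draws from the posterior. Jensen/Cauchy--Schwarz then gives
\[
\EE_{\PP_\rho}\|M_a\|_{\mathrm F}\le\Bigl(\EE_{\PP_\rho}\mu_{U,V}^{\otimes2}\bigl[R_{12}\mathbf 1_{\calG_1}\mathbf 1_{\calG_2}\bigr]\Bigr)^{1/2}.
\]
This is $o(1)$ by the second claim of Proposition~\ref{prop:wishart-posterior-overlap}. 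The same argument applies to $b$.

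Summing the two bounds gives $\EE_{\mathcal P_\rho}\bigl[\langle\hat a,a\rangle^2+\langle\hat b,b\rangle^2\bigr]=o(1)$, and Markov's inequality yields the claim for every fixed $\eta>0$.

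The only step needing care is the truncation. The indicator $\mathbf 1_{\calG}$ involves $(u,v)$ as well as $(a,b)$, so one must check that the Frobenius identity still holds with these indicators. It does, since the indicators factorize across replicas. One must also check that $\|a\|^2\mathbf 1_{\calG^c}$ has vanishing expectation even though $a$ is unbounded, which is handled by the fourth-moment Cauchy--Schwarz bound above. Beyond this, all the real work is already contained in Proposition~\ref{prop:wishart-posterior-overlap}.
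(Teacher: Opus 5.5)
Your proposal is correct and is essentially the paper's argument: the paper obtains this theorem by running the proof of Theorem~\ref{thm:wigner-no-weak-recovery} in the Wishart setting. That proof splits off $\calG^c$, bounds $\hat a^\intercal M_a\hat a$ by $\norm{M_a}_{\mathrm F}$ (with $M_a$ the $\calG$-truncated posterior second moment), applies Cauchy--Schwarz together with the second claim of Proposition~\ref{prop:wishart-posterior-overlap}, and finishes with Markov's inequality. Your fourth-moment justification that $\EE_{\pi_\rho}[\norm{a}^2\mathbf 1_{\calG^c}]=o(1)$ supplies a step the paper leaves implicit.
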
